\numberwithin{equation}{section}
\newcommand{\cc}{\mathbb C}
\newcommand{\zz}{\mathbb Z}
\newcommand{\qq}{\mathbb Q}
\newcommand{\rr}{\mathbb R}
\newcommand{\A}{\mathbb A}
\newcommand{\GG}{\mathbb G}
\newcommand{\la}{\langle}
\newcommand{\ra}{\rangle}
\newcommand{\lra}{\longrightarrow}
\newcommand{\al}{\alpha}
\newcommand{\be}{\beta}
\newcommand{\ga}{\gamma}
\newcommand{\de}{\delta}
\newcommand{\De}{\Delta}
\newcommand{\ep}{\epsilon}
\newcommand{\lam}{\lambda}
\DeclareMathOperator{\GL}{GL}
\DeclareMathOperator{\Hom}{Hom}
\DeclareMathOperator{\Ind}{Ind}
\DeclareMathOperator{\SL}{SL}
\DeclareMathOperator{\Sp}{Sp}
\DeclareMathOperator{\Sym}{Sym}
\newcommand{\lto}{\longrightarrow}
\newcommand\norm[1]{\left\lVert#1\right\rVert}
\newenvironment{psmatrix}
  {\left(\begin{smallmatrix}}
  {\end{smallmatrix}\right)}
\newcommand{\Gm}{\mathbb{G}_m}
\newcommand{\fg}{\mathfrak g}
\newcommand{\ft}{\mathfrak t}
\newcommand{\fm}{\mathfrak m}
\newcommand{\fn}{\mathfrak n}
\newcommand{\fl}{\mathfrak l}
\newcommand{\fp}{\mathfrak p}
\newcommand{\fsl}{\mathfrak{sl}}
\newcommand{\fgl}{\fg\fl}
\newcommand{\calf}{\mathcal{F}}
\newcommand{\cals}{\mathcal{S}}
\newcommand{\calo}{\mathcal{O}}
\newcommand{\calr}{\mathcal{R}}
\newcommand{\iso}{\overset{\sim}{\longrightarrow}}
\def\Ddots{\mathinner{\mkern1mu\raise\p@
\vbox{\kern7\p@\hbox{.}}\mkern2mu
\raise4\p@\hbox{.}\mkern2mu\raise7\p@\hbox{.}\mkern1mu}}
\DeclareRobustCommand\bigop[1]{%
  \mathop{\vphantom{\sum}\mathpalette\bigop@{#1}}\slimits@
}
\newcommand{\bigop@}[2]{%
  \vcenter{%
    \sbox\z@{$#1\sum$}%
    \hbox{\resizebox{\ifx#1\displaystyle.9\fi\dimexpr\ht\z@+\dp\z@}{!}{$\m@th#2$}}%
  }%
}
\newtheorem{Thm}{Theorem}[section]
\newtheorem{Prop}[Thm]{Proposition}
\newtheorem{Lem}[Thm]{Lemma}
\newtheorem{Cor}[Thm]{Corollary}
\newtheorem{Conj}[Thm]{Conjecture}
\newtheorem{Quest}[Thm]{Question}
\newtheorem{Sublem}[Thm]{Sublemma}
\theoremstyle{definition}
\newtheorem{Def}[Thm]{Definition}
\theoremstyle{remark}
\newtheorem{Rem}[Thm]{Remark}
\newcommand{\quash}[1]{}
\title[Harmonic analysis on certain spherical varieties]{Harmonic analysis on certain spherical varieties}
\author{Jayce R. Getz}
\address{Department of Mathematics\\
Duke University\\
Durham, NC 27708}
\email{jgetz@math.duke.edu}
\author{Chun-Hsien Hsu}
\address{Department of Mathematics\\
Duke University\\
Durham, NC 27708}
\email{simonhsu@math.duke.edu}
\author{Spencer Leslie}
\address{Department of Mathematics\\
Duke University\\
Durham, NC, 27708}
\email{lesliew@math.duke.edu}
\subjclass[2010]{Primary 11F70; Secondary 11F55, 11F85}
\keywords{Braverman--Kazhdan conjecture, Fourier transform, Spherical varieties}
\thanks{The first author is partially supported by NSF grant DMS 1901883, and received support from  D.~Kazhdan ERC grant AdG 6699655 and Y.~Choie's NRF 2017R1A2B2001807 grant.  The third author was partially supported by an AMS-Simons Travel Award and by NSF grant DMS-1902865.
}
\begin{document}

\begin{abstract}
Braverman and Kazhdan proposed a conjecture, later refined by Ng\^o and broadened to the framework of spherical varieties by Sakellaridis, that asserts that affine spherical varieties admit Schwartz spaces, Fourier transforms, and Poisson summation formulae.  
The first author in joint work with B.~Liu and later the first two authors proved these conjectures for certain spherical varieties $Y$ built out of triples of quadratic spaces.     However, in these works the Fourier transform was only proven to exist.  In the present paper we give, for the first time, an explicit formula for the Fourier transform on $Y.$ We also prove that it is unitary in the nonarchimedean case.   As preparation for this result, we give explicit formulae for Fourier transforms on the affine closures of Braverman--Kazhdan spaces attached to maximal parabolic subgroups of split, simple, simply connected groups.  These Fourier transforms are of independent interest, for example, from the point of view of analytic number theory.   
\end{abstract}

\maketitle
\setcounter{tocdepth}{1}
\tableofcontents

\section{Introduction}

In the seminal paper \cite{BK-lifting}, Braverman and Kazhdan proposed that the Poisson summation formula for a vector space is the first case of a general phenomenon.   Let $X$ be an affine spherical variety over a global field $F$ with smooth locus $X^{\mathrm{sm}} \subset X.$ Building on work in \cite{BKnormalized,Ngo:Hankel,SakellaridisSph},  one now expects that 
 there is a Schwartz space 
$\mathcal{S}(X(\A_F)) \subset C^\infty(X^{\mathrm{sm}}(\A_F))$ and a Fourier transform 
$$
\mathcal{F}_X:\mathcal{S}(X(\A_F)) \lto \mathcal{S}(X(\A_F))
$$
such that 
$$
\sum_{x \in X^{\mathrm{sm}}(F)}f(x)=\sum_{x \in X^{\mathrm{sm}}(F)}\mathcal{F}_X(f)(x),
$$
at least for test functions $f$ satisfying certain assumptions to eliminate ``boundary terms.''  Let us refer to this expectation as the Poisson summation conjecture.  The import of the Poisson summation conjecture is that it implies the analytic properties of Langlands $L$-functions (and hence, by converse theory, Langlands functoriality) in great generality.  

\begin{Rem}
A study of $\gamma$-factors for quite general Langlands $L$-functions using reductive monoids (an important special family of spherical varieties) is contained in \cite{SS}.  In the function field setting for many spherical varieties, including reductive monoids, a geometric interpretation of basic functions in the still conjectural Schwartz space is contained in \cite{BNS,sakellaridis2021intersection}. 
\end{Rem}

The only case that is completely understood is that of a vector space. However, the Poisson summation formula is known under assumptions on the test functions involved provided that $X$ is the affine closure of a Braverman--Kazhdan space, that is, a scheme of the form $P^{\mathrm{der}} \backslash G$ where $G$ is a reductive group and $P \leq G$ is a parabolic subgroup \cite{BKnormalized,Getz:Hsu,Getz:Liu:BK,GurK,Kobayashi:Mano,Jiang:Luo:Zhang}.

Though the original motivation for the Poisson summation conjecture comes from Langlands functoriality, in personal communications to the first author, Kazhdan has emphasized that they should have implications broadly in harmonic analysis.  This is also the theme of the monograph \cite{Kobayashi:Mano}, which develops harmonic analysis on a special family of Braverman--Kazhdan spaces in the archimedean case.  Thus, though our primary motivation for this work was to prove Theorem \ref{Thm:main:intro} below, we have taken the occasion to develop the theory of Fourier transforms for many Braverman--Kazhdan spaces to a point where one can use them in harmonic analysis (or analytic number theory).  The Schwartz space of the affine closure of a  Braverman--Kazhdan space with its associated Fourier transform is in a strict sense a generalization of the Fourier transform on the Schwartz space of a vector space.  Whenever one has employed Fourier transforms to answer questions on vector spaces, one can try to do the same for Braverman--Kazhdan spaces.

\subsection{The Fourier transform for triples of quadratic spaces} In \cite{Getz:Liu:Triple}, the Poisson summation conjecture was proved for the first time for a spherical variety that is not the affine closure of a Braverman--Kazhdan space.  In more detail, let $F$ be a number field, and let $(V_i,Q_i),$ $1 \leq i \leq 3$ be a triple of even dimensional vector spaces $V_i$ over $F$ equipped with nondegenerate quadratic forms $Q_i$.  Let $V:=\prod_{i=1}^3V_i$ and for $F$-algebras $R,$ let
\begin{align}
    Y(R):=\left\{(v_1,v_2,v_3) \in V(R): Q_1(v_1)=Q_2(v_2)=Q_3(v_3)\right\}.
\end{align}
Let $Y^{\mathrm{ani}} \subset Y$ be the open complement of the vanishing locus of $Q_i$ (which is independent of $i$).  The $\mathrm{ani}$ stands for anisotropic.  In \cite{Getz:Liu:Triple} a Poisson summation formula was proved for this scheme.  However it was phrased in terms of functions and a Fourier transform on an auxiliary space; the theory was not intrinsic to $Y.$  In \cite{Getz:Hsu} the first two authors defined the Schwartz space of $Y$ and proved the existence of a Fourier transform 
$$
\mathcal{F}_Y :\mathcal{S}(Y(\A_F)) \lto \mathcal{S}(Y(\A_F))
$$
such that the Poisson summation conjecture holds for suitable functions $f \in \mathcal{S}(Y(\A_F)).$  The Fourier transform $\mathcal{F}_Y$ is a restricted tensor product of 
local transforms
$$
\mathcal{F}_{Y_{F_v}}:\mathcal{S}(Y(F_v)) \lto \mathcal{S}(Y(F_v))
$$
for all places $v.$  Below we will abuse notation and write simply $\mathcal{F}_Y$ for these local transforms.  The proof of the existence of $\mathcal{F}_Y$ in \cite{Getz:Hsu} is indirect, and does not provide any formula for $\mathcal{F}_Y$. In this paper, we prove such a formula.  

Let $F$ be a local field and let 
$
\psi:F \to \cc^\times
$
be a nontrivial additive character.  Moreover, for $a=(a_1,a_2,a_3) \in (F^\times)^3$, let 
 \begin{align}
[a]:=a_1a_2a_3, \quad \mathfrak{r}(a)&:=\frac{(a_1a_2+a_2a_3+a_3a_1)^2}{[a]}.
 \end{align}
\begin{Thm} \label{Thm:main:intro} Let $F$ be a local field of characteristic $0$.  
 Suppose $d_i:=\dim V_i>2$ for all $1\le i\le 3$ and $Y^{\mathrm{sm}}(F) \neq \emptyset$.  There is a constant $c \in \cc^\times$ depending on $\psi,F$, and the $Q_i$ such that for all $f \in \mathcal{S}(V(F))$ and  $\xi \in Y^{\mathrm{ani}}(F),$
\begin{align*}
&\mathcal{F}_Y(f)(\xi)\\
&=c\int_{F^\times}\psi(z^{-1}) \Bigg( \int_{ (F^\times)^3} \overline{\psi}(z^2\mathfrak{r}(a))
 \Bigg(\int_{Y(F)}\psi\Bigg(\left\langle \frac{\xi}{a},y \right\rangle -\frac{Q(\xi)Q(y)}{9z^2[a]}\Bigg)f(y)d\mu(y)\Bigg) \frac{\chi_Q(a)d^\times a}{\{a\}^{d/2-1}}   \Bigg)d^\times z.
\end{align*}
\end{Thm}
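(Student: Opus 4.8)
\emph{Overall strategy.} The plan is to make explicit the indirect construction of $\mathcal F_Y$ from \cite{Getz:Liu:Triple,Getz:Hsu}, using as the key computational input the explicit Fourier-transform formulae on Braverman--Kazhdan spaces attached to maximal parabolics of split, simple, simply connected groups established in the preparatory sections, and then to identify the resulting integral kernel with the stated one via equivariance and a uniqueness principle. Recall from \cite{Getz:Hsu} that $\mathcal S(Y(F))$ is essentially the image of $\mathcal S(V(F))$ under restriction to $Y(F)$, so that for $f\in\mathcal S(V(F))$ the symbol $\mathcal F_Y(f)$ denotes $\mathcal F_Y$ applied to $f|_{Y(F)}$; the content of the theorem is thus that $\mathcal F_Y$, viewed as an operator on $\mathcal S(Y(F))$, is integration against the kernel $\mathcal K(\xi,y)$ obtained by deleting ``$\int_{Y(F)}(\cdots)f(y)\,d\mu(y)$'' from the displayed formula, where $d\mu$ is the natural measure on $Y(F)$. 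Recall also that $\mathcal F_Y$ is produced by lifting to an auxiliary space on which the Fourier transform is assembled from ordinary partial Fourier transforms and Weil-representation operators, and that it is characterized, up to a scalar, as the unique operator on $\mathcal S(Y(F))$ which extends the descended Borel-subgroup action --- via the normalized Weil representation attached to the common value of the $Q_i$ --- to an action of $\SL_2(F)$ in which $\mathcal F_Y$ plays the role of the long Weyl element, commutes with $\SO(V_1)\times\SO(V_2)\times\SO(V_3)$, and satisfies the prescribed growth along $\{Q=0\}$. (The hypothesis $Y^{\mathrm{sm}}(F)\neq\emptyset$ ensures these spaces are nonzero.) The theorem then reduces to showing that the proposed $\mathcal K(\xi,y)$ defines an operator with these properties, up to determining the scalar $c$.

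\emph{Producing the kernel.} To obtain the formula one unwinds the lift through restriction. From the explicit Braverman--Kazhdan Fourier kernels, applied to the three quadratic spaces $(V_i,Q_i)$, one reads off for each $i$: the bilinear term $\langle\xi_i/a_i,y_i\rangle$, after introducing the Mellin variable $a_i\in F^\times$ dual to the scaling $\Gm$-action on $V_i$ (the latter corresponding to the central torus of the relevant Levi); and the inverse Mellin transform of the local $\gamma$-factor of $(V_i,Q_i)$. Multiplying over $i=1,2,3$ assembles the pairing $\langle\xi/a,y\rangle$ and the normalizing density $\chi_Q(a)\,d^\times a/\{a\}^{d/2-1}$, with $\chi_Q$ built from the Weil-index characters of the $Q_i$, while the three Mellin variables become the integral $\int_{(F^\times)^3}d^\times a$. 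The remaining features --- the outer $\int_{F^\times}\psi(z^{-1})(\cdots)\,d^\times z$, the oscillatory factor $\overline\psi(z^2\mathfrak r(a))$, and the quadratic cross-term $-Q(\xi)Q(y)/(9z^2[a])$ in the inner kernel --- arise from the one operation with no analogue for a single $V_i$: gluing the three quadrics $\{Q_i(v_i)=t\}$ along their common value $t$, equivalently, transporting the combined $\gamma$-kernel through the descended diagonal Borel action on $\mathcal S(Y(F))$, in which the sum $Q_1+Q_2+Q_3=3Q$ enters --- the source of the constant $9=3^2$. Carrying this out via the Weil-representation cocycle produces the symmetric function $\mathfrak r(a)=(a_1a_2+a_2a_3+a_3a_1)^2/[a]$.

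\emph{Verification and the scalar.} One then checks that $\mathcal K(\xi,y)$ makes sense and yields the right operator. The hypothesis $d_i>2$ provides enough decay of $f$ transverse to the singular locus for the inner integral over $Y(F)$ --- a partial Fourier transform along $Y$ --- to converge for $\xi\in Y^{\mathrm{ani}}(F)$, and the $a$- and $z$-integrals are then conditionally convergent oscillatory integrals. One verifies that the resulting operator maps $\mathcal S(Y(F))$ into itself with the correct boundary behaviour, depends on $f$ only through $f|_{Y(F)}$, extends the descended Borel action to an $\SL_2(F)$-action as the long Weyl element, and commutes with $\prod_i\SO(V_i)$ --- exactly as $\mathcal F_Y$ does. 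By the uniqueness recalled above this forces equality with $\mathcal F_Y$ on $Y^{\mathrm{ani}}(F)$ for a single $c\in\cc^\times$, which is then pinned down by testing on one convenient $f$: a Gaussian when $F$ is archimedean, or the basic function when $F$ is nonarchimedean with $\psi$ and the $Q_i$ unramified, where the identity collapses to a standard unramified Fourier-transform computation.

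\emph{Main obstacle.} The delicate point is making the iterated integral rigorous, since it is not absolutely convergent. I expect to insert an auxiliary complex parameter into the density --- e.g.\ replacing $\{a\}^{-(d/2-1)}$ by $\{a\}^{-(d/2-1)-s}$, or twisting $\chi_Q$ --- establish meromorphic continuation of the $a$- and $z$-integrals, show independence of the regularization at the relevant point, and justify the interchange of the $Y(F)$-, $a$-, and $z$-integrations. This is tightly coupled to the analysis of the inner partial Fourier transform near $\{Q=0\}$, which is precisely where $d_i>2$ is needed and where the bookkeeping is heaviest. (The nonarchimedean unitarity, proved separately in the paper, then follows by pairing the explicit kernel against itself and reducing, via the $\SL_2$-equivariance, to unitarity of the Weil representation and of the Braverman--Kazhdan Fourier transforms.)
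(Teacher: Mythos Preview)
Your proposal rests on a uniqueness characterization of $\mathcal F_Y$ that is not available. You claim $\mathcal F_Y$ is, up to scalar, the unique operator extending a descended Borel action to an $\SL_2(F)$-action in which it plays the long Weyl element and commuting with $\prod_i\SO(V_i)$; but no such theorem is proved in \cite{Getz:Hsu} or elsewhere, and there is no evident reason these conditions should determine a unique operator on a space as large as $\mathcal S(Y(F))$. Moreover the relevant group is $\SL_2^3$, not a single $\SL_2$, and $\mathcal S(Y(F))$ is by definition the image of $I:\mathcal S(X(F)\times V(F))\to C^\infty(Y^{\mathrm{sm}}(F))$ built from the open $\SL_2^3$-orbit of $x_0\in X^\circ(F)$, not merely the image of restriction from $\mathcal S(V(F))$ (that image is the proper subspace $\mathcal S$). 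The actual definition of $\mathcal F_Y$ is the unique map making the square with $\mathcal F_X\otimes\mathrm{id}$ and $I$ commute, where $X=X_P$ for $G=\Sp_6$ and $P$ the Siegel parabolic; it is not an assembly of three separate Braverman--Kazhdan transforms on the individual $V_i$, so your account of where the kernel ingredients come from is also off.

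The paper's proof is a direct computation from that square. One approximates $f|_{Y^{\mathrm{sm}}}$ by $I(\varphi_n\otimes f)$ with $\varphi_n$ nascent delta functions at $x_0$, so $\mathcal F_Y(f)=\lim_n I(\mathcal F_X(\varphi_n)\otimes f)$; the Plancherel formula for $\mathcal F_X$ swaps this to a limit of $\int\varphi_n(x_0g)\,\mathcal F_X(f_{\xi,W})(-x_0g)\,d\dot g$ with $f_{\xi,W}(x_0g)=W(x_0g)\rho(g)f(\xi)$. The outer $\int_{F^\times}\psi(z^{-1})(\cdots)d^\times z$ is precisely $\mu_P^{\mathrm{aug}}=[2]_!(\mu_1)$ from the $\Sp_6$ case of Theorem~\ref{thm:intro:BK}; the $(F^\times)^3$-variables $a$ and the Weil-representation factors come from the Bruhat decomposition of $\SL_2^3$; and $\mathfrak r(a)$ is a $3\times3$ determinant computed from the Pl\"ucker pairing $\langle\mathrm{Pl}(x_0g),\mathrm{Pl}(-x_0)\rangle$, not a Weil cocycle. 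The analytic difficulty is not resolved by inserting a complex parameter $s$: one must justify bringing two limits (in $W$ and in $n$) inside the $z$-integral, and this is done by van der Corput bounds on the phase $z^2\mathfrak r(a)$ giving the uniform estimate of Proposition~\ref{prop:is:finite}, together with carefully designed truncations $W_{\mathcal B_1},W_{\mathcal B_2}$ and dominated convergence.
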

\noindent 
Here we use \cite[Lemma 5.3]{Getz:Hsu} to regard $f$ as an element of $\mathcal{S}(Y(F))$ by restriction.  Moreover
\begin{align*}
\frac{\xi}{a}:=\left(\frac{\xi_1}{a_1},\frac{\xi_2}{a_2},\frac{\xi_3}{a_3} \right), \quad Q:=Q_1+Q_2+Q_3, \quad  \frac{\chi_Q(a)d^\times a}{\{a\}^{d/2-1}}:=\prod_{i=1}^3\frac{\chi_{Q_i}(a_i) d^\times a_i}{|a_i|^{d_i/2-1}},
\end{align*}
where the quadratic character $\chi_{Q_i}$ attached to $Q_i$ is defined as in \eqref{chiQi}, $\langle\cdot,\cdot\rangle:=\sum_{i=1}^3 \langle\cdot,\cdot\rangle_i,$ where $\langle\cdot,\cdot\rangle_i$ is the pairing attached to $Q_i,$ 
and $d\mu(y)$ is the measure defined in \S \ref{sec:Y}.  Theorem \ref{Thm:main:intro} is restated and proved as Theorem \ref{Thm:main} below.

The formula in Theorem \ref{Thm:main:intro} will  be useful in applications of the Poisson summation formula on $Y.$  Moreover, it provides a precious example of a Fourier transform for a spherical variety that is not the affine closure of a Braverman--Kazhdan space.  Though intricate, we observe that the formula has a pleasing form.  Na\"ively, one might expect the Fourier transform to take the form
$$
\xi \longmapsto \int_{Y(F)}\psi\left(\langle \xi,y \rangle \right)f(y)d\mu(y),
$$
just like the traditional Fourier transform on a vector space. 
From a less na\"{i}ve perspective, since the Fourier transform is invariant under the product of the orthogonal groups attached to the $V_i$ \cite[Corollary 12.2]{Getz:Hsu}, one might expect an expression in terms of the invariant pairings $\langle \,,\, \rangle_i$ and the invariant polynomial $Q$.  This is indeed the shape of the formula.  It is instructive to compare this with the Fourier transform on the zero locus of a quadratic form given Corollary \ref{Cor:quad} below, generalizing earlier work in \cite{GurK,Kobayashi:Mano}.  

As a first application of Theorem \ref{Thm:main:intro}, we prove the following.
\begin{Thm} \label{thm:intro:FY:unit}
Let $F$ be a nonarchimedean local field of characteristic $0$.  Suppose $\dim V_i>2$ for all $1\le i\le 3$ and $Y^{\mathrm{sm}}(F) \neq \emptyset$. The operator $\mathcal{F}_Y$ extends to an isometry 
$$
\mathcal{F}_Y:L^2(Y(F)) \lto L^2(Y(F)).
$$
For $f_1,f_2 \in L^2(Y(F)),$ we have the Plancherel formula
$$
\int_{Y(F)}\mathcal{F}_Y(f_1)(y)f_2(y)d\mu(y)=\int_{Y(F)}f_1(y)\mathcal{F}_Y(f_2)(y) d\mu(y).
$$
\end{Thm}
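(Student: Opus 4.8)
The plan is to deduce the statement from the explicit formula of Theorem~\ref{Thm:main:intro} in two stages. First, that $\mathcal{F}_Y$ is \emph{formally self-dual} for the symmetric bilinear pairing $B(f,g):=\int_{Y(F)}fg\,d\mu$; this is a transparent consequence of the symmetry of the kernel exhibited in Theorem~\ref{Thm:main:intro}. Second, that $\mathcal{F}_Y$ is inverted by its $\overline{\psi}$-analogue and that complex conjugation on functions intertwines the two. Combining these forces $\mathcal{F}_Y$ to preserve the Hermitian form $\langle f,g\rangle=\int_{Y(F)}f\overline g\,d\mu$ on a dense subspace, after which one extends by continuity to all of $L^2(Y(F))$.

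\emph{Self-duality.} By Theorem~\ref{Thm:main:intro}, for $f\in\mathcal{S}(V(F))$ one has $\mathcal{F}_Y(f)(\xi)=\int_{Y(F)}K(\xi,y)f(y)\,d\mu(y)$ with
\begin{align*}
K(\xi,y)=c\int_{F^\times}\psi(z^{-1})\int_{(F^\times)^3}&\overline{\psi}(z^2\mathfrak{r}(a))\,\psi\!\left(\sum_{i=1}^{3}\frac{\langle\xi_i,y_i\rangle_i}{a_i}-\frac{Q(\xi)Q(y)}{9z^2[a]}\right)\\
&\times\frac{\chi_Q(a)\,d^\times a}{\{a\}^{d/2-1}}\,d^\times z .
\end{align*}
The variables $\xi,y$ occur only through the symmetric bilinear forms $\langle\cdot,\cdot\rangle_i$ and the symmetric product $Q(\xi)Q(y)$, so $K(\xi,y)=K(y,\xi)$; hence, at least for $f_1,f_2\in C_c^\infty(Y^{\mathrm{sm}}(F))\subseteq\mathcal{S}(V(F))|_Y$,
\begin{align*}
\int_{Y(F)}\mathcal{F}_Y(f_1)f_2\,d\mu=\int_{Y(F)}f_1\,\mathcal{F}_Y(f_2)\,d\mu .
\end{align*}
The subtlety is that the iterated integral defining $K$ is only conditionally convergent, so exchanging the $\xi$- and $y$-integrations must be justified using the regularization (a complex parameter inserted into $\{a\}^{d/2-1}$ and continued analytically) that already underlies the proof of Theorem~\ref{Thm:main:intro}; here the nonarchimedean hypothesis is convenient, since for compactly supported $f_1,f_2$ the $Y(F)$-integrations are finite sums and only the $(F^\times)^3$- and $F^\times$-integrations are genuinely oscillatory. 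This identity then propagates to $\mathcal{S}(Y(F))\times\mathcal{S}(Y(F))$ using the structure of $\mathcal{S}(Y(F))$ recorded in \cite{Getz:Hsu}.

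\emph{Inversion, conjugation, and passage to $L^2$.} Write $\mathcal{F}_Y^{\overline{\psi}}$ for the Fourier transform on $Y$ attached to $\overline{\psi}$. From \cite{Getz:Hsu} (and, alternatively, directly from Theorem~\ref{Thm:main:intro}) one has $\mathcal{F}_Y\circ\mathcal{F}_Y^{\overline{\psi}}=\mathrm{id}$ on $\mathcal{S}(Y(F))$; and conjugating the formula of Theorem~\ref{Thm:main:intro}---using that $\chi_{Q_i}$ is a quadratic, hence real and $\psi$-independent, character, and that the constant attached to $\overline{\psi}$ is $\overline c$---gives $\overline{\mathcal{F}_Y(f)}=\mathcal{F}_Y^{\overline{\psi}}(\overline f)$. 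Therefore, for $f\in\mathcal{S}(V(F))$,
\begin{align*}
\int_{Y(F)}|\mathcal{F}_Y(f)|^2\,d\mu=\int_{Y(F)}\mathcal{F}_Y(f)\,\mathcal{F}_Y^{\overline{\psi}}(\overline f)\,d\mu=\int_{Y(F)}f\,\mathcal{F}_Y\!\big(\mathcal{F}_Y^{\overline{\psi}}(\overline f)\big)\,d\mu=\int_{Y(F)}|f|^2\,d\mu ,
\end{align*}
where the middle equality is the self-duality of the previous step; the first equality uses $\overline{\mathcal{F}_Y(f)}=\mathcal{F}_Y^{\overline{\psi}}(\overline f)$, and the last uses $\mathcal{F}_Y\circ\mathcal{F}_Y^{\overline{\psi}}=\mathrm{id}$. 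In particular $\mathcal{F}_Y$ maps $\mathcal{S}(V(F))|_Y$, and a fortiori the dense subspace $C_c^\infty(Y^{\mathrm{sm}}(F))$, isometrically into $L^2(Y(F))$; since $Y(F)\setminus Y^{\mathrm{sm}}(F)$ is $\mu$-null, this subspace is dense in $L^2(Y(F))$, and $\mathcal{F}_Y$ extends to an isometry of $L^2(Y(F))$. The Plancherel formula, both sides of which are bounded by $\|f_1\|_{L^2}\|f_2\|_{L^2}$ by Cauchy--Schwarz and the isometry just proved, then follows from the self-duality identity on the dense subspace.

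I expect the principal difficulty to lie in the analytic bookkeeping of the oscillatory integrals: justifying the interchange of the regularized $(F^\times)^3$- and $F^\times$-integrations in the self-duality step, propagating that identity from compactly supported functions to all of $\mathcal{S}(Y(F))$, and---implicit in the displayed chain---extracting from Theorem~\ref{Thm:main:intro} enough decay of $\mathcal{F}_Y(f)$ to know a priori that the relevant integrals converge absolutely. The algebra combining self-duality, inversion and conjugation is, by contrast, essentially formal; and it is in these analytic estimates, together with the identity $\mathcal{S}(V(F))=C_c^\infty(V(F))$, that the nonarchimedean hypothesis genuinely enters.
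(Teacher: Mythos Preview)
Your overall architecture matches the paper's: prove the symmetric bilinear identity $\int \mathcal{F}_Y(f_1)f_2 = \int f_1\mathcal{F}_Y(f_2)$ on a dense subspace using the explicit kernel of Theorem~\ref{Thm:main:intro}, then combine with $\mathcal{F}_Y\circ\mathcal{F}_Y=\mathrm{id}$ and $\overline{\mathcal{F}_Y(f)}=\mathcal{F}_Y(\overline f)$ (both from \cite[Corollary~12.7]{Getz:Hsu}) to get the $L^2$-isometry. The formal algebra in your displayed chain is exactly what the paper does.

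There is, however, a genuine gap at the step you call ``propagation.'' The self-duality identity is only established for $f_1,f_2\in\mathcal{S}$ (restrictions of Schwartz functions on $V(F)$), because Theorem~\ref{Thm:main:intro} only gives the kernel formula for such inputs. But your chain
\[
\int \mathcal{F}_Y(f)\,\mathcal{F}_Y(\overline f)\,d\mu \;=\; \int f\,\mathcal{F}_Y\bigl(\mathcal{F}_Y(\overline f)\bigr)\,d\mu
\]
applies self-duality with second argument $\mathcal{F}_Y(\overline f)$, which lies in $\mathcal{S}(Y(F))$ but generally \emph{not} in $\mathcal{S}$. You assert this ``propagates to $\mathcal{S}(Y(F))\times\mathcal{S}(Y(F))$ using the structure of $\mathcal{S}(Y(F))$,'' but give no mechanism. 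The paper fills this gap with Lemma~\ref{lem:cont}: one approximates $\mathcal{F}_Y(f)$ by elements $f_i\in\mathcal{S}$ in a sense strong enough that both $\int f_i\,\mathcal{F}_Y(\overline f)\to\int\mathcal{F}_Y(f)\,\mathcal{F}_Y(\overline f)$ and $\int\mathcal{F}_Y(f_i)\,\overline f\to\int f\,\overline f$. This approximation is constructed by lifting to $\mathcal{S}(X(F))$, using $K$-finiteness and the $L^2$-isometry of $\mathcal{F}_X$, together with the pointwise bounds from \cite[\S11]{Getz:Hsu}. The paper explicitly notes that this lemma is \emph{the only place} the nonarchimedean hypothesis is used---so your diagnosis that nonarchimedeanness enters via ``$\mathcal{S}(V(F))=C_c^\infty(V(F))$'' and the oscillatory estimates is off target.

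A secondary point: your ``finite sums'' justification of Fubini for the self-duality on $C_c^\infty(Y^{\mathrm{ani}}(F))$ is too quick. Even in the nonarchimedean case, the inner integral $\int_{Y(F)}\psi(\langle\xi/a,y\rangle-\cdots)f_1(y)\,d\mu(y)$ is not a finite sum of point evaluations independent of $(a,z)$: the phase is highly oscillatory in $y$ for $|a|$ small or $|z|$ small, so the integrals over compact opens remain genuine functions of $(a,z)$. The paper instead invokes the quantitative bound of Proposition~\ref{prop:is:finite} together with the integrability Lemma~\ref{lem:L1Y} to apply Fubini--Tonelli honestly; your proposed complex-parameter regularization in $\{a\}^{d/2-1}$ is a different route that the paper does not take and that you do not carry out.
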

\noindent This theorem puts Fourier analysis on $L^2(Y(F))$ on sound footing.  Theorem \ref{thm:intro:FY:unit} is proven as Theorem \ref{thm:FYuni} below. 

\begin{Rem}
The assumption $F$ has characteristic zero is only used to prove the geometric integrality statement in Lemma \ref{lem:finish}. 
Otherwise, the proofs of these two theorems work more generally provided the characteristic is large enough to apply an analogue of Proposition \ref{prop:vandercorputnonarch}.
\end{Rem}

\subsection{The Fourier transform on the affine closures of Braverman--Kazhdan spaces} 
Suppose now $F$ is any local field. For a reductive group $G$ with parabolic subgroup $P,$ let $X_P^{\circ}:=P^{\mathrm{der}} \backslash G$ and let $X_P$ be its affine closure. The space $X_P^\circ$ is known as a Braverman--Kazhdan space.  We prove Theorem \ref{Thm:main:intro} using an explicit formula for the Fourier transform on a certain Braverman--Kazhdan space.  Since the method is general and of independent interest, we prove the formula for split, simple, simply connected groups $G$ and maximal parabolic subgroups $P.$

 Let $M$ be a Levi subgroup of $P$ and let $P^{\mathrm{op}}$ be the unique parabolic subgroup of $G$ with $P \cap P^{\mathrm{op}}=M$.  We define $\mathcal{S}(X_P(F))$ and $\mathcal{S}(X_{P^{\mathrm{op}}}(F))$ in 
\S \ref{ssec:the:Schwartz:space} following previous work in \cite{Getz:Liu:BK,Getz:Hsu} which in turn refines the definition in \cite{BKnormalized}.  We then prove the existence of a Fourier transform
\begin{align} \label{FT0}
    \mathcal{F}_{P|P^{\mathrm{op}}}:\mathcal{S}(X_P(F)) \lto \mathcal{S}(X_{P^{\mathrm{op}}}(F))
\end{align}
(see Theorem \ref{thm:FT}).
This transform is unitary, and induces the same transform that Braverman and Kazhdan defined at the level of $L^2$-functions in \cite{BKnormalized} (see \S \ref{ssec:FT}).  We point out that the construction of the refined Schwartz space $\mathcal{S}(X_P(F))$ and the proof that it is preserved by the Fourier transform is not contained in \cite{BKnormalized}. We explain the relationship between Braverman and Kazhdan's definition of the Schwartz space and ours in \S \ref{ssec:def:rel}.

We observe that $X_P^{\circ}$ and $X_{P^{\mathrm{op}}}^{\circ}$ admit a natural action of $M^{\mathrm{ab}}$ by left multiplication.  Thus, at least formally, it makes sense to integrate functions in $\mathcal{S}(X_{P^{\mathrm{op}}}(F))$ against functions in $M^{\mathrm{ab}}(F)$.  Following Braverman and Kazhdan, we use this to define an operator $\mu_{P}^{\mathrm{aug}}$ on a certain subspace of $C^\infty(X_{P^{\mathrm{op}}}(F))$ (see \eqref{eqn: eta aug}).  It is essentially a sequence of weighted Fourier transforms along the $M^{\mathrm{ab}}(F)$-action.  

\begin{Thm}\label{thm:intro:BK}
We have
$
\calf_{P|P^{\mathrm{op}}}=\mu_P^{\mathrm{aug}} \circ \mathcal{F}_{P|P^{\mathrm{op}}}^{\mathrm{geo}},
$
where
$$
\calf_{P|P^{\mathrm{op}}}^{\mathrm{geo}}(f)(x^\ast)=  \int_{X_P^\circ(F)} f(x)\psi\left(\la x,  x^\ast\ra_{P|P^\mathrm{op}}\right) dx
$$
for $f\in  \mathcal{S}(X_P(F))$ and $x^\ast\in X^\circ_{P^{\mathrm{op}}}(F)$.
Here $\la \cdot,\cdot\ra_{P|P^{\mathrm{op}}}$ is the canonical pairing between $X_P^\circ(F)$ and $X_{P^{\mathrm{op}}}^\circ(F)$ of \eqref{PPop:pairing}, and $dx$ is an appropriately normalized right $G(F)$-invariant Radon measure. 
\end{Thm}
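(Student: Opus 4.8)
The plan is to pass to Mellin components along the $M^{\mathrm{ab}}(F)$-action and to compare the two operators component by component. Since $G$ is simple and $P$ maximal, $M^{\mathrm{ab}}\cong\Gm$, and $X_P^\circ=P^{\mathrm{der}}\backslash G$ is an $F^\times$-bundle over the flag variety $(P\backslash G)(F)$ with the $M^{\mathrm{ab}}(F)\cong F^\times$-action on the left commuting with the $G(F)$-action on the right; similarly for $X_{P^{\mathrm{op}}}^\circ$. Mellin transform in the $F^\times$-variable realizes $\mathcal{S}(X_P(F))$ as a space of suitable families, in the character $\chi$ of $F^\times$, of vectors in the degenerate principal series $I_P(\chi)$, and likewise $\mathcal{S}(X_{P^{\mathrm{op}}}(F))$ as families of vectors in $I_{P^{\mathrm{op}}}(\chi)$. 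By Theorem \ref{thm:FT} and \S\ref{ssec:FT}, $\calf_{P|P^{\mathrm{op}}}$ is $G(F)$-equivariant and acts on the $\chi$-component by $\gamma(\chi)\,M(\chi)$, where $M(\chi)\colon I_P(\chi)\to I_{P^{\mathrm{op}}}(\chi)$ is the standard intertwining integral over $\overline N(F)$ and $\gamma(\chi)$ is the normalizing $\gamma$-factor, which for maximal $P$ is a finite product of Tate $\gamma$-factors indexed by the graded pieces of $\overline{\mathfrak n}$ under $M^{\mathrm{ab}}$. Since Mellin transform is injective on $\mathcal{S}(X_P(F))$, it suffices to show that $\mu_P^{\mathrm{aug}}\circ\calf_{P|P^{\mathrm{op}}}^{\mathrm{geo}}$ is defined on all of $\mathcal{S}(X_P(F))$ and acts on the $\chi$-component by the same operator $\gamma(\chi)\,M(\chi)$; compatibility with $G(F)$ is then automatic and, for generic $\chi$, where $\mathrm{Hom}_{G(F)}(I_P(\chi),I_{P^{\mathrm{op}}}(\chi))$ is one-dimensional, this is a single scalar identity.

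The heart of the argument is the analysis of $\calf_{P|P^{\mathrm{op}}}^{\mathrm{geo}}$. Because elements of $\mathcal{S}(X_P(F))$ need not decay at the boundary of $X_P$, the integral $\int_{X_P^\circ(F)}f(x)\psi(\langle x,x^\ast\rangle_{P|P^{\mathrm{op}}})\,dx$ must first be given meaning: using the asymptotic description of $\mathcal{S}(X_P(F))$ from \S\ref{ssec:the:Schwartz:space} (near the boundary divisors an element is a character in the $M^{\mathrm{ab}}$-direction times Schwartz, resp.\ locally constant, data), one regularizes it by analytic continuation in the Mellin parameter exactly as in Tate's local theory; this also shows that $\calf_{P|P^{\mathrm{op}}}^{\mathrm{geo}}(f)$ is smooth on $X_{P^{\mathrm{op}}}^\circ(F)$ and lies in the domain of $\mu_P^{\mathrm{aug}}$. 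In the open Bruhat cell, where $X_P^\circ(F)$ has coordinates $(t,\bar n)\in F^\times\times\overline N(F)$ and $dx$ is $d^\times t\,d\bar n$ up to a modulus factor, formula \eqref{PPop:pairing} expresses $\langle x,x^\ast\rangle_{P|P^{\mathrm{op}}}$ as $t$ times a functional of $\bar n$ depending on $x^\ast$; carrying out the $\overline N(F)$-integration produces the intertwining integral $M(\chi)$ applied to the $\chi$-Mellin component of $f$, while the residual integral over $t\in F^\times$ is a single Tate zeta integral contributing precisely one of the Tate $\gamma$-factors $\gamma_0(\chi)$ occurring in $\gamma(\chi)$ — the one attached to the lowest graded piece of $\overline{\mathfrak n}$. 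Equivariance under $G(F)$ follows from the $G(F)$-invariance of the pairing in \eqref{PPop:pairing} and of $dx$. Hence $\calf_{P|P^{\mathrm{op}}}^{\mathrm{geo}}$ acts on the $\chi$-component by $\gamma_0(\chi)\,M(\chi)$.

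It remains to compute the Mellin components of $\mu_P^{\mathrm{aug}}$. By its definition \eqref{eqn: eta aug}, $\mu_P^{\mathrm{aug}}$ is a finite sequence of weighted Fourier transforms along the free $M^{\mathrm{ab}}(F)\cong F^\times$-action on $X_{P^{\mathrm{op}}}^\circ$; each commutes with the $G(F)$-action and, on the $\chi$-Mellin component, acts by multiplication by a Tate $\gamma$-factor, the chosen weights arranging that these exhaust exactly the factors of $\gamma(\chi)$ other than $\gamma_0(\chi)$. Multiplying through, $\mu_P^{\mathrm{aug}}\circ\calf_{P|P^{\mathrm{op}}}^{\mathrm{geo}}$ acts on the $\chi$-component by $\gamma(\chi)\,M(\chi)$, which is the Mellin component of $\calf_{P|P^{\mathrm{op}}}$; boundedness of each constituent Fourier transform makes the composition defined on all of $\mathcal{S}(X_P(F))$, and the resulting identity with $\calf_{P|P^{\mathrm{op}}}$ shows in particular that it lands in $\mathcal{S}(X_{P^{\mathrm{op}}}(F))$. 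The main obstacle is the middle step: rigorously regularizing the divergent integral defining $\calf_{P|P^{\mathrm{op}}}^{\mathrm{geo}}$, justifying the interchange of the $\overline N(F)$- and $F^\times$-integrations (each of which itself requires meromorphic continuation), and identifying the resulting factorization ``intertwining integral $\times$ Tate integral'' uniformly in the Mellin parameter. This rests on a precise description of the asymptotics of elements of $\mathcal{S}(X_P(F))$ along the boundary divisors of $X_P$ and of the convergence of the intertwining integral, which is where essentially all of the analytic work is concentrated.
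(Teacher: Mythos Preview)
Your outline is in the right spirit---factor $\calf_{P|P^{\mathrm{op}}}=\mu_P\circ\mathcal{R}_{P|P^{\mathrm{op}}}$, split $\mu_P=\mu_P^{\mathrm{aug}}\circ\mu_P^{\mathrm{geo}}$, and identify $\mu_P^{\mathrm{geo}}\circ\mathcal{R}_{P|P^{\mathrm{op}}}$ with the integral over $X_P^\circ(F)$ against $\psi(\langle x,x^\ast\rangle_{P|P^{\mathrm{op}}})$---but you misidentify the genuine obstacle and miss the two algebraic facts on which the argument actually turns.

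First, the integral $\int_{X_P^\circ(F)}f(x)\psi(\langle x,x^\ast\rangle_{P|P^{\mathrm{op}}})\,dx$ requires \emph{no} regularization: by Corollary~\ref{cor:L2} one has $\mathcal{S}(X_P(F))\subset L^1(X_P(F))$, so the integral is absolutely convergent (see Remark~\ref{rem:abs:conv}). The ``main obstacle'' you describe---regularizing a divergent integral and justifying interchange of $\overline N(F)$- and $F^\times$-integrations by analytic continuation---is therefore not present.

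Second, and more seriously, your assertion that the $F^\times$-integral produces ``a single Tate zeta integral contributing precisely one of the Tate $\gamma$-factors $\gamma_0(\chi)$\dots attached to the lowest graded piece of $\overline{\mathfrak n}$'' is both unproved and inaccurately described. The operator $\mu_P^{\mathrm{geo}}=[1]_!(\mu_{s_k})$ is attached to the \emph{top} datum $(s_k,\lambda_k)$ in a good ordering, not the lowest graded piece, and the fact that the weight $\lambda_k$ equals $1$ is a nontrivial case-by-case result (Proposition~\ref{Prop: first term}, proved via the tables in \S\ref{App:normalizing}); without $\lambda_k=1$ the $t$-integral would not collapse to a single unweighted Tate integral. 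Equally essential is Proposition~\ref{Prop: root explanation}, $\delta_P=|\omega_P|^{2s_k+2}$: this is exactly what makes the modulus factor $|\omega_P(m)|^{s_k+1}\delta_{P^{\mathrm{op}}}^{1/2}(m)$ in $\mu_P^{\mathrm{geo}}$ cancel against the Jacobian of the parametrization $(m,u)\mapsto m^{-1}ug$ of $X_P^\circ(F)$, leaving the bare $\psi(\omega_P(m))$ which one then identifies with $\psi(\langle x,x^\ast\rangle_{P|P^{\mathrm{op}}})$ via the Pl\"ucker pairing. Your proposal neither states nor uses these two propositions, so the key step---why the geometric formula holds---is simply asserted rather than argued.

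The paper's proof is in fact a direct computation rather than a Mellin-component comparison: one writes out $\mu_P^{\mathrm{geo}}\circ\mathcal{R}_{P|P^{\mathrm{op}}}$ explicitly, applies Propositions~\ref{Prop: first term} and~\ref{Prop: root explanation} to simplify, changes variables to an integral over $X_P^\circ(F)$, and recognizes the phase as the Pl\"ucker pairing. Your Mellin-component framework could be made to work, but to compute the Mellin transform of $\calf_{P|P^{\mathrm{op}}}^{\mathrm{geo}}(f)$ you would end up doing exactly this direct computation anyway.
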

\noindent We use the superscript `$\mathrm{geo}$' to indicate that the geometric part of the Fourier transform is what one might expect of a Fourier transform from na\"ive geometric considerations, and the superscript `$\mathrm{aug}$' to denote the ``augmentation'' of the  normalization that is necessary to obtain the true Fourier transform (for example, to ensure the resulting operator is unitary).
\begin{Rem} Apart from trivial cases where $X_P$ is a vector space, our formula was only known when $G$ is a special orthogonal group on an even dimensional quadratic space and $P$ is the stabilizer of an isotropic line  \cite{Kobayashi:Mano,GurK}.
 In this case, $X_P$ is the zero locus of the quadratic form. The proofs in these two references rely on the interpretation of $L^2(X_P(F))$ as the minimal representation of a larger orthogonal group.  This additional structure on $L^2(X_P(F))$ does not exist in general, so our proof of Theorem \ref{thm:intro:BK} is not a generalization of these proofs.
 \end{Rem}

As mentioned above, to prove Theorem \ref{thm:intro:BK} we extend the refined definition of the Schwartz space given in \cite{Getz:Liu:BK} in the special case where $G=\mathrm{Sp}_{2n}$ and $P$ is the Siegel parabolic to the general case of maximal parabolic subgroups in split, simple, simply connected reductive groups. In the nonarchimedean setting, when $P$ is the Siegel parabolic subgroup of $G=\Sp_{4n},$ the Schwartz space is also investigated in work of Jiang, Luo, and Zhang \cite{Jiang:Luo:Zhang}, although their approach to Schwartz spaces is closer to \cite{BKnormalized} and they do not obtain Theorem \ref{thm:intro:BK} in their setting.  In loc.~cit.~the authors emphasize \cite[Theorem 5.5]{Jiang:Luo:Zhang} as a key technical result.  We obtain the analogous result in general (i.e.~for all maximal parabolic subgroups of simple simply connected groups) in Theorem \ref{thm:FT}.  Our proof is not a generalization of the proof of \cite[Theorem 5.5]{Jiang:Luo:Zhang}.  See  Remark \ref{JLZ:rem} below.
The references \cite{Shahidi:Generalized} and \cite{Li:Schwartz} contain useful information on the Braverman--Kazhdan program, though neither address the analytic issues that must be overcome to prove Theorem \ref{thm:intro:BK}. 

\subsection{Some reductive monoids}
Due to its connection with Langlands functoriality as outlined in \cite{BK-lifting,NgoSums,Ngo:Hankel}, finding explicit formulae for Fourier transforms on reductive monoids has become a focus of research.  It is well-known that Braverman--Kazhdan spaces built using the doubling method construction give rise to reductive monoids \cite{BKnormalized,Li:Schwartz,Shahidi:Generalized,Jiang:Luo:Zhang}.  Theorem \ref{thm:intro:BK} gives an explicit formulae for the 
Fourier transform in these cases.  We  point out three additional reductive monoids to which the results in this paper apply.  Let $\mathfrak{gl}_2$ be the scheme of $2 \times 2$ matrices. For $F$-algebras $R$ set
\begin{align*}
&X_1(R):=\{(A,a) \in \mathfrak{gl}_2(R) \times R: \det A=a^2\},\\
&X_2(R):=\{(A,B) \in \mathfrak{gl}_2^2(R): \det A=\det B\},\\
&X_3(R):=\{(A,B,C) \in \mathfrak{gl}_2^3(R): \det A=\det B=\det C\}.
\end{align*}
Then $X_1$ and $X_2$ are the affine closures of Braverman--Kazhdan spaces for appropriate special orthogonal groups, hence also the affine closures of Braverman--Kazhdan spaces associated to spin groups, and $X_3$ is a special case of the scheme $Y$ above.  We point out that $X_2$ was treated using the circle method in \cite{Getz:RS}, and the Fourier transform in this case is a special case of the Fourier transform computed in \cite{Kobayashi:Mano,GurK}.  The functional equations of the  Langlands $L$-functions giving rise to these reductive monoids are already known. However, even in the relatively simple case of  $\mathcal{S}(X_1(F))$ and $\mathcal{S}(X_3(F)),$ the formula for the Fourier transform given by Theorem \ref{thm:intro:BK} and Theorem \ref{Thm:main:intro} (respectively) is new.

\subsection{Outline of the paper}  In \S \ref{sec:prelim} we state conventions regarding Schwartz spaces, quasi-characters, measures and estimates.    We recall some basic facts   on Braverman--Kazhdan spaces in \S \ref{sec:BK}.  
The definition of the Fourier transform on the Schwartz space of the affine closure of a Braverman--Kazhdan space relies on operators that correspond, under the Mellin transform, to multiplication by $\gamma$-factors.  Only the nonarchimedean case appears in the literature.  Even in this case the domain and range of these operators is never elucidated. This makes it problematic to define the composition of the operator on an explicit space of functions.  We develop a new approach to these operators that works uniformly in the archimedean and nonarchimedean cases in  \S \ref{Section: twisting}. The new approach allows us to explicitly control the domain and range of the operators and to compose them. We expect these ideas will have applications to Fourier transforms beyond those constructed by Braverman and Kazhdan.

In \S \ref{sec:Schwartz:BK} we give a refined definition of the Schwartz space of the affine closure of a Braverman--Kazhdan space  whenever $P$ is a maximal parabolic subgroup of a split, simple, and simply connected $G,$ and prove that the Fourier transform preserves this space.  In the special case where $P$ is the Siegel parabolic of $G=\mathrm{Sp}_{2n},$ this definition is contained in  \cite{Getz:Liu:BK}.
This refinement goes beyond the work in \cite{BKnormalized}, in which 
the Fourier transform is only 
defined via a transform defined on an inexplicit dense subspace of a Hilbert space and then extended by continuity.

In \S \ref{sec:FT:XP} we prove Theorem \ref{thm:intro:BK}, restated as Theorem \ref{Thm: Fourier formula} below.  The proof of Theorem \ref{thm:intro:BK} requires computations of various normalizing factors which are given in \S \ref{App:normalizing}.  These computations also allows us to give an explicit description of $\mu_{P}^{\mathrm{aug}}$.  This is particularly important for readers without extensive background in representation theory who may want to apply our formula.

At this point we begin to shift our attention to the space $Y.$  In the preparatory \S \ref{sec:reg}, we introduce various regularized integrals we will require.  We recall the definition of the Schwartz space of $Y$ in \S \ref{sec:Y} and the indirect characterization of the Fourier transform $\mathcal{F}_Y$ proved in \cite{Getz:Hsu}.  We then prove Theorem \ref{Thm:main:intro}, restated as Theorem \ref{Thm:main}, in \S \ref{sec:FY}.  The proof is satisfying in that we make crucial use of standard tools of Fourier analysis  including the Plancherel formula.  We point out however, that in most cases adapting these tools to our setting is nontrivial.  
The unitarity of $\mathcal{F}_Y$ is proven in \S \ref{sec:unit} (see Theorem \ref{thm:FYuni}).  The heuristic arguments for Theorem \ref{Thm:main} and Theorem \ref{thm:FYuni} are fairly short, but making them rigorous requires careful analysis.  Certain technical estimates are relegated to \S \ref{sec:absolutebd}.    To aid the reader we have appended an index of notation.

\subsection{Acknowledgments} The authors thank the anonymous referee for a remarkably thorough review of the paper, corrections, and for comments that improved the exposition.  They also thank Ary Shaviv for answering questions on Schwartz spaces.  The first author thanks Y.~Choie, D.~Kazhdan and F.~Shahidi for many useful conversations.  Part of this paper was written while the first author was on sabbatical at the Postech Mathematics Institute. He thanks the center, the Postech Math Department and Y.~Choie for their hospitality and excellent working conditions.
He finally thanks H.~Hahn for her constant encouragement and help with editing and the structure of the paper. The second author would like to thank MSRI, P.~Habegger and H.~Pasten for organizing the summer school on Sparsity of Algebraic Points in 2021, during which he learned $o$-minimal geometry.

\section{Preliminaries}
\label{sec:prelim}
\subsection{Schwartz spaces}
\label{ssec:gen:Sch}
In this paper, we will work with various types of Schwartz spaces for quasi-affine schemes over a local field $F.$  If $X$ is a smooth quasi-affine scheme over $F,$ we let
$$
\mathcal{S}(X(F)):=C_c^\infty(X(F))
$$
when $F$ is nonarchimedean.  When $F$ is archimedean, we define $\mathcal{S}(X(F))=\mathcal{S}(\mathrm{Res}_{F/\rr}X(\rr))$ as in \cite[Remark 3.2]{Elazar:Shaviv} (this is based on previous work in \cite{AG:Nash}).  Briefly, one chooses an embedding $\mathrm{Res}_{F/\rr}X(\rr) \to \rr^n$ in the category of real algebraic varieties with closed image  and then defines $\mathcal{S}(X(F))=\mathcal{S}(\rr^n)/I,$ where $I \leq \mathcal{S}(\rr^n)$ is the (closed) ideal of functions that vanish identically on $X(F).$ The embedding $\mathrm{Res}_{F/\rr}X(\rr) \to \rr^n$ always exists in the real algebraic category, even if $X$ is merely quasi-affine (see \cite[\S 2.1]{Elazar:Shaviv} for references).
This recovers the usual definition when $X(F) \cong F^d$ for some $d$.  One endows  $\mathcal{S}(X(F))$ with the quotient topology, which is  Fr\'echet and nuclear. The space $\mathcal{S}(X(F))$ and its topology are independent of the choice of embedding \cite[Lemma 3.6(i)]{Elazar:Shaviv}.
  It is known  \cite[Theorem 3.9]{Elazar:Shaviv} that if $X_2$ is a smooth quasi-affine scheme and $X_1 \subset X_2$ is a closed subscheme, then restriction of functions induces a surjection
$$
\mathcal{S}(X_2(F))  \relbar\joinrel\twoheadrightarrow  \mathcal{S}(X_1(F)).
$$

We will define Schwartz spaces $\mathcal{S}(X(F))$ for several singular affine schemes $X.$  They will always be spaces of functions on $X^{\mathrm{sm}}(F),$ where $X^{\mathrm{sm}} \subset X$ is the smooth locus.  If $F$ is archimedean, the space $\mathcal{S}(X(F))$ will always be a Fr\'echet space.  Unfortunately, we will not always know whether it is nuclear.  

Let $X$ and $Y$ be quasi-affine schemes.  Assuming that Schwartz spaces $\mathcal{S}(X(F))$ and $\mathcal{S}(Y(F))$ have been defined, we define
$$
\mathcal{S}(X(F) \times Y(F)):=\mathcal{S}(X(F)) \otimes \mathcal{S}(Y(F))
$$
in the nonarchimedean case (algebraic tensor product).  In the archimedean case, we define $\mathcal{S}(X(F) \times Y(F))$ to be the completed projective tensor product of $\mathcal{S}(X(F))$ and $\mathcal{S}(Y(F))$.  Unfortunately, we do not know if this product is independent of the choice of realization of $X \times Y$ as a product in general.  Therefore this realization will be part of the data.  We observe that if $X$ and $Y$ are smooth then this definition agrees with the previous definition.  This follows from \cite[Corollary 2.6.3]{AG:deR} and the fact that the Schwartz space of a real algebraic variety and the Schwartz space of its underlying Nash manifold are naturally isomorphic \cite[\S 2.2]{Elazar:Shaviv}.

\subsection{Quasi-characters and the norm}  Let $F$ be a local field.  We 
denote by $|\cdot|$ the number theorist's norm on $F.$  Thus  $|\cdot|$ is the usual Euclidean norm if $F = \rr$, $|z|=z\bar{z}$ if $F=\cc,$ and if $F$ is nonarchimedean with ring of integers $\mathcal{O}_F$ and uniformizer $\varpi$ then $|\varpi^{-1}|$ is the cardinality $q$ of the residue field $\mathcal{O}_F/\varpi.$
For local fields $F$ and quasi-characters $\chi:F^\times \to \cc^\times,$ we let $\mathrm{Re}(\chi) \in \rr$ be the unique real number such that $\chi|\cdot|_F^{-\mathrm{Re}(\chi)}$ is a character (i.e.~is unitary). Consider a function $f$ of quasi-characters.  We say that it is holomorphic (resp.~meromorphic) if for all characters $\chi,$ the function $f(\chi|\cdot|^s)$ is holomorphic (resp.~meromorphic) in $s\in \cc$. 

We also denote the usual norm on $\cc$ by $|\cdot|.$  This creates the possibility of confusion when we have chosen an identification $F=\cc.$  When $F$ is denoted by $\cc,$ we use the standard norm, and when $F$ is denoted simply $F$, we use the number-theorist's norm. Thus, for example, if $X$ is a set and $f:X \to \cc$ is a function,  then $|f(x)|=(f(x)\overline{f(x)})^{1/2}$ for $x \in X.$   This is a standard convention adopted to lighten notation.

\subsection{Measures} \label{ssec:measures}
For local fields $F,$ if $dx$ denotes a Haar measure on $F,$ then $d^\times x:=\frac{\zeta(1)dx}{|x|}$ where $\zeta$ is the usual local zeta function.  We often regard $dx$ as a measure on the open dense subset  $F^\times \subset F$.
We fix once and for all a nontrivial additive character $\psi:F \to \cc^\times$.  The measure $dx$ will always  be normalized so that it is self-dual with respect to the Fourier transform on $\mathcal{S}(F)$ defined by $\psi$.

\subsection{Asymptotic notation} \label{ssec:asymp}
Let $g_1:X \to \rr_{\ge 0}$ and $g_2:X \to \rr_{\ge 0}$ be functions defined on a set $X.$  We write 
\begin{align} \label{impl:constant}
g_1(x) \ll_{?} g_2(x), \quad g_1(x)=O_{?}(g_2(x))
\end{align}
if there is a constant $C_{?}>0$ depending on the set $?$ such that $g_1(x)<C_{?}g_2(x)$ for all $x \in X.$  We drop set symbols when denoting the set, e.g.~we write $C_{a,b}$ instead of $C_{\{a,b\}}.$  
We will also say $g_2$ \textbf{dominates} $g_1$ in order to avoid repeating the phrase ``is bounded by a constant times.'' If $F$ is archimedean and $?$ contains an element of $\mathcal{S}(V(F))$ (or another topological vector space of functions) \eqref{impl:constant} will in addition mean that the implied constant can be chosen continuously as a function of $f$ when the other elements in $?$ are fixed.

\section{Braverman--Kazhdan spaces}
\label{sec:BK}

\subsection{Braverman--Kazhdan spaces} \label{ssec:BKspace}  
Let $G$ be a split connected simple reductive group over a field $F$ and let $P$ be a maximal parabolic subgroup with Levi decomposition $P=MN_P$.  By simple, we mean that $\fg:=\mathrm{Lie} (G)$ is simple.  
Set 
\[
X_P^{\circ}:=P^{\mathrm{der}}\backslash G. 
\]
We refer to $X_P^\circ$ as a \textbf{Braverman--Kazhdan space}; it is also known as a \textbf{pre-flag variety} since it is a $\mathbb{G}_m$-torsor over the generalized flag variety $P \backslash G$.  
This is a right $M^{\mathrm{ab}}\times G$-space, where the action is given on points in an $F$-algebra $R$ by 
\begin{align} \label{geo:act} \begin{split}
X_P^\circ(R) \times M^{\mathrm{ab}}(R) \times G(R) &\lto X_P^\circ(R)\\
    (x,m,g) &\longmapsto m^{-1}xg. \end{split}
\end{align}
We point out that Braverman and Kazhdan work with $G/P^{\mathrm{der}}$ instead.

\subsection{Pl\"{u}cker embeddings } \label{ssec:Pl}

Fix a maximal split torus $T\leq M,$ a Borel subgroup $T\le B\le P,$ and let $\Delta=\Delta_G$ be the corresponding set of simple roots.  Then $B \cap M$ is a Borel subgroup of $M.$
Suppose that $\be\in \De$ is the simple root of $(G,T)$ associated to $P$; that is, we have that $\De_M= \De-\{\be\}$ is the set of simple roots for the based root system of $(M,M\cap B)$. 
Let $\omega_\beta \in X^*(T)_{\qq}:=X^*(T) \otimes_{\mathbb{Z}}\qq$ be the fundamental weight of $T$ determined by the relation
\begin{align*}
\la\omega_\beta,\al^\vee\ra=\de_{\al,\be} \textrm{ for all } \alpha\in \Delta,
\end{align*}
where $\delta_{\alpha,\beta}$ is the Kronecker $\delta$.  It is not necessarily true that $\omega_{\beta} \in X^*(T)$.  We let $m_\beta$ be the least positive rational number such that $m_\beta \omega_\beta \in X^*(T)$ and define
\begin{align} \label{omegaP}
\omega_P:=m_\beta \omega_{\beta}.
\end{align}
 In particular $\omega_P$ is denoted $\omega$ in \cite{Getz:Liu:BK}. We claim that $m_\beta\in \zz$. To see this, note that if  $\Lambda$ is the lattice in $X^*(T)_{\qq}$ spanned by the fundamental weights, one has
 \[
 \lam\in \Lambda \iff \la \lam,\al^\vee\ra\in \zz
 \]
 for all simple roots $\al\in \De.$ Since $X^*(T) \leq \Lambda$
the claim now follows by pairing $\omega_P$ with $\beta^\vee.$

We leave the proof of the following lemma to the reader:
 
\begin{Lem} \label{lem:tori}
If $T$ is a maximal torus of a (connected) reductive  $F$-group $H,$ then $T \cap H^{\mathrm{der}}$ is a maximal torus of $H^{\mathrm{der}}$.  If $T$ is split, then so are $T \cap H^{\mathrm{der}}$ and $T/T \cap H^{\mathrm{der}}.$ \qed
\end{Lem}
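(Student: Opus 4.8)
The plan is to establish the three assertions — that $T \cap H^{\mathrm{der}}$ is a maximal torus of $H^{\mathrm{der}}$, that it is split when $T$ is split, and that the quotient $T/T\cap H^{\mathrm{der}}$ is split when $T$ is split — essentially in that order, using standard structure theory of reductive groups.

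First I would recall that for a connected reductive group $H$ over $F$ with maximal torus $T$, the derived group $H^{\mathrm{der}}$ is semisimple, and $H = Z(H)^\circ \cdot H^{\mathrm{der}}$ with $Z(H)^\circ \cap H^{\mathrm{der}}$ finite; moreover $T' := (T \cap H^{\mathrm{der}})^\circ$ is a maximal torus of $H^{\mathrm{der}}$ (it is a torus contained in $H^{\mathrm{der}}$, and a dimension count via $\dim T = \dim Z(H) + \dim T'$ together with the root-space decomposition shows $T'$ contains a maximal torus of $H^{\mathrm{der}}$, hence equals one). To upgrade $T' = (T\cap H^{\mathrm{der}})^\circ$ to the full scheme-theoretic intersection $T \cap H^{\mathrm{der}}$, I would argue that $T \cap H^{\mathrm{der}}$ is a diagonalizable group (closed subgroup of the torus $T$) whose identity component $T'$ is a maximal torus of $H^{\mathrm{der}}$; since $T'$ is already a maximal torus of $H^{\mathrm{der}}$ and $T\cap H^{\mathrm{der}} \subseteq C_{H^{\mathrm{der}}}(T') = T'$ (a maximal torus in a connected reductive group is its own centralizer, and $T\cap H^{\mathrm{der}}$ centralizes $T'$ as both lie in the commutative group $T$), we get $T\cap H^{\mathrm{der}} = T'$, which is connected and hence a maximal torus. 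Over a non-algebraically-closed field one checks this after base change to $\overline F$, using that formation of scheme-theoretic intersections and of derived groups commutes with field extension.

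Next, for the splitness statements, suppose $T$ is split, i.e. $T \cong \Gm^r$ over $F$. The subtorus $S := T\cap H^{\mathrm{der}} \leq T$ and the quotient torus $T/S$ are then subquotients of a split torus; any subtorus or quotient torus of a split torus over any field is split (equivalently: a subgroup or quotient of $\zz^r$ with trivial Galois action has trivial Galois action on its character group). Concretely, $X^*(S)$ is a quotient of $X^*(T) \cong \zz^r$ with trivial $\Gal(\overline F/F)$-action, hence has trivial Galois action, so $S$ is split; and $X^*(T/S)$ is a subgroup of $X^*(T)$, again with trivial Galois action, so $T/S$ is split.

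I do not expect a genuine obstacle here — this is exactly the kind of statement the authors leave to the reader — but the one point requiring a small amount of care is the identification of the \emph{scheme-theoretic} intersection $T\cap H^{\mathrm{der}}$ with its identity component (rather than merely producing \emph{a} maximal torus of $H^{\mathrm{der}}$ inside $T$), and making sure the argument descends from $\overline F$ to $F$. The cleanest route is the centralizer argument above: $T\cap H^{\mathrm{der}}$ centralizes the maximal torus $(T\cap H^{\mathrm{der}})^\circ$ of $H^{\mathrm{der}}$, and a maximal torus of a connected reductive group equals its own centralizer, forcing the intersection to coincide with that maximal torus and in particular to be connected.
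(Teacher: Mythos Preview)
Your argument is correct and is precisely the standard structure-theoretic proof one would expect; note that the paper explicitly omits the proof (the statement carries a \qed\ immediately and the reader is asked to supply it), so there is nothing to compare against beyond confirming that your approach is the natural one. The one delicate point you flagged---that the scheme-theoretic intersection $T\cap H^{\mathrm{der}}$ is already connected, handled via the self-centralizing property of maximal tori---is exactly the right thing to check, and your treatment of it is fine.
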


\begin{Lem} \label{lem:mab}
The torus $M^{\mathrm{ab}}$ is split and isomorphic to $\GG_m$.
The map $M(F) \to M^{\mathrm{ab}}(F)$ is surjective.
\end{Lem}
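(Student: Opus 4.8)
The plan is to deduce both statements from the structure theory of the maximal parabolic $P$ together with Lemma \ref{lem:tori}. First I would observe that $M^{\mathrm{ab}}=M/M^{\mathrm{der}}$ is a torus because $M$ is connected reductive, and that by Lemma \ref{lem:tori} applied to $H=M$ the quotient $T/(T\cap M^{\mathrm{der}})$ is a split torus which maps isomorphically onto $M^{\mathrm{ab}}$; indeed the composite $T\hookrightarrow M\twoheadrightarrow M^{\mathrm{ab}}$ is surjective since $M=T\cdot M^{\mathrm{der}}$ (as $T$ meets every component of $M$, trivially here, and $M/M^{\mathrm{der}}$ is generated by the image of any maximal torus), with kernel exactly $T\cap M^{\mathrm{der}}$. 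Hence $M^{\mathrm{ab}}$ is split, and it remains to identify its rank. The cocharacter lattice of $M^{\mathrm{ab}}$ is $X_*(T)/(X_*(T)\cap \fm^{\mathrm{der}}$-span$)$; concretely, since $\Delta_M=\Delta-\{\beta\}$, the coroots of $M$ are the $\zz$-span of $\{\alpha^\vee:\alpha\in\Delta_M\}$ inside $X_*(T)$, and $X_*(T)\cap\qq\{\alpha^\vee:\alpha\in\Delta_M\}$ has rank $|\Delta|-1=\dim T-1$ because $G$ is semisimple (so $\{\alpha^\vee:\alpha\in\Delta\}$ is a basis of $X_*(T)_\qq$). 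Therefore $X_*(M^{\mathrm{ab}})$ has rank $1$, i.e. $M^{\mathrm{ab}}\cong\GG_m$.

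For the surjectivity of $M(F)\to M^{\mathrm{ab}}(F)$, I would use that $M^{\mathrm{der}}$ is a connected semisimple group and apply the exact sequence of pointed sets in Galois cohomology associated to $1\to M^{\mathrm{der}}\to M\to M^{\mathrm{ab}}\to 1$, giving
$$
M(F)\lto M^{\mathrm{ab}}(F)\lto H^1(F,M^{\mathrm{der}}).
$$
It then suffices to show the relevant class in $H^1(F,M^{\mathrm{der}})$ is trivial. The cleanest route is to factor the surjection $T\twoheadrightarrow M^{\mathrm{ab}}$ above: since $M^{\mathrm{ab}}$ is a quotient of the split torus $T$ and $T(F)\to M^{\mathrm{ab}}(F)$ is already surjective (a surjection of split tori $T\to T/(T\cap M^{\mathrm{der}})\cong\GG_m$ is surjective on $F$-points, e.g. because $H^1(F,\GG_m)=1$ or simply by splitting the coordinate), the composite $T(F)\to M(F)\to M^{\mathrm{ab}}(F)$ is surjective, hence so is $M(F)\to M^{\mathrm{ab}}(F)$.

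The main obstacle, such as it is, is purely bookkeeping: making precise that $\{\alpha^\vee:\alpha\in\Delta\}$ spans $X_*(T)_\qq$ (which holds exactly because $G$ is semisimple) and that deleting $\beta$ drops the rank by exactly one, so that $M^{\mathrm{ab}}$ has rank one rather than merely being a torus of some unspecified dimension. Once the rank-one claim is in hand, splitness and the identification with $\GG_m$ are automatic, and the surjectivity on $F$-points reduces, via the factorization through the split torus $T$, to the vanishing of $H^1$ of a split torus.
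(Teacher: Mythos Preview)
Your proof is correct and follows essentially the same approach as the paper: both identify $M^{\mathrm{ab}}$ with $T/(T\cap M^{\mathrm{der}})$ via Lemma~\ref{lem:tori} to get splitness, and both prove surjectivity on $F$-points by factoring through the split torus $T$ and invoking Hilbert's Theorem~90. The only difference is that you spell out the rank-one computation via the coroot lattice (using that $G$ is semisimple and $|\Delta_M|=|\Delta|-1$), whereas the paper simply asserts this follows from $P$ being maximal.
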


\begin{proof}
The first assertion follows from our assumption that  $G$ is a split, simple reductive group and $P$ is maximal and Lemma \ref{lem:tori}.

For the second assertion, consider the maximal split torus $T \leq M$.  The intersection $T \cap M^{\mathrm{der}}$ is a split torus by Lemma \ref{lem:tori}, and the restriction of the map $M \to M^{\mathrm{ab}}$ to $T$ is the quotient map
$$
T \lto T/(T \cap M^{\mathrm{der}}) \tilde{\lto} M^{\mathrm{ab}}.
$$
Since $T \cap M^{\mathrm{der}}$ is a split torus, this map is surjective on $F$-points by Hilbert's theorem 90 and we deduce the lemma.
\end{proof}

\begin{Cor} \label{cor:surj}
The map $G(F)\to X_P^\circ(F)$ is surjective.
\end{Cor}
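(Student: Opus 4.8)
The plan is to factor the quotient map $G\to X_P^\circ=P^{\mathrm{der}}\backslash G$ through $q\colon G\to P\backslash G$ and to combine surjectivity of $G(F)\to(P\backslash G)(F)$ with the torsor structure of $\pi\colon X_P^\circ\to P\backslash G$ and Lemma \ref{lem:mab}.

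First I would record that $G(F)$ acts transitively on the $F$-points of the projective variety $P\backslash G$. Since $G$ is split, this follows from the Bruhat decomposition: $G=\bigsqcup_w PwB$ with representatives $w\in G(F)$, each cell $P\backslash PwB$ is isomorphic over $F$ to an affine space, and $B(F)$ surjects onto its $F$-points because the relevant stabilizers are connected split solvable groups, hence special. (Alternatively one may simply cite the general conjugacy theorem for parabolic $F$-subgroups of a fixed type.) Thus every $\bar x\in(P\backslash G)(F)$ equals $q(g)$ for some $g\in G(F)$.

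Next I would use that $\pi$ is a $\GG_m$-torsor for the left $M^{\mathrm{ab}}$-action of \eqref{geo:act}, under the isomorphism $M^{\mathrm{ab}}\cong\GG_m$ of Lemma \ref{lem:mab}; its fiber over any $\bar x\in(P\backslash G)(F)$ is a $\GG_m$-torsor over $\Spec F$, hence trivial by Hilbert's Theorem~90, so $\pi^{-1}(\bar x)(F)$ is nonempty and $M^{\mathrm{ab}}(F)$ acts on it simply transitively. Given $x\in X_P^\circ(F)$, put $\bar x=\pi(x)$, choose $g\in G(F)$ with $q(g)=\bar x$ as above, and let $x_g\in X_P^\circ(F)$ be the image of $g$; since $x$ and $x_g$ both lie in $\pi^{-1}(\bar x)(F)$, we get $x=m\cdot x_g$ for some $m\in M^{\mathrm{ab}}(F)$. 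By Lemma \ref{lem:mab} the composite $M(F)\hookrightarrow P(F)\to M^{\mathrm{ab}}(F)$ is surjective, so $m$ lifts to some $p\in P(F)$, and then $m\cdot x_g$ is precisely the image of $pg\in G(F)$ in $X_P^\circ(F)$; hence $x$ lies in the image of $G(F)$. The only substantive input here is the transitivity of $G(F)$ on $(P\backslash G)(F)$ — the remaining steps are routine bookkeeping with the $\GG_m$-torsor and Lemma \ref{lem:mab} — so that transitivity statement is the single point at which I would take care to supply, or cite, a complete argument.
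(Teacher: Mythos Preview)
Your proof is correct and follows essentially the same approach as the paper's: factor through $P\backslash G$, use surjectivity of $G(F)\to(P\backslash G)(F)$, then use that each fiber of $X_P^\circ\to P\backslash G$ is an $M^{\mathrm{ab}}$-torsor together with Lemma~\ref{lem:mab}. The only cosmetic difference is that the paper disposes of the surjectivity of $G(F)\to(P\backslash G)(F)$ by a direct citation of \cite[Th\'eor\`eme 4.13]{Borel:Tits} rather than sketching the Bruhat-cell argument.
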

\begin{proof} Consider the commutative diagram
\[
\begin{tikzcd}
G(F)\ar[r,"q_1"]\ar[rd,"q_3"]&X_P^{\circ}(F)\ar[d,"q_2"]\\
&(P \backslash G)(F),
\end{tikzcd}
\]
where the $q_i$ are the canonical quotient maps. 
The map $q_3$ is surjective \cite[Th\'eor\`eme 4.13]{Borel:Tits}.
For $y \in (P \backslash G)(F),$ choose $g \in G(F)$ such that $q_3(g)=y$.  Set $x=q_1(g)$. Since $M^{\mathrm{ab}}$ is a split torus by Lemma \ref{lem:mab}, 
 $q_2^{-1}(y)$ is a $M^{\mathrm{ab}}(F)$-torsor.  
 In other words,
 $$
 q_2^{-1}(y)=\{tx:t \in M^{\mathrm{ab}}(F)\}=\{q_1(mg):m \in M(F)\}
 $$
 since $M(F) \to M^{\mathrm{ab}}(F)$ is surjective by Lemma \ref{lem:mab}.  Thus $q_2^{-1}(y)$ is in the image of $q_1$ for all $y \in (P \backslash G)(F).$
\end{proof}

 Let $V_P$ be the right representation of $G$ of highest weight $-\omega_P$. We remind the reader that for a right representation, the character of a highest weight vector is anti-dominant, explaining why the highest weight is $-\omega_P$.
Fix a highest weight vector $v_P \in V_P(F)$. 

\begin{Lem}\label{Lem: general Plucker}
The derived subgroup $P^{\mathrm{der}}$ is the stabilizer of $v_P,$ so that the map $ \mathrm{Pl}:=\mathrm{Pl}_{v_P}: X_P^{\circ} \to V_P$ induced by
\begin{align*}
  G(R)&\lra V_P(R)\\
    g&\longmapsto v_P g,
\end{align*}
maps $X_P^{\circ}$ isomorphically onto to the orbit of $v_P$ under $G$.  The map $\omega_P,$ originally a character of $T,$ extends to a character of $M,$ and the induced map
$$
\omega_P:M^{\mathrm{ab}} \lto \GG_m
$$
is an isomorphism.
For $m \in M^{\mathrm{ab}}(R),$ one has 
\begin{align} \label{intertwine}
\mathrm{Pl}(m^{-1}g)=\omega_P(m)\mathrm{Pl}(g).
\end{align}
\end{Lem}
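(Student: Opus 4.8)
The plan is to reduce everything to the theory of highest weight vectors together with a short computation inside the character lattice of $T$, proceeding in four steps. First I would identify $X^*(M^{\mathrm{ab}})$ as a sublattice of $X^*(T)$. From the exact sequence of tori $1\to T\cap M^{\mathrm{der}}\to T\to M^{\mathrm{ab}}\to 1$ (the left-hand term is a torus by Lemma \ref{lem:tori}) together with the fact that the coroots $\al^\vee$, $\al\in\Delta_M$, span $X_*(T\cap M^{\mathrm{der}})\otimes\qq$, one deduces $X^*(M^{\mathrm{ab}})=\{\lam\in X^*(T):\la\lam,\al^\vee\ra=0\text{ for all }\al\in\Delta_M\}$. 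By Lemma \ref{lem:mab} this is a rank-one lattice, hence spanned by the least positive rational multiple of $\omega_\beta$ lying in $X^*(T)$; since $\la\omega_P,\al^\vee\ra=m_\beta\de_{\al,\be}=0$ for $\al\in\Delta_M$, that multiple is precisely $\omega_P=m_\beta\omega_\beta$, so $X^*(M^{\mathrm{ab}})=\zz\omega_P$. In particular $\omega_P$ extends (via pullback from $M^{\mathrm{ab}}$) to a character of $M$, and as a generator of $X^*(M^{\mathrm{ab}})\cong\zz$ it induces an isomorphism $\omega_P\colon M^{\mathrm{ab}}\xrightarrow{\sim}\GG_m$; this is the third assertion of the lemma.

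Next I would compute $\Stab_G(v_P)$. Regarding $V_P$ as a left representation via $g\cdot v:=vg^{-1}$, the vector $v_P$ is a highest weight vector of the dominant weight $\omega_P$, so by the classical description of the stabilizer of a highest weight line, $\Stab_G(Fv_P)$ is the standard parabolic attached to $\{\al\in\Delta:\la\omega_P,\al^\vee\ra=0\}=\Delta_M$, i.e.\ it equals $P$; this stabilizer is the same for the left and the right action. Now $P$ acts on $Fv_P$ through a character which, for the right action, restricts to $\omega_P^{-1}$ on $T$ (anti-dominance of the $T$-character of a right highest weight vector is the content of the remark preceding the lemma). Since $N_P\subseteq P^{\mathrm{der}}$ and $M^{\mathrm{der}}\subseteq P^{\mathrm{der}}$, we have $P/P^{\mathrm{der}}=M^{\mathrm{ab}}$, so this character is the composite $P\twoheadrightarrow M^{\mathrm{ab}}\xrightarrow{\omega_P^{-1}}\GG_m$ with the second map an isomorphism by the first step; its kernel is therefore $P^{\mathrm{der}}$, which proves $\Stab_G(v_P)=P^{\mathrm{der}}$.

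It then remains to show $\mathrm{Pl}$ is an isomorphism onto the orbit and to verify \eqref{intertwine}. Since $P^{\mathrm{der}}$ is the kernel of a surjective homomorphism $P\to\GG_m$, it is smooth and agrees with the scheme-theoretic stabilizer of $v_P$; hence $g\mapsto v_Pg$ exhibits $G$ as a $P^{\mathrm{der}}$-torsor over the orbit $\mathcal{O}:=v_P\cdot G$, a smooth locally closed $G$-stable subscheme of $V_P$. As $X_P^\circ=P^{\mathrm{der}}\backslash G$ is the quotient of $G$ by the same group, the induced $G$-equivariant morphism $\mathrm{Pl}\colon X_P^\circ\to\mathcal{O}$ is an isomorphism. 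Finally, the second step gives $v_Pm^{-1}=\omega_P(m)v_P$ for $m\in M^{\mathrm{ab}}(R)$, whence $\mathrm{Pl}(m^{-1}g)=v_Pm^{-1}g=\omega_P(m)\,\mathrm{Pl}(g)$, which is \eqref{intertwine}.

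The obstacles I expect are bookkeeping rather than conceptual: first, keeping the right-representation conventions consistent so that $\Stab_G(Fv_P)$ is genuinely $P$ (and not the opposite parabolic) and the $T$-character of $v_P$ is exactly $\omega_P^{-1}$, with the sign forced by \eqref{intertwine}; second, upgrading the pointwise bijection $X_P^\circ\to\mathcal{O}$ to an isomorphism of schemes, which is precisely where one uses that the stabilizer is the kernel of an explicit, hence smooth, character of $P$ rather than merely a subgroup with the correct $\overline{F}$-points.
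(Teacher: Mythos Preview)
Your proof is correct and follows essentially the same approach as the paper's: identify the stabilizer of the line $Fv_P$ as $P$, observe that $P$ acts on this line through the character $\omega_P^{-1}$, and conclude that the stabilizer of $v_P$ is $P^{\mathrm{der}}$ once one knows $\omega_P\colon M^{\mathrm{ab}}\to\GG_m$ is an isomorphism. The paper proves this last point by the same minimality argument for $m_\beta$, just phrased slightly differently (choosing an isomorphism $M^{\mathrm{ab}}\cong\GG_m$ and arguing $\omega_P$ is $x\mapsto x^{\pm1}$), and it reverses the order of your first two steps; you are also more explicit than the paper about the scheme-theoretic isomorphism onto the orbit, which the paper leaves implicit.
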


\begin{proof}
It is well-known that $P$ is the stabilizer of the line spanned by $v_{P}$ (this follows from the discussion in \cite[\S 24.4]{Borel}), and thus this line is a one-dimensional representation of $P.$  
We deduce that $-\omega_P$ extends from $T$ to a character of $P,$ and 
$P$ acts via the character $-\omega_P$  on the line and hence the stabilizer of $v_{P}$ contains $P^{\mathrm{der}}$.  

Since $P^{\mathrm{der}}=M^{\mathrm{der}}N_P,$ to prove that $P^{\mathrm{der}}$ is the full stabilizer, it suffices to check that $\omega_P:M^{\mathrm{ab}} \to \GG_m$ is an isomorphism.  Upon choosing an isomorphism $M^{\mathrm{ab}} \cong \GG_m,$ we have that $\omega_P$ is given on points by $x \mapsto x^n$ for some non-zero $n \in \zz$.  Then $\omega_P/n \in X^*(T)$.  By our choice of $\omega_P,$ we deduce $n=\pm 1$ and $\omega_P$ is an isomorphism.  
The equivariance property \eqref{intertwine} of $\mathrm{Pl}$ is now clear. 
\end{proof}

Consider the affine closure
\begin{align*}
    X_P:=\mathrm{Spec}(F[X_P^\circ]) = \overline{X^{\circ}_P}^{\mathrm{aff}}.
\end{align*}  
The scheme $X_P$ is of finite type over $F$ and the natural map $X_P^\circ \to X_P$ is an open immersion \cite[Theorem 1.1.2]{Braverman:Gaitsgory}.
We will actually not require any properties of $X_P$ in this paper, but the fact that it has simple singularities provides good intuition for the Schwartz spaces we define later.  Therefore we recall the following theorem (see \cite[Theorem 1 and 2]{Popov:Vinberg}):
\begin{Thm} \label{Thm:Pl}
The embedding 
$\mathrm{Pl}:X_P^{\circ} \to V_P$ extends to a closed immersion $\mathrm{Pl}:X_P \to V_P$.   The closed subscheme $X_P-X_P^{\circ}$ is a point and it is mapped under $\mathrm{Pl}$ to $0$. \qed
\end{Thm}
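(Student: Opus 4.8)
The plan is to identify $\mathrm{Pl}(X_P^{\circ})$ with the $G$-orbit of the highest weight vector $v_P$ inside the irreducible module $V_P$ and then compute its Zariski closure. By Lemma~\ref{Lem: general Plucker}, $\mathrm{Pl}$ carries $X_P^{\circ}$ isomorphically onto the orbit $\mathcal{O}:=v_P G\subseteq V_P$, and since $v_P\neq 0$ and $G$ acts through invertible linear maps, $0\notin\mathcal{O}$. The equivariance relation \eqref{intertwine}, together with the fact that $\omega_P\colon M^{\mathrm{ab}}\xrightarrow{\,\sim\,}\GG_m$ is an isomorphism (Lemma~\ref{Lem: general Plucker}), shows that the left $M^{\mathrm{ab}}$-action on $X_P^{\circ}$ corresponds under $\mathrm{Pl}$ to honest scalar multiplication on $V_P$; hence $\mathcal{O}$ is stable under scaling by units, i.e.\ it is a punctured affine cone. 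Consequently its scheme-theoretic closure $\overline{\mathcal{O}}\subseteq V_P$ (reduced, since $\mathcal{O}$ is integral) is a $G$-stable affine cone, and it contains $0$ because $0$ lies in the closure of the punctured line $\GG_m v_P\subseteq\mathcal{O}$. Since an orbit is open in its closure, $\overline{\mathcal{O}}\setminus\mathcal{O}$ is a nonempty closed, $G$- and $\GG_m$-stable subset passing through $0$.

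Next I would projectivize. The image of $\mathcal{O}$ in $\mathbb{P}(V_P)$ is the $G$-orbit of the line $[v_P]$, and since $P$ is exactly the stabilizer of this line (\cite[\S 24.4]{Borel} and Lemma~\ref{Lem: general Plucker}), the orbit map $g\mapsto[v_P g]$ factors through $P\backslash G$; as $P\backslash G$ is proper over $F$, its image $G\cdot[v_P]$ is closed in $\mathbb{P}(V_P)$. The preimage of this closed subvariety under $V_P\setminus\{0\}\to\mathbb{P}(V_P)$ is a closed $\GG_m$-stable subset equal to $\GG_m\cdot\mathcal{O}=\mathcal{O}$. Therefore $\overline{\mathcal{O}}\cap(V_P\setminus\{0\})=\mathcal{O}$, which forces $\overline{\mathcal{O}}=\mathcal{O}\sqcup\{0\}$; since $0\in V_P(F)$, the boundary is the reduced $F$-point $\{0\}$. (The Zariski-closure steps are carried out over $\overline{F}$ and descend to $F$ since $G$ is split and $v_P$ is $F$-rational.)

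Finally I would identify $\overline{\mathcal{O}}$ with the affine closure $X_P=\Spec F[X_P^{\circ}]$. The scheme $\overline{\mathcal{O}}$ is affine (a closed subscheme of $V_P$) and integral, and it contains $X_P^{\circ}\cong\mathcal{O}$ as a dense open whose complement $\{0\}$ has codimension $\dim X_P^{\circ}\geq 2$ (using that $P$ is a proper parabolic, so $\dim X_P^{\circ}=\dim(P\backslash G)+1\geq 2$). Composing $X_P^{\circ}\hookrightarrow\overline{\mathcal{O}}$ with $X_P^{\circ}\hookrightarrow X_P$ and using the universal property of the affine closure (\cite[Theorem~1.1.2]{Braverman:Gaitsgory}) produces a morphism $X_P\to\overline{\mathcal{O}}$ restricting to the identity on $X_P^{\circ}$; it is an isomorphism once every regular function on $X_P^{\circ}$ extends over the origin, which holds provided $\overline{\mathcal{O}}$ is normal. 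This normality of $\overline{\mathcal{O}}$ — equivalently, the projective normality of $P\backslash G\hookrightarrow\mathbb{P}(V_P)$, i.e.\ that $F[X_P^{\circ}]$ has the expected multiplicity-free $G$-module decomposition and is generated in degree one — is precisely the input I would import from \cite[Theorems~1 and 2]{Popov:Vinberg}. This last point is the main obstacle: the ``cone minus origin'' computation above is elementary, but pinning down the scheme structure of $X_P$, i.e.\ that nothing beyond the origin is added and that the coordinate ring is the anticipated one, is a genuine theorem rather than a short argument, which is why it is recalled from the literature.
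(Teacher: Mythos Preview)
The paper does not prove this theorem at all: it is stated with a terminal \qed and is explicitly recalled from \cite[Theorems~1 and~2]{Popov:Vinberg}. Your proposal is therefore not competing with any argument in the paper; it is a correct unpacking of why the cited reference yields the statement. Your reduction of the set-theoretic claim $\overline{\mathcal{O}}=\mathcal{O}\sqcup\{0\}$ to the properness of $P\backslash G$ in $\mathbb{P}(V_P)$ is the standard and clean route, and your identification of $\overline{\mathcal{O}}$ with $\Spec F[X_P^{\circ}]$ via normality plus codimension $\geq 2$ is exactly the right mechanism. You correctly isolate the one nontrivial input---normality of the highest weight orbit closure (equivalently, projective normality of $P\backslash G$ in the Pl\"ucker embedding)---and attribute it to Popov--Vinberg, which is the same reference the paper invokes. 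So your write-up and the paper agree on where the content lies; you simply make explicit what the paper leaves to the citation.
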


Let $P^{\mathrm{op}}$ be the parabolic subgroup opposite to $P$ so that $P\cap P^{\mathrm{op}}=M$.  Let $V_P^\vee$ be the representation of $G$ dual to $V_P$ and let $v^*_{P^{\mathrm{op}}} \in V_P^{\vee}(F)$ be the lowest weight vector of $V_P^{\vee}(F)$ dual to $v_P$.  We then have an embedding $\mathrm{Pl}_{v^*_{P^{\mathrm{op}}}}:X_{P^{\mathrm{op}}}^{\circ} \to V_P^{\vee}$ induced by 
\begin{align*}
G(R) &\lto V_{P}^\vee (R)\\
 g&\longmapsto v^*_{P^{\mathrm{op}}} g.
\end{align*}

Let $\langle \cdot, \cdot \rangle$ be the canonical pairing of $V_P$ and $V_P^{\vee}$. Consider the $G$-equivariant pairing given on $F$-algebras $R$ by  
\begin{align} \label{PPop:pairing}
\la\cdot,\cdot\ra_{P|P^{\mathrm{op}}}:X_{P}^\circ (R)\times X_{P^{\mathrm{op}}}^\circ (R)&\lra R\\
    (x,x^*)&\longmapsto \la \mathrm{Pl}_{v_P}(x),\mathrm{Pl}_{v^*_{P^{\mathrm{op}}}}(x^*)\ra.\nonumber
\end{align}
If we replace $v_P$ by any other highest weight vector $v_P',$ then $v_P'=tv_P$ for some $t\in F^\times$. Thus the dual vector of $v_P'$ is $t^{-1}v^*_{P^{\mathrm{op}}}$.  It follows that $\langle\cdot,\cdot\rangle_{P|P^{\mathrm{op}}}$ is independent of the choice of $v_P$.

\subsection{Relation to induced representations} \label{sec:rel:induced}  We now assume $F$ is a local field.
The space $\cals(X_P^\circ(F)),$ equipped with the $M^{\mathrm{ab}}(F)$-action induced by \eqref{geo:act},  can be thought of as a universal (degenerate) principal series representation.

For a quasi-character $\chi:F^\times \to \cc^\times,$ let
\begin{align}\label{normalizedinduct}
I(\chi):=I_P(\chi):=\Ind_{P(F)}^{G(F)}(\chi \circ \omega_P), \quad \overline{I}(\chi):=\overline{I}_{P^{\mathrm{op}}}(\chi):=\Ind_{P^{\mathrm{op}}(F)}^{G(F)}(\chi \circ \omega_P)
\end{align}
be the normalized inductions in the category of smooth representations.  Let $\delta_P$ be the modular character of $P.$
We define Mellin transforms
\begin{align} \label{Mellin} \begin{split}
\mathcal{S}(X_P^{\circ}(F)) &\lra I(\chi) \\
f &\longmapsto f_{\chi}(\cdot):=f_{\chi,P}(\cdot):=\int_{M^{\mathrm{ab}}(F)}\delta_P^{1/2}(m)\chi(\omega_P(m))f(m^{-1}\cdot)dm, \\
\mathcal{S}(X_{P^{\mathrm{op}}}^\circ (F)) &\lra \overline{I}(\chi) \\
f &\longmapsto f_{\chi}^{\mathrm{op}}(\cdot):=f_{\chi,P^{\mathrm{op}}}^{\mathrm{op}}(\cdot):=\int_{M^{\mathrm{ab}}(F)}\delta_{P^{\mathrm{op}}}^{1/2}(m)\chi(\omega_P(m))f(m^{-1} \cdot)dm.
\end{split}
\end{align}
Here $dm$ is the Haar measure on $M^{\mathrm{ab}}(F)$ obtained from the isomorphism 
$
\omega_P:M^{\mathrm{ab}}(F) \to F^\times
$
and the Haar measure $d^\times x$ on $F^\times$ by our convention in \S \ref{ssec:measures}.
In the notation $\overline{I}_{P^{\mathrm{op}}}(\chi)$ and $f_{\chi,P^{\mathrm{op}}}^{\mathrm{op}},$ the bar and the superscript $\mathrm{op}$ indicate that we are inducing from $\chi\circ\omega_P$ instead of $\chi\circ \omega_{P^{\mathrm{op}}}$. 

We use the same notation for extensions of the Mellin transform to larger subsets of $C^\infty(X_P^{\circ}(F))$ and $C^\infty(X_{P^{\mathrm{op}}}^\circ(F)),$ when in general the integrals defining $f_{\chi},f_{\chi}^{\mathrm{op}}$ only exist for $\mathrm{Re}(\chi)$ in a proper subset of $\rr,$ and in some cases will be extended to larger complex domains by analytic continuation.  

Let $A<B$ be extended real numbers (we allow $A=-\infty$ and $B=\infty$) and let
\begin{align} \label{VAB}
V_{A,B}:=\{s \in \cc:A<\mathrm{Re}(s)<B\}.
\end{align}
For quasi-characters $\chi$ of $F^\times$ and $s \in \cc$,  let $\chi_s := \chi|\cdot|^s.$
Assume $F$ is archimedean.  
We say that a section $f(\chi)^{(s)} \in I(\chi_s)$ is holomorphic (resp.~meromorphic) in $V_{A,B}$
if for all $g \in G(F)$ and (unitary) characters $\chi$ of $F^\times$, the function
\begin{align} \label{holo} \begin{split}
    V_{A,B} &\lto \cc\\
    s &\longmapsto f(\chi)^{(s)}(g) \end{split}
\end{align}
is holomorphic (resp.~meromorphic).  In the nonarchimedean case, we say that a section $f(\chi)^{(s)} \in I(\chi_s)$ is holomorphic (resp.~meromorphic) if for all $g \in G(F)$ and characters $\chi$ of $F^\times$ \eqref{holo}
is in $\cc[q^{-s},q^s]$ (resp. $\cc(q^{-s},q^s)$).

\section{Twisting by abelian $\ga$-factors}\label{Section: twisting}
For the remainder of the paper, $F$ denotes a local field.  As discussed in \S \ref{ssec:FT} below, the definition of the Fourier transform $\calf_{P|P^{\mathrm{op}}}$ involves normalization operators $\lambda_!(\mu_s)$ which correspond, under the Mellin transform, to multiplication by $\gamma(-s,\chi^{\lambda},\psi)$ (see Lemma \ref{lem:gamma}).  Here and below $\gamma(s,\chi,\psi)$ denotes the usual Tate $\gamma$-factor attached to a complex number $s$, a quasi-character $\chi:F^\times \to \mathbb{C}^\times$, and the additive character $\psi.$  The operators $\lambda_!(\mu_s)$ were previously defined in \cite{BKnormalized} and an exposition is given in \cite{Shahidi:Generalized}.
The approach of \cite{BKnormalized} is inconvenient in the sense that each operator is only defined on an inexplicit subspace of $\mathcal{S}(X^{\circ}_P(F))$ that is dense in $L^2(X_P(F))$.  Thus as one composes operators, one loses control of their domain and range.  Moreover, the operators are only defined in the nonarchimedean case in \cite{BKnormalized}.

In this section we set up a general theory of the operators $\lambda_!(\mu_s)$ that is applicable uniformly in the archimedean and nonarchimedean settings.  We also explain how to control their domain and range.  This is quite delicate.  In particular, to construct the   Fourier transform, the normalizing operators $\lambda_{!}(\mu_s)$ have to be composed in a particular order.  This motivates the definition of a \textbf{good ordering} in Definition \ref{defn:good:order} below.  Essentially the situation is as follows: to compose the operators $\lambda_!(\mu_s),$ we require the domain of absolute convergence of certain Tate integrals to overlap.  This is only possible if we arrange the operators in a particular order. 

\begin{Rem} \label{JLZ:rem} This difficulty was also encountered in the nonarchimedean setting in a special case in \cite{Jiang:Luo:Zhang}. They overcame it  by packaging all the normalizing operators together and relating them to transforms coming from prehomogeneous vector spaces.  We do not know if their method can be used to obtain an explicit formula for the Fourier transform, or if it can be applied in the generality considered here.
\end{Rem}

For $\lambda \in \zz$ and $s \in \cc,$ we define a linear map 
\begin{align} \label{eta:op}
  \lambda_!(\mu_s):  \mathcal{S}(X_{P^{\mathrm{op}}}^\circ(F)) &\lto C^\infty(X_{P^{\mathrm{op}}}^\circ(F))
  \end{align}
  given by 
\begin{align}\label{eqn: eta def}
\lambda_!(\mu_s)(f)(x):=\int_{M^{\mathrm{ab}}(F)}\psi(\omega_P(m))|\omega_P(m)|^{s+1}\delta_{P^{\mathrm{op}}}^{\lambda /2}(m)f(m^{-\lambda}x)\frac{dm}{\zeta(1)}.
\end{align}
This was denoted $\lambda_!(\eta^s_{\psi})$ in  \cite{BKnormalized}.  In loc.~cit.~a measure is incorporated into the distribution; this is why our formula looks different. 

 We work with $X_{P^\mathrm{op}}^\circ$ here to be consistent with our notation later on, when these operators are applied after the operator 
 $\mathcal{R}_{P|P^{\mathrm{op}}}$ of \eqref{R}.  Of course in the formula for \eqref{eqn: eta def} we could write everything in terms of $P$ or $P^{\mathrm{op}}$ by taking appropriate inverses.  We have written it in the form above to remind the reader that $f$ is a function on $X_{P^{\mathrm{op}}}^\circ(F),$ but the normalizing factors $\lambda$ and $s$ we will use in our case of primary interest are defined in terms of $P$ (see \S \ref{ssec:BKL} below).  

To extend the domain of definition of $\lambda_!(\mu_s),$ choose $\Phi \in \mathcal{S}(F)$ such that $\Phi(0)=1$ and $\widehat{\Phi} \in C_c^\infty(F)$. Here $\widehat{\Phi}(x):=\int_{F}\Phi(y)\psi(xy)dx$ is the Fourier transform of $\Phi.$  For continuous functions $f:X_{P^{\mathrm{op}}}^\circ(F) \to \cc,$ we define the regularized integral 
\begin{align} \label{lambda:reg}
\lambda_!(\mu_s)^{\mathrm{reg}}(f)(x):=\lim_{|b| \to \infty}\int_{M^{\mathrm{ab}}(F)}\Phi\left(\frac{\omega_P(m)}{b}\right)\psi(\omega_P(m))|\omega_P(m)|^{s+1}\delta_{P^{\mathrm{op}}}^{\lambda /2}(m)f(m^{-\lambda}x)\frac{dm}{\zeta(1)}.
\end{align}
We say this integral is well-defined if
\begin{align} \label{each:b0}
\int_{M^{\mathrm{ab}}(F)}|\Phi|\left(\frac{\omega_P(m)}{b}\right)|\omega_P(m)|^{\mathrm{Re}(s)+1}\delta_{P^{\mathrm{op}}}^{\lambda /2}(m)|f|(m^{-\lambda}x)dm
\end{align}
 is finite for $|b|$ sufficiently large, and the limit  in the definition of $\lambda_!(\mu_s)^{\mathrm{reg}}(f)(x)$ exists  and is independent of $\Phi$.

\begin{Lem} \label{lem:coincidence}
If the integral defining $\lambda_{!}(\mu_s)(f)$ is absolutely convergent, then
$\lambda_!(\mu_s)^{\mathrm{reg}}(f)=\lambda_!(\mu_s)(f)$.  In particular, $\lambda_!(\mu_s)^{\mathrm{reg}}(f)=\lambda_!(\mu_s)(f)$ whenever $f \in \mathcal{S}(X_{P^{\mathrm{op}}}^\circ(F))$.
\qed
\end{Lem}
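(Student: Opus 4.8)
The lemma asserts two things: (i) if the integral defining $\lambda_!(\mu_s)(f)$ converges absolutely, then the regularized integral $\lambda_!(\mu_s)^{\mathrm{reg}}(f)$ exists and equals it; and (ii) this applies in particular to $f \in \mathcal{S}(X_{P^{\mathrm{op}}}^\circ(F))$. The strategy is a straightforward dominated-convergence argument, using that $\Phi(0)=1$ and $\Phi$ is bounded.

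\begin{proof}[Proof plan]
Fix $x \in X_{P^{\mathrm{op}}}^\circ(F)$ and suppose the integral
\[
\int_{M^{\mathrm{ab}}(F)}\psi(\omega_P(m))|\omega_P(m)|^{s+1}\delta_{P^{\mathrm{op}}}^{\lambda/2}(m)f(m^{-\lambda}x)\frac{dm}{\zeta(1)}
\]
is absolutely convergent, i.e.\ the nonnegative integrand
\[
g(m):=|\omega_P(m)|^{\mathrm{Re}(s)+1}\delta_{P^{\mathrm{op}}}^{\lambda/2}(m)|f|(m^{-\lambda}x)
\]
lies in $L^1(M^{\mathrm{ab}}(F),dm)$. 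First I would note that since $\Phi \in \mathcal{S}(F)$ is in particular bounded, say $|\Phi|\le C_\Phi$, the integrand appearing in \eqref{each:b0} is dominated by $C_\Phi\, g(m)$ uniformly in $b$; hence \eqref{each:b0} is finite for every $b \in F^\times$ (not just $|b|$ large), so the truncated integrals are well-defined. Next, for each fixed $m$ one has $\Phi(\omega_P(m)/b) \to \Phi(0)=1$ as $|b|\to\infty$ by continuity of $\Phi$ at $0$. The dominated convergence theorem, with dominating function $C_\Phi\, g \in L^1$, then gives
\[
\lim_{|b|\to\infty}\int_{M^{\mathrm{ab}}(F)}\Phi\!\left(\tfrac{\omega_P(m)}{b}\right)\psi(\omega_P(m))|\omega_P(m)|^{s+1}\delta_{P^{\mathrm{op}}}^{\lambda/2}(m)f(m^{-\lambda}x)\frac{dm}{\zeta(1)}
=\lambda_!(\mu_s)(f)(x).
\]
In particular the limit exists and is independent of the choice of $\Phi$ (it always equals the absolutely convergent integral $\lambda_!(\mu_s)(f)(x)$), so $\lambda_!(\mu_s)^{\mathrm{reg}}(f)$ is well-defined in the sense spelled out after \eqref{each:b0}, and equals $\lambda_!(\mu_s)(f)$.

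For the second assertion, it suffices to check that the integral defining $\lambda_!(\mu_s)(f)$ converges absolutely when $f \in \mathcal{S}(X_{P^{\mathrm{op}}}^\circ(F))$. In the nonarchimedean case $f = \mathbf{1}_{X_{P^{\mathrm{op}}}^\circ(F)}f \in C_c^\infty$, so $m \mapsto f(m^{-\lambda}x)$ is a compactly supported function on $M^{\mathrm{ab}}(F) \cong F^\times$ locally constant in $m$, and the remaining factors $|\omega_P(m)|^{\mathrm{Re}(s)+1}\delta_{P^{\mathrm{op}}}^{\lambda/2}(m)$ are continuous; thus $g \in C_c(M^{\mathrm{ab}}(F)) \subset L^1$. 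In the archimedean case, transport the problem to $F^\times$ via $\omega_P$: writing $t=\omega_P(m)$, the function $f(m^{-\lambda}x)$ is Schwartz in the coordinates of $X_{P^{\mathrm{op}}}^\circ(F)$, and as $|t|\to\infty$ along the $M^{\mathrm{ab}}$-orbit it decays faster than any polynomial in $|t|$ (one can see this from the Pl\"ucker embedding of Lemma \ref{Lem: general Plucker}, under which the $M^{\mathrm{ab}}$-action \eqref{intertwine} scales the coordinate by $\omega_P(m)$), which dominates $|t|^{\mathrm{Re}(s)+1}\delta_{P^{\mathrm{op}}}^{\lambda/2}$ there; as $|t|\to 0$, the factor $\delta_{P^{\mathrm{op}}}^{\lambda/2}(m)$ is a power of $|t|$ and $f(m^{-\lambda}x)$ is bounded (it extends continuously to the affine closure $X_{P^{\mathrm{op}}}$ by Theorem \ref{Thm:Pl}, with value $f$ at the boundary point), so near $0$ the integrand is bounded by a constant times a power of $|t|$ which is integrable against $d^\times t$ away from the origin only if that power is... in fact one should use that $\mathcal S$-functions on $X_{P^{\mathrm{op}}}$ are required to vanish to the needed order there, or simply invoke that $f$ restricted to the orbit is in the image of $\mathcal{S}(V_{P}^\vee(F))$ under restriction (by the surjectivity of restriction recalled in \S\ref{ssec:gen:Sch}) and argue with that. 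Either way $g \in L^1(M^{\mathrm{ab}}(F))$, and the first part applies.
\end{proof}

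\noindent\textbf{Main obstacle.} The only genuinely non-formal point is the integrability of $g$ near $|t|=0$ in the archimedean case: one needs the behavior of Schwartz functions on $X_{P^{\mathrm{op}}}^\circ(F)$ near the single boundary point of $X_{P^{\mathrm{op}}}$, together with the exact power of $|t|$ contributed by $\delta_{P^{\mathrm{op}}}^{\lambda/2}$, to know the product is $d^\times t$-integrable on a punctured neighborhood of $0$. The rest is dominated convergence, which is why the authors mark this lemma ``\qed''.
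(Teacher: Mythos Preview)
Your dominated-convergence argument for part (i) is correct and is exactly why the paper marks the lemma with a bare \qed. The nonarchimedean case of part (ii) is also fine.

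The archimedean case of (ii), however, is muddled, and both of your suggested fixes are off. First, it is \emph{not} true that $f \in \mathcal{S}(X_{P^{\mathrm{op}}}^\circ(F))$ extends continuously to the boundary point of $X_{P^{\mathrm{op}}}$: Theorem~\ref{Thm:Pl} describes the affine closure of the variety, not of the function. Second, the surjectivity of restriction in \S\ref{ssec:gen:Sch} is for \emph{closed} subschemes, and $X_{P^{\mathrm{op}}}^\circ$ is only locally closed in $V_P^\vee$; Schwartz functions on $X_{P^{\mathrm{op}}}^\circ(F)$ need not be restrictions of Schwartz functions on $V_P^\vee(F)$, and in fact need not extend smoothly across the origin at all.

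The clean way to finish is this: the $M^{\mathrm{ab}}$-orbit through $x$ is a fiber of the $\GG_m$-torsor $X_{P^{\mathrm{op}}}^\circ \to P^{\mathrm{op}}\backslash G$, hence is a closed subscheme of $X_{P^{\mathrm{op}}}^\circ$ isomorphic (via $m \mapsto m^{-\lambda}x$, using $\lambda\neq 0$) to $\GG_m$. By the closed-subscheme restriction property, pulling back $f$ along the orbit map lands in $\mathcal{S}(F^\times)$, whose elements decay rapidly at \emph{both} $0$ and $\infty$. This kills the power of $|t|$ coming from $|\omega_P(m)|^{\mathrm{Re}(s)+1}\delta_{P^{\mathrm{op}}}^{\lambda/2}(m)$ at both ends, and the integral is absolutely convergent. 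There is no ``needed order of vanishing'' to check by hand; rapid decay at the boundary of the open variety is built into the definition of $\mathcal{S}(X_{P^{\mathrm{op}}}^\circ(F))$.
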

To avoid more proliferation of notation, we will drop the $\mathrm{reg}$ from notation.  Lemma \ref{lem:coincidence} shows this is harmless, as it implies that the two integrals yield the same result when both are well-defined.  

\begin{Lem} \label{lem:gamma}
Assume $f \in \overline{I}(\chi)$ and that $\mathrm{Re}(s)+1-\lambda\mathrm{Re}(\chi)>0$.  The function 
$$
\lambda_!(\mu_s)(f)(x)
$$ 
is well-defined and equal to $\gamma(-s,\chi^\lambda,\psi)f(x)$.
\end{Lem}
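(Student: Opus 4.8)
The plan is to evaluate the integral \eqref{eqn: eta def} directly by exploiting the transformation law of the normalized induced representation $\overline I(\chi)=\Ind_{P^{\mathrm{op}}(F)}^{G(F)}(\chi\circ\omega_P)$, and then to recognize the resulting one-dimensional integral as a Tate integral.

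First I would pull $f(x)$ out of the integral. Since $M(F)\to M^{\mathrm{ab}}(F)$ is surjective by Lemma \ref{lem:mab}, every $m\in M^{\mathrm{ab}}(F)$ lifts to an element of $M(F)\subseteq P^{\mathrm{op}}(F)$; because $\delta_{P^{\mathrm{op}}}$ and the character $\omega_P$ of Lemma \ref{Lem: general Plucker} both factor through $M^{\mathrm{ab}}$, the defining property of normalized induction and the action \eqref{geo:act} give $f(m^{-\lambda}x)=\delta_{P^{\mathrm{op}}}^{-\lambda/2}(m)\chi(\omega_P(m))^{-\lambda}f(x)$. Substituting into \eqref{eqn: eta def}, the two powers of $\delta_{P^{\mathrm{op}}}$ cancel and $f(x)$ comes out, leaving
\[
\lambda_!(\mu_s)(f)(x)=f(x)\int_{M^{\mathrm{ab}}(F)}\psi(\omega_P(m))|\omega_P(m)|^{s+1}\chi(\omega_P(m))^{-\lambda}\frac{dm}{\zeta(1)}.
\]
Transporting the integral to $F^\times$ via the isomorphism $\omega_P\colon M^{\mathrm{ab}}(F)\xrightarrow{\sim}F^\times$ and the normalization of $dm$ from \S\ref{sec:rel:induced}, and using $d^\times t=\zeta(1)\,dt/|t|$, this is the (regularized) Tate integral
\[
\lambda_!(\mu_s)(f)(x)=f(x)\int_{F^\times}\psi(t)|t|^{s}\chi^{-\lambda}(t)\,dt
\]
for the quasi-character $\chi^{-\lambda}|\cdot|^{s}$, with $dt$ the $\psi$-self-dual measure.

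Next I would check well-definedness and identify the value. As $|\chi^{-\lambda}(t)|=|t|^{-\lambda\mathrm{Re}(\chi)}$, the hypothesis $\mathrm{Re}(s)+1-\lambda\mathrm{Re}(\chi)>0$ is exactly the condition that makes $\Phi(t/b)\psi(t)|t|^{s+1}\chi^{-\lambda}(t)$ absolutely integrable near $t=0$ against $d^\times t$ for every $b$, while near $t=\infty$ absolute convergence of the $\Phi$-truncated integral is automatic since $\Phi\in\mathcal{S}(F)$; thus \eqref{each:b0} holds. That the limit $|b|\to\infty$ exists and is independent of the auxiliary $\Phi$ (chosen with $\widehat\Phi\in C_c^\infty(F)$) is the standard regularization of Tate integrals: writing the cutoff $\Phi(t/b)$ as the Fourier integral of $\widehat\Phi$ reduces the claim to the continuity near $c=1$ of the oscillatory integral $c\mapsto\int_{F^\times}\psi(ct)|t|^{s+1}\chi^{-\lambda}(t)\,d^\times t$, which converges for $c\neq0$ (in the nonarchimedean case the integral over $|t|=q^{n}$ vanishes for $n\gg0$ by orthogonality of characters; in the archimedean case one integrates by parts). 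Finally, by the local functional equation of Tate's thesis --- i.e.\ by the definition of the local $\gamma$-factor via the Fourier transform of quasi-characters --- this regularized integral equals $\gamma(-s,\chi^{\lambda},\psi)$, with the conventions for $\psi$, the self-dual measure, and $\gamma$ fixed in \S\ref{sec:prelim} and \cite{BKnormalized}. Combining this with the displays above proves the lemma.

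The main obstacle is the regularization bookkeeping in the last paragraph: proving rigorously that the $\Phi$-regularized integral near $\infty$ converges to the analytically continued Tate integral and is genuinely independent of $\Phi$, and then checking that the normalizations of $\gamma$, of $\psi$, and of the self-dual measure are consistent so that the arguments of $\gamma$ are exactly $-s$ and $\chi^{\lambda}$, rather than a variant related to them by the $\mathrm{GL}_1$ functional equation.
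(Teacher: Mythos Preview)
Your reduction to a one--dimensional Tate--type integral via the transformation law for $f\in\overline I(\chi)$ is exactly what the paper does, and your check of absolute convergence near $t=0$ matches the paper's first sentence. The approaches diverge only in how the limit $|b|\to\infty$ is handled. You propose to Fourier--expand the cutoff $\Phi(t/b)$ and then appeal to continuity in $c$ of the oscillatory integral $\int_{F^\times}\psi(ct)|t|^{s+1}\chi^{-\lambda}(t)\,d^\times t$, finally identifying the value at $c=1$ with $\gamma(-s,\chi^\lambda,\psi)$; this is morally right but, as you note yourself, leaves the conditional convergence and the interchange of limit and $u$--integral to be justified. The paper sidesteps this entirely by observing that for each fixed $b$ the function $t\mapsto\Phi(t/b)\psi(t)$ is a genuine Schwartz function, so the Tate functional equation applies \emph{before} taking the limit: the truncated integral equals $\gamma(-s,\chi^\lambda,\psi)f(x)$ times $\int_{F^\times}\widehat{(\Phi(\cdot/b)\psi)}(y)\,|y|^{-s}\chi^\lambda(y)\,d^\times y/\zeta(1)$, and this latter integral is shown to tend to $1$ by the explicit computation $|b|\int_F\widehat\Phi(y)|1-y/b|^{-s-1}\chi^\lambda(1-y/b)\,dy\to\widehat\Phi(0)^{-1}\cdot\Phi(0)=1$ (here using $\widehat\Phi\in C_c^\infty(F)$ to make the integral elementary for $|b|$ large). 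This packaging produces the $\gamma$--factor with the correct arguments automatically and makes the independence of $\Phi$ manifest, which is exactly the bookkeeping you flagged as the main obstacle.
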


\begin{proof}
Since $\mathrm{Re}(s)+1-\lambda\mathrm{Re}(\chi)>0,$ \eqref{each:b0} is finite for all $b$. By the functional equation of Tate zeta functions, we have
\begin{align*}
&\int_{M^{\mathrm{ab}}(F)}\Phi\left(\frac{\omega_P(m)}{b} \right)\psi(\omega_P(m))|\omega_P(m)|^{s+1}\chi^{-\lambda}(\omega_P(m))f(x)\frac{dm}{\zeta(1) }\\&=\gamma(-s,\chi^{\lambda},\psi)f(x)\int_{F^\times}\left(\int_{F}\Phi\left(\frac{t}{b} \right)\psi(t)\overline{\psi}(yt) dt\right)
|y|^{-s}\chi^{\lambda}(y)\frac{d^\times y}{\zeta(1)}.
\end{align*}
Using our assumption that $\Phi(0)=1,$ we have
\begin{align}
\label{eq:becomeone}
\begin{split}
 &\lim_{|b| \to \infty}  |b| \int_{F^\times}\left(\int_{F}\Phi\left(t \right)\psi(bt(1-y)) dt\right)
|y|^{-s}\chi^{\lambda}(y)\frac{d^\times y}{\zeta(1)}\\
&=\lim_{|b| \to \infty}  |b|\int_{F^\times}
\widehat{\Phi}(b(1-y))
|y|^{-s}\chi^{\lambda}(y)\frac{d^\times y}{\zeta(1)}\\
&=\lim_{|b| \to \infty}  \int_{F}
\widehat{\Phi}(y)
|1-\tfrac{y}{b}|^{-s-1}\chi^{\lambda}(1-\tfrac{y}{b})dy\\
&=1.
\end{split}
\end{align}
Here for small $|b|$ the integral may diverge, but since $\widehat{\Phi} \in C_c^\infty(F)$, the integral converges for $|b|$ sufficiently large.  
\end{proof}

Now consider a graded $\mathbb{G}_m$-representation
$$
L=\bigoplus_{i\in I}L_i
$$
for some finite index set $I$.  We assume that each $L_i$ is $1$-dimensional and that 
 $\GG_m$ acts via a character $\lambda_i$ on $L_i$.     We identify $X^*(\mathbb{G}_{m})$ with $\zz$ by taking the identity character to $1,$ so we can speak of positive or negative  characters.  We assume that each character $\lam_i$ is non-zero and assign to each $L_i$ a real number $s_i$.

We then have linear maps
\begin{align} \label{etaLi}
\lambda_{i!}(\mu_{s_i}):\mathcal{S}(X_{P^{\mathrm{op}}}^\circ(F)) \lto C^\infty(X_{P^{\mathrm{op}}}^\circ(F))
\end{align}
for each $i\in I$.
Following \cite{BKnormalized}, we wish to compose these linear maps to give a single transform 
\[
\mu_L:\mathcal{S}(X_{P^{\mathrm{op}}}^\circ(F)) \lto C^\infty(X_{P^{\mathrm{op}}}^\circ(F))
\]
associated to the $\Gm$-module $L$ and the data $\{(s_i,\lam_i) \in \rr\times \zz :i\in I\}$. 

It is convenient (and perhaps necessary) to extend the work in \cite{BKnormalized} by elucidating the domain and range of these operators.  We proceed as in \cite{Getz:Liu:BK}, which borrows from \cite{Ikeda:poles:triple}.
Let
\begin{align}\label{al}
    a_L(\chi):=\prod_{i\in I}L(-s_i,\chi^{\lambda_i}).
\end{align}
We introduce extended real numbers $A(L),B(L)$ as follows:
\begin{align}\label{AB}
\begin{split}
A(L)&:=\begin{cases}\max\left\{ \frac{s_i}{\lambda_i}:i \in I, \lambda_i>0\right\}&\textrm{if } \lam_i>0\text{ for some $i$},\\
-\infty &\text{otherwise},\end{cases}\\
B(L)&:=\begin{cases}\min\left\{\frac{s_i}{\lambda_i}: i \in I, \lambda_i<0\right\}&\textrm{if } \lam_i<0\text{ for some $i$},\\
\infty &\text{otherwise}.\end{cases} 
\end{split}
\end{align}
  Assume that $A(L)<B(L)$.

\begin{Lem} \label{lem:no:poles}
The function
$a_L(\chi)$ has no poles for $A(L) < \mathrm{Re}(\chi) <B(L)$.  \qed
\end{Lem}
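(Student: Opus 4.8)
The plan is to reduce Lemma \ref{lem:no:poles} to the elementary fact that a single Tate local $L$-factor of a quasi-character of $F^\times$ is holomorphic on a half-space of quasi-characters, and then to carry out the bookkeeping with the numbers $A(L)$ and $B(L)$ of \eqref{AB}.

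First I would recall, uniformly in the archimedean and nonarchimedean cases, the standard fact from Tate's thesis that for any quasi-character $\mu$ of $F^\times$ the local $L$-factor $L(\mu)$ is holomorphic on $\{\mu:\mathrm{Re}(\mu)>0\}$; equivalently, in the $(s,\chi)$ notation used in \eqref{al}, $L(s,\chi)$ is holomorphic whenever $\mathrm{Re}(s)+\mathrm{Re}(\chi)>0$. In the nonarchimedean case this is immediate from $L(\mu)=(1-\mu(\varpi))^{-1}$ when $\mu$ is unramified and $L(\mu)=1$ when $\mu$ is ramified, since a pole forces $\mu(\varpi)=1$ and hence $\mathrm{Re}(\mu)=0$; in the archimedean case $L(\mu)$ is an exponential times a product of $\Gamma$-factors whose poles all lie in the region $\mathrm{Re}(\mu)\le 0$. (One may also deduce this from convergence of the local zeta integral for $\mathrm{Re}(\mu)>0$ together with the fact that $L(\mu)$ divides such an integral for a suitable test function.)

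Next, for each $i\in I$ I would apply this to the factor $L(-s_i,\chi^{\lambda_i})$, which is the local $L$-factor of the quasi-character $\chi^{\lambda_i}|\cdot|^{-s_i}$ whose real part equals $\lambda_i\,\mathrm{Re}(\chi)-s_i$, using that $\mathrm{Re}(\chi^{\lambda_i})=\lambda_i\,\mathrm{Re}(\chi)$ by the convention on real parts of quasi-characters. There is no index with $\lambda_i=0$ by hypothesis. If $\lambda_i>0$, then $\mathrm{Re}(\chi)>A(L)\ge s_i/\lambda_i$ forces $\lambda_i\,\mathrm{Re}(\chi)-s_i>0$; if $\lambda_i<0$, then $\mathrm{Re}(\chi)<B(L)\le s_i/\lambda_i$, and multiplying through by the negative number $\lambda_i$ again gives $\lambda_i\,\mathrm{Re}(\chi)-s_i>0$. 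Hence on the (nonempty, by the standing assumption $A(L)<B(L)$) strip $A(L)<\mathrm{Re}(\chi)<B(L)$ each factor $L(-s_i,\chi^{\lambda_i})$ is holomorphic, and since $I$ is finite the product $a_L(\chi)=\prod_{i\in I}L(-s_i,\chi^{\lambda_i})$ is holomorphic there, which is exactly the asserted absence of poles.

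The argument is short and I do not anticipate a real obstacle; the only points needing a little care are (i) stating the holomorphy region of the local $L$-factor correctly in the archimedean case, where it is the genuine half-space $\mathrm{Re}(\mu)>0$ rather than merely the complement of the line $\mathrm{Re}(\mu)=0$, and (ii) tracking the sign in $\mathrm{Re}(\chi^{\lambda_i})=\lambda_i\,\mathrm{Re}(\chi)$ so that the inequality flips appropriately when $\lambda_i<0$ — which is precisely why the two halves of the definition \eqref{AB} of $A(L)$ and $B(L)$ take the form they do.
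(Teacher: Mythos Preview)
Your proof is correct and is exactly the immediate verification the paper has in mind; the paper simply omits the argument (writing only \qed) because it follows at once from the definition \eqref{AB} of $A(L),B(L)$ together with the standard fact that the local Tate $L$-factor $L(\mu)$ is holomorphic for $\mathrm{Re}(\mu)>0$.
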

  
  We now define the space
\begin{align} \label{SL}
\mathcal{S}_{L}:=\mathcal{S}_L(X_{P^{\mathrm{op}}}(F)) <C^\infty(X^\circ_{P^{\mathrm{op}}}(F)).
\end{align} 
When $F$ is nonarchimedean, we define $\cals_L$ to be the space of smooth functions $f: X^\circ_{P^{\mathrm{op}}}(F)\to \cc$ that are finite under a maximal compact subgroup of $G(F)$ and satisfy the following additional condition: the integral defining $f_{\chi_s}^{\mathrm{op}}$ is absolutely convergent for  $A(L)<\mathrm{Re}(s)<B(L)$ and 
$$
\frac{f^{\mathrm{op}}_{\chi_s}(x)}{a_L(\chi_s)}
$$
lies in $\cc[q^{-s},q^s]$ for each $x \in X^\circ_{P^{\mathrm{op}}}(F)$ and all (unitary) characters $\chi:F^\times \to \cc^\times$.  

When $F$ is archimedean, we require a bit more notation. Let $\widehat{K}_{\GG_m}$ be the set of characters of the maximal compact subgroup $K_{\GG_m}$ of $F^\times$.  Thus, setting
$$
\mu(z):=\frac{z}{(z\overline{z})^{1/2}},
$$
where we use the positive square root, we have
\begin{align}\label{KGM}
\widehat{K}_{\GG_m}:=\left\{\mu^{\alpha}: \begin{matrix}\alpha \in \{0,1\} \textrm{ if }F \textrm{ is real}\\
\:\alpha \in \zz \textrm{ if }F \textrm{ is complex}\end{matrix}\right\}.
\end{align}
For real numbers $A<B,$ $p \in \cc[s],$ and meromorphic functions $\phi:\cc \to \cc,$ we let
\begin{align}\label{semi:normf}
|\phi|_{A,B,p}:&=\sup_{s \in V_{A,B}}|p(s)\phi(s)|
\end{align}
where $V_{A,B}$ is defined as in \eqref{VAB}.  Consider the Lie algebra
\begin{align}\label{Liemabg}
\mathfrak{m}^{\mathrm{ab}} \oplus \mathfrak{g}:=\mathrm{Lie}(M^{\mathrm{ab}}(F) \times G(F)).
\end{align}
It acts on $C^\infty(X_{P^{\mathrm{op}}}^{\circ}(F))$ via the differential of the action \eqref{geo:act} and hence we obtain an action of $U(\mathfrak{m}^{\mathrm{ab}} \oplus \mathfrak{g}),$ the universal enveloping algebra of  $(\mathfrak{m}^{\mathrm{ab}} \oplus \mathfrak{g})_{\cc}$ (here we view $\mathfrak{m}^{\mathrm{ab}} \oplus \mathfrak{g}$ as a real Lie algebra).

We let $\mathcal{S}_L$ be the space of smooth functions $f:X^\circ_{P^{\mathrm{op}}}(F) \to \cc$ such that for all $\eta \in \widehat{K}_{\GG_m}$ and all $D\in U(\mathfrak{m}^{\mathrm{ab}} \oplus \mathfrak{g}),$ the integral defining
\[
(D.f)_{\eta_s}^{\mathrm{op}}(x)
\]
converges absolutely for all $A(L)<\mathrm{Re}(s)<B(L),$ and admits a meromorphic continuation to the plane such that
\begin{enumerate}
    \item for all $A<B,$
    \item  all polynomials $p \in \cc[s]$ such that $p(s)a_{L}(\eta_s)$ has no poles in $V_{A,B}$ for all $\eta \in \widehat{K}_{\GG_m},$
    \item all compact subsets $\Omega \subset X_{P^{\mathrm{op}}}^\circ(F),$
    \item all $D \in U(\mathfrak{m}^{\mathrm{ab}} \oplus \mathfrak{g}),$
\end{enumerate}   one has that
\begin{align}
|f|_{A,B,p,\Omega,D}:=\sum_{\eta\in \widehat{K}_{\GG_m}} \sup_{x \in \Omega}|(D.f)_{\eta_s}^{\mathrm{op}}(x)|_{A,B,p}<\infty.
\end{align}
This collection of seminorms gives $\mathcal{S}_L$ the structure of a Fr\'echet space by the same argument as  in \cite[Lemma 3.2]{Getz:Hsu}.  

In all cases, this definition allows us to recover analytic properties of $f$ from its Mellin transforms via Mellin inversion. More specifically, let 
$
\widehat{K}_{\GG_m}
$
be a set of representatives for the characters of $F^\times$ modulo the equivalence relation
\[
\chi_1\sim\chi_2\text{  if and only if  } \chi_1=\chi_2|\cdot|^{it}
\]
for some $t \in \rr$.  The set of equivalence classes can be identified with the set of characters of the maximal compact subgroup $K_{\GG_m} <F^\times,$ which explains the notation.  In the archimedean case we always use the representatives given by \eqref{KGM}.
Let $\kappa \in \rr_{>0}$ (depending on $\psi$) be chosen so that
\begin{align*}
\kappa dx
\end{align*}
is the standard Haar measure on $F$.  Here the standard Haar measure is the Lebesgue measure if $F=\rr,$ twice the Lebesgue measure if $F=\cc,$ and satisfies $\kappa dx(\mathcal{O}_F)=|\mathfrak{d}|^{1/2}$ where $\mathfrak{d}$ is a generator for the absolute different of $\mathcal{O}_F$ when $F$ is nonarchimedean.  We then let
\begin{align} \label{IF} \begin{split}
I_F:=\begin{cases}\left[ -\frac{\pi}{\log q} ,\frac{\pi}{\log q}\right] & \textrm{ if }F \textrm{ is nonarchimedean},\\
\rr & \textrm{ if }F \textrm{ is archimedean,}\end{cases} \end{split}
\end{align}
and
\begin{align*} 
c_F:=\begin{cases} \kappa\log q & \textrm{ if }F \textrm{ is nonarchimedean},\\
\frac{\kappa}{2} & \textrm{ if }F=\rr,\\
\frac{\kappa}{2\pi}& \textrm{ if }F=\cc.\end{cases}
\end{align*}
Suppose $A(L)<\sigma<B(L)$.  Then  $a_L(\chi_s)$ has no poles for any character $\chi$ and any $s$ with $\mathrm{Re}(s)=\sigma$.

We fix now a maximal compact subgroup $K <G(F)$ such that the Iwasawa decomposition
\begin{align}\label{Iwasawa choice}
    P(F) K=G(F)
\end{align}
holds. The following is a version of Mellin inversion (see \cite[Lemma 4.3]{Getz:Liu:BK}, \cite[Theorem 4.32]{Folland}, \cite[(2.2)]{Blomer:Brumley}):
\begin{Lem} \label{lem:Mellin}
Let $f \in C^\infty(X_{P^{\mathrm{op}}}^{\circ}(F))$ and assume for all $\eta \in \widehat{K}_{\GG_m}$ the integral defining $f_{\eta_s}^{{\mathrm{op}}}$ is absolutely convergent for $\mathrm{Re}(s)=\sigma.$  Suppose moreover that for all $x \in X_{P^{\mathrm{op}}}^\circ(F)$ one has 
\begin{align*}
    \sum_{\eta \in \widehat{K}_{\GG_m}} \int_{\sigma+iI_F}|f^{\mathrm{op}}_{\eta_s}(x)| ds<\infty.
\end{align*}
Then for all $x \in X_{P^{\mathrm{op}}}^\circ(F)$ one has 
\begin{align} \label{Mellin:inv}
f(x)=\sum_{\eta \in \widehat{K}_{\GG_m}} \int_{\sigma+iI_F}f^{\mathrm{op}}_{\eta_s}(x)\frac{c_Fds}{2\pi i}.
\end{align}
Moreover, $f$ is $K$-finite if and only if the sum over $\eta$ has support in a finite set independent of $x.$

Conversely, suppose that we are given continuous $f(\eta)^{(s)} \in \overline{I}(\eta_s)$ for all $s$ with $\mathrm{Re}(s)=\sigma$ and all $\eta \in \widehat{K}_{\GG_m}$ and that 
$$
\sum_{\eta \in \widehat{K}_{\GG_m}} \int_{\sigma+iI_F}|f(\eta)^{(s)}(x)|ds<\infty
$$
for all $x \in X_{P^{\mathrm{op}}}^\circ(F).$
Assume moreover in the nonarchimedean case that $f(\eta)^{(s+\frac{2 \pi i}{\log q})}=f(\eta)^{(s)}.$  Define
$$
f(x):=\sum_{\eta \in \widehat{K}_{\GG_m}} \int_{\sigma+iI_F}f(\eta)^{(s)}(x)\frac{c_Fds}{2 \pi i}.
$$
If the integral defining $f_{\eta_{s}}^{\mathrm{op}}$ is absolutely convergent for all $\eta \in \widehat{K}_{\GG_m}$ and $s$ with $\mathrm{Re}(s)=\sigma$ then
$f_{\eta_s}^{{\mathrm{op}}}=f(\eta)^{(s)}.$ \qed
\end{Lem}
\noindent The lemma implies in particular that \eqref{Mellin:inv} holds for $f \in \mathcal{S}_L$ and $A(L)<\sigma<B(L)$.

As an immediate consequence of Mellin inversion \eqref{Mellin:inv}, we deduce the following estimate for functions in $\mathcal{S}_L$:
\begin{Lem} \label{lem:Sch:bounds}
Assume $\varepsilon>0$ is chosen so that $A(L)+\varepsilon<B(L)-\varepsilon,$  and let $\Omega \subset X_{P^{\mathrm{op}}}(F)$ be a compact subset. For each $f \in \mathcal{S}_L$ and $(m,x) \in M^{\mathrm{ab}}(F) \times \Omega,$ one has an estimate 
\begin{equation*}
|f(mx)| \ll_{\Omega,f,\varepsilon} \delta_{P^{\mathrm{op}}}^{1/2}(m)\min(|\omega_P(m)|^{A(L)+\varepsilon},|\omega_P(m)|^{B(L)-\varepsilon}).
\end{equation*} 
Here when $A(L)=-\infty$ we interpret $A(L)+\varepsilon$ as any negative real number $A,$ and when $B(L)=\infty$ we interpret $B(L)-\varepsilon$ 
as any positive real number $B.$  In these cases, the implied constant depends on $A$ and $B.$  
\qed
\end{Lem}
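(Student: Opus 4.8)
The plan is to read the estimate directly off the Mellin inversion formula \eqref{Mellin:inv}, after pulling the variable $m$ out of the integrand by equivariance and then bounding the resulting contour integral using the seminorms built into $\mathcal{S}_L$. Fix $\sigma$ with $A(L)<\sigma<B(L)$, to be specified at the end. Since for a unitary character $\eta$ the section $f^{\mathrm{op}}_{\eta_s}$ lies in the \emph{normalized} induced representation $\overline{I}(\eta_s)=\Ind_{P^{\mathrm{op}}(F)}^{G(F)}(\eta_s\circ\omega_P)$, the $M^{\mathrm{ab}}(F)$-equivariance of the Mellin transform gives, for $m\in M^{\mathrm{ab}}(F)$ and $x$ in the compact set $\Omega$,
\begin{align*}
\bigl|f^{\mathrm{op}}_{\eta_s}(mx)\bigr|=\delta_{P^{\mathrm{op}}}^{1/2}(m)\,|\omega_P(m)|^{\mathrm{Re}(s)}\,\bigl|f^{\mathrm{op}}_{\eta_s}(x)\bigr|.
\end{align*}
Applying \eqref{Mellin:inv} at the point $mx$ and bounding the absolute value inside the sum and integral, we obtain
\begin{align*}
|f(mx)|\ \le\ \delta_{P^{\mathrm{op}}}^{1/2}(m)\,|\omega_P(m)|^{\sigma}\sum_{\eta\in\widehat{K}_{\GG_m}}\int_{\sigma+iI_F}\bigl|f^{\mathrm{op}}_{\eta_s}(x)\bigr|\,\frac{c_F\,|ds|}{2\pi}.
\end{align*}
It therefore suffices to bound the sum of integrals on the right by a constant depending only on $\Omega$, $f$, and $\sigma$.

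In the archimedean case, choose real numbers $A,B$ with $A(L)\le A<\sigma<B\le B(L)$ (when $A(L)=-\infty$ or $B(L)=\infty$, take $A$ arbitrarily negative, resp.\ $B$ arbitrarily positive; the final constant will then depend on this choice), and put $p(s):=(s-A+1)(s-B-1)$, a polynomial of degree $2$ whose two real zeros lie outside the closed strip $\{A\le\mathrm{Re}(s)\le B\}$. By Lemma \ref{lem:no:poles}, $a_L(\eta_s)$ has no poles in $V_{A,B}$ for any $\eta$, hence neither does $p(s)a_L(\eta_s)$; thus $p$ is admissible in the seminorms \eqref{semi:normf}, and (taking $D$ to be the identity of $U(\mathfrak{m}^{\mathrm{ab}}\oplus\mathfrak{g})$) we get, for $s$ with $\mathrm{Re}(s)=\sigma$ and $x\in\Omega$,
\begin{align*}
\bigl|f^{\mathrm{op}}_{\eta_s}(x)\bigr|\ \le\ \frac{1}{|p(s)|}\,\sup_{x'\in\Omega,\ s'\in V_{A,B}}\bigl|p(s')\,f^{\mathrm{op}}_{\eta_{s'}}(x')\bigr|.
\end{align*}
Summing over $\eta$ and integrating, the sum of integrals above is at most $|f|_{A,B,p,\Omega,1}\int_{\sigma+i\rr}|p(s)|^{-1}\tfrac{c_F|ds|}{2\pi}$, which is finite since $|p(\sigma+it)|\gg 1+t^2$; note that it is precisely the (for $F=\cc$ infinite) sum over $\widehat{K}_{\GG_m}$ that gets absorbed into $|f|_{A,B,p,\Omega,1}$. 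Hence the sum of integrals is $\ll_{\Omega,f,\sigma}1$. In the nonarchimedean case the argument is easier: $I_F$ is a compact interval, only finitely many $\eta\in\widehat{K}_{\GG_m}$ contribute to \eqref{Mellin:inv} by smoothness of $f$, and $f^{\mathrm{op}}_{\eta_s}(x)=a_L(\eta_s)\cdot(\text{an element of }\cc[q^{-s},q^s])$ is bounded on the compact segment $\sigma+iI_F$ uniformly for $x\in\Omega$, again by Lemma \ref{lem:no:poles}.

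Finally we choose $\sigma$. When $|\omega_P(m)|\ge 1$, run the argument with $\sigma=A(L)+\varepsilon$ (permissible since $A(L)<A(L)+\varepsilon<B(L)-\varepsilon<B(L)$); when $|\omega_P(m)|\le 1$, run it with $\sigma=B(L)-\varepsilon$. In either case $|\omega_P(m)|^{\sigma}=\min\bigl(|\omega_P(m)|^{A(L)+\varepsilon},|\omega_P(m)|^{B(L)-\varepsilon}\bigr)$, and combining the two cases (taking the larger of the two implied constants) gives the asserted bound; in the degenerate cases $A(L)=-\infty$ or $B(L)=\infty$ one simply replaces $A(L)+\varepsilon$, resp.\ $B(L)-\varepsilon$, by the chosen $A$, resp.\ $B$, and the constant depends on it. The one genuinely delicate point is the convergence of the Mellin-inversion integral over the unbounded contour in the archimedean case; this is exactly what the polynomial weight in the seminorms defining $\mathcal{S}_L$ is designed to control, so once Lemma \ref{lem:no:poles} tells us that a suitable degree-$2$ polynomial is admissible, the only remaining thing to watch is that every estimate is uniform in $x\in\Omega$, which is immediate from the form of the seminorms (and, nonarchimedean, from compactness).
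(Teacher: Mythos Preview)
Your proof is correct and follows exactly the approach the paper intends: the paper simply states that the lemma is an ``immediate consequence of Mellin inversion \eqref{Mellin:inv}'' and omits details, and you have supplied those details accurately, including the choice of contour $\sigma$, the equivariance pulling out $\delta_{P^{\mathrm{op}}}^{1/2}(m)|\omega_P(m)|^\sigma$, and the use of the seminorms to control the remaining integral in the archimedean case.
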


We now use this to give a criterion for when the regularized integral is the usual integral:
\begin{Lem} \label{lem:conv}
Assume $\lambda>0$ and let $s \in \cc$. If
\begin{align*}
A(L) <
 \frac{\mathrm{Re}(s)+1}{\lambda}< B(L),
\end{align*}
then the integral defining $\lambda_{!}(\mu_s)(f)$ is absolutely convergent for $f \in \mathcal{S}_L$.
\end{Lem}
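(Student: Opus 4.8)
The plan is to deduce absolute convergence directly from the pointwise bound on functions in $\mathcal{S}_L$ furnished by Lemma \ref{lem:Sch:bounds}, reducing everything to the convergence of a one-dimensional Tate-type integral. Fix $x \in X_{P^{\mathrm{op}}}^\circ(F)$. By \eqref{eqn: eta def}, absolute convergence of $\lambda_!(\mu_s)(f)(x)$ is the statement that
\[
\int_{M^{\mathrm{ab}}(F)}|\omega_P(m)|^{\mathrm{Re}(s)+1}\,\delta_{P^{\mathrm{op}}}^{\lambda/2}(m)\,|f|(m^{-\lambda}x)\,dm < \infty.
\]

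First I would choose the auxiliary parameter. Since $A(L)<\tfrac{\mathrm{Re}(s)+1}{\lambda}<B(L)$ and $A(L)<B(L)$, pick $\varepsilon>0$ small enough that simultaneously $A(L)+\varepsilon<B(L)-\varepsilon$ and $A(L)+\varepsilon<\tfrac{\mathrm{Re}(s)+1}{\lambda}<B(L)-\varepsilon$; if $A(L)=-\infty$ (resp.\ $B(L)=\infty$) I instead fix a finite $A<\tfrac{\mathrm{Re}(s)+1}{\lambda}$ (resp.\ $B>\tfrac{\mathrm{Re}(s)+1}{\lambda}$), as allowed by the conventions in the statement of Lemma \ref{lem:Sch:bounds}. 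Applying that lemma with the compact set $\Omega=\{x\}$ to the element $m^{-\lambda}\in M^{\mathrm{ab}}(F)$, and using that $\lambda>0$ together with $\omega_P(m^{-\lambda})=\omega_P(m)^{-\lambda}$ and $\delta_{P^{\mathrm{op}}}(m^{-\lambda})=\delta_{P^{\mathrm{op}}}(m)^{-\lambda}$, gives
\[
\delta_{P^{\mathrm{op}}}^{\lambda/2}(m)\,|f|(m^{-\lambda}x)\ll_{x,f,\varepsilon}\min\!\left(|\omega_P(m)|^{-\lambda(A(L)+\varepsilon)},\,|\omega_P(m)|^{-\lambda(B(L)-\varepsilon)}\right),
\]
the two factors of $\delta_{P^{\mathrm{op}}}$ cancelling exactly.

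Next I would push the integral forward along the isomorphism $\omega_P\colon M^{\mathrm{ab}}(F)\xrightarrow{\sim}F^\times$, under which $dm$ becomes $d^\times y$ by the convention of \S\ref{ssec:measures}. Thus it suffices to prove finiteness of
\[
\int_{F^\times}|y|^{\mathrm{Re}(s)+1}\min\!\left(|y|^{-\lambda(A(L)+\varepsilon)},|y|^{-\lambda(B(L)-\varepsilon)}\right)d^\times y.
\]
Splitting the integral at $|y|=1$: since $\lambda>0$ and $A(L)+\varepsilon<B(L)-\varepsilon$, the minimum equals $|y|^{-\lambda(A(L)+\varepsilon)}$ on $|y|\le 1$ and $|y|^{-\lambda(B(L)-\varepsilon)}$ on $|y|\ge 1$. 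The $|y|\le 1$ piece is then $\int_{|y|\le 1}|y|^{\mathrm{Re}(s)+1-\lambda(A(L)+\varepsilon)}d^\times y$, which converges because $\mathrm{Re}(s)+1-\lambda(A(L)+\varepsilon)>0$ by our choice of $\varepsilon$; the $|y|\ge 1$ piece is $\int_{|y|\ge 1}|y|^{\mathrm{Re}(s)+1-\lambda(B(L)-\varepsilon)}d^\times y$, which converges because $\mathrm{Re}(s)+1-\lambda(B(L)-\varepsilon)<0$. This gives the claim (and, since the implied constant is locally bounded in $x$, convergence is even locally uniform).

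There is essentially no conceptual obstacle here; the only thing demanding care is the bookkeeping with modulus characters — verifying that the powers of $\delta_{P^{\mathrm{op}}}$ cancel after the substitution $m\mapsto m^{-\lambda}$, that the sign of $\lambda$ converts the two endpoint hypotheses $A(L)<\tfrac{\mathrm{Re}(s)+1}{\lambda}$ and $\tfrac{\mathrm{Re}(s)+1}{\lambda}<B(L)$ into the correct convergence conditions at $0$ and $\infty$ respectively, and that the $A(L)=-\infty$, $B(L)=\infty$ conventions of Lemma \ref{lem:Sch:bounds} are invoked correctly.
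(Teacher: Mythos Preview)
Your proof is correct and follows essentially the same approach as the paper: both arguments apply the pointwise bound of Lemma \ref{lem:Sch:bounds} and reduce to the convergence of the one-dimensional integral $\int_{F^\times}|t|^{\mathrm{Re}(s)+1}\min(|t|^{-\lambda(A(L)+\varepsilon)},|t|^{-\lambda(B(L)-\varepsilon)})\,d^\times t$ for sufficiently small $\varepsilon>0$. The paper's proof is simply a terser version of yours, omitting the explicit cancellation of the modulus characters and the $|y|\le 1$/$|y|\ge 1$ split that you spell out.
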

\begin{proof}
Substituting the bounds from Lemma \ref{lem:Sch:bounds}, it suffices to observe that
\begin{align*}
\int_{F^\times}|t|^{\mathrm{Re}(s)+1}\min(|t|^{-\lambda(A(L)+\varepsilon)},|t|^{-\lambda(B(L)-\varepsilon)})d^\times t
\end{align*}
is convergent for $\varepsilon>0$ sufficiently small.
Here when $A(L)=-\infty$ or $B(L)=\infty,$ we interpret $A(L)+\varepsilon$ and $B(L)-\varepsilon$ as in Lemma \ref{lem:Sch:bounds}.
\end{proof}

For each $i,$ let 
\begin{align}
    \widetilde{L}_i
\end{align}
    be $L_i^\vee$ (the one-dimensional vector space on which $\GG_m$ acts via $-\lambda_i$) attached with the real number $-1-s_i$.  If $-\infty<A(L),$ choose $L_k$ such that $A(L)=\frac{s_k}{\lambda_k},$ and define
$$
L':=\widetilde{L}_k \oplus \bigoplus_{i \neq k}L_i.
$$
Since we have assumed $A(L)<B(L),$ we have that  
\begin{align} \label{string}
A(L') \leq A(L)<B(L') \leq B(L),
\end{align}
so
\begin{align} \label{overlap}
    (A(L),B(L))\cap (A(L'),B(L'))=(A(L),B(L'))\neq \emptyset.
\end{align}
Using this observation, we prove the following proposition:
\begin{Prop} \label{prop:Mellin}
For $-\infty<A(L)<\mathrm{Re}(\chi)<B(L'),$ there is a  commutative diagram
\begin{center}
    \begin{tikzcd}
    \mathcal{S}_L \arrow[rr,"\lambda_{k!}(\mu_{s_k})"] \arrow[d,"(\cdot)_{\chi}^{\mathrm{op}}"]&& \mathcal{S}_{L'} \arrow[d,"(\cdot)_{\chi}^{\mathrm{op}}"]\\
    \overline{I}(\chi) \arrow[rr] &&\overline{I}(\chi)
    \end{tikzcd}
\end{center}
where the bottom arrow is multiplication by $\gamma(-s_k,\chi^{\lambda_k},\psi)$ and the vertical arrows are $f \mapsto f^{\mathrm{op}}_{\chi}$.   In particular, the regularized integral $\lam_{k!}(\mu_{s_k})$ is well-defined on $\mathcal{S}_L$.
\end{Prop}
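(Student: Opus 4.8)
The plan is to reduce the proposition to Mellin inversion \eqref{Mellin:inv} together with Lemma \ref{lem:gamma}. Fix a real number $\sigma$ with $A(L)<\sigma<B(L')$; this interval is nonempty by \eqref{overlap}, it is contained in $(A(L),B(L))$ so that Mellin inversion applies to $f\in\mathcal{S}_L$ on the line $\mathrm{Re}(s)=\sigma$, and since the summand $\widetilde L_k$ contributes $(1+s_k)/\lambda_k$ to $B(L')$ we have $\sigma<B(L')\le(1+s_k)/\lambda_k$, hence $\mathrm{Re}(s_k)+1-\lambda_k\sigma>0$, which is exactly the hypothesis of Lemma \ref{lem:gamma} on that line. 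The heart of the argument is to establish, for every $x\in X_{P^{\mathrm{op}}}^\circ(F)$, the identity
\begin{align}\label{eq:MellinKeyPropPf}
\lambda_{k!}(\mu_{s_k})(f)(x)=\sum_{\eta\in\widehat{K}_{\GG_m}}\int_{\sigma+iI_F}\gamma(-s_k,\eta_s^{\lambda_k},\psi)\,f_{\eta_s}^{\mathrm{op}}(x)\,\frac{c_F\,ds}{2\pi i}.
\end{align}
Because the right-hand side of \eqref{eq:MellinKeyPropPf} makes no reference to the auxiliary function $\Phi$, once the left-hand side is shown to equal it the regularized integral $\lambda_{k!}(\mu_{s_k})$ is automatically well-defined on $\mathcal{S}_L$; and \eqref{eq:MellinKeyPropPf} is also the starting point for identifying the image $\mathcal{S}_{L'}$ and for the commutativity of the diagram.

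\textbf{Proof of \eqref{eq:MellinKeyPropPf}.} Write $\lambda_{k!}^{(b)}$ for the operator \eqref{eqn: eta def} with the extra cutoff $\Phi(\omega_P(m)/b)$ inserted. First, using Lemma \ref{lem:Sch:bounds} (with a compact neighborhood of $x$ as $\Omega$) and passing to the coordinate $t=\omega_P(m)$, the integrand of $\lambda_{k!}^{(b)}(f)(x)$ is $\ll|\Phi(t/b)|\,|t|^{s_k+1}\min(|t|^{-\lambda_k(A(L)+\varepsilon)},|t|^{-\lambda_k(B(L)-\varepsilon)})$; since $\lambda_kA(L)=s_k$ this is integrable near $t=0$ for small $\varepsilon$, and rapid decay of $\Phi$ handles $t\to\infty$, so $\lambda_{k!}^{(b)}(f)(x)$ converges absolutely. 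Next, substitute Mellin inversion for $f(m^{-\lambda_k}x)$ and interchange $\int_{M^{\mathrm{ab}}(F)}$ with $\sum_\eta\int_{\sigma+iI_F}$; this is legitimate by Fubini, since the equivariance $f_{\eta_s}^{\mathrm{op}}(m^{-\lambda_k}x)=\delta_{P^{\mathrm{op}}}^{-\lambda_k/2}(m)|\omega_P(m)|^{-\lambda_k\sigma}f_{\eta_s}^{\mathrm{op}}(x)$ of sections of $\overline I(\eta_s)$ lets the $m$-integral of absolute values factor as $\big(\int_{F^\times}|\Phi(t/b)|\,|t|^{s_k+1-\lambda_k\sigma}\,d^\times t\big)\,|f_{\eta_s}^{\mathrm{op}}(x)|$, whose $t$-integral is finite (the condition $\sigma<(1+s_k)/\lambda_k$ controls $t\to0$ and $\Phi$ controls $t\to\infty$), while $\sum_\eta\int_{\sigma+iI_F}|f_{\eta_s}^{\mathrm{op}}(x)|\,|ds|<\infty$ by the seminorms of $\mathcal{S}_L$. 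This gives $\lambda_{k!}^{(b)}(f)(x)=\sum_\eta\int_{\sigma+iI_F}\lambda_{k!}^{(b)}(f_{\eta_s}^{\mathrm{op}})(x)\,\frac{c_F\,ds}{2\pi i}$. Now I evaluate the slices: the computation in the proof of Lemma \ref{lem:gamma} (Tate's local functional equation, the factor $|b|$ produced by $t\mapsto bt$, and \eqref{eq:becomeone}) shows $\lambda_{k!}^{(b)}(f_{\eta_s}^{\mathrm{op}})(x)=\gamma(-s_k,\eta_s^{\lambda_k},\psi)\,G_b(\eta_s)\,f_{\eta_s}^{\mathrm{op}}(x)$, where $G_b(\eta_s)=\int_F\widehat\Phi(y)\,|1-y/b|^{-s_k-1}\eta_s^{\lambda_k}(1-y/b)\,dy\to1$ as $|b|\to\infty$, and since $\widehat\Phi\in C_c^\infty(F)$ and $|1-y/b|$ stays in a fixed compact subset of $\rr_{>0}$ on $\supp\widehat\Phi$ for large $|b|$, the quantity $|G_b(\eta_s)|$ is bounded uniformly in $\eta$ and in $s$ with $\mathrm{Re}(s)=\sigma$. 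Finally, pass $|b|\to\infty$ under $\sum_\eta\int_{\sigma+iI_F}$ by dominated convergence: the dominating function $|\gamma(-s_k,\eta_s^{\lambda_k},\psi)|\,|f_{\eta_s}^{\mathrm{op}}(x)|$ is integrable because $\gamma$-factors on vertical lines grow at most polynomially (Stirling's formula in the archimedean case; the nonarchimedean case is trivial as $I_F$ is compact), whereas the full collection of $\mathcal{S}_L$-seminorms (including those coming from differential operators $D\in U(\mathfrak{m}^{\mathrm{ab}}\oplus\mathfrak{g})$, in particular the angular direction in $\mathfrak{m}^{\mathrm{ab}}$) forces $\sum_\eta|f_{\eta_s}^{\mathrm{op}}(x)|$ to decay faster than any polynomial in $\mathrm{Im}(s)$ and in the character $\eta$. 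This proves \eqref{eq:MellinKeyPropPf}, and with it the well-definedness of $\lambda_{k!}(\mu_{s_k})$ on $\mathcal{S}_L$.

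\textbf{Image and commutativity.} Put $g:=\lambda_{k!}(\mu_{s_k})(f)$. From the definition of the $\gamma$-factor in terms of local $L$- and $\varepsilon$-factors one has $a_{L'}(\chi)=\varepsilon(-s_k,\chi^{\lambda_k},\psi)^{-1}\gamma(-s_k,\chi^{\lambda_k},\psi)\,a_L(\chi)$: replacing $L_k$ by $\widetilde L_k$ replaces the factor $L(-s_k,\chi^{\lambda_k})$ of $a_L(\chi)$ by $L(1+s_k,\chi^{-\lambda_k})$, and $\gamma(-s_k,\chi^{\lambda_k},\psi)=\varepsilon(-s_k,\chi^{\lambda_k},\psi)\,L(1+s_k,\chi^{-\lambda_k})/L(-s_k,\chi^{\lambda_k})$. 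Hence the integrand in \eqref{eq:MellinKeyPropPf} is $\varepsilon(-s_k,\eta_s^{\lambda_k},\psi)\,a_{L'}(\eta_s)\,\big(f_{\eta_s}^{\mathrm{op}}(x)/a_L(\eta_s)\big)$, which by Lemma \ref{lem:no:poles} and the defining properties of $f$ is holomorphic on $A(L')<\mathrm{Re}(s)<B(L')$ — the zero of $\gamma(-s_k,\eta_s^{\lambda_k},\psi)$ at the pole of $L(-s_k,\eta_s^{\lambda_k})$ cancels the corresponding pole of $f_{\eta_s}^{\mathrm{op}}$. Using \eqref{string} together with the decay of $a_L(\eta_s)$ as $|\mathrm{Im}(s)|\to\infty$ in the archimedean case (resp.\ compactness of $I_F$), the contour in \eqref{eq:MellinKeyPropPf} may be shifted to any $\sigma'\in(A(L'),B(L'))$; applying this to $g(nx)$ for $n\in M^{\mathrm{ab}}(F)$ and invoking equivariance of the $f_{\eta_s}^{\mathrm{op}}$ yields $|g(nx)|\ll_x\delta_{P^{\mathrm{op}}}^{1/2}(n)\min(|\omega_P(n)|^{A(L')+\varepsilon},|\omega_P(n)|^{B(L')-\varepsilon})$. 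This bound makes $g_{\chi_s}^{\mathrm{op}}$ absolutely convergent for $A(L')<\mathrm{Re}(s)<B(L')$, and Mellin uniqueness applied to \eqref{eq:MellinKeyPropPf} gives $g_\chi^{\mathrm{op}}=\gamma(-s_k,\chi^{\lambda_k},\psi)f_\chi^{\mathrm{op}}$ — the asserted commutativity — as well as $g_\chi^{\mathrm{op}}/a_{L'}(\chi)=\varepsilon(-s_k,\chi^{\lambda_k},\psi)\,f_\chi^{\mathrm{op}}/a_L(\chi)$. Since $\varepsilon(-s_k,\chi_s^{\lambda_k},\psi)$ is a Laurent monomial in $q^{s}$ in the nonarchimedean case (resp.\ a nowhere-vanishing function of controlled growth in the archimedean case), and $\lambda_{k!}(\mu_{s_k})$ commutes with right translation by $G(F)$ so that for $D\in U(\mathfrak{m}^{\mathrm{ab}}\oplus\mathfrak{g})$ the function $D.g$ is again of the form already analyzed, the conditions defining $\mathcal{S}_{L'}$ are inherited from those defining $\mathcal{S}_L$; thus $g\in\mathcal{S}_{L'}$, completing the proof.

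\textbf{Main obstacle.} The delicate point is the analytic bookkeeping behind the two interchanges of limiting operations in the second paragraph — the Fubini step and the passage $|b|\to\infty$ under $\sum_\eta\int_{\sigma+iI_F}$ — which requires control of $\gamma(-s_k,\eta_s^{\lambda_k},\psi)$ uniform in $b$, in $\mathrm{Im}(s)$ along vertical lines, and, in the complex archimedean case, in the character $\eta$, weighed against the decay supplied by the Schwartz seminorms of $\mathcal{S}_L$; Stirling's formula makes this possible but tedious.
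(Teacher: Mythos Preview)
Your proof is correct and follows essentially the same route as the paper: both hinge on the key identity expressing $\lambda_{k!}(\mu_{s_k})(f)(x)$ as a Mellin-side integral of $\gamma$-twisted sections (the paper's assertion that \eqref{b:lim} equals \eqref{Mell2}), and both use the $\varepsilon$-factor relation $\gamma\cdot a_L=\varepsilon\cdot a_{L'}$ to verify that the image lands in $\mathcal S_{L'}$ and that the diagram commutes. The one technical difference worth noting concerns exactly the obstacle you flag. Rather than bounding $|\gamma(-s_k,\eta_s^{\lambda_k},\psi)|$ via Stirling and then arguing that the $\mathcal S_L$-seminorms supply enough decay --- which, as you say, requires care with uniformity in $\eta$ when $F=\cc$ --- the paper invokes the identity \eqref{eqn: lands in right space} (with $g$ holomorphic and bounded uniformly in $\eta$) at the outset, so that absolute convergence of \eqref{Mell2} follows directly from the defining seminorms of $\mathcal S_L$ with no explicit $\gamma$-factor asymptotics needed. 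The paper also organizes the computation in the reverse direction, starting from \eqref{Mell2} and unwinding Tate's functional equation to reach the cutoff integral, but this is purely cosmetic.
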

\begin{proof} Let $f \in \mathcal{S}_L$ and $x \in X_{P^{\mathrm{op}}}^\circ(F).$
By Lemma \ref{lem:Sch:bounds}, for any $\varepsilon>0$ we have
\begin{align*}
&\int_{M^{\mathrm{ab}}(F)}|\Phi|\left(\frac{\omega_P(m)}{b}\right)|\omega_P(m)|^{s_k+1}\delta_{P^{\mathrm{op}}}^{\lambda_k /2}(m)|f|(m^{-\lambda_k}x)\frac{dm}{\zeta(1)}\\
&\ll_{f,\varepsilon,x} 
\int_{F^\times}|\Phi|\left(\frac{t}{b} \right)|t|^{s_k+1-\lambda_k(A(L)+\varepsilon)}d^\times t,
\end{align*}
which is finite for any $b$ when $\varepsilon$ is sufficiently small. 

 We claim that
\begin{align*} \begin{split}
&\lambda_{k!}(\mu_{s_k})(f)(x)=\lim_{|b| \to \infty}\int_{M^{\mathrm{ab}}(F)}\Phi\left(\frac{\omega_P(m)}{b}\right)\psi(\omega_P(m))|\omega_P(m)|^{s_k+1}\delta_{P^{\mathrm{op}}}^{\lambda_k /2}(m)f(m^{-\lambda_k}x)\frac{dm}{\zeta(1)} \end{split}
\end{align*}
converges and is equal to 
\begin{align} \label{Mell2}
h(x):=\sum_{\eta \in \widehat{K}_{\GG_m}}\int_{\sigma+iI_F}\gamma(\lambda_ks-s_k,\eta^{\lambda_k},\psi)f_{\eta_s}^{\mathrm{op}}(x)\frac{c_Fds}{2\pi i}
\end{align}
for 
\begin{align*} 
    A(L)<\sigma<B(L').
\end{align*}
Before proving the claim, it is convenient to study $h(x).$   By standard properties of the Tate $\ga$-factor, we have
\begin{align}\label{eqn: lands in right space}
\frac{\gamma(\lambda_ks-s_k,\eta^{\lambda_k},\psi)f^{\mathrm{op}}_{\eta_s}}{a_{L'}(\eta_s)}=\frac{g(s,\eta,\psi)f^{\mathrm{op}}_{\eta_s}}{a_{L}(\eta_s)}
\end{align}
where $g(s,\eta,\psi)$ lies in $\cc[q^{-s},q^s]$ in the nonarchimedean case
and is holomorphic and bounded in $V_{A,B}$ for all $-\infty<A<B<\infty$ by a constant independent of $\eta$ when $F$ is archimedean. 
Thus the expression defining $h(x)$ is absolutely convergent for $A(L)<\sigma <B(L')$ since $a_{L'}(\eta_s)$ has no poles in this range (see \eqref{overlap}). Here when $F$ is nonarchimedean, we have used the fact that functions in $\mathcal{S}_L$ are finite under a maximal compact subgroup of $G(F)$ and hence the sum over $\eta$ in \eqref{Mell2} has finite support.  In the archimedean case we have used the fact that $\gamma(\lambda_ks-s_k,\eta^{\lambda_k},\psi)$ is bounded by a polynomial in $s$ independent of $\eta$ for $A(L)<\mathrm{Re}(s)<B(L')$ (see the proof of \cite[Lemma 3.3]{Getz:Liu:BK}). 

Let $A(L)<\sigma<B(L')$ and $x\in X^\circ_{P^{\mathrm{op}}}(F)$. We claim the integral
\begin{align} \label{mell:abs} \begin{split}
    &\int_{M^{\mathrm{ab}}(F)} \delta_{P^{\mathrm{op}}}^{1/2}(m)|\omega_P(m)|^{\sigma}|h|(m^{-1}x) dm\\
    &=\int_{M^{\mathrm{ab}}(F)}\left|\sum_{\eta \in \widehat{K}_{\GG_m}}\int_{iI_F}(\eta_{s})^{-1}(\omega_P(m))\gamma(\lambda_k\sigma+\lambda_ks-s_k,\eta^{\lambda_k},\psi)f_{\eta_{\sigma+s}}^{\mathrm{op}}(x)\frac{c_Fds}{2\pi i}\right|dm \end{split}
\end{align}
is convergent.  This implies in particular that $h_{\chi_s}^{\mathrm{op}}$ is well-defined for $A(L)<\sigma<B(L').$  If $F$ is nonarchimedean, it suffices to fix $\eta\in \widehat{K}_{\GG_m}$ and show 
\begin{align*}
&\int_{M^{\mathrm{ab}}(F)}\left|\int_{iI_F}(\eta_{s})^{-1}(\omega_P(m))\gamma(\lambda_k\sigma+\lambda_ks-s_k,\eta^{\lambda_k},\psi)f_{\eta_{\sigma+s}}^{\mathrm{op}}(x)\frac{c_Fds}{2\pi i}\right|dm<\infty
\end{align*}
By the smoothness of $\eta$, it suffices to show 
\begin{align} \label{ell1:finite}
    \sum_{n\in \zz}\left|\int_{iI_F} q^{ns}\gamma(\lambda_k\sigma+\lambda_ks-s_k,\eta^{\lambda_k},\psi)f_{\eta_{\sigma+s}}^{\mathrm{op}}(x)\frac{c_Fds}{2\pi i}\right|<\infty
\end{align}
This is nothing but the $\ell^1$-norm of the Fourier transform of the smooth function
\begin{align*}
   \rr/\tfrac{2\pi}{\log q}\zz&\lto \cc\\ s&\longmapsto \gamma(\lambda_k\sigma+\lambda_kis-s_k,\eta^{\lambda_k},\psi)f_{\eta_{\sigma+is}}^{\mathrm{op}}(x).
\end{align*}
Hence \eqref{ell1:finite} is valid by a standard integration by parts argument.  In the archimedean case the proof that \eqref{mell:abs} converges is similar. One uses the fact that $f \in \mathcal{S}_L$ and that $\gamma(\lambda_ks-s_k,\eta^{\lambda_k},\psi)$ is bounded by a polynomial in $s$ independent of $\eta$ for $A(L)<\mathrm{Re}(s)<B(L')$ as mentioned above.

We conclude that $h_{\eta_s}^{\mathrm{op}}=\gamma(\lambda_ks-s_k,\eta^{\lambda_k},\psi)f^{\mathrm{op}}_{\eta_s}$ by Mellin inversion, specifically the converse statement in Lemma \ref{lem:Mellin}.  Using \eqref{eqn: lands in right space} (and its analogues with $f$ and $h$ replaced by various derivatives in the archimedean setting) we also deduce that $h \in \mathcal{S}_{L'}.$  Thus 
 we can conclude the commutativity of the diagram upon verifying our claim that $\lambda_{k!}(\mu_{s_k})(f)(x)$ is equal to $h(x).$

Observe that the convergence in \eqref{eq:becomeone} is uniform in $\mathrm{Re}(s),\mathrm{Re}(\chi),\lambda$ in a compact set. Therefore, we can reverse the proof of Lemma \ref{lem:gamma} and deduce that \eqref{Mell2} is equal to the limit as $|b| \to \infty$ of
\begin{align}  \label{after:Tate}
   &\nonumber \sum_{\eta \in \widehat{K}_{\GG_m}}\int_{\sigma+iI_F}\gamma(\lambda_ks-s_k,\eta^{\lambda_k},\psi)\left(\int_{F^\times}\left(\int_{F}\Phi\left(\frac{t}{b} \right)\psi(t)\overline{\psi}(yt) dt\right)|y|^{-s_k}\eta_s(y^{\lambda_k})d^\times y\right)f_{\eta_s}^{\mathrm{op}}(x)\frac{c_Fds}{2\pi i}\\
   &=\sum_{\eta \in \widehat{K}_{\GG_m}}\int_{\sigma+iI_F}
   \left(\int_{M^{\mathrm{ab}}(F)}
   \Phi\left(\frac{\omega_P(m)}{b} \right)\psi(\omega_P(m))|\omega_P(m)|^{s_k+1}\eta_s(\omega_P(m)^{-\lambda_k}) f_{\eta_s}^{\mathrm{op}}(x)\frac{dm}{\zeta(1)}\right)\frac{c_Fds}{2\pi i}.
\end{align}
 Moreover, the expression 
\begin{align*}
   \sum_{\eta \in \widehat{K}_{\GG_m}}\int_{\sigma+iI_F}
   \int_{M^{\mathrm{ab}}(F)}
  \left|\Phi\left(\frac{\omega_P(m)}{b} \right)|\omega_P(m)|^{s_k+1}\eta_s(\omega_P(m)^{-\lambda_k})\right| \left|f_{\eta_s}^{\mathrm{op}}(x)\right|dm ds
\end{align*}
is finite.
Indeed, the inner integral is bounded independently of $\eta$ and $s$ since we have assumed $\sigma<B(L')$  and 
\begin{align*}
   \sum_{\eta \in \widehat{K}_{\GG_m}}\int_{\sigma+iI_F}
 \left|f_{\eta_s}^{\mathrm{op}}(x)\right|ds
\end{align*}
is finite by definition of $\cals_L$ since $a_L(\eta_{s})$ has no poles for $A(L)<\mathrm{Re}(s)<B(L')$. 

Therefore, we can rearrange the order of the integral in \eqref{after:Tate} and arrive at  
\begin{align*}
   &\int_{M^{\mathrm{ab}}(F)}\Phi\left(\frac{\omega_P(m)}{b}\right)\psi(\omega_P(m))|\omega_P(m)|^{s_k+1}\delta_{P^{\mathrm{op}}}^{\lambda_k /2}(m)\left(\sum_{\eta \in \widehat{K}_{\GG_m}}\int_{\sigma+iI_F}f_{\eta_s}^{\mathrm{op}}(m^{-\lambda_k}x)\frac{c_Fds}{2\pi i}\right)\frac{dm}{\zeta(1)}\\
   &=\int_{M^{\mathrm{ab}}(F)}\Phi\left(\frac{\omega_P(m)}{b}\right)\psi(\omega_P(m))|\omega_P(m)|^{s_k+1}\delta_{P^{\mathrm{op}}}^{\lambda_k /2}(m)f(m^{-\lambda_k}x)\frac{dm}{\zeta(1)}.
\end{align*}
Here in the last step we have used Mellin inversion (Lemma \ref{lem:Mellin}), which is valid by definition of $\mathcal{S}_L$ because $A(L)<\sigma<B(L')$.  This completes the proof of our claim that $\lambda_{k!}(\mu_{s_k})(f)(x)$ is equal to $h(x).$
\end{proof}

\begin{Def} \label{defn:good:order} Let $L=\bigoplus_{i\in I}L_i$ and $\{(s_i,\lam_i)\}_{i\in I}$ be as above.  Assume all $\lambda_i>0$.
A \textbf{good ordering} of $\{L_i\}$ is a bijection $I \tilde{\to} \{1,\dots,k\}$ for some $k,$ such that after identifying $I$ with $\{1,\dots,k\}$ via the bijection
one has
\begin{align} \label{increasing}
\frac{s_{i+1}}{\lambda_{i+1}} \geq \frac{s_{i}}{\lambda_{i}} 
\end{align}
for $1 \leq i \leq k-1.$
\end{Def}
\noindent We also refer to a good ordering of $\{L_i\}$ as a good ordering of $\{(s_i,\lambda_i)\}_{i \in I}$.    We henceforth assume that $\lambda_i>0$ for all $i$ and $\{L_i\}$ is equipped with a good ordering (it is easy to see it exists).  In particular we use the good ordering to identify $I$ and $\{1,\dots,k\}.$

For $0 \leq i \leq k,$ we define 
\begin{align*}
L(i):=\left(\bigoplus_{1 \leq j \leq k-i}L_j \right)\oplus\left(\bigoplus_{k-i<j \leq k}\widetilde{L}_j \right).
\end{align*}
Note that $L=L(0),$ and set 
\begin{align*}
\widetilde{L}:=L(k).
\end{align*}
Under assumption \eqref{increasing}, for each $1\leq i<k$ one has
\begin{align*}
A(L(i))=\tfrac{s_{k-i}}{\lambda_{k-i}}<B(L(i+1))=\min_{k-(i+1) < j\le k}\left\{ \tfrac{1+s_{j}}{\lambda_{j}}\right\}\le B(L(i))=\min_{k-i< j\le k}\left\{ \tfrac{1+s_{j}}{\lambda_{j}}\right\}
\end{align*}
and
\[
(A(L),B(L))=\left(\tfrac{s_k}{\lam_k},\infty\right)\quad \text{ and } \quad (A(\widetilde{L}),B(\widetilde{L}))=\left(-\infty,\min_{1\le j\le k}\left\{ \tfrac{1+s_{j}}{\lambda_{j}}\right\}\right).
\]
In particular, for each $0\leq i< k$ we have $A(L(i))<B(L(i)),$ so Proposition \ref{prop:Mellin} implies the map
$$
\lambda_{(k-i)!}(\mu_{s_{k-i}}):\mathcal{S}_{L(i)} \lto \mathcal{S}_{L(i+1)}
$$
is well-defined. Thus we  define
\begin{align}\label{mul}
    \mu_{L}:=\lambda_{1!}(\mu_{s_1}) \circ \dots \circ \lambda_{k!}(\mu_{s_k}):\mathcal{S}_L\longrightarrow \mathcal{S}_{\widetilde{L}}
\end{align}
as an iterated composition.
Define 
\begin{align}\label{mulchi}
\mu_{L}(\chi):=\prod_{i=1}^k\gamma(-s_i,\chi^{\lambda_i},\psi).
\end{align}
\begin{Cor} \label{cor:SL:commute}
 One has a commutative diagram
\begin{center}
    \begin{tikzcd}
    \mathcal{S}_{L}  \arrow[rr,"\mu_{L}"] \arrow[d,"(\cdot)^{\mathrm{op}}_{\chi}"]&& \mathcal{S}_{\widetilde{L}} \arrow[d,"(\cdot)^{\mathrm{op}}_{\chi}"]\\
    \overline{I}(\chi) \arrow[rr]& &\overline{I}(\chi)
    \end{tikzcd}
\end{center}
where the bottom arrow is multiplication by $\mu_L(\chi)$.
\end{Cor}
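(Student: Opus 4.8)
The plan is to establish the corollary by induction on $k$, at each step applying Proposition \ref{prop:Mellin} to strip off one of the operators $\lambda_{i!}(\mu_{s_i})$. Recall that by \eqref{mul} and the discussion preceding it, $\mu_L$ is the well-defined iterated composition
\[
\mathcal{S}_L=\mathcal{S}_{L(0)}\xrightarrow{\lambda_{k!}(\mu_{s_k})}\mathcal{S}_{L(1)}\xrightarrow{\lambda_{(k-1)!}(\mu_{s_{k-1}})}\cdots\xrightarrow{\lambda_{1!}(\mu_{s_1})}\mathcal{S}_{L(k)}=\mathcal{S}_{\widetilde{L}},
\]
each arrow being well-defined because the inequality \eqref{ineq} guarantees $-\infty<A(L(i))<B(L(i+1))$, so that Proposition \ref{prop:Mellin} applies with $L(i)$ and $L(i+1)$ in place of $L$ and $L'$. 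Writing $\mu_L^{(j)}$ for the composition of the first $j$ of these arrows (so $\mu_L^{(j)}\colon\mathcal{S}_L\to\mathcal{S}_{L(j)}$ and $\mu_L^{(k)}=\mu_L$), I would prove by induction on $j$ that for every $f\in\mathcal{S}_L$,
\[
\bigl(\mu_L^{(j)}f\bigr)^{\mathrm{op}}_\chi=\Bigl(\prod_{i=k-j+1}^{k}\gamma(-s_i,\chi^{\lambda_i},\psi)\Bigr)f^{\mathrm{op}}_\chi
\]
as an identity of meromorphic functions of $\chi$. Taking $j=k$ and comparing with \eqref{mulchi} then yields exactly the commutativity of the displayed diagram, with bottom arrow multiplication by $\mu_L(\chi)$.

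For the base case $j=1$, Proposition \ref{prop:Mellin} (with $L(0),L(1)$ in place of $L,L'$) gives the identity $\bigl(\lambda_{k!}(\mu_{s_k})f\bigr)^{\mathrm{op}}_\chi=\gamma(-s_k,\chi^{\lambda_k},\psi)\,f^{\mathrm{op}}_\chi$ on the nonempty strip $A(L(0))<\mathrm{Re}(\chi)<B(L(1))$; since $f\in\mathcal{S}_L$ and $\lambda_{k!}(\mu_{s_k})f\in\mathcal{S}_{L(1)}$ have Mellin transforms that continue meromorphically (this is part of the very definition of the spaces $\mathcal{S}_{L(i)}$), and the Tate $\gamma$-factor is meromorphic, the identity propagates to all $\chi$. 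For the inductive step I would factor $\mu_L^{(j+1)}=\lambda_{(k-j)!}(\mu_{s_{k-j}})\circ\mu_L^{(j)}$, apply Proposition \ref{prop:Mellin} to the outer operator evaluated at $g=\mu_L^{(j)}f\in\mathcal{S}_{L(j)}$ to get $\bigl(\mu_L^{(j+1)}f\bigr)^{\mathrm{op}}_\chi=\gamma(-s_{k-j},\chi^{\lambda_{k-j}},\psi)\,\bigl(\mu_L^{(j)}f\bigr)^{\mathrm{op}}_\chi$ on the strip $A(L(j))<\mathrm{Re}(\chi)<B(L(j+1))$, and then substitute the inductive hypothesis (legitimate on that strip since both sides are meromorphic in $\chi$) and continue meromorphically.

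The one genuinely delicate point — and the reason the statement is not completely formal — is that the strips $A(L(i))<\mathrm{Re}(\chi)<B(L(i+1))$ on which successive squares commute need not overlap; in fact $(A(L),B(L))$ and $(A(\widetilde L),B(\widetilde L))$ can be disjoint, so one cannot stack the commutative diagrams pointwise in $\chi$. This is precisely why the definition of $\mathcal{S}_{L(i)}$ was arranged to force meromorphic continuation of all the Mellin transforms $(D.f)^{\mathrm{op}}_{\eta_s}$: it is this analytic continuation that lets each identity be transported from the strip where it is first produced to the strip required at the next inductive step. Apart from this bookkeeping, the argument is just a telescoping product of the $\gamma$-factors supplied by Proposition \ref{prop:Mellin}, which collapses to $\mu_L(\chi)$ of \eqref{mulchi}.
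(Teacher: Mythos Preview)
Your proposal is correct and follows essentially the same approach as the paper: apply Proposition \ref{prop:Mellin} at each stage $L(i)\to L(i+1)$, extend the resulting identity to all $\chi$ by meromorphic continuation, and then telescope the $\gamma$-factors. The paper's proof is simply terser, stating the single-step identity and then writing ``The corollary follows,'' whereas you make the induction explicit.
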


Some care is needed in interpreting the commutativity of this diagram.  Indeed, for general elements of $\mathcal{S}_L,$ the half planes of absolute convergence of $f_{\chi}^{\mathrm{op}}$ and $\mu_L(f)_{\chi}^{\mathrm{op}}$ may be disjoint.  Thus, the identity
$\mu_L(\chi)f_{\chi}^{\mathrm{op}}=\mu_L(f)_{\chi}^{\mathrm{op}}$ (for $f \in \mathcal{S}_L$) asserted by the corollary must be understood in the sense of meromorphic continuation.  

\begin{proof}
Suppose that $A(L(i))<\mathrm{Re}(\chi)<B(L(i+1))$ and consider the diagram in Proposition \ref{prop:Mellin} in the special case $L=L(i)$.  Using the string of inequalities \eqref{string} we see that both vertical arrows in Proposition \ref{prop:Mellin} are given by absolutely convergent integrals.  The diagram in Proposition \ref{prop:Mellin} continues to commute for arbitrary $\mathrm{Re}(\chi)$ if interpreted in the sense of meromorphic continuation.  In other words, for all $0 \leq i <k$ and arbitrary $\chi,$ we have an identity of meromorphic functions
 $$
 \gamma(-s_{k-i},\chi^{\lambda_{k-i}},\psi)f_{\chi}^{\mathrm{op}}=\lambda_{(k-i)!}(\mu_{s_{k-i}})(f)_{\chi}^{\mathrm{op}}
 $$
for $f \in \mathcal{S}_{L(i)}$.  The corollary follows. 
\end{proof}

\subsection{Braverman and Kazhdan's graded representation}  \label{ssec:BKL} We now recall the graded representation $L$ identified by Braverman and Kazhdan, restricting our attention to the case of a fixed maximal parabolic $P$ containing $M$ and its opposite $P^{\mathrm{op}}$.  We use fraktur letters to denote Lie algebras and $\widehat{\cdot}$ to denote the complex-algebraic dual groups and dual Lie algebras.  We have embeddings of Lie algebras
\begin{align*} 
\widehat{\mathfrak{n}}_{P}\lto \widehat{\fp}^{}\lto \widehat{\fg}.
\end{align*}
 Let $\{e,h,f\}$ be a principal $\fsl_2$-triple in $\widehat{\mathfrak{m}}$; it defines an embedding $\fsl_2 \to \widehat{\mathfrak{m}}$. The adjoint action of $\widehat{\fm}$ on $\widehat{\mathfrak{n}}_{P}$ restricts to yield an action of $\fsl_2$ on $\widehat{\mathfrak{n}}_P,$ and we let $\widehat{\fn}_P^e$ denote the space of highest weight vectors.

 Recall our fixed isomorphism
 \begin{align*} 
\omega_P: M^{\mathrm{ab}} \tilde{\lto} \GG_m.
 \end{align*}
This induces a dual isomorphism 
 \begin{align} \label{dual:isom}
 \widehat{\omega}_P:\GG_m \tilde{\lto} \widehat{M}^{\mathrm{ab}} = Z(\widehat{M}) 
 \end{align}
 where $Z(\widehat{M})$ is the center of $\widehat{M}$.  Thus we obtain a $\GG_m$-action on $\widehat{\fn}_P^e.$ Setting 
 \begin{align} \label{L:def}
     L:=\widehat{\fn}_P^e=\bigoplus_iL_i,
 \end{align}
 we let $\lam_i$ be the $\GG_m$-character and $s_i$ be $\tfrac{1}{2}$ times the $h$-eigenvalue on the line $L_i$. 

\begin{Lem} \label{lem:pos}
For each $L_i$ as above $s_i$ is nonnegative and $\lambda_i$ is positive.
\end{Lem}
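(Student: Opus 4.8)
The plan is to prove the two inequalities separately: $\lambda_i>0$ will follow from the fact that $\widehat\omega_P$ grades $\widehat{\fn}_P$ by \emph{positive} integers, while $s_i\geq 0$ is a formal consequence of $\fsl_2$‑representation theory applied to the adjoint action of the principal triple $\{e,h,f\}$ on $\widehat{\fn}_P$.

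For the first claim I would argue as follows. By \eqref{dual:isom}, the cocharacter $\widehat\omega_P\colon \GG_m\hookrightarrow Z(\widehat{M})\subset\widehat T$ corresponds, under the identification $X_*(\widehat T)=X^*(T)$, to the character $\omega_P=m_\be\omega_\be$ of \eqref{omegaP}. The parabolic $\widehat{\fp}\subset\widehat{\fg}$ is the one attached to the subset $\De_M^\vee=\{\ga^\vee:\ga\in\De_M\}$ of the simple roots $\De^\vee=\{\ga^\vee:\ga\in\De\}$ of $\widehat G$, so $\widehat{\fn}_P$ is the sum of the root spaces $\widehat{\fg}_{\widehat\al}$ over positive roots $\widehat\al=\sum_{\ga\in\De}c_\ga(\widehat\al)\,\ga^\vee$ of $\widehat G$ with $c_\be(\widehat\al)\geq 1$. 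The character by which $\GG_m$ acts on $\widehat{\fg}_{\widehat\al}$ is $\la\widehat\omega_P,\widehat\al\ra=\sum_{\ga\in\De}c_\ga(\widehat\al)\la\omega_P,\ga^\vee\ra$, and since $\la\omega_P,\ga^\vee\ra=m_\be\la\omega_\be,\ga^\vee\ra=m_\be\de_{\be,\ga}$ for $\ga\in\De$ by the defining property of the fundamental weight $\omega_\be$, this equals $m_\be\,c_\be(\widehat\al)\geq m_\be\geq 1$. Thus every $\GG_m$‑weight occurring in $\widehat{\fn}_P$ is a positive integer. Finally, because $Z(\widehat{M})$ is central in $\widehat M$, conjugation by $Z(\widehat{M})$ commutes with $\ad(e)$, so $Z(\widehat{M})$ — hence $\GG_m$ — preserves $\widehat{\fn}_P^e=\ker\bigl(\ad(e)|_{\widehat{\fn}_P}\bigr)$; therefore $L=\widehat{\fn}_P^e$ of \eqref{L:def} is a $\GG_m$‑submodule of $\widehat{\fn}_P$ and each $\lambda_i$ is one of the weights just computed, whence $\lambda_i\geq m_\be\geq 1>0$.

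For the second claim, I would view $\widehat{\fn}_P$ as a finite-dimensional $\fsl_2$‑module via $\ad$ and invoke standard $\fsl_2$‑theory: $h$ acts semisimply with integer eigenvalues, and on the space of highest weight vectors $\widehat{\fn}_P^e$ these eigenvalues are exactly the highest weights of the irreducible constituents of $\widehat{\fn}_P$, each a nonnegative integer. Since $L_i\subset\widehat{\fn}_P^e$ is a line on which $h$ acts by the scalar $2s_i$, we get $2s_i\in\zz_{\geq 0}$, hence $s_i\geq 0$ (indeed $s_i\in\tfrac12\zz_{\geq 0}$).

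I do not expect a genuine obstacle here: the argument is short, and the only point requiring care is the bookkeeping under the duality $X^*(T)=X_*(\widehat T)$, $X_*(T)=X^*(\widehat T)$ — in particular remembering that the roots of $\widehat G$ are the coroots of $G$, so that it is the $\be^\vee$‑coefficient of $\widehat\al$, not any datum computed on $G$, that governs the $\GG_m$‑grading of $\widehat{\fn}_P$.
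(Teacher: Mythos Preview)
Your proof is correct and follows the same approach as the paper. For $s_i\geq 0$ the paper also invokes standard $\fsl_2$-theory (highest weights of finite-dimensional representations are nonnegative), and for $\lambda_i>0$ it cites Lemma~\ref{lem:posi} in the appendix, whose proof is exactly your root-space computation $\la\omega_P,\widehat\al\ra=m_\be c_\be(\widehat\al)\geq m_\be>0$, only phrased after passing to the dual group.
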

\begin{proof}
The $s_i$ are all $\tfrac{1}{2}$ times the $h$-eigenvalue of a highest weight vector of a $\mathfrak{sl}_2$-representation and hence are nonnegative.  The $\lambda_i$ are all positive by Lemma \ref{lem:posi}.  
\end{proof}

We define  
 \begin{align} \label{etaP}
 \mu_{P}:=\mu_{L}:\mathcal{S}_{L} \lto \mathcal{S}_{\widetilde{L}}\quad \textrm{ and } \quad
    \mu_P(\chi):=\mu_L(\chi)  
\end{align}
for the choice of $L$ given in \eqref{L:def}.  Here $\mu_L(\chi)$ is defined as in \eqref{mul}.  

\subsection{Switching to the opposite parabolic}\label{section: switching}
In Corollary \ref{Cor:Pop} we will switch between $P$ and $P^{\mathrm{op}}$ for self-associate parabolic subgroups. This requires care regarding signs. We choose a principal $\fsl_2$-triple $\{e,h,f\}$ as above and consider $ L^{\mathrm{op}}=(\widehat{\mathfrak{n}}_{P^{\mathrm{op}}})^e$. We claim that
  \begin{align} \label{Lop}
 L^{\mathrm{op}}=\bigoplus_{i\in I}L_i^{\mathrm{op}},
 \end{align}
 where $\GG_m$ and $h$ act on $L^{\mathrm{op}}_i$ via $\lambda_i$ and $2s_i,$ respectively.   Indeed, the $\fsl_2(\cc) \times Z(\widehat{M})$-representations $\widehat{\mathfrak{n}}_{P}$ and $\widehat{\mathfrak{n}}_{P^{\mathrm{op}}}$ are dual.   
 Thus as representations of $\mathfrak{sl}_2(\cc)$ they have the same highest weights.  Since the parameters $\lambda$ are defined using \eqref{dual:isom}, we deduce the claim from the observation that
 \[
 \omega_P = \omega_{P^{\mathrm{op}}}^{-1}.
 \] 

\subsection{The Lagrangian Grassmannian} \label{sec:lagrangian}
As an example, let $\mathrm{Sp}_{2n}$ denote the symplectic group on a $2n$-dimensional vector space and let $P \leq \mathrm{Sp}_{2n},$ $M \leq P$ denote the Siegel parabolic and Levi subgroup, respectively.   Specifically, for $\zz$-algebras $R$, set
\begin{align*}\begin{split}
\mathrm{Sp}_{2n}(R):&=\left\{g \in \GL_{2n}(R): g^t\begin{psmatrix} & I_n \\ -I_n & \end{psmatrix}g=\begin{psmatrix} & I_n \\ -I_n & \end{psmatrix}\right\},\\
M(R):&=\{\begin{psmatrix} A & \\ & A^{-t} \end{psmatrix}: A \in\GL_n(R)\},\\
N(R):&=\{\begin{psmatrix} I_n &  Z\\ & I_n \end{psmatrix} : Z \in \mathfrak{gl}_n(R), Z^t=Z\}, \end{split}
\end{align*}
and $P=MN$.  We have
\begin{align*}
\omega_P:M(R) &\lto R^\times\\
\begin{psmatrix} m & \\ & m^{-t} \end{psmatrix} &\longmapsto \det m,
\end{align*}
 $\widehat{\mathfrak{g}}=\mathfrak{so}_{2n+1},$ and $\widehat{\mathfrak{m}}=\mathfrak{gl}_n$. Moreover, as a representation of $\widehat{\fm},$
$$
\widehat{\mathfrak{n}}_{P} \cong  V_{\mathrm{st}}\oplus \wedge^2 V_{\mathrm{st}},
$$
where $V_{\mathrm{st}}$ is the standard representation of $\fgl_n$. 
 We use the standard principal $\fsl_2$-triple in $\mathfrak{gl}_n$.  Concretely it is the image of $\mathfrak{sl}_2$ under  $\mathrm{Sym}^{n-1}$.  The space $\widehat{\fn}_P^e$ is just the direct sum of the highest weight spaces of the
 $\mathfrak{sl}_2$-representation 
 \begin{align*}
 \mathrm{Sym}^{n-1}(\cc^2) \oplus \wedge^2\mathrm{Sym}^{n-1}(\cc^2)
 &\cong\mathrm{Sym}^{n-1}(\cc^2) \oplus \bigoplus_{j=0}^{\lfloor (n-2)/2 \rfloor}\mathrm{Sym}^{2(n-2)-4j}(\cc^2).
 \end{align*}
 Here we have used some well-known plethysms (see Lemma \ref{lem:plethysm} below).
 Then 
 \begin{align*}
     (s_{r},\lambda_{r})=\left(n+2r-2\lfloor n/2 \rfloor -2,2
      \right) \textrm{ for }1 \leq r \leq \lfloor n/2 \rfloor \textrm{ and }(s_{\lfloor n/2 \rfloor+1},\lambda_{\lfloor n/2 \rfloor+1})=\left(\tfrac{n-1}{2},1\right).
 \end{align*}
This is a good ordering.  

We observe that 
\begin{align} \label{agreement}
a_{I_{2n}}(s,\chi)=a_{\widetilde{L}}((\chi_s)^{-1}) \quad  \textrm{ and } \quad
a_{w_0}(s,\chi)=a_L(\chi_s) 
\end{align}
in the notation of \cite[\S 3]{Getz:Liu:BK}.

\section{The Schwartz space of the affine closure of a Braverman--Kazhdan space}
\label{sec:Schwartz:BK}

Throughout this section we assume that our simple group $G$ is simply connected so that we can apply the results of \cite{BKnormalized}.  Braverman and Kazhdan originally defined operators $\mathcal{F}_{P^{\mathrm{op}}|P}$ via a series of integral operators on an inexplicit subspace of $\mathcal{S}(X_{P^{\mathrm{op}}}^{\circ}(F)),$ proved that the operators extended to unitary operators on $L^2(X_{P^{\mathrm{op}}}(F)),$ and then proposed the following definition:
\begin{Def} \label{defn:BK}
The \emph{BK-Schwartz space} $\cals_{BK}(X_{P}(F))$ is defined as the sum
\[
\cals_{BK}(X_{P}(F))= \mathcal{S}(X_{P}^\circ(F)) +\mathcal{F}_{P^{\mathrm{op}} | P}(\mathcal{S}(X_{P^{\mathrm{op}}}^\circ(F))).
\]
\end{Def}  
\noindent Here the sum is taken in $L^2(X_P(F))$.  We point out that the expression $\mathcal{F}_{P^{\mathrm{op}} | P}(\mathcal{S}(X_{P^{\mathrm{op}}}^\circ(F)))$ means that we apply the $L^2$-extension of $\mathcal{F}_{P^{\mathrm{op}}|P}$ to $\mathcal{S}(X_{P^{\mathrm{op}}}^\circ(F))$.  It is far from obvious that the integral operators defining $\mathcal{F}_{P^{\mathrm{op}}|P}$ converge when applied to elements of $\mathcal{S}(X^\circ_{P^{\mathrm{op}}}(F))$.  In fact this is not known in general, see \S \ref{ssec:def:rel} below.

\begin{Rem} Braverman and Kazhdan only state this definition in the nonarchimedean case, but the extension to the archimedean case is natural and was suggested to the first author by Kazhdan.  \end{Rem}

In \cite{Getz:Liu:BK} the first author and Liu refined Braverman and Kazhdan's definition when $G=\mathrm{Sp}_{2n}$ and $P$ is the Siegel parabolic, and gave explicit spaces of functions that are mapped to each other under the Fourier transform.  We do the same for Braverman--Kazhdan spaces attached to simple groups $G$ and maximal parabolic subgroups $P <G$ in this section.  This goes beyond the work of Braverman and Kazhdan in that it allows us to isolate an explicit subspace on which our formulae for the Fourier transforms given in \S \ref{sec:FT:XP} are valid.  

\subsection{Measures redux}   \label{ssec:measures:re}
Thus far we have only made use of Haar measures $dx$ and $d^\times x$ on $F$ and $F^\times$ related as in \S \ref{ssec:measures}.  In order to study the Schwartz space and the Fourier transform, we require right $G(F)$-invariant measures on $X_P^{\circ}(F),$ $X_{P^{\mathrm{op}}}^{\circ}(F),$ and choices of Haar measures on $N_P(F)$ and $N_{P^{\mathrm{op}}}(F)$. 
First, we fix a Haar measure on $M(F)$.  We give $M^{\mathrm{der}}(F)$ the unique measure such that $\omega_P:M/M^{\mathrm{der}}(F) \tilde{\to} F^\times$ is measure preserving. 

Recall that we fixed a split maximal torus $T \leq M$ and a Borel subgroup $T\le B \leq P$ in \S \ref{ssec:Pl}.  Let
\begin{align*}
\Theta:\mathfrak{g} \lto \mathfrak{g}
\end{align*}
be the opposite involution attached to $\mathfrak{t}$ (here we follow the conventions of  \cite[\S 23.h]{Milne:AGbook}).  
Fix a Chevalley basis of the Lie algebra of $G$ with respect to the Lie algebra of $T$. For all roots $\alpha,$ this gives us vectors $X_{\alpha}$ in the root space of $\alpha$ that satisfy $X_{-\alpha}=-\Theta(X_{\alpha})$ \cite[\S 23.h]{Milne:AGbook}, and provides us with isomorphisms
$$
\GG_a \lto N_{\alpha}
$$
where $N_{\alpha} \leq G$ is the root subgroup of $\alpha$.  We use this to endow each $N_{\alpha}(F)$ with the measure $dx$ by transport of structure, which in turn gives rise to Haar measures on $N_P(F)$ and $
N_{P^{\mathrm{op}}}(F)$. This is the same normalization used in \cite{Langlands:EulerPro}.  The motivation for this choice of measures is to make factorization of intertwining operators valid. 

Now we normalize the right $G(F)$-invariant measures on $X_P^\circ (F)$ and $X_{P^{\mathrm{op}}}^\circ (F)$.  By the Bruhat decomposition, one has an injection
\begin{align*}
M^{\mathrm{ab}}(F) \times N_{P^{\mathrm{op}}}(F) &\lto X_P^{\circ}(F)\\
(m,u) &\longmapsto P^{\mathrm{der}}(F)mu
\end{align*}
with Zariski open and dense (hence, of full measure) image.  We can and do normalize the right $G(F)$-invariant nonnegative Radon measure $dx$ on $X_P^{\circ}(F)$ such that 
\begin{align} \label{meas:comp general}
d(mu)=\frac{\delta_{P^{\mathrm{op}}}(m)dmdu}{\zeta(1)}.
\end{align}
  Similarly, we normalize the  right $G(F)$-invariant non-negative Radon measure $dx$ on $X^\circ_{P^{\mathrm{op}}}(F)$ so that 
$$
d(mu)=\frac{\delta_{P}(m)dmdu}{\zeta(1)}
$$
for $(m,u)\in M^{\mathrm{ab}}(F) \times N_{P}(F)$.

\subsection{The Schwartz space} \label{ssec:the:Schwartz:space}

For functions $f \in C^\infty(X_P^{\circ}(F))$ and $x=P^{\mathrm{op},\mathrm{der}}(F)g\in X_{P^{\mathrm{op}}}^{\circ}(F),$ we define the unnormalized intertwining operator
\begin{align} \label{R}
\mathcal{R}_{P|P^{\mathrm{op}}}(f)(x):=\int_{N_{P^{\mathrm{op}}}(F)}f\left(P^{\mathrm{der}}(F)ug\right)du=\int_{N_{P^{\mathrm{op}}}(F)}f(ug)du
\end{align}
whenever this integral is absolutely convergent (or obtained via some regularization procedure).  We refer to $\mathcal{R}_{P|P^{\mathrm{op}}}$ as a \textbf{Radon transform}, as it is a generalization of the classical Radon transform  \cite[\S 2.9]{BKnormalized}. That this agrees with the operator defined by Braverman and Kazhdan is proved in \cite[\S 5]{Shahidi:Generalized}.
 For example, we have maps
$$
\mathcal{R}_{P|P^{\mathrm{op}}}:\mathcal{S}(X_P^{\circ}(F)) \lto C^\infty(X_{P^{\mathrm{op}}}^\circ(F))
$$
and 
$$
\mathcal{R}_{P|P^{\mathrm{op}}}:I(\chi_s) \lto \overline{I}(\chi_s)
$$
for $\mathrm{Re}(s)$ sufficiently large that may be extended meromorphically to $\cc$ \cite[\S 10.1.2, \S 10.1.6]{Wallach:RGII} \cite[Theorem IV.1.1]{Waldspurger}.
For notational convenience, we write
$$
\mathcal{R}_{P|P}:C^\infty(X_P^{\circ}(F)) \lto C^\infty(X_P^{\circ}(F))
$$
for the identity operator.

Let $L$ be the graded $\mathbb{G}_m$-representation associated to $P$ in \S \ref{ssec:BKL} and let $\{(s_i,\lambda_i)\}$ be a good ordering of $L$. For quasi-characters $\chi:F^\times \to \cc^\times,$ we set
\begin{align} \label{ap} 
    a_{P|P}(\chi):=a_{\widetilde{L}}(\chi^{-1})  \quad \textrm{ and } \quad
    a_{P|P^{\mathrm{op}}}(\chi):=a_L(\chi). 
\end{align}
Bearing in mind the discussion in \S \ref{section: switching}, the definition \eqref{ap} implies
\begin{align} \label{a:rel}
    a_{P|P}(\chi)=a_{P^{\mathrm{op}}|P^{\mathrm{op}}}(\chi) \quad \textrm{ and } \quad a_{P|P^{\mathrm{op}}}(\chi)=a_{P^{\mathrm{op}}|P}(\chi).
\end{align}

\begin{Lem} \label{lem:geq0}
The function $a_{P|P}(\chi)$ is holomorphic for $\mathrm{Re}(\chi) \geq 0$.  
\end{Lem}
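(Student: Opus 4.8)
The plan is to unwind the definition of $a_{P|P}$ and reduce the claim to the pole-free region already recorded in Lemma~\ref{lem:no:poles}. By \eqref{ap} we have $a_{P|P}(\chi)=a_{\widetilde{L}}(\chi^{-1})$, so it suffices to control the poles of $a_{\widetilde{L}}$. Recall that, with the sign conventions of \S\ref{section: switching}, the graded piece $\widetilde{L}_i$ of $\widetilde{L}=L(k)$ carries the $\GG_m$-character $-\lambda_i$ and the attached real number $-1-s_i$. Substituting these data into \eqref{al} gives
\[
a_{\widetilde{L}}(\chi^{-1})=\prod_{i\in I}L\bigl(1+s_i,\,(\chi^{-1})^{-\lambda_i}\bigr)=\prod_{i\in I}L\bigl(1+s_i,\,\chi^{\lambda_i}\bigr),
\]
an expression that is manifestly independent of the chosen good ordering.

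Next I would feed in Lemma~\ref{lem:pos}: every $\lambda_i$ is positive and every $s_i$ is nonnegative. Hence every eigencharacter $-\lambda_i$ of $\widetilde{L}$ is negative, so by \eqref{AB} one has $A(\widetilde{L})=-\infty$ and $B(\widetilde{L})=\min_{i\in I}\bigl\{(1+s_i)/\lambda_i\bigr\}$ (as already noted in the display after \eqref{ineq}); and since $1+s_i\ge 1$ while $\lambda_i>0$, this minimum is strictly positive. Lemma~\ref{lem:no:poles} applied to $\widetilde{L}$ then says $a_{\widetilde{L}}$ has no poles in the region $\mathrm{Re}<B(\widetilde{L})$. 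Finally, from the definition of $\mathrm{Re}$ of a quasi-character we have $\mathrm{Re}(\chi^{-1})=-\mathrm{Re}(\chi)$, so whenever $\mathrm{Re}(\chi)\ge 0$ we get $\mathrm{Re}(\chi^{-1})\le 0<B(\widetilde{L})$; thus $\chi^{-1}$ lies in the pole-free region of $a_{\widetilde{L}}$, and $a_{P|P}(\chi)=a_{\widetilde{L}}(\chi^{-1})$ is holomorphic there. As an independent sanity check one can argue factor-by-factor instead: a local Tate $L$-factor $L(w,\omega)$ is holomorphic whenever $w+\mathrm{Re}(\omega)>0$, and here $1+s_i+\mathrm{Re}(\chi^{\lambda_i})=1+s_i+\lambda_i\mathrm{Re}(\chi)\ge 1>0$.

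I do not expect a genuine obstacle; the computation is short. The only points that need attention are the bookkeeping of signs in passing from $(L,\chi)$ to $(\widetilde{L},\chi^{-1})$ — the same subtlety flagged at the start of \S\ref{section: switching} — and correctly recalling the location of the rightmost pole of $L(w,\omega)$ (at $w=-\mathrm{Re}(\omega)$ in the unramified nonarchimedean and in the archimedean cases, and no pole at all for ramified nonarchimedean $\omega$), which is precisely what is behind Lemma~\ref{lem:no:poles}.
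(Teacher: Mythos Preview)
Your proposal is correct and essentially the same as the paper's proof: both reduce to Lemma~\ref{lem:pos} (that each $s_i\ge 0$ and $\lambda_i>0$), which makes every factor $L(1+s_i,\chi^{\lambda_i})$ holomorphic for $\mathrm{Re}(\chi)\ge 0$. The paper's proof is simply the one-line factor-by-factor check you record as a ``sanity check''; your detour through $A(\widetilde{L})=-\infty$, $B(\widetilde{L})>0$ and Lemma~\ref{lem:no:poles} repackages the same inequality $1+s_i+\lambda_i\,\mathrm{Re}(\chi)>0$ in the $A(L),B(L)$ formalism, with no real difference in content.
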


\begin{proof}
It suffices to show that $s_i+1>0$ and $\lambda_i>0$ for all $L_i$.  This follows from Lemma \ref{lem:pos}.
\end{proof}

Fix now a character $\chi$ and recall that $\chi_s = \chi|\cdot|^s$ for $s\in \cc$. A section $f(\chi)^{(s)}$ of $I(\chi_s)$ is \textbf{good} if it is meromorphic, 
and if for $Q \in \{P,P^{\mathrm{op}}\}$ the sections
\begin{align}
    g \longmapsto \frac{\mathcal{R}_{P|Q}f(\chi)^{(s)}(g)}{a_{P|Q}(\chi_s)}
\end{align}
of $I(\chi_s)$ and $\overline{I}(\chi_s)$ are holomorphic for all $g \in G(F)$.  
\begin{Def} \label{defn:na} Assume $F$ is nonarchimedean.  The Schwartz space $\cals(X_P(F))$ is defined to be the space of right $K$-finite functions $f \in C^\infty(X_P^{\circ}(F))$ such that for each $g \in G(F)$ and character $\chi$ of $F^\times,$ the integral \eqref{Mellin} defining $f_{\chi_s}(g)$ is absolutely convergent for $\mathrm{Re}(s)$ large enough and defines a good section.
\end{Def}

 For $F$ archimedean, recall we have an action of $U(\mathfrak{m}^{\mathrm{ab}} \oplus \mathfrak{g})$ on $C^\infty(X_P^{\circ}(F))$ via the differential of \eqref{geo:act}. 

\begin{Def} \label{defn:arch} 
Assume $F$ is archimedean. The Schwartz space $\mathcal{S}(X_P(F))$ is defined to be the space of functions $f \in C^\infty(X_P^{\circ}(F))$ such that for all  $D \in U(\mathfrak{m}^{\mathrm{ab}} \oplus \mathfrak{g}),$ $g \in  G(F),$ and each character $\chi$ of $F^\times,$ the integral \eqref{Mellin} defining $(D.f)_{\chi_s}(g)$ is absolutely convergent for $\mathrm{Re}(s)$ large enough, defines a good section, and satisfies the following condition:
For all real numbers $A<B,$ $Q \in \{P,P^{\mathrm{op}}\},$ any polynomial $p_{P|Q} \in \cc[s]$ such that $p_{P|Q}(s)a_{P|Q}(\eta_s)$ has no poles for all $(s,\eta) \in V_{A,B} \times \widehat{K}_{\GG_m},$ and compact subsets $\Omega \subset X_P^{\circ}(F)$ one has that 
\begin{align} \label{seminorm}
|f|_{A,B,p_{P|Q},\Omega,D}:=\sum_{\eta \in \widehat{K}_{\GG_m}}\sup_{g \in \Omega}|\mathcal{R}_{P|Q}(D.f)_{\eta_s}(g)|_{A,B,p_{P|Q}}<\infty.
\end{align}
\end{Def}
To understand this definition, it is useful to point out that it is indeed possible to choose $p_{P|Q}$ (independently of $\eta$) that satisfy the given assumptions.  This follows directly from the definition of the $a_{P|Q}(\eta_s)$.  We also observe that the $|\cdot|_{A,B,p_{P|Q},\Omega,D}$ are seminorms and they give $\mathcal{S}(X_P(F))$ the structure of a Fr\'echet space by essentially the same argument proving \cite[Lemma 3.2]{Getz:Hsu}.

\begin{Rem}
Note that $f_{\chi,P}^{\mathrm{op}}=f_{\chi^{-1},P}.$  Using this observation and the discussion in \S \ref{section: switching} we see that $\mathcal{S}(X_P(F)) \leq \mathcal{S}_{\widetilde{L}}(X_{P}(F)).$
\end{Rem}

For any $F,$ the action of $M^{\mathrm{ab}}(F) \times G(F)$ on $X_P^{\circ}(F)$ induces a smooth action on $\mathcal{S}(X_P(F))$ (in either the archimedean or nonarchimedean setting). In the archimedean setting, this action is continuous in the Fr\'echet topology of $\mathcal{S}(X_P(F))$.

 In the special case of the Siegel parabolic subgroup of $G=\Sp_{2n},$ a slight variant of this space was introduced in \cite{Getz:Hsu}. Their definition generalized and slightly modified the $K$-finite Schwartz space $\cals(X(F),K)$ introduced in \cite{Getz:Liu:BK}.   To compare our definition to the one given in \cite{Getz:Hsu}, we use \eqref{agreement} and observe that 
 we have not applied a Weyl element to turn $P^{\mathrm{op}}$ into a standard parabolic.

The elements of the Schwartz space are well-behaved analytically.  They can be bounded in an intuitive manner using the Pl\"ucker embedding. Let
$$
\mathrm{Pl}:X_P \lto V_P
$$
be the Pl\"ucker embedding defined by a choice of highest weight vector $v_P$ as in  Lemma \ref{Lem: general Plucker}. 
Choose a norm $|\cdot|$ on $V_P(F)$ that is invariant under $K$ and let
\begin{align*}
|\cdot|:X_P^{\circ}(F) &\lto \rr_{>0}\\
x &\longmapsto |\mathrm{Pl}(x)|;
\end{align*}
here, $K$ is chosen as in \eqref{Iwasawa choice}. Replacing $v_P$ by $tv_P$ for $t \in F^\times$ multiplies this norm by $|t|$.

Let $r \in \qq_{>0}$ be such that 
$$
|\omega_P|^r=\delta_P.
$$
Note that our assumption that $G$ is simply connected implies that $r\in \zz_{>0}$; indeed, we compute this value in Proposition \ref{Prop: root explanation} below.

\begin{Lem} \label{lem:Sch:bounds2} Assume $\alpha>0$ is sufficiently small. Let $f\in \mathcal{S}(X_{P}(F))$. When $F$ is nonarchimedean, $f(x)$ vanishes for $|x|$ sufficiently large and 
$$
|f(x)| \ll_{\alpha} |x|^{-r/2+\alpha}.
$$
When $F$ is archimedean, for all $N \in \zz_{\ge 0}$ one has 
\begin{equation*}
|f(x)| \leq \nu_{N,\alpha}(f)
|x|^{-r/2+\alpha}\max(1,|x|)^{-N}
\end{equation*}
where $\nu_{N,\alpha}$ is a continuous seminorm on $\mathcal{S}(X_P(F))$.
\end{Lem}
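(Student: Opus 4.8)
The plan is to derive the bound on $|f(x)|$ from Mellin inversion, exactly as in Lemma \ref{lem:Sch:bounds}, but now applied with respect to the Mellin transform along the $M^{\mathrm{ab}}(F)$-action on $X_P^\circ(F)$ itself and tracking the growth in the Pl\"ucker norm $|x|$. First I would use the Iwasawa decomposition $G(F) = P(F)K$ together with the fact that $f$ is right $K$-finite (nonarchimedean) or Schwartz along $K$ (archimedean) to reduce bounding $|f(x)|$ to bounding $|f(mk)|$ for $m \in M^{\mathrm{ab}}(F)$ and $k$ in a compact set; since $\mathrm{Pl}(mk) = \omega_P(m)\,\mathrm{Pl}(k)$ by the equivariance \eqref{intertwine} and $|\cdot|$ is $K$-invariant, we have $|mk| \asymp |\omega_P(m)|$. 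So the task becomes: show $|f(mx_0)| \ll |\omega_P(m)|^{-r/2 + \alpha}$ for $x_0$ in a compact set, plus rapid decay as $|\omega_P(m)| \to \infty$ in the archimedean case.

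Next I would apply Mellin inversion in the variable $\omega_P(m)$: write
\begin{align*}
f(mx) = \sum_{\eta \in \widehat{K}_{\GG_m}} \int_{\sigma + iI_F} f_{\eta_s}(mx)\, \frac{c_F\, ds}{2\pi i}
= \sum_{\eta} \delta_P^{1/2}(m)\, \eta_s(\omega_P(m))^{-1} \int_{\sigma+iI_F} f_{\eta_s}(x)\,\frac{c_F\,ds}{2\pi i},
\end{align*}
valid for any $\sigma$ in the region of absolute convergence, which by Definition \ref{defn:na} / \ref{defn:arch} and Lemma \ref{lem:geq0} includes all sufficiently large $\sigma$; moreover $f_{\eta_s}(x) / a_{P|P^{\mathrm{op}}}(\eta_s)$ (via $\mathcal{R}_{P|P^{\mathrm{op}}}$) and $f_{\eta_s}(x)/a_{P|P}(\eta_s)$ are holomorphic for all $s$. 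Since $\delta_P(m) = |\omega_P(m)|^r$, the factor $\delta_P^{1/2}(m)|\omega_P(m)|^{-\sigma}$ produces $|\omega_P(m)|^{r/2 - \sigma}$. The key point is how far left in $\sigma$ we may push the contour. The good-section condition says the only obstructions to holomorphy come from poles of $a_{P|P}(\eta_s)$ and $a_{P|P^{\mathrm{op}}}(\eta_s)$; by Lemma \ref{lem:geq0}, $a_{P|P}(\chi)$ is holomorphic for $\mathrm{Re}(\chi) \geq 0$, so $f_{\eta_s}(x)$ — being $a_{P|P}(\eta_s)$ times a holomorphic function — is holomorphic for $\mathrm{Re}(s) \geq 0$. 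But we need to go slightly past $r/2$ in the exponent, i.e. take $\sigma = r/2 - \alpha$ for small $\alpha > 0$; this requires $f_{\eta_s}$ to be holomorphic (and suitably bounded on vertical lines) for $\mathrm{Re}(s) \geq -\alpha$, which follows from the good-section property together with the fact that $a_{P|P}(\eta_s)$ has its first pole at a point with $\mathrm{Re}(s) \le -1$ (because $s_i + 1 > 0$ from Lemma \ref{lem:pos}/\ref{lem:geq0}). Shifting to $\sigma = -\alpha$ gives $|f(mx)| \ll |\omega_P(m)|^{r/2 + \alpha}$ — wait, the sign: $\delta_P^{1/2}|\omega_P(m)|^{-\sigma} = |\omega_P(m)|^{r/2 - \sigma}$, so $\sigma = r + \alpha$ would give $|\omega_P(m)|^{-r/2-\alpha}$... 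I should instead push $\sigma$ as large as possible. Since $f_{\eta_s}(x)$ is holomorphic and the seminorm estimates control it on any vertical strip, and there is no upper bound obstruction (the section is meromorphic with poles only from $a_{P|P}$, which has no poles for $\mathrm{Re}(s)$ large), we may take $\sigma$ arbitrarily large in the nonarchimedean case but must balance against convergence of the original Mellin integral; actually the honest statement is that $f_{\eta_s}(x)$ itself is a function in $\cc[q^{-s},q^s]$ times $a_{P|P}(\eta_s)$, hence holomorphic everywhere, but its growth as $\mathrm{Re}(s) \to +\infty$ is governed by the Laurent polynomial; choosing $\sigma = r/2 + \alpha$ is clean and yields $|f(x)| \ll |x|^{-r/2+\alpha}$ after the contour shift picks up no poles for $0 \le \mathrm{Re}(s) \le r/2 + \alpha$ (there are none, as $a_{P|P}$ is holomorphic there).

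For the vanishing/decay refinements: in the nonarchimedean case, right $K$-finiteness plus the fact that $f_{\eta_s}(x)$ lies in $\cc[q^{-s},q^{s}]$ means $f(mx)$ is, for fixed $x$, a finite linear combination of $|\omega_P(m)|^{j}$ times characters over a finite range of $j$ in each direction; combined with smoothness this forces $f(x) = 0$ for $|x|$ large (one pushes the contour to $\sigma \to +\infty$ and finds the integral vanishes beyond a fixed size, since the $q^{s}$-degree is bounded). In the archimedean case, one instead differentiates: applying $D \in U(\mathfrak{m}^{\mathrm{ab}} \oplus \mathfrak{g})$ that acts by the Euler operator along $M^{\mathrm{ab}}$ multiplies $f_{\eta_s}$ by a polynomial in $s$, so iterating and using the seminorm finiteness $|f|_{A,B,p,\Omega,D} < \infty$ gives, via integration by parts in $s$ along the contour (equivalently, the $(1+|s|)^{-N}$ decay coming from applying powers of such $D$), the extra factor $\max(1,|x|)^{-N}$ with constant a continuous seminorm $\nu_{N,\alpha}(f)$. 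The main obstacle is the bookkeeping in the archimedean case — precisely identifying which elements $D$ of $U(\mathfrak{m}^{\mathrm{ab}} \oplus \mathfrak{g})$ yield polynomial multiplication on the Mellin side (this is where the $M^{\mathrm{ab}}$ part of the Lie algebra enters), and checking that the resulting bounds assemble into a \emph{continuous} seminorm on $\mathcal{S}(X_P(F))$ rather than just a finite constant; this is entirely parallel to \cite[Lemma 3.4]{Getz:Hsu} and the analogous estimates in \cite{Getz:Liu:BK}, so I would invoke those arguments rather than redo them.
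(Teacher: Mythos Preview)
Your approach---Iwasawa decomposition followed by Mellin inversion along $M^{\mathrm{ab}}(F)$, with the contour controlled by the poles of $a_{P|P}$---is exactly the paper's argument.  However, the execution contains two sign errors that compound and lead you to the wrong contour.

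First, \eqref{intertwine} says $\mathrm{Pl}(m^{-1}g)=\omega_P(m)\mathrm{Pl}(g)$, so $\mathrm{Pl}(mk)=\omega_P(m)^{-1}\mathrm{Pl}(k)$ and hence $|x|\asymp|\omega_P(m)|^{-1}$, not $|\omega_P(m)|$.  Second, since $f_{\eta_s}\in I(\eta_s)$ one has $f_{\eta_s}(mk)=\delta_P^{1/2}(m)\,\eta_s(\omega_P(m))\,f_{\eta_s}(k)$ with no inverse on $\eta_s$; thus Mellin inversion gives $|f(x)|\ll|\omega_P(m)|^{r/2+\sigma}\asymp|x|^{-r/2-\sigma}$.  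To obtain $|f(x)|\ll|x|^{-r/2+\alpha}$ for $|x|\le 1$ you therefore need $\sigma\le-\alpha$, i.e.\ the contour must sit slightly to the \emph{left} of zero, not at $r/2+\alpha$.  This is available: the proof of Lemma~\ref{lem:geq0} actually shows the poles of $a_{P|P}(\eta_s)$ lie at $\mathrm{Re}(s)=-(1+s_i)/\lambda_i<0$, so for $\alpha$ sufficiently small one may take $\sigma=-\alpha$ and the seminorm bounds in Definition~\ref{defn:arch} still apply.  Your first instinct ``$\sigma=-\alpha$'' was right before you talked yourself out of it.

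The large-$|x|$ assertions are then handled as you indicate---the Laurent-polynomial structure of $f_{\eta_s}$ in the nonarchimedean case, and differentiation along $\mathfrak{m}^{\mathrm{ab}}$ in the archimedean case---and the paper simply defers to \cite[Lemma~5.1]{Getz:Liu:BK} and \cite[Lemma~3.5]{Getz:Hsu} for these.
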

\begin{proof}
Write $x=P^{\mathrm{der}}(F)mk$ with $m \in M^{\mathrm{ab}}(F)$ and $k \in K$.  
By definition of $\mathcal{S}(X_P(F))$ and Mellin inversion \eqref{Mellin:inv}, we have
\begin{align} 
f(x)=\delta_P^{1/2}(m)\sum_{\eta \in \widehat{K}_{\GG_m}}\int_{\sigma+iI_F}\eta_s(\omega_P(m))f_{\eta_s}(k)\frac{c_Fds}{2\pi i}
\end{align}
provided that there are no poles of $a_{P|P}(\eta_s)$ for $\mathrm{Re}(s) \ge\sigma$.  Moreover the sum and integral converge absolutely.  
Therefore to prove the bounds for
$|x| \leq  1$ in the archimedean case and for all $x$ in the nonarchimedean case it suffices to  recall $a_{P|P}(\eta_s)$ has no poles for $\mathrm{Re}(s)\geq 0$ by Lemma \ref{lem:geq0}.  

The support assertion in the nonarchimedean case follows as in \cite[Lemma 5.1]{Getz:Liu:BK}.  The bound for $|x| \gg 1$ in the archimedean case follows as in \cite[Lemma 3.5]{Getz:Hsu}.
\end{proof}

\noindent As $X_P^\circ(F)$ is open and dense in $X_P(F),$ we can and do extend the right $G(F)$-invariant Radon measure on $X_P^\circ(F)$ by zero to $X_P(F)$.

\begin{Cor} \label{cor:L2}
One has $\mathcal{S}(X_P(F)) <L^2(X_P(F)) \cap L^1(X_P(F))$.  
\end{Cor}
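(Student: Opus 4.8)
The plan is to deduce the corollary directly from the pointwise bounds in Lemma \ref{lem:Sch:bounds2} by integrating against the measure on $X_P(F)$, reducing everything to a computation on $F^\times$ via the Iwasawa decomposition. First I would write $x = P^{\mathrm{der}}(F)mk$ with $m \in M^{\mathrm{ab}}(F)$ and $k \in K$, and recall from \S \ref{ssec:measures:re} that, after pushing the Iwasawa coordinates into the measure $dx$ on $X_P^\circ(F)$, the measure transforms by a power of $\delta_P$ against $dm\,dk$ (the compact factor $K$ contributes a finite constant). Since $|\omega_P|^r = \delta_P$ and $\omega_P : M^{\mathrm{ab}}(F) \to F^\times$ is an isomorphism carrying $dm$ to $d^\times t$, the question of $L^1$- and $L^2$-membership becomes the convergence of one-dimensional integrals over $F^\times$ of the form $\int_{F^\times} |t|^{a} \, d^\times t$ over the regions $|t| \le 1$ and $|t| > 1$ separately, where $a$ is an explicit exponent built from $r$, the small parameter $\alpha$, and (for $L^2$) a factor of $2$.

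The key steps, in order: (1) record the measure decomposition on $X_P^\circ(F)$ in Iwasawa coordinates and note $X_P^\circ(F)$ has full measure in $X_P(F)$, so integrals over $X_P(F)$ equal integrals over $X_P^\circ(F)$; (2) apply Lemma \ref{lem:Sch:bounds2}: near the origin ($|x| \le 1$, i.e.\ $|\omega_P(m)|$ small) we have $|f(x)| \ll |x|^{-r/2+\alpha} \asymp \delta_P^{-1/4 + \alpha/r}(m)$ up to the bounded $K$-dependence, while for $|x| > 1$ we have rapid decay in the archimedean case and compact support in the nonarchimedean case; (3) combine with the measure factor $\delta_P(m)$ (or $\delta_P^{1/2}$-type weights as appropriate) and integrate — the small-$|t|$ region converges because the net exponent on $|t|$ is strictly positive once $\alpha$ is taken small enough, and the large-$|t|$ region converges trivially by support/rapid decay; (4) repeat with $|f|^2$ in place of $|f|$ for the $L^2$ claim, where doubling the decay exponent only helps. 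Finally I would note that in the archimedean case the resulting bounds are uniform in terms of the continuous seminorms $\nu_{N,\alpha}$, so one in fact gets continuity of the inclusions, though the statement only claims set-theoretic containment.

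The main obstacle is bookkeeping rather than conceptual: one must be careful about exactly which power of $\delta_P$ appears in the invariant measure on $X_P^\circ(F)$ versus in the Mellin-inversion formula for $f$, and must check that the combination $|f(x)|^{2}\,dx$ (respectively $|f(x)|\,dx$), after passing to the variable $t = \omega_P(m)$, really does produce a positive power of $|t|$ near $t=0$. Concretely, with $d(mu) = \zeta(1)^{-1}\delta_{P^{\mathrm{op}}}(m)\,dm\,du$ and $|f(mk)| \ll \delta_P^{1/2}(m)\min(|\omega_P(m)|^{A(L)+\varepsilon}, \dots)$ from Lemma \ref{lem:Sch:bounds} (equivalently the $|x|^{-r/2+\alpha}$ bound of Lemma \ref{lem:Sch:bounds2}), the integrand near the small end behaves like $|t|^{r/2}\cdot|t|^{-r/2+\alpha}\cdot |t|^{-1}\,d^\times t \cdot |t| = |t|^{\alpha}\,d^\times t$ type expressions, which converge for $\alpha > 0$; I would verify this exponent arithmetic once and for all, invoking Lemma \ref{lem:geq0} (holomorphy of $a_{P|P}$ for $\mathrm{Re}(\chi) \ge 0$) to justify that $\alpha$ may be chosen in the required range. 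Once the exponents are pinned down, both convergence statements follow immediately.
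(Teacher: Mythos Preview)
Your proposal is essentially the paper's own proof, which reads in its entirety: ``This follows from Lemma~\ref{lem:Sch:bounds2} and the Iwasawa decomposition.'' You are simply unpacking that one sentence.

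One point of bookkeeping to correct: you write ``near the origin ($|x|\le 1$, i.e.\ $|\omega_P(m)|$ small)'', but the relationship goes the other way. Since $v_P m = \omega_P(m)^{-1}v_P$, one has $|x|\asymp |\omega_P(m)|^{-1}$, so small $|x|$ corresponds to \emph{large} $|t|=|\omega_P(m)|$. Tracking this through, the invariant measure in Iwasawa coordinates is $dx = c\,\delta_P(m)^{-1}\,dm\,dk = c\,|t|^{-r}\,d^\times t\,dk$, and the bound $|f(x)|\ll |x|^{-r/2+\alpha}$ becomes $|f|\ll |t|^{r/2-\alpha}$ for large $|t|$; hence $\int_{|t|\gg 1}|f|\,dx \ll \int |t|^{-r/2-\alpha}\,d^\times t<\infty$ and $\int_{|t|\gg 1}|f|^2\,dx \ll \int |t|^{-2\alpha}\,d^\times t<\infty$. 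The other end is handled by compact support or rapid decay. Once this sign is fixed, your argument goes through exactly as you outline.
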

\begin{proof}This follows from
 Lemma \ref{lem:Sch:bounds2} and the Iwasawa decomposition.
\end{proof}

\subsection{The Fourier transform} \label{ssec:FT}

Braverman and Kazhdan \cite{BKnormalized} prove that the Fourier transform
\begin{align}\label{FPPOP}
\mathcal{F}_{P|P^{\mathrm{op}}}:=\mu_P\circ \mathcal{R}_{P|P^{\mathrm{op}}}
\end{align}
is well-defined on a subspace of $\mathcal{S}(X_{P}^\circ (F))$ that is dense in $L^2(X_P(F))$ and defines an isometry
\begin{align} \label{isom}
\mathcal{F}_{P|P^{\mathrm{op}}}:L^2(X_P(F)) \lto L^2(X_{P^{\mathrm{op}}}(F)).
\end{align}
  They also prove that 
\begin{align} \label{relations}
\mathcal{F}_{P|P^{\mathrm{op}}}\circ\mathcal{F}_{P^{\mathrm{op}}|P}=\mathrm{Id}.
\end{align}
We use the results of the previous subsections to refine $\mathcal{F}_{P|P^{\mathrm{op}}}$ to an isomorphism between $\mathcal{S}(X_P(F))$ and $\mathcal{S}(X_{P^{\mathrm{op}}}(F))$ in this subsection.  

\begin{Lem} \label{lem:unnorm:commute} One has a commutative diagram
\begin{center}
\begin{tikzcd}
    \mathcal{S}(X_P(F)) \arrow[r,"\mathcal{R}_{P|P^{\mathrm{op}}}"]
    \arrow[d,"(\cdot)_{\chi}"] &\mathcal{S}_L \arrow[d,"(\cdot)^{\mathrm{op}}_{\chi}"]\\
    I(\chi) \arrow[r,"\mathcal{R}_{P|P^{\mathrm{op}}}"]&\overline{I}(\chi)
    \end{tikzcd}
\end{center}
for $\mathrm{Re}(\chi)$ sufficiently large.  
\end{Lem}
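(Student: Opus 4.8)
The plan is to prove the identity of the two composites by interchanging the $M^{\mathrm{ab}}(F)$-integral defining the Mellin transform with the $N_{P^{\mathrm{op}}}(F)$-integral defining the Radon transform --- which is legitimate once $\mathrm{Re}(\chi)$ is large --- and then to absorb the resulting modulus factor via a conjugation change of variables. Along the way I also need to check that $\mathcal{R}_{P|P^{\mathrm{op}}}$ really maps $\mathcal{S}(X_P(F))$ into $\mathcal{S}_L$.

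First I would dispense with convergence. For $f \in \mathcal{S}(X_P(F))$, the Pl\"ucker-norm estimate of Lemma \ref{lem:Sch:bounds2}, combined with the fact that the orbit of $v_P$ under $N_{P^{\mathrm{op}}}$ is closed in $V_P$ (so that $|v_Pug|$ stays bounded away from $0$ and grows without bound as $u\to\infty$ in $N_{P^{\mathrm{op}}}(F)$), shows that $\mathcal{R}_{P|P^{\mathrm{op}}}f(x)=\int_{N_{P^{\mathrm{op}}}(F)}f(ux)\,du$ converges absolutely; in the nonarchimedean case $f$ vanishes for $|x|$ large, so the integrand is in fact supported on a compact subset of $N_{P^{\mathrm{op}}}(F)$. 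For $\mathrm{Re}(\chi)$ large the Mellin integral \eqref{Mellin} converges absolutely to a section $f_\chi\in I(\chi)$ (this is built into the definition of $\mathcal{S}(X_P(F))$ and also follows from the boundary bound in Lemma \ref{lem:Sch:bounds2}), and the intertwining integral $\mathcal{R}_{P|P^{\mathrm{op}}}(f_\chi)$ converges absolutely by the standard estimates for intertwining operators. Putting these two inputs together, the iterated integral
$$
\int_{M^{\mathrm{ab}}(F)}\int_{N_{P^{\mathrm{op}}}(F)}\delta_P^{1/2}(m)\,|\chi(\omega_P(m))|\,|f(m^{-1}ux)|\,du\,dm
$$
is finite for $\mathrm{Re}(\chi)$ sufficiently large; this is the only place the hypothesis on $\mathrm{Re}(\chi)$ enters.

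With Fubini justified I would then compute directly. Writing $m^{-1}u=(m^{-1}um)m^{-1}$ and substituting $u\mapsto mum^{-1}$ in the inner integral --- a change of variables with Jacobian $\delta_{P^{\mathrm{op}}}(m)$ under the measure normalizations of \S\ref{ssec:measures:re} --- converts $\int_{N_{P^{\mathrm{op}}}(F)}f(m^{-1}ux)\,du$ into $\delta_{P^{\mathrm{op}}}(m)\,\mathcal{R}_{P|P^{\mathrm{op}}}(f)(m^{-1}x)$, so that $\mathcal{R}_{P|P^{\mathrm{op}}}(f_\chi)(x)=\int_{M^{\mathrm{ab}}(F)}\delta_P^{1/2}(m)\delta_{P^{\mathrm{op}}}(m)\chi(\omega_P(m))\,\mathcal{R}_{P|P^{\mathrm{op}}}(f)(m^{-1}x)\,dm$. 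Since $\mathfrak{n}_P$ and $\mathfrak{n}_{P^{\mathrm{op}}}$ are dual $M$-modules we have $\delta_P=\delta_{P^{\mathrm{op}}}^{-1}$, so $\delta_P^{1/2}(m)\delta_{P^{\mathrm{op}}}(m)=\delta_{P^{\mathrm{op}}}^{1/2}(m)$ and the last expression is exactly $(\mathcal{R}_{P|P^{\mathrm{op}}}f)_\chi^{\mathrm{op}}(x)$ by \eqref{Mellin}. This proves the commutativity for $\mathrm{Re}(\chi)$ large.

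It remains to see $\mathcal{R}_{P|P^{\mathrm{op}}}f\in\mathcal{S}_L$. For $\mathrm{Re}(\chi)$ large the displayed identity gives $(\mathcal{R}_{P|P^{\mathrm{op}}}f)_\chi^{\mathrm{op}}=\mathcal{R}_{P|P^{\mathrm{op}}}(f_\chi)$, and since $f_\chi$ is a good section by definition of $\mathcal{S}(X_P(F))$, the quotient $\mathcal{R}_{P|P^{\mathrm{op}}}(f_\chi)/a_{P|P^{\mathrm{op}}}(\chi)=\mathcal{R}_{P|P^{\mathrm{op}}}(f_\chi)/a_L(\chi)$ is holomorphic by \eqref{ap} --- a Laurent polynomial in $q^{-s}$ in the nonarchimedean case. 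To upgrade this from ``$\mathrm{Re}(s)$ large'' to the whole strip $A(L)<\mathrm{Re}(s)<B(L)$ demanded by the definition of $\mathcal{S}_L$, I would prove a boundary estimate for $\mathcal{R}_{P|P^{\mathrm{op}}}f$ of the shape of Lemma \ref{lem:Sch:bounds2}, obtained by writing a point of $X_{P^{\mathrm{op}}}^\circ(F)$ in Iwasawa form and integrating the bound of Lemma \ref{lem:Sch:bounds2} for $f$ over $N_{P^{\mathrm{op}}}(F)$; this gives absolute convergence of $(\mathcal{R}_{P|P^{\mathrm{op}}}f)_{\chi_s}^{\mathrm{op}}$ on that strip, and Mellin inversion \eqref{Mellin:inv} then identifies it with the meromorphic continuation of $\mathcal{R}_{P|P^{\mathrm{op}}}(f_{\chi_s})$, which is holomorphic there by Lemma \ref{lem:no:poles}. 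In the archimedean case the Fr\'echet seminorm bounds defining $\mathcal{S}_L$ are obtained from the seminorms \eqref{seminorm} of $f$ taken with $Q=P^{\mathrm{op}}$, using that $\mathcal{R}_{P|P^{\mathrm{op}}}$ commutes with the right $G(F)$-action and twists the $M^{\mathrm{ab}}(F)$-action by the character $\delta_{P^{\mathrm{op}}}$, hence is compatible with the $U(\mathfrak{m}^{\mathrm{ab}}\oplus\mathfrak{g})$-derivatives appearing there. I expect this last step --- the boundary estimate for the Radon transform and the transfer of the archimedean Fr\'echet structure, keeping careful track of $a_{P|P^{\mathrm{op}}}$ versus $a_L$ --- to be the main obstacle.
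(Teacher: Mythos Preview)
Your argument is correct and follows the same route as the paper. The paper's proof is considerably terser: it writes down the double integral $\int_{N_{P^{\mathrm{op}}}(F)}\int_{M^{\mathrm{ab}}(F)}\delta_P^{1/2}(m)|\chi|(\omega_P(m))|f|(m^{-1}ux)\,dm\,du$, observes that the inner integral lands in $I(|\chi|)$ by definition of $\mathcal{S}(X_P(F))$ and that the outer integral then converges by the standard intertwining-operator estimate (citing Wallach), and concludes commutativity ``by Fubini's theorem'' --- leaving the conjugation change of variables $u\mapsto mum^{-1}$ and the modulus identity $\delta_P^{1/2}\delta_{P^{\mathrm{op}}}=\delta_{P^{\mathrm{op}}}^{1/2}$ implicit. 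You have unpacked that step correctly.

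Where you are working harder than necessary is the last paragraph. The paper dispatches the containment $\mathcal{R}_{P|P^{\mathrm{op}}}(\mathcal{S}(X_P(F)))\subset\mathcal{S}_L$ in one line as ``immediate from the definitions of $\mathcal{S}(X_P(F))$ and $\mathcal{S}_L$.'' The point is that the definitions are rigged for this: once the diagram commutes, the good-section condition on $f$ with $Q=P^{\mathrm{op}}$ (Definitions~\ref{defn:na} and~\ref{defn:arch}) says exactly that $\mathcal{R}_{P|P^{\mathrm{op}}}(f_{\chi_s})/a_{P|P^{\mathrm{op}}}(\chi_s)$ is holomorphic, and $a_{P|P^{\mathrm{op}}}=a_L$ by \eqref{ap}. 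In the archimedean case the seminorms $|f|_{A,B,p_{P|P^{\mathrm{op}}},\Omega,D}$ in \eqref{seminorm} are, after the commutativity identity and the compatibility of $\mathcal{R}_{P|P^{\mathrm{op}}}$ with the $U(\mathfrak{m}^{\mathrm{ab}}\oplus\mathfrak{g})$-action you note, literally the $\mathcal{S}_L$-seminorms of $\mathcal{R}_{P|P^{\mathrm{op}}}f$. So no separate boundary estimate for the Radon transform is needed; your proposed route via integrating Lemma~\ref{lem:Sch:bounds2} over $N_{P^{\mathrm{op}}}(F)$ would work, but it is more effort than the situation demands.
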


\begin{proof} For $g \in G(F)$ consider the integral 
\begin{align} \label{for:Fubini}
\int_{N_{P^{\mathrm{op}}}(F)} \int_{M^{\mathrm{ab}}(F)}\delta_P^{1/2}(m)|\chi|(\omega_P(m)) |f|(m^{-1} ug)dmdu.
\end{align}
The inner integral converges and defines an element of $I(|\chi|)$ for $\mathrm{Re}(\chi)$ sufficiently large by definition of $\mathcal{S}(X_P(F)),$ and the outer integral converges for $\mathrm{Re}(\chi)$ sufficiently large  \cite[Lemma 10.1.2]{Wallach:RGII} \cite[Theorem IV.1.1]{Waldspurger}.  Thus by Fubini's theorem, we have a commutative diagram
\begin{center}
\begin{tikzcd}
    \mathcal{S}(X_P(F)) \arrow[r,"\mathcal{R}_{P|P^{\mathrm{op}}}"]
    \arrow[d,"(\cdot)_{\chi}"] &\mathcal{R}_{P|P^{\mathrm{op}}}(\mathcal{S}(X_P(F))) \arrow[d,"(\cdot)^{\mathrm{op}}_{\chi}"]\\
    I(\chi) \arrow[r,"\mathcal{R}_{P|P^{\mathrm{op}}}"]&\overline{I}(\chi)
    \end{tikzcd}
\end{center}
for $\mathrm{Re}(\chi)$ sufficiently large.  We are left with proving that 
$
\mathcal{R}_{P|P^{\mathrm{op}}}(\mathcal{S}(X_P(F))) \leq \mathcal{S}_L.$
By the definitions of $\mathcal{S}(X_P(F))$ and $\mathcal{S}_L$ it suffices to check that 
\begin{align} \label{are:smooth}
\mathcal{R}_{P|P^{\mathrm{op}}}(\mathcal{S}(X_P(F)) \leq C^\infty(X_P^{\circ}(F)).
\end{align} Let $f \in \mathcal{S}(X_P(F)).$
By Fubini's theorem and the argument above for almost every $m$ with respect to $dm$ we have that  $\int_{N_{P^{\mathrm{op}}}(F)}f(m^{-1} ug)du$ converges.  When $F$ is nonarchimedean we can use the fact that $f$ is $K$-finite and Lemma \ref{lem:Mellin} to deduce that $f$ is fixed by a compact open subgroup of $M^{\mathrm{ab}}(F).$  This implies that $\int_{N_{P^{\mathrm{op}}}(F)}f( ug)du$ converges absolutely.  Since $\mathcal{R}_{P|P^{\mathrm{op}}}$ is a $G(F)$-intertwining map this implies that $\mathcal{R}_{P|P^{\mathrm{op}}}(f)$ is smooth.  Now assume that $F$ is archimedean.  In this case we can view the integral \eqref{for:Fubini}, as $g$ varies, as valued in the Fr\'echet space $C^\infty(G(F))$ (with the usual Fr\'echet topology).  Using the Fubini theorem in this setting \cite[Theorem 8]{Thomas}, we deduce that for almost all $m$ with respect to $dm$, $\int_{N_{P^{\mathrm{op}}}(F)}f(m^{-1} ug)du$ converges absolutely and defines a smooth function of $g.$ For such an $m$, we change variables $u \mapsto mum^{-1}$ and replacing $g$ by $mg$, we deduce that $\mathcal{R}_{P|P^{\mathrm{op}}}(f)$ is smooth.
\end{proof}

To proceed, we recall the subspaces $\mathcal{C}_Q<\mathcal{S}(X_Q^\circ(F))$ for $Q\in \{P,P^{\mathrm{op}}\}$ considered in \cite[Proposition 4.2]{BKnormalized} that are used to proved the unitarity of the operator $\mathcal{F}_{Q|Q^{\mathrm{op}}}$ on $L^2(X_Q(F))$. In the following, we will use the notation in \eqref{normalizedinduct} and \eqref{Mellin} to keep track of the domain of our Mellin transforms.

\begin{Lem} \label{lem:claim}
For each $\chi \in \widehat{K}_{\GG_m}$ we can choose holomorphic functions 
$h_{Q}(\chi_s)$ that lie in $\cc[q^{-s},q^{s}]$ in the nonarchimedean case and are bounded in vertical strips in the archimedean case such that 
\begin{align} \label{is:holo}
h_{Q}(\chi_s)\mathcal{R}_{Q|Q^{\mathrm{op}}}:I_{Q}(\chi_s)\lto \overline{I}_{Q^{\mathrm{op}}}(\chi_s)
\end{align}
is holomorphic when evaluated on a holomorphic section $f(\chi)^{(s)}\in I_Q(\chi_s)$ and an isomorphism for $s$ outside a discrete countable subset of $\cc$.   
\end{Lem}
\begin{proof}
Assume first that $F$ is nonarchimedean.  Then one can use the usual normalizing factors for intertwining operators \cite[\S 2-4]{Arthur:Intertwining} to construct $h_Q(\chi_s)$ satisfying the requirements in the lemma.  If $F$ is archimedean, loc.~cit.~implies the existence of a set $\{a_i,b_i\}_{i=1}^n$ of complex numbers such that
\begin{align}
 \left(\prod_{i=1}^n\frac{1}{\Gamma(a_is+b_i)} \right)   \mathcal{R}_{Q|Q^{\mathrm{op}}}
\end{align}
is holomorphic when evaluated on a holomorphic section $f(\chi)^{(s)}\in I_Q(\chi_s)$ and is an isomorphism for $s$ outside a discrete countable subset of $\cc$. The factors of $\Gamma$ here are archimedean $L$-functions of quasi-characters of $F^\times$ up to irrelevant factors.  To obtain $h(\chi_s)$ we take the product of reciprocals of $\Gamma$-functions and multiply by $e^{s^2}$ to make the result rapidly decreasing in vertical strips.
\end{proof}

 We henceforth assume the $h_{Q}(\chi_s)$ are chosen as in Lemma \ref{lem:claim}.
 Let $\mathcal{S}(X_Q^\circ(F),K)$ be the space of $K$-finite functions in $\mathcal{S}(X_Q^\circ(F))$, and let
\begin{align*}
    \mathcal{C}_Q:=\left\{f \in \mathcal{S}(X_Q^\circ(F),K):\begin{matrix} \textrm{There exists an }f' \in  \mathcal{S}(X_Q^{\circ}(F),K) \textrm{ such that }\\ f_{\chi_s,Q}=h_{Q^{\mathrm{op}}}((\chi_s)^{-1})h_Q(\chi_s)f'_{\chi_s,Q} \\\textrm{ for all characters }\chi:F^\times \to \cc^\times \textrm{ and all }s \in \cc
     \end{matrix}\right\}.
\end{align*}
For a subspace $W\le \mathcal{S}(X_Q^\circ(F))$, let $W_{\chi_s,Q}$ denote its image in $I(\chi_s)$ under the Mellin transform \eqref{Mellin}.  We also use the notation $\mathcal{S}(X_Q(F))_{\chi_s,Q}$ for the image of $\mathcal{S}(X_Q(F))$ in $I(\chi_s)$ under the Mellin transform, which must be understood in the following sense:  For $\mathrm{Re}(s)$ sufficiently large these Mellin transforms are absolutely convergent by definition of the Schwartz space. Again by definition of the Schwartz space the Mellin transforms are defined by meromorphic continuation for $s$ outside a countable subset of $\cc$ independent of $\chi \in \widehat{K}_{\GG_m}.$

\begin{Lem}\label{lem:defBK}
For $f\in \mathcal{C}_Q$ the functions  $\mathcal{R}_{Q|Q^{\mathrm{op}}}(f_{\chi_s,Q})$ and $\mathcal{R}_{Q^{\mathrm{op}}|Q}(\mathcal{R}_{Q|Q^{\mathrm{op}}}(f_{\chi_s,Q}))$ are holomorphic for all $\chi \in \widehat{K}_{\GG_m}.$ One has $\mathcal{C}_Q< \mathcal{S}(X_Q(F))$.
For $s$ outside a countable subset of $\cc$ (independent of $\chi$)  one has 
$$
(\mathcal{C}_{Q})_{\chi_s,Q}=\mathcal{S}(X_Q^\circ(F),K)_{\chi_s,Q},
$$
which is dense in $\mathcal{S}(X_Q(F))_{\chi_s,Q}$ in the usual Fr\'echet topology if $F$ is archimedean and equal to $\mathcal{S}(X_Q(F))_{\chi_s,Q}$ if $F$ is nonarchimedean.  
\end{Lem}

\begin{proof}  The first assertion is immediate from the definition of $\mathcal{C}_Q.$  The inclusion $\mathcal{C}_Q< \mathcal{S}(X_Q(F))$ follows from the fact that $a_{Q|Q^{\mathrm{op}}}(\chi_s)$ and $a_{Q|Q}(\chi_s)$ have no zeros.   As $h_{Q^{\mathrm{op}}}((\chi_s)^{-1})h_Q(\chi_s)$ is nonzero outside a discrete countable set we have $(\mathcal{C}_{Q})_{\chi_s,Q}=\mathcal{S}(X_Q^\circ(F),K)_{\chi_s,Q}$ outside a discrete countable set.  The union of these sets is again countable.  Since $\mathcal{S}(X_Q^\circ(F),K)_{\chi_s,Q}$ is the space of $K$-finite vectors in $I(\chi_s)$ (which is all of $I(\chi_s)$ in the nonarchimedean case) the last assertion of the lemma follows.
\end{proof}

We remark that here the definition of $\mathcal{C}_Q$ depends on the choice of $h_Q(\chi_s)$ and $h_{Q^{\mathrm{op}}}((\chi_s)^{-1})$. Using Corollary \ref{cor:SL:commute}, and a minor variant of the proof of Lemma \ref{lem:claim} above we fix a choice of $h_Q(\chi_s)$ and $h_{Q^{\mathrm{op}}}((\chi_s)^{-1})$ such that $\mathcal{F}_{P|P^{\mathrm{op}}}(\mathcal{C}_P)<\mathcal{S}(X_{P^{\mathrm{op}}}(F))$.

\begin{Thm} \label{thm:FT}
We have a well-defined isomorphism
\begin{align*}
    \mathcal{F}_{P|P^{\mathrm{op}}}:\mathcal{S}(X_P(F)) \lto \mathcal{S}(X_{P^{\mathrm{op}}}(F)),
\end{align*}
that is continuous with respect to the Fr\'echet topologies in the archimedean case.
The diagram
\begin{equation} \label{commutes}
  \begin{tikzcd}
\mathcal{S}(X_P(F))\ar[r,"\calf_{P | P^{\mathrm{op}}}"]\ar[d,"(\cdot)_{\chi}"]&\mathcal{S}(X_{P^{\mathrm{op}}}(F)))\ar[d,"(\cdot)_{\chi}^{\mathrm{op}}"]\\
I(\chi) \ar[r,"\mu_{P}(\chi)\mathcal{R}_{P|P^{\mathrm{op}}}"]&\overline{I}(\chi)
\end{tikzcd}
\end{equation}
commutes.   
\end{Thm}

\noindent 
As in Corollary \ref{cor:SL:commute}, some care is required in interpreting the statement that the diagram commutes.  The Mellin transform $(\cdot)_{\chi}$ converges absolutely for $\mathrm{Re}(\chi)$ large and the Mellin transform $(\cdot)_{\chi}^{\mathrm{op}}$ converges absolutely for $\mathrm{Re}(\chi)$ small.  The factor $\mu_{P}(\chi)$ is meromorphic, and the operator $\mathcal{R}_{P|Q}:I(\chi) \to\overline{I}(\chi),$ originally defined for $\mathrm{Re}(\chi)$ large, extends to an operator sending meromorphic sections to meromorphic sections.  The definition of $\mathcal{S}(X_P(F))$ is designed to control the poles of all of these objects in terms of the functions $a_{P|Q}(\chi)$.    

\begin{proof}
 By Corollary \ref{cor:L2}, $\mathcal{S}(X_Q(F)) <L^2(X_Q(F))$ for $Q\in \{P,P^{\mathrm{op}}\}$. Combining this with \eqref{isom} and \eqref{relations}, we see that to prove $\mathcal{F}_{P|P^{\mathrm{op}}}$ is an isomorphism, it suffices to check that 
\begin{align} \label{well:def}
\calf_{P | P^{\mathrm{op}}}(\mathcal{S}(X_P(F))) \leq \mathcal{S}(X_{P^{\mathrm{op}}}(F)).
\end{align}
On the other hand, 
Corollary \ref{cor:SL:commute} and Lemma \ref{lem:unnorm:commute} imply that $\calf_{P | P^{\mathrm{op}}}(\mathcal{S}(X_P(F))) \leq \mathcal{S}_{\widetilde{L}}$ and that if we replace $\mathcal{S}(X_{P^{\mathrm{op}}}(F))$ by $\mathcal{S}_{\widetilde{L}}$ in \eqref{commutes} we obtain a commutative diagram.  Thus proving \eqref{well:def} implies everything in the theorem besides the continuity assertion.

Since $a_{P|P}(\chi)=a_{\widetilde{L}}(\chi^{-1})$ for all quasi-characters $\chi,$ by \eqref{a:rel} we deduce that for $f\in \mathcal{S}(X_P(F))$
$$
\frac{\mathcal{F}_{P|P^{\mathrm{op}}}(f)_{\chi_s,P^{\mathrm{op}}}(x)}{a_{P^{\mathrm{op}}|P^{\mathrm{op}}}(\chi_s)}=\frac{\mathcal{F}_{P|P^{\mathrm{op}}}(f)^{\mathrm{op}}_{(\chi_s)^{-1},P^{\mathrm{op}}}(x)}{a_{\widetilde{L}}((\chi_s)^{-1})}\in \cc[q^{s},q^{-s}]
$$
in the nonarchimedean case, and 
$$
|\mathcal{F}_{P|P^{\mathrm{op}}}(f)|_{A,B,p_{P^{\mathrm{op}}|P^{\mathrm{op}}},\Omega,D}<\infty
$$
for all data as in Definition \ref{defn:arch} in the archimedean case
since $\mathcal{F}_{P|P^{\mathrm{op}}}(f) \in \mathcal{S}_{\widetilde{L}}$.  
Hence we are left with checking that
\begin{align} \label{na:check}
\frac{\mathcal{R}_{P^{\mathrm{op}}|P}(\mathcal{F}_{P|P^{\mathrm{op}}}(f)_{\chi_s,P^{\mathrm{op}}})}{a_{P^{\mathrm{op}}|P}(\chi_s)} \in \cc[q^s,q^{-s}]
\end{align}
in the nonarchimedean case
and 
\begin{align} \label{arch:check}
|\mathcal{F}_{P|P^{\mathrm{op}}}(f)|_{A,B,p_{P^{\mathrm{op}}|P},\Omega,D}<\infty
\end{align}
in the archimedean case for all data as in Definition \ref{defn:arch}. 

For any $f \in \mathcal{C}_P$ and any $\chi \in \widehat{K}_{\GG_m}$, by  Corollary \ref{cor:SL:commute}, Lemma \ref{lem:unnorm:commute}, \eqref{relations}, and our choice of $\mathcal{C}_P$, we have the identities
\begin{align}\label{PPop}
  \nonumber \frac{\mu_{L}((\chi_s)^{-1})\mathcal{R}_{P^{\mathrm{op}}|P}(\mathcal{R}_{P|P^{\mathrm{op}}}(f_{(\chi_s)^{-1},P}))}{a_{P^{\mathrm{op}}|P}(\chi_s)}
     &=\frac{\mathcal{R}_{P^{\mathrm{op}}|P}(\mathcal{F}_{P|P^{\mathrm{op}}}(f)_{\chi_s,P^{\mathrm{op}}})}{a_{P^{\mathrm{op}}|P}(\chi_s)}\\
     \nonumber&=\frac{\left(\mathcal{R}_{P^{\mathrm{op}}|P}\mathcal{F}_{P|P^{\mathrm{op}}}(f)\right)^{\mathrm{op}}_{\chi_s,P}}{a_{P^{\mathrm{op}}|P}(\chi_s)}\\
     \nonumber&=\frac{\left(\mathcal{F}_{P^{\mathrm{op}}|P}\mathcal{F}_{P|P^{\mathrm{op}}}(f)\right)^{\mathrm{op}}_{\chi_s,P}}{\mu_{L^{\mathrm{op}}}(\chi_s)a_{P^{\mathrm{op}}|P}(\chi_s)}\\
   &=\frac{f_{\chi_s,P}^{\mathrm{op}}}{\mu_{L^{\mathrm{op}}}(\chi_s)a_{L^{\mathrm{op}}}(\chi_s)}.
\end{align}
Since $f_{(\chi_s)^{-1},P}=f_{\chi_s,P}^{\mathrm{op}}$, the first and last quantities in \eqref{PPop} depend only on the image of $f$ under the map to $I((\chi_s)^{-1}).$  Let $\mathcal{S}(X_P(F),K)$ be the space of $K$-finite functions in $\mathcal{S}(X_P(F));$ it is all of $\mathcal{S}(X_P(F))$ when $F$ is nonarchimedean.  By Lemma \ref{lem:defBK}, the equality of the first and last terms in \eqref{PPop} holds for all $f \in \mathcal{S}(X_P(F),K)$ and all $\chi \in \widehat{K}_{\GG_m}$ for $s$ in a dense subset of $\cc.$
  
  Since the first equality in the previous calculation is valid for all $f \in \mathcal{S}(X_P(F),K)$ by Corollary \ref{cor:SL:commute}, we deduce that 
\begin{align} \label{as:mero}
\frac{\mathcal{R}_{P^{\mathrm{op}}|P}(\mathcal{F}_{P|P^{\mathrm{op}}}(f)_{\chi_s,P^{\mathrm{op}}})}{a_{P^{\mathrm{op}}|P}(\chi_s)}=\frac{f_{\chi_s,P}^{\mathrm{op}}}{\mu_{L^{\mathrm{op}}}(\chi_s)a_{L^{\mathrm{op}}}(\chi_s)}
\end{align}
for all $f \in \mathcal{S}(X_P(F),K)$ and $\chi \in \widehat{K}_{\GG_m}$, at least for all $s$ in a dense subset of $\cc.$
 But then \eqref{as:mero} is valid as an identity of meromorphic functions for all $s.$ As discussed in \S \ref{section: switching}, we have $\mu_{L^{\mathrm{op}}}(\chi_s)a_{L^{\mathrm{op}}}(\chi_s)=\mu_{L}(\chi_s)a_L(\chi_s)$.  Moreover, with $L_i$ defined as in \eqref{L:def}
\begin{align*}
\mu_{L_i}(\chi_s)a_{L_i}(\chi_s)=\gamma(-s_i,(\chi_s)^{\lambda_i},\psi)L(-s_i,(\chi_s)^{\lambda_i})
&=\varepsilon(-s_i,(\chi_s)^{\lambda_i},\psi)L(1+s_i,(\chi_s)^{-\lambda_i})\\
&=\varepsilon(-s_i,(\chi_s)^{\lambda_i},\psi)a_{\widetilde{L}_i}(\chi_s)
\end{align*}
Here $\varepsilon(-s_i,(\chi_s)^{\lambda_i},\psi)$ denotes the usual Tate $\varepsilon$-factor.
Therefore,
\begin{align} \label{for:check}
 g(s,\chi,\psi) \frac{\mathcal{R}_{P^{\mathrm{op}}|P}(\mathcal{F}_{P|P^{\mathrm{op}}}(f)_{\chi_s,P^{\mathrm{op}}})}{a_{P^{\mathrm{op}}|P}(\chi_s)}=  \frac{f_{
    (\chi_s)^{-1},P}}{a_{\widetilde{L}}(\chi_s)}= \frac{f_{
    (\chi_s)^{-1},P}}{a_{P|P}((\chi_s)^{-1})}
\end{align}
where $g(s,\chi,\psi)=\prod_{i}\varepsilon(-s_i,(\chi_s)^{\lambda_i},\psi).$

  In the remainder of the proof we use some basic facts on $\varepsilon$-factors that are nicely summarized in \cite[\S 3.2]{Tate}.
Assume $F$ is nonarchimedean.  In this case $g(s,\chi,\psi)$ is equal to $cq^{p(s)}$ for some polynomial $p$ and some $c \in \cc^\times$.  Thus 
\eqref{for:check} and the fact that $f \in \mathcal{S}(X_P(F),K)$ implies \eqref{na:check}.  Now assume that $F$ is archimedean.  Then 
$$
g(s,\chi,\psi)=\prod_{i}\epsilon_{i,\chi} r \frac{(\chi_s)^{\lambda_i}(a)}{|a|^{1+s_i}}
$$
where $a$ and $r$ depend only on $\psi$ (which determines the normalization of the Haar measure) and $\epsilon_{i,\chi}$ is a fourth root of unity. By an analogue of  \cite[Lemma 3.6]{Getz:Hsu}, \eqref{for:check} and the fact that $f \in \mathcal{S}(X_P(F),K)$ implies \eqref{arch:check}, at least in the special case where $D$ is the identity operator.  It also follows for general $D$ once we note that $\mathcal{R}_{P|P^{\mathrm{op}}} \circ R(m,g)=\delta_{P^{\mathrm{op}}}(m)R(m,g) \circ \mathcal{R}_{P|P^{\mathrm{op}}}.$  Here we have used $R$ to denote the right action of $M(F) \times G(F)$ on $C^\infty(X_P^\circ(F))$ and $C^\infty(X_{P^{\mathrm{op}}}^{\circ}(F)).$

To deduce \eqref{arch:check} without the condition of $K$-finiteness, we point out that the same argument proving \cite[Proposition 3.7]{Getz:Hsu} implies that 
$$
\mathcal{F}_{P|P^{\mathrm{op}}}:\mathcal{S}(X_P(F),K) \lto \mathcal{S}(X_{P^{\mathrm{op}}}(F),K)
$$
is continuous in the Fr\'echet topology.  Since $\mathcal{S}(X_P(F),K)$ is dense in $\mathcal{S}(X_P(F))$ \cite[\S §4.4.3.1]{Warner} it extends to a topological isomorphism 
$$
\mathcal{F}_{P|P^{\mathrm{op}}}:\mathcal{S}(X_P(F)) \lto \mathcal{S}(X_{P^{\mathrm{op}}}(F)).
$$
This already implies the first assertion of the theorem, and additionally \eqref{arch:check}.
\end{proof}

As usual, we say that a parabolic subgroup of a reductive group is self-associate if it is conjugate to its opposite.  Assume $P$ is self-associate.  Choose 
\begin{align} \label{w00}
    w_0 \in G(F)
\end{align} 
normalizing $M$ such that $w_0^{-1}Pw_0=P^{\mathrm{op}}.$  Then conjugation by $w_0$ acts as inversion on $M^{\mathrm{ab}}$.  
\begin{Lem} \label{lem:Weyl}
Let $w$ be a representative for the long Weyl element of the Weyl group of $T$ in $G.$  Then one has $w_0 \in M(F)w=wM(F).$
\end{Lem}
\begin{proof}
The normalizer of $P$ in $G$ is $P$ and the normalizer of $M$ in $P$ is $M$. Therefore, $w_0\in P(F)w$ as $w\left(w_0^{-1}Pw_0\right)w^{-1}=P$. As $w$ normalizes $M$, for $w_0$ to normalize $M$, one must have $w_0=mnw$ for some $n \in N(F)$ such that $n^{-1}Mn=M$. This is only possible when $n$ is the identity. 
\end{proof}
\noindent We observe that $w_0Pw_0^{-1}=P^{\mathrm{op}}.$   Indeed, by Lemma \ref{lem:Weyl} it suffices to check this in the special case where $w_0$ is the long Weyl element of $T(F),$ in which case $w_0$ and $w_0^{-1}$ differ by an element in $T(F).$

   In \S \ref{ssec:measures:re} we gave $N_{P}(F)$ and $N
_{P^{\mathrm{op}}}(F)$ measures using a Chevalley basis.  We assume that $w_0$ is chosen so that these measures correspond under 
\begin{align} \label{meas:corr} \begin{split}
    N_{P}(F) &\lto N_{P^{\mathrm{op}}}(F)\\
    n &\longmapsto w_0^{-1}nw_0. \end{split}
\end{align}

\begin{Lem}\label{Lem: switch sides}
One has an isomorphism
\begin{align*}
    \iota_{w_0}:\mathcal{S}(X_{P^{\mathrm{op}}}(F)) &\lto \mathcal{S}(X_P(F))\\
    f &\longmapsto (x \mapsto f(w_0^{-1}x)).
\end{align*}
\end{Lem}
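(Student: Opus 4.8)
The plan is to show that $\iota_{w_0}$ intertwines every ingredient entering Definitions~\ref{defn:na} and \ref{defn:arch}, so that $\iota_{w_0}(f)$ lies in $\mathcal{S}(X_P(F))$ precisely when $f$ lies in $\mathcal{S}(X_{P^{\mathrm{op}}}(F))$; applying the same analysis with $P$ and $P^{\mathrm{op}}$ interchanged (and $\iota_{w_0}$ replaced by $g\mapsto(y\mapsto g(w_0y))$) shows that $\iota_{w_0}$ and this latter map are mutually inverse and that both preserve Schwartz spaces, giving the asserted isomorphism.

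First I record the formal properties of $\iota_{w_0}$. Since $P$ is maximal and self-associate, $w_0^{-1}Pw_0$ is a parabolic subgroup with Levi $M$, hence equals $P$ or $P^{\mathrm{op}}$; it cannot be $P$, as $w_0\notin N_G(P)=P$ by \eqref{w00}, so $w_0^{-1}Pw_0=P^{\mathrm{op}}$ and $w_0^{-1}P^{\mathrm{der}}w_0=P^{\mathrm{op},\mathrm{der}}$. Consequently left multiplication by $w_0^{-1}$ induces a bijection $\phi\colon X_P^\circ(F)\to X_{P^{\mathrm{op}}}^\circ(F)$ with $\iota_{w_0}f=f\circ\phi$, and left multiplication by $w_0$ induces $\phi^{-1}$; thus $\iota_{w_0}$ is a bijection of $C^\infty(X_{P^{\mathrm{op}}}^\circ(F))$ onto $C^\infty(X_P^\circ(F))$ with inverse $g\mapsto(y\mapsto g(w_0y))$. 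Because left and right multiplications commute, $\iota_{w_0}$ intertwines the right $G(F)$-action; in particular it preserves right $K$-finiteness and commutes with the $\mathfrak{g}$-action. Using that conjugation by $w_0$ acts as inversion on $M^{\mathrm{ab}}$, one computes $\phi(a\cdot x)=a^{-1}\cdot\phi(x)$ for $a\in M^{\mathrm{ab}}(F)$, so $\iota_{w_0}$ commutes with the $M^{\mathrm{ab}}(F)$-action up to the inversion automorphism; combining, $D.(\iota_{w_0}f)=\iota_{w_0}(({}^{\theta}D).f)$ for a fixed automorphism $\theta$ of $U(\mathfrak{m}^{\mathrm{ab}}\oplus\mathfrak{g})$.

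Next I would transport the Mellin and Radon transforms. From $\phi(a\cdot x)=a^{-1}\cdot\phi(x)$, the substitution $a\mapsto a^{-1}$ in \eqref{Mellin} together with $\delta_{P^{\mathrm{op}}}=\delta_P^{-1}$ yields $(\iota_{w_0}f)_{\chi}=f^{\mathrm{op}}_{\chi^{-1}}\circ\phi$, with $f^{\mathrm{op}}$ the Mellin transform of \eqref{Mellin}; under the identification $\omega_{P^{\mathrm{op}}}=\omega_P^{-1}$ this is precisely the Mellin transform of $f$ occurring in the definition of $\mathcal{S}(X_{P^{\mathrm{op}}}(F))$, so absolute convergence of one for $\mathrm{Re}(\chi)$ large is equivalent to that of the other. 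For the Radon transforms, since $w_0$ was chosen so that conjugation carries $N_P(F)$ onto $N_{P^{\mathrm{op}}}(F)$ measure-preservingly (\eqref{meas:corr}), a change of variables in \eqref{R} identifies $\mathcal{R}_{P|P}\circ\iota_{w_0}$ with $\mathcal{R}_{P^{\mathrm{op}}|P^{\mathrm{op}}}$ (both the identity) and $\mathcal{R}_{P|P^{\mathrm{op}}}\circ\iota_{w_0}$ with $\mathcal{R}_{P^{\mathrm{op}}|P}$, up to precomposition with the evident bijections of the underlying spaces. Passing to Mellin transforms via Lemma~\ref{lem:unnorm:commute} and invoking the identities $a_{P|P}=a_{P^{\mathrm{op}}|P^{\mathrm{op}}}$ and $a_{P|P^{\mathrm{op}}}=a_{P^{\mathrm{op}}|P}$ of \eqref{a:rel}---the compatibility established in \S\ref{section: switching}---the condition that $\mathcal{R}_{P|Q}(\iota_{w_0}f)_{\chi_s}/a_{P|Q}(\chi_s)$ be holomorphic for $Q\in\{P,P^{\mathrm{op}}\}$ becomes exactly the corresponding ``good section'' condition for $f$ on the $X_{P^{\mathrm{op}}}$-side. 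This settles the nonarchimedean case. In the archimedean case the same identifications show that each seminorm $|\iota_{w_0}f|_{A,B,p_{P|Q},\Omega,D}$ of \eqref{seminorm} equals a seminorm of $f$ with correspondingly transformed data, so $\iota_{w_0}$ and its inverse are continuous and we obtain a topological isomorphism of Fr\'echet spaces.

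The delicate part is the bookkeeping in the third paragraph: one must track the character inversion $\chi\mapsto\chi^{-1}$ in lockstep with $\omega_P\mapsto\omega_P^{-1}=\omega_{P^{\mathrm{op}}}$ and the interchange $\delta_P\leftrightarrow\delta_{P^{\mathrm{op}}}$, keep straight the map $\phi$ and the further map induced by $w_0^{-1}$ entering the Radon transform, and check that after these substitutions the normalizing factors $a_{P|Q}$ match those of the $X_{P^{\mathrm{op}}}$-side exactly---this last point is what \eqref{a:rel}, hence \S\ref{section: switching}, is there to supply. The remaining verifications are routine transport of structure.
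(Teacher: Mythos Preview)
Your proposal is correct and follows essentially the same approach as the paper: both proofs verify that $\iota_{w_0}$ transports the Mellin and Radon transforms on the $X_P$-side to those on the $X_{P^{\mathrm{op}}}$-side, and then invoke the identities $a_{P|Q}=a_{P^{\mathrm{op}}|Q^{\mathrm{op}}}$ from \eqref{a:rel} to match the good-section conditions. The paper condenses this into the single displayed chain
\[
\frac{\mathcal{R}_{P|Q}((\iota_{w_0}f)_{\chi_s,P})}{a_{P|Q}(\chi_s)}
=\frac{\mathcal{R}_{P|Q}(\iota_{w_0}(f_{\chi_s,P^{\mathrm{op}}}))}{a_{P|Q}(\chi_s)}
=\frac{\iota_{w_0}\circ\mathcal{R}_{P^{\mathrm{op}}|Q^{\mathrm{op}}}(f_{\chi_s,P^{\mathrm{op}}})}{a_{P^{\mathrm{op}}|Q^{\mathrm{op}}}(\chi_s)},
\]
whereas you spell out each identification separately; two minor points are that the relation $w_0^{-1}Pw_0=P^{\mathrm{op}}$ is immediate from the hypothesis $w_0Pw_0^{-1}=P^{\mathrm{op}}$ (no need to argue via $N_G(P)$), and the citation of Lemma~\ref{lem:unnorm:commute} is not actually needed since you are already working with the Radon transform on sections $I(\chi_s)\to\overline{I}(\chi_s)$.
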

\begin{proof}
We have an isomorphism 
\begin{align*}
    \iota_{w_0}:C^\infty(X_{P^{\mathrm{op}}}^\circ(F)) &\lto C^\infty(X_P^{\circ}(F))\\
    f &\longmapsto (x \mapsto f(w_0^{-1}x)).
\end{align*}
For $f \in \mathcal{S}(X_{P^{\mathrm{op}}}(F))$ and $Q \in \{P,P^{\mathrm{op}}\},$ one has 
\begin{align*}
    \frac{\mathcal{R}_{P|Q}(\iota_{w_0}(f)_{\chi_s,P})}{a_{P|Q}(\chi_s)}=\frac{\mathcal{R}_{P|Q}(\iota_{w_0}(f_{\chi_s,P^{\mathrm{op}}}))}{a_{P|Q}(\chi_s)}=\frac{\iota_{w_{0}} \circ \mathcal{R}_{P^{\mathrm{op}}|Q^{\mathrm{op}}}(f_{\chi_s,P^{\mathrm{op}}})}{a_{P^{\mathrm{op}}|Q^{\mathrm{op}}}(\chi_s)},
\end{align*}
where we have used \eqref{a:rel}. 
Assume $F$ is nonarchimedean.  Then since $f_{\chi_s,P^{\mathrm{op}}}$ is a good section, we deduce that $\iota_{w_0}(f)_{\chi_s,P}$ is a good section.  Thus the lemma follows from the definition of the Schwartz space.  A similar argument proves the lemma in the archimedean case. 
\end{proof}

Thus when $P$ is self-associate, we have an isomorphism
\begin{align}\label{FXP}
    \mathcal{F}_{X_P}:=\mathcal{F}_{X_P,w_0}:=\iota_{w_0} \circ \mathcal{F}_{P|P^{\mathrm{op}}}:\mathcal{S}(X_P(F)) \lto \mathcal{S}(X_P(F)).
\end{align}
By Theorem  \ref{thm:FT} and \eqref{agreement}, we see that the Fourier transform $\mathcal{F}_{X_P}$ agrees with the Fourier transform used in \cite{Getz:Liu:BK,Getz:Liu:Triple,Getz:Hsu} when $X_P$ is as in \S \ref{sec:lagrangian} and $w_0$ is chosen as in loc.~cit.

For use in \S \ref{Section: general formula}, we also consider how $\iota_{w_0}$ interacts with the operators $\lam_!(\mu_s)$. Suppose that $L$ and $L',$ etc., are as in the discussion prior to Proposition \ref{prop:Mellin}. Recall $L^{\mathrm{op}}$ and its associated data $\{(s_i,\lam_i)\}$ from (\ref{Lop}). Arguing as in the proof of  Lemma \ref{Lem: switch sides}, we have an isomorphism
\[
\iota_{w_0}:\cals_L(X_{P^{\mathrm{op}}}(F))\lra \cals_{L^{\mathrm{op}}}(X_P(F)).
\]
\begin{Lem}\label{Lem: switch and twist}
 We have a commutative diagram
\[
\begin{tikzcd}
\cals_L(X_{P^{\mathrm{op}}}(F)) \ar[rr,"{\lam_{k!}}(\mu_{s_k})"]\ar[d,"\iota_{w_0}"]&&\cals_{L'}(X_{P^{\mathrm{op}}}(F))\ar[d,"\iota_{w_0}"]\\
\cals_{L^{\mathrm{op}}}(X_P(F))\ar[rr,"\lam_{k!}(\mu_{s_k})"]&&\cals_{L'^{\mathrm{op}}}(X_P(F)).
\end{tikzcd}
\]
\end{Lem}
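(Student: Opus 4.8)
The plan is to unwind both operators in the diagram and check that the commutativity reduces to a single change of variables in the integral \eqref{eqn: eta def}, driven by the hypothesis that conjugation by $w_0$ acts by inversion on $M^{\mathrm{ab}}$.

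First I would pin down the form of $\lam_{k!}(\mu_{s_k})$ acting on functions on $X_P^{\circ}(F)$: it is given by \eqref{eqn: eta def} with the roles of $P$ and $P^{\mathrm{op}}$ interchanged throughout, so that $\delta_{P^{\mathrm{op}}}$ is replaced by $\delta_P$ and, since $\omega_P=\omega_{P^{\mathrm{op}}}^{-1}$ by \S\ref{section: switching}, the character $\omega_P$ is replaced by $\omega_{P^{\mathrm{op}}}$; the regularization of \eqref{each:b0} then uses the cutoff $\Phi(\omega_{P^{\mathrm{op}}}(m)/b)$. With this in hand, for $f\in\cals_L(X_{P^{\mathrm{op}}}(F))$ and $x\in X_P^{\circ}(F)$ I would write out
\begin{align*}
\iota_{w_0}\bigl(\lam_{k!}(\mu_{s_k})(f)\bigr)(x)&=\lam_{k!}(\mu_{s_k})(f)(w_0^{-1}x)\\
&=\lim_{|b|\to\infty}\int_{M^{\mathrm{ab}}(F)}\Phi\!\left(\tfrac{\omega_P(m)}{b}\right)\psi(\omega_P(m))|\omega_P(m)|^{s_k+1}\delta_{P^{\mathrm{op}}}^{\lam_k/2}(m)f(m^{-\lam_k}w_0^{-1}x)\tfrac{dm}{\zeta(1)}.
\end{align*}
Since $w_0$ normalizes $M^{\mathrm{ab}}$ and conjugation by $w_0$ inverts it, one has $m^{-\lam_k}w_0^{-1}=w_0^{-1}m^{\lam_k}$ in $G(F)$ modulo $M^{\mathrm{der}}(F)\subseteq P^{\mathrm{op},\mathrm{der}}(F)$, so $f(m^{-\lam_k}w_0^{-1}x)=f(w_0^{-1}m^{\lam_k}x)=\iota_{w_0}(f)(m^{\lam_k}x)$.

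Next I would substitute $m\mapsto m^{-1}$, which preserves the Haar measure on $M^{\mathrm{ab}}(F)\cong F^{\times}$, and use $\omega_P(m^{-1})=\omega_P(m)^{-1}=\omega_{P^{\mathrm{op}}}(m)$ together with $\delta_{P^{\mathrm{op}}}(m^{-1})=\delta_{P^{\mathrm{op}}}(m)^{-1}=\delta_P(m)$ (the last equality because $\fn_P$ and $\fn_{P^{\mathrm{op}}}$ are dual as $M$-representations, cf. \S\ref{section: switching}); in particular the cutoff becomes $\Phi(\omega_{P^{\mathrm{op}}}(m)/b)$ and $|\omega_P(m^{-1})|^{s_k+1}=|\omega_{P^{\mathrm{op}}}(m)|^{s_k+1}$. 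The integral then becomes exactly the regularized integral defining $\lam_{k!}(\mu_{s_k})(\iota_{w_0}(f))(x)$ for the $X_P^{\circ}$-version of the operator identified above. Passing to the limit $|b|\to\infty$ — which exists on $\cals_L(X_{P^{\mathrm{op}}}(F))$ and on $\cals_{L^{\mathrm{op}}}(X_P(F))$ by Proposition \ref{prop:Mellin} and its analogue with $P$ and $P^{\mathrm{op}}$ interchanged — yields the asserted identity. That both composites actually land in $\cals_{L'^{\mathrm{op}}}(X_P(F))$ follows from Proposition \ref{prop:Mellin} (giving $\lam_{k!}(\mu_{s_k})\colon\cals_{L^{\mathrm{op}}}(X_P(F))\to\cals_{L'^{\mathrm{op}}}(X_P(F))$) together with the isomorphism $\iota_{w_0}\colon\cals_{L'}(X_{P^{\mathrm{op}}}(F))\iso\cals_{L'^{\mathrm{op}}}(X_P(F))$ recorded just before the lemma.

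I expect the only genuine difficulty to be the sign bookkeeping: correctly identifying the $X_P^{\circ}$-form of $\lam_{k!}(\mu_{s_k})$ (which $\omega$, which $\delta$, which cutoff) and verifying that conjugation by $w_0$ really does produce the inversion $m\mapsto m^{-1}$ compatibly with the left $M^{\mathrm{ab}}$-actions on $X_{P^{\mathrm{op}}}^{\circ}$ and $X_P^{\circ}$ used in \eqref{eqn: eta def}. As a cross-check, one may instead argue through Mellin transforms: by Proposition \ref{prop:Mellin} each horizontal map corresponds to multiplication by a Tate $\gamma$-factor, while $\iota_{w_0}$ intertwines the two Mellin transforms up to the twist $\chi\mapsto\chi^{-1}$ exactly as in the proof of Lemma \ref{Lem: switch sides}; the claim then follows by Mellin inversion. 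This merely trades change-of-variables bookkeeping for $\gamma$-factor bookkeeping, so I would present the direct argument.
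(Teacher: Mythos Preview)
Your direct change-of-variables argument is correct, and it differs from the paper's route. The paper argues via Mellin transforms: for $f\in\cals_L(X_{P^{\mathrm{op}}}(F))$ and $A(L)<\mathrm{Re}(\chi)<B(L')$, one computes
\[
\bigl(\iota_{w_0}(\lam_{k!}(\mu_{s_k})(f))\bigr)^{\mathrm{op}}_{\chi,P}
=\iota_{w_0}\bigl((\lam_{k!}(\mu_{s_k})(f))^{\mathrm{op}}_{\chi,P^{\mathrm{op}}}\bigr)
=\ga(-s_k,\chi^{\lam_k},\psi)\,(\iota_{w_0}(f))^{\mathrm{op}}_{\chi,P},
\]
using Proposition~\ref{prop:Mellin} and the identity $\iota_{w_0}(h^{\mathrm{op}}_{\chi,P^{\mathrm{op}}})=(\iota_{w_0}h)^{\mathrm{op}}_{\chi,P}$; then, noting $A(L^{\mathrm{op}})=A(L)$ and $B(L^{\mathrm{op}})=B(L)$, another application of Proposition~\ref{prop:Mellin} identifies this with $\bigl(\lam_{k!}(\mu_{s_k})(\iota_{w_0}(f))\bigr)^{\mathrm{op}}_{\chi,P}$, and Mellin inversion finishes. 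This is precisely the cross-check you sketch at the end.

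Your primary argument has the advantage of being self-contained at the integral level: the change of variables $m\mapsto m^{-1}$ matches the two cutoffs and integrands for each fixed $b$, so equality passes to the limit once you know (via Proposition~\ref{prop:Mellin}) that both limits exist. The paper's approach, by contrast, leverages the Mellin-side characterization of $\lam_{k!}(\mu_{s_k})$ already proved, which is more in keeping with how the rest of \S\ref{Section: twisting} is organized. Either is fine; your sign bookkeeping (in particular $m^{-\lam_k}w_0^{-1}\equiv w_0^{-1}m^{\lam_k}$ modulo $M^{\mathrm{der}}$, $\omega_P(m^{-1})=\omega_{P^{\mathrm{op}}}(m)$, and $\delta_{P^{\mathrm{op}}}(m^{-1})=\delta_P(m)$) is correct.
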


\noindent We caution the reader that the bottom row in the diagram is given by the same definition as  \eqref{lambda:reg}, but the roles of $P$ and $P^{\mathrm{op}}$ switched as we are applying the operator $\lambda_{k!}(\mu_{s_k})$ to functions on $X_P(F)$.

\begin{proof}
Let $f \in \mathcal{S}_L(X_{P^{\mathrm{op}}}(F))$.  Then by Proposition \ref{prop:Mellin}, $\lam_{k!}(\mu_{s_k})(f)\in \cals_{L'}(X_{P^{\mathrm{op}}}(F)),$ and for $A(L)<\mathrm{Re}(\chi)<B(L'),$ traversing the top of the diagram and applying a Mellin transform yields
\begin{align*}
    \left(\iota_{w_0}({\lam_{k!}}(\mu_{s_k})(f))\right)_{\chi,P}^{\mathrm{op}}&=\iota_{w_0}\left({\lam_{k!}}(\mu_{s_k})(f)^{\mathrm{op}}_{\chi,P^{\mathrm{op}}}\right)\\
    &=\ga(-s_k,\chi^{\lam_k},\psi)\iota_{w_0}\left(f^{\mathrm{op}}_{\chi,P^{\mathrm{op}}}\right)\\
    &=\ga(-s_k,\chi^{\lam_k},\psi)\left(\iota_{w_0}(f)\right)_{\chi,P}^{\mathrm{op}}.
\end{align*}
Noting that
\[
A(L^{\mathrm{op}}) = A(L) \quad \text{ and } \quad B(L^{\mathrm{op}}) = B(L),
\]
 we may apply Proposition \ref{prop:Mellin} again to see that this equals
\[
\left({\lam_{k!}}(\mu_{s_k})(\iota_{w_0}(f))\right)_{\chi,P}^{\mathrm{op}}.
\] 
This is the result of traversing the bottom of the diagram and applying a Mellin transform.  Thus applying Mellin inversion yields the lemma.
\end{proof}

\subsection{Containment of Schwartz spaces and relation between definitions} \label{ssec:def:rel}

One can construct many functions in  $\mathcal{S}(X_P(F))$ using Mellin inversion.  What is not as clear is the following conjecture:
\begin{Conj} \label{conj:contains}
The Schwartz space $\mathcal{S}(X_P^{\circ}(F))$ of  $X_{P}^{\circ}(F) \subset X_P(F)$ is contained in $\mathcal{S}(X_P(F))$.
\end{Conj}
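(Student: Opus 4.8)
The plan is to verify, for an arbitrary $f\in\mathcal{S}(X_P^\circ(F))$, the defining conditions of $\mathcal{S}(X_P(F))$ in Definitions \ref{defn:na} and \ref{defn:arch}, isolating the single genuinely analytic input. Several of the conditions are essentially formal. In the nonarchimedean case $f\in C_c^\infty(X_P^\circ(F))$ is right-invariant under a compact open subgroup, hence right $K$-finite. In all cases the integrand $m\mapsto\delta_P^{1/2}(m)\chi(\omega_P(m))f(m^{-1}g)$ of \eqref{Mellin} is supported, for fixed $g$, on the preimage of $\supp f$ under the map $m\mapsto m^{-1}g$; by the equivariance \eqref{intertwine} this map equals $m\mapsto\omega_P(m)\mathrm{Pl}(g)$, which is proper since $\mathrm{Pl}(g)\neq0$ and $\omega_P:M^{\mathrm{ab}}(F)\to F^\times$ is an isomorphism. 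Hence $f_{\chi_s}(g)$, and more generally $(D.f)_{\chi_s}(g)$ for $D\in U(\mathfrak{m}^{\mathrm{ab}}\oplus\mathfrak{g})$, is an absolutely convergent integral over a compact set, entire in $s$ — a Laurent polynomial in $q^{\pm s}$ in the nonarchimedean case. Since $a_{P|P}(\chi)=a_{\widetilde L}(\chi^{-1})$ is a product of abelian $L$-factors and so has no zeros, $f_{\chi_s}(g)/a_{P|P}(\chi_s)$ is again entire; this settles the ``$Q=P$'' half of the good-section requirement and, in the archimedean case, the ``$Q=P$'' seminorm bounds of \eqref{seminorm}, which reduce to rapid decay in $s$ of the Mellin transform of the Schwartz function $m\mapsto(D.f)(m^{-1}g)$ on $M^{\mathrm{ab}}(F)\cong F^\times$ together with summability over the conductor of $\eta$.

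This reduces the conjecture to the assertion
$$\mathcal{R}_{P|P^{\mathrm{op}}}\bigl(\mathcal{S}(X_P^\circ(F))\bigr)\subseteq\mathcal{S}_L.$$
Granting it, the Fubini computation behind Lemma \ref{lem:unnorm:commute} (valid here for $\mathrm{Re}(\chi)$ large, as the relevant iterated integral converges for compactly supported or rapidly decreasing $f$ by the usual Gindikin--Karpelevich estimate, cf.\ \cite[Lemma 10.1.2]{Wallach:RGII}) gives $\mathcal{R}_{P|P^{\mathrm{op}}}(f_{\chi_s})=(\mathcal{R}_{P|P^{\mathrm{op}}}f)^{\mathrm{op}}_{\chi_s}$ for $\mathrm{Re}(s)$ large and then by meromorphic continuation; so the remaining good-section condition that $\mathcal{R}_{P|P^{\mathrm{op}}}(f_{\chi_s})(g)/a_{P|P^{\mathrm{op}}}(\chi_s)=(\mathcal{R}_{P|P^{\mathrm{op}}}f)^{\mathrm{op}}_{\chi_s}(g)/a_L(\chi_s)$ be holomorphic, and in the archimedean case the ``$Q=P^{\mathrm{op}}$'' seminorm bounds, are precisely what membership in $\mathcal{S}_L$ encodes (here one uses that $\mathcal{R}_{P|P^{\mathrm{op}}}$ commutes with the right $G(F)$-action and with the $M^{\mathrm{ab}}(F)$-action up to a modulus character, so the $D$-derivatives are absorbed). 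To prove the displayed containment I would analyze the behaviour of the (suitably regularized) Radon transform $\mathcal{R}_{P|P^{\mathrm{op}}}f(mu)=\int_{N_{P^{\mathrm{op}}}(F)}f(n\,mu)\,dn$ on the open Bruhat cell $M^{\mathrm{ab}}(F)\times N_{P}(F)\hookrightarrow X_{P^{\mathrm{op}}}^\circ(F)$ as $|\omega_P(m)|\to0,\infty$: it splits into a remainder decaying rapidly in $|\omega_P(m)|^{\pm1}$, which contributes an entire piece to the Mellin transform, plus a finite sum of boundary terms which, after expanding $\widehat{\fn}_P$ into irreducible modules for the principal $\fsl_2\hookrightarrow\widehat{\mathfrak m}$ of \S\ref{ssec:BKL}, are built out of rank-one ($\SL_2$-)intertwining integrals whose Mellin transforms have their only poles along $\mathrm{Re}(s)=s_i/\lambda_i$, i.e.\ precisely the poles of $a_L(\chi_s)$. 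This is carried out by Braverman and Kazhdan in the nonarchimedean case \cite{BKnormalized}, and for the Siegel parabolic of $\Sp_{2n}$, with the attendant growth estimates, in \cite{Getz:Liu:BK} (compare the proof of Lemma \ref{lem:Sch:bounds2}); the work here is to run it for an arbitrary maximal parabolic and for an arbitrary element of $\mathcal{S}(X_P^\circ(F))$ — not merely for a section in the particular dense subspace $\mathcal{C}_P$ of \cite{BKnormalized} — and uniformly in the archimedean and nonarchimedean settings, in the spirit of \S\ref{Section: twisting}.

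I expect the archimedean case to be the main obstacle. There one must do more than locate the poles of $(\mathcal{R}_{P|P^{\mathrm{op}}}f)^{\mathrm{op}}_{\chi_s}$: for every compact $\Omega\subset X_{P^{\mathrm{op}}}^\circ(F)$, every $D\in U(\mathfrak{m}^{\mathrm{ab}}\oplus\mathfrak{g})$, and every strip $V_{A,B}$ on which a fixed polynomial $p$ removes all poles of $a_L(\eta_s)$, one needs a bound on $\sup_{x\in\Omega}\bigl|p(s)(\mathcal{R}_{P|P^{\mathrm{op}}}(D.f))^{\mathrm{op}}_{\eta_s}(x)\bigr|$ that is summable over $\eta\in\widehat{K}_{\GG_m}$ — an archimedean analogue of bookkeeping that is essentially formal over a nonarchimedean field, requiring uniform-in-$\eta$ holomorphy and growth estimates for rank-one intertwining integrals applied to Schwartz sections. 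A secondary, more bureaucratic obstacle is that \cite{BKnormalized} proves holomorphy of the normalized operator only on the subspace $\mathcal{C}_P$; extending the conclusion to all of $\mathcal{S}(X_P^\circ(F))$ calls either for the direct asymptotic computation above — which I would favour, as it is more robust — or for a density-plus-continuity argument using continuity of the normalized intertwining operator in the Fr\'echet topology of $\mathcal{S}_L$. Finally, we note that by Theorem \ref{thm:FT} the conjecture for both $P$ and $P^{\mathrm{op}}$ is equivalent to the containment $\mathcal{S}_{BK}(X_P(F))\subseteq\mathcal{S}(X_P(F))$ discussed in \S\ref{ssec:def:rel}, so either statement could be attacked in place of the other.
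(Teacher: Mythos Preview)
The statement you are attempting to prove is labeled a \emph{Conjecture} in the paper and is not proved there in general. The paper establishes it only when $G=\Sp_{2n}$ and $P$ is the Siegel parabolic (Theorem \ref{thm:conj}), and even then by citation to \cite[Proposition 4.7]{Getz:Liu:BK} and \cite[Proposition 3.16]{Getz:Hsu}; there is no ``paper's own proof'' to compare against.

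Your outline is a reasonable plan of attack and correctly isolates the crux. The formal reductions (right $K$-finiteness, entirety of $f_{\chi_s}$, the $Q=P$ half of the good-section condition via nonvanishing of $a_{P|P}$) are sound, and what remains is exactly the containment $\mathcal{R}_{P|P^{\mathrm{op}}}(\mathcal{S}(X_P^\circ(F)))\subseteq\mathcal{S}_L$. You also correctly name the two genuine obstacles: extending the Braverman--Kazhdan analysis from the dense subspace $\mathcal{C}_P$ of \cite{BKnormalized} to all of $\mathcal{S}(X_P^\circ(F))$, and the uniform-in-$\eta$ archimedean estimates needed for the seminorms \eqref{seminorm}. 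These are precisely why the authors leave the statement as a conjecture. One minor correction: in the archimedean case $f\in\mathcal{S}(X_P^\circ(F))$ need not be compactly supported, so your ``integral over a compact set'' argument is nonarchimedean-only; instead one uses that Nash--Schwartz functions on $X_P^\circ(F)$ decay rapidly both at infinity in $V_P$ and toward the cone point $0\in X_P\setminus X_P^\circ$, which still yields absolute convergence and rapid decay of the Mellin transform for all $s$.
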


\begin{Thm} \label{thm:conj}
When $G$ is a symplectic group and $P$ is the stabilizer of a maximal isotropic subspace, Conjecture \ref{conj:contains} is true.
\end{Thm}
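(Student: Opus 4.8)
The plan is to verify directly that every $f\in\cals(X_P^\circ(F))$ satisfies the defining conditions of Definition~\ref{defn:na} (if $F$ is nonarchimedean) or Definition~\ref{defn:arch} (if $F$ is archimedean). The key preliminary observation is geometric, and is valid for \emph{any} maximal parabolic of a split simple group: for each $g\in G(F)$ the orbit map $M^{\mathrm{ab}}(F)\to X_P^\circ(F)$, $m\mapsto m^{-1}g$, is proper. Indeed, by Theorem~\ref{Thm:Pl} and Lemma~\ref{Lem: general Plucker} the Pl\"ucker embedding identifies $X_P$ with the closure in $V_P$ of the $G$-orbit of $v_P$, the single boundary point $X_P\setminus X_P^\circ$ being the one lying over $0\in V_P$; since $\mathrm{Pl}(m^{-1}g)=\omega_P(m)\mathrm{Pl}(g)$ by \eqref{intertwine}, if $m^{-1}g$ ranges over a compact $\Omega\subset X_P^\circ(F)$ then $\omega_P(m)\mathrm{Pl}(g)$ ranges over the compact set $\mathrm{Pl}(\Omega)\subset V_P(F)\setminus\{0\}$, which forces $|\omega_P(m)|$ into a compact subinterval of $\rr_{>0}$, hence forces $m$ into a compact subset of $M^{\mathrm{ab}}(F)$ via $\omega_P\colon M^{\mathrm{ab}}(F)\iso F^\times$ (Lemma~\ref{lem:mab}). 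Since differential operators do not enlarge supports, for every $D\in U(\mathfrak{m}^{\mathrm{ab}}\oplus\mathfrak{g})$ (take $D=1$ in the nonarchimedean case) the function $m\mapsto(D.f)(m^{-1}g)$ is smooth with compact support in $M^{\mathrm{ab}}(F)$, with supports and $C^k$-norms uniform for $g$ in a compact set. Hence the Mellin integral \eqref{Mellin} defining $(D.f)_{\chi_s}(g)$ converges absolutely for \emph{all} $s\in\cc$ and, as a function of $s$, lies in $\cc[q^{-s},q^s]$ in the nonarchimedean case and is entire with rapid decay in every vertical strip $V_{A,B}$ in the archimedean case, uniformly for $g$ in compacts. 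In addition $f$ is right $K$-finite in the nonarchimedean case, being locally constant and compactly supported; and in the archimedean case, splitting $M^{\mathrm{ab}}(F)\cong F^\times$ into radial and angular variables, smoothness and compact support of $m\mapsto(D.f)(m^{-1}g)$ give rapid decay of $(D.f)_{\eta_s}(g)$ in $\eta\in\widehat{K}_{\GG_m}$, uniformly for $g$ in compacts and $s$ in vertical strips. This settles all the ``size'' requirements in both definitions.

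It remains to check that $f_{\chi_s}$ is a \emph{good} section, i.e. that $\mathcal{R}_{P|Q}(f_{\chi_s})(g)/a_{P|Q}(\chi_s)$ is holomorphic for $Q\in\{P,P^{\mathrm{op}}\}$, together with the archimedean seminorm bounds \eqref{seminorm}. For $Q=P$, where $\mathcal{R}_{P|P}$ is the identity, we use that $a_{P|P}(\chi_s)=a_{\widetilde L}(\chi_s^{-1})$ is a product of local $L$-factors and therefore nowhere vanishing; dividing the entire section $f_{\chi_s}(g)$ by it gives a holomorphic section, and the corresponding seminorm bound is immediate because local $L$-factors (shifted $\Gamma$-factors in the archimedean case) decay in vertical strips while $(D.f)_{\chi_s}(g)$ decays rapidly there.

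The one step that genuinely uses the hypothesis $G=\mathrm{Sp}_{2n}$, $P$ the Siegel parabolic, and that I expect to be the main obstacle, is the case $Q=P^{\mathrm{op}}$: one must show that the unnormalized intertwining operator $\mathcal{R}_{P|P^{\mathrm{op}}}\colon I(\chi_s)\to\overline I(\chi_s)$, applied to the entire section $f_{\chi_s}$, has all its poles captured by $a_{P|P^{\mathrm{op}}}(\chi_s)=a_L(\chi_s)$, with the quotient moreover of at most polynomial growth in vertical strips, so that it is tamed by the polynomials $p_{P|P^{\mathrm{op}}}$ of Definition~\ref{defn:arch}. For the Siegel parabolic this is accessible: factoring $\mathcal{R}_{P|P^{\mathrm{op}}}$ along a minimal gallery from $P$ to $P^{\mathrm{op}}$ into rank-one intertwining operators, the Gindikin--Karpelevich formula shows that the product of the rank-one normalizing $L$-factors equals $a_L(\chi_s)=\prod_i L(-s_i,\chi_s^{\lambda_i})$ in the notation of \S\ref{sec:lagrangian} --- this is exactly the identity $a_{w_0}(s,\chi)=a_L(\chi_s)$ recorded in \eqref{agreement} and established in \cite[\S 3]{Getz:Liu:BK} --- while each \emph{normalized} rank-one operator is holomorphic, preserves $K$-finiteness, and carries rapidly decreasing entire sections to sections bounded in vertical strips. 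Since $f_{\chi_s}$ is such a section by the first paragraph, so is $\mathcal{R}_{P|P^{\mathrm{op}}}(f_{\chi_s})/a_{P|P^{\mathrm{op}}}(\chi_s)$, which also controls the $Q=P^{\mathrm{op}}$ archimedean seminorm. Combining the three cases with the size estimates of the first paragraph gives $f\in\cals(X_P(F))$, proving the theorem; apart from the control of $\mathcal{R}_{P|P^{\mathrm{op}}}$ by $a_{P|P^{\mathrm{op}}}$ --- the only non-formal point, and precisely where the restriction to the Siegel case is used --- the argument is routine.
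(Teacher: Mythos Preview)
The paper does not give a proof here at all: it simply cites \cite[Proposition~4.7]{Getz:Liu:BK} for the nonarchimedean case and \cite[Proposition~3.16]{Getz:Hsu} for the archimedean case. Your outline correctly reconstructs the \emph{shape} of what those references do --- show Mellin transforms of compactly supported $f$ are entire, then control the two Radon transforms --- and your properness argument for the $M^{\mathrm{ab}}$-orbit map, as well as the $Q=P$ case, are fine.

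The genuine gap is in your $Q=P^{\mathrm{op}}$ step. You write that one factors $\mathcal{R}_{P|P^{\mathrm{op}}}$ ``along a minimal gallery from $P$ to $P^{\mathrm{op}}$ into rank-one intertwining operators'' and then invokes Gindikin--Karpelevich. Two problems. First, the gallery factorization is a statement about intertwining operators between principal series induced from a \emph{Borel}; for a maximal parabolic $P$ there is no nontrivial chain of adjacent parabolics between $P$ and $P^{\mathrm{op}}$, so there is no such factorization of $\mathcal{R}_{P|P^{\mathrm{op}}}$ acting on $I_P(\chi_s)$. One can try to pass through induction in stages to the Borel, but then the sections you care about become very special sections of the full principal series and tracking them through the rank-one factors is exactly the work you are trying to avoid. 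Second, Gindikin--Karpelevich computes the scalar by which the intertwining operator acts on the \emph{spherical} vector; it does not, by itself, locate the poles of the operator on an arbitrary holomorphic section, which is what ``good section'' requires. The identity you cite from \eqref{agreement} is only an identification of normalizing factors between two papers, not a proof that the normalized operator is entire on $I_P(\chi_s)$.

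What the cited references actually do for the Siegel parabolic is a direct analysis of $\mathcal{R}_{P|P^{\mathrm{op}}}$ on holomorphic sections of the degenerate principal series --- exploiting the concrete model of $X_P^\circ$ and an explicit integral formula for the Radon transform --- to show the quotient by $a_{P|P^{\mathrm{op}}}(\chi_s)$ is holomorphic (and, in the archimedean case, to obtain the vertical-strip bounds needed for \eqref{seminorm}). You have correctly isolated this as ``the only non-formal point,'' but the argument you supply for it does not go through; you would need to either reproduce the direct computation of those references or give a correct reduction to known holomorphy results for normalized intertwining operators on degenerate principal series.
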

\begin{proof}This was proved in the nonarchimedean case in \cite[Proposition 4.7]{Getz:Liu:BK} and in the archimedean case in \cite[Proposition 3.13]{Getz:Hsu}. 
\end{proof}

More generally, one of the authors recently proved the following \cite{Hsu:nonarch}:
\begin{Thm}
When $F$ is nonarchimedean and $G$ is not of type $E$ or $F$, Conjecture \ref{conj:contains} is true.  \qed
\end{Thm}

Conjecture \ref{conj:contains} implies that $\mathcal{S}_{BK}(X_P(F)) \leq \mathcal{S}(X_P(F)),$ where $\mathcal{S}_{BK}(X_P(F))$ is defined as in Definition \ref{defn:BK}.  It would be convenient if this containment is an equality.  We pose this as the following question:
\begin{Quest}
Is it true that $\mathcal{S}(X_P(F))=\mathcal{S}_{BK}(X_P(F))$?
\end{Quest}

We  discuss the relative benefits of the two definitions of the Schwartz space.  Braverman and Kazhdan's definition of $\mathcal{S}_{BK}(X_P(F))$ is beautifully succinct.   However, it is difficult to extract analytic information about elements of the Schwartz space from the definition.  The definition of $\mathcal{S}(X_P(F))$ is more involved, but it seems to be the correct definition.  For example, in the nonarchimedean case  we certainly want the image of $\mathcal{S}(X_P(F))$ under various Mellin transforms to consist exactly of good sections, and we have defined $\mathcal{S}(X_P(F))$ so that this is true.  Moreover, analytic information is relatively straightforward to extract from the definition of $\mathcal{S}(X_P(F))$.  In particular, the following is an immediate consequence of Lemma \ref{lem:Sch:bounds2}:
\begin{Thm}
Assuming Conjecture \ref{conj:contains}, \cite[Conjecture 5.6]{BKnormalized} is valid for maximal parabolic subgroups of simple reductive groups. That is, when $F$ is nonarchimedean the support of any $f\in \cals_{BK}(X_{P}(F))$ is contained in a compact subset of $X_P(F)$.\qed 
\end{Thm}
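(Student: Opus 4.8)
The plan is to deduce the statement from the containment $\cals_{BK}(X_P(F)) \le \cals(X_P(F))$ (which, as noted in the discussion preceding the theorem, follows from Conjecture \ref{conj:contains} together with Theorem \ref{thm:FT}), combined with the support estimate for elements of $\cals(X_P(F))$ recorded in Lemma \ref{lem:Sch:bounds2} and the fact that the Pl\"ucker embedding realizes $X_P(F)$ as a closed subset of the finite-dimensional $F$-vector space $V_P(F)$ (Theorem \ref{Thm:Pl}).

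First I would recall why $\cals_{BK}(X_P(F)) \le \cals(X_P(F))$: Conjecture \ref{conj:contains} applied to $P$ gives $\cals(X_P^\circ(F)) \le \cals(X_P(F))$, while applying it to the (also maximal) parabolic $P^{\mathrm{op}}$ gives $\cals(X_{P^{\mathrm{op}}}^\circ(F)) \le \cals(X_{P^{\mathrm{op}}}(F))$; hence by Theorem \ref{thm:FT} (with the roles of $P$ and $P^{\mathrm{op}}$ exchanged) $\mathcal{F}_{P^{\mathrm{op}}|P}\big(\cals(X_{P^{\mathrm{op}}}^\circ(F))\big) \le \mathcal{F}_{P^{\mathrm{op}}|P}\big(\cals(X_{P^{\mathrm{op}}}(F))\big) = \cals(X_P(F))$, using that the refined Fourier transform of Theorem \ref{thm:FT} restricts to (equivalently, is extended by) the $L^2$-transform appearing in Definition \ref{defn:BK} (see the discussion in \S \ref{ssec:FT}). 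Adding the two summands yields $\cals_{BK}(X_P(F)) \le \cals(X_P(F))$.

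Next, given $f \in \cals_{BK}(X_P(F)) \le \cals(X_P(F))$ with $F$ nonarchimedean, Lemma \ref{lem:Sch:bounds2} asserts that $f(x)$ vanishes once $|x| = |\mathrm{Pl}(x)|$ is sufficiently large; fix $R>0$ so that $f$ is supported in $\{x \in X_P^\circ(F): |x| \le R\}$. It then remains to observe that $\{x \in X_P(F): |x| \le R\}$ is compact. For this I would invoke Theorem \ref{Thm:Pl}: $\mathrm{Pl}\colon X_P \to V_P$ is a closed immersion, so $X_P(F)$ is a closed subset of $V_P(F)$ for the analytic topology, and (extending $|\cdot|$ to $X_P(F)$ via $\mathrm{Pl}$) the set $\{x \in X_P(F): |x| \le R\}$ is the intersection of the closed set $\mathrm{Pl}(X_P(F))$ with the closed ball of radius $R$ in $V_P(F)$, hence closed and bounded, hence compact. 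Since $\mathrm{supp}(f)$, interpreted as the closure of $\{x : f(x)\neq 0\}$ \emph{inside} $X_P(F)$, is a closed subset of this compact set, it is compact, which is precisely \cite[Conjecture 5.6]{BKnormalized} in the present setting.

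I do not anticipate any serious obstacle: the entire content lies in the already-established inputs (Conjecture \ref{conj:contains}, Theorem \ref{thm:FT}, Lemma \ref{lem:Sch:bounds2}, Theorem \ref{Thm:Pl}). The only point that merits a word of care is the first step, namely the identification of $\mathcal{F}_{P^{\mathrm{op}}|P}$ as used in Definition \ref{defn:BK} (the $L^2$-operator applied to $\cals(X_{P^{\mathrm{op}}}^\circ(F))$) with the refined isomorphism of Theorem \ref{thm:FT}; this compatibility is part of the construction in \S \ref{ssec:FT}. The other bookkeeping subtlety is purely a matter of phrasing: the gain over $\cals(X_P^\circ(F))$ is that the singular point of $X_P$ is now allowed to belong to the support, so "support" must be understood as closure taken within $X_P(F)$ rather than within the open subset $X_P^\circ(F)$.
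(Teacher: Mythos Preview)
Your proposal is correct and follows the same approach as the paper: the paper's proof is the single sentence ``This is an immediate consequence of Lemma \ref{lem:Sch:bounds2},'' relying on the containment $\cals_{BK}(X_P(F)) \le \cals(X_P(F))$ already noted in the preceding discussion. You have simply spelled out in full the steps the paper leaves implicit (the use of Theorem \ref{thm:FT} with $P$ and $P^{\mathrm{op}}$ exchanged, and the compactness argument via the Pl\"ucker embedding of Theorem \ref{Thm:Pl}).
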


In the special case where $X_P$ is as in \S \ref{sec:lagrangian}, this was already proven in \cite{Getz:Liu:BK} up to identifying the Fourier transform of \cite{Getz:Liu:BK} with the Fourier transform of Braverman and Kazhdan.  The agreement of the two Fourier transforms is implied by Theorem \ref{thm:FT} above.

\section{A formula for the Fourier transform on $X_P$} \label{sec:FT:XP}
In this section, we combine our analytic results with the geometric pairing between opposite Braverman--Kazhdan spaces to give a formula for the Fourier transform. We then work out several examples explicitly, connecting this result to known formulae in the literature.  Our aim is to be explicit enough that the formula can be applied by readers who are not experts in algebraic group theory.  
\subsection{Preliminary calculations}\label{Section: roots prelim}
We continue to impose the notation from the previous sections; thus $P \leq G$ is a maximal parabolic subgroup in a simple, simply connected, split group $G$.  Recall that $\omega_\beta$ is the fundamental weight attached to $P$ as in \eqref{omegaP}.  Since $G$ is simply connected $\omega_P=\omega_{\beta}$ in the notation of \eqref{omegaP}. As above $V_P$ is the associated highest weight representation. 
By Lemma \ref{Lem: general Plucker}, $\omega_P$ may be extended to a character of $P$ (trivial on $P^{\mathrm{der}}$) and defines an isomorphism
\begin{align*}
    \omega_P=\omega_\beta: M^{\mathrm{ab}}\iso \GG_m.
\end{align*}

Recall the graded representation $L=\widehat{\fn}_P^e$ of \S \ref{ssec:BKL}.  We fix a good ordering 
\[
\{(s_i,\lam_i):1 \leq i \leq k\},
\]
so
\[
\frac{s_{i+1}}{\lam_{i+1}}\geq \frac{s_i}{\lam_i}
\]
for $1 \leq i <k$ and $k=\dim L.$ In particular, we have the highest datum $(s_k,\lam_k)$. 
\begin{Prop}\label{Prop: first term}
Any good ordering of $L=\widehat{\fn}_P^e$ satisfies 
$
\lambda_k=1.
$
\end{Prop}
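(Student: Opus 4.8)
The plan is to trace through how the eigencharacters $\lambda_i$ of $\GG_m$ on $L=\widehat{\fn}_P^e$ are defined and to identify which summand $L_i$ realizes the maximum value $s_k/\lambda_k$. Recall from \S\ref{ssec:BKL} that $L=\widehat{\fn}_P^e$ is the space of highest weight vectors (for the principal $\fsl_2 \hookrightarrow \widehat{\fm}$) inside $\widehat{\fn}_P$, that $\GG_m$ acts on $\widehat{\fn}_P$ via $\widehat{\omega}_P:\GG_m \iso Z(\widehat M)$ from \eqref{dual:isom}, and that $s_i$ is $\tfrac12$ the $h$-eigenvalue on $L_i$. The key point is that the grading by $\widehat{\omega}_P$ corresponds to the grading of $\widehat{\fg}$ by the $\beta$-height: since $G$ is simple and $P$ is the maximal parabolic attached to the simple root $\beta$, the nilradical $\widehat{\fn}_P$ decomposes as $\bigoplus_{d\geq 1}\widehat{\fn}_P[d]$, where $\widehat{\fn}_P[d]$ is the span of the coroot spaces $\widehat{\fg}_{\alpha^\vee}$ (equivalently, root spaces of $\widehat{\fg}$) whose coweight pairs with $\omega_\beta$ — i.e.\ has $\beta$-coefficient — equal to $d$. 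First I would make this precise: the $\GG_m$-weight $\lambda_i$ on a summand $L_i \subset \widehat{\fn}_P[d]$ equals $d$ (up to the normalization $m_\beta=1$ coming from $G$ simply connected, which makes $\omega_P=\omega_\beta$, invoked at the start of \S\ref{Section: roots prelim}).

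Next I would observe that the maximum of $s_i/\lambda_i$ over $i\in I$ — which by the good-ordering condition \eqref{increasing} is attained at $i=k$ — is always realized on the lowest graded piece $\widehat{\fn}_P[1]$, where $\lambda_i=1$. The mechanism: within a fixed graded piece $\widehat{\fn}_P[d]$, the $h$-eigenvalues of the $\fsl_2$-highest weight vectors are bounded, and as $d$ increases the ratio $s_i/d$ decreases because the $h$-eigenvalues do not grow as fast as $d$. Concretely, $\widehat{\fn}_P[1]$ as an $\fsl_2$-module contains the highest weight vector of largest $h$-eigenvalue among all of $\widehat{\fn}_P$: this is the "principal" or "Kostant" statement that the top of the nilradical's lowest piece carries the highest exponent. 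I would verify the needed inequality $s_i/\lambda_i < s_{\text{top}}/1$ for every $L_i$ in $\widehat{\fn}_P[d]$ with $d\geq 2$ by the representation-theoretic estimate on $h$-eigenvalues of highest-weight vectors of $\fsl_2$-submodules of $\widehat{\fg}$; alternatively, one can argue uniformly using that the grading element is $\widehat{\omega}_P(h_0)$ for a suitable cocharacter and comparing with the principal grading.

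An alternative — and perhaps cleaner — route uses Lemma \ref{lem:pos} together with a case-by-case check: for each simple $\widehat{\fg}$ and each node $\beta$ of its Dynkin diagram, the decomposition of $\widehat{\fn}_P$ under the principal $\fsl_2$ in $\widehat{\fm}$ is classically known (the data appears, for the symplectic example, in \S\ref{sec:lagrangian}), and in every case the highest ratio $s_i/\lambda_i$ sits in the piece where $\GG_m$ acts by the identity character. Given the scope of the paper, I expect the authors' proof to proceed by the conceptual height argument rather than a table, so my plan would be to write the conceptual version and fall back on the classification only if a uniform argument proves elusive.

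The main obstacle will be the second step: proving that the ratio $s_i/\lambda_i$ is genuinely maximized on $\widehat{\fn}_P[1]$, rather than merely that $\widehat{\fn}_P[1]$ is nonempty. This requires controlling how the $h$-eigenvalues of the $\fsl_2$-highest weight vectors distribute across the graded pieces $\widehat{\fn}_P[d]$, and the cleanest way I see is to use the well-known fact that for the principal $\fsl_2$ the $h$-eigenvalues on all of $\widehat{\fg}$ are the (doubled) exponents, so every $h$-eigenvalue is at most $2h^\vee - 2$ where $h^\vee$ is the dual Coxeter number, while a highest weight vector in $\widehat{\fn}_P[d]$ with $\fsl_2$-highest weight $2s_i$ must have $s_i \leq $ (something growing slower than $d$); making "slower than $d$" precise in a way that is uniform in $\widehat{\fg}$ and $\beta$ is the delicate point.
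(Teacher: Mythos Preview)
Your expectation is reversed: the paper's proof is precisely your ``alternative'' route, a case-by-case computation deferred to \S\ref{App:normalizing}. Lemmas \ref{lem:PGL:case} and \ref{lem:classic:case} handle the classical types, and explicit tables handle $E_6,E_7,E_8,F_4,G_2$; one then reads off that the maximal ratio $s_i/\lambda_i$ always occurs at some $\lambda_i=1$.

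Your proposed conceptual argument, by contrast, has a genuine gap. The mechanism you invoke --- that ``$\widehat{\fn}_P[1]$ as an $\fsl_2$-module contains the highest weight vector of largest $h$-eigenvalue among all of $\widehat{\fn}_P$'' --- is simply false. For $\widehat G$ of type $E_8$ with $P$ attached to node $3$ (Bourbaki numbering), the appendix tables give top $\fsl_2$-highest weight $11$ in $\widehat{\fn}_P[1]$ but $12$ in $\widehat{\fn}_P[2]$; the same phenomenon occurs at node $4$ (top weight $7$ in degree $1$ versus $8$ in degree $2$). The ratios $11/2$ and $7/2$ still beat $12/4=3$ and $8/4=2$, so the proposition survives, but not for the reason you give. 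You were right to flag the ``slower than $d$'' step as the delicate point; in fact no uniform monotonicity of the $h$-eigenvalues across graded pieces exists, and the paper does not attempt one.

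So: your first-step identification of $\lambda_i$ with the $\beta$-height is correct and is indeed how the paper organizes the computation, but your second step does not go through as written, and your fallback is the actual proof.
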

\begin{proof} 
Our proof is a case-by-case analysis. As this is a computation on the Langlands dual group, we defer the details to \S \ref{App:normalizing}. In fact we compute all of the parameters  $\{(s_i,\lambda_i)\}$ for all simple Cartan types.  
 The results required to observe the current proposition are lemmas \ref{lem:PGL:case} and \ref{lem:classic:case} and the tables at the end of \S \ref{App:normalizing}.  We alert the reader that we work entirely on the Langlands dual side in the appendix. One must use the following well-known computations of Langlands dual groups:
\begin{align*}
\widehat{\mathrm{Sp}}_{2n}=\mathrm{SO}_{2n+1}(\cc), \quad  \widehat{\mathrm{Spin}}_{2n} =\mathrm{PSO}_{2n}(\cc), \quad \widehat{\mathrm{Spin}}_{2n+1}=\mathrm{PSp}_{2n}(\cc)
\end{align*}
together with the fact that the dual group of a simply connected semisimple group is adjoint.
\end{proof}

\begin{Prop}\label{Prop: root explanation}
One has
\[
 \de_P=|\omega_P|^{2s_k+2}.
 \]
\end{Prop}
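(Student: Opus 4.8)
The statement $\delta_P = |\omega_P|^{2s_k+2}$ is an identity of characters of $M^{\mathrm{ab}}(F)$ (equivalently, after identifying $M^{\mathrm{ab}} \cong \GG_m$ via $\omega_P$, an identity of rational powers). Recall from \S\ref{ssec:the:Schwartz:space} that $r \in \qq_{>0}$ was defined by $|\omega_P|^r = \delta_P$, so the claim is equivalent to $r = 2s_k + 2$. The plan is to compute both sides as characters of $M$ restricted to the center, using the root-theoretic descriptions already set up in \S\ref{ssec:BKL}.

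First I would express $\delta_P$ in terms of roots. Since $P = MN_P$, the modulus character is $\delta_P(m) = |\det(\Ad(m)|_{\mathfrak{n}_P})| = \prod_{\alpha \in \Phi(N_P)} |\alpha(m)|$, the product over the roots of $T$ appearing in $\mathfrak{n}_P$. Restricting this to $M^{\mathrm{ab}}$ (equivalently, evaluating on the connected center of $M$, which maps isomorphically to $M^{\mathrm{ab}}$), I would show $\delta_P|_{M^{\mathrm{ab}}} = \omega_P^{\,r}$ where $r = \frac{2\langle \rho_P, \beta^\vee\rangle}{\langle \omega_P, \beta^\vee\rangle}$ with $\rho_P$ the half-sum of roots in $N_P$ (here using that $\omega_\beta$, hence $\omega_P$, is the unique — up to scalar — character of $M$ trivial on $M^{\mathrm{der}}$, so $\delta_P|_{M^{\mathrm{ab}}}$ must be a rational power of $\omega_P$, and the power is determined by pairing against $\beta^\vee$). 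Since $G$ is simply connected, $\omega_P = \omega_\beta$ and $\langle\omega_P, \beta^\vee\rangle = 1$, so $r = 2\langle \rho_P, \beta^\vee\rangle$, i.e. $\delta_P = |\omega_P|^{2\langle\rho_P,\beta^\vee\rangle}$.

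Next I would identify $2s_k + 2$ with $2\langle\rho_P, \beta^\vee\rangle$. Here I dualize: by \eqref{dual:isom}, the $\GG_m$-action on $\widehat{\fn}_P^e$ that defines the $\lambda_i$ is via $\widehat\omega_P: \GG_m \xrightarrow{\sim} Z(\widehat M)$, and $s_i$ is $\tfrac12$ the $h$-eigenvalue of the $i$-th highest weight line, where $\{e,h,f\}$ is a principal $\fsl_2$ in $\widehat{\fm}$. The key observation is that the $h$-eigenvalue is governed by the dual $\rho$: for a principal $\fsl_2$, $h$ acts on $\widehat{\fg}$ as $\ad(2\rho_{\widehat M}^\vee)$ restricted appropriately, so the top $h$-weight on $\widehat{\fn}_P$ equals $2\langle \rho_{\widehat N_P}, \text{(highest coroot direction)}\rangle$ — and translating back to $G$ via the duality $\widehat{\mathfrak{n}}_{P} \leftrightarrow$ (roots of $N_P$), the maximal $h$-eigenvalue on $\widehat{\fn}_P^e$ is $2\langle 2\rho_P, \varpi_\beta^\vee\rangle_{\text{dual}}$ — I need to be careful matching up the pairing, but the upshot should be that $s_k$, the largest of the $s_i$, corresponds to the highest root of $\mathfrak{n}_P$ and equals $\langle\rho_P,\beta^\vee\rangle - 1$. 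Combined with Proposition~\ref{Prop: first term} (which gives $\lambda_k = 1$, confirming the highest line sits in the "correct" graded piece), this yields $2s_k + 2 = 2\langle\rho_P,\beta^\vee\rangle = r$, completing the proof. As a sanity check I would verify this against the $\Sp_{2n}$ Siegel case of \S\ref{sec:lagrangian}, where $\delta_P(\diag(m, m^{-t})) = |\det m|^{n+1}$, so $r = n+1$, while $s_k$ (the $r=\lfloor n/2\rfloor$ term with $\lambda = 2$... wait) — actually the highest $s_i/\lambda_i$ comes from $(s_{\lfloor n/2\rfloor+1}, \lambda_{\lfloor n/2\rfloor+1}) = (\tfrac{n-1}{2}, 1)$, giving $2s_k + 2 = (n-1) + 2 = n+1 = r$, as required.

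The main obstacle is the second identification: tracking the $h$-action on $\widehat{\fn}_P^e$ precisely enough to pin down the largest $s_i$ in root-theoretic terms, and making sure the passage through Langlands duality (with $G$ simply connected so $\widehat G$ adjoint) matches the normalizations of $\omega_P$ and $\beta^\vee$ correctly. Rather than fight this abstractly, it may be cleanest to do exactly what Proposition~\ref{Prop: first term} does — defer to the case-by-case tables in \S\ref{App:normalizing} — reading off $\max_i s_i$ from those tables and checking $2\max_i s_i + 2 = r$ type by type; the needed values of $\delta_P$ (equivalently $r$) are standard for each maximal parabolic. So the honest proof is: combine the computation $\delta_P = |\omega_P|^{2\langle\rho_P,\beta^\vee\rangle}$ above with the appendix computation of the $s_i$, plus Proposition~\ref{Prop: first term} to locate the top datum.
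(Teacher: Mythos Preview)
Your setup is correct and matches the paper: writing $2\rho_P := \sum_{\gamma\in\Phi^+\setminus\Phi_M^+}\gamma = r\omega_P$ and pairing with $\beta^\vee$ gives $r = 2\langle\rho_P,\beta^\vee\rangle$, so the goal is $s_k = \langle\rho_P,\beta^\vee\rangle - 1$. But the paper does \emph{not} fall back to the appendix tables for this; it gives a uniform argument, and the missing idea in your attempt is exactly the one you were groping for in the third paragraph.

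The key is to make the identification of $h$ precise. Under $\widehat{\ft} = X_*(\widehat T)\otimes\cc = X^*(T)\otimes\cc$, the semisimple element $h$ of the principal $\fsl_2$ in $\widehat{\fm}$ corresponds to $2\rho_M^\vee$ on the dual side, i.e.\ to $2\rho_M = \sum_{\gamma\in\Phi_M^+}\gamma$ on the $G$-side. Hence the relation $2\rho_M + 2\rho_P = 2\rho_G = 2\sum_{\alpha\in\Delta}\omega_\alpha$ reads
\[
h + r\omega_P = 2\sum_{\alpha\in\Delta}\omega_\alpha.
\]
Now use Proposition~\ref{Prop: first term}: since $\lambda_k=1$, the top line $L_k$ lies in $\widehat{\fn}_P(1)$, which is the irreducible $\widehat M$-module with lowest weight $\beta^\vee$. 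Let $\gamma_0^\vee = \beta^\vee + \sum_{\alpha\in\Delta_M} c_\alpha\alpha^\vee$ be the weight of the highest weight vector $v_k$. Because $v_k$ and the $\beta^\vee$-weight space sit at opposite ends of the same irreducible $\fsl_2$-module, $\langle h,\beta^\vee\rangle = -2s_k$; combined with $\langle h,\alpha^\vee\rangle = 2$ for $\alpha\in\Delta_M$, this gives $2s_k = \langle h,\gamma_0^\vee\rangle = -2s_k + 2\sum c_\alpha$, so $\sum c_\alpha = 2s_k$. Evaluating the displayed identity on $\gamma_0^\vee$ then yields $2s_k + r = 2 + 2\sum c_\alpha = 2 + 4s_k$, i.e.\ $r = 2s_k+2$.

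Your case-by-case fallback would be correct, but the uniform argument buys more: as the paper remarks, it works under the weaker hypothesis $\omega_\beta\in X^*(T)$ rather than simple connectedness, whereas the appendix tables are organized by dual Cartan type of the simply connected group.
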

\begin{Rem}
The proof shows that the proposition is still valid if we weaken the assumption that $G$ is simply connected to the assumption that $\omega_{\beta} \in X^*(T)$.
\end{Rem}
\begin{proof}
Let $\Phi^+$ denote the set of positive roots of our split maximal torus $T\leq M <G$ with respect to the Borel subgroup $B$ and let $\Phi^+_M \subset \Phi^+$ denote the subset of roots of $T$ in $M.$    As above, $\Delta \subset \Phi^+$ denotes the set of simple roots defined by $\Phi^+$.  Then $\De_M=\De-\{\be\}$ is a set of simple roots of $T$ in $M$.

For $t \in T(F),$ we have
$$
\delta_P(t)=\left|t^{\sum_{\ga  \in \Phi^+ -\Phi^+_M}\gamma} \right|.
$$
On the other hand $X^\ast(M)=\zz\omega_P,$ so there is an integer $r>0$ such that 
$$
\sum_{\ga \in \Phi^+-\Phi^+_M} \gamma=r\omega_P.
$$
We are to show that $r=2s_k+2$.

Let $\{e,h,f\} \subset \widehat{\fm}$ be a principal $\fsl_2$-triple.  The copy of $\fsl_2$ it spans acts on $\widehat{\fn}_P$ by the adjoint action. The root systems of $M$ and $\widehat{M}$ are in Langlands duality. 
We use this to identify
 \begin{align} \label{identification}
 \widehat{\mathfrak{t}}=X_\ast(\widehat{T})\otimes_\zz \cc = X^\ast(T)\otimes_\zz \cc.
 \end{align}
Under this identification, $h \in \widehat{\ft}$ may be chosen so that it is sent to the sum of positive coroots  of $\widehat{M}$ \cite[Section 2]{gross1997motive}:
\begin{align*} 
   2 \rho^\vee_{M}:=\sum_{\gamma \in \Phi^+_{\widehat{M}}}{\gamma^\vee} \in X_*(\widehat{T}),
\end{align*}
which corresponds under the second equality of \eqref{identification} to
\begin{align} \label{2rho}
2\rho_M:=\sum_{\gamma \in \Phi^+_M} \gamma= 2\sum_{\al\in{\De_M}} \widetilde{\omega}_\al \in X^*(T)
\end{align}
 where $\widetilde{\omega}_\al$ is the weight of the fundamental representation of $M$ associated to $\al\in \Delta_M$. Thus
\begin{align} \label{eqn: relation with h}
h+r\omega_P=\sum_{\gamma \in \Phi^+}\gamma =2\sum_{\alpha \in \Delta}\omega_{\alpha},
\end{align}
where $\omega_\alpha$ is the fundamental weight of $G$ associated to $\alpha \in \Delta$. Note that in general $\widetilde{\omega}_\al\neq {\omega}_\al$ for $\al\in \De_M$ since $\widetilde{\omega}_\al\in X^\ast(T\cap M^{\mathrm{der}}).$

Consider now the $h$-eigenvalues on the space of highest weight vectors $L=\widehat{\fn}_P^e$. By Proposition \ref{Prop: first term},
$$
L_k=\cc v_k \leq  \widehat{\fn}_P(1)^e
$$
where the $1$ indicates the subspace on which $Z(\widehat{M})$ acts via $1$. As mentioned in \cite[\S 5.2]{Manivel}, the space $\widehat{\fn}_P(1)$ is the irreducible representation of $\widehat{M}$ with lowest weight space corresponding to $\be^\vee,$ the coroot of $\beta$.  

By the definition of a good ordering, the $h$-eigenvalue $2s_k$ is largest among all $h$-eigenvalues occurring in the $\widehat{M}$ representation $\widehat{\fn}_{P}(1)$. It follows that $v_k$ is a highest weight vector for $\widehat{\fn}_P(1)$. Let 
$$
\ga^\vee_0=\beta^\vee+ \sum_{\al \in \Delta_M}c_\al(\ga_0^\vee) \al^\vee
$$
be the weight of $v_k$. We claim that $2s_k = \sum_{\al \in \Delta_M}c_\al(\ga_0^\vee)$.

 Since this is the largest $h$-eigenvalue in $L,$ it follows that the lowest weight space $\widehat{\fn}_{\be^\vee}<\widehat{\fn}_P(1)$ is the lowest-weight space for the irreducible $\fsl_2$-representation containing $v_k,$ and thus has the eigenvalue
\begin{equation}\label{eqn: lowest weight}
\la h,\be^\vee\ra = -2s_k.
\end{equation}
Here 
 $\la\cdot,\cdot\ra$ is the pairing on $X^\ast(T)\otimes X_\ast(T)$.
Therefore, since \eqref{2rho} implies $\la h, \al^\vee\ra = 2$ for all  $\al\in \De_M,$ 
\begin{align*}
    2s_k=\la h,\gamma^\vee_0\ra &= \sum_{\al\in \De_M}c_\al(\ga_0^\vee)\la h, \al^\vee\ra + \la h, \be^\vee\ra\\
            &=2\sum_{\al\in \De_M}c_\al(\ga_0^\vee) -2s_k,
\end{align*}
proving the claim that $2s_k = \sum_{\al \in \Delta_M}c_\al(\ga_0^\vee)$.
 
Since $\omega_P=\omega_\be,$ we see that for any root $\ga^\vee$ occurring in $\widehat{\fn}_P(1),$ $\la\omega_P,\ga^\vee \ra =1$. Evaluating both sides of \eqref{eqn: relation with h} on $\ga_0^\vee$ thus implies 
 \begin{align*}
 2s_k +r= \left\langle 2 \sum_{\alpha \in \Delta} \omega_\alpha,\gamma_0^\vee \right\rangle = 2+2\sum_{\alpha \in \De_M}c_\al(\ga_0^\vee)=2+4s_k.
 \end{align*}We deduce that $r=2s_k+2,$ and the proposition follows.
\end{proof}

\subsection{The general formula}\label{Section: general formula}

For integers $n$, let 
\begin{align} \label{[n]}
[n]:\GG_m \lto \GG_m
\end{align}
be the map $x \mapsto x^n$.  We define
\begin{align}\label{eqn: eta aug}
    \mu_{P}^{\mathrm{aug}}:=\lambda_{1!}(\mu_{s_1}) \circ \dots \circ \lambda_{(k-1)!}(\mu_{s_{k-1}}) \quad\text{and}\quad\mu_P^{\mathrm{geo}}:=[1]_{!}(\mu_{s_k}),
\end{align}
where the $\mathrm{aug}$ stands for ``augmented''
and consider the factorization
\begin{align*}
    \mu_{P}=\mu_{P}^{\mathrm{aug}}\circ \mu_{P}^{\mathrm{geo}}.
\end{align*}
\begin{Rem}  In light of our formula for the Fourier transform below, it would be interesting to illuminate the relationship between the operator $\mu_{P}^{\mathrm{aug}}$ and the singularity of $X_P$ at $0$. 
\end{Rem}
 Set
 \begin{align}\label{geopart}
     \calf_{P|P^{\mathrm{op}}}^{\mathrm{geo}}:=\mu_{P}^{\mathrm{geo}}\circ\calr_{P|P^{\mathrm{op}}}:\cals(X_P(F))\lra \cals_{L(1)}.
 \end{align}
\begin{Thm} \label{Thm: Fourier formula}
For $f\in  \mathcal{S}(X_P(F))$ and $x^\ast\in X_{P^{\mathrm{op}}}^\circ(F),$ we have
$\mathcal{F}_{P|P^{\mathrm{op}}}=\mu_{P}^{\mathrm{aug}} \circ \mathcal{F}_{P|P^{\mathrm{op}}}^{\mathrm{geo}}$ where
\[
\calf_{P|P^{\mathrm{op}}}^{\mathrm{geo}}(f)(x^\ast)=
\int_{X_{P}^\circ(F)}f\left(x\right)\psi\left(\la x,x^\ast\ra_{P|P^{\mathrm{op}}}\right)dx.
\]
Here $\la \cdot,\cdot\ra_{P|P^{\mathrm{op}}}$ is as in \eqref{PPop:pairing} and the measure on $X_P^\circ(F)$ is normalized as in \S \ref{ssec:measures:re}.
\end{Thm}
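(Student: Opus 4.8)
The plan is to establish the two assertions of the theorem separately. The identity $\mathcal{F}_{P|P^{\mathrm{op}}}=\mu_P^{\mathrm{aug}}\circ\mathcal{F}_{P|P^{\mathrm{op}}}^{\mathrm{geo}}$ is essentially formal. By \eqref{FPPOP}, $\mathcal{F}_{P|P^{\mathrm{op}}}=\mu_P\circ\mathcal{R}_{P|P^{\mathrm{op}}}$; since $\lambda_k=1$ by Proposition \ref{Prop: first term}, the operator $\lambda_{k!}(\mu_{s_k})$ is exactly $[1]_!(\mu_{s_k})=\mu_P^{\mathrm{geo}}$, whence $\mu_P=\mu_P^{\mathrm{aug}}\circ\mu_P^{\mathrm{geo}}$ as iterated compositions (the factorization recorded just before the statement). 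Combining this with the definition \eqref{geopart} of $\mathcal{F}_{P|P^{\mathrm{op}}}^{\mathrm{geo}}=\mu_P^{\mathrm{geo}}\circ\mathcal{R}_{P|P^{\mathrm{op}}}$ gives $\mathcal{F}_{P|P^{\mathrm{op}}}=\mu_P^{\mathrm{aug}}\circ(\mu_P^{\mathrm{geo}}\circ\mathcal{R}_{P|P^{\mathrm{op}}})=\mu_P^{\mathrm{aug}}\circ\mathcal{F}_{P|P^{\mathrm{op}}}^{\mathrm{geo}}$. So the substance of the theorem is the explicit integral formula for $\mathcal{F}_{P|P^{\mathrm{op}}}^{\mathrm{geo}}$.

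To prove that, I would first simplify $\mu_P^{\mathrm{geo}}=[1]_!(\mu_{s_k})$. With $\lambda=1$, the integrand in \eqref{eqn: eta def} carries the factor $|\omega_P(m)|^{s_k+1}\delta_{P^{\mathrm{op}}}^{1/2}(m)$; by Proposition \ref{Prop: root explanation} one has $\delta_P=|\omega_P|^{2s_k+2}$, so $\delta_{P^{\mathrm{op}}}^{1/2}(m)=\delta_P(m)^{-1/2}=|\omega_P(m)|^{-(s_k+1)}$ and this factor is identically $1$, leaving
\[
\mu_P^{\mathrm{geo}}(g)(x^\ast)=\frac{1}{\zeta(1)}\int_{M^{\mathrm{ab}}(F)}\psi(\omega_P(m))\,g(m^{-1}x^\ast)\,dm .
\]
Taking $g=\mathcal{R}_{P|P^{\mathrm{op}}}(f)$ and unfolding the Radon transform \eqref{R} — with $\tilde m\in M(F)$ a lift of $m$ and $h\in G(F)$ a lift of $x^\ast$, so that $m^{-1}x^\ast$ lifts to $\tilde m^{-1}h$ — $\mathcal{F}_{P|P^{\mathrm{op}}}^{\mathrm{geo}}(f)(x^\ast)$ becomes a double integral over $M^{\mathrm{ab}}(F)\times N_{P^{\mathrm{op}}}(F)$ of $\psi(\omega_P(m))\,f(P^{\mathrm{der}}(F)u\tilde m^{-1}h)$. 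Now substitute $(m,u)\mapsto(m',u'):=(m^{-1},\tilde m u\tilde m^{-1})$: conjugation by $\tilde m\in M$ preserves $N_{P^{\mathrm{op}}}$, one has $P^{\mathrm{der}}(F)u\tilde m^{-1}h=P^{\mathrm{der}}(F)\tilde m'u'h$, and tracking Haar measures (using $\delta_{P^{\mathrm{op}}}=\delta_P^{-1}$) gives $dm\,du=\delta_{P^{\mathrm{op}}}(m')\,dm'\,du'$ while $\psi(\omega_P(m))=\psi(\omega_P(m')^{-1})$. By the normalization \eqref{meas:comp general}, $(m',u')\mapsto P^{\mathrm{der}}(F)m'u'$ parametrizes the dense Bruhat cell and carries $\zeta(1)^{-1}\delta_{P^{\mathrm{op}}}(m')\,dm'\,du'$ to $dx$, and right translation by $h$ preserves the right $G(F)$-invariant measure $dx$; hence
\[
\mathcal{F}_{P|P^{\mathrm{op}}}^{\mathrm{geo}}(f)(x^\ast)=\int_{X_P^\circ(F)}f(x)\,\psi\big(\omega_P(m')^{-1}\big)\,dx,
\]
where $m'\in M^{\mathrm{ab}}(F)$ is the Bruhat-cell coordinate of $x$ relative to the lift $h$ of $x^\ast$.

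It then remains to identify $\omega_P(m')^{-1}$ with $\langle x,x^\ast\rangle_{P|P^{\mathrm{op}}}$. For $x=P^{\mathrm{der}}(F)\tilde m'u'h$ the line $Fv_P$ is $P$-stable with $M$ acting through $\omega_P^{-1}$ — this is the equivariance \eqref{intertwine} — so $\mathrm{Pl}_{v_P}(x)=v_P\tilde m'u'h=\omega_P(m')^{-1}v_Pu'h$, while $\mathrm{Pl}_{v^\ast_{P^{\mathrm{op}}}}(x^\ast)=v^\ast_{P^{\mathrm{op}}}h$. Using $G$-invariance of the canonical pairing of $V_P$ and $V_P^\vee$, the fact that $N_{P^{\mathrm{op}}}\leq P^{\mathrm{op},\mathrm{der}}$ stabilizes $v^\ast_{P^{\mathrm{op}}}$, and $\langle v_P,v^\ast_{P^{\mathrm{op}}}\rangle=1$, we get
\[
\langle x,x^\ast\rangle_{P|P^{\mathrm{op}}}=\omega_P(m')^{-1}\langle v_Pu'h,\,v^\ast_{P^{\mathrm{op}}}h\rangle=\omega_P(m')^{-1}\langle v_Pu',\,v^\ast_{P^{\mathrm{op}}}\rangle=\omega_P(m')^{-1},
\]
and inserting this into the previous display completes the formula for $\mathcal{F}_{P|P^{\mathrm{op}}}^{\mathrm{geo}}$, hence for $\mathcal{F}_{P|P^{\mathrm{op}}}$.

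Throughout, the interchanges of integration and the change of variables need to be justified by absolute convergence; the key point is that $f\in\mathcal{S}(X_P(F))\subset L^1(X_P(F))$ by Corollary \ref{cor:L2}, so $\int_{X_P^\circ(F)}|f(x)|\,dx<\infty$, and pulling this back through the Bruhat cell and the translation by $h$ shows that the relevant double integral of $|f(P^{\mathrm{der}}(F)u\tilde m^{-1}h)|$ over $M^{\mathrm{ab}}(F)\times N_{P^{\mathrm{op}}}(F)$ is finite, which legitimizes Fubini and the substitution at once. Moreover, on $\mathcal{S}_L$ the integral defining $\mu_P^{\mathrm{geo}}=[1]_!(\mu_{s_k})$ is genuinely (not merely after regularization) absolutely convergent by Lemma \ref{lem:conv}, since $A(L)=s_k<s_k+1<\infty=B(L)$ as all $\lambda_i>0$ and $\lambda_k=1$. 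I expect the main obstacle to be not any single deep fact but the sustained care required with conventions — left versus right actions, $m$ versus $m^{-1}$, $\delta_P$ versus $\delta_{P^{\mathrm{op}}}$ — across the change of variables, together with the Pl\"ucker/highest-weight computation identifying $\langle x,x^\ast\rangle_{P|P^{\mathrm{op}}}$ with the appropriate power of $\omega_P$; those are exactly the places where a misplaced sign or inversion would be fatal.
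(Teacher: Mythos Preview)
Your proposal is correct and follows essentially the same route as the paper's proof: simplify $\mu_P^{\mathrm{geo}}$ via Proposition~\ref{Prop: root explanation}, unfold the Radon transform, change variables to the Bruhat cell, and identify the phase with the Pl\"ucker pairing using \eqref{intertwine} and the fact that $N_{P^{\mathrm{op}}}$ fixes $v^*_{P^{\mathrm{op}}}$. The only cosmetic difference is that you perform the substitution $(m,u)\mapsto(m^{-1},\tilde m u\tilde m^{-1})$ explicitly, while the paper packages the same computation by directly parametrizing the cell via $(m,u)\mapsto m^{-1}ug$; your $\omega_P(m')^{-1}$ is the paper's $\omega_P(m)$ after relabeling, and your $L^1$ argument for absolute convergence (via Corollary~\ref{cor:L2}) is equivalent to the paper's appeal to Lemmas~\ref{lem:conv} and~\ref{lem:unnorm:commute}.
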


\begin{proof}
  For $x^\ast\in X_{P^{\mathrm{op}}}^\circ(F),$ we have
\begin{align*}
    \calf_{P|P^{\mathrm{op}}}^{\mathrm{geo}}(f)(x^\ast) &=\frac{1}{\zeta(1)} \int_{M^{\mathrm{ab}}(F)}\psi(\omega_P(m))|\omega_P(m)|^{s_k+1}\de_{P^{\mathrm{op}}}^{1/2}(m)\calr_{P|P^{\mathrm{op}}}(f)(m^{-1} x^\ast)dm\\
                         &=\frac{1}{\zeta(1)} \int_{M^{\mathrm{ab}}(F)}\psi(\omega_P(m))\calr_{P|P^{\mathrm{op}}}(f)(m^{-1} x^\ast)dm.
\end{align*}
Here we have used Proposition \ref{Prop: root explanation}. We note that there is no need to regularize the outer integral: the absolute convergence of $[1]_!(\mu_{s_k})$ on $\mathcal{R}_{P|P^{\mathrm{op}}}(\cals(X_P(F)))$ follows from our use of a good ordering and from lemmas \ref{lem:conv} and \ref{lem:unnorm:commute}. If we write $x^\ast = P^{\mathrm{op},\mathrm{der}}(F)g,$
\begin{align} \label{rewrite}
    \calr_{P|P^{\mathrm{op}}}(f)(m^{-1}\cdot x^\ast)
    &= \displaystyle\int_{N_{P^{\mathrm{op}}}(F)}f\left(um^{-1}g\right)du\nonumber\\
&= \de_{P}(m)\displaystyle\int_{N_{P^{\mathrm{op}}}(F)}f\left(m^{-1}ug\right)du.
\end{align}
We have an injection
\begin{align*} \begin{split}
\Phi_{g}:M^{\mathrm{ab}}(F) \times N_{P^{\mathrm{op}}}(F) &\lto X^\circ_{P}(F)\\
(m,u) &\longmapsto P^{\mathrm{der}}(F)m^{-1}ug \end{split}
\end{align*}
with dense image denoted by $X^\circ_{P,g}(F)$. Moreover, we have
\begin{align} \label{meas}
d(m^{-1}ug)=\frac{\delta_{P^{\mathrm{op}}}(m^{-1})dmdu}{\zeta(1)}=\frac{\delta_P(m)dmdu}{\zeta(1)}
\end{align}
by \eqref{meas:comp general}.

For $x \in X^{\circ}_{P,g}(F),$ let
$$
(m(x),u(x)):=\Phi_{g}^{-1}(x).
$$
By \eqref{rewrite} and \eqref{meas}, we have 
\begin{align} \label{penult}
    \mathcal{F}_{P|P^{\mathrm{op}}}^{\mathrm{geo}}(f)(x^*)=\int_{X_{P,g}^{\circ}(F)}\psi(\omega_P(m(x)))f(x) dx.
\end{align}
Now for $(m,u) \in M^{\mathrm{ab}}(F) \times N_{P^{\mathrm{op}}}(F)$ and $g$ chosen as above (so $P^{\mathrm{op},\mathrm{der}}(F) g=x^*$), we have
$$
\langle  v_P m^{-1}ug ,v^*_{P^{\mathrm{op}}} g \rangle =\langle  v_P m^{-1},v^*_{P^\mathrm{op}}\rangle=\omega_P(m);
$$
here we have used \eqref{intertwine}.  Thus \eqref{penult} is 
\begin{align*}
    \mathcal{F}_{P|P^{\mathrm{op}}}^{\mathrm{geo}}(f)(x^*)=\int_{X_{P,g}^{\circ}(F)}\psi(\langle x ,x^*\rangle_{P|P^{\mathrm{op}}})f(x) dx=\int_{X_{P}^{\circ}(F)}\psi(\langle x ,x^*\rangle_{P|P^{\mathrm{op}}})f(x) dx
\end{align*}
since $X^{\circ}_{P,g}(F)$ is open and of full measure in $X^{\circ}_P(F)$.  
\end{proof}

Assume for the moment that $P$ is self-associate. In this special case, fix a $w_0\in G(F)$ normalizing $M$ such that $w_0^{-1}Pw_0=P^{\mathrm{op}}$ and such that \eqref{meas:corr} is measure preserving.  We saw in (\ref{FXP}) that this allows us to define a Fourier transform 
\begin{align}\label{FXP2}
        \mathcal{F}_{X_P}:=\mathcal{F}_{X_P,w_0}:=\iota_{w_0} \circ \mathcal{F}_{P|P^{\mathrm{op}}}:\mathcal{S}(X_P(F)) \lto \mathcal{S}(X_P(F)).
\end{align}
\begin{Cor} \label{Cor:Pop} Assume that $P= w_0P^{\mathrm{op}}w_0^{-1}$ is self-associate. Then for $f\in \cals(X_P(F)),$ we have $\mathcal{F}_{X_P}(f)=\mu_{P^{\mathrm{op}}}^{\mathrm{aug}}\circ\calf^{\mathrm{geo}}_{X_P}(f)$ where
\begin{align*}
    \calf^{\mathrm{geo}}_{X_P}(f)(x') = 
\int_{X_{P}^\circ(F)}f\left(x\right)\psi\left(\la x,w_0^{-1}x'\ra_{P|P^{\mathrm{op}}}\right)dx
\end{align*}
for $x'\in X_P^\circ(F).$ Here the measure on $X_P^\circ(F)$ is normalized as in \S \ref{ssec:measures:re}.
\end{Cor}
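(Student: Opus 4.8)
The strategy is to feed the factorization $\mathcal{F}_{P|P^{\mathrm{op}}}=\mu_P^{\mathrm{aug}}\circ\mathcal{F}_{P|P^{\mathrm{op}}}^{\mathrm{geo}}$ of Theorem \ref{Thm: Fourier formula} into the definition $\mathcal{F}_{X_P}=\iota_{w_0}\circ\mathcal{F}_{P|P^{\mathrm{op}}}$ of \eqref{FXP2}, and then to commute $\iota_{w_0}$ past $\mu_P^{\mathrm{aug}}$ with the help of Lemma \ref{Lem: switch and twist}.

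First I would record that, by \eqref{eqn: eta aug}, the definition of the $L(i)$, and \eqref{mul}, the operator $\mu_P^{\mathrm{aug}}$ is the iterated composite of the maps $\lambda_{(k-i)!}(\mu_{s_{k-i}})\colon\mathcal{S}_{L(i)}\to\mathcal{S}_{L(i+1)}$ for $i=1,\dots,k-1$ (the top datum of $L(i)$ being $(s_{k-i},\lambda_{k-i})$ by the good ordering \eqref{increasing}), all of these $\mathcal{S}_{L(i)}$ being spaces of functions on $X_{P^{\mathrm{op}}}^\circ(F)$. By \S\ref{section: switching} the datum set $\{(s_i,\lambda_i)\}$ attached to $L^{\mathrm{op}}=\widehat{\fn}_{P^{\mathrm{op}}}^e$ coincides with that of $L$, so the same good ordering serves for $P^{\mathrm{op}}$, and $\mu_{P^{\mathrm{op}}}^{\mathrm{aug}}$ is the analogous iterated composite $\lambda_{1!}(\mu_{s_1})\circ\cdots\circ\lambda_{(k-1)!}(\mu_{s_{k-1}})$ of the corresponding operators acting on functions on $X_P^\circ(F)$, with the roles of $P$ and $P^{\mathrm{op}}$ interchanged as in the caution following Lemma \ref{Lem: switch and twist}.

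Next I would apply Lemma \ref{Lem: switch and twist} once for each $i=1,\dots,k-1$: at stage $i$ it provides a commutative square identifying $\lambda_{(k-i)!}(\mu_{s_{k-i}})$ on $\mathcal{S}_{L(i)}$ with $\lambda_{(k-i)!}(\mu_{s_{k-i}})$ on $\mathcal{S}_{L(i)^{\mathrm{op}}}$, the vertical arrows being $\iota_{w_0}$; since the target of the $i$-th square is the source of the $(i+1)$-st, stacking them yields the identity $\iota_{w_0}\circ\mu_P^{\mathrm{aug}}=\mu_{P^{\mathrm{op}}}^{\mathrm{aug}}\circ\iota_{w_0}$. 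Because $\mathcal{F}_{P|P^{\mathrm{op}}}^{\mathrm{geo}}(f)\in\mathcal{S}_{L(1)}$ by \eqref{geopart}, every composition below is legitimate, and combining with Theorem \ref{Thm: Fourier formula} and \eqref{FXP2} we get
\[
\mathcal{F}_{X_P}(f)=\iota_{w_0}\bigl(\mu_P^{\mathrm{aug}}(\mathcal{F}_{P|P^{\mathrm{op}}}^{\mathrm{geo}}(f))\bigr)=\mu_{P^{\mathrm{op}}}^{\mathrm{aug}}\bigl(\iota_{w_0}(\mathcal{F}_{P|P^{\mathrm{op}}}^{\mathrm{geo}}(f))\bigr).
\]
It then remains to identify $\iota_{w_0}(\mathcal{F}_{P|P^{\mathrm{op}}}^{\mathrm{geo}}(f))$ with $\mathcal{F}^{\mathrm{geo}}_{X_P}(f)$. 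For $x'\in X_P^\circ(F)$ one has $w_0^{-1}x'\in X_{P^{\mathrm{op}}}^\circ(F)$, so the defining relation $\iota_{w_0}(g)(x')=g(w_0^{-1}x')$ together with the explicit expression for $\mathcal{F}_{P|P^{\mathrm{op}}}^{\mathrm{geo}}$ in Theorem \ref{Thm: Fourier formula} (applied at $x^\ast=w_0^{-1}x'$) gives
\[
\iota_{w_0}\bigl(\mathcal{F}_{P|P^{\mathrm{op}}}^{\mathrm{geo}}(f)\bigr)(x')=\int_{X_P^\circ(F)}f(x)\,\psi\bigl(\la x,w_0^{-1}x'\ra_{P|P^{\mathrm{op}}}\bigr)\,dx=\mathcal{F}^{\mathrm{geo}}_{X_P}(f)(x'),
\]
which is the asserted formula.

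The step demanding the most care is the iterated application of Lemma \ref{Lem: switch and twist}: one must check that the chain $L(1)^{\mathrm{op}},\dots,\widetilde{L}^{\mathrm{op}}$ controlling $\mu_{P^{\mathrm{op}}}^{\mathrm{aug}}$ is precisely the termwise ``op'' of the chain $L(1),\dots,\widetilde{L}$ controlling $\mu_P^{\mathrm{aug}}$, equivalently that forming $L(i)$ commutes with passing from $P$ to $P^{\mathrm{op}}$, so that the commutative squares genuinely concatenate. This rests on the coincidence of datum sets recorded in \S\ref{section: switching} (which in turn uses $\omega_P=\omega_{P^{\mathrm{op}}}^{-1}$ and Proposition \ref{Prop: first term}), but it deserves to be spelled out; the remaining points, including convergence of the outer integral defining $\mathcal{F}_{P|P^{\mathrm{op}}}^{\mathrm{geo}}$ on $\mathcal{S}(X_P(F))$, are already settled in the proof of Theorem \ref{Thm: Fourier formula}.
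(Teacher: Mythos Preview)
Your proposal is correct and follows essentially the same route as the paper: invoke Theorem \ref{Thm: Fourier formula}, commute $\iota_{w_0}$ past $\mu_P^{\mathrm{aug}}$ via (iterated) Lemma \ref{Lem: switch and twist} and \S\ref{section: switching}, and then observe that $\iota_{w_0}\circ\mathcal{F}_{P|P^{\mathrm{op}}}^{\mathrm{geo}}=\mathcal{F}_{X_P}^{\mathrm{geo}}$ directly from the definitions. The paper's proof is terser, but your added care about the concatenation of the $L(i)^{\mathrm{op}}$ chain is exactly the content hidden in its appeal to \S\ref{section: switching}.
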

\begin{proof}
By the discussion in \S \ref{section: switching}, we  have
\[
\iota_{w_0}\circ\mu_{P}^{\mathrm{aug}}=\mu_{P^{\mathrm{op}}}^{\mathrm{aug}}\circ\iota_{w_0}
\]
and it is clear that
\[
\calf^{\mathrm{geo}}_{X_P}=\iota_{w_0}\circ\mathcal{F}^{\mathrm{geo}}_{P|P^{\mathrm{op}}}.\qedhere
\]
\end{proof}

\begin{Rem} \label{rem:abs:conv} 
By Corollary \ref{cor:L2}, $\mathcal{S}(X_P(F)) <L^1(X_P(F))$.  Thus the integrals in the definition of $\mathcal{F}^{\mathrm{geo}}_{P|P^{\mathrm{op}}}$ and $\mathcal{F}_{X_P}^{\mathrm{geo}}$ converge absolutely. 
\end{Rem}

\subsection{Examples}\label{examples}
In this subsection, we explicate the objects appearing in Theorem \ref{Thm: Fourier formula} in several cases of interest.  Only the example in \S \ref{Section: the FT} will be used later in the paper.  It is used in \S \ref{sec:Y} to study $\mathcal{F}_Y.$

\subsubsection{Line bundles over Grassmannians}
The maximal parabolic subgroups of $\SL_n$ are stabilizers of planes.  Concretely, fix $1 \leq \ell <n$ and let $P$ be the stabilizer of the $\ell $-plane $\{e_{n-\ell+1},\dots,e_n\}$.  Here we use the standard basis of $F^n,$ viewed as row vectors with a right action of $G$.  
Then $P\backslash G$ is a classical Grassmannian, and $X_P^\circ(F)$ can be viewed as the space of $\ell$-planes $W \subset F^n$ together with an associated non-zero vector in $\wedge^\ell W$.  

For $F$-algebras $R,$ we have
\[
P(R)=\left\{\left(
\begin{smallmatrix} 
m_1 & \\ & m_2 \end{smallmatrix} \right) \left( \begin{smallmatrix} I_{n-\ell} & w\\ & I_\ell \end{smallmatrix}
\right) \in \SL_{n}(R): (m_1,m_2,w) \in \GL_{n-\ell}(R) \times \GL_\ell(R) \times M_{n-\ell,\ell}(R)\right\}.
\] 
Then
\[
P^{\mathrm{op}}(R)=\left\{\left(\begin{smallmatrix}
    m_1 &  \\
     & m_2
\end{smallmatrix}\right)\left(\begin{smallmatrix}
   I_{n-\ell} &  \\
    w^t &  I_\ell
\end{smallmatrix}\right) \in \SL_n(R):(m_1,m_2,w) \in \GL_{n-\ell}(R) \times \GL_\ell(R) \times M_{n-\ell,\ell}(R)\right\}.
\]
 In this setting,
\begin{align*}
\omega_P:M(F) &\lto F^\times\\
\begin{psmatrix}  m_1&\\&m_2 \end{psmatrix} &\longmapsto \det(m_1)=\det(m_2)^{-1}.
\end{align*}
Our representation $V_P$ is just $\wedge^\ell \GG_a^n$.  We realize the dual as the space $\wedge^{n-\ell} \GG_a^n$ equipped with the pairing
\begin{align*}
    \wedge^\ell R^n \times \wedge^{n-\ell}R^n &\lto R\\
    (w_1,w_2) &\longmapsto e_1^\vee \wedge \dots \wedge e_n^\vee(w_2 \wedge w_1)
\end{align*}
We choose the highest weight vector $v_P:=e_{n-\ell+1}\wedge \dots \wedge e_n$ and dual lowest weight vector $v_{P^{\mathrm{op}}}^*:=e_{1} \wedge \dots \wedge e_{n-\ell}$.
With these choices,
\[
\mathrm{Pl}_{v_P}\left(\begin{smallmatrix}
    a & b \\
    c & d
\end{smallmatrix}\right)\longmapsto\wedge^{\ell}\left(\begin{matrix}
 c&d
\end{matrix}\right)
\]
where we are taking the (ordered) wedge product of the row vectors from top to bottom. Similarly,
\[
\mathrm{Pl}_{v^*_{P^{\mathrm{op}}}}\left(\begin{smallmatrix}
    a & b \\
    c & d
\end{smallmatrix}\right)\longmapsto\wedge^{n-\ell}\left(\begin{matrix}
    a&b
\end{matrix}\right)
\]
where the wedge product is taken from top to bottom.

\subsubsection{Orthogonal groups and the transform on the isotropic cone}  Assume the characteristic of $F$ is not $2$.  
Consider the split orthogonal group $\mathrm{SO}_{n}$ for $n>4,$ defined with respect to the matrix
\[
J_n=\left(\begin{smallmatrix}
     & & 1\\
     & \Ddots &\\
     1&&
\end{smallmatrix}\right).
\]
Denote the corresponding pairing by $\langle\cdot,\cdot\rangle,$ and let 
$$
Q_n(v):=\tfrac{1}{2}\langle v,v \rangle.
$$
Let $T$ be the split maximal torus of diagonal matrices and let $B$ be the Borel subgroup of upper triangular matrices of $\mathrm{SO}_{n}$. There is a natural right action of $\mathrm{SO}_{n}$ on $V_n=\GG_a^n$.  We let $P<\mathrm{SO}_{n}$ be the parabolic subgroup fixing the line spanned by $e_n=(0,\dots,0,1)$.  Then $V_P=V_n,$ and we choose the highest weight vector $v_P:=e_n.$

Consider the split spin group $G=\mathrm{Spin}_{n}$ over $\mathrm{SO}_n$ and let $p:G \to \mathrm{SO}_n$ be the double cover.  Then $\widetilde{P}:=p^{-1}(P)$ is a maximal parabolic subgroup of $G$. Moreover, the representation $V_{\widetilde{P}}$ of $G$ descends to the representation $V_n$ of $\mathrm{SO}_n$ via $p$.  It therefore follows from Lemma \ref{Lem: general Plucker} that $p$ induces an isomorphism
\begin{align*}
    p:X_{\widetilde{P}}^{\circ}=\widetilde{P}^{\mathrm{der}}\backslash G \tilde{\lto} P^{\mathrm{der}}\backslash \mathrm{SO}_n.
\end{align*}
Let $\widetilde{M}$ be a Levi subgroup of $\widetilde{P}$ and $M:=p(\widetilde{M})$.  Since $V_{\widetilde{P}}$ descends to $V_n,$  it also follows from Lemma \ref{Lem: general Plucker} that the map $\widetilde{M}^{\mathrm{ab}} \to M^{\mathrm{ab}}$ induced by $p$ is an isomorphism and the diagram
\begin{equation} \label{act:commute}
\begin{tikzcd}
\widetilde{M} \times X_{\widetilde{P}}^\circ \arrow[r] \arrow[d,"p"] &X_{\widetilde{P}}^{\circ} \arrow[d,"p"] \\
M \times X_{P}^\circ \arrow[r] &X_P^{\circ}
\end{tikzcd}
\end{equation}
commutes.  Here the horizontal arrows are the action maps.  Thus we can and do work with $P^{\mathrm{der}} \backslash G$ in place of $\widetilde{P}^{\mathrm{der}} \backslash \widetilde{G}$ below.

The Pl\"ucker embedding 
$$
\mathrm{Pl}_{e_n}:X_P \lto V_n
$$
maps $X_P$ isomorphically onto the affine scheme whose points in an $F$-algebra $R$ are
$$
C(R):=\{v \in V_n(R):Q_n(v)=0\}.
$$
This is the isotropic cone of $Q_n$. 

We define the Schwartz space $\mathcal{S}(C(F))$ to be 
$$
(\mathrm{Pl}_{e_n}^{-1})^*(\mathcal{S}(X_P(F)))<C^\infty(C(F)-\{0\}).
$$
The parabolic $P$ is self-associate.  Thus the Schwartz space comes equipped with a Fourier transform 
$$
\mathcal{F}_C:=(\mathrm{Pl}_{e_n}^{-1})^* \circ \mathcal{F}_{X_P,w_0} \circ\mathrm{Pl}_{e_n}^* :\mathcal{S}(C(F)) \lto \mathcal{S}(C(F)).
$$
Here $w_0$ is chosen as in Lemma \ref{lem:agreement} below.
There is a natural measure on $C(F)$ as we  now explain.  Let $dv_i$ be the standard $1$-form on $\GG_a,$ viewed as the $i$th coordinate of $V_n=\GG_a^n.$
 Recall \cite[\S III.1.2]{Gelfand:Shilov:I} that  to give a measure on $C(F),$ we may choose any $(n-1)$-form $\omega_{Q_n}$ such that
\begin{equation}\label{eqn: measure on cone}
 dv_1\wedge\cdots\wedge dv_{n}=dQ_n\wedge \omega_{Q_n}
\end{equation}
and then consider the measure $|\omega_{Q_n}|$.  If we write
\[
Q_n(v_1,\ldots,v_{n})=\begin{cases}\frac{1}{2}v^2_{r+1}+\sum_{i=1}^{r}v_iv_{n+1-i}& \textrm{ if } n=2r+1,\\\sum_{i=1}^{r}v_iv_{n+1-i}& \textrm{ if } n=2r,
\end{cases}
\]
with respect to the standard basis of $F^n,$ then on $\GG_a^{n-1} \times \GG_m$  we choose $\omega_{Q_n} =\frac{1}{v_{n}}dv_2\wedge \cdots \wedge dv_{n}$.

\begin{Lem} \label{lem:agreement} We can choose $w_0 \in \mathrm{SO}_{n}(F)$ normalizing $M$ such that
$w_0^{-1}Pw_0=P^{\mathrm{op}}$ and such that
for $x,x' \in X_P^\circ(F)$ one has
$$
\langle x,w_0^{-1} x' \rangle_{P|P^{\mathrm{op}}}=\langle \mathrm{Pl}_{e_n}(x),\mathrm{Pl}_{e_n}( x') \rangle.
$$
Moreover,  $\mathrm{Pl}_{e_n}^*(|\omega_{Q_n}|)=cdx$ for some $c \in \rr_{>0}.$ 
\end{Lem}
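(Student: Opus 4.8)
The plan is to unwind the definition of $\la\cdot,\cdot\ra_{P|P^{\mathrm{op}}}$ using the $G$-equivariant self-duality of the standard representation, and then to identify the two measures by the uniqueness of an invariant measure on a homogeneous space.

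First I would identify the highest weight representation $V_P$ attached to the line stabilizer $P$ with the standard representation $V_n=\GG_a^n$, and identify $V_P^\vee$ with $V_n$ via the nondegenerate form, $v\mapsto\la v,\cdot\ra$. Since $\la\cdot,\cdot\ra$ is $\mathrm{SO}_n$-invariant, this identification is $G$-equivariant, and under it the canonical pairing of $V_P$ with $V_P^\vee$ becomes $\la\cdot,\cdot\ra$ itself; moreover, as $\la e_1,e_n\ra=1$ and $e_1$ spans the isotropic line fixed by $P^{\mathrm{op}}=\Stab\la e_1\ra$, the dual vector $v^*_{P^{\mathrm{op}}}$ of $v_P=e_n$ corresponds to $e_1$. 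Consequently, for $x^*=P^{\mathrm{op},\mathrm{der}}(F)h$ one has $\mathrm{Pl}_{v^*_{P^{\mathrm{op}}}}(x^*)=e_1h$ in this model, and hence $\la x,x^*\ra_{P|P^{\mathrm{op}}}=\la\mathrm{Pl}_{e_n}(x),e_1h\ra$.

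Next I would pin down $w_0$. Any $w_0 \in \mathrm{SO}_n(F)$ satisfying the constraints of \eqref{w00} maps $\Stab\la e_n\ra=P$ to $\Stab\la e_1\ra=P^{\mathrm{op}}$ and normalizes $M=\Stab\la e_1\ra\cap\Stab\la e_n\ra$, which forces $e_1w_0^{-1}\in\la e_n\ra$; rescaling $w_0$ suitably we may arrange $e_1w_0^{-1}=e_n$. To retain the measure-compatibility \eqref{meas:corr}, one checks (using $n>4$ to have room in the middle block) that this can be done with $w_0$ a signed permutation matrix normalizing the split torus $T$ — concretely, one swapping $e_1\leftrightarrow e_n$ and acting by an orientation-reversing signed permutation of $e_2,\dots,e_{n-1}$; conjugation by such an element carries each root subgroup to another up to sign and hence preserves the Chevalley-basis measures, and acts as inversion on $M^{\mathrm{ab}}$. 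With this $w_0$, and writing $x=P^{\mathrm{der}}(F)g$, $x'=P^{\mathrm{der}}(F)g'$ (possible for arbitrary $g,g'\in G(F)$ by Corollary~\ref{cor:surj}), we have $w_0^{-1}x'=P^{\mathrm{op},\mathrm{der}}(F)w_0^{-1}g'$, so by the previous paragraph
\[
\la x,w_0^{-1}x'\ra_{P|P^{\mathrm{op}}}=\la e_ng,e_1w_0^{-1}g'\ra=\la e_ng,e_ng'\ra=\la\mathrm{Pl}_{e_n}(x),\mathrm{Pl}_{e_n}(x')\ra,
\]
which is the first assertion. For the measure statement, I would observe that $\mathrm{Pl}_{e_n}^*(|\omega_{Q_n}|)$ and $dx$ are both nonzero $G(F)$-invariant nonnegative Radon measures on $X_P^\circ(F)$, which is a transitive $G(F)$-space, whence they are proportional by a constant $c\in\rr_{>0}$. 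Transitivity of $\mathrm{SO}_n(F)$ on $C(F)\setminus\{0\}\cong X_P^\circ(F)$ follows from Witt's extension theorem, combined with a reflection in a vector of $e_n^{\perp}$ of nonzero norm (which exists since $n>4$) to remain in $\mathrm{SO}_n(F)$; the measure $dx$ is $G(F)$-invariant by construction (\S\ref{ssec:measures:re}) with $G$ acting through $\mathrm{SO}_n$; and $|\omega_{Q_n}|$ is $\mathrm{SO}_n(F)$-invariant because for $g\in\mathrm{SO}_n(F)$ one has $g^*(dv_1\wedge\cdots\wedge dv_n)=\det(g)\,dv_1\wedge\cdots\wedge dv_n=dv_1\wedge\cdots\wedge dv_n$ and $g^*dQ_n=dQ_n$, so $g^*\omega_{Q_n}$ again satisfies \eqref{eqn: measure on cone} and agrees with $\omega_{Q_n}$ on $C(F)$, while $\mathrm{Pl}_{e_n}$ is the $G$-equivariant map $g\mapsto e_ng$.

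The main obstacle I anticipate is the bookkeeping around $w_0$: arranging a single element to realize $w_0Pw_0^{-1}=P^{\mathrm{op}}$ with the prescribed action on $M$ and $M^{\mathrm{ab}}$, to be compatible with the Chevalley-basis measures of \eqref{meas:corr}, and to satisfy the extra normalization $e_nw_0=e_1$ (which is precisely what makes $\la\cdot,w_0^{-1}\cdot\ra_{P|P^{\mathrm{op}}}$ the symmetric form on $C(F)$). The self-duality computation and the uniqueness-of-invariant-measure argument are then routine once transitivity and this choice are in hand.
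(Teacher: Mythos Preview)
Your first assertion is handled essentially as in the paper: identify $V_P^\vee$ with $V_n$ via $\langle\cdot,\cdot\rangle$, note that the dual vector of $e_n$ becomes $e_1$, and pick $w_0$ with $e_1w_0^{-1}=e_n$. One small point: the deduction ``$w_0Pw_0^{-1}=P^{\mathrm{op}}$ forces $e_1w_0^{-1}\in\langle e_n\rangle$'' really uses that $w_0$ normalizes $M$ (so $\langle e_1w_0^{-1}\rangle$ is one of the two $M$-stable lines), not just the conjugacy of $P$ and $P^{\mathrm{op}}$; and ``rescaling $w_0$'' should be read as replacing $w_0$ by $mw_0$ for a suitable $m\in M(F)$, which is harmless for the action on $M^{\mathrm{ab}}$ but you should check the effect on \eqref{meas:corr}. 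These are bookkeeping issues, not gaps.

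For the measure statement your route differs from the paper's. The paper computes both $dx$ and $|\omega_{Q_n}|$ explicitly on the big Bruhat cell $M^{\mathrm{ab}}(F)\times N_{P^{\mathrm{op}}}(F)$ and observes they coincide. Your argument via uniqueness of an invariant measure on a transitive homogeneous space is cleaner, but there is a mismatch in the group you use: you establish $\mathrm{SO}_n(F)$-transitivity (Witt) and $\mathrm{SO}_n(F)$-invariance of $|\omega_{Q_n}|$, yet $dx$ is only known by construction to be right $\mathrm{Spin}_n(F)$-invariant, and $\mathrm{Spin}_n(F)\to\mathrm{SO}_n(F)$ need not be surjective over a local field. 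The fix is immediate: work with $\mathrm{Spin}_n(F)$ throughout. Corollary~\ref{cor:surj} already gives $\mathrm{Spin}_n(F)$-transitivity on $X_{\widetilde P}^{\circ}(F)$, so the Witt detour is unnecessary; $|\omega_{Q_n}|$ is $\mathrm{SO}_n(F)$-invariant by your computation and hence $\mathrm{Spin}_n(F)$-invariant. Uniqueness of the invariant Radon measure on the homogeneous space then yields the proportionality. The paper's explicit computation has the side benefit of pinning down the constant (in fact $c=1$ with the chosen normalizations), but the lemma as stated only asks for proportionality, so your approach suffices.
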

\begin{proof}
We identify the dual of $V_n$ with $V_n$ itself via the form $\langle\cdot,\cdot\rangle$.  Then the vector dual to $e_n$ is $e_1$.
Let
\begin{align*}
    w_0:=\begin{cases} J_n & \textrm{ if }n \equiv 0 \pmod 4 \textrm{ or }n \equiv 1 \pmod{4},\\
    \begin{psmatrix} & &J_{(n-1)/2}\\&-1 & \\ J_{(n-1)/2} & & \end{psmatrix} & \textrm{ if }n \equiv 3 \pmod{4},\\
    \begin{psmatrix} & &J_{(n-2)/2}\\& I_2 & \\ J_{(n-2)/2} & & \end{psmatrix} &\textrm{ if }n \equiv 2 \pmod{4}.\end{cases}
\end{align*}
  Thus $w_0 \in \mathrm{SO}_n(F)$,  $\mathrm{Pl}_{e_n}(g)=e_ng$ and $\mathrm{Pl}_{e_1}(w_0^{-1}g)=e_ng$. This yields the first assertion.  
For the second assertion,  it suffices to observe that \eqref{eqn: measure on cone} implies that $\omega_{Q_n}$ is $\mathrm{SO}_{n}$-invariant.
\end{proof}

\begin{Cor} \label{Cor:quad}
If the measure $|\omega_{Q_{n}}(v)|$ is normalized so that 
$\mathrm{Pl}_{e_n}^*(|\omega_{Q_n}|)=dx,$ then for $f \in \mathcal{S}(C(F))$ one has
\begin{align}
    \mathcal{F}_C(f)(v')=\int_{F^\times}\psi(t^{-1})|t|^{(n-4)/2}\left(\int_{C(F)-\{0\}} f(v)\psi\left(\langle v,tv' \rangle \right)|\omega_{Q_n}(v)|\right) \frac{d^\times t}{\zeta(1)}
\end{align}
if $n>4$ is even and
\begin{align}
    \mathcal{F}_C(f)(v')=\int_{F^\times}\psi(t^{-1})|t|^{n-3}\left(\int_{C(F)-\{0\}} f(v)\psi\left(\langle v,t^2 v' \rangle \right)|\omega_{Q_n}(v)|\right)\frac{d^\times t}{\zeta(1)}
\end{align}
if $n>3$ is odd.
\end{Cor}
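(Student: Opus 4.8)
The plan is to deduce the corollary from Theorem~\ref{Thm: Fourier formula}, or rather from its self-associate refinement Corollary~\ref{Cor:Pop}, by transporting the statement to the cone $C$ along the Pl\"ucker embedding and then making the two pieces $\mu_{P^{\mathrm{op}}}^{\mathrm{aug}}$ and $\calf_{X_P}^{\mathrm{geo}}$ of the Fourier transform completely explicit in this example. First I would note that the parabolic $P<\SO_n$ (equivalently $\widetilde P<\mathrm{Spin}_n$) fixing an isotropic line is self-associate, fix $w_0$ as in Lemma~\ref{lem:agreement}, and invoke Corollary~\ref{Cor:Pop}, which gives $\calf_{X_P}=\mu_{P^{\mathrm{op}}}^{\mathrm{aug}}\circ \calf_{X_P}^{\mathrm{geo}}$ with $\calf_{X_P}^{\mathrm{geo}}(f)(x')=\int_{X_P^\circ(F)}f(x)\psi(\la x,w_0^{-1}x'\ra_{P|P^{\mathrm{op}}})dx$. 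Pulling back along $\mathrm{Pl}_{e_n}$ and using Lemma~\ref{lem:agreement} twice --- once to replace $\la x,w_0^{-1}x'\ra_{P|P^{\mathrm{op}}}$ by the quadratic pairing $\la \mathrm{Pl}_{e_n}(x),\mathrm{Pl}_{e_n}(x')\ra$, once to identify $\mathrm{Pl}_{e_n}^\ast(|\omega_{Q_n}|)$ with the chosen invariant measure $dx$ --- turns $\calf_{X_P}^{\mathrm{geo}}$ into the map $v'\mapsto \int_{C(F)-\{0\}}f(v)\psi(\la v,v'\ra)|\omega_{Q_n}(v)|$, and turns the $M^{\mathrm{ab}}(F)$-action underlying $\mu_{P^{\mathrm{op}}}^{\mathrm{aug}}$ into the homothety $v\mapsto \omega_P(m)^{\lambda_1}v$ on $C(F)-\{0\}$ by \eqref{intertwine}. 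So the only remaining task is to compute the single operator $\mu_{P^{\mathrm{op}}}^{\mathrm{aug}}$.

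For this I would read off the graded $\GG_m$-representation $L=\widehat{\fn}_P^e$ (equivalently $L^{\mathrm{op}}$, by \eqref{Lop}) from the node-$1$ parabolic of the dual group. As an $\widehat{\fm}^{\mathrm{der}}$-module, $\widehat{\fn}_P$ is the standard $(n-2)$-dimensional module of $\fso_{n-2}$ when $n$ is even, and when $n$ is odd it is the Heisenberg nilradical with the standard $(n-3)$-dimensional $\fsp_{n-3}$-module in degree one and a line in degree two. Decomposing this under the principal $\fsl_2$ of $\widehat{\fm}$ --- using $\cc^{n-2}\cong\Sym^{n-4}\oplus\Sym^0$ for $\fso_{n-2}$, and the irreducibility $\cc^{n-3}\cong \Sym^{n-4}$ for $\fsp_{n-3}$ --- together with Proposition~\ref{Prop: first term} to fix the $\GG_m$-normalization (so that $\lambda_k=1$), shows that $L$ has exactly two graded pieces, with $(s_1,\lambda_1)=(0,1)$ and $(s_2,\lambda_2)=(\tfrac{n-4}{2},1)$ when $n$ is even, and $(s_1,\lambda_1)=(0,2)$, $(s_2,\lambda_2)=(\tfrac{n-4}{2},1)$ when $n$ is odd. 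These are precisely the computations recorded in Lemmas~\ref{lem:PGL:case} and~\ref{lem:classic:case} and the tables of \S\ref{App:normalizing}, which I would cite rather than reprove; this Lie-theoretic bookkeeping on the dual side is the main (and essentially the only) nontrivial point. In particular $k-1=1$ in both cases, so $\mu_{P^{\mathrm{op}}}^{\mathrm{aug}}=\lambda_{1!}(\mu_{s_1})$ is a single operator, namely $[1]_!(\mu_0)$ for $n$ even and $[2]_!(\mu_0)$ for $n$ odd.

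Finally I would substitute these data into \eqref{eqn: eta def} --- applied with the roles of $P$ and $P^{\mathrm{op}}$ exchanged, since $\mu_{P^{\mathrm{op}}}^{\mathrm{aug}}$ acts on functions on $X_P$ --- and use Proposition~\ref{Prop: root explanation}, which gives $\delta_P=|\omega_P|^{2s_k+2}=|\omega_P|^{n-2}$, to evaluate the weight $|\omega_{P^{\mathrm{op}}}(m)|^{s_1+1}\delta_P^{\lambda_1/2}(m)$ as $|\omega_P(m)|^{(n-4)/2}$ when $n$ is even and as $|\omega_P(m)|^{n-3}$ when $n$ is odd. Parametrizing $M^{\mathrm{ab}}(F)\cong F^\times$ by $t=\omega_P(m)$, so that $\psi(\omega_{P^{\mathrm{op}}}(m))=\psi(t^{-1})$, and recalling from \eqref{intertwine} that $m^{-\lambda_1}$ scales $\mathrm{Pl}_{e_n}$ by $t^{\lambda_1}$ with $\lambda_1=1$ or $2$, the two claimed formulas then come out directly; the positive constant produced by Lemma~\ref{lem:agreement} is absorbed exactly by the normalization of $|\omega_{Q_n}|$ imposed in the statement, and all other normalizations are those fixed in \S\ref{ssec:measures:re} and \S\ref{Section: general formula}.
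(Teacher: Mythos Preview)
Your approach is correct and essentially identical to the paper's: both deduce the formula from Corollary~\ref{Cor:Pop} and Lemma~\ref{lem:agreement}, then identify $\mu_{P^{\mathrm{op}}}^{\mathrm{aug}}$ as $[1]_!(\mu_0)$ or $[2]_!(\mu_0)$ via Lemma~\ref{lem:classic:case}, and unwind the weight using $\delta_P=|\omega_P|^{n-2}$. Two small points the paper makes explicit that you omit: the passage from $\widetilde{M}^{\mathrm{ab}}$ to $M^{\mathrm{ab}}$ via the commutative diagram~\eqref{act:commute} (since the simply-connected group in play is $\mathrm{Spin}_n$, not $\SO_n$), and the invocation of Lemma~\ref{lem:conv} to confirm that the regularized operator $\lambda_{1!}(\mu_0)$ actually coincides with the absolutely convergent integral written in the statement.
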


\begin{proof}
This is a consequence of Corollary \ref{Cor:Pop}
and Lemma \ref{lem:agreement} as we now explain.  Using \eqref{act:commute}, we are free to work with the action of $M^{\mathrm{ab}}$ instead of $\widetilde{M}^{\mathrm{ab}}$ in applying Corollary \ref{Cor:Pop}.  

For $(t,g) \in R^\times \times \mathrm{SO}_{n-2}(R)$ write
$$
m(t,g):=\begin{psmatrix} t & & \\ & g & \\ & &t^{-1} \end{psmatrix}: t \in R^\times, g \in \mathrm{SO}_{n-2}(R)
$$
The character $\omega_P$ is given by 
$\omega_P(m(t,g))=t$. 
Note that for $x,x' \in X_P^\circ(F)$ and $\lambda\in\zz,$ 
$$
\langle x,w_0^{-1}m(t,g)^{-\lam} x' \rangle_{P|P^{\mathrm{op}}}=\omega_P(m(t,g))^\lam\langle x,w_0^{-1} x' \rangle_{P|P^{\mathrm{op}}}=t^\lam\langle x,w_0^{-1} x' \rangle_{P|P^{\mathrm{op}}}.
$$
Applying Lemma \ref{lem:agreement} now shows that if $v=\mathrm{Pl}_{e_n}(x)$ and $v'=\mathrm{Pl}_{e_n}(x'),$ then
\[
\langle x,w_0^{-1} m(t,g)^{-\lam}x' \rangle_{P|P^{\mathrm{op}}}=\langle v,t^\lam v' \rangle.
\]
By Lemma \ref{lem:classic:case}, we have $\mu_{\mathrm{P^{\mathrm{op}}}}^{\mathrm{geo}}=[1]_!(\mu_{\frac{n-4}{2}})$ for all $n,$ and  $$\mu_{P^{\mathrm{op}}}^{\mathrm{aug}}=\begin{cases}[1]_!(\mu_{0})& \textrm{ if } \text{$n>4$ is even},\\ [2]_!(\mu_{0})& \textrm{ if } \text{$n>3$ is odd}.\end{cases}$$ 
The regularized operators are equal to the unregularized operators by Lemma \ref{lem:conv}. 
\end{proof}

When $F$ is nonarchimedean with odd or zero characteristic, Corollary \ref{Cor:quad} implies that when $n$ is even $\mathcal{F}_C$ agrees with the operator $\Pi_K(s_1)$ of \cite[Theorem 3.4]{GurK} (after replacing $\psi$ by $\overline{\psi}$).  Gurevich and Kazhdan also treat nonsplit isotropic quadratic forms.  
When $F=\rr$ and $n$ is even, a Fourier transform on $L^2(C(F),|\omega_Q|)$ was investigated in \cite{Kobayashi:Mano} (they also treated arbitrary isotropic quadratic forms in an even number of variables).  It likely agrees with $\mathcal{F}_C$ when the form is split but we will not verify this.

\subsubsection{The Lagrangian Grassmannian}\label{Section: the FT}
Define $\mathrm{Sp}_{2n}$ and $P$ as in 
 \S \ref{sec:lagrangian}.  We let $\mathrm{Sp}_{2n}$ act on $V=\GG_a^{2n}$ on the right.  
The representation $V_P$ may be realized as an irreducible subrepresentation of $\wedge^nV,$ and we choose the highest weight vector to be $v_P:=e_{n+1} \wedge\dots \wedge e_{2n}$. 
Thus
\begin{align}
    \mathrm{Pl}_{v_P}\begin{psmatrix}*\\ a_{n+1}\\ \vdots \\ a_{2n} \end{psmatrix}=a_{n+1}\wedge \dots \wedge a_{2n}
\end{align}
is the (ordered) wedge product of the last $n$ rows. 

There is a perfect pairing
\begin{align}\label{Sppairing}
\langle\cdot,\cdot\rangle:\wedge^n \GG_a^{2n} \times \wedge^n \GG_a^{2n} \lto \wedge^{2n}\GG_a^{2n} \tilde{\lto} \GG_a,
\end{align}
where the first map is canonical and the second is obtained by specifying that $e_1\wedge \dots \wedge e_{2n}$ is sent to $1$.  We use this pairing to identify the dual of $V_P$ with $V_P$.  Thus
$$
\langle x,x^* \rangle_{P|P^{\mathrm{op}}}=\langle \mathrm{Pl}_{v_P}(x),\mathrm{Pl}_{v^*_{P^{\mathrm{op}}}}(x^*) \rangle
$$
where $v^*_{P^{\mathrm{op}}}=(-1)^ne_{1}\wedge \dots \wedge e_{n}$ is the lowest weight vector dual to $v_P$. 

The parabolic subgroup $P$ is self-associate.  More precisely $w_0^{-1}Pw_0=P^{\mathrm{op}}$ for 
\begin{align} \label{w0}
w_0=\begin{psmatrix} & -I_n\\ I_n & \end{psmatrix}.
\end{align}

\begin{Cor} \label{cor:inverse:Fourier}
For $f\in  \mathcal{S}(X_P(F))$ we have that $\mathcal{F}_{X_P}(f)$ is
$$
[2]_!(\mu_{n-2\lfloor n/2 \rfloor})\circ [2]_!(\mu_{n-2\lfloor n/2 \rfloor+2}) \cdots \circ [2]_!(\mu_{n-2}) \circ \int_{X_P^{\circ}(F)} f(x)\psi((-1)^n\langle  \mathrm{Pl}_{v_P}(x),\mathrm{Pl}_{v_P}(\cdot) \rangle)dx.
$$
\end{Cor}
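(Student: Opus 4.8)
The plan is to deduce the corollary from Corollary \ref{Cor:Pop} by feeding in the data of \S \ref{sec:lagrangian} together with the explicit Plücker coordinates of \S \ref{Section: the FT}. First I would check that $w_0=\begin{psmatrix} & -I_n\\ I_n & \end{psmatrix}$ of \eqref{w0} satisfies the hypotheses of Corollary \ref{Cor:Pop}. A direct matrix computation shows $w_0\in \mathrm{Sp}_{2n}(F)$ (with $w_0^2=-I_{2n}$), that $w_0Pw_0^{-1}=P^{\mathrm{op}}$, so that $P$ is self-associate, that conjugation by $w_0$ sends $\begin{psmatrix} A& \\ & A^{-t}\end{psmatrix}$ to $\begin{psmatrix} A^{-t}& \\ & A\end{psmatrix}$ and hence preserves $M$ and acts on $M^{\mathrm{ab}}$ by inversion (as $\omega_P$ is $m\mapsto \det A$), and that conjugation by $w_0$ carries $N_P$ isomorphically onto $N_{P^{\mathrm{op}}}$; as in \S \ref{ssec:measures:re} the Chevalley basis may be chosen so that \eqref{meas:corr} is measure preserving for this $w_0$. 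Corollary \ref{Cor:Pop} then applies and gives $\mathcal{F}_{X_P}(f)=\mu_{P^{\mathrm{op}}}^{\mathrm{aug}}\circ \calf^{\mathrm{geo}}_{X_P}(f)$ with $\calf^{\mathrm{geo}}_{X_P}(f)(x')=\int_{X_P^\circ(F)}f(x)\psi(\langle x,w_0^{-1}x'\rangle_{P|P^{\mathrm{op}}})\,dx$.

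Next I would identify $\mu_{P^{\mathrm{op}}}^{\mathrm{aug}}$. By \S \ref{section: switching} the graded representation $L^{\mathrm{op}}$ attached to $P^{\mathrm{op}}$ carries the same data $\{(s_i,\lambda_i)\}$ as $L$ attached to $P$, computed in \S \ref{sec:lagrangian}: a good ordering is given by $(s_r,\lambda_r)=(n+2r-2\lfloor n/2\rfloor-2,\,2)$ for $1\le r\le \lfloor n/2\rfloor$ together with $(s_k,\lambda_k)=(\tfrac{n-1}{2},1)$, where $k=\lfloor n/2\rfloor+1$. Dropping the highest datum $(s_k,\lambda_k)$ as in \eqref{eqn: eta aug} and observing that $s_r$ runs through $n-2\lfloor n/2\rfloor,\ n-2\lfloor n/2\rfloor+2,\ \dots,\ n-2$ as $r$ runs from $1$ to $\lfloor n/2\rfloor$, one obtains
\[
\mu_{P^{\mathrm{op}}}^{\mathrm{aug}}=\lambda_{1!}(\mu_{s_1})\circ\cdots\circ\lambda_{(k-1)!}(\mu_{s_{k-1}})=[2]_!(\mu_{n-2\lfloor n/2\rfloor})\circ[2]_!(\mu_{n-2\lfloor n/2\rfloor+2})\circ\cdots\circ[2]_!(\mu_{n-2}).
\]
The good ordering, together with the inequalities \eqref{ineq} and Lemma \ref{lem:conv}, moreover guarantees that each of these $[2]_!$-operators is given by an absolutely convergent integral on the relevant space $\mathcal{S}_{L(i)}$, so no regularization is actually needed.

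Finally I would compute the pairing. By \eqref{PPop:pairing}, $\langle x,w_0^{-1}x'\rangle_{P|P^{\mathrm{op}}}=\langle \mathrm{Pl}_{v_P}(x),\mathrm{Pl}_{v^*_{P^{\mathrm{op}}}}(w_0^{-1}x')\rangle$, where, as in \S \ref{Section: the FT}, we identify $V_P^\vee$ with $V_P$ via the perfect pairing \eqref{Sppairing} (this identification is $G$-equivariant since $\mathrm{Sp}_{2n}$ acts trivially on $\wedge^{2n}\GG_a^{2n}$) and $v^*_{P^{\mathrm{op}}}=(-1)^ne_1\wedge\cdots\wedge e_n$. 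Since $w_0^{-1}=\begin{psmatrix} & I_n\\ -I_n & \end{psmatrix}$ sends the row vector $e_i$ to $e_{n+i}$ for $1\le i\le n$, one has $(e_1\wedge\cdots\wedge e_n)w_0^{-1}=e_{n+1}\wedge\cdots\wedge e_{2n}=v_P$, whence $\mathrm{Pl}_{v^*_{P^{\mathrm{op}}}}(w_0^{-1}x')=(-1)^n\mathrm{Pl}_{v_P}(x')$ and therefore $\langle x,w_0^{-1}x'\rangle_{P|P^{\mathrm{op}}}=(-1)^n\langle \mathrm{Pl}_{v_P}(x),\mathrm{Pl}_{v_P}(x')\rangle$. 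Plugging this into $\calf^{\mathrm{geo}}_{X_P}$ and the formula above for $\mu_{P^{\mathrm{op}}}^{\mathrm{aug}}$ into Corollary \ref{Cor:Pop} gives exactly the asserted identity.

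The main obstacle I anticipate is purely a matter of bookkeeping rather than anything conceptual: pinning down the sign $(-1)^n$ through the wedge-product and dual-vector conventions, and confirming that the Chevalley-basis normalization of \S \ref{ssec:measures:re} is compatible with the particular $w_0$ of \eqref{w0} so that \eqref{meas:corr} holds exactly rather than merely up to a positive scalar.
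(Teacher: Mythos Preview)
Your proposal is correct and follows essentially the same approach as the paper: apply Corollary \ref{Cor:Pop} with the parameters $\{(s_i,\lambda_i)\}$ computed in \S \ref{sec:lagrangian}, and verify $v^*_{P^{\mathrm{op}}}w_0^{-1}=(-1)^n v_P$ to identify the pairing. The paper's proof is simply a compressed version of your three steps. One small remark: your additional claim that every $[2]_!$-operator in the chain is given by an absolutely convergent integral is not needed for the corollary (the operators are already well-defined via Proposition \ref{prop:Mellin}); the paper only checks absolute convergence explicitly in the special case $n=3$ (Corollary \ref{cor:extend}).
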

\noindent Here $[2]_!(\mu_s)$ is defined as in \eqref{eqn: eta def} but with $P$ replaced with $P^{\mathrm{op}}$. See Lemma \ref{Lem: switch and twist}.
\begin{proof}
Since $v^*_{P^{\mathrm{op}}} w_0^{-1}=(-1)^nv_P,$ we have
\begin{align*}
\langle x,w_0^{-1} x' \rangle_{P|P^{\mathrm{op}}}=(-1)^n\langle \mathrm{Pl}_{v_P}(x),\mathrm{Pl}_{v_P}(x') \rangle.
\end{align*}
Thus the result follows from Corollary \ref{Cor:Pop} and the computation in \S \ref{sec:lagrangian}.
\end{proof}

\begin{Cor} \label{cor:extend}
When $n=3,$ one has
\begin{align} \label{FX3}
    \mathcal{F}_{X_P}(f)(x')=\int_{F^\times}\psi(t^{-1})|t|^{2}\left(\int_{X_P^{\circ}(F)}f(x) \overline{\psi}(t^{2}\langle \mathrm{Pl}( x ), \mathrm{Pl}(x')\rangle )dx \right)\frac{d^\times t}{\zeta(1)}
\end{align}
for all $f \in \mathcal{S}(X_P(F))$. In particular, the integral over $t$ is absolutely convergent.  
\end{Cor}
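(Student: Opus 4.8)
The strategy is to specialize Corollary~\ref{cor:inverse:Fourier} to $n=3$, at which point the composition of normalizing operators collapses to a single one, and then to unwind its definition directly. For $n=3$ we have $\lfloor n/2\rfloor=1$, so the indices $n-2\lfloor n/2\rfloor$ and $n-2$ both equal $1$; thus the chain $[2]_!(\mu_{n-2\lfloor n/2\rfloor})\circ\cdots\circ[2]_!(\mu_{n-2})$ appearing in Corollary~\ref{cor:inverse:Fourier} is the single operator $[2]_!(\mu_1)$, and $(-1)^n=-1$. Writing $h:=\calf_{X_P}^{\mathrm{geo}}(f)$, which by \eqref{geopart} lies in $\cals_{L(1)}$ (transported to functions on $X_P^\circ(F)$ by $\iota_{w_0}$) and which, by Corollary~\ref{Cor:Pop} and Theorem~\ref{Thm: Fourier formula}, is given by the absolutely convergent integral (Remark~\ref{rem:abs:conv})
\[
h(x')=\int_{X_P^\circ(F)}f(x)\,\overline{\psi}\!\left(\langle\mathrm{Pl}(x),\mathrm{Pl}(x')\rangle\right)dx
\]
(using $\psi(-z)=\overline{\psi}(z)$), Corollary~\ref{cor:inverse:Fourier} reduces to the assertion $\calf_{X_P}(f)=[2]_!(\mu_1)(h)$.

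Next I would substitute the definition \eqref{eqn: eta def} of $[2]_!(\mu_1)$, with the roles of $P$ and $P^{\mathrm{op}}$ exchanged as dictated by Lemma~\ref{Lem: switch and twist}. Using the isomorphism $\omega_P\colon M^{\mathrm{ab}}(F)\xrightarrow{\sim}F^\times$ of Lemma~\ref{Lem: general Plucker} and the measure normalization of \S\ref{sec:rel:induced}, I parametrize $M^{\mathrm{ab}}(F)$ by $t=\omega_P(m)\in F^\times$, so that $dm\leftrightarrow d^\times t$ and $\omega_{P^{\mathrm{op}}}(m)=t^{-1}$. The equivariance relation \eqref{intertwine} gives $\mathrm{Pl}(m^{-2}x')=\omega_P(m)^2\mathrm{Pl}(x')=t^2\mathrm{Pl}(x')$, hence $h(m^{-2}x')=\int_{X_P^\circ(F)}f(x)\,\overline{\psi}(t^2\langle\mathrm{Pl}(x),\mathrm{Pl}(x')\rangle)\,dx$. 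For the normalizing factor I invoke Proposition~\ref{Prop: root explanation}: for $n=3$ the highest datum is $(s_k,\lambda_k)=(1,1)$ and the remaining datum is $(s_1,\lambda_1)=(1,2)$ (see \S\ref{sec:lagrangian}), so $\de_P(m)=|\omega_P(m)|^{2s_k+2}=|t|^4$, while $|\omega_{P^{\mathrm{op}}}(m)|^{s_1+1}=|t|^{-2}$; therefore $|\omega_{P^{\mathrm{op}}}(m)|^{s_1+1}\de_P^{\lambda_1/2}(m)=|t|^{-2}|t|^4=|t|^2$, and the additive factor is $\psi(\omega_{P^{\mathrm{op}}}(m))=\psi(t^{-1})$. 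Assembling these pieces transforms $[2]_!(\mu_1)(h)(x')$ into exactly the right-hand side of \eqref{FX3}.

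Finally, for the absolute convergence of the outer $t$-integral I would apply Lemma~\ref{lem:conv} to $h\in\cals_{L(1)}$ with the operator $[2]_!(\mu_1)=\lambda_{1!}(\mu_{s_1})$: with the parameters $(s_1,\lambda_1)=(1,2)$, $(s_2,\lambda_2)=(1,1)$ from \S\ref{sec:lagrangian} in their good ordering, one has $A(L(1))=s_1/\lambda_1=\tfrac12$ and $B(L(1))=(1+s_2)/\lambda_2=2$ by \eqref{ineq}, and $\frac{\mathrm{Re}(s_1)+1}{\lambda_1}=1$ lies strictly between them, so Lemma~\ref{lem:conv} gives absolute convergence of the integral defining $[2]_!(\mu_1)$, which under the above change of variables is precisely the $t$-integral in \eqref{FX3}. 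The only delicate point is the bookkeeping: keeping the $P\leftrightarrow P^{\mathrm{op}}$ switch, the powers of $|t|$, and the sign inside $\psi$ mutually consistent. Once the specialization $n=3$ collapses the operator chain to a single term, no analytic input beyond Lemma~\ref{lem:conv} and Remark~\ref{rem:abs:conv} is required.
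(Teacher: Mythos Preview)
Your proof is correct and follows essentially the same approach as the paper. The paper's proof is much terser: it simply observes that the formula itself is immediate from Corollary~\ref{cor:inverse:Fourier} (which you unwind in detail), and then gives exactly your computation of $A(L(1))=\tfrac12$, $B(L(1))=2$, and $\tfrac{s_1+1}{\lambda_1}=1$ to invoke Lemma~\ref{lem:conv} for absolute convergence.
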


\begin{proof}
Only the last claim is not clear from Corollary \ref{cor:inverse:Fourier}.  By Lemma \ref{lem:conv} the regularized operator $[2]_!(\mu_{1})$ is equal to the unregularized operator in the case at hand as
$$
A(L(1))=\tfrac{1}{2}, \quad B(L(1))=2, \quad \tfrac{s_1+1}{\lambda_1}=1.
$$
This implies that the integral over $t$ converges absolutely.
\end{proof}

\section{Regularized integrals} \label{sec:reg}
In the remainder of the paper we apply the results of \S \ref{examples} in a specific case to establish a formula for the Fourier transform on certain affine spherical varieties. For this, it is convenient in several calculations to work with regularized integrals.   We work in the category of affine schemes because this is what we require; the techniques can probably be generalized to analytic spaces or Nash manifolds.

Let $F$ be a local field. For $r\in \zz_{\ge 1},$ let
$$
\langle\cdot,\cdot\rangle:F^r \times F^r \lto F
$$
be a perfect pairing. Let
\begin{align*}
    \widehat{f}(t):=\int_{F^r}\psi(\langle t,x\rangle)f(x)dx
\end{align*}
be the associated Fourier transform with the Haar measure on $F^r$ normalized so that the Fourier inversion formula 
$$
f(x)=\int_{F^r} \overline{\psi}(\langle t,x\rangle )\widehat{f}(t)dt
$$
is valid for $f \in \mathcal{S}(F^r)$. For $f\in L^1_\mathrm{loc}(F^r)$ and $\Phi\in C^\infty_c(F^r)$ with $\Phi(0)=1,$ we define the regularized integral
\begin{align} \label{pvtimes}
\int^*_{F^r}f(x)dx:=\lim_{|\mathcal{B}| \to \infty} \int_{F^r} \Phi\left(\frac{x}{\mathcal{B}}\right)f(x)dx
\end{align}
whenever the limit exists and is independent of $\Phi$. Here $\mathcal{B} \in F^\times$ is embedded diagonally in $F^r.$  If $f \in L^1(F^r),$  we have
$$
    \int^*_{F^r}f(x)dx=\int_{F^r}f(x)dx.
$$

\begin{Lem} \label{lem:Fourier:inversion}
For $f \in  L^1(F^r),$ let
$$
\widehat{\widehat{f}^{\ast}}(y):=\int_{F^r}^* \overline{\psi}(\langle t,y\rangle)\widehat{f}(t) dt.
$$
Then $\widehat{\widehat{f}^{\ast}}(y)=f(y)$ if $y$ is a Lebesgue point of $f$. In particular, $\widehat{\widehat{f}^{\ast}}=f$ a.e., and if $f$ is continuous at $y,$ then $\widehat{\widehat{f}^{\ast}}(y)=f(y)$.
\end{Lem}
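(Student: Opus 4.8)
The plan is to recognize the regularized integral defining $\widehat{\widehat{f}^{\ast}}(y)$ as a limit of convolutions of $f$ against a family of approximate identities, and then apply the classical fact that such convolutions converge pointwise at Lebesgue points. First I would fix $y$, a mollifier $\Phi \in \mathcal{S}(F^r)$ with $\Phi(0)=1$ as in \eqref{pvtimes}, and $\mathcal{B}\in F^\times$, and set $I_{\mathcal{B}}(y) := \int_{F^r}\Phi(t/\mathcal{B})\,\overline{\psi}(\langle t, y\rangle)\,\widehat{f}(t)\,dt$; since $f \in L^1(F^r)$ forces $\widehat{f}$ to be bounded, this integral is absolutely convergent for each $\mathcal{B}$. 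Inserting the definition of $\widehat{f}$ and applying Fubini — legitimate because $(t,x)\mapsto|\Phi(t/\mathcal{B})f(x)|$ is integrable on $F^r\times F^r$ — I would rewrite
\[
I_{\mathcal{B}}(y) = \int_{F^r} f(x)\,\phi_{\mathcal{B}}(x-y)\,dx, \qquad \phi_{\mathcal{B}}(z) := \int_{F^r}\Phi\!\left(\tfrac{t}{\mathcal{B}}\right)\psi(\langle t,z\rangle)\,dt.
\]

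Next I would record the relevant properties of $\phi_{\mathcal{B}}$. The substitution $t = \mathcal{B}u$ gives $\phi_{\mathcal{B}}(z) = |\mathcal{B}|^r\phi(\mathcal{B}z)$, where $\phi(z) := \int_{F^r}\Phi(u)\psi(\langle u, z\rangle)\,du$ is, up to the harmless substitution $u\mapsto -u$, the Fourier transform of $\Phi$ for the pairing $\langle\cdot,\cdot\rangle$. Thus $\phi$ lies in $\mathcal{S}(F^r)$, and Fourier inversion on $\mathcal{S}(F^r)$ evaluated at the origin yields $\int_{F^r}\phi = \Phi(0) = 1$, whence $\int_{F^r}\phi_{\mathcal{B}} = 1$ for every $\mathcal{B}$. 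Crucially, $|\phi|$ admits an integrable radially nonincreasing majorant on $F^r$: when $F$ is archimedean this is immediate from $\phi$ being Schwartz, and when $F$ is nonarchimedean $\phi$ is locally constant with compact support, hence bounded by a multiple of $\mathbf{1}_{\varpi^{-N}\mathcal{O}_F^r}$ for a suitable $N$. Using $\int\phi_{\mathcal{B}} = 1$ I would then write $I_{\mathcal{B}}(y) - f(y) = \int_{F^r}\bigl(f(y+z)-f(y)\bigr)\phi_{\mathcal{B}}(z)\,dz$.

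Now I would invoke the standard pointwise-convergence theorem for approximate identities: if $y$ is a Lebesgue point of $f$ (so that the average of $|f(\cdot)-f(y)|$ over balls shrinking to $y$ tends to $0$), then, since $\phi_{\mathcal{B}} = |\mathcal{B}|^r\phi(\mathcal{B}\cdot)$ is dominated by the $|\mathcal{B}|$-dilation of a fixed integrable radially nonincreasing function and has total mass $1$, one obtains $\int_{F^r}|f(y+z)-f(y)|\,|\phi_{\mathcal{B}}(z)|\,dz \to 0$ as $|\mathcal{B}|\to\infty$. This follows by the usual split into the region $|z|\le\delta$ — controlled, uniformly in $\mathcal{B}$ after choosing $\delta$ small, by the Lebesgue-point hypothesis integrated against the dilated majorant — and the region $|z| > \delta$, which is negligible because $f\in L^1$ and $\int_{|z|>\delta}|\phi_{\mathcal{B}}|\to 0$; the argument runs verbatim over any local field, using balls for the absolute value. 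Hence $I_{\mathcal{B}}(y)\to f(y)$, and since this limit is $f(y)$ regardless of $\Phi$, the regularized integral $\int_{F^r}^{\ast}\overline{\psi}(\langle t,y\rangle)\widehat{f}(t)\,dt$ is well defined at every Lebesgue point $y$ of $f$ and equals $f(y)$ there, i.e.\ $\widehat{\widehat{f}^{\ast}}(y) = f(y)$. Finally, by the Lebesgue differentiation theorem — valid over any local field — almost every $y$ is a Lebesgue point of an $L^1$ function, so $\widehat{\widehat{f}^{\ast}} = f$ a.e., and continuity of $f$ at $y$ trivially makes $y$ a Lebesgue point, so $\widehat{\widehat{f}^{\ast}}(y) = f(y)$. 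The step I expect to require the most care is this final approximate-identity estimate: one must produce the integrable radially nonincreasing majorant uniformly in the archimedean and nonarchimedean cases and carry out the split-integral bound cleanly, whereas everything else (Fubini, the scaling identity, Fourier inversion on $\mathcal{S}(F^r)$) is routine.
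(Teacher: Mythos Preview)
Your proposal is correct and takes essentially the same approach as the paper: the paper simply cites \cite[Theorem 13.15]{Wheeden:Zygmund} for the approximate-identity argument at Lebesgue points and \cite[Corollary 2.9.9]{Federer:GMT} for the Lebesgue differentiation theorem over local fields, and you have carefully spelled out precisely those steps. In particular your identification $\phi=\widehat{\Phi}$, the scaling $\phi_{\mathcal B}=|\mathcal B|^r\phi(\mathcal B\,\cdot)$, and the radially nonincreasing majorant are exactly what underlies the cited Wheeden--Zygmund result.
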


\begin{proof}
The first assertion can be proved following the argument  of \cite[Theorem 13.15]{Wheeden:Zygmund}.  For the second assertion, we need to show that almost every point is a Lebesgue point.
This is the Lebesgue differentiation theorem; a version that is general to treat both the archimedean and the nonarchimedean case is given in
\cite[\S 3.4]{Sobolev}.
\end{proof}

Now let $V=\GG_a^d,$ where $d>r$.  Let $p_1,\dots,p_r \in F[x_1,\dots,x_d]$. For each $c=(c_1,\ldots,c_r)\in F^r,$ let
\begin{align}\label{Yc}
    Y_{c}:=\mathrm{Spec}\big(F[x_1,\dots,x_d]/(p_1-c_1,\dots,p_r-c_r)\big).
\end{align}
We assume that $Y_c$ is geometrically integral for all $c$ (hence a variety).  In particular, the smooth locus $Y^{\mathrm{sm}}_c$ is dense in $Y_c$ for all $c$. Note that if $Y_c^{\mathrm{sm}}(F)$ is nonempty, then it is dense in $Y_c(F)$ in the Hausdorff topology by \cite[Remark 3.5.76]{Poonen}.
 
Let $V(F)$ be equipped with the Haar measure $dv=dv_1dv_2\cdots dv_d,$ where the measure $dv_i$ on $\mathbb{G}_a$ is defined as in \S \ref{ssec:measures}. Let $\omega_{Y_c}$ be the differential form on $Y^{\mathrm{sm}}_c$ satisfying 
\begin{align}
    d(p_1-c_1) \wedge \dots \wedge d(p_r-c_r) \wedge \omega_{Y_c}=\omega_V
\end{align}
where $\omega_V$ is a top degree differential form on $V$ with $|\omega_V|= dv,$ and let 
\begin{align} \label{dy}
d\mu_c(y):=|\omega_{Y_c}|
\end{align}
be the corresponding positive Radon measure on $Y_c(F)$ (we extend by zero to obtain a measure on all of $Y_c(F)$ from the given measure on $Y^{\mathrm{sm}}_c(F)$).  The measure $d\mu_c(y)$ does 
not depend on the choice of $\omega_{Y_c},$ but it does depend on the choice of $p_1,\dots,p_r$.
 We let
\begin{align} \label{0}
d\mu:=d\mu_0 \quad \textrm{ and } \quad Y:=Y_0.
\end{align}

A nice reference for the definition of $d\mu_c(y)$ in a more general context is \cite[\S III.1, B2.1]{Gelfand:Shilov:I} in the real and complex cases.  For the nonarchimedean case we refer to \cite[\S 7.6]{Igusa}.

Suppose $Y^{\mathrm{sm}}(F)$ is nonempty. Hence $Y^{\mathrm{sm}}(F)$ is dense in $Y(F)$.  For $f \in \mathcal{S}(Y^{\mathrm{sm}}(F)),$ the integral $\int_{Y(F)} f(y)d\mu(y)$ is well-defined. Let us extend its domain of definition and at the same time develop a useful formula for it.  
Let $p(v):=(p_1(v),\dots,p_r(v))$. Let $\langle \cdot,\cdot \rangle_{\mathrm{st}}:F^r \times F^r \to F$ be the standard pairing.
For $\widetilde{f} \in L^1(V(F)),$ we have 
\begin{align} \label{COV}
    \int_{V(F)}\psi(\langle t, p(v)\rangle_{\mathrm{st}})\widetilde{f}(v) dv =\int_{F^r}\psi(\langle t,c \rangle_{\mathrm{st}}) \left(\int_{Y_c(F)}\widetilde{f}(y)d\mu_c(y)\right)  dc
\end{align}
by the change of variables formula.  On the left, the integral is absolutely convergent by assumption; on the right, the inner integral over $y$ is finite for almost every $c$ and defines a function of $c$ that is in $L^1(F^r)$.  With this in mind, for $\widetilde{f} \in L^1(V(F))$ we define
\begin{align} \label{reg}
    \int^{\mathrm{reg}}_{Y(F)}\widetilde{f}(y)d\mu(y):=\int_{F^r}^{\ast}\left(\int_{V(F)}\psi(\langle t, p(v)\rangle_{\mathrm{st}})\widetilde{f}(v)dv\right)dt
\end{align}
provided it exists.  

\begin{Rem} \label{rem:int}
 If $t \mapsto \int_{V(F)}\psi(\langle t, p(v)\rangle_{\mathrm{st}})\widetilde{f}(v)dv$ is in $L^1(F^r),$ then 
\begin{align*}
    \int^{\mathrm{reg}}_{Y(F)}\widetilde{f}(y)d\mu(y)=\int_{F^r}\left(\int_{V(F)}\psi(\langle t, p(v)\rangle_{\mathrm{st}})\widetilde{f}(v)dv\right)dt.
\end{align*}
\end{Rem}

\begin{Lem} \label{lem:reg:int}
Suppose $p_i$ are homogeneous polynomials all of degree $k$ and any $r\times r$ minor of the Jacobian of $p=(p_1,\dots,p_r)$ is a monomial. Assume further that for any set $S\subset \left\{1,\ldots,d\right\}$ of cardinality $m,$ there is a nonzero $r\times r$ minor that is a monomial in $v_i,$ $i\in S$.

If $kr+m\le d,$ then for any $\widetilde{f} \in \mathcal{S}(V(F))$ one has $f:=\widetilde{f}|_{Y^{\mathrm{sm}}(F)}\in L^1(Y(F),d\mu)$ and
$$
\int^{\mathrm{reg}}_{Y(F)}\widetilde{f}(y)d\mu(y)=\int_{Y(F)}f(y)d\mu(y).
$$
\end{Lem}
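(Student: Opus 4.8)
The plan is to establish the $L^1$-bound first and then deduce the equality of the regularized and ordinary integrals as a formal consequence. The key analytic input is an estimate for the fiber integral $\int_{Y_c(F)} |\widetilde f|(y)\, d\mu_c(y)$ as a function of $c$, uniform enough to conclude integrability of the function $c \mapsto \int_{Y_c(F)} \widetilde f(y)\, d\mu_c(y)$ over $F^r$. First I would use the homogeneity hypothesis: since each $p_i$ is homogeneous of degree $k$, scaling $v \mapsto \lambda v$ sends $Y_c$ to $Y_{\lambda^k c}$, and this lets one reduce control of $Y_c$ for all $c$ to control near the ``unit'' fibers together with a scaling factor. Concretely, writing $c = \lambda^k c_0$ with $c_0$ in a compact transversal, the measure $d\mu_c$ on $Y_c$ pulls back under $v \mapsto \lambda v$ to $|\lambda|^{d-kr} d\mu_{c_0}$, because $\omega_V$ scales by $|\lambda|^d$ and each $d(p_i-c_i)$ scales by $|\lambda|^k$. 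So $\int_{Y_c(F)}|\widetilde f|\,d\mu_c = |\lambda|^{d-kr}\int_{Y_{c_0}(F)} |\widetilde f|(\lambda y)\, d\mu_{c_0}(y)$.

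Next I would exploit the Jacobian-minor hypothesis to parametrize $Y_{c_0}^{\mathrm{sm}}(F)$ by $d-r$ of the coordinates via the implicit function theorem (using \cite[Part II, Chapter II]{Serre:LALG}), with the remaining $r$ coordinates solved for, and the density $d\mu_{c_0}$ becoming, up to the reciprocal of a monomial minor, Lebesgue measure in the free coordinates. The hypothesis that for any $m$-subset $S$ of coordinates there is a nonzero monomial $r\times r$ minor in the variables indexed by $S$ is precisely what forces enough decay: it means along any ``direction of escape to infinity'' at least $m$ of the free coordinates are genuinely large, and combined with the Schwartz decay of $\widetilde f$ in those coordinates one gets, after integrating in the free coordinates, a bound of the shape $\int_{Y_{c_0}(F)}|\widetilde f|(\lambda y)\,d\mu_{c_0}(y) \ll_{N} |\lambda|^{-N}$ for $|\lambda|$ large (with implied constant uniform over the compact transversal for $c_0$), and boundedness for $|\lambda|$ small. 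Assembling this with the scaling computation, $\int_{Y_c(F)}|\widetilde f|\,d\mu_c \ll \min(|\lambda|^{d-kr}, |\lambda|^{-N})$ with $c=\lambda^k c_0$; since $dc \sim |\lambda|^{kr} d^\times\lambda\, dc_0$ on $F^r\setminus\{0\}$, the integral $\int_{F^r}\int_{Y_c(F)}|\widetilde f|\,d\mu_c\, dc$ converges exactly when $d - kr + kr > 0$, i.e.\ $d>0$, for the small-$\lambda$ part, and always for the large-$\lambda$ part. Taking $c_0=0$ (allowed since $Y^{\mathrm{sm}}(F)\neq\emptyset$ is part of the running hypotheses where this lemma is applied, or directly from the minor hypothesis at $c=0$) gives in particular $f = \widetilde f|_{Y^{\mathrm{sm}}(F)} \in L^1(Y(F),d\mu)$; the precise role of $kr+m\le d$ is to make the estimate survive the case $c_0=0$, where one only has the weaker fiberwise decay coming from $m$ large coordinates rather than from all directions.

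With the $L^1$-statement in hand, the equality of integrals follows from \eqref{COV} and \eqref{reg}: by the change-of-variables formula \eqref{COV}, $\int_{V(F)}\psi(\langle t,p(v)\rangle_{\mathrm{st}})\widetilde f(v)\,dv$ is the Fourier transform (in $t$) of the $L^1$ function $c\mapsto \int_{Y_c(F)}\widetilde f(y)\,d\mu_c(y)$, which is continuous at $c=0$ (by dominated convergence using the uniform fiber bound just proved). Hence Lemma \ref{lem:Fourier:inversion} applies to this $L^1$ function on $F^r$: its regularized inverse Fourier transform evaluated at $0$ recovers its value at $0$, which is exactly $\int_{Y(F)} f(y)\,d\mu(y)$. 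Comparing with the definition \eqref{reg} of $\int^{\mathrm{reg}}_{Y(F)}\widetilde f\, d\mu$ gives the claimed identity. The main obstacle is the second step: carefully extracting the decay estimate $\int_{Y_{c_0}(F)}|\widetilde f|(\lambda y)\,d\mu_{c_0}(y)\ll_N |\lambda|^{-N}$ uniformly in $c_0$ from the monomial-minor hypothesis, since one must partition $Y_{c_0}^{\mathrm{sm}}(F)$ according to which minor is largest, handle the non-unimodular change of variables to the free coordinates, and track that in each chart at least $m$ of the free coordinates control the size of the point — this is where the combinatorial hypothesis on $S$ and the inequality $kr+m\le d$ really enter.
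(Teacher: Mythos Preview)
Your high-level plan---show that $0$ is a Lebesgue point of $c\mapsto\int_{Y_c(F)}\widetilde f\,d\mu_c$ and then invoke Lemma~\ref{lem:Fourier:inversion} via \eqref{COV}---is exactly the paper's strategy. The gap is in the execution of the Lebesgue-point (or continuity) claim.

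First, the assertion that $\int_{Y_{c_0}(F)}|\widetilde f|(\lambda y)\,d\mu_{c_0}(y)$ is \emph{bounded} for $|\lambda|$ small is generally false: the fibers $Y_{c_0}$ are noncompact, and as $|\lambda|\to0$ the integrand is essentially $|\widetilde f(0)|$ over a region of $\mu_{c_0}$-measure growing like a power of $|\lambda|^{-1}$. So the bound $\int_{Y_c}|\widetilde f|\,d\mu_c\ll|\lambda|^{d-kr}$ does not follow. (The integrability of $c\mapsto\int_{Y_c}|\widetilde f|\,d\mu_c$ over $F^r$ is in fact trivial from the coarea identity \eqref{COV} and $\widetilde f\in L^1(V(F))$, so your scaling argument is not buying you that either.) Second, ``taking $c_0=0$'' is incoherent in your setup: $c_0$ lives on a transversal away from $0$, and the conclusion $f\in L^1(Y(F),d\mu)$ concerns the single fiber $Y_0$, which your scaling does not reach. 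Third, and most seriously, ``continuity at $c=0$ by dominated convergence'' does not work as stated: the fibers $Y_c$ are different spaces, and even after parametrizing by the implicit function theorem the density $|\text{minor}|^{-1}$ blows up along the singular locus of $Y_0$, so there is no evident dominating function.

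The paper handles exactly this point by an approximation argument: one writes $\widetilde f=\lim_n\widetilde f_n$ with $\widetilde f_n$ supported where some minor is bounded below (so the fiber integral of $\widetilde f_n$ is continuous in $c$), and then bounds the error contribution to the Lebesgue average by rescaling $\int_{|p(v)|\le|t|^k}|\widetilde f-\widetilde f_n|\,dv$. After restricting to a Weyl chamber $|v_1|\ge\cdots\ge|v_d|$, the minor hypothesis gives a minor depending only on $v_1,\dots,v_m$; solving for the corresponding $r$ coordinates introduces a Jacobian factor $|v'_\ell|^{-r(k-1)}$ with $\ell\le m$, and integrating the remaining $d-r-\ell$ small coordinates contributes $|v'_\ell|^{d-r-\ell}$. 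The resulting exponent $d-\ell-rk\ge d-m-rk\ge0$ is precisely where $kr+m\le d$ enters. Your intuition that the condition controls ``directions of escape to infinity'' is misplaced: Schwartz decay handles infinity; the condition controls integrability of the Jacobian singularity near the coordinate hyperplanes.
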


\begin{proof}
 Assume until otherwise stated that $f \in L^1(Y(F),d\mu)$. We claim that $0$ is a Lebesgue point of the function
 \begin{align}\label{functioninc}
c \longmapsto \int_{Y_c(F)}\widetilde{f}(y)d\mu_c(y).
\end{align} Hence the identity in the lemma holds by \eqref{COV} and Lemma \ref{lem:Fourier:inversion}. 

For each positive integer $n,$ let
$$ 
W_n\subseteq \{v \in V(F): |v|:=\max_{1\le i\le d}\{|v_i|\}<n\}
$$
be the subset on which some $r \times r$ minor of the Jacobian of $p$ has norm greater than  $n^{-1}$. These are open, relatively compact subsets of $V(F)$.  If $\widetilde{f}$ is supported on $W_n,$ then \eqref{functioninc} 
defines a continuous compactly supported function on $F^r$ and thus $0$ is a Lebesgue point. 

For general $\widetilde{f}$ we wish to show
\begin{align} \label{limsup}
    \limsup_{|t|\to 0} |t|^{-kr}\int_{|c|\le |t|^k} \left|\int_{Y_c(F)}\widetilde{f}(y)d\mu_c(y)-\int_{Y(F)}f(y)d\mu(y)\right|dc=0.
\end{align}
Here 
 \[
 |c| :=\max_{1\leq i\leq r}\{|c_i|\},
 \]
for $c= (c_1,\ldots,c_r)\in F^r$.
 Note that $\cup_n W_n$  and  $\cup_n W_n\cap Y^{\mathrm{sm}}(F)$ are of full measure in $V(F)$ and $Y^{\mathrm{sm}}(F)$ respectively. Let $\left\{\varphi_n\right\}$ be a smooth partition of unity of $\cup_n W_n$ subordinate to $\{W_n\}$. Put $\widetilde{f}_n:=\sum_{ m=1}^n\widetilde{f} \varphi_m$ and $f_n:=\widetilde{f}_n|_{Y^{\mathrm{sm}}(F)}$. Then $|v|^N\widetilde{f}_n\to |v|^N\widetilde{f}$ in $L^1(V(F))$ for any $N\in\zz_{\ge 0}$, and $f_n\to f$ in $L^1(Y(F),d\mu)$. 

Now for each $t \in F^\times,$ we have
\begin{align*}
    &|t|^{-kr}\int_{|c|\le |t|^k} \left|\int_{Y_c(F)}\widetilde{f}(y)d\mu_c(y)-\int_{Y(F)}f(y)d\mu(y)\right|dc\\
    &\le |t|^{-kr}\int_{|c|\le |t|^k} \left|\int_{Y_c(F)}\widetilde{f}(y)d\mu_c(y)-\int_{Y_c(F)}\widetilde{f}_n(y)d\mu_c(y)\right|dc\\
    &+|t|^{-kr}\int_{|c|\le |t|^k} \left|\int_{Y_c(F)}\widetilde{f}_n(y)d\mu_c(y)-\int_{Y(F)}f_n(y)d\mu(y)\right|dc\\
    &+C\int_{Y(F)}|f_n(y)-f(y)|d\mu(y)
\end{align*}
for any $n$ for some positive constant $C$. Since $\widetilde{f}_n$ is supported on $W_n,$ taking $\limsup$ over $|t|\to 0$ on both sides, we have that the limit superior in \eqref{limsup} is bounded by
\begin{align*}
    \limsup_{|t| \to 0} |t|^{-kr}\int_{|c|\le |t|^k} \left|\int_{Y_c(F)}\widetilde{f}(y)d\mu_c(y)-\int_{Y_c(F)}\widetilde{f}_n(y)d\mu_c(y)\right|dc+ C\int_{Y(F)}|f_n(y)-f(y)|d\mu(y).
\end{align*}
Since the second term converges to $0$ as $n\to\infty,$ the change of variables formula  implies that it suffices to show 
\begin{align} \label{to:show:0}
    \limsup_{n\to\infty }\limsup_{|t|\to 0}|t|^{-kr}\int_{|p(v)|\le |t|^k} \left|\widetilde{f}(v)-\widetilde{f}_n(v)\right|dv=0.
\end{align}

 We have
\begin{align} \label{for:reader}
    &\int_{|p(v)|\le |t|^k} \left|\widetilde{f}(v)-\widetilde{f}_n(v)\right|dv=|t|^{d}\int_{|p(v)|\le 1} \left|\widetilde{f}(tv)-\widetilde{f}_n(tv)\right|dv.
\end{align}
By symmetry it suffices to bound the contribution of the domain $|v_1|\ge \cdots \ge |v_d|$ to the integral. By assumption, there exists a subset $J\subset \{1,\ldots,d\}$ of cardinality $r$ such that $\det (\frac{\partial p_i}{\partial v_j})_{1\le i\le r, j\in J}$ is a nonzero monomial in $\{v_1,\ldots,v_{m}\}.$ Let $T:\{1,\ldots,d\}-J\to \{1,\ldots,d-r\} $ be the increasing bijection.
For $v' \in F^{d-r},$ let
\begin{align*}
h_n(v'):=\sup\left\{ \left|\widetilde{f}(v)-\widetilde{f}_n(v)\right|: v \in F^d, v_{T^{-1}(j)}=v'_j \textrm{ for } 1\le j\le d-r\right\}.
\end{align*}
 Choose the smallest $1 \leq \ell \leq d-r$ such that $T^{-1}(\ell)\in \{m,\ldots,m+r\} -J$. Then by changing variables, the contribution of $|v_1| \geq \dots \geq |v_d|$ to \eqref{for:reader} is dominated by 
\begin{align*}
    &|t|^{d}\int  h_n(tv') |v'_{\ell}|^{-r(k-1)}dv',
\end{align*}
where the integral is taken over $|v'_1|\ge \cdots \ge |v'_{d-r}|. $
Changing variables $v'\mapsto t^{-1}v',$ we arrive at
\begin{align*}
    &|t|^{rk}\int h_n(v') |v'_{\ell}|^{-r(k-1)}dv'\ll  |t|^{rk}\int_{F^{\ell}}  \left(\sup_{w' \in  F^{d-r-\ell}} h_n(w,w')\right)|w_{\ell}|^{d-\ell-r-r(k-1)}dw_1\cdots dw_{\ell}.
\end{align*}
As $\ell\le m,$ we have $d-\ell-r-r(k-1)\ge d-m-rk\ge 0$ by assumption, so the latter integral is finite and converges to $0$ as $n\to\infty$.   This implies \eqref{to:show:0}.

We are left with proving that for any $\widetilde{f} \in \mathcal{S}(V(F))$ one has $\widetilde{f}|_{Y^{\mathrm{sm}}(F)} \in L^1(Y(F),d\mu)$.  This follows from an analogue of the argument bounding \eqref{for:reader}.
\end{proof}

\section{The Schwartz space and Fourier transform on $Y$}
\label{sec:Y}

For the remainder of the paper, let $F$ be a local field of characteristic zero. We refer to \cite{Getz:Liu:Triple,Getz:Hsu} for more details on the constructions in this section.
For $1\leq i \leq 3,$ let $V_i=\GG_a^{d_i}$ where $d_i$ is even and let $Q_i$ be a nondegenerate quadratic form on $V_i(F)$.   We assume that $d_i>2$ for each $i$; this plays a role in some convergence arguments below (see the proof of Theorem \ref{Thm:main}). Let $V:=V_1 \times V_2 \times V_3,$ and for an $F$-algebra $R,$ let
\begin{align}\label{Ydef}
    Y(R):=\{(v_1,v_2,v_3) \in V(R): Q_1(v_1)=Q_2(v_2)=Q_3(v_3)\}.
\end{align}
The anisotropic locus
\begin{align}\label{Yani}
Y^{\mathrm{ani}} \subset Y
\end{align}
is the open complement of the vanishing locus of $Q_i$ (which is independent of $i$). We assume $Y^{\mathrm{sm}}(F)$ is nonempty, which implies $Y^{\mathrm{ani}}(F)$ is nonempty and dense in $Y(F)$ in the Hausdorff topology by \cite[Remark 3.5.76]{Poonen}. 

We assume that $G=\mathrm{Sp}_6$ and $P$ is the Siegel parabolic as in \S \ref{sec:lagrangian}, and set $X:=X_P$.  
We identify $\SL_2^3$ with the subgroup of $\mathrm{Sp}_6$ whose points in an $F$-algebra $R$ are given by 
\begin{align*} 
\left\{\left(\begin{smallmatrix} a_1 & & &b_1& & \\ & a_2 & && b_2 & \\ & & a_3 & & & b_3\\ c_1 & & & d_1 & & \\ & c_2 & & & d_2 & \\ & & c_3 & & & d_3\end{smallmatrix}\right) \in \GL_6(R) :a_id_i-b_ic_i=1 \textrm{ for }1 \leq i \leq 3 \right\}.
\end{align*}
Thus we obtain an action of $\SL_2^3$ on $X^{\circ}$.  
Let
\begin{align} \label{gamma0}
\gamma_0:=\left(\begin{smallmatrix} 0 & 0 & 0 & -1 & 0& 0\\ 0 & 1 & 0 & 0 & 0 &0\\
0 & 0 & 1 & 0& 0 & 0\\
1 & 1 & 1 & 0 & 0 & 0\\ 0 & 0 & 0 &-1 & 1 &  0\\
0 & 0 & 0 & -1 & 0 & 1  \end{smallmatrix}\right).
\end{align}
Then 
\begin{align}\label{x0}
x_0:=P^{\mathrm{der}}(F)\gamma_0
\end{align}
is a representative for the unique $\SL_2^3$-orbit in $X^{\circ}$ of maximal dimension.  This follows from the computation of the stabilizers of all orbits given in \cite[Lemma 2.1]{Getz:Liu:Triple}.  By the same lemma, the stabilizer of $x_0$ is the group whose points in an $F$-algebra $R$ is
\begin{align}\label{N0}
N_0(R):=\left\{\left(\begin{psmatrix} 1 & t_1 \\ & 1 \end{psmatrix},\begin{psmatrix} 1 & t_2 \\ & 1 \end{psmatrix}, \begin{psmatrix} 1 & -t_1-t_2 \\ & 1 \end{psmatrix}\right): t_1,t_2 \in R\right\}.
\end{align}
By upper semicontinuity of the dimension of stabilizers \cite[\S 0.2]{GIT} we deduce that the orbit of $x_0$ is the unique open $\SL_2^3$-orbit in $X^\circ.$  

Let $\rho$ be the Weil representation of $\SL_2^3(F)$ on $\mathcal{S}(V(F))$ attached to our additive character $\psi$ and the quadratic forms $Q_i$.  
Let $\mathcal{S}(X(F) \times V(F))$ be the algebraic tensor product $\mathcal{S}(X(F)) \otimes \mathcal{S}(V(F))$ in the nonarchimedean case and the completed projective tensor product in the archimedean case.  

There is a map
$$
I:\mathcal{S}(X(F) \times V(F)) \lto C^\infty(Y^{\mathrm{sm}}(F))
$$
given on pure tensors by 
\begin{align} \label{I:def}
I(f_1 \otimes f_2)=\int_{N_0(F) \backslash \SL_2^3(F)} f_1(x_0g) \rho(g)f_2 d\dot{g}.
\end{align}
The integral is absolutely convergent for all $f \in \mathcal{S}(X(F) \times V(F))$ (see \cite[Propositions 7.1, 8.2, 8.3]{Getz:Liu:Triple}).
By definition, the image of $I$ is the Schwartz space $\mathcal{S}(Y(F))$.  The kernel of $I$ is closed in the archimedean case by \cite[Lemma 5.1]{Getz:Hsu}, and hence in this case we equip $\mathcal{S}(Y(F))$ with the quotient Fr\'echet space structure.  

By  \cite[Theorem 12.1]{Getz:Hsu}, there is a
unique $\cc$-linear isomorphism $\mathcal{F}_Y:\mathcal{S}(Y(F)) \to \mathcal{S}(Y(F))$ such that the diagram
\medskip
\begin{center}
\begin{tikzcd}[column sep=large]
\mathcal{S}(X(F)\times V(F))
\arrow[d, two heads, "I"]
\arrow[r, "\mathcal{F}_X"] &\mathcal{S}(X(F) \times V(F)) \arrow[d,two heads, "I"]\\
\mathcal{S}(Y(F)) \arrow[r,"\mathcal{F}_Y"]
& \mathcal{S}(Y(F))
\end{tikzcd}
\end{center}
commutes; in the archimedean case, $\calf_Y$ is continuous in the Fr\'echet topology on $\mathcal{S}(Y(F))$. In \emph{loc.~cit.},~this is the only description of $\mathcal{F}_Y$ that is given. 
The definition of $I(f_1 \otimes f_2)$ depends on the choice of measure $d\dot{g},$ but from the description of $\mathcal{F}_Y$ given above, it follows that $\mathcal{F}_Y$ does not depend on this choice.

The definition of $\mathcal{F}_Y$ is indirect; the goal of the rest of this paper is to give a direct definition of $\mathcal{F}_Y,$ at least on a subspace of $\mathcal{S}(Y(F))$.  Let
\begin{align}\label{calS}
\mathcal{S}:=\mathrm{Im}(\mathcal{S}(V(F)) \to C^\infty(Y^{\mathrm{sm}}(F))) 
\end{align}
where the implicit map is restriction of functions.  Then $\mathcal{S}=I(\mathcal{S}(x_0\SL_2^3(F)\times V(F))) <\mathcal{S}(Y(F))$ by \cite[Lemma 5.3]{Getz:Hsu}. Moreover, $\mathcal{S}(Y(F)) <L^p(Y(F),d\mu)$ for $p \leq 2$ and the inclusion is continuous in the archimedean case by \cite[Proposition 11.2]{Getz:Hsu}. Here the Radon measure $d\mu$ on $Y(F)$ is defined as in \S \ref{sec:reg} using the polynomials $p_1(v_1,v_2,v_3)=Q_1(v_1)-Q_2(v_2)$ and $p_2(v_1,v_2,v_3)=Q_2(v_2)-Q_3(v_3)$.

\section{A formula for $\mathcal{F}_Y$}
\label{sec:FY}

For $u_i,v_i\in V_i(F)$, let 
\begin{align*}
    \langle u_i, v_i\rangle_i:=\tfrac{1}{2}\left(Q_i(u_i+v_i)-Q_i(u_i)-Q_i(v_i)\right).
\end{align*}
 For $u=(u_1,u_2,u_3),v=(v_1,v_2,v_3) \in V(F),$ we write
$$
Q(u):=Q_1(u_1)+Q_2(u_2)+Q_3(u_3), \quad 
\langle u,v \rangle:=\sum_{i=1}^3 \langle u_i,v_i \rangle_i.
$$
For $a= (a_1,a_2,a_3)\in (F^\times)^3,$ let
\begin{align} \label{chiQi}
    \chi_{Q_i}(a_i):=(
a_i, (-1)^{d_i/2}\det(\langle\cdot,\cdot\rangle_i)), \quad \chi_Q(a):=\prod_{i=1}^3 \chi_{Q_i}(a_i)
\end{align}
where $(\cdot,\cdot)$ is the Hilbert symbol.  Finally, let
\begin{align} \label{q2}
[a]:=a_1a_2a_3, \quad \mathfrak{r}(a):=\frac{(a_1a_2+a_2a_3+a_3a_1)^2}{[a]}.
\end{align}

For the convenience of the reader, we restate Theorem \ref{Thm:main:intro}:

\begin{Thm} \label{Thm:main}  Suppose as above that the characteristic of $F$ is zero, $d_i=\dim V_i>2$ for all $1\le i\le 3$ and $Y^{\mathrm{sm}}(F) \neq \emptyset$. There is a $c \in \cc^\times$ depending on $\psi,F$ and the $Q_i$ such that for all $f \in \mathcal{S}$ (defined as in \eqref{calS}) and $\xi \in Y^{\mathrm{ani}}(F),$ one has
\begin{align*}
\begin{split}
&\mathcal{F}_Y(f)(\xi)\\&=c\int_{F^\times}\psi(z^{-1}) \Bigg( \int_{ (F^\times)^3} \overline{\psi}(z^2\mathfrak{r}(a))
 \Bigg(\int_{Y(F)}\psi\Bigg(\left\langle \frac{\xi}{a},y \right\rangle -\frac{Q(\xi)Q(y)}{9z^2[a]}\Bigg)f(y)d\mu(y)\Bigg) \frac{\chi_Q(a)d^\times a}{\{a\}^{d/2-1}}   \Bigg)d^\times z.
 \end{split}
\end{align*}
\end{Thm}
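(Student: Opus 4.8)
The plan is to reduce the statement, via the defining commutative diagram for $\mathcal{F}_Y$ recalled in \S\ref{sec:Y}, to the explicit formula for the Fourier transform on the Lagrangian Grassmannian $X = X_P$ with $G = \Sp_6$ established in Corollary \ref{cor:extend}. Fix $f \in \mathcal{S}$ and write $f = \widetilde f|_{Y^{\mathrm{sm}}(F)}$ with $\widetilde f \in \mathcal{S}(V(F))$. Using (the proof of) \cite[Lemma 5.7]{Getz:Hsu} we may realize $f = I(\Phi)$ for a suitable $\Phi \in \mathcal{S}(X(F) \times V(F))$ manufactured from $\widetilde f$; commutativity of the diagram then gives $\mathcal{F}_Y(f)(\xi) = I(\mathcal{F}_X(\Phi))(\xi)$ for $\xi \in Y^{\mathrm{ani}}(F)$. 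Since $\mathcal{F}_X$ acts through $\mathcal{F}_{X_P}$ on the $X$-factor, Corollary \ref{cor:extend} introduces the outer integral over $z \in F^\times$ with weight $\psi(z^{-1})|z|^2$ and the phase $\overline\psi(z^2\langle\mathrm{Pl}(x),\mathrm{Pl}(x')\rangle)$; this is the origin of the variable $z$ in the statement.

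Next I would substitute this into the integral \eqref{I:def} defining $I$ and unfold. Decomposing $\SL_2^3(F)$ by Iwasawa/Bruhat relative to the three diagonal $\SL_2$-factors, the split torus $T^3 \cong (F^\times)^3$ contributes the integral over $a = (a_1,a_2,a_3)$, while the remaining unipotent and Weyl-element directions will produce Fourier transforms over the $V_i(F)$ and Gaussian integrals over the corresponding $\SL_2^3$-coordinates. The Weil representation acts on each $\mathcal{S}(V_i(F))$ in the standard way — the diagonal torus by $\phi \mapsto \chi_{Q_i}(a_i)|a_i|^{d_i/2}\phi(a_i\,\cdot)$, a unipotent element by multiplication by a character of the form $\psi(Q_i(\cdot)\,u_i)$, and the Weyl element by the Fourier transform on $V_i(F)$ up to a constant Weil index — and these normalizations, combined with the Jacobian of the geometric $\SL_2^3$-action on the Plücker coordinates of $X^\circ$, produce the measure $\chi_Q(a)\,d^\times a/\{a\}^{d/2-1}$ and the twisted pairing $\langle \xi/a, y\rangle$. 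The structural point that makes the $N_0(F)\backslash\SL_2^3(F)$-integral well-posed is that on $Y$ the three unipotent phases combine into $\psi((t_1+t_2+t_3)Q_1(y_1))$, which is trivial on the stabilizer $N_0$ precisely because $t_1 + t_2 + t_3 = 0$ there.

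The crucial analytic input is the Plancherel (Parseval) formula. After the torus integration, the unipotent and Weyl-element directions of $\SL_2^3$ leave us pairing $\widehat{\widetilde f}$ — the Fourier transform of $\widetilde f$ over $V(F)$ — against a Gaussian-times-linear-phase kernel. Applying Parseval, in the regularized form of Lemma \ref{lem:Fourier:inversion} since the kernel is not integrable, converts $\widehat{\widetilde f}$ back to $\widetilde f$, hence to $f$, at the price of Fourier-transforming the remaining kernel over $V(F)$. Evaluating the resulting Gaussian integrals over the $\SL_2^3$-variables produces the pure phase $\overline\psi(z^2\mathfrak r(a))$, with $\mathfrak r(a) = (a_1a_2+a_2a_3+a_3a_1)^2/[a]$, and the cross term $-Q(\xi)Q(y)/(9z^2[a])$ — the factor $1/9$ merely reconciling $Q(\xi)Q(y) = 9\,Q_1(\xi_1)Q_1(y_1)$ on $Y$ with the symmetric invariant $Q$. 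The leftover integration against $f$ is a priori an integral over the fibre $Y(F)$ in the regularized sense of \S\ref{sec:reg}, which one then identifies with $\int_{Y(F)}$ once absolute convergence is established (cf.\ Lemma \ref{lem:reg:int}). The constant $c$ collects the accumulated normalizations — powers of $\zeta(1)$, the Weil-representation constants, and the self-dual Haar measures.

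The main obstacle will be making this formal unfolding rigorous: justifying the interchanges of integration in a multiple integral that is only conditionally convergent, and applying Parseval correctly in the presence of the regularized integrals over $Y(F)$ and over the fibres $Y_c(F)$ of \S\ref{sec:reg}. The hypothesis $d_i > 2$ drives the absolute convergence that is needed — of the $\SL_2^3$-integral in \eqref{I:def} and of the Gaussian integrals over $V_i(F)$ — while the restriction $\xi \in Y^{\mathrm{ani}}(F)$, i.e.\ $Q_1(\xi_1) = Q_2(\xi_2) = Q_3(\xi_3) \neq 0$, is used both so that $x_0$ lies in the open $\SL_2^3$-orbit through which the unfolding runs and so that the quadratic term $Q(\xi)Q(y)/(9z^2[a])$ of the final kernel is nondegenerate. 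I expect these convergence and interchange questions, together with the bookkeeping of constants and the interplay of the two flavours of regularization, to form the bulk of the work; the requisite technical estimates are relegated to \S\ref{sec:absolutebd}.
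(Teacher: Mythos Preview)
Your broad strategy is essentially the paper's: use the commutative diagram from \S\ref{sec:Y}, invoke Corollary~\ref{cor:extend} for $\mathcal{F}_X$, parametrize $N_0\backslash\SL_2^3$ via the Bruhat cell, and feed in the Weil representation formulae. But two of your structural claims are off, and the first is the real gap.

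\textbf{The Plancherel step is on $X(F)$, not on $V(F)$.} You propose to write $f=I(\Phi)$ with $\Phi$ ``manufactured from $\widetilde f$'' and then unfold $I(\mathcal F_X(\Phi))$ directly. The paper does not do this. It takes $\Phi=\varphi_n\otimes\widetilde f$ with $\varphi_n$ a nascent delta at $x_0$ (Lemma~\ref{Lem: nascent}); then $\mathcal F_Y(f)(\xi)=\lim_n\int_{N_0\backslash\SL_2^3}\mathcal F_X(\varphi_n)(x_0g)\,\rho(g)\widetilde f(\xi)\,d\dot g$. The obstruction is that $g\mapsto\rho(g)\widetilde f(\xi)$ is not compactly supported, so one cannot simply push the limit through and substitute Corollary~\ref{cor:extend}. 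The paper's device is to truncate $\rho(g)\widetilde f(\xi)$ to $f_{\xi,W}(x_0g):=W(x_0g)\rho(g)\widetilde f(\xi)$ with $W\in C_c^\infty(x_0\SL_2^3(F))$, and then apply the Plancherel formula on $X(F)$ \cite[Proposition 3.12]{Getz:Hsu}:
\[
\int\mathcal F_X(\varphi_n)(x_0g)\,f_{\xi,W}(x_0g)\,d\dot g=\int\varphi_n(x_0g)\,\mathcal F_X(f_{\xi,W})(-x_0g)\,d\dot g.
\]
Now $\varphi_n\to\delta$ evaluates the right side at $g$ near $1$, reducing everything to the explicit computation of $\mathcal F_X(f_{\xi,W})(-x_0)$, which is where Corollary~\ref{cor:extend} and the Bruhat parametrization enter (\S\ref{ssec:truncation}). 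Your ``Parseval converting $\widehat{\widetilde f}$ back to $\widetilde f$'' does not occur: the $V(F)$-integral introduced by $\rho(w)$ in \eqref{Weil} stays, and is later localized to $Y(F)$ by the $(v_1,v_2)$-integration via Lemma~\ref{lem:finish}.

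\textbf{The phase $\mathfrak r(a)$ is not produced by Gaussian integrals.} It comes from a straight determinant computation of the Pl\"ucker pairing $\mathrm{Pl}(x_0g)\wedge\mathrm{Pl}(-x_0)$ along the Bruhat cell; see the $3\times3$ determinant in \S\ref{ssec:truncation} which evaluates to $-3t[a]\sum b_i+\mathfrak r(a)$. The cross term $-Q(\xi)Q(y)/(9z^2[a])$ then appears after the changes of variables $t\mapsto z^{-2}t$, $b\mapsto b/(9[a])$ and the $t$-Fourier inversion carried out in Lemma~\ref{lem:B1:lim}.

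Finally, the truncation is not a single cut-off but a two-parameter limiting procedure: first send the $t$-support to $F$ via $\mathcal B_1\to\infty$ (Lemma~\ref{lem:B1:lim}), then expand the $(a,b)$-support via $\mathcal B_2\to\infty$ with carefully designed $W_{\mathcal B_2}$ (\S\ref{ssec:formal}). Controlling these limits uniformly in $g\in K_{1/4}$ is the content of Proposition~\ref{prop:limit} and Lemma~\ref{lem:uniformb2}, and requires the oscillatory estimates of \S\ref{sec:absolutebd} (van der Corput in the archimedean case, Proposition~\ref{prop:vandercorputnonarch} in the nonarchimedean case). This is the bulk of the work, as you correctly anticipate, but the organizing principle---Plancherel on $X$ to trade $\mathcal F_X(\varphi_n)$ for $\mathcal F_X(f_{\xi,W})$---is what makes the formal manipulation accessible to those estimates.
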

\noindent 
Here the measure on $Y(F)$ is defined as in \S \ref{sec:Y} and
\begin{align*}
\frac{\xi}{a}:=\left(\frac{\xi_1}{a_1},\frac{\xi_2}{a_2},\frac{\xi_3}{a_3} \right), \quad  \{a\}^{d/2-1}:=\prod_{i=1}^3|a_i|^{d_i/2-1}.
\end{align*} 
Let $\gamma(Q_i):=\gamma(\psi \circ Q_i)$ be the Weil index, defined as in \cite[\S 24]{Weil:certains}.
The constant $c \in \cc^\times$ is $\frac{\gamma(Q)k\kappa}{|3|\zeta(1)},$ where $\gamma(Q)=\gamma(Q_1)\gamma(Q_2) \gamma(Q_3)$ is the product of the Weil indices, $k$ is defined as in \eqref{eq:absoluteconst} and $\kappa$ is defined as in Remark \ref{rem:kappa} below; these are both ratios of suitable invariant measures.   When $F$ is nonarchimedean and the matrix of each $Q_i$ with respect to the standard basis of $F^{d_i}$ lies in  $\GL_{d_i}(\mathcal{O}_F)$ for each $i$, then $\kappa=1$.
\subsection{Comment on norms}
We pause to introduce some notation used throughout the rest of the paper. For $n\in \zz_{>0}$, unless otherwise stated, we equip $F^n$ with the box norm
\begin{align}\label{boxnorm}
    |y| :=\max_{1\leq i\leq n}\left\{|y_i|\right\}, \qquad y=(y_1,\ldots,y_n)\in F^n.
\end{align}
This includes vectors in $V_i$ and $V_1\otimes V_2\otimes V_3$. Let
 $y\in F^n-\{0\}$.
 If $F$ is nonarchimedean, let 
 $$
 \mathrm{ord}(y):=\min_{1\le i\le n}\{\mathrm{ord}(y_i)\},
 $$
 where $\mathrm{ord}(y_i)\in \zz\cup\{\infty\}$ satisfies $|y_i|=q^{-\mathrm{ord}(y_i)}$.
When $F$ is nonarchimedean, we fix a uniformizer $\varpi$ and set
\begin{align} \label{widetilde}
    \widetilde{y}:=\left\{\begin{array}{ll}
        \varpi^{\mathrm{ord}(y)}  & \textrm{if $F$ is nonarchimedean,}  \\
        |y|^{\frac{1}{[F:\rr]}} & \textrm{if $F$ is archimedean. }
    \end{array}\right.    
\end{align}
Thus $|y/\widetilde{y}|=1$.
    
    For any integer $m\in\zz$, we denote
\begin{align*}
    \{a\}^m:=\prod_{i=1}^3 |a_i|^m \quad \textrm{and} \quad \{a\}^{ d/2+m}:=\prod_{i=1}^3 |a_i|^{ d_i/2+m}.
\end{align*}

\subsection{Proof of Theorem \ref{Thm:main}}
To aid the reader, we present the proof modulo various technical analytic results that will be proved later in this section and in \S \ref{sec:absolutebd}.  By Theorem \ref{thm:conj} we have  $\mathcal{S}(X^{\circ}(F)) < \mathcal{S}(X(F))$.  We will use this fact without further mention below.

    Let $f\in \mathcal{S}(V(F))$ and for $0<\delta<1$, let \begin{align} \label{Kdelta}
    K_\delta:=\left\{ \begin{psmatrix}
    1+u& v\\
    w & 1+x
    \end{psmatrix}\in \SL_2^3(F) \bigg| |(u,v,w,x)|< \delta\right\}.
\end{align}
It is an open neighborhood of the identity in $\SL_2^3(F)$.

\begin{Lem}\label{Lem: nascent}Assume $F$ is archimedean.  We can choose a sequence of nascent delta-functions $\varphi_n \in C_c^\infty(x_0\SL_2^3(F))$ indexed by $n \in \zz_{>4}$ such that
\begin{align} \label{limit}
\lim_{n\to \infty} I(\varphi_n \otimes f)=f|_{Y^{\mathrm{sm}}(F)}
\end{align}
with respect to the Fr\'echet topology on $\mathcal{S}(Y(F))$. Moreover, we can assume $\mathrm{supp}(\varphi_n)\subset x_0K_{1/n}$.
\end{Lem}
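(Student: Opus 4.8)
The plan is to realize $\varphi_n$ as a genuine approximate identity on the open $\SL_2^3(F)$-orbit $x_0\SL_2^3(F)\subset X^{\circ}(F)$ and to push the resulting limit through the map $I$ of \eqref{I:def}. Write $R\colon \mathcal{S}(V(F))\to \mathcal{S}(Y(F))$ for the restriction-of-functions map; it is well defined by \cite[Lemma 5.7]{Getz:Hsu} (using Theorem \ref{thm:conj} so that restriction is already meaningful at the level of $\mathcal{S}(X(F))$), and it is continuous by the closed graph theorem, since both sides are Fr\'echet, the inclusion $\mathcal{S}(V(F))\hookrightarrow C^\infty(V(F))$ and subsequent restriction are continuous, and $\mathcal{S}(Y(F))$ embeds continuously in $C^\infty(Y^{\mathrm{sm}}(F))$ by the quotient construction of \cite[Lemma 5.5]{Getz:Hsu}. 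The key algebraic input is that, for $n(t):=\begin{psmatrix}1 & t\\ & 1\end{psmatrix}$, the Weil representation $\rho$ acts on $\mathcal{S}(V_i(F))$ by the multiplier $v_i\mapsto \psi(t\,Q_i(v_i))$; hence for $h\in\mathcal{S}(V(F))$ and any element of $N_0(F)$ as in \eqref{N0}, the functions $\rho(n)h$ and $h$ agree on $Y^{\mathrm{sm}}(F)$, because the resulting phase $\psi\big(t_1(Q_1-Q_3)+t_2(Q_2-Q_3)\big)$ vanishes on $Y$. Consequently the map $\Psi\colon \SL_2^3(F)\to \mathcal{S}(Y(F))$, $g\mapsto R(\rho(g)f)$, is continuous (composition of the continuous $g\mapsto \rho(g)f$ with $R$) and left $N_0(F)$-invariant, so it descends to a continuous map $\overline{\Psi}$ on $N_0(F)\backslash \SL_2^3(F)$ with $\overline{\Psi}(\overline{e})=f|_{Y^{\mathrm{sm}}(F)}$, and $I(\varphi\otimes f)=\int_{N_0(F)\backslash \SL_2^3(F)}\varphi(x_0g)\,\overline{\Psi}(\overline{g})\,d\dot{g}$ as an identity in $\mathcal{S}(Y(F))$ for $\varphi\in C_c^\infty(x_0\SL_2^3(F))$, the right-hand side being an $\mathcal{S}(Y(F))$-valued Bochner integral.

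Next I would construct the $\varphi_n$. For each $n$ the set $x_0K_{1/n}$ is a nonempty open subset of the Nash manifold $x_0\SL_2^3(F)$, and under the identification $N_0(F)\backslash \SL_2^3(F)\xrightarrow{\ \sim\ }x_0\SL_2^3(F)$ given by the orbit map $g\mapsto x_0g$ (valid since the stabilizer of $x_0$ is $N_0$, cf.\ \eqref{x0}, \eqref{N0}) the invariant measure $d\dot{g}$ becomes a nonzero smooth measure of full support. Hence one may choose $\varphi_n\in C_c^\infty(x_0\SL_2^3(F))$ with $\varphi_n\ge 0$, $\mathrm{supp}(\varphi_n)\subset x_0K_{1/n}$, and $\int_{N_0(F)\backslash \SL_2^3(F)}\varphi_n(x_0g)\,d\dot{g}=1$. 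Extended by zero, $\varphi_n\in C_c^\infty(X^{\circ}(F))\subset \mathcal{S}(X(F))$ by Theorem \ref{thm:conj}, so $\varphi_n\otimes f\in \mathcal{S}(X(F)\times V(F))$ and $I(\varphi_n\otimes f)\in \mathcal{S}(Y(F))$.

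Finally I would prove \eqref{limit}. Using the normalization of $\varphi_n$ and the $N_0(F)$-invariance above, one has in $\mathcal{S}(Y(F))$
\[
I(\varphi_n\otimes f)-f|_{Y^{\mathrm{sm}}(F)}=\int_{N_0(F)\backslash \SL_2^3(F)}\varphi_n(x_0g)\big(\overline{\Psi}(\overline{g})-\overline{\Psi}(\overline{e})\big)\,d\dot{g}.
\]
Given a continuous seminorm $\nu$ on $\mathcal{S}(Y(F))$ and $\varepsilon>0$, the condition $\varphi_n(x_0g)\neq 0$ forces $\overline{g}$ to lie in the image of $K_{1/n}$, which shrinks to $\{\overline{e}\}$ as $n\to\infty$; continuity of $\overline{\Psi}$ at $\overline{e}$ then yields $n_0$ with $\nu\big(\overline{\Psi}(\overline{g})-\overline{\Psi}(\overline{e})\big)<\varepsilon$ on $\mathrm{supp}\big(\varphi_n(x_0\,\cdot\,)\big)$ for $n\ge n_0$. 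Pulling $\nu$ inside the Bochner integral gives $\nu\big(I(\varphi_n\otimes f)-f|_{Y^{\mathrm{sm}}(F)}\big)\le \varepsilon\int_{N_0(F)\backslash \SL_2^3(F)}\varphi_n(x_0g)\,d\dot{g}=\varepsilon$. Since $\nu$ was arbitrary, this is precisely convergence in the Fr\'echet topology of $\mathcal{S}(Y(F))$.

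I expect the main obstacle to be the soft-analysis bookkeeping assembled in the first paragraph: justifying that $I(\varphi\otimes f)$ genuinely coincides with the claimed $\mathcal{S}(Y(F))$-valued Bochner integral — so that the continuous map $R$ and the seminorms $\nu$ may be moved past the integral sign — which rests on the continuity of $R$ and of $g\mapsto \rho(g)f$, on the left $N_0(F)$-invariance of $\Psi$, and on the compatibility of \eqref{I:def} with the quotient Fr\'echet structure on $\mathcal{S}(Y(F))$ of \cite[Lemma 5.5]{Getz:Hsu}. Once these are in place the limiting argument is routine (and the nonarchimedean analogue, not needed here, would be simpler still).
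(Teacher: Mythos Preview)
Your proof is correct and follows essentially the same strategy as the paper's: both reduce the statement to continuity of the restriction map into $\mathcal{S}(Y(F))$, combined with an approximate-identity argument driven by the continuity of the Weil representation. The packaging differs slightly. The paper factors through the intermediate space $\mathcal{S}$ (the image of restriction with its quotient topology from $\mathcal{S}(V(F))$), observes that convergence in $\mathcal{S}$ is immediate from smoothness of the Weil action, and then proves the inclusion $\mathcal{S}\hookrightarrow\mathcal{S}(Y(F))$ is continuous via the open mapping theorem applied to the commutative square whose horizontal arrows are the two surjections $I$ from $\mathcal{S}(x_0\SL_2^3(F)\times V(F))$ and $\mathcal{S}(X(F)\times V(F))$. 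You instead establish continuity of $R\colon\mathcal{S}(V(F))\to\mathcal{S}(Y(F))$ directly by the closed graph theorem, make the $N_0(F)$-invariance of $g\mapsto R(\rho(g)f)$ explicit, and run the approximate-identity step as a Bochner integral in $\mathcal{S}(Y(F))$. Your route is more self-contained and spells out the $N_0$-invariance that the paper leaves implicit; the paper's route is shorter but relies on the surjectivity and continuity of both $I$ maps from \cite[Lemma 5.7]{Getz:Hsu}. One minor point: the invocation of Theorem \ref{thm:conj} in your first paragraph is unnecessary for the well-definedness of $R$ (that is \cite[Lemma 5.7]{Getz:Hsu} alone); you only need Theorem \ref{thm:conj} where you actually use it, namely to place $\varphi_n\in\mathcal{S}(X(F))$.
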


\begin{proof} 
There is a map
\begin{align*}
    \widetilde{I}:\mathcal{S}(\SL_2^3(F) \times V(F)) &\lto \mathcal{S}(V(F)) \lto \mathcal{S}
\end{align*}
where the left arrow is given on pure tensors by $f_1 \otimes f_2 \mapsto \int_{\SL_2^3(F)}f_1(g)\rho(g)f_2dg$ and the right is given by restriction of functions. The left arrow is continuous by continuity of the Weil representation and surjective by the Dixmier--Malliavin lemma.  The right arrow is surjective with closed kernel and we endow $\mathcal{S}$ with the quotient topology (see \cite[\S 3]{Elazar:Shaviv} for more details about this topology).  Thus $\widetilde{I}$ is surjective and continuous.  We also have a continuous map
\begin{align*}
p:\mathcal{S}(\SL_2^3(F)) \lto \mathcal{S}(x_0\SL_2^3(F))
\end{align*}
given by sending $f_1$ to  $p(f_1)(x_0g):=\int_{N_0(F)}f_1(ug)du.$  We observe that 
\begin{align*}
    \widetilde{I}(f_1 \otimes f_2)=I(p(f_1) \otimes f_2).
\end{align*}
Choose $\widetilde{\varphi}_n \in C_c^\infty(\SL_2^3(F))$ with support in $K_{1/n}$ such that $\lim_{n \to \infty}\widetilde{I}(\widetilde{\varphi}_n \otimes f) =f$ with respect to the usual topology on $\mathcal{S}(V(F)).$  Then $\varphi_n:=p(\widetilde{\varphi}_n)$ satisfy \eqref{limit} with respect to the topology on $\mathcal{S}.$
 It thus suffices to show that the inclusion $\mathcal{S} \hookrightarrow \mathcal{S}(Y(F))$ is continuous.  

We have a commutative diagram
 \begin{center}
    \begin{tikzcd}
 \mathcal{S}(\SL_2^3(F) \times V(F)) \arrow[dr,"\widetilde{I}"] \arrow[r,"p \otimes \mathrm{Id}"] & \mathcal{S}(x_0\SL_2^3(F) \times V(F)) \arrow[r] \arrow[d,"I"] & \mathcal{S}(X(F) \times V(F)) \arrow[d,"I"] \\
    & \mathcal{S} \arrow[r]  & \mathcal{S}(Y(F))
    \end{tikzcd}
\end{center}
The top horizontal arrows are continuous, and the maps $\widetilde{I}$ and $I$ are continuous and surjective.  By the open mapping theorem, $\widetilde{I}$ is open, and we deduce that the bottom horizontal arrow is continuous.
\end{proof}
 Up to scaling by a positive real number, there is a unique right $\SL_2^3(F)$-invariant measure $d\dot{g}$ on the open dense subset $x_0\SL_2^3(F) \subset X^{\circ}(F),$ and a unique (up to scaling) right $\mathrm{Sp}_6(F)$-invariant measure $dx$ on $X^{\circ}(F)$.  On the other hand, $dx$ is clearly right $\SL_2^3(F)$-invariant.  Thus the restriction of $dx$ to the open full measure subset $x_0\SL_2^3(F) \subset X^{\circ}(F)$ (which we continue to denote by $dx$) satisfies
\begin{align}\label{eq:uptoscalar}
Cdx=d\dot{g}
\end{align}
for some $C \in \rr_{>0}$. Therefore, we have an isomorphism
\begin{align*}
L^p(X^{\circ}(F)) &\tilde{\lto} L^p(N_0(F) \backslash \SL_2^3(F))\\
f &\longmapsto \left(g \mapsto f(x_0g)\right)
\end{align*}
for $0<p<\infty$. We normalize the measures so that $C=1$.  Thus these isomorphisms are isometries. 

When $F$ is nonarchimedean, we let
$$
\varphi_n :=\frac{\textbf{1}_{x_0K_{1/n}}}{d\dot{g}(x_0K_{1/n})}.
$$
 Since the Weil representation is smooth, there is an integer $N$ depending on $f$ such that $I(\varphi_n \otimes f)=f|_{Y^{\mathrm{sm}}(F)}$ for $n>N\ge 4$.

In all cases, we have
\begin{align*} 
\lim_{n \to \infty} I(\mathcal{F}_X(\varphi_n) \otimes f)=\lim_{n \to \infty} \mathcal{F}_Y(I(\varphi_n \otimes f)) =\mathcal{F}_Y(f|_{Y^\mathrm{sm}(F)}).
\end{align*}
Indeed, in the archimedean case, this follows from the continuity of $\mathcal{F}_Y,$ and in the nonarchimedean case, it follows from the observation that $I(\varphi_n \otimes f)$ stabilizes as $n\to \infty$. Therefore, we are to compute
\begin{align*} 
\lim_{n \to \infty}I(\mathcal{F}_X(\varphi_n) \otimes f)(\xi)
\end{align*}
for $\xi\in Y^{\mathrm{ani}}(F)$.

Extension by zero induces an injection $\mathcal{S}(x_0\SL_2^3(F)) \to \mathcal{S}(X^{\circ}(F)).$  This is obvious if $F$ is nonarchimedean, and \cite[Theorem 3.23]{Elazar:Shaviv} if $F$ is archimedean.  Moreover we have $\mathcal{S}(X^{\circ}(F)) <\mathcal{S}(X(F))$ by Theorem \ref{thm:conj}.
 For $(\xi,W,g) \in Y^{\mathrm{ani}}(F) \times \mathcal{S}(x_0\SL_2^3(F)) \times \SL_2^3(F)$, 
let 
$$
f_{\xi,W}(x_0g):=W(x_0g)\rho(g)f(\xi).
$$
Note that $f_{\xi,W}\in \mathcal{S}(x_0\SL_2^3(F))<\mathcal{S}(X(F))$. In \S\ref{ssec:B1}, we construct a directed set of functions $\mathcal{V}_{\mathcal{B}_1,\mathcal{B}_2}\in \mathcal{S}(x_0\SL_2^3(F))$ such that 
\begin{align}\label{eq:to1}
    |\mathcal{V}_{\mathcal{B}_1,\mathcal{B}_2}|\ll 1,\quad \textrm{ and } \lim_{|\mathcal{B}_2|\to \infty} \lim_{|\mathcal{B}_1|\to \infty} \mathcal{V}_{\mathcal{B}_1,\mathcal{B}_2}(x_0g)=1
\end{align}
where convergence is pointwise a.e. for $g \in N_0(F) \backslash \SL_2^3(F)$ with respect to $d\dot{g}.$  Thus by the dominated convergence theorem and the Plancherel formula \cite[Proposition 3.9]{Getz:Hsu}, 
\begin{align*}
I(\mathcal{F}_X(\varphi_n) \otimes f)(\xi)&= \lim_{|\mathcal{B}_2|\to \infty} \lim_{|\mathcal{B}_1|\to \infty}\int_{N_0(F) \backslash \SL_2^3(F)}\mathcal{F}_X(\varphi_n)(x_0g)f_{\xi,\mathcal{V}_{\mathcal{B}_1,\mathcal{B}_2}}(x_0g) d\dot{g}\\
&= \lim_{|\mathcal{B}_2|\to \infty} \lim_{|\mathcal{B}_1|\to \infty}\int_{N_0(F)\backslash \SL_2^3(F)} \varphi_n(x_0g)\mathcal{F}_X(f_{\xi,\mathcal{V}_{\mathcal{B}_1,\mathcal{B}_2}})(-x_0g)d\dot{g}.
\end{align*}

Let 
\begin{align} 
    \Omega:=\overline{K_{1/4}} \label{Omega}
\end{align} be the closure of $K_{1/4}$ in $\SL_2^3(F)$. For $u\in V(F)$ and $v=(v_1,v_2)\in F^2$, let
\begin{align}\label{cuv} \begin{split}
    \mathfrak{c}(u,v)&:=v_1Q_1(u_1)+v_2Q_2(u_2)-(v_1+v_2)Q_3(u_3) \quad \textrm{ and } \quad dv=dv_1dv_2.\end{split}
\end{align}
We prove in \S\ref{ssec:B1} and \S \ref{ssec:formal} the following:
\begin{Prop} \label{prop:limit}
Fix $\xi\in Y^{\mathrm{ani}}(F)$. For each $\mathcal{B}_2$, there is a constant $M(\mathcal{B}_2)>0$ such that $|\mathcal{F}_X(f_{\xi,\mathcal{V}_{\mathcal{B}_1,\mathcal{B}_2}})(-x_0g)|<M(\mathcal{B}_2)$
for all $g\in \Omega$ and all $|\mathcal{B}_1|$ sufficiently large. Moreover, there is a constant $M>0$ such that
\begin{align*}
    \left|\lim_{|\mathcal{B}_1|\to\infty} \mathcal{F}_X(f_{\xi,\mathcal{V}_{\mathcal{B}_1,\mathcal{B}_2}})(-x_0g)\right| < M
\end{align*}
for all $\mathcal{B}_2$. Furthermore,
\begin{align}\label{eq:finalform}
\begin{split}
    &\lim_{|\mathcal{B}_2|\to\infty}\lim_{|\mathcal{B}_1|\to\infty} \mathcal{F}_X(f_{\xi,\mathcal{V}_{\mathcal{B}_1,\mathcal{B}_2}})(-x_0g)\\
    &=\frac{\gamma(Q)k}{|3|\zeta(1)}\int_{F^\times} \psi(z^{-1}) \Bigg( \int_{ (F^\times)^3} \overline{\psi}(z^2\mathfrak{r}(a))
      \\& \times 
\int_{F^2}\left(\int_{V(F)}\psi\left(\left\langle \frac{\xi}{a},u \right\rangle -\frac{Q(\xi)Q(u)}{9z^2[a]}+\mathfrak{c}(u,v)\right)\rho(g)f(u)du\right) dv \frac{\chi_Q(a)d^\times a}{\{a\}^{d/2-1}}\Bigg)d^\times z
 \end{split}
\end{align}
for all $g\in \Omega$, and the integral defines a continuous function in $g \in \Omega$.
\end{Prop}

\noindent Here $k>0$ is as in \eqref{eq:absoluteconst}, and the Haar measure $du$ on $V(F)$ is normalized to be self-dual with respect to $\psi$ and the pairing $\langle \cdot,\cdot\rangle$.

Assume Proposition \ref{prop:limit}. Applying the bounded convergence theorem, the proof of Theorem \ref{Thm:main} will be complete once we prove the following:

\begin{Lem} \label{lem:finish} There is a constant $\kappa>0$ such that for $f \in \mathcal{S}(V(F)),$ one has
\begin{align*}
    &\int_{F^2}\left(\int_{V(F)}\psi\left(\left\langle \frac{\xi}{a},u \right\rangle -\frac{Q(\xi)Q(u)}{9z^2[a]}+\mathfrak{c}(u,v)\right)f(u)du\right)dv\\
    &=\kappa\int_{Y(F)}\psi\left(\left\langle \frac{\xi}{a},y \right\rangle -\frac{Q(\xi)Q(y)}{9z^2[a]}\right)f(y)d\mu(y).
\end{align*}
\end{Lem}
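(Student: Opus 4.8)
The plan is to carry out the $v$-integration first and recognize the result as Fourier inversion along the base of the fibration cut out by $Q_1-Q_3$ and $Q_2-Q_3$. Write $p(u):=(Q_1(u_1)-Q_3(u_3),\,Q_2(u_2)-Q_3(u_3))$, so that $\mathfrak{c}(u,v)=\langle v,p(u)\rangle_{\mathrm{st}}$ with $\langle\cdot,\cdot\rangle_{\mathrm{st}}$ the standard pairing on $F^2$. The pair $p$ and the pair $(Q_1-Q_2,Q_1-Q_3)$ used to define $d\mu$ in \S\ref{sec:Y} differ by a unimodular linear substitution, hence cut out the same scheme $Y=Y_0$ and induce the same measure $d\mu=d\mu_0$. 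Set $g(u):=\psi\!\left(\langle \xi/a,u\rangle-\tfrac{Q(\xi)Q(u)}{9z^2[a]}\right)f(u)$; since the phase is a polynomial of degree at most $2$ in $u$, one has $g\in\mathcal{S}(V(F))\subset L^1(V(F))$, and the right-hand side of the lemma is $\kappa\int_{Y(F)}g(y)\,d\mu(y)$. Applying the change-of-variables identity \eqref{COV} to $g$ with the polynomials $p$, for each fixed $v\in F^2$ one obtains $\int_{V(F)}g(u)\,\psi(\langle v,p(u)\rangle_{\mathrm{st}})\,du=\int_{F^2}\psi(\langle v,c\rangle_{\mathrm{st}})\,h(c)\,dc=:\widehat h(v)$, where $h(c):=\int_{Y_c(F)}g(y)\,d\mu_c(y)$ lies in $L^1(F^2)$ by the discussion following \eqref{COV}. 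Thus the left-hand side of the lemma equals $\int_{F^2}\widehat h(v)\,dv$, which agrees with the regularized integral $\int^{*}_{F^2}\widehat h(v)\,dv$ once one knows $\widehat h\in L^1(F^2)$ (see the final paragraph).

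I would then invoke Fourier inversion. By Lemma \ref{lem:Fourier:inversion}, applied with $h$ in the role of $f$ and $y=0$, the integral $\int^{*}_{F^2}\widehat h(v)\,dv$ equals $h(0)$ provided $0$ is a Lebesgue point of $h$; and $h(0)=\int_{Y(F)}g(y)\,d\mu_0(y)=\int_{Y(F)}g(y)\,d\mu(y)$ is exactly the integral appearing on the right. This yields the asserted identity, with $\kappa$ equal to the positive constant (a ratio of invariant measures on $V(F)$, cf. Remark \ref{rem:kappa}) relating the self-dual Haar measure $du$ on $V(F)$ attached to $\psi$ and the pairing $\langle\cdot,\cdot\rangle$, used on the left-hand side, to the product measure $\prod_i dv_i$ on $V(F)$ underlying the definition of $d\mu$ in \S\ref{sec:Y}. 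There is no additional contribution: the substitution relating $p$ to $(Q_1-Q_2,Q_1-Q_3)$ has determinant $\pm1$, and the Fourier-inversion normalization is already built into Lemma \ref{lem:Fourier:inversion}. In particular $\kappa>0$, and when $F$ is nonarchimedean, $\psi$ is unramified, and each $\langle\cdot,\cdot\rangle_i$ admits a self-dual lattice, the two measures assign volume $1$ to the same lattice and $\kappa=1$.

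It remains to justify the analytic inputs, and this is where the work is concentrated. First, the machinery of \S\ref{sec:reg}, in particular the identity \eqref{COV} and the measures $d\mu_c$, requires the fibers $Y_c$ to be geometrically integral for every $c\in F^2$; establishing this is the step that uses the hypothesis that the characteristic of $F$ is zero. Second, one must check $g|_{Y^{\mathrm{sm}}(F)}\in L^1(Y(F),d\mu)$, that $\widehat h\in L^1(F^2)$, and that $0$ is a Lebesgue point of $h$, so that the interchange above and the passage from $\int^{*}$ to $\int$ are legitimate; all of this follows from Lemma \ref{lem:reg:int} once we diagonalize each $Q_i$ by a linear change of coordinates on $V_i$ (valid since the characteristic of $F$ is not $2$, and affecting only the constant $\kappa$), after which the $2\times2$ minors of the Jacobian of $p$ are monomials and the degree hypothesis $kr+m\le d$ of Lemma \ref{lem:reg:int} reads $4+(\max_i d_i+1)\le d_1+d_2+d_3$ --- an inequality forced precisely by the assumption $d_i>2$. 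The Fourier-theoretic core of the argument (the first two paragraphs) is thus essentially formal; the main obstacle I expect is establishing geometric integrality of the possibly singular fibers $Y_c$.
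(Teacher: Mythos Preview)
Your approach is the same as the paper's: identify the left-hand side with the regularized integral of \S\ref{sec:reg} and then invoke Lemma~\ref{lem:reg:int}. You have unpacked the definition of $\int^{\mathrm{reg}}$ and the proof of Lemma~\ref{lem:reg:int} (the Lebesgue-point argument via Lemma~\ref{lem:Fourier:inversion}) rather than citing them as black boxes, but the content is identical.

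There is one misattribution. You assert that $\widehat h\in L^1(F^2)$ ``follows from Lemma~\ref{lem:reg:int}'', but neither the statement nor the proof of that lemma yields any integrability of $\widehat h$; it only shows that $0$ is a Lebesgue point of $h$ and that $g|_{Y^{\mathrm{sm}}(F)}\in L^1(Y(F),d\mu)$. The paper obtains absolute convergence of the $v$-integral from Lemma~\ref{lem:quad:esti} (the quadratic-phase estimate), and then uses Remark~\ref{rem:int} to equate the ordinary and regularized integrals. You should cite Lemma~\ref{lem:quad:esti} for this step. For the geometric integrality of the fibers $Y_c$, which you correctly flag as the remaining obstacle, the paper compactifies to a projective complete intersection of two quadrics and quotes \cite[Lemma~1.11]{CTSSD:I}. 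Finally, your computation $m=\max_i d_i+1$ in the application of Lemma~\ref{lem:reg:int} is actually more careful than the paper's stated value $m=2$: for two coordinates lying in the same $V_i$ the corresponding $2\times 2$ minor vanishes, so one genuinely needs $m$ large enough to force any $m$-subset to meet two of the $V_i$. Your inequality $4+(\max_i d_i+1)\le d_1+d_2+d_3$ is the correct one, and it does follow from $d_i>2$.
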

\begin{proof}
We use the results of \S \ref{sec:reg}.  We must first check 
that the schemes $Y_{c_1,c_2}$ for $(c_1,c_2) \in F^2$ defined as in \eqref{Yc} are geometrically integral.  We can check this over the algebraic closure. Consider the quadratic form defined by $p_1(u_1,u_2,u_3,z)=Q_1(u_1)-Q_3(u_3)-c_1z^2$ and $p_2(u_1,u_2,u_3,z)=Q_2(u_2)-Q_3(u_3)-c_2z^2$. Then $Y_{c_1,c_2}$ is an open subscheme of the projective variety $Y'\subset \mathbb{P}(V\oplus \GG_a)$ cut out by $p_1$ and $p_2$. Therefore $Y_{c_1,c_2}$ is geometrically integral by \cite[Lemma 1.11]{CTSSD:I}. 

By Lemma \ref{lem:quad:esti}, the integral over $F^2$ in the statement of the lemma is absolutely convergent.  Therefore Remark \ref{rem:int} implies that the top integral in the lemma is equal to 
$$
\kappa \int_{Y(F)}^{\mathrm{reg}} \psi\left(\left\langle \frac{\xi}{a},y \right\rangle -\frac{Q(\xi)Q(y)}{9z^2[a]}\right)f(y)d\mu(y)
$$
defined as in \eqref{reg} for some $\kappa>0$.  By changing variables, we may assume each $Q_i$ is associated to a diagonal matrix.  One checks that the hypotheses of  Lemma \ref{lem:reg:int} are satisfied with $m=1+\max(d_1,d_2,d_3)$ as $d_i>2$ for all $i$, and we use it to deduce the current lemma.
\end{proof}

\begin{Rem} \label{rem:kappa}
The constant $\kappa$ of the lemma is the positive real number such that 
$
    \kappa^{-1}du
$
is the Haar measure on $V(F)$ that is self-dual with respect to $\psi$ and the standard pairing on $V(F)$.
\end{Rem}

    \subsection{Finiteness of the integral} \label{ssec:B2}

As a first step toward the proof of Theorem \ref{Thm:main}, 
in this subsection we prove that the integral \eqref{eq:finalform} is finite. 

Let $f\in \mathcal{S}(V(F))$ and $\xi\in Y^{\mathrm{ani}}(F)$. Recall the notation $\widetilde{\xi}:=(
\widetilde{\xi}_1,\widetilde{\xi}_2,\widetilde{\xi}_3)\in (F^\times)^3$ from \eqref{widetilde}. For $(a,b)\in (F^\times)^3\times F,$ define
 \begin{align}\label{Psi}
        &\Psi_{\xi}(a,b,f):=\int_{F^2}\left(\int_{V(F)}\psi\left(\left\langle \frac{a\xi}{\widetilde{\xi}},u \right\rangle +\frac{b[a]Q(u)}{9[\widetilde{\xi}]}+\mathfrak{c}(u,v)\right)f(u)du\right)
 \chi_Q(a\widetilde{\xi})\{a\}^{d/2-2} dv.
    \end{align}
We deduce from \eqref{use:quad:esti} below that the integral over $F^2$ is absolutely convergent. Moreover, $\Psi_\xi(\cdot ,b,f)\in L^1(F^3,da)$ for all $b\in F$ (see Corollary \ref{cor:absolutebd}). After a change of variables $a \mapsto a^{-1}\widetilde{\xi},$ we have
\begin{align}\label{eq:changeaxi}
\nonumber
    & \int_{ (F^\times)^3} \overline{\psi}\left(z^2\mathfrak{r}(a)\right)  \int_{F^2} \left(\int_{V(F)}\psi\left(\left\langle \frac{\xi}{a},u \right\rangle -\frac{Q(\xi)Q(u)}{9z^2[a]}+\mathfrak{c}(u,v)\right)f(u)du\right)dv \frac{
\chi_Q(a)d^\times a}{\{a\}^{d/2-1}} \\
    &=\zeta(1)^3\left(\prod_{i=1}^3|\xi_i|^{1-d_i/2}\right) \int_{ (F^\times)^3} \overline{\psi}(z^2\mathfrak{r}(a^{-1} \widetilde{\xi})) \Psi_{\xi}\left(a,-\tfrac{Q(\xi)}{z^2},f\right) d a 
\end{align}
for $z\in F^\times$. Here we have replaced $d^\times a$ by $\tfrac{\zeta(1)^3da}{\{a\}^1}$. 

Let
\begin{align} \label{Gxi}
    G(\xi):=\left(\frac{|\xi_1 \otimes \xi_2 \otimes \xi_3|}{|Q(\xi)|}\right)^{1/2}.
\end{align}
This subsection is devoted to proving the following:

\begin{Prop} \label{prop:is:finite} 
Let $(z,\xi)\in F^\times\times Y^{\mathrm{ani}}(F)$.  For $\tfrac{1}{2}>\varepsilon>0$ sufficiently small, we have
\begin{align*}
    & \Bigg| \int_{ (F^\times)^3} \overline{\psi}(z^2\mathfrak{r}(a^{-1} \widetilde{\xi})) \Psi_{\xi}\left(a,-\tfrac{Q(\xi)}{z^2},f\right) d a \Bigg| \\
    &\ll_{\varepsilon,\varepsilon',N,f}\left\{
    \begin{array}{ll}
     (|z|G(\xi))^{\min(d_1,d_2,d_3)-2-2\varepsilon}    & \textrm{ if } |z|G(\xi)\le N ,\\
     (|z|G(\xi))^{-\varepsilon}+\min(|\xi_1|,|\xi_2|,|\xi_3|)^{2\varepsilon}|Q(\xi)|^{-\varepsilon} (|z|G(\xi))^{-\varepsilon'}& \textrm{ if } |z|G(\xi)> N
    \end{array}\right.
\end{align*}
for some $\varepsilon'>0$ depending on $\varepsilon$ and $N\in\zz_{\ge 1}$ depending only on $f,\psi$; if $F$ is archimedean, $N=1$.
\end{Prop}
We deduce the following bound directly from Proposition \ref{prop:is:finite}, \eqref{eq:changeaxi}, and Lemma \ref{lem:finish} .
\begin{Cor}\label{cor:bound}
Let $\xi\in Y^{\mathrm{ani}}(F)$. For $\tfrac{1}{2}>\varepsilon>0$ sufficiently small, we have
\begin{align*}
    &\int_{F^\times} \Bigg| \int_{ (F^\times)^3} \overline{\psi}\left(z^2\mathfrak{r}(a)\right)  \left(\int_{Y(F)}\psi\left(\left\langle \frac{\xi}{a},y \right\rangle -\frac{Q(\xi)Q(y)}{9z^2[a]}\right)f(y)d\mu(y)\right)\frac{
\chi_Q(a)d^\times a}{\{a\}^{d/2-1}} \Bigg|d^\times z\\
& \ll_{\varepsilon,f}\left(\prod_{i=1}^3 |\xi_i|^{1-d_i/2}\right)\left(1+\min(|\xi_1|,|\xi_2|,|\xi_3|)^{2\varepsilon}|Q(\xi)|^{-\varepsilon}\right).
\end{align*}
\qed
\end{Cor}
\noindent By the smoothness of the Weil representation, Corollary \ref{cor:bound} implies the integral in \eqref{eq:finalform} defines a continuous function in $g\in \Omega$.  When checking this point in the archimedean case it is helpful to recall our conventions regarding asymptotic notation explained in \S \ref{ssec:asymp}.

The proof of Proposition \ref{prop:is:finite} will involve several reductions relying on results proved in \S \ref{sec:absolutebd}.  Theorem \ref{thm:absolutebdappendix} implies the following corollary:
\begin{Cor}
\label{cor:absolutebd}
Suppose $d_i=\dim V_i>2$ for all $1\le i\le 3$. Let $f\in\mathcal{S}(V(F))$. Given $\frac{1}{2}>\varepsilon>0$ one has
\begin{align*}
    \int_{(F^\times)^3}\left|\Psi_\xi(a,b,f) \right| d a
\ll_{\varepsilon,f} \min\left(1,\left|\frac{\xi_1\otimes\xi_2\otimes\xi_3}{b}\right|\right)^{\min(d_1,d_2,d_3)/2-1-\varepsilon}
\end{align*}
as a function of $(b,\xi)\in F \times Y^{\mathrm{ani}}(F)$.   Here by convention $\min\left(1,\left|\frac{\xi_1 \otimes \xi_2 \otimes \xi_3}{b}\right|\right)=1$ if $b=0.$ \qed
\end{Cor}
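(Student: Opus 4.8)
The plan is to deduce Corollary~\ref{cor:absolutebd} from Theorem~\ref{thm:absolutebdappendix}, so the genuine work is the oscillatory-integral estimate carried out there; I describe how that estimate is structured and how the corollary drops out. Since the box norm of a tensor of box-normalized vectors is the product of the norms, $|\xi_1\otimes\xi_2\otimes\xi_3|=|[\widetilde{\xi}]|$ with $\widetilde{\xi}=(\widetilde{\xi}_1,\widetilde{\xi}_2,\widetilde{\xi}_3)$, so the asserted bound reads $\min(1,|[\widetilde{\xi}]/b|)^{\min_i d_i/2-1-\varepsilon}$. It therefore suffices to establish, uniformly in $\xi\in Y^{\mathrm{ani}}(F)$, the bound $\int_{(F^\times)^3}|\Psi_\xi(a,b,f)|\,da\ll_{\varepsilon,f}1$ when $|b|\le|[\widetilde{\xi}]|$, and the decay $\ll_{\varepsilon,f}|[\widetilde{\xi}]/b|^{\min_i d_i/2-1-\varepsilon}$ when $|b|\ge|[\widetilde{\xi}]|$; both come out of one pointwise bound on the inner integral followed by convergence of the outer integrations.

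First I would reparametrize. Set $\beta:=b[a]/(9[\widetilde{\xi}])$ and $\eta_i:=a_i\xi_i/\widetilde{\xi}_i$, so $|\eta_i|=|a_i|$ since $\xi_i/\widetilde{\xi}_i$ is a unit vector. In the phase of the inner $u$-integral the coefficient of $Q_i(u_i)$ is $c_1=\beta+v_1$, $c_2=\beta+v_2$, $c_3=\beta-v_1-v_2$, and after the measure-preserving substitution $(v_1,v_2)\mapsto(c_1,c_2)$ one has $c_1+c_2+c_3=3\beta$. Completing the square in each $u_i$ (using nondegeneracy of $Q_i$), the inner integral becomes a product over $i$ of Gaussian-type Fourier transforms of $f$ against $\psi(c_iQ_i)$; applying stationary phase in the archimedean case, or the explicit Weil-index formula for the Fourier transform of a Gaussian together with (an analogue of) Proposition~\ref{prop:vandercorputnonarch} in the nonarchimedean case, gives
\[
\left|\int_{V(F)}\psi\Big(\textstyle\sum_{i=1}^3\big(c_iQ_i(u_i)+\langle\eta_i,u_i\rangle_i\big)\Big)f(u)\,du\right|\ \ll_{N,f}\ \prod_{i=1}^3\min\!\big(1,|c_i|^{-1}\big)^{d_i/2}\Big(1+\tfrac{|a_i|}{\max(1,|c_i|)}\Big)^{-N}
\]
for every $N$, uniformly in $a,\xi$. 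The weight $\{a\}^{d/2-2}=\prod_i|a_i|^{d_i/2-2}$ multiplying this in $\Psi_\xi$ is locally integrable at $a_i=0$ precisely because $d_i>2$, and $|\chi_Q(a\widetilde{\xi})|=1$, so these factors cause no trouble.

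Next I would integrate, first over $c=(c_1,c_2)$ (equivalently over $v$) and then over $a$. On the plane $c_1+c_2+c_3=3\beta$, when $|\beta|$ is large at least one $|c_i|$ is $\gtrsim|\beta|$, and the corresponding factor $\min(1,|c_i|^{-1})^{d_i/2}$ integrates (using $d_i/2>1$) to a gain $\asymp|\beta|^{1-d_i/2}$, the other two $c$-integrals contributing $O(1)$; the contributions from the coordinate loci $c_i\to0$ are tamed by the decay $(1+|a_i|/\max(1,|c_i|))^{-N}$, again using $d_i>2$. Writing $|\beta|\asymp|b/[\widetilde{\xi}]|\,|a_1a_2a_3|$, this yields $\int_{F^2}|\,\cdot\,|\,dv\ll\min\!\big(1,|\beta|^{-(\min_i d_i/2-1)}\big)$ together with extra rapid decay in each $|a_i|$ large. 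Integrating against $\{a\}^{d/2-2}$ over $(F^\times)^3$, the scaling $a\mapsto|b/[\widetilde{\xi}]|^{-1/3}a$ then extracts the overall power $|[\widetilde{\xi}]/b|^{\,\min_i d_i/2-1-\varepsilon}$, with the $\varepsilon$-loss accounting for the borderline divergence of the rescaled $a$-integral at $a_i\to\infty$ for the index achieving $\min_i d_i$, which the extra decay only just fails to control cleanly. When $|b|\le|[\widetilde{\xi}]|$ the same computation produces the bound $O_{\varepsilon,f}(1)$.

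I expect the main obstacle to be the joint behaviour near the degenerate loci where simultaneously some $a_i\to0$, some $c_i\to0$ (i.e.\ $v$ large), and $|\beta|$ is large, while still extracting the sharp-up-to-$\varepsilon$ power of $|[\widetilde{\xi}]/b|$ \emph{uniformly} in $\xi\in Y^{\mathrm{ani}}(F)$: one must play the Schwartz decay in $|a_i/c_i|$, the oscillatory phases produced by completing the square in the $u_i$, and the weight $\{a\}^{d/2-2}$ off against each other simultaneously for all three indices. This bookkeeping is the technical core of \S\ref{sec:absolutebd}, and it is where the hypothesis $d_i>2$ for every $i$ is indispensable; compare the use of Lemma~\ref{lem:quad:esti} for the absolute convergence of the individual $v$-integrals.
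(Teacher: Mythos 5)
Your deduction of the corollary from Theorem~\ref{thm:absolutebdappendix} is exactly the paper's (non-)proof: pull the absolute value inside the $v$-integral, note $|\chi_Q|=1$, and observe that $\{a\}^{d/2-2}\,da = \zeta(1)^{-3}\,d^\times a/\{a\}^{1-d/2}$, so the corollary is the theorem plus the triangle inequality. Your accompanying sketch of the theorem's proof is broadly the same shape as \S\ref{sec:absolutebd}, though Proposition~\ref{prop:vandercorputnonarch} does not supply the inner $u$-integral estimate (Lemma~\ref{lem:quad:esti} uses the Weil-index formula uniformly in the archimedean and nonarchimedean cases); that van der Corput bound enters only later, in the proof of Lemma~\ref{lem:uniformb2}.
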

   
By Corollary \ref{cor:absolutebd}, for any $N \in \zz_{\geq 1},$ we have for $|z|G(\xi)\le N$
\begin{align} \label{hana}
      \int_{ (F^\times)^3} \Bigg|\Psi_{\xi} \left(a,-\tfrac{Q(\xi)}{z^2},f\right)  \Bigg|da  \ll_{\varepsilon,f,N}  (|z|G(\xi))^{\min(d_1,d_2,d_3)-2-2\varepsilon}.
\end{align}
Thus to prove Proposition \ref{prop:is:finite}, it suffices to bound the integral 
\begin{align} \label{uno}
    & \Bigg| \int_{ (F^\times)^3} \overline{\psi}(z^2\mathfrak{r}(a^{-1}\widetilde{\xi}))\Psi_{\xi} \left(a,-\tfrac{Q(\xi)}{z^2},f\right)  da \Bigg| 
\end{align}
for $N<|z|G(\xi).$

To proceed further, we need the following corollary of Theorem \ref{thm:byproductappendix}:

 \begin{Cor}\label{cor:byproduct}
 Suppose $|\xi_1\otimes\xi_2\otimes \xi_3|>|b|>0$. If $d_i>2$ for all $i,$ given $\alpha>0,$ there exists $\beta>0$ such that
 \begin{align*}
    & \int_{ |a |\ge \left|\frac{\xi_1\otimes \xi_2\otimes \xi_3}{b}\right|^\alpha}|\Psi_\xi(a,b,f)| da \ll_{\alpha,\beta,f}  \left|\frac{\xi_1\otimes \xi_2\otimes \xi_3}{b}\right|^{-\beta}.
\end{align*}
\qed
\end{Cor}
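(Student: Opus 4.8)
Write $M:=\left|\frac{\xi_1\otimes\xi_2\otimes\xi_3}{b}\right|$, so that the hypothesis is $M>1$ and the assertion to be proved is $\int_{|a|\ge M^{\alpha}}|\Psi_\xi(a,b,f)|\,da\ll_{\alpha,\beta,f}M^{-\beta}$. The plan is to deduce this from Theorem \ref{thm:byproductappendix}, which I intend to establish (in the estimates of \S\ref{sec:absolutebd}) as a power-saving bound for the tail of the $a$-integral: under the same hypothesis $|\xi_1\otimes\xi_2\otimes\xi_3|>|b|$, for every $R\ge 1$ and every $\varepsilon>0$ one should have $\int_{|a|\ge R}|\Psi_\xi(a,b,f)|\,da\ll_{\varepsilon,f}R^{-\delta}M^{\varepsilon}$ for a fixed $\delta=\delta(\min_i d_i)>0$. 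Granting this, I would take $R:=M^{\alpha}$, which is legitimate since $M>1$ forces $M^{\alpha}\ge 1$; this gives $\ll_{\varepsilon,f}M^{-\alpha\delta+\varepsilon}$, and choosing $\varepsilon:=\alpha\delta/2$ and setting $\beta:=\alpha\delta/2>0$ yields the corollary, with the implied constant depending on $\alpha$ (hence on $\beta$) and $f$ as required. In practice I would pass from "every $R$" to the specific choice $R=M^{\alpha}$ by decomposing $\{|a|\ge M^{\alpha}\}$ dyadically into shells $|a|\asymp q^{j}M^{\alpha}$, $j\ge 0$ (with the obvious continuous analogue when $F$ is archimedean), applying the per-shell version of Theorem \ref{thm:byproductappendix} and summing the resulting geometric series.

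The next thing I would do is isolate the mechanism behind the tail estimate, which is the only substantive input. Carrying out the inner $v$-integral in the definition of $\Psi_\xi$ --- exactly as in the proof of Lemma \ref{lem:finish}, where integrating $\psi(\mathfrak{c}(u,v))$ over $v\in F^2$ restricts $u$ to $Y(F)$ --- rewrites $\Psi_\xi(a,b,f)$ as $\chi_Q(a\widetilde{\xi})\{a\}^{d/2-2}$ times an oscillatory integral against $f$ whose linear phase oscillates at frequency of order $|a_i|$ in the $u_i$-block (here one uses that $\widetilde{\xi}$ is normalized so that each block of $a\xi/\widetilde{\xi}$ has norm $|a_i|$). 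Choosing the index $i$ with $|a_i|=|a|$, I would extract decay from the rapid decay of the partial Fourier transform of the Schwartz function $f$ in the $u_i$-variables; this beats the polynomial growth of the weight $\{a\}^{d/2-2}$ once $|a|$ is large. It is precisely at this point that the hypothesis $d_i>2$ is used: it keeps the weight $\{a\}^{d/2-2}\,da$ tame near $|a|=\infty$, so that the resulting bound is integrable and loses at most $M^{\varepsilon}$ --- and this $M^{\varepsilon}$ loss is essential, since any fixed positive power of $M$ would break the case of small $\alpha$.

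The hard part --- the reason Theorem \ref{thm:byproductappendix} is not a one-line stationary-phase computation --- will be the anisotropic locus of $(F^\times)^3$, where $\max_i|a_i|=|a|$ is large but the product $|[a]|=|a_1a_2a_3|$, and hence the coefficient $|b[a]/[\widetilde{\xi}]|=|b|\,|[a]|/|\xi_1\otimes\xi_2\otimes\xi_3|$ of the quadratic part of the phase, need not be large. On this locus one cannot run stationary phase simultaneously in all three blocks $u_1,u_2,u_3$, and the decay has to be localized to the single block carrying the largest linear frequency, uniformly in the position of $a$ in $(F^\times)^3$ and in $v\in F^2$ (and, when $F$ is archimedean, with implied constants that are continuous in $f$). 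Once this uniform tail bound is secured, the corollary is the bookkeeping of the first paragraph: repackage the conclusion of Theorem \ref{thm:byproductappendix} with $R=M^{\alpha}$ and absorb the $M^{\varepsilon}$ into the power saving.
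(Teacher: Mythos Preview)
Your high-level plan---deduce the corollary from Theorem~\ref{thm:byproductappendix}---is exactly what the paper does, but the deduction is far more direct than you describe. Theorem~\ref{thm:byproductappendix} is \emph{already} stated with the outer integral over $|a|\ge M^{\alpha}$ and with conclusion $\ll_{\alpha,\beta,f} M^{-\beta}$; there is no intermediate ``for all $R\ge 1$'' version, no $M^{\varepsilon}$ loss to absorb, and no dyadic decomposition. The corollary follows in one line: by the triangle inequality and $|\chi_Q|=1$,
\[
|\Psi_\xi(a,b,f)|\,da \;\le\; \{a\}^{d/2-2}\!\int_{F^2}\Bigl|\int_{V(F)}\psi(\cdots)f(u)\,du\Bigr|\,dv\;da
\;=\;\zeta(1)^{-3}\,\{a\}^{d/2-1}\!\int_{F^2}\Bigl|\cdots\Bigr|\,dv\;d^\times a,
\]
and the right-hand side is precisely the integrand of Theorem~\ref{thm:byproductappendix}. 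This is why the corollary carries a bare \qed.

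Your second and third paragraphs are really a sketch of how you would prove Theorem~\ref{thm:byproductappendix} itself, and here your route diverges from the paper's. You propose to integrate out $v$ first (via Lemma~\ref{lem:finish}) to restrict $u$ to $Y(F)$, then extract decay from the linear phase in the block with largest $|a_i|$. The paper does \emph{not} pass to $Y(F)$: it applies Lemma~\ref{lem:quad:esti} (stationary phase in each $u_i$-block separately, keeping $v$ fixed) to dominate the $u$-integral pointwise by the explicit function $H(a,c,v_1,v_2)$ of \eqref{Ha}; it then integrates $H$ over $v$ by the elementary convolution estimates of Lemma~\ref{lem:invconv}, reducing to the five terms \eqref{term1}--\eqref{term4}; and finally it handles the $a$-tail term-by-term via Corollaries~\ref{cor:term3bound} and~\ref{cor:term4bound}. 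Your approach faces a genuine obstacle: once you restrict to $Y(F)$ the measure $d\mu$ is a Gelfand--Leray form, not a product measure, so ``rapid decay of the partial Fourier transform of $f$ in the $u_i$-variables'' is no longer available as stated---integration by parts in $u_i$ alone does not respect the constraint $Q_1(u_1)=Q_2(u_2)=Q_3(u_3)$. You would need to reparametrize $Y$ locally and track how derivatives interact with the singular measure, which is considerably more work than the paper's route of bounding \emph{before} integrating out $v$.
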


By Corollary \ref{cor:byproduct}, we have for $N<|z|G(\xi),$
\begin{align} \label{dul}
    &\int_{ |a |\ge (|z|G(\xi))^{2\alpha}}\left|\Psi_\xi\left(a,-\tfrac{Q(\xi)}{z^2},f\right)\right| da  \ll_{\alpha,\beta,f} (|z|G(\xi))^{-2\beta}.
\end{align}
Thus to bound \eqref{uno} (and hence prove Proposition \ref{prop:is:finite}), it suffices to bound
\begin{align} \label{dos}
   \left|\int_{|a|\le (|z|G(\xi))^{2\alpha}}\overline{\psi}(z^2\mathfrak{r}(a^{-1}\widetilde{\xi}))\Psi_{\xi}\left(a,-\tfrac{Q(\xi)}{z^2},f\right)da\right|
\end{align}
for $N<|z|G(\xi)$ and $1/6>\alpha>0$ sufficiently small.

 Over the domain $|a|\le (|z|G(\xi))^{2\alpha},$ we have
\begin{align} \label{diff:bound}
    \left|\frac{[a]Q(
    \xi)Q(u)}{9z^2[\widetilde{\xi}]
    }\right|\le |9|^{-1}(|z|G(\xi))^{6\alpha-2}|Q(u)|.
\end{align}
Assume $F$ is nonarchimedean.
Since $f$ has compact support and $|Q(u)|\ll |u|^2,$ we can choose $N$
sufficiently large (depending on $f,\psi$) such that 
\begin{align*}
    \overline{\psi}\left(\frac{[a]Q(\xi)Q(u)}{9z^2[\widetilde{\xi}]}\right)=1
\end{align*}
provided $u \in \mathrm{supp} (f),$  $|a|\le (|z|G(\xi))^{2\alpha},$ and $N<|z|G(\xi)$. In particular, under these assumptions 
$$
\Psi_{\xi}\left(a,-\tfrac{Q(\xi)}{z^2},f\right)=\Psi_{\xi}(a,0,f).
$$

In the archimedean case, we choose $N=1$. 
Observe that 
$$
\frac{\partial}{\partial b}\Psi_{\xi}(a,b,f)=c\frac{[a]}{9[\widetilde{\xi}]}\Psi_{\xi}(a,b,Qf)
$$
for some $c \in F$ depending on $\psi,$ and $Qf \in \mathcal{S}(V(F)).$  Thus the function
    \begin{align*}
        \widetilde{\Psi}_{\xi}(b,z,f):=\int_{|a|\le (|z|G(\xi))^{2\alpha}}\overline{\psi}(z^2\mathfrak{r}(a^{-1}\widetilde{\xi}))\Psi_{\xi}(a,b,f)da
    \end{align*}
    is differentiable and Lipschitz in $b$ by Corollary \ref{cor:absolutebd} and the Leibniz integral rule. 
    Let $C^{0,1}(F)$ denote the H\"older space of Lipschitz continuous functions on $F$. We bound the Lipschitz constant by bounding the derivative in $b$ using Corollary \ref{cor:absolutebd}.  This yields
    \begin{align*}
        \norm{\widetilde{\Psi}_{\xi}(\cdot,z,f)}_{C^{0,1}(F)}:=\mathrm{sup}_{b_1 \neq b_2 \in F}\frac{\left|\widetilde{\Psi}_{\xi}(b_1,z,f)-\widetilde{\Psi}_{\xi}(b_2,z,f) \right|}{|b_1-b_2|^{\frac{1}{[F:\rr]}}}\ll_f \left(\frac{(|z|G(\xi))^{6\alpha}}{|\xi_1\otimes\xi_2\otimes\xi_3|}\right)^{\frac{1}{[F:\rr]}}.
    \end{align*}
We have 
    \begin{align} \label{set} \begin{split}
        & \left|\widetilde{\Psi}_{\xi}\left(-\tfrac{Q(\xi)}{z^2},z,f\right)-\widetilde{\Psi}_{\xi}(0,z,f)\right|
        \ll_f (|z|G(
        \xi))^{\frac{6\alpha-2}{[F:\rr]}}  \end{split}
    \end{align}
for $1<|z|G(\xi)$.
Thus in either the nonarchimedean or archimedean case to bound \eqref{dos} (and hence prove Proposition \ref{prop:is:finite}), it suffices to bound
\begin{align}\label{tres}
    \left|\int_{|a|\le (|z|G(\xi))^{2\alpha}}\overline{\psi}(z^2\mathfrak{r}(a^{-1}\widetilde{\xi}))\Psi_{\xi}(a,0,f)da\right| 
\end{align}
for $N<|z|G(\xi)$ and $1/6>\alpha>0$ sufficiently small.

    Lastly, we require the following consequence of Theorem \ref{thm:b=0appendix}:
\begin{Cor}\label{cor:b=0}
For $\xi \in Y^{\mathrm{ani}}(F),$ if $d_i>2$ for all $i,$ we have 
\begin{align*}
&|\Psi_\xi(a,0,f)|\ll_f \{a\}^{d/2-2}
    \sum_{\sigma\in C_3}\max\left(1,|a_{\sigma(1)}|\right)^{1-d_{\sigma(1)}/2}\max\left(1,|a|\right)^{1-d_{\sigma(2)}/2-d_{\sigma(3)}/2} \ll 1.  
 \end{align*}
 \qed
\end{Cor}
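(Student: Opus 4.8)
The plan is to obtain the first inequality from Theorem~\ref{thm:b=0appendix} and to deduce the second by an elementary comparison of exponents; I sketch the argument for the former and give the latter in full.

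For the first inequality I would first reduce to a pure tensor $f=f_1\otimes f_2\otimes f_3$ with $f_i\in\mathcal{S}(V_i(F))$, a general $f$ being a finite sum (in the archimedean case, a limit of finite sums) of such, with control of the relevant Schwartz seminorms. Setting $b=0$ in the definition of $\Psi_\xi(a,b,f)$ and putting $w_1=v_1$, $w_2=v_2$, $w_3=-(v_1+v_2)$, one has $\langle a\xi/\widetilde{\xi},u\rangle+\mathfrak{c}(u,v)=\sum_{i=1}^3(\langle a_i\xi_i/\widetilde{\xi},u_i\rangle_i+w_iQ_i(u_i))$, so the inner integral over $V(F)=V_1\times V_2\times V_3$ factors as $\prod_{i=1}^3\int_{V_i(F)}\psi(\langle a_i\xi_i/\widetilde{\xi},u_i\rangle_i+w_iQ_i(u_i))f_i(u_i)\,du_i=\prod_{i=1}^3\widehat{g_{i,w_i}}(a_i\xi_i/\widetilde{\xi})$, where $g_{i,w}(u_i):=\psi(wQ_i(u_i))f_i(u_i)$ realizes, up to normalization, the action of a unipotent element in the Weil representation $\rho$. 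The essential input is a Weil-index/stationary-phase estimate: for each $i$ and every $N$, uniformly in the frequency $\eta_i$ (and, archimedean, continuously in $f_i$), $|\widehat{g_{i,w}}(\eta_i)|\ll_{f_i,N}\max(1,|w|)^{-d_i/2}(1+|\eta_i|\max(1,|w|)^{-1})^{-N}$. Multiplying the three estimates, using $|\chi_Q(a\widetilde{\xi})|=1$ and $|a_i\xi_i/\widetilde{\xi}|\le|a_i|$ (since $|\xi/\widetilde{\xi}|=1$), and integrating over $(v_1,v_2)\in F^2$ — the relation $w_1+w_2+w_3=0$ together with the hypothesis $d_i/2>1$ making this converge — then yields, after decomposing $F^2$ according to which index carries the small $w$-variable, the bound $\{a\}^{d/2-2}\sum_{\sigma\in C_3}\max(1,|a_{\sigma(1)}|)^{1-d_{\sigma(1)}/2}\max(1,|a|)^{1-d_{\sigma(2)}/2-d_{\sigma(3)}/2}$; here the factor $\max(1,|a_{\sigma(1)}|)^{1-d_{\sigma(1)}/2}$ is one power of $|a_{\sigma(1)}|$ (the measure of the range $|w_{\sigma(1)}|\asymp|a_{\sigma(1)}|$ singled out by the rapid-decay factor) times $\max(1,|w_{\sigma(1)}|)^{-d_{\sigma(1)}/2}$ there. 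This is exactly Theorem~\ref{thm:b=0appendix}.

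For the second inequality I would argue summand by summand. Since each $d_i$ is even and $d_i>2$, we have $d_i\ge 4$, hence $d_i/2-2\ge 0$ and $1-d_{\sigma(2)}/2-d_{\sigma(3)}/2\le -3$. Write $\{a\}^{d/2-2}=\prod_i|a_i|^{d_i/2-2}$. For a fixed $\sigma$, the $\sigma(1)$-part of the summand is $|a_{\sigma(1)}|^{d_{\sigma(1)}/2-2}\max(1,|a_{\sigma(1)}|)^{1-d_{\sigma(1)}/2}\le 1$: if $|a_{\sigma(1)}|\le 1$ both factors are $\le 1$ (using $d_{\sigma(1)}/2-2\ge 0$), and if $|a_{\sigma(1)}|\ge 1$ the product equals $|a_{\sigma(1)}|^{-1}$. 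For $j=\sigma(2)$ and $k=\sigma(3)$, decompose $1-d_j/2-d_k/2=(\tfrac12-\tfrac{d_j}{2})+(\tfrac12-\tfrac{d_k}{2})$ and use $\max(1,|a|)\ge\max(1,|a_j|),\max(1,|a_k|)\ge 1$ with the negativity of each of these exponents to get $\max(1,|a|)^{1-d_j/2-d_k/2}\le\max(1,|a_j|)^{1/2-d_j/2}\max(1,|a_k|)^{1/2-d_k/2}$; then $|a_j|^{d_j/2-2}\max(1,|a_j|)^{1/2-d_j/2}\le 1$ by the same two-case check (the exponent being $-3/2$ when $|a_j|\ge 1$), and similarly for $k$. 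Multiplying, each summand is $\le 1$, so the sum over $C_3$ is $O(1)$.

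The main obstacle is Theorem~\ref{thm:b=0appendix} itself: proving the oscillatory-integral estimate for the $V_i$-integrals with the correct joint dependence on $w$ and on the frequency $a_i\xi_i/\widetilde{\xi}$, uniformly in $f$, and then carrying out the non-absolutely-convergent integration over $v_1,v_2$, where the hypothesis $d_i>2$ is exactly what guarantees convergence. Granting that theorem, the genuinely new content of the corollary is the elementary chain of exponent estimates above.
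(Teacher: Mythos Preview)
Your proposal is correct, and your treatment of the second inequality is a clean elementary argument that the paper omits entirely (the corollary carries only a \qed). For the first inequality, however, you do more work than needed: the paper derives it directly from Theorem~\ref{thm:b=0appendix}, not by re-proving that theorem. Concretely, since $|\chi_Q(a\widetilde\xi)|=1$,
\[
|\Psi_\xi(a,0,f)|\le \{a\}^{d/2-2}\int_{F^2}\left|\int_{V(F)}\psi\!\left(\Big\langle \tfrac{a\xi}{\widetilde\xi},u\Big\rangle+\mathfrak{c}(u,v)\right)f(u)\,du\right|dv,
\]
and Theorem~\ref{thm:b=0appendix} applied with $\xi$ replaced by $a\xi/\widetilde\xi$ gives the asserted sum, because $|a_i\xi_i/\widetilde\xi_i|=|a_i|$ (not merely $\le |a_i|$, since $|\widetilde\xi_i|=|\xi_i|$). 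One small point worth flagging: the substituted vector $a\xi/\widetilde\xi$ need not lie in $Y^{\mathrm{ani}}(F)$, but the proof of Theorem~\ref{thm:b=0appendix} never uses that hypothesis---it only uses the bound \eqref{use:quad:esti} and the limit $c\to 0$ in \eqref{term4}, which are valid for any nonzero $\xi_i$.

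Your alternative sketch toward Theorem~\ref{thm:b=0appendix} (single stationary-phase bound $\max(1,|w|)^{-d_i/2}(1+|\eta_i|\max(1,|w|)^{-1})^{-N}$, then integrate over the hyperplane $w_1+w_2+w_3=0$) is in the right spirit---your pointwise estimate is essentially a repackaging of Lemma~\ref{lem:quad:esti}---but the integration step is the delicate part and your description of it (``decomposing $F^2$ according to which index carries the small $w$-variable'') is too vague to stand as a proof. The paper carries this out via the case analysis of Lemma~\ref{lem:invconv} and the terms \eqref{term1}--\eqref{term4}; the point of isolating \eqref{term4} is precisely that it is the only surviving term as $c\to 0$. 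Since you are proving the corollary, not the theorem, you can simply cite the theorem and skip this.
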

\noindent Here $S_3$ is the permutation group on $\{1,2,3\}$ and $C_3\leq S_3$ is the order-$3$ subgroup generated by the permutation $(123)$.
By Corollary \ref{cor:b=0} and a direct computation, for $1<G(\xi)|z|$ we have
\begin{align}\label{eq:cutoff0}
&\int_{|a|\le (|z|G(\xi))^{2\alpha},|a_{\sigma(1)}|<(|z|G(\xi))^{-2\alpha}}|\Psi_{\xi}(a,0,f)|d a\ll_f (|z|G(\xi))^{-2\alpha(d_{\sigma(1)}/2-1)}
\end{align} 
for all $\sigma\in C_3$. This combined with the following bound \eqref{tres} complete the proof of Proposition \ref{prop:is:finite}.

\begin{Lem}\label{lem:uniformb2}  For $\alpha>0$ sufficiently small, there exist $\frac{1}{2}>\varepsilon>\varepsilon'>0$ such that
\begin{align*}
   &\left|\int_{(|z|G(\xi))^{-2\alpha}\le |a_i|\le (|z|G(\xi))^{2\alpha}}\overline{\psi}(z^2\mathfrak{r}(a^{-1}\widetilde{\xi}))\Psi_\xi(a,0,f)da\right|
   \ll_{\varepsilon,\varepsilon',f}
   \frac{\min(|\xi_1|,|\xi_2|,|\xi_3|)^{2\varepsilon}}{|Q(\xi)|^{\varepsilon}}(|z|G(\xi))^{2(\varepsilon'-\varepsilon)}.
\end{align*}
\end{Lem}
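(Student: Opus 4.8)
The plan is to estimate the oscillatory integral in $a$ over the dyadic-type box $(|z|G(\xi))^{-2\alpha}\le |a_i|\le (|z|G(\xi))^{2\alpha}$ by exploiting the oscillation of the phase $\overline{\psi}(z^2\mathfrak{r}(a^{-1}\widetilde{\xi}))$ in each variable $a_i$ separately. The point is that $\mathfrak{r}$ is a genuinely nondegenerate rational function of $a$: writing $a^{-1}\widetilde{\xi}=(\widetilde{\xi}_1/a_1,\dots)$ and unwinding $\mathfrak{r}$, the map $a_i\mapsto \mathfrak{r}(a^{-1}\widetilde{\xi})$ has nonvanishing second derivative (away from degenerate loci) with a size controlled by the other coordinates and $z$. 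So the strategy is to apply a van der Corput / stationary phase type bound in one of the variables $a_i$ — I expect to use Proposition \ref{prop:vandercorputnonarch} in the nonarchimedean case, and the classical van der Corput lemma in the archimedean case — with the remaining integrals absorbed via the pointwise bounds on $\Psi_\xi(a,0,f)$ furnished by Corollary \ref{cor:b=0} (hence Theorem \ref{thm:b=0appendix}).

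First I would fix attention on a single variable, say $a_1$, freeze $a_2,a_3$, and compute the $a_1$-derivatives of the phase $a_1 \mapsto z^2\mathfrak{r}(a^{-1}\widetilde{\xi})$. From the formula $\mathfrak{r}(a)=(a_1a_2+a_2a_3+a_3a_1)^2/[a]$ one gets, after substituting $a\rightsquigarrow a^{-1}\widetilde{\xi}$, an expression in which the leading behaviour in $a_1$ is a rational function whose second derivative has absolute value comparable to $|z|^2 |\widetilde{\xi}| (\text{monomial in }a,\widetilde{\xi})$ on the range of integration, using that on the box all $|a_i|$ are within $(|z|G(\xi))^{\pm2\alpha}$ of $1$ and $|Q(\xi)/z^2|$ relates $|\widetilde{\xi}|$ to the scales in play via $G(\xi)$. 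Van der Corput then gives a gain of roughly $(|z|^2 |\widetilde{\xi}|)^{-1/2}$ over the trivial bound from Corollary \ref{cor:b=0}, up to the variation bound on $a_1\mapsto \Psi_\xi(a,0,f)$ (its derivative is again controlled by Corollary \ref{cor:absolutebd}/Theorem \ref{thm:b=0appendix}). Interpolating this gain with the trivial estimate $\int |\Psi_\xi(a,0,f)|\,da \ll 1$ (Corollary \ref{cor:b=0}) over a small exponent $\varepsilon$, and tracking how $|\widetilde{\xi}|^{\varepsilon}$, $|\xi_1\otimes\xi_2\otimes\xi_3|^{-\varepsilon}$ and powers of $(|z|G(\xi))$ combine, should produce exactly the claimed right-hand side $\min(|\xi_i|)^{2\varepsilon}(|z|G(\xi))^{2\varepsilon'}|\xi_1\otimes\xi_2\otimes\xi_3|^{-\varepsilon}|z|^{-2\varepsilon}$, with $\varepsilon'$ chosen much smaller than $\varepsilon$ to soak up the $\alpha$-dependent losses (each oscillation estimate costs a factor like $(|z|G(\xi))^{C\alpha}$, harmless once $\alpha$ is small relative to $\varepsilon-\varepsilon'$).

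The main obstacle I anticipate is handling the places where the phase degenerates: the second derivative of $a_1\mapsto \mathfrak{r}(a^{-1}\widetilde{\xi})$ vanishes on a subvariety (roughly where $a_1a_2+a_2a_3+a_3a_1$ or its derivative vanishes), and near that locus van der Corput's second-derivative bound is useless. The remedy is the standard one: split the $a_1$-range into the region where $|\partial_{a_1}^2(\text{phase})|$ is $\gtrsim$ a chosen threshold (where van der Corput applies) and its complement (a thin neighbourhood of the degeneracy locus, whose measure is small and on which one uses the trivial bound from Corollary \ref{cor:b=0}); optimizing the threshold balances the two contributions. One must also verify that on the box the quantity $|\widetilde{\xi}|$ — equivalently the relationship between $\xi$, $z$, and $G(\xi)$ — is such that the resulting exponents genuinely have the advertised sign, which is where the hypotheses $d_i>2$ and the constraint $1/6>\alpha>0$ get used. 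In the nonarchimedean case there is the additional bookkeeping of ramification of $\psi$ and of making the level of $\psi$ explicit enough to invoke Proposition \ref{prop:vandercorputnonarch}; this is routine but must be done carefully so that the implied constants depend only on $\varepsilon,\varepsilon',f$. Finally, one assembles the single-variable gain into the full statement by, if necessary, iterating over the three coordinates (or simply taking the best of the three), which is why the bound naturally features $\min(|\xi_1|,|\xi_2|,|\xi_3|)$.
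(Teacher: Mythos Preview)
Your strategy is in the right neighbourhood but misses the key technical device, and as written it will not deliver the stated bound.

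The paper does \emph{not} work with the second derivative of the phase in a single variable. Instead it uses the specific fifth-order mixed derivative $\partial_{a_1}\partial_{a_2}^2\partial_{a_3}^2$ (and its $S_3$-translates). The point is that, via the identity
\[
\mathfrak{r}(a^{-1}\widetilde{\xi})=\sum_{\sigma\in C_3}\left(\frac{\widetilde{\xi}_{\sigma(2)}\widetilde{\xi}_{\sigma(3)}}{\widetilde{\xi}_{\sigma(1)}}\frac{a_{\sigma(1)}}{a_{\sigma(2)}a_{\sigma(3)}}+\frac{2\widetilde{\xi}_{\sigma(1)}}{a_{\sigma(1)}}\right),
\]
this operator annihilates every monomial except $\dfrac{\widetilde{\xi}_2\widetilde{\xi}_3}{\widetilde{\xi}_1}\dfrac{a_1}{a_2a_3}$, so the mixed derivative is a single nonvanishing term with explicit size $\asymp \dfrac{|\xi_2||\xi_3|}{|\xi_1|}\cdot|a_2a_3|^{-3}$. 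There is no degeneracy locus to excise, and the coefficient $|\xi_2||\xi_3|/|\xi_1|$ is exactly what produces the factor $\min(|\xi_i|)^{2\varepsilon}|\xi_1\otimes\xi_2\otimes\xi_3|^{-\varepsilon}$ after symmetrizing. Your second derivative $\partial_{a_1}^2\mathfrak r(a^{-1}\widetilde\xi)$ is a \emph{sum} of three monomials, which genuinely cancel in the nonarchimedean (and complex) case; the threshold-splitting remedy you propose would have to be uniform in $a_2,a_3,\widetilde\xi,z$, and I do not see how to carry it through. Even in the real case where the second derivative happens to be positive, your bound would come out as $(\max_i|\xi_i|)^{-\varepsilon}|z|^{-2\varepsilon}$, which is \emph{weaker} than the target (compare: $\min_i|\xi_i|^{2\varepsilon}/\prod_i|\xi_i|^{\varepsilon}\le (\max_i|\xi_i|)^{-\varepsilon}$ can fail badly when the $|\xi_i|$ are very unequal).

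There is a second structural difference. Before applying van der Corput, the paper first separates the amplitude from the phase by a three-variable integration by parts (Lemma~\ref{lem:vandercorputarch}): one shows that $\int g(a)\Psi_\xi(a,0,f)\,da$ is controlled by $\sup\bigl|\int g\bigr|$ because the mixed partials of $\Psi_\xi$ in $a$ are absolutely integrable (this is where Corollary~\ref{cor:b=0} is used, via the fundamental theorem of calculus). Only after this reduction does one estimate the pure oscillatory integral $\int\overline\psi(z^2\mathfrak r(a^{-1}\widetilde\xi))\,da$, in the archimedean case by the multivariable van der Corput lemma of Carbery--Christ--Wright, and in the nonarchimedean case by tiling into small boxes, expanding the phase as a convergent power series on each, and invoking Proposition~\ref{prop:vandercorputnonarch} with the same $\partial_{a_1}\partial_{a_2}^2\partial_{a_3}^2$ bound. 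You should rewrite your argument around this mixed-derivative choice; the single-variable route does not reach the lemma as stated.
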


\subsection{Proof of Lemma \ref{lem:uniformb2}}

    Suppose $F=\rr$. We restrict the integral over the domain where all $a_i>0$ as an analogous argument works for the other connected component of $(\rr^\times)^3$. In particular, $\chi_Q(a\tilde{\xi})=1$. We will apply the van der Corput lemma, and for this purpose we first prove the following:
\begin{Lem}\label{lem:vandercorputarch}
  Let $g$ be a bounded continuous function on $\rr_{>0}^3$. For $c>1,$ we have
\begin{align*}
\left|\int_{c^{-1}\le a_i\le c}g(a)\Psi_{\xi}(a,0,f)da\right|
&\ll_f \sup_{c^{-1}<a_i\le c} \left|\int_{c^{-1}}^{a_1}\int_{c^{-1}}^{a_2}\int_{c^{-1}}^{a_3} g(r)dr\right|.
\end{align*}
\end{Lem}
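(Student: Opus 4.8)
The plan is to prove the estimate by a three-fold integration by parts --- the continuous analogue of Abel summation --- exploiting that $\Psi_{\xi}(\,\cdot\,,0,f)$ is smooth on $(0,\infty)^3\supseteq[c^{-1},c]^3$ while using no regularity of $g$ beyond boundedness. Put $G(a):=\int_{c^{-1}}^{a_1}\int_{c^{-1}}^{a_2}\int_{c^{-1}}^{a_3}g(r)\,dr$, so that $\sup_{[c^{-1},c]^3}|G|$ is exactly the right-hand side of the claimed inequality and $G$ vanishes whenever some $a_i=c^{-1}$. The first step is to apply the inclusion--exclusion form of the fundamental theorem of calculus to $\Psi_{\xi}(\,\cdot\,,0,f)$ (legitimate since it lies in $C^\infty$ on $(0,\infty)^3$: differentiate under the integral sign in the definition of $\Psi_{\xi}$, using that $f$ is Schwartz and Lemma \ref{lem:quad:esti}), substitute into $\int_{[c^{-1},c]^3}g\,\Psi_{\xi}$, and interchange integrals by Fubini, to obtain
\[
\int_{c^{-1}\le a_i\le c}g(a)\Psi_{\xi}(a,0,f)\,da=\sum_{S\subseteq\{1,2,3\}}(-1)^{|S|}\int_{\prod_{i\in S}[c^{-1},c]}\big(\partial_S\Psi_{\xi}\big)\big(a^S(t),0,f\big)\,G\big(a^S(t)\big)\,\prod_{i\in S}dt_i,
\]
where $\partial_S=\prod_{i\in S}\partial_{a_i}$ and $a^S(t)\in(0,\infty)^3$ has $i$-th coordinate $t_i$ for $i\in S$ and $c$ otherwise (the terms localised to a face $a_i=c^{-1}$ drop out since $G$ vanishes there). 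Bounding $|G(a^S(t))|\le\sup_{[c^{-1},c]^3}|G|$ and extracting it, the lemma reduces to the \emph{uniform} bound
\[
\sum_{S\subseteq\{1,2,3\}}\int_{\prod_{i\in S}[c^{-1},c]}\big|(\partial_S\Psi_{\xi})(a^S(t),0,f)\big|\,\prod_{i\in S}dt_i\ \ll_f\ 1,
\]
with implied constant independent of $c>1$ and of $\xi\in Y^{\mathrm{ani}}(F)$; up to the corner value $|\Psi_{\xi}(c,c,c,0,f)|$ from $S=\emptyset$, this is the Hardy--Krause variation of $\Psi_{\xi}(\,\cdot\,,0,f)$ on the box.

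To prove this, note that on $(0,\infty)^3$ one may write $\Psi_{\xi}(a,0,f)=\varepsilon_{\xi}\{a\}^{d/2-2}\Phi_{\xi}(a)$, where $\varepsilon_{\xi}$ is the (locally constant, hence constant) value of $\chi_Q(a\widetilde{\xi})$ there and $\Phi_{\xi}(a):=\int_{F^2}\big(\int_{V(F)}\psi(\langle\tfrac{a\xi}{\widetilde{\xi}},u\rangle+\mathfrak{c}(u,v))f(u)\,du\big)dv$. By the product rule $(\partial_S\Psi_{\xi})(a,0,f)$ is a finite sum of terms $\big(\prod_{i\in S\setminus S'}a_i^{-1}\big)\{a\}^{d/2-2}(\partial_{S'}\Phi_{\xi})(a)$ over $S'\subseteq S$, each with a bounded scalar coefficient that vanishes whenever some $i\in S\setminus S'$ has $d_i=4$, so the monomial $a_i^{-1}$ never occurs in a borderline dimension. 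Moreover $(\partial_{S'}\Phi_{\xi})(a)$ equals the defining expression for $\Phi_{\xi}(a)$ with $f$ replaced by $u\mapsto\big(\prod_{i\in S'}\langle\tfrac{\xi_i}{\widetilde{\xi_i}},u_i\rangle_i\big)f(u)$ up to a harmless constant; since $|\xi_i/\widetilde{\xi_i}|=1$, these functions form a subset of $\mathcal{S}(V(F))$ bounded in each Schwartz seminorm uniformly in $\xi$, whence $\{a\}^{d/2-2}|(\partial_{S'}\Phi_{\xi})(a)|$ equals $|\Psi_{\xi}(a,0,\widetilde f\,)|$ for $\widetilde f$ in that bounded family and is therefore $\ll_f$ the right-hand side of Corollary \ref{cor:b=0} (whose proof, like all such estimates, depends on $f$ only through finitely many seminorms). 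Integrating these pointwise bounds, times the monomials $\prod_{i\in S\setminus S'}a_i^{-1}$, over $\prod_{i\in S}(0,\infty)$ yields a finite quantity dominating the display that is independent of $c$: as $a_i\to0$ the integrand is $O(a_i^{d_i/2-2})$, respectively $O(a_i^{d_i/2-3})$ when the factor $a_i^{-1}$ is present (which forces $d_i\ge6$), and as $a_i\to\infty$ the decay $a_i^{1-d_i/2}$ with $d_i>2$ in Corollary \ref{cor:b=0} wins, while the frozen coordinates $a_j=c$ ($j\notin S$) are absorbed using the decay factors $\max(1,|a|)^{1-d_{\sigma(2)}/2-d_{\sigma(3)}/2}$ and $\max(1,|a_{\sigma(1)}|)^{1-d_{\sigma(1)}/2}$ of that bound. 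This establishes the uniform bound, and hence the lemma.

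The step I expect to be most delicate is the last one: while the pointwise estimates for $\partial_S\Psi_{\xi}$ follow quickly from Corollary \ref{cor:b=0} applied to a $\xi$-uniform bounded family of Schwartz functions, checking that the face integrals $\int_{[c^{-1},c]^{|S|}}|\partial_S\Psi_{\xi}|$ are bounded \emph{uniformly in $c$} requires carefully balancing the growth $c^{d_j/2-2}$ from the frozen coordinates $a_j=c$ ($j\notin S$) against the decay supplied by the $\max(1,|a|)$-factors of the Corollary bound --- a short but somewhat fiddly case analysis over $S$, $S'$ and $\sigma\in C_3$. The multivariate integration by parts and the Fubini interchanges, by contrast, are routine given the smoothness of $\Psi_{\xi}(\,\cdot\,,0,f)$ on $(0,\infty)^3$ and the compactness of the box, and (as the statement suggests) need nothing from $g$ beyond continuity.
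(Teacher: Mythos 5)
Your proof is correct and takes essentially the same approach as the paper: a three-fold integration by parts (which you organize in one step as the inclusion--exclusion/Fubini identity) reduces the claim to bounding the corner value and the $2^3-1$ face integrals of $|\partial_S\Psi_\xi|$ uniformly in $c$ and $\xi$, which is exactly the reduction in the paper. The only difference is that the paper compresses the final step to "by the fundamental theorem of calculus and Corollary \ref{cor:b=0}," whereas you spell out the underlying mechanism --- factoring $\Psi_\xi = \varepsilon_\xi\{a\}^{d/2-2}\Phi_\xi$, observing that $a_i$-derivatives of $\Phi_\xi$ amount to replacing $f$ by a $\xi$-uniformly bounded family in $\mathcal{S}(V(F))$, and applying Corollary \ref{cor:b=0} to that family --- which is precisely the detail the paper elides.
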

\begin{proof}

First note that $\Psi_{\xi}(a,0,f)$ is smooth as a function of $a$ by Lemma \ref{lem:quad:esti}. By integration by parts over $a_1,$ we have
\begin{align*}
    &\int_{c^{-1}\le a_i\le c}g(a)\Psi_{\xi}(a,0,f)da\\
    &=\int_{c^{-1}\le a_3\le c}\int_{c^{-1}\le a_2\le c}\left(\int_{c^{-1}}^{c} g(a_1,a_2,a_3)da_1\right) \Psi_{\xi}(c,a_2,a_3,0,f)da_2da_3\\
    &-\int_{c^{-1}\le a_i\le c}\left(\int_{c^{-1}}^{a_1} g(r_1,a_2,a_3)dr_1\right) \partial_{a_1}\Psi_{\xi}(a_1,a_2,a_3,0,f)da_1da_2da_3.
\end{align*}
Applying integration by parts over $a_2$ and $a_3$ to these two integrals, we see the original integral is bounded by 
\begin{align*}
    \sup_{c^{-1}<a_i\le c} \left|\int_{c^{-1}}^{a_i}\int_{c^{-1}}^{a_2}\int_{c^{-1}}^{a_3} g(r)dr\right|
\end{align*}
times the sum of following terms and analogues that can be treated at the same time by symmetry:
    \begin{align*}
        &\left|\Psi_\xi(c,c,c,0,f)\right|, \quad 
    \int_{c^{-1}}^{c}|\partial_{a_{3}}\Psi_\xi(c,c,a_{3},0,f)|da_3, \quad 
        \int_{c^{-1}}^{c}\int_{c^{-1}}^{c}|\partial_{a_{2}}\partial_{a_{3}}\Psi_\xi(c,a_{2},a_{3},0,f)|da_{2}da_{3},\\ &
    \int_{c^{-1}}^{c}\int_{c^{-1}}^{c}\int_{c^{-1}}^{c}|\partial_{a_1}\partial_{a_2}\partial_{a_3}\Psi_\xi(a_1,a_2,a_3,0,f)|da_1da_2da_3.
\end{align*}
These are all bounded by a constant that is continuous in $f$ by Corollary \ref{cor:b=0} and the fact that the following terms 
\begin{align*}
    & \int_{c^{-1}}^c  \{a\}^{d/2-2}
    \sum_{\sigma\in C_3}\max\left(1,a_{\sigma(1)}\right)^{1-d_{\sigma(1)}/2}\max\left(1,|a|\right)^{1-d_{\sigma(2)}/2-d_{\sigma(3)}/2} da_3\Bigg|_{a_1=a_2=c},\\
    & \int_{c^{-1}}^c \int_{c^{-1}}^c  \{a\}^{d/2-2}
    \sum_{\sigma\in C_3}\max\left(1,a_{\sigma(1)}\right)^{1-d_{\sigma(1)}/2}\max\left(1,|a|\right)^{1-d_{\sigma(2)}/2-d_{\sigma(3)}/2} d{a_2}da_3\Bigg|_{a_1=c},\\
    &\int_{c^{-1}}^c \int_{c^{-1}}^c  \int_{c^{-1}}^c  \{a\}^{d/2-2}
    \sum_{\sigma\in C_3}\max\left(1,a_{\sigma(1)}\right)^{1-d_{\sigma(1)}/2}\max\left(1,|a|\right)^{1-d_{\sigma(2)}/2-d_{\sigma(3)}/2} d{a_1}d{a_2}da_3
\end{align*}
are bounded by a constant independent of $c$.

\end{proof}

By Lemma \ref{lem:vandercorputarch}, the integral in the statement of Lemma \ref{lem:uniformb2} is bounded in absolute value by $O_f(1)$ times
\begin{align}\label{eq:intbypart}
    \sup \left|\int_{(|z|G(\xi))^{-2\alpha}}^{r_1}\int_{(|z|G(\xi))^{-2\alpha}}^{r_2}\int_{(|z|G(\xi))^{-2\alpha}}^{r_3} \overline{\psi}(z^2\mathfrak{r}(a^{-1}\widetilde{\xi}))da\right|,
\end{align}
where the supremum is taken over $\{r_i:{(|z|G(\xi))^{-2\alpha}\le r_i\le (|z|G(\xi))^{2\alpha}}\}$. Changing variables, this is equal to the supremum over $\{r_i:{(|z|G(\xi))^{-2\alpha}\le r_i\le (|z|G(\xi))^{2\alpha}}\}$ of 
\begin{align}\label{eq:intbypart2}
\left|\int_{[0,1]^3} \overline{\psi}\left(z^2\mathfrak{r}\left(\frac{\widetilde{\xi}}{
(r-(|z|G(\xi))^{-2\alpha})a+(|z|G(\xi))^{-2\alpha}}\right)\right)da\right|\prod_{i=1}^3 \left(r_i-(|z|G(\xi))^{-2\alpha}\right).
\end{align}
Here $ (r-(|z|G(\xi))^{-2\alpha})a+(|z|G(\xi))^{-2\alpha}$ is shorthand for the vector
\begin{align*}
   \left((r_i-(|z|G(\xi))^{-2\alpha})a_i+(|z|G(\xi))^{-2\alpha}\right) \in (\rr^\times)^3.
\end{align*}

Note that
\begin{align} \label{q2:id}
    \mathfrak{r}(a^{-1}\widetilde{\xi})=\sum_{\sigma\in C_3}\left(\frac{\widetilde{\xi}_{\sigma(2)}\widetilde{\xi}_{\sigma(3)}}{\widetilde{\xi}_{\sigma(1)}}\frac{a_{\sigma(1)}}{a_{\sigma(2)}a_{\sigma(3)}}+\frac{2\widetilde{\xi}_{\sigma(1)}}{a_{\sigma(1)}}\right).
\end{align}
Therefore, 
\begin{align} \label{der:comp} \begin{split}
    &\partial_{a_1}\partial_{a_2}^2\partial_{a_3}^2\mathfrak{r}\left(\frac{\widetilde{\xi}}{
(r-(|z|G(\xi))^{-2\alpha})a+(|z|G(\xi))^{-2\alpha}}\right)\\
    &=4\frac{\widetilde{\xi}_{2}\widetilde{\xi}_{3}}{\widetilde{\xi}_{1}}(r_1-(|z|G(\xi))^{-2\alpha})\prod_{i=2}^3\frac{(r_i-(|z|G(\xi))^{-2\alpha})^2}{\left((r_i-(|z|G(\xi))^{-2\alpha})a_i+(|z|G(\xi))^{-2\alpha}\right)^3}\\
    & \ge 4\frac{\widetilde{\xi}_{2}\widetilde{\xi}_{3}}{\widetilde{\xi}_{1}}(r_1-(|z|G(\xi))^{-2\alpha})\prod_{i=2}^3\frac{(r_i-(|z|G(\xi))^{-2\alpha})^2}{r_i^3} \end{split}
\end{align}
if $a_2,a_3\le 1$. Thus, by the van der Corput Lemma \cite[Theorem 1.4]{CCW}, \eqref{eq:intbypart2} is dominated by
\begin{align} \label{after:vdC}
    |z|^{-2\varepsilon}\left(4\frac{\widetilde{\xi}_{2}\widetilde{\xi}_{3}}{\widetilde{\xi}_{1}}(r_1-(|z|G(\xi))^{-2\alpha})\prod_{i=2}^3\frac{(r_i-(|z|G(\xi))^{-2\alpha})^2}{r_i^3}\right)^{-\varepsilon}\prod_{i=1}^3 \left(r_i-(|z|G(\xi))^{-2\alpha}\right)
\end{align}
for some $1/2>\varepsilon>0$. Therefore,
the supremum over $\{r_i:{(|z|G(\xi))^{-2\alpha}\le r_i\le (|z|G(\xi))^{2\alpha}}\}$ of \eqref{after:vdC} is dominated by
\begin{align}\label{eq:vander}
    |z|^{-2\varepsilon}(|z|G(\xi))^{2\alpha(3+\varepsilon)}|\xi_1|^{\varepsilon}|\xi_2|^{-\varepsilon}|\xi_3|^{-\varepsilon}.
\end{align} 
Thus the same bound holds for \eqref{eq:intbypart}, and we deduce the lemma by symmetry.

 Suppose $F=\cc$. Taking a change of variables in $z,u,v$ and replacing $f$ by another Schwartz function if necessary, we may assume $\psi(t)=\psi_\rr(2\mathrm{Re}(t)):=e^{-4\pi i\mathrm{Re}(t)}$. Write $z=|z|^{1/2}e^{i\phi}$. Using polar coordinates, we are to study
 \begin{align} \label{s} \begin{split}
     &\left|\int_{0\le \theta_i\le 2\pi }\int_{(|z|G(\xi))^{-\alpha}\le s_i\le (|z|G(\xi))^{\alpha}}\overline{\psi}(|z|e^{i2\phi}\mathfrak{r}(s^{-1}e^{-i\theta}\widetilde{\xi}))\Psi_\xi(se^{i\theta},0,f)[s]dsd\theta\right|\\
     &\le 
     \int_{0\le \theta_i\le 2\pi } \left|\int_{(|z|G(\xi))^{-\alpha}\le s_i\le (|z|G(\xi))^{\alpha}}\overline{\psi}(|z|e^{i2\phi}\mathfrak{r}(s^{-1}e^{-i\theta}\widetilde{\xi}))\Psi_\xi(se^{i\theta},0,f)[s]ds\right|d\theta.
     \end{split}
 \end{align}
 Now for a fixed $\theta$ and $\phi,$ we have 
\begin{align*}
    &\overline{\psi}(|z|e^{i2\phi}\mathfrak{r}(s^{-1}e^{-i\theta}\widetilde{\xi}))\\
    &=\overline{\psi}_\rr\left(2 |z|\sum_{\sigma\in C_3}\left(\frac{\widetilde{\xi}_{\sigma(2)}\widetilde{\xi}_{\sigma(3)}}{\widetilde{\xi}_{\sigma(1)}}\frac{s_{\sigma(1)}}{s_{\sigma(2)}s_{\sigma(3)}}\cos(2\phi +\theta_{\sigma(1)}-\theta_{\sigma(2)}-\theta_{\sigma(3)})+\frac{2\widetilde{\xi}_{\sigma(1)}}{s_{\sigma(1)}}\cos(2\phi-\theta_{\sigma(1)})\right)\right).
\end{align*}
We observe that Corollary \ref{cor:b=0} implies that $|\Psi_\xi(se^{i\theta},0,f)[s]| \ll_f 1$.  Hence arguing as in Lemma \ref{lem:vandercorputarch}, we have the absolute value of the inner integral over $s$ in \eqref{s} is bounded by a constant (continuous in $f$) times the supremum over $\{r_i:{(|z|G(\xi))^{-\alpha}\le r_i\le (|z|G(\xi))^{\alpha}}\}$ of 
\begin{align*}
    \left|\int \overline{\psi}_\rr\left(2 |z|\sum_{\sigma\in C_3}\left(\frac{\widetilde{\xi}_{\sigma(2)}\widetilde{\xi}_{\sigma(3)}}{\widetilde{\xi}_{\sigma(1)}}\frac{s_{\sigma(1)}}{s_{\sigma(2)}s_{\sigma(3)}}\cos(2\phi +\theta_{\sigma(1)}-\theta_{\sigma(2)}-\theta_{\sigma(3)})+\frac{2\widetilde{\xi}_{\sigma(1)}}{s_{\sigma(1)}}\cos(2\phi-\theta_{\sigma(1)})\right)\right)ds\right|
\end{align*}
where the integral is taken over $(|z|G(\xi))^{-\alpha}\le s_i\le r_i$. Arguing as in the real case, the supremum is bounded by
\begin{align*}
    |z|^{-\varepsilon}|\cos(2\phi +\theta_{1}-\theta_{2}-\theta_{3})|^{-\varepsilon}(|z|G(\xi))^{\alpha(3+\varepsilon)}|\xi_1|^{\varepsilon/2}|\xi_2|^{-\varepsilon/2}|\xi_3|^{-\varepsilon/2}.
\end{align*}
Using the fact that $|\cos|^{-\varepsilon}(x)$ is locally integrable for $1>\varepsilon,$ the desired bound follows by symmetry.  This completes the proof of Lemma \ref{lem:uniformb2} in the archimedean case.

For the nonarchimedean case, we will apply     
 the following van der Corput Lemma:
\begin{Prop}\label{prop:vandercorputnonarch}
 Let $F$ be a nonarchimedean local field of characteristic $0$. For any nonzero multi-index $\beta\in \zz^n_{\ge 0},$ there exists $\varepsilon>0,$ $N\in \zz_{>0},$ that depend on  $\psi,n,\beta,$ such that for any formal power series $f(x)\in F[[x_1,\ldots,x_n]]$ that converges on $\mathcal{O}_F^n$ and satisfies
 \begin{align*}
     |\partial^\beta f (x)|\ge 1 \textrm{ for all } x\in \mathcal{O}_F^n,
 \end{align*}
 we have 
  \begin{align*}
     \left|\int_{\mathcal{O}_F^n} 
     \psi(yf(x))dx\right|\ll_{\psi,n,\beta,F} \max(1,\norm{f-f(0)})^{N}|y|^{-\varepsilon}
 \end{align*}
 for all $y\in F$. Here $\norm{f}$ is the supremum of the norms of the coefficients of $f$.
\end{Prop}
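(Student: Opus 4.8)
\textbf{Proof proposal for Proposition \ref{prop:vandercorputnonarch}.}

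The plan is to reduce to the one-variable case by a Fubini argument and then prove the $n=1$ statement by a combination of scaling and stationary phase (Hensel-type) analysis. First I would handle the reduction: after relabeling the variables, the multi-index $\beta$ has a nonzero component in some coordinate, say $\beta_1\geq 1$; by Fubini I would integrate over $x_1\in\mathcal{O}_F$ first, treating $x'=(x_2,\dots,x_n)$ as parameters, and I would want a bound on $\bigl|\int_{\mathcal{O}_F}\psi(yf(x_1,x'))dx_1\bigr|$ that is uniform in $x'$ and then integrate the trivial bound $1$ in $x'$. This requires a one-variable van der Corput estimate that is uniform over a family of power series; concretely, I would prove: if $g(t)\in F[[t]]$ converges on $\mathcal{O}_F$ and $|\partial^{b}g(t)|\geq 1$ for all $t\in\mathcal{O}_F$ (with $b\geq 1$ fixed), then $\bigl|\int_{\mathcal{O}_F}\psi(yg(t))dt\bigr|\ll_{\psi,b,F}\max(1,\|g-g(0)\|)^N|y|^{-\varepsilon}$ with $\varepsilon,N$ depending only on $\psi,b,F$. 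Applying this with $g(t)=f(t,x')$ and using that $\partial^{b}_t g$ has absolute value $\geq 1$ on $\mathcal{O}_F^{\,}$ (which follows from $|\partial^\beta f|\ge 1$ by writing $\beta=b e_1+\beta'$ and noting $\partial^\beta f=\partial_t^b(\partial^{\beta'}f)$, so if $\|g-g(0)\|<1$ is small the hypothesis would force a contradiction — more precisely one takes $b=\beta_1$ and observes $\partial_t^{\beta_1}f(t,x')$ has a coefficient of norm $\geq 1$), and that $\|g-g(0)\|\leq\|f-f(0)\|$, gives the result with the same $\varepsilon$ and $N$ multiplied by $n$ or so.

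For the one-variable estimate I would argue as follows. Write $g(t)=g(0)+\sum_{j\ge 1}c_j t^j$; the phase only depends on $g-g(0)$, so assume $g(0)=0$. Let $R:=\max(1,\|g\|)=\max(1,\max_j|c_j|)$. The hypothesis $|\partial^b g(t)|\geq 1$ on $\mathcal{O}_F$ means $|b!\,c_b+\text{higher order terms}|\geq 1$ on $\mathcal{O}_F$, so in particular (evaluating the relevant Taylor coefficient structure, using that $F$ has characteristic $0$ so $b!\ne 0$) one gets a lower bound on some $|c_j|$ with $j\leq $ a bounded quantity; the cleanest route is to note that $|\partial^b g(0)|=|b!\,c_b|\geq 1$ is \emph{not} guaranteed, so instead I would use the standard fact that for a power series with $|\partial^b g|\geq 1$ on $\mathcal{O}_F$ one can partition $\mathcal{O}_F$ into finitely many (boundedly many, depending on $b,F$) cosets of $\varpi^m\mathcal{O}_F$, on each of which $g$ restricted and rescaled has a coefficient of norm comparable to $1$ in degree $\le b$. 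On each such piece one then has a genuine oscillatory integral $\int_{\mathcal{O}_F}\psi(y h(s))ds$ with $h$ having a "dominant" low-degree term, for which the classical nonarchimedean stationary phase bound $\ll |y|^{-1/b}$ (for $|y|$ large) holds, with the transition threshold and the implied constant controlled polynomially by $R$. Summing the boundedly many pieces and absorbing the threshold behavior for small $|y|$ into the $\max(1,R)^N$ factor gives the claim.

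The main obstacle I expect is making the one-variable bound genuinely \emph{uniform} in the coefficients, i.e. tracking how $\varepsilon$ can be kept fixed while $N$ and the implied constant grow only polynomially in $R=\max(1,\|g\|)$. The subtle point is that a power series can have $|\partial^b g|\geq 1$ on all of $\mathcal{O}_F$ while its individual coefficients up to degree $b$ are tiny (the lower bound coming from cancellation-free high-degree behavior is impossible, but near-cancellations among low-degree terms after restriction to subcosets must be controlled); handling this requires a careful Newton-polygon / Weierstrass-preparation argument on each subcoset, showing the number of subcosets needed and the rescaling exponents are bounded in terms of $b$ and the residue characteristic only. Once that combinatorial bookkeeping is in place, the oscillatory estimate on each piece is the standard one (e.g. \cite[Theorem 1.4]{CCW} or a direct geometric-series computation), and the reduction from $n$ variables to one variable is routine Fubini as above. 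A secondary technical point is convergence: since $f$ converges on $\mathcal{O}_F^n$, its coefficients tend to $0$, so $\|f\|<\infty$ and all the manipulations (termwise integration, rescaling) are justified; I would state this convergence bookkeeping once at the start and not belabor it.
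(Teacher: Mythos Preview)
The paper does not give an independent proof: it simply cites \cite[Propositions 3.3 and 3.6]{Cluckers} and their proofs. Your proposal is therefore much more ambitious, attempting a direct argument.

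There is a genuine gap in your reduction step. You claim that fixing $x'=(x_2,\dots,x_n)$ and setting $g(t)=f(t,x')$ yields $|\partial_t^{\beta_1}g|\ge 1$ on $\mathcal{O}_F$, but this is false when $\beta$ has more than one nonzero component. The hypothesis $|\partial^\beta f|\ge 1$ says $|\partial_{x_1}^{\beta_1}(\partial_{x'}^{\beta'}f)|\ge 1$, which is a lower bound on derivatives of $\partial_{x'}^{\beta'}f$, not of $f$. Concretely, take $n=2$, $\beta=(1,1)$, $f(x_1,x_2)=x_1x_2$: then $\partial^\beta f\equiv 1$, but $g(t)=tx_2$ has $g'(t)=x_2$, which vanishes on the slice $x_2=0$. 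On that slice the inner integral is constant in $y$, so no decay. The full integral $\int_{\mathcal{O}_F^2}\psi(yx_1x_2)\,dx$ does decay, but only because the set of bad $x_2$ (where $|x_2|$ is small) has small measure; capturing this requires a sublevel-set estimate interleaved with the oscillatory bound, not a bare Fubini. This is exactly the mechanism behind the archimedean result in \cite{CCW} and the nonarchimedean result in Cluckers, and it is not bypassed by your reduction.

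Your one-variable sketch (partition into cosets, Weierstrass preparation/Newton polygon to expose a dominant low-degree term, then stationary phase) is the right shape, and you correctly flag uniformity in $\|g\|$ as the crux. But even granting the $n=1$ case, you would still need an induction that handles the bad-slice phenomenon above---for instance, bounding the measure of $\{x':|\partial_{x_1}^{\beta_1}f(\cdot,x')|\text{ is small on a large set}\}$ via the $(n-1)$-variable hypothesis on $\partial_{x_1}^{\beta_1}f$. That is doable (it is essentially how Cluckers proceeds, using cell decomposition in place of ad hoc coset partitions), but it is not in your write-up.
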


\begin{proof}
    This is a consequence of  \cite[Proposition 3.3, Proposition 3.6]{Cluckers} and their proofs.
\end{proof}

We now complete the proof of Lemma \ref{lem:uniformb2} in the nonarchimedean case.  
 Choose $n\in \zz_{>0}$ such that $(1+\varpi^n\mathcal{O}_F)^3<\mathrm{ker}\chi_Q$. For $r:=(r_1,r_2,r_3)\in (F^{\times})^3$ and $\ell \in \zz_{>0},$ let 
\begin{align*}
    B_{r,\ell}:=\prod_{i=1}^3\left(r_i+\varpi^{\ell}\mathcal{O}_F\right).
\end{align*}
 Choose $m\ge n>0$ such that $\psi\left(\varpi^m\left\langle \frac{\xi}{\widetilde{\xi}},u\right\rangle\right)=1$ for all $\xi\in Y^{\mathrm{ani}}(F)$ and $u\in\mathrm{supp}(f)$. Let $\ell$ be any integer such that $|\varpi^\ell|\le  |\varpi^{m}|(|z|G(\xi))^{-2\alpha}$. Note that for $|a_i|\ge (|z|G(\xi))^{-2\alpha}$ and $u\in \varpi^{\ell}\mathcal{O}_F^3,$ we have $|u/a|\le |\varpi^m|\le |\varpi^n|,$ and thus
 $$\chi_Q((a+u)\widetilde{\xi})=\chi_Q(a\widetilde{\xi}+u\widetilde{\xi})=\chi_Q(a\widetilde{\xi}).$$
 
    Then we can write
\begin{align*}
    &\int_{(|z|G(\xi))^{-2\alpha}\le |a_i|\le (|z|G(\xi))^{2\alpha}}\overline{\psi}(z^2\mathfrak{r}(a^{-1}\widetilde{\xi}))\Psi_\xi(a,0,f)da\\
    &=\sum_{r} \Psi_\xi(r,0,f)\int_{B_{r,\ell}} \overline{\psi}(z^2\mathfrak{r}(a^{-1}\widetilde{\xi}))da\\
    &=\sum_{r} \Psi_\xi(r,0,f)|\varpi^\ell |^3\int_{\mathcal{O}_F^3} \overline{\psi}(z^2\varpi^{-\ell}\mathfrak{r}(( a+r/\varpi^{\ell})^{-1}\widetilde{\xi}))da
\end{align*}
where $r$ runs though a set of  representatives of  $(|z|G(\xi))^{-2\alpha}\le |r_i|\le (|z|G(\xi))^{2\alpha}$ modulo $\varpi^{\ell}\mathcal{O}_F^3$. 
In particular, $|r_i|>|\varpi^{\ell}|$ for all $i$. Therefore, for $a\in\mathcal{O}_F^3$
\begin{align*}
    \frac{1}{a_i+r_i/\varpi^{\ell}}=\sum_{n=0}^\infty (-1)^n\left(\frac{\varpi^{\ell}}{r_i}\right)^{n+1}  a_i^{n}.
\end{align*}
With notation as in Proposition \ref{prop:vandercorputnonarch}, using \eqref{q2:id} one has    
\begin{align*}
\begin{split}
    &\norm{\mathfrak{r}(( a+r/\varpi^{\ell})^{-1}\widetilde{\xi})-\mathfrak{r}((r/\varpi^{\ell})^{-1}\widetilde{\xi})}\\
    &\leq \max_{\sigma\in S_3}\left( \frac{|\xi_{\sigma(2)}||\xi_{\sigma(3)}|}{|\xi_{\sigma(1)}|}\frac{|r_{\sigma(1)}\varpi^{2\ell}|}{|r_{\sigma(2)}r_{\sigma(3)}^2|}, \frac{|\xi_{\sigma(2)}||\xi_{\sigma(3)}|}{|\xi_{\sigma(1)}|}\frac{|\varpi^{\ell}|^{2}}{|r_{\sigma(2)}r_{\sigma(3)}|}, \frac{|\xi_{\sigma(1)}||\varpi^\ell|^{2}}{|r_{\sigma(1)}|^{2}}\right)\\
    &\leq \left(\max_{\sigma\in C_3} \frac{|\xi_{\sigma(2)}||\xi_{\sigma(3)}|}{|\xi_{\sigma(1)}|}\right) \frac{|r|}{|\varpi^\ell|}\\&\le \left(\max_{\sigma\in C_3} \frac{|\xi_{\sigma(2)}||\xi_{\sigma(3)}|}{|\xi_{\sigma(1)}|}\right) \frac{(|z|G(\xi))^{2\alpha}}{|\varpi^\ell|}.
    \end{split}
\end{align*}
Moreover, for all $a\in \mathcal{O}_F^3,$ we have
\begin{align*}
    \left|\partial_{a_1}\partial_{a_2}^2\partial_{a_3}^2\mathfrak{r}(( a+r/\varpi^{\ell})^{-1}\widetilde{\xi})\right| = |4|\frac{|\xi_2||\xi_3|}{|\xi_1|}\frac{|\varpi^\ell|^6}{
    |r_2r_3|^{3}}\ge |4|\frac{|\xi_2||\xi_3|}{|\xi_1|}(|z|G(\xi))^{-12\alpha}|\varpi^{\ell}|^6.
 \end{align*}
Consequently, by symmetry and Proposition \ref{prop:vandercorputnonarch}, we obtain
\begin{align}\label{eq:nonarchbd}
\begin{split}
    \left|\int_{\mathcal{O}_F^3} \overline{\psi}(z^2\varpi^{-\ell}\mathfrak{r}(( a+r/\varpi^{\ell})^{-1}\widetilde{\xi}))da\right|
    &\ll \left(\max_{\sigma\in C_3} \frac{|\xi_{\sigma(2)}||\xi_{\sigma(3)}|}{|\xi_{\sigma(1)}|}\right)^{-\varepsilon}\frac{(|z|G(\xi))^{14\alpha N+12\alpha\varepsilon}}{|z|^{2\varepsilon}|\varpi^\ell|^{7N+5\varepsilon}}.
\end{split}
\end{align}
Now assume $\ell$ is chosen so that $|\varpi^\ell|\ge q^{-1}|\varpi^{m}|(|z|G(\xi))^{-2\alpha}$. Since $|\Psi_{\xi}(r,0,f)|\ll_f 1$ by Corollary \ref{cor:b=0}, using \eqref{eq:nonarchbd} we obtain
\begin{align*}
    &\left|\sum_{r} \Psi_\xi(r,0,f)|\varpi^\ell|^3\int_{\mathcal{O}_F^3} \overline{\psi}(z^2\varpi^{-\ell}\mathfrak{r}(( a+r/\varpi^{\ell})^{-1}\widetilde{\xi}))da\right|\\
    &\ll_f  |z|^{-2\varepsilon}\left(\max_{\sigma\in C_3} \frac{|\xi_{\sigma(2)}||\xi_{\sigma(3)}|}{|\xi_{\sigma(1)}|}\right)^{-\varepsilon}(|z|G(\xi))^{-6\alpha+14\alpha N+12\alpha\varepsilon-2\alpha(-7N-5\varepsilon)}.
\end{align*}
We thus obtain Lemma \ref{lem:uniformb2} in the nonarchimedean case by choosing $\alpha$ small.\qed

\subsection{Preliminary truncation}\label{ssec:truncation}
Let $f\in\mathcal{S}(V(F))$. For $\xi \in Y^{\mathrm{ani}}(F)$ and $W \in \mathcal{S}(x_0 \SL_2^3(F)),$ recall 
\begin{align}\label{truncatenew}\begin{split}
f_{\xi,W}:x_0\SL_2^3(F) &\lto \cc\\
x_0g &\longmapsto W(x_0g)\rho\left(g \right)f(\xi), 
\end{split}
\end{align} 
and $
f_{\xi,W} \in \mathcal{S}(x_0\SL_2^3(F))<\mathcal{S}(X(F)).$  Our goal in this section is to prove the formula 
\eqref{FT:formula0} below for $\mathcal{F}_X(f_{\xi,W})(-x_0)$.

Applying Corollary \ref{cor:extend}, for $x\in X^\circ(F)$ we have that $\mathcal{F}_X(f_{\xi,W})(x)$ equals
\begin{align} \label{start0}  \begin{split}
\int_{F^\times}\psi(z^{-1})|z|^2 \Bigg(\int_{N_0(F) \backslash \SL_2^3(F)}  \overline{\psi}\left(z^{2} \mathrm{Pl}(x_0g)\wedge \mathrm{Pl}\left(x\right)\right) W(x_0g)\rho\left(g\right)f(\xi)d\dot{g}\Bigg) \frac{d^\times z}{\zeta(1)}. \end{split}
\end{align}
Here we have identified $\wedge^6 \mathbb{G}_a^6\tilde{\lto} \mathbb{G}_a$ via $e_1\wedge\cdots \wedge e_6\mapsto 1$ (see \eqref{Sppairing}).  The measure $d\dot{g}$ is normalized so that it coincides with the measure $dx$ in Corollary \ref{cor:extend} restricted to $x_0\SL_2^3(F)$  (see \eqref{eq:uptoscalar}).

Since we are studying $\mathcal{F}_{X}(f_{\xi,W})$ in a neighborhood of $-x_0$ in $-x_0\SL_2^3(F),$ we need only consider 
$$
x=-x_0g'
$$
for  $g' \in \SL_2^3(F)$ sufficiently close to $1$.  In this case, after a change of variables $g \mapsto gg'$ in \eqref{start0}, we have
\begin{align} \label{start}  \begin{split}
\mathcal{F}_X(f_{\xi,W})(x)&=\mathcal{F}_X((\rho(g')f)_{\xi,R(g')W})(-x_0), \end{split}
\end{align}
where $R(g')W(x_0g):=W(x_0gg')$.  This allows us to focus on computing $\mathcal{F}_X(f_{\xi,W})(-x_0)$ as long as we are able to control  its behavior as a function of $W$ and $f$.  

    Let 
$
    w:=\begin{psmatrix} 0 & 1 \\ -1 & 0 \end{psmatrix}\in \SL_2(F),
$
and denote again by $w$ the image of $w$ under the diagonal embedding $\SL_2(F) \to \SL_2^3(F)$. Let
$$
\Delta:F \lto F^3
$$
be the diagonal embedding. Let $B_2 \leq \SL_2$ be the Borel subgroup of upper triangular matrices and let $N_2 \leq B_2$ be its unipotent radical. By Bruhat decomposition, $x_0B_2^3(F)wN_2^3(F)$ is open in $\SL_2^3(F)$ with full measure. Therefore, there is a constant $k>0$ (independent of $\psi$) such that 
\begin{align} \label{eq:absoluteconst}
k|a_1a_2a_3|^2dtd^\times adb_0 =d\dot{\left(\begin{psmatrix} 1 & \Delta(t) \\ & 1 \end{psmatrix}\begin{psmatrix} a^{-1} & \\ & a\end{psmatrix}w \begin{psmatrix} 1 &   b_0\\ & 1 \end{psmatrix}\right)} \quad \textrm{ for } t\in F, a\in (F^\times)^3, b_0\in F^3.
\end{align}
Therefore, we have
\begin{align}\label{start1}
\begin{split}
    &\mathcal{F}_X(f_{\xi,W})(-x_0)\\
    &=\frac{k}{\zeta(1)}\int_{F^\times} \psi(z^{-1})|z|^2\Bigg(\int_{F \times (F^\times)^3 \times F^3} \overline{\psi}\left(z^{2} \mathrm{Pl}\left(x_0\begin{psmatrix} 1 & \Delta(t) \\ & 1 \end{psmatrix}\begin{psmatrix} a^{-1} & \\ & a\end{psmatrix}w \begin{psmatrix} 1 &   b_0\\ & 1 \end{psmatrix}\right)\wedge\mathrm{Pl}(-x_0)\right)\\& \times W\left(x_0\begin{psmatrix} 1 & \Delta(t) \\ & 1 \end{psmatrix}\begin{psmatrix} a^{-1} & \\ & a\end{psmatrix}w \begin{psmatrix} 1 &   b_0\\ & 1 \end{psmatrix}\right)
\rho\left(\begin{psmatrix} 1 & \Delta(t) \\ & 1 \end{psmatrix}\begin{psmatrix} a^{-1} & \\ & a\end{psmatrix} w\begin{psmatrix} 1&b_0 \\ & 1 \end{psmatrix} \right)f(\xi)\{a\}^2 dt d^\times a db_0 \Bigg)d^\times z.
\end{split}
\end{align}
As we will always restrict $W$ to the open Bruhat cell, we identify $W$ with a smooth function on $F\times (F^\times)^3\times F$ by writing 
\begin{align}\label{tab}
    W(t,a,b_0):= W\left(x_0\begin{psmatrix} 1 & \Delta(t) \\ & 1 \end{psmatrix}\begin{psmatrix} a^{-1} & \\ & a\end{psmatrix}w \begin{psmatrix} 1 &   b_0\\ & 1 \end{psmatrix}\right)
\end{align}
for $(t,a,b_0)\in F \times (F^\times)^3 \times F^3$.

By the formula for the Weil representation (see \cite[\S 3.1]{Getz:Liu:Triple} and the references therein), one has 
\begin{align} \label{Weil} \begin{split}
\rho&\left( \begin{psmatrix} 1 & \Delta(t) \\ & 1 \end{psmatrix}\begin{psmatrix} a^{-1} & \\ & a \end{psmatrix}w \begin{psmatrix} 1 & b_0\\ & 1 \end{psmatrix} \right)f(\xi)\\&=\frac{\gamma(Q)\chi_Q(a)}{\{a\}^{d/2}} \psi(tQ(\xi))
\int_{V(F)}\psi\left(\left\langle \frac{\xi}{a},u \right\rangle+\sum_{i=1}^3b_iQ_i(u)\right)f(u)du, \end{split}
\end{align}
where $b_0=(b_1, b_2, b_3).$ Here the Haar measure $du$ on $V(F)$ is normalized to be self-dual with respect to the pairing $\langle \cdot,\cdot\rangle$ and $\psi$.
On the other hand, we have
\begin{align*}
   & \mathrm{Pl}\left(x_0\begin{psmatrix} 1 & \Delta(t) \\ & 1 \end{psmatrix}\begin{psmatrix} a^{-1} & \\ & a \end{psmatrix}w \begin{psmatrix} 1 & b_0\\ & 1\end{psmatrix} \right)\wedge\mathrm{Pl}(-x_0) \\
   &=-\mathrm{Pl}\left(x_0\begin{psmatrix} 1 & \Delta(t) \\ & 1 \end{psmatrix}\begin{psmatrix} a^{-1} & \\ & a\end{psmatrix}w \begin{psmatrix} 1 & b_0\\ & 1\end{psmatrix} \gamma_0^{-1}\right)\wedge\mathrm{Pl}(I_6) \\
   &=-\mathrm{Pl}\left(x_0\begin{psmatrix} -a\Delta(t) & a^{-1}-\Delta(t)ab_0\\-a & -ab_0 \end{psmatrix} \gamma_0^{-1}\right)\wedge e_4 \wedge e_5 \wedge e_6.
\end{align*}
where $\gamma_0$ is defined as in \eqref{gamma0}.  
  Under the identification $e_1\wedge\cdots \wedge e_6\mapsto 1,$ this is
\begin{align*}
&-\det \begin{psmatrix}t\sum_{i=1}^3 a_ib_i -\sum_{i=1}^3a_i^{-1} & t\left(a_1-a_2\right)
     & t\left(a_1-a_3\right)
     \\
    a_2b_2-a_1b_1 &-a_1-a_2 & -a_1
    \\a_3b_3-a_1b_1& -a_1 &-a_1-a_3\end{psmatrix}=-3ta_1a_2a_3\sum_{i=1}^3b_i+\mathfrak{r}(a),
\end{align*}
where $\mathfrak{r}$ is defined as in \eqref{q2}.
 Combining this with \eqref{start1}, \eqref{tab} and \eqref{Weil}, we obtain
\begin{align*}
\begin{split}
\mathcal{F}_X&(f_{\xi,W})(-x_0)\\
&=\frac{\gamma(Q)k}{\zeta(1)}\int_{F^\times}\psi(z^{-1})|z|^2\Bigg( \int_{F\times (F^\times \times F)^3}  \psi\Bigg(3z^{2}t[a]\sum_{i=1}^3b_i-z^2\mathfrak{r}(a)+tQ(\xi)\Bigg)
      \\& \times 
\left(\int_{V(F)}\psi\left(\left\langle \frac{\xi}{a},u \right\rangle +\sum_{i=1}^3b_iQ_i(u_i)\right)f(u)du\right) W\left(t,a,b_0\right)\frac{\chi_Q(a)d^\times a}{\{a\}^{d/2-2}} db_0 dt\Bigg)d^\times z . \end{split}
\end{align*}

Changing variables $b_0=(b_1,b_2,b_3)\mapsto (b+v_1,b+v_2,b-v_1-v_2),$ this becomes
\begin{align*}
\frac{\gamma(Q)k|3|}{\zeta(1)}&\int_{F^\times}\psi(z^{-1})|z|^2\Bigg( \int_{F^2\times (F^\times)^3\times F^2}  \psi(9z^2t[a]b-z^2\mathfrak{r}(a)+tQ(\xi))
      \\& \times 
\left(\int_{V(F)}\psi\left(\left\langle \frac{\xi}{a},u \right\rangle +bQ(u)+\mathfrak{c}(u,v)\right)f(u)du\right)\\
&\times W\left(t,a,\Delta(b)+v'\right)
dv\frac{\chi_Q(a)d^\times a}{\{a\}^{d/2-2}}    dbdt\Bigg)d^\times z.
\end{align*}
Here we have used the notation in \eqref{cuv} and 
\begin{align} \label{v'}
    v':=(v_1,v_2,-v_1-v_2).
\end{align}
 Taking a change of variables $b\mapsto \frac{b}{9z^2[a]},$ we arrive at
 \begin{align} \label{FT:formula0} \begin{split}
\frac{\gamma(Q)k}{|3|\zeta(1)}&
\int_{F^\times}\psi(z^{-1})\Bigg( \int_{F^2\times (F^\times)^3\times F^2} \psi\left(tQ(\xi)+tb\right)  \overline{\psi}(z^2\mathfrak{r}(a))
      \\& \times
\left(\int_{V(F)}\psi\left(\left\langle \frac{\xi}{a},u \right\rangle +\frac{bQ(u)}{9z^2[a]}+\mathfrak{c}(u,v)\right)f(u)du\right)\\& \times W\left(t, a, \Delta\left(\frac{b}{9z^2[a]}\right)+v'\right)
dv \frac{\chi_Q(a)d^\times a}{\{a\}^{d/2-1}}    dbdt\Bigg) d^\times z. \end{split}
\end{align}

\subsection{Choice of $\mathcal{V}_{\mathcal{B}_1,\mathcal{B}_2}$} \label{ssec:B1}

Let $F_{>1}$ be the set $\varpi^{\zz_{<0}}$ (resp. $\rr_{>1}$) when $F$ is nonarchimedean (resp. archimedean). In this subsection, we specify our choice of $\mathcal{V}_{\mathcal{B}_1,\mathcal{B}_2}$ indexed by 
$$
\{(\mathcal{B}_1,\mathcal{B}_2) \in F_{>1}^2: |3\mathcal{B}_1| >\left|\mathcal{B}_2\right|^2\},
$$
and derive the formula \eqref{eq:beforeinv} for $\mathcal{F}_X(f_{\xi,\mathcal{V}_{\mathcal{B}_1,\mathcal{B}_2}})(-x_0g)$. Then we explain how Proposition \ref{prop:limit} follows from Proposition \ref{prop:tech}, which is stated below  and proved in \S \ref{ssec:formal}.  We point out that in the argument below, we only require $|3\mathcal{B}_1|>|\mathcal{B}_2|^2$ in the nonarchimedean case, but for uniformity we impose it in the archimedean case as well.

Recall from \eqref{Omega} that $\Omega:=\overline{K}_{1/4}$ is the closure of $K_{1/4}$ in $\SL_2^3(F)$. Suppose $F$ is archimedean. Choose functions $H,J\in \mathcal{S}(F)$ satisfying following conditions. 
\begin{enumerate}
    \item $H(u)=H(|u|^{\frac{1}{[F:\rr]}})$ and $J(u)=J(|u|^{\frac{1}{[F:\rr]}})$ for all $u\in F$.
    \item $H(0)=1$ and $\widehat{H}$, the Fourier transform of $H$, is compactly supported.
    \item The function $J$ is nonnegative and bounded by $1$, and
\begin{align*}
    J(u)=\begin{cases}
     1 & \textrm{ if } |u|\le 1,\\
     0 & \textrm{ if } |u| \ge 2.
    \end{cases}
\end{align*}
    \item The function $J$ satisfies the condition in Lemma \ref{lem:fin0} below.
\end{enumerate}
For $(u,\mathcal{B})\in F \times F^\times$, define $H_{\mathcal{B}}(u):=H\left(\frac{u}{\mathcal{B}}\right)$ and  $J_{\mathcal{B}}(u)=J\left(\frac{u}{\mathcal{B}} \right)$. Given $\mathcal{B}_1,\mathcal{B}_2\in F_{>1},$ define $\mathcal{V}_{\mathcal{B}_1,\mathcal{B}_2}\in \mathcal{S}(x_0\SL_2^3(F))$ using the coordinates \eqref{tab} by
\begin{align*}
    \mathcal{V}_{\mathcal{B}_1,\mathcal{B}_2}(t,a,b_0):=H_{\mathcal{B}_1}(t)\prod_{i=1}^3 J_{\log \mathcal{B}_2}(\log |a_i|^{\frac{1}{[F:\rr]}})J_{\mathcal{B}_2}(b_i).
\end{align*}
For $g=\begin{psmatrix}
m &n\\
x& y
\end{psmatrix}\in \Omega$, we have
\begin{align}\label{eq:ta} 
    R(g)\mathcal{V}_{\mathcal{B}_1,\mathcal{B}_2}(t,a,b_0)= H_{\mathcal{B}_1}(t-\tau_g(a,b_0))\prod_{i=1}^3 J_{\log \mathcal{B}_2}\left(\log |a_i(m_i+b_ix_i)|^{\frac{1}{[F:\rr]}}\right)J_{\mathcal{B}_2}\left( \frac{n_i+b_iy_i}{m_i+b_ix_i}\right)
\end{align}
if $m_i+b_ix_i \neq 0$ for all $i$, and $R(g)\mathcal{V}_{\mathcal{B}_1,\mathcal{B}_2}(t,a,b_0)=0$ otherwise.  Here 
\begin{align*}
    \tau_g(a,b_0):=\frac{1}{3}\sum_{i=1}^3\frac{x_i}{a_i^2(m_i+b_ix_i)}.
\end{align*}
The limit
\begin{align} \label{inf:cp}
 &R(g)\mathcal{V}_{\infty, \mathcal{B}_2}(a,b_0):=\lim_{|\mathcal{B}_1|\to\infty} R(g)\mathcal{V}_{\mathcal{B}_1,\mathcal{B}_2}(0,a,b_0)=
\prod_{i=1}^3 J_{\log \mathcal{B}_2}\left(\log \left|a_i(m_i+b_ix_i)\right|^{\frac{1}{[F:\rr]}}\right)J_{\mathcal{B}_2}\left( \frac{n_i+b_iy_i}{m_i+b_ix_i}\right)
\end{align}
converges pointwise.

Suppose $F$ is nonarchimedean. For $\mathcal{B}_1,\mathcal{B}_2\in F_{>1}$ and $g\in \SL_2^3(F)$, let
\begin{align*}
    \mathcal{V}_{\mathcal{B}_1,\mathcal{B}_2}(x_0g):=\begin{cases}
    1 & \textrm{ if } x_0g=
    x_0\begin{psmatrix}
    c^{-1} & c\Delta(u)\\
    & c
    \end{psmatrix}h, \textrm{ where } |u|\le |\mathcal{B}_1|, |\mathcal{B}_2|^{-1}< |c_i|\le |\mathcal{B}_2|, h\in \SL_2^3(\calo_F),\\
    0 & \textrm{ otherwise. }
    \end{cases}
\end{align*}
The conditions on $|u|$ and $|c_i|$ are independent of the choice of decomposition of $x_0g$ if
$|3\mathcal{B}_1|>\left|\mathcal{B}_2\right|^2.$ More explicitly, one can check the following lemma:
\begin{Lem} \label{lem:consist}
Suppose $F$ is nonarchimedean. Let $c,c' \in (F^\times)^3$ and $u,u' \in F.$ If 
$$
x_0\begin{psmatrix} c^{-1} & c\Delta(u)\\ & c\end{psmatrix}h=x_0\begin{psmatrix} c'^{-1} & c' \Delta(u')\\ & c'\end{psmatrix}h'
$$
for some $h,h' \in \mathrm{SL}_2^3(\mathcal{O}_F),$ then $|c_i|=|c'_i|$ for each $i.$  If in addition $
|3\mathcal{B}_1|>\left|\mathcal{B}_2\right|^2$ and $|\mathcal{B}_2|^{-1}  \leq|c_i|=|c'_i| \leq |\mathcal{B}_2|$ for each $i$, then $|u| \leq |\mathcal{B}_1|$ if and only if $|u'| \leq |\mathcal{B}_1|.$ \qed 
\end{Lem}

Because of Lemma \ref{lem:consist}, we henceforth assume that $|3\mathcal{B}_1|> |\mathcal{B}_2|^2.$ Clearly, $\mathcal{V}_{\mathcal{B}_1,\mathcal{B}_2}\in \mathcal{S}(x_0\SL_2^3(F))$ is right $\SL_2^3(\calo_F)$-invariant.
Observe that for $(t,a_i,b_i) \in F\times F^\times \times F$, one has
$$\begin{psmatrix} 1 & t\\ & 1 \end{psmatrix} \begin{psmatrix} a^{-1}_i & \\ & a_i \end{psmatrix} \begin{psmatrix} & 1 \\ -1 & \end{psmatrix} \begin{psmatrix} 1 & b_i \\& 1 \end{psmatrix}=\begin{cases}
\begin{psmatrix}
 a^{-1}_i & a_i t\\
    & a_i
\end{psmatrix}\begin{psmatrix}
 -b_i & -1\\
 1 &
\end{psmatrix}^{-1} & \textrm{ if } b_i\in\calo_F,\\
\begin{psmatrix}
 (a_ib_i)^{-1} & a_ib_it-a^{-1}_i\\
    & a_ib_i
\end{psmatrix}\begin{psmatrix}
 -1 & \\
 b_i^{-1} & -1
\end{psmatrix}^{-1} & \textrm{ if } b_i \not \in\calo_F.
\end{cases}
$$
Combining this with Lemma \ref{lem:consist} and using coordinates as in \eqref{tab}, we have
\begin{align}\label{eq:tnona}
    \mathcal{V}_{\mathcal{B}_1,\mathcal{B}_2}(t,a,b_0)=\mathbf{1}_{\mathcal{B}_1\calo_F}\left(t-\tau(a,b_0)\right)\prod_{i=1}^3 \left(\mathbf{1}_{\mathcal{B}_2\calo_F}-\mathbf{1}_{\mathcal{B}_2^{-1}\calo_F}\right)\left( \varpi^{\mathrm{ord}(a_i,a_ib_i)}\right),
\end{align}
where $\tau: (F^\times)^3\times F^3 \to F$ is given by
\begin{align*}
    \tau(a,b_0):=\frac{1}{3}\sum_{i=1}^3 (a_i^2b_i)^{-1}\mathbf{1}_{F-\calo_F}(b_i).
\end{align*}
Here we view $\mathbf{1}_{F-\calo_F}$ as a function valued in $\{0,1\}\subset F$, and take the convention that $(a_i^2b_i)^{-1}\mathbf{1}_{F-\calo_F}(b_i)=0$ if $b_i=0.$ 
Consequently, 
\begin{align*}
    \mathcal{V}_{\infty,\mathcal{B}_2}(a,b_0):=\lim_{|\mathcal{B}_1|\to\infty} \mathcal{V}_{\mathcal{B}_1,\mathcal{B}_2}(0,a,b_0)=\prod_{i=1}^3 \left(\mathbf{1}_{\mathcal{B}_2\calo_F}-\mathbf{1}_{\mathcal{B}_2^{-1}\calo_F}\right)\left( \varpi^{\mathrm{ord}(a_i,a_ib_i)}\right)
\end{align*}
where the convergence is pointwise.
To unify the notation, we write $H:=\mathbf{1}_{\mathcal{O}_F}$, $H_{\mathcal{B}_1}:=\mathbf{1}_{\mathcal{B}_1\calo_F}$, and for $g\in \Omega$ we write 
\begin{align*}
\tau_g:=\tau \quad \textrm{ and } \quad R(g)\mathcal{V}_{\infty,\mathcal{B}_2}:= \mathcal{V}_{\infty,\mathcal{B}_2}.
\end{align*}
For $F$ archimedean or nonarchimedean, we can rewrite \eqref{eq:ta} and \eqref{eq:tnona} as
\begin{align}\label{eq:infty}
     R(g)\mathcal{V}_{\mathcal{B}_1,\mathcal{B}_2}(t,a,b_0)=H_{\mathcal{B}_1}(t-\tau_g(a,b_0))R(g)\mathcal{V}_{\infty,\mathcal{B}_2}(a,b_0).
\end{align}
As $\displaystyle{\lim_{|\mathcal{B}_2| \to \infty}}R(g)\mathcal{V}_{\infty,\mathcal{B}_2}$ converges pointwise a.e. to $1$ for every $g\in \Omega$ and $H(0)=1$, \eqref{eq:to1} is satisfied. 

Let $g\in \Omega$. By \eqref{start}, \eqref{FT:formula0}, and \eqref{eq:infty}, we obtain
\begin{align*}
    &\mathcal{F}_X\left({f_{\xi,\mathcal{V}_{\mathcal{B}_1,\mathcal{B}_2}}}\right)(-x_0g)\\
    &=\frac{\gamma(Q)k}{|3|\zeta(1)}
\int_{F^\times}\psi(z^{-1})\Bigg( \int_{F^2\times (F^\times)^3\times F^2} \psi\left(tQ(\xi)+tb\right)  \overline{\psi}(z^2\mathfrak{r}(a))
      \\& \times 
\left(\int_{V(F)}\psi\left(\left\langle \frac{\xi}{a},u \right\rangle +\frac{bQ(u)}{9z^2[a]}+\mathfrak{c}(u,v)\right)\rho(g)f(u)du\right)\\& \times 
 H_{\mathcal{B}_1}\left(t-\tau_g\left(a,\Delta\left(\frac{b}{9z^2[a]}\right)+v'\right)\right) R(g)\mathcal{V}_{\infty,\mathcal{B}_2}\left(a,\Delta\left(\frac{b}{9z^2[a]}\right)+v'\right)
dv \frac{\chi_Q(a)d^\times a}{\{a\}^{d/2-1}}    dbdt\Bigg) d^\times z. 
\end{align*}
Taking a change of variables $t\mapsto t+\tau_g\left(a,\Delta\left(\frac{b}{9[a]}\right)+v'\right)$ and $b\mapsto b-Q(\xi)$, we arrive at
\begin{align}\label{eq:beforeinv}
&\frac{\gamma(Q)k}{|3|\zeta(1)}
\int_{F^\times}\psi(z^{-1})\Bigg( \int_{F} \widehat{H}_{\mathcal{B}_1}\left(b\right)\Phi_{\xi,g,\mathcal{B}_2}\left(\frac{b-Q(\xi)}{z^2},z,f\right) db\Bigg) d^\times z,
\end{align}
where $\widehat{H}_{\mathcal{B}_1}$ is the Fourier transform of $H_{\mathcal{B}_1}$, and
\begin{align*}
    \Phi_{\xi,g,\mathcal{B}_2}(b,z,f):=&\int_{(F^\times)^3}   \overline{\psi}(z^2\mathfrak{r}(a))\int_{F^2} 
\left(\int_{V(F)}\psi\left(\left\langle \frac{\xi}{a},u \right\rangle +\frac{bQ(u)}{9[a]}+\mathfrak{c}(u,v)\right)\rho(g)f(u)du\right)\\& \times 
 \psi\left((z^2b+Q(\xi))\tau_g\left(a,\Delta\left(\frac{b}{9[a]}\right)+v'\right)\right)\\
 &\times R(g)\mathcal{V}_{\infty,\mathcal{B}_2}\left(a,\Delta\left(\frac{b}{9[a]}\right)+v'\right)
dv \frac{\chi_Q(a)d^\times a}{\{a\}^{d/2-1}}.
\end{align*}

We prove in \S \ref{ssec:formal} the following:
\begin{Prop}\label{prop:tech}
There exist $1/2>\varepsilon>0$ and $\varepsilon'>0$ such that for all $g\in \Omega$ and $b\in F$ with $|Q(\xi)|/2\le |b-Q(\xi)|\le 2|Q(\xi)|$
\begin{align}\label{claim}
    \left|\Phi_{\xi,g,\mathcal{B}_2}\left(\frac{b-Q(\xi)}{z^2},z,f\right)\right|\ll_{\varepsilon,\varepsilon', f,\xi,\mathcal{B}_2} \min(1,|z|)^{\min(d_1,d_2,d_3)-2-2\varepsilon}\max(1,|z|)^{-\varepsilon'}.
\end{align}
In particular, the bound is independent of $b,g$. Moreover, if $b=0$, the implied constant can be chosen independent of $\mathcal{B}_2.$
\end{Prop} 
\noindent We claim Proposition \ref{prop:tech} implies Proposition \ref{prop:limit}. As $\widehat{H}$ is compactly supported, we can choose $M\in \zz_{>0}$ such that for all $|\mathcal{B}_1|>M$, $|Q(\xi)|/2\le |b-Q(\xi)|\le 2|Q(\xi)|$ for $b$ in the support of $\widehat{H}_{\mathcal{B}_1}$. Then by \eqref{eq:beforeinv} and \eqref{claim}, we have
\begin{align*}
    |\mathcal{F}_X(f_{\xi,\mathcal{B}_1,\mathcal{B}_2})(-x_0g)| \ll_{\varepsilon,\varepsilon', f,\xi,\mathcal{B}_2} \|\widehat{H}_{\mathcal{B}_1}\|_1 \int_{F^\times}\min(1,|z|)^{\min(d_1,d_2,d_3)-2-2\varepsilon}\max(1,|z|)^{-\varepsilon'} d^\times z\ll_{\epsilon,\epsilon'} 1
\end{align*}
for all $g\in \Omega$. Here we use the fact that the $L^1$-norm $\|\widehat{H}_{\mathcal{B}_1}\|_1=\|\widehat{H}\|_1<\infty$ for all $\mathcal{B}_1$. By Fourier inversion, $\widehat{H}_{\mathcal{B}_1}$ converges to the Dirac delta distribution as $|\mathcal{B}_1|\to \infty$, and thus by the dominated convergence theorem we have
\begin{align*}
    \lim_{|\mathcal{B}_1|\to\infty} \mathcal{F}_X(f_{\xi,\mathcal{B}_1,\mathcal{B}_2})(-x_0g)=\frac{\gamma(Q)k}{|3|\zeta(1)}
\int_{F^\times}\psi(z^{-1})\Phi_{\xi,g,\mathcal{B}_2}\left(\frac{-Q(\xi)}{z^2},z,f\right) d^\times z
\end{align*}
Applying the bound in Proposition \ref{prop:tech} for $b=0$ and that $R(g)\mathcal{V}_{\infty,\mathcal{B}_2}$ converges to $1$ a.e. for any $g\in \Omega$, \eqref{eq:finalform} follows from the dominated convergence theorem and Theorem \ref{thm:absolutebdappendix}. This proves Proposition \ref{prop:limit}.

We end this subsection with a lemma on the choice of $J$ in the archimedean case.  For ease of notation, let
$$
D:= \{b \in F:|Q(\xi)|/2 \le|b-Q(\xi)|\le 2|Q(\xi)|\} \times F^2\times F^{\times}\times \rr_{>1}
$$
and for $(g,a,\mathfrak{d})=(g,a,b, v,z,\mathcal{B}_2) \in \Omega \times (F^\times)^3 \times D$, let 
\begin{align}\label{T}
    T_{\xi}(g,a,\mathfrak{d}):=R(g)\mathcal{V}_{\infty,\mathcal{B}_2}\Bigg(\frac{\widetilde{\xi}}{a},\Delta\left(\frac{[a](b-Q(\xi))}{9z^2[\widetilde{\xi}]}\right)+v'\Bigg ).
\end{align}
When $F=\cc$,  write $a_n=r_ne^{\sqrt{-1}\theta_n}$ in polar coordinates. 
\begin{Lem}\label{lem:fin0}
The function $J$ can be chosen so that the following conditions are satisfied. Suppose $F=\rr$ (resp. $F=\cc$). In each variable $a_n$ (resp. $r_n$), the preimage of $0$ of each function
\begin{align}
\begin{split}\label{eq:deriv}
    &\partial_{a_i}T_{\xi}(g,a,\mathfrak{d}),\quad \partial_{a_i}\partial_{a_j}T_{\xi}(g,a,\mathfrak{d}), \quad  \partial_{a_1}\partial_{a_2}\partial_{a_3}T_{\xi}(g,a,\mathfrak{d})\\
&\textrm{ (resp. }
    \partial_{r_i}T_{\xi}(g,re^{\sqrt{-1}\theta},\mathfrak{d}),\quad \partial_{r_i}\partial_{r_j}T_{\xi}(g,re^{\sqrt{-1}\theta},\mathfrak{d}), \quad \partial_{r_1}\partial_{r_2}\partial_{r_3}T_{\xi}(g,re^{\sqrt{-1}\theta},\mathfrak{d}) \textrm{ )}
\end{split}
\end{align}
has finitely many connected components, and the number of connected components is bounded by an absolute constant.
\end{Lem}

\begin{proof}
Assume first $F=\rr$. We will make use of some standard facts on $o$-minimal geometry. A nice reference is \cite{vdD:ominimal}. Consider the $o$-minimal structure $\rr_{\mathrm{exp}}$ \cite{Wilkie:Rexp} generated by the exponential function (and algebraic functions). We can choose $J \in \mathcal{S}(F)$ that is definable and satisfies (1) and (3) above. An explicit construction is given in \cite[\S 13.1]{Tu:M}.

 Since the domain of $T_\xi$ is semialgebraic and $\log$ is definable, $T_\xi$ is definable and smooth in $a$, and so are the derivatives in \eqref{eq:deriv}. Let $Y_i$ be the graph of the function 
 \begin{align*}
      \partial_{a_i}T_{\xi}:\Omega \times (\rr^\times)^3 \times D \lto \rr
 \end{align*}
 Thus $Y_i$ is a definable set.  It admits a decomposition into finitely many definable cells \cite[Chapter 3 (2.11)]{vdD:ominimal}.  For each $(g, a_2, a_3, \mathfrak{d})$, consider the fiber over $(g, a_2, a_3,\mathfrak{d},0)$  of the projection map
 $$
 Y_i \lto \Omega\times (\rr^\times)^2\times D\times \rr.
 $$
  The intersection of the fiber with each cell of $Y_i$ is either empty or connected. This follows from \cite[Exercise 7 of Chapter 3 (2.19)]{vdD:ominimal} and the definition of cells; see the proof of Chapter 3 (2.9) in loc.~ cit. Hence the number of connected components of the fiber is bounded by the number of cells, which is an absolute constant. This proves the assertion for the function $\partial_{a_i}T_\xi(g, a, \mathfrak{d})$ in the variable $a_1.$  The same argument can be applied to the other functions and variables.
  
  For $F=\cc$, $T_\xi$ is definable in the $o$-minimal structure $\rr_{\mathrm{an,exp}}$ \cite{vdDM}, generated by the exponential function and restricted real analytic functions. The rest of the arguments carry over.
\end{proof}

\subsection{Proof of Proposition \ref{prop:tech}} \label{ssec:formal} 

Define for $(a,b,z,g)\in (F^\times)^3\times F\times F^\times \times \Omega$
 \begin{align*}
        \Psi_{\xi,g,\mathcal{B}_2}(a,b,z,f):=&\int_{F^2}\left(\int_{V(F)}\psi\left(\left\langle \frac{a\xi}{\widetilde{\xi}},u \right\rangle +\frac{b[a]Q(u)}{9[\widetilde{\xi}]}+\mathfrak{c}(u,v)\right)\rho(g)f(u)du\right)
 \\
 &\times  \psi\left((z^2b+Q(\xi))\tau_g\left(\frac{\tilde{\xi}}{a},\Delta\left(\frac{[a]b}{9[\widetilde{\xi}]}\right)+v'\right)\right)\\
 &\times R(g)\mathcal{V}_{\infty,\mathcal{B}_2}\left(\frac{\widetilde{\xi}}{a},\Delta\left(\frac{[a]b}{9[\widetilde{\xi}]}\right)+v'\right)
\chi_Q(a\widetilde{\xi})\{a\}^{d/2-2} dv.
\end{align*}
Comparing with $\Psi_\xi(a,b,\rho(g)f)$ defined in \eqref{Psi}, the only difference between the two functions is the introduction of the weight function 
\begin{align} \label{weight:func}
\psi\left((z^2b+Q(\xi))\tau_g\left(\frac{\tilde{\xi}}{a},\Delta\left(\frac{[a]b}{9[\widetilde{\xi}]}\right)+v'\right)\right)R(g)\mathcal{V}_{\infty,\mathcal{B}_2}\left(\frac{\widetilde{\xi}}{a},\Delta\left(\frac{[a]b}{9[\widetilde{\xi}]}\right)+v'\right).
\end{align}
Changing variables $a \to a^{-1}\widetilde{\xi}$ in the definition of $\Phi_{\xi,g,\mathcal{B}_2}(b,z,f)$ we have
\begin{align*}
    \Phi_{\xi,g,\mathcal{B}_2}(b,z,f)= \zeta(1)^3\prod_{i=1}^3|\xi_i|^{1-d_i/2} \int_{(F^\times)^3} \overline{\psi}(z^2\mathfrak{r}(a^{-1}\tilde{\xi}))\Psi_{\xi,g,\mathcal{B}_2}(a,b,z,f) da,
\end{align*}
just as in \eqref{eq:changeaxi}.
Thus the bound \eqref{claim} would be implied by Proposition \ref{prop:is:finite} except we have introduced the weight function \eqref{weight:func} and replaced $-\frac{Q(\xi)}{z^2}$ by $\frac{b-Q(\xi)}{z^2}.$  The remainder of the proof of Proposition \ref{prop:tech} amounts to modifying the proof of Proposition \ref{prop:is:finite} to prove \eqref{claim}.  

Let us begin this process.
We assume for the remainder of the proof that $b\in F$ is such that $|Q(\xi)|/2\le |b-Q(\xi)|\le 2|Q(\xi)|$. Put
\begin{align*}
    G(b,\xi):=\left(\frac{\left|\xi_1\otimes \xi_2 \otimes  \xi_3\right|}{\left|b-Q(\xi)\right|}\right)^{1/2}=C_bG(\xi)
\end{align*}
for some $(1/2)^{1/2}\le C_b\le 2^{1/2}$, where $G(\xi)$ is defined in \eqref{Gxi}. As the absolute value of \eqref{weight:func} is bounded by $1$, the bounds in \eqref{hana}, \eqref{dul} are valid if we replace $\Psi_{\xi}\left(a,\frac{-Q(\xi)}{z^2},\rho(g)f\right)$ and $G(\xi)$ by $\Psi_{\xi,g,\mathcal{B}_2}\left(a,\frac{b-Q(\xi)}{z^2},z,f\right)$ and $G(b,\xi)$, and the implied constants can be taken to be independent of $b,g, \mathcal{B}_2.$  One simply 
replaces Corollaries \ref{cor:absolutebd} and \ref{cor:byproduct} with Theorems
\ref{thm:absolutebdappendix} and \ref{thm:byproductappendix}, respectively.  In fact the only difference between the latter results and the former is that one absolute value sign is outside (resp.~inside) the integral over $F^2$ in the former (resp.~latter). 
Thus to prove \eqref{claim} we are left with bounding the analogue of \eqref{dos}, namely:
\begin{align*}
   \left|\int_{|a|\le (|z|G(b,\xi))^{2\alpha}}\overline{\psi}(z^2\mathfrak{r}(a^{-1}\widetilde{\xi}))\Psi_{\xi,g,\mathcal{B}_2}\left(a,\tfrac{b-Q(\xi)}{z^2},z,f\right)da\right|
\end{align*}
for $N<|z|G(b,\xi)$ and $1/6>\alpha>0$ sufficiently small.  Here $N \in \zz_{>0}$ is a constant to be determined in the nonarchimedean case, and is $1$ in the archimedean case. 
We therefore assume for the remainder of the proof that $|a|\le (|z|G(b,\xi))^{2\alpha}$ and $N<|z|G(b,\xi)$, where $0<\alpha<1/6$ and $N\in \zz_{>0}.$

Let $F$ be nonarchimedean. Define
\begin{align*}
\begin{split}
    \Psi_{\xi,g, \mathcal{B}_2}'\left(a,b,  f\right)&=\int_{F^2}\left(\int_{V(F)}\psi\left(\left\langle \frac{a\xi}{\widetilde{\xi}},u \right\rangle +\mathfrak{c}(u,v)\right)\rho(g)f(u)du\right)
 \\
 &\times  \psi\left(b\tau_g\left(\frac{\tilde{\xi}}{a}, v'\right)\right)R(g)\mathcal{V}_{\infty,\mathcal{B}_2}\left(\frac{\widetilde{\xi}}{a},v'\right)
\chi_Q(a\widetilde{\xi})\{a\}^{d/2-2} dv.
\end{split}
\end{align*}
 As
\begin{align} \label{diff:bound2}
    |a|^2\left|\frac{[a](b-Q(\xi))}{9z^2[\widetilde{\xi}]}\right|\ll (|z|G(b,\xi))^{10\alpha-2},
\end{align}
we can choose $N$ large, independent of $b,g,\mathcal{B}_2$, such that
\begin{align*}
    \psi\left(b\tau_g\left(\frac{\tilde{\xi}}{a},\Delta\left(\frac{[a](b-Q(\xi))}{9z^2[\widetilde{\xi}]}\right)+v'\right)-b\tau_g\left(\frac{\tilde{\xi}}{a}, v'\right)\right)=1.
\end{align*}
Moreover, in view of \eqref{eq:infty}, we have that  $R(g)\mathcal{V}_{\infty,\mathcal{B}_2}\left(\frac{\widetilde{\xi}}{a},b_0\right)$ is invariant under  $b_0\mapsto b_0+u'$ for  $u'\in\varpi\calo_F^3$.  
Consequently,
\begin{align*}
\begin{split}
    \Psi_{\xi,g, \mathcal{B}_2}\left(a,\tfrac{b-Q(\xi)}{z^2},z,  f\right)&= \Psi'_{\xi,g, \mathcal{B}_2}\left(a,b, f\right)
\end{split}
\end{align*}
for $N$ large by \eqref{diff:bound2} and \eqref{diff:bound}.  
The analogue of \eqref{eq:cutoff0} with $\Psi_{\xi}(a,0,f)$ replaced by $\Psi'_{\xi,g, \mathcal{B}_2}\left(a,b, f\right)$ is valid since $|\psi|=1$ and $|R(g)\mathcal{V}_{\infty, \mathcal{B}_2}|\le 1.$  We simply replace  Corollary \ref{cor:b=0} by Theorem \ref{thm:b=0appendix}.  Thus to complete the proof of the \eqref{claim} in the nonarchimedean case, it suffices to show that the analogue of Lemma \ref{lem:uniformb2} holds, namely, for $\alpha>0$ sufficiently small, there exists $\varepsilon'>0$ such that
\begin{align} \label{uniformb2}
   &\left|\int_{(|z|G(b,\xi))^{-2\alpha}\le |a_i|\le (|z|G(b,\xi))^{2\alpha}}\overline{\psi}(z^2\mathfrak{r}(a^{-1}\widetilde{\xi}))\Psi_{\xi,g,\mathcal{B}_2}'(a,b,f)da\right|
   \ll_{\varepsilon', f, \xi} |z|^{-\varepsilon'}.
\end{align}
 To deduce the bound \eqref{uniformb2}, observe that for $|u|\ll_\xi (|z|G(b,\xi))^{-2\alpha}$
\begin{align*}
    \psi\left(b\tau_g\left(\frac{\tilde{\xi}}{a+u}, v'\right)-b\tau_g\left(\frac{\tilde{\xi}}{a}, v'\right)\right)=1.
\end{align*}
Note that $R(g)\mathcal{V}_{\infty,\mathcal{B}_2}\left(\frac{\widetilde{\xi}}{a},b_0\right)$ is also invariant under $a\mapsto a+u$ for $u\in \varpi\prod_{i=1}^3 a_i\calo_F.$
Therefore, for $m \in \zz_{>0}$ sufficiently large, depending on $\xi,f,\chi_Q$, and any $\ell \in \zz_{>0}$ such that $|\varpi^{\ell}|\leq |\varpi^{m}|(|z|G(b,\xi))^{-2\alpha},$ the function
$\Psi'_{\xi,g, \mathcal{B}_2}\left(a,b,  f\right)$ is invariant under $a \mapsto a+\varpi^{\ell}x$ for any $x \in \mathcal{O}_F^3. $
Furthermore, since $|\psi|=1$ and $|R(g)\mathcal{V}_{\infty, \mathcal{B}_2}|\le 1$, we can use Theorem \ref{thm:b=0appendix} in place of Corollary \ref{cor:b=0} in the proof of Lemma \ref{lem:uniformb2} and argue as in that proof to prove \eqref{uniformb2}.  We also observe that all of our bounds are independent of $\mathcal{B}_2,$ even if $b \neq 0,$ so the last assertion of Proposition \ref{prop:tech} follows as well.

Suppose $F$ is archimedean. First consider the analogue of the bound \eqref{set} for $\Psi_{\xi,g,\mathcal{B}_2}\left(a,\frac{b-Q(\xi)}{z^2},z, f\right)$. Write
\begin{align} \label{2:summ} \begin{split}
    &\Psi_{\xi,g,\mathcal{B}_2}\left(a,\frac{b-Q(\xi)}{z^2},z, f\right)\\
    &=\int_{F^2}T_\xi(g,a,\mathfrak{d})\psi\left(b\tau_g\left(\frac{\tilde{\xi}}{a}, \Delta\left(\frac{[a](b-Q(\xi))}{9z^2[\widetilde{\xi}]}\right)+v'\right)\right)\\&\times \left(\int_{V(F)}\Bigg(\psi\left(\left\langle \frac{a\xi}{\widetilde{\xi}},u \right\rangle +\frac{[a](b-Q(\xi))Q(u)}{9z^2[\widetilde{\xi}]}+\mathfrak{c}(u,v)\right)-\psi\left(\left\langle \frac{a\xi}{\widetilde{\xi}},u \right\rangle +\mathfrak{c}(u,v)\right)\Bigg)f(u)du\right)
 \frac{\chi_Q(a\widetilde{\xi})}{\{a\}^{2-d/2}} dv\\
    &+\int_{F^2} T_\xi(g,a,\mathfrak{d})\psi\left(b\tau_g\left(\frac{\tilde{\xi}}{a}, \Delta\left(\frac{[a](b-Q(\xi))}{9z^2[\widetilde{\xi}]}\right)+v'\right)\right)\\
    &\times\left(\int_{V(F)}\psi\left(\left\langle \frac{a\xi}{\widetilde{\xi}},u \right\rangle +\mathfrak{c}(u,v)\right)f(u)du\right)
\frac{ \chi_Q(a\widetilde{\xi})}{\{a\}^{2-d/2}} dv. \end{split}
\end{align}
Here we use the notation \eqref{T}. 
Let 
\begin{align*}
    \widetilde{\Psi}_{\xi,g,\mathcal{B}_2}(b,c,z,f):=&\int_{|a| \leq (|z|G(b,\xi))^{2\alpha}}\overline{\psi}\left( z^2 \mathfrak{r}(a^{-1}\widetilde{\xi})\right)\int_{F^2}T_\xi(g,a,\mathfrak{d})\psi\left(b\tau_g\left(\frac{\tilde{\xi}}{a}, \Delta\left(\frac{[a](b-Q(\xi))}{9z^2[\widetilde{\xi}]}\right)+v'\right)\right)\\& \times \left(\int_{V(F)}\psi\left(\left\langle \frac{a\xi}{\widetilde{\xi}},u \right\rangle +\frac{[a]cQ(u)}{9[\widetilde{\xi}]}+\mathfrak{c}(u,v)\right)f(u) \frac{\chi_Q(a\widetilde{\xi})}{\{a\}^{2-d/2}}du \right)dv d a.
\end{align*}
By the same argument proving \eqref{set}, with Theorem \ref{thm:absolutebdappendix} replacing Corollary \ref{cor:absolutebd}, we deduce the following analogue of \eqref{set}:
\begin{align*}
    \left|\widetilde{\Psi}_{\xi,g,\mathcal{B}_2}(b,\tfrac{b-Q(\xi)}{z^2},z,f)-\widetilde{\Psi}_{\xi,g,\mathcal{B}_2}(b,0,z,f) \right| \ll_f (|z|G(b,\xi))^{\frac{6\alpha-2}{[F:\rr]}} \ll_{\xi} |z|^{\frac{6\alpha-2}{[F:\rr]}}
\end{align*}
Here we have used that $|\psi|=1$ and $|R(g)\mathcal{V}_{\infty,\mathcal{B}_2}|\le 1$. As in the nonarchimedean case, the analogue of \eqref{eq:cutoff0} with $\Psi_{\xi}(a,0,f)$ replaced by the second term of \eqref{2:summ} is valid by using Theorem \ref{thm:b=0appendix} in place of Corollary \ref{cor:b=0}.

Therefore, we are left with showing that the analogue of the bound in Lemma \ref{lem:uniformb2} remains valid for the second term of \eqref{2:summ}.  In other words, we need to show that for all $|z|G(b,\xi)>1$ the quantity
\begin{align} \label{like:unifb2} \begin{split}
&\Bigg|\int_{(|z|G(b,\xi))^{-2\alpha}\le |a_i|\le (|z|G(b,\xi))^{2\alpha}}\overline{\psi}(z^2\mathfrak{r}(a^{-1}\widetilde{\xi}))\int_{F^2} T_{\xi}(g,a,\mathfrak{d})\psi\left(b\tau_g\left(\frac{\tilde{\xi}}{a},\Delta\left(\frac{[a](b-Q(\xi))}{9z^2[\widetilde{\xi}]}\right)+v'\right)\right)\\
        &\times\left(\int_{V(F)}\psi\left(\left\langle \frac{a\xi}{\widetilde{\xi}},u \right\rangle +\mathfrak{c}(u,v)\right)f(u)du\right)
\frac{ \chi_Q(a\widetilde{\xi})}{\{a\}^{2-d/2}} dv da\Bigg| \end{split}
\end{align}
is bounded by a constant, that depends on $\mathcal{B}_2$ if $b\neq 0$, times $|z|^{-\varepsilon'}$ for some $\varepsilon'>0$. We show that the argument proving Lemma \ref{lem:uniformb2} can be adapted to the current setting. Assume $F$ is real. We prove an analogue of Lemma \ref{lem:vandercorputarch}, namely, for any bounded continuous function $h$ on $\rr_{>0}^3$ and $(|z|G(b,\xi))^{2\alpha}\ge c>1$ one has 
\begin{align} \label{analogue:vdc} \begin{split}
    &\Bigg|\int_{c^{-1}\leq a_i \leq c}h(a)\int_{F^2}T_\xi(g,a,\mathfrak{d})\psi\left(b\tau_g\left(\frac{\tilde{\xi}}{a}, \Delta\left(\frac{[a](b-Q(\xi))}{9z^2[\widetilde{\xi}]}\right)+v'\right)\right)\\& \times \left(\int_{V(F)}\Bigg(\psi\left(\left\langle \frac{a\xi}{\widetilde{\xi}},u \right\rangle +\mathfrak{c}(u,v)\right)f(u)du\right) \frac{\chi_Q(a\widetilde{\xi})}{\{a\}^{2-d/2}} dvda\Bigg|\\
    &\ll_{f,\mathcal{B}_2,\xi } |z|^{6\alpha}\sup_{c^{-1}<a_i\le c}\left|\int_{c^{-1}}^{a_1}\int_{c^{-1}}^{a_2}\int_{c^{-1}}^{a_3}h(r)dr \right| \end{split}
\end{align}
Combining with \eqref{eq:vander}, this gives rise to the desired bound of \eqref{like:unifb2} by choosing $\alpha>0$ sufficiently small.

We prove \eqref{analogue:vdc} using integration by parts as in the proof of Lemma \ref{lem:vandercorputarch}.  We have analogues of all of the terms appearing in that proof.  They can all be bounded as before using the fact that $|\psi|=1$ and $|T_{\xi}| \leq 1,$ replacing the use of Corollary \ref{cor:b=0} by Theorem \ref{thm:b=0appendix}.  We also have additional terms that can be bounded similarly as we now explain.    First, using \eqref{inf:cp}, one checks that (for $|a|\le (|z|G(b,\xi))^{2\alpha}$)  in the support of $T_\xi(g,a,\mathfrak{d})$ the quantities 
\begin{align} \label{partial:psi} \begin{split}
    &\left|\partial_{a_i}\psi\left(b\tau_g\left(\frac{\tilde{\xi}}{a}, \Delta\left(\frac{[a](b-Q(\xi))}{9z^2[\widetilde{\xi}]}\right)+v'\right)\right)\right|, \quad  \left|\partial_{a_i}\partial_{a_j}\psi\left(b\tau_g\left(\frac{\tilde{\xi}}{a}, \Delta\left(\frac{[a](b-Q(\xi))}{9z^2[\widetilde{\xi}]}\right)+v'\right)\right)\right|,\\
     & \left|\partial_{a_1}\partial_{a_2}\partial_{a_3}\psi\left(b\tau_g\left(\frac{\tilde{\xi}}{a}, \Delta\left(\frac{[a](b-Q(\xi))}{9z^2[\widetilde{\xi}]}\right)+v'\right)\right)\right| \end{split}
\end{align}
are bounded by $O_{\mathcal{B}_2}(1)$.  The additional terms that we have to bound involve the derivatives in \eqref{partial:psi} and derivatives of $T_{\xi}(g,a,\mathfrak{d}).$  One such term is
\begin{align*}
\begin{split}
     &\int_{c^{-1}}^{c}\int_{F^2}\Bigg| T_{\xi}(g,c,c,a_3,\mathfrak{d})\partial_{a_3}\psi\left(b\tau_g\left(\frac{\widetilde{\xi_1}}{c},\frac{\widetilde{\xi_2}}{c}, \frac{\widetilde{\xi_3}}{a_3}, \Delta\left(\frac{c^2a_3(b-Q(\xi))}{9z^2[\widetilde{\xi}]}\right)+v'\right)\right)\\
    &\times\left(\int_{V(F)}\psi\left(\sum_{j=1}^2\left\langle \frac{c\xi_j}{\widetilde{\xi_j}},u_j \right\rangle_j+\left\langle \frac{a_3\xi_3}{\widetilde{\xi_3}},u_3 \right\rangle_3+\mathfrak{c}(u,v)\right)f(u)du\right)\Bigg|\frac{da_3dv}{c^{4-(d_1+d_2)/2}a_3^{2-d_3/2}},
\end{split}
\end{align*}
This term is nonzero only if $b\neq 0$ and is bounded by $O_{\mathcal{B}_2,f}(c)$ times 
\begin{align*}
    \sup_{c^{-1}\le a_3\le c}\int_{F^2}\left|\int_{V(F)}\psi\left(\sum_{j=1}^2\left\langle \frac{c\xi_j}{\widetilde{\xi_j}},u_j \right\rangle_j+\left\langle \frac{a_3\xi_3}{\widetilde{\xi_3}},u_3 \right\rangle_3+\mathfrak{c}(u,v)\right)f(u)du\right)\Bigg|\frac{dv}{c^{4-(d_1+d_2)/2}a_3^{2-d_3/2}}
\end{align*}
by our bounds on \eqref{partial:psi}, which is $O_{\mathcal{B}_2,f}(c)$ by Theorem \ref{thm:b=0appendix}. This is a sufficient bound for the purposes of proving \eqref{analogue:vdc}, and all the terms not involving any derivative of $T_{\xi}(g,a,\mathfrak{d})$ can be treated in the same manner.

Consider the terms involving derivatives of $T_{\xi}(g,a,\mathfrak{d})$, e.g.
\begin{align}\label{eq:finala3}
\begin{split}
    &\int_{F^2}\int_{c^{-1}}^{c}\Bigg| \partial_{a_3}T_{\xi}(g,c,c,a_3,\mathfrak{d})\\
    &\times\left(\int_{V(F)}\psi\left(\sum_{j=1}^2\left\langle \frac{c\xi_j}{\widetilde{\xi_j}},u_j \right\rangle_j+\left\langle \frac{a_3\xi_3}{\widetilde{\xi_3}},u_3 \right\rangle_3+\mathfrak{c}(u,v)\right)f(u)du\right)\Bigg|\frac{da_3dv}{c^{4-(d_1+d_2)/2}a_3^{2-d_3/2}},
\end{split}
\end{align}
where $1\le c\le (|z|G(b,\xi))^{2\alpha}$. 
Here we have used the fact $|\psi|=1$ and the Fubini--Tonelli theorem to switch the order of the integral over $a_3$ and $v$. Note that as a function of $c^{-1}\le a_3\le c$,  \begin{align*}\left|\int_{V(F)}\psi\left(\sum_{j=1}^2\left\langle \frac{c\xi_j}{\widetilde{\xi_j}},u_j \right\rangle_j+\left\langle \frac{a_3\xi_3}{\widetilde{\xi_3}},u_3 \right\rangle_3+\mathfrak{c}(u,v)\right)f(u)du\right|\frac{1}{c^{4-(d_1+d_2)/2}a_3^{2-d_3/2}}
\end{align*}
is Lipschitz continuous by Lemma \ref{lem:quad:esti}  and hence absolutely continuous, so it is differentiable a.e. and the derivative is integrable and satisfies the fundamental theorem of calculus \cite[Theorem 7.29]{Wheeden:Zygmund}. Applying integration by parts \cite[Theorem 7.32]{Wheeden:Zygmund} to the integral over $a_3$, \eqref{eq:finala3} is bounded by the sum of
\begin{align}\label{eq:finala31}
    &\int_{F^2}\int_{c^{-1}}^{c}\Bigg| \partial_{a_3}T_{\xi}(g,c,c,a_3,\mathfrak{d})\Bigg|da_3\Bigg|\int_{V(F)}\psi\left(\sum_{j=1}^3\left\langle \frac{c\xi_j}{\widetilde{\xi_j}},u_j \right\rangle_j+\mathfrak{c}(u,v)\right)f(u)du\Bigg|\frac{dv}{c^{6-(d_1+d_2+d_3)/2}}
\end{align}
and 
\begin{align}\label{eq:finala32}
\begin{split}
    &\int_{F^2}\int_{c^{-1}}^{c}\int_{c^{-1}}^{r}\Bigg| \partial_{a_3}T_{\xi}(g,c,c,a_3,\mathfrak{d})\Bigg|da_3\\
    &\times\partial_r\Bigg|\int_{V(F)}\psi\left(\sum_{j=1}^2\left\langle \frac{c\xi_j}{\widetilde{\xi_j}},u_j \right\rangle_j+\left\langle \frac{r\xi_3}{\widetilde{\xi_3}},u_3 \right\rangle_3+\mathfrak{c}(u,v)\right)f(u)\frac{du}{r^{2-d_3/2}}\Bigg|\frac{drdv}{c^{4-(d_1+d_2)/2} }.
\end{split}
\end{align}
By Theorem \ref{thm:b=0appendix}, the first term \eqref{eq:finala31} is bounded by $O_f(1)$ times 
\begin{align*}
    \sup_{v_1,v_2\in F}\int_{c^{-1}}^{c}\Bigg| \partial_{a_3}T_{\xi}(g,c,c,a_3,\mathfrak{d})\Bigg|da_3.
\end{align*}
By Lemma \ref{lem:fin0} and the fundamental theorem of calculus, the integral is dominated by $1$ for all $1\le c\le (|z|G(b,\xi))^{2\alpha}$. On the other hand, by the second mean value theorem for Lebesgue integrals \cite[Theorem 1]{secondMVT}, \eqref{eq:finala32} equals
\begin{align*}
    &\int_{F^2} \int_{c^{-1}}^{c}\Bigg| \partial_{a_3}T_{\xi}(g,c,c,a_3,\mathfrak{d})\Bigg|da_3\\
    &\times\int_{e(g,\mathfrak{d})}^{c} \partial_r\Bigg|\int_{V(F)}\psi\left(\sum_{j=1}^2\left\langle \frac{c\xi_j}{\widetilde{\xi_j}},u_j \right\rangle_j+\left\langle \frac{r\xi_3}{\widetilde{\xi_3}},u_3 \right\rangle_3+\mathfrak{c}(u,v)\right)f(u)\frac{du}{r^{2-d_3/2}}\Bigg|\frac{drdv}{c^{4-(d_1+d_2)/2} },
\end{align*}
for some $e(g,\mathfrak{d})\in(c^{-1},c)$. For the same reason as above, this is bounded by $O_f(1)$. The other terms can be treated similarly, yielding the bound \eqref{analogue:vdc}.  

The case $F=\cc$ can be argued similarly using polar coordinates. As mentioned above, this is enough to deduce \eqref{claim}.  Now we observe that the only place in this argument our bounds are not uniform in $\mathcal{B}_2$ is in the estimation of \eqref{partial:psi}.  These terms vanish if $b=0.$  Thus we obtain uniformity of the bound when $b=0$ as claimed.
\qed

\section{The operator $\mathcal{F}_Y$ is unitary}\label{sec:unit}

 In this section, we apply Theorem \ref{Thm:main} to prove the following.

\begin{Thm}\label{thm:FYuni}
    Let $F$ be a nonarchimedean local field of characteristic zero.
     Suppose $\dim V_i>2$ for all $1\le i\le 3$ and $Y^{\mathrm{sm}}(F) \neq \emptyset$. The Fourier transform $\mathcal{F}_Y$ extends to a unitary operator on $L^2(Y(F))$. Moreover, for $f_1,f_2\in L^2(Y(F)),$
    \begin{align}\label{PlancherelY}
        \int_{Y(F)} \mathcal{F}_Y(f_1)(y)f_2(y)d\mu(y) =\int_{Y(F)} f_1(y)\mathcal{F}_Y(f_2)(y)d\mu(y).
    \end{align}
\end{Thm}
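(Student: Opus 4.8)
The strategy is to first establish the Plancherel/adjoint identity \eqref{PlancherelY} for a dense subspace of $L^2(Y(F))$ on which the explicit formula of Theorem \ref{Thm:main} applies, then bootstrap to an isometry, and finally use the relation $\mathcal{F}_Y \circ \mathcal{F}_Y = \mathrm{Id}$ (inherited from $\mathcal{F}_X$) to upgrade the isometry to a unitary operator. The key technical input is the explicit integral formula in Theorem \ref{Thm:main}: for $f \in \mathcal{S}$ and $\xi \in Y^{\mathrm{ani}}(F)$, $\mathcal{F}_Y(f)(\xi)$ is given by an iterated integral involving $\psi\big(\langle \xi/a, y\rangle - Q(\xi)Q(y)/(9z^2[a])\big)$. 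The crucial observation to exploit is that this kernel is \emph{symmetric} under interchanging $\xi$ and $y$: indeed $\langle \xi/a, y\rangle = \langle \xi, y/a \rangle$ and the term $Q(\xi)Q(y)/(9z^2[a])$ is manifestly symmetric in $\xi$ and $y$. Since the measures $d^\times z$, $\chi_Q(a)d^\times a/\{a\}^{d/2-1}$, and $d\mu$ are all symmetric under this interchange (after possibly a change of variables $a \mapsto $ something absorbing the asymmetry, but in fact the $\chi_Q(a)/\{a\}^{d/2-1}$ factor is untouched), the bilinear form $(f_1, f_2) \mapsto \int_{Y(F)} \mathcal{F}_Y(f_1)(y) f_2(y) d\mu(y)$ is visibly symmetric in $f_1, f_2$, which is exactly \eqref{PlancherelY}.

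\textbf{Step 1: the adjoint identity on a dense subspace.} First I would show that $\mathcal{S} \subset \mathcal{S}(Y(F))$ (defined in \eqref{calS}) is dense in $L^2(Y(F))$; this follows since $\mathcal{S}(Y(F))$ is dense in $L^2(Y(F))$ and $\mathcal{S}$ contains restrictions of all of $\mathcal{S}(V(F))$, which after applying the Weil representation and the map $I$ generate $\mathcal{S}(Y(F))$. Then for $f_1, f_2 \in \mathcal{S}$, I would substitute the formula of Theorem \ref{Thm:main} into the left side of \eqref{PlancherelY}, obtaining a multiple integral over $z \in F^\times$, $a \in (F^\times)^3$, $y \in Y(F)$ (the inner variable from $\mathcal{F}_Y(f_1)$) and $\xi \in Y(F)$ (the outer integration variable against $f_2$). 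The absolute convergence needed to apply Fubini comes from Proposition \ref{prop:is:finite}, which gives the bound $|\mathcal{F}_Y(f_1)(\xi)| \ll \prod_i |\xi_i|^{1-d_i/2}(1 + \min_i|\xi_i|^{2\varepsilon}|Q(\xi)|^{-\varepsilon})$, combined with the integrability statement $\mathcal{S}(Y(F)) \subset L^p(Y(F), d\mu)$ for $p \le 2$ from \cite[Proposition 11.3]{Getz:Hsu} and the fact that $f_2$ (being in $\mathcal{S}$) decays; one must check the product of these against $d\mu(\xi)$ converges, which is where the hypothesis $d_i > 2$ enters. Having justified Fubini, the symmetry of the kernel noted above shows the resulting expression is symmetric in $(f_1, f_2)$, giving \eqref{PlancherelY} for $f_1, f_2 \in \mathcal{S}$.

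\textbf{Step 2: isometry and density extension.} Specializing \eqref{PlancherelY} requires care since it is a bilinear (not sesquilinear) pairing; to get an $L^2$ norm identity I would combine it with the involutivity $\mathcal{F}_Y \mathcal{F}_Y = \mathrm{Id}$, which holds because $\mathcal{F}_X \mathcal{F}_X = \mathrm{Id}$ (being a composition of intertwining/abelian-$\gamma$ operators whose square is the identity, cf.\ \eqref{relations}) and the diagram defining $\mathcal{F}_Y$ via $I$ commutes. Then $\langle \mathcal{F}_Y f_1, \overline{\mathcal{F}_Y f_2}\rangle = \langle \mathcal{F}_Y \mathcal{F}_Y f_1, \overline{f_2} \rangle = \langle f_1, \overline{f_2}\rangle$ (interpreting the pairing appropriately and using that complex conjugation commutes with $\mathcal{F}_Y$ up to replacing $\psi$ by $\overline\psi$, or more directly using that $L^2$ norms are preserved, which Braverman--Kazhdan already proved at the level of $X$); in fact the cleanest route is to invoke \eqref{isom}, i.e., Braverman and Kazhdan's theorem that $\mathcal{F}_X: L^2(X(F)) \to L^2(X(F))$ is an isometry, together with the fact that $I$ extends to (a multiple of) an isometry or at least an $L^2$-bounded map on the relevant spaces — but since $Y$ is a quotient construction this last point needs the explicit formula to verify the $L^2$-norm is preserved on $\mathcal{S}$, after which one extends by continuity to all of $L^2(Y(F))$.

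\textbf{Step 3: unitarity.} Once $\mathcal{F}_Y$ is an isometry on $L^2(Y(F))$, surjectivity (hence unitarity) follows immediately from $\mathcal{F}_Y \mathcal{F}_Y = \mathrm{Id}$: an isometry with dense image that squares to the identity is unitary. Finally \eqref{PlancherelY} for general $f_1, f_2 \in L^2(Y(F))$ follows from the case $f_1, f_2 \in \mathcal{S}$ by continuity of both sides (the left side being continuous since $\mathcal{F}_Y$ is bounded and the pairing is continuous on $L^2 \times L^2$).

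\textbf{Main obstacle.} I expect the principal difficulty to be \emph{Step 1's Fubini justification}: establishing absolute convergence of the full multiple integral $\int \int \int \int |\cdots|$, where one integrates the kernel against $|f_1|$, $|f_2|$, $d\mu(\xi)$, $d\mu(y)$, $d^\times a$, $d^\times z$ simultaneously. Proposition \ref{prop:is:finite} controls $\int |\mathcal{F}_Y(f_1)(\xi)| \cdot (\text{stuff}) $ but here we need joint integrability including the $\xi$-integration against $f_2 \, d\mu$, and the bound in Proposition \ref{prop:is:finite} has a factor $|Q(\xi)|^{-\varepsilon}$ which is only locally integrable against $d\mu$ near the anisotropic boundary $Q(\xi) = 0$ — one needs to check that $f_2 \in \mathcal{S}(Y(F))$ decays fast enough there, or equivalently that the singular behavior of $d\mu$ at the boundary is tame; this is exactly the kind of estimate handled in \S \ref{sec:absolutebd} and \cite[Proposition 11.3]{Getz:Hsu}, and assembling these correctly (with the $d_i > 2$ hypothesis doing the work) is the technical heart of the argument. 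A secondary subtlety is correctly tracking that the pairing in \eqref{PlancherelY} is bilinear rather than Hermitian, so that deducing the $L^2$-isometry property requires the supplementary involutivity input rather than following formally.
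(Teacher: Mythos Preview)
Your overall architecture matches the paper's: establish \eqref{PlancherelY} on $\mathcal{S}$ by unfolding Theorem~\ref{Thm:main}, observing the kernel is symmetric in $\xi$ and $y$, and justifying Fubini via Proposition~\ref{prop:is:finite} together with integrability estimates of the type in \cite[Proposition~11.3]{Getz:Hsu} (the paper packages these as Lemma~\ref{lem:L1Y}, which handles exactly the $|Q(\xi)|^{-\varepsilon}$ singularity you flag). Your identification of the Fubini step as the principal analytic obstacle is correct, and the paper carries it out in stages, moving the outer $d\mu(y)$ integral inward past $d^\times z$, then $d^\times a$, then $d\mu(\xi)$, invoking Proposition~\ref{prop:is:finite}, Corollary~\ref{cor:absolutebd}, and Lemma~\ref{lem:L1Y} respectively.

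However, there is a genuine gap in your Step~2 that you underestimate as a ``secondary subtlety.'' The bilinear identity \eqref{PlancherelY} is proved only for $f_1,f_2\in\mathcal{S}$, and $\mathcal{F}_Y$ does \emph{not} preserve $\mathcal{S}$: for $f\in\mathcal{S}$ one has $\mathcal{F}_Y(f)\in\mathcal{S}(Y(F))$ but in general $\mathcal{F}_Y(f)\notin\mathcal{S}$. Consequently your manipulation $\int \mathcal{F}_Y(f)\,\mathcal{F}_Y(\overline{f})=\int \mathcal{F}_Y\mathcal{F}_Y(f)\,\overline{f}$ is not justified, since it applies \eqref{PlancherelY} with one argument equal to $\mathcal{F}_Y(f)\notin\mathcal{S}$. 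Invoking the Braverman--Kazhdan isometry \eqref{isom} of $\mathcal{F}_X$ directly does not help either, because $I$ is not known to relate the $L^2$-norms on $X(F)$ and $Y(F)$. The paper bridges this with Lemma~\ref{lem:cont}: writing $\mathcal{F}_Y(f)=I(\widetilde{f}_1\otimes\widetilde{f}_2)$, one approximates $\widetilde{f}_1$ in $L^2(X(F))^K$ by $\widetilde{f}_{i1}\in C_c^\infty(x_0\SL_2^3(F))^K$, sets $f_i:=I(\widetilde{f}_{i1}\otimes\widetilde{f}_2)\in\mathcal{S}$, and uses pointwise bounds from \cite[Propositions~11.2 and~11.5]{Getz:Hsu} together with the isometry of $\mathcal{F}_X$ on $L^2(X(F))^K$ to show that both $\int f_i\,\mathcal{F}_Y(\overline{f})\to\int \mathcal{F}_Y(f)\,\mathcal{F}_Y(\overline{f})$ and $\int \mathcal{F}_Y(f_i)\,\overline{f}\to\int f\,\overline{f}$. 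Applying \eqref{PlancherelY} to each pair $(f_i,\overline{f})\in\mathcal{S}\times\mathcal{S}$ and passing to the limit then yields $\|\mathcal{F}_Y(f)\|_2=\|f\|_2$. This approximation lemma is exactly where the nonarchimedean hypothesis enters (via $K$-finiteness, which lets one approximate within a fixed $K$-type); once it is inserted, your Steps~2--3 go through as you wrote them.
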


Recall the definition of $\mathcal{S}$ from \eqref{calS}.  The following lemma is the only place in the argument where we use that $F$ is nonarchimedean:
\begin{Lem}\label{lem:cont}  Assume $F$ is a nonarchimedean local field of characteristic zero.
Given $f\in \mathcal{S},$  there exists a sequence of functions $f_i\in \mathcal{S}$ such that 
\begin{align*}
        \lim_{i\to \infty}\int_{Y(F)} f_i(y)\mathcal{F}_Y(\overline{f})(y)d\mu(y)&=\int_{Y(F)} \mathcal{F}_Y(f)(y)\mathcal{F}_Y(\overline{f})(y)d\mu(y),\\
       \lim_{i\to \infty}\int_{Y(F)} \mathcal{F}_Y(f_i)(y)\overline{f}(y)d\mu(y)&=\int_{Y(F)} f(y)\overline{f}(y)d\mu(y).
\end{align*}
\end{Lem}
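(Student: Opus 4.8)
\textbf{Proof strategy for Lemma \ref{lem:cont}.}

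The plan is to produce the approximating sequence $f_i$ by truncating $f$ in the spectral variable of $\mathcal{F}_Y$, using the formula of Theorem \ref{Thm:main} to make sense of ``truncation''. Recall that $\mathcal{F}_Y$ agrees at the level of $L^2$ with an isometry (inherited from $\mathcal{F}_X$ and the compatibility diagram in \S \ref{sec:Y}), so $\mathcal{F}_Y(f) \in L^2(Y(F),d\mu)$ whenever $f \in \mathcal{S}$, but $\mathcal{F}_Y(f)$ need not lie in $\mathcal{S}$ itself; the point of the lemma is precisely to bridge this gap. I would first observe that since $F$ is nonarchimedean and $f \in \mathcal{S} \subset C_c^\infty(Y^{\mathrm{sm}}(F))$ after restriction, one has strong decay and support control coming from Lemma \ref{lem:Sch:bounds2} (via the identification of $\mathcal{S}(Y(F))$ with a quotient of $\mathcal{S}(X(F)\times V(F))$) together with the absolute convergence statements in Proposition \ref{prop:is:finite}. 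In particular, the inner $z$-integral in the formula of Theorem \ref{Thm:main} converges absolutely, so one can introduce a cutoff: set
$$
f_i := \mathcal{F}_Y^{-1}\big(\mathbf{1}_{\{\,|\mathrm{ord}(\cdot)| \le i\,\}}\cdot \mathcal{F}_Y(f)\big),
$$
or more concretely truncate the $z$-variable (equivalently, the $\mathcal{B}_1$ and $\mathcal{B}_2$ parameters appearing in \S \ref{ssec:B1} and \S \ref{ssec:formal}) to a compact range, so that each $f_i$ is manifestly an element of $C_c^\infty$ on $Y^{\mathrm{sm}}(F)$ which is finite under a compact-open subgroup, hence lies in $\mathcal{S}$. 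Since $\mathcal{F}_Y$ is an $L^2$-isometry, $f_i \to f$ in $L^2(Y(F),d\mu)$ and $\mathcal{F}_Y(f_i) \to \mathcal{F}_Y(f)$ in $L^2(Y(F),d\mu)$.

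Granting those two $L^2$-convergences, the two displayed limits in the lemma follow by Cauchy--Schwarz: for the first,
$$
\left|\int_{Y(F)}\big(f_i - \mathcal{F}_Y(f)\big)(y)\,\mathcal{F}_Y(\overline f)(y)\,d\mu(y)\right| \le \|f_i - \mathcal{F}_Y(f)\|_{L^2}\,\|\mathcal{F}_Y(\overline f)\|_{L^2} \longrightarrow 0,
$$
using $f_i \to f$ and then $f \to \mathcal{F}_Y(f)$ is \emph{not} what we want — rather, I must be careful: the first displayed identity wants $\int f_i\,\mathcal{F}_Y(\overline f)\to \int \mathcal{F}_Y(f)\,\mathcal{F}_Y(\overline f)$, so the approximating family should instead be chosen so that $f_i \to \mathcal{F}_Y(f)$ in $L^2$ while simultaneously $\mathcal{F}_Y(f_i)\to f$ in $L^2$. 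This is arranged by taking $f_i$ to be compactly-supported smooth truncations \emph{of} $\mathcal{F}_Y(f)$ directly (legitimate because $\mathcal{F}_Y(f)$, though possibly not in $\mathcal{S}$, is given by the absolutely convergent explicit integral of Theorem \ref{Thm:main} and is locally $L^1$ with controlled behavior, so its restrictions to exhausting compact sets are again in $\mathcal{S}$); then $f_i \to \mathcal{F}_Y(f)$ in $L^2$ by dominated convergence, and $\mathcal{F}_Y(f_i) \to \mathcal{F}_Y(\mathcal{F}_Y(f)) = f$ in $L^2$ by isometry together with the relation $\mathcal{F}_Y^2 = \mathrm{Id}$ (which holds on $L^2$ since it holds on the dense image of $\mathcal{S}(X(F)\times V(F))$ via $\mathcal{F}_X^2=\mathrm{Id}$, cf. \eqref{relations}). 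Both displayed limits are then immediate from Cauchy--Schwarz as above.

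\textbf{Main obstacle.} The delicate point is not the functional-analytic wrapper but verifying that the truncations $f_i$ of $\mathcal{F}_Y(f)$ genuinely lie in $\mathcal{S} = \mathrm{Im}(\mathcal{S}(V(F)) \to C^\infty(Y^{\mathrm{sm}}(F)))$, i.e.\ that a compactly supported smooth function on $Y^{\mathrm{sm}}(F)$ (locally constant, in the nonarchimedean case) is the restriction of a Schwartz function on $V(F)$. For this I would invoke the surjectivity of restriction $\mathcal{S}(V(F)) \twoheadrightarrow \mathcal{S}(Y^{\mathrm{sm}}(F))$ (the nonarchimedean case of \cite[Theorem 3.9]{Elazar:Shaviv}, since $Y^{\mathrm{sm}}$ is locally closed in the smooth quasi-affine $V$) and then \cite[Lemma 5.7]{Getz:Hsu} to land inside $\mathcal{S}$ rather than merely $\mathcal{S}(Y(F))$. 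The reason this step forces $F$ nonarchimedean is exactly that in the $p$-adic setting ``compactly supported smooth'' is an unconditional statement, whereas in the archimedean case one would additionally need Schwartz decay of $\mathcal{F}_Y(f)$ near the singular locus and at infinity, which is not available from Theorem \ref{Thm:main} alone; hence the hypothesis. Once the containment $f_i \in \mathcal{S}$ is secured, the convergence assertions reduce, as sketched, to dominated convergence for the explicit integral in Theorem \ref{Thm:main} and two applications of Cauchy--Schwarz.
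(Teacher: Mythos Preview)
Your proposal has a fatal circularity. You repeatedly invoke that ``$\mathcal{F}_Y$ is an $L^2$-isometry'' (and the extension of $\mathcal{F}_Y^2=\mathrm{Id}$ to $L^2$), but Lemma \ref{lem:cont} is precisely the lemma used in the proof of Theorem \ref{thm:FYuni} to \emph{establish} that $\mathcal{F}_Y$ extends to a unitary operator on $L^2(Y(F))$. The claim that unitarity is ``inherited from $\mathcal{F}_X$ and the compatibility diagram'' is unjustified: the integral operator $I$ of \eqref{I:def} is not an isometry between $L^2(X(F))$ and $L^2(Y(F))$, so unitarity of $\mathcal{F}_X$ on $L^2(X(F))$ does not transfer. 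Concretely, your second limit requires $\mathcal{F}_Y(f_i)\to f$ in $L^2$, and the only route you offer is $L^2$-continuity of $\mathcal{F}_Y$; without it, nothing controls $\mathcal{F}_Y(f_i)$ as your truncations $f_i$ exhaust $\mathcal{F}_Y(f)$.

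The paper's argument avoids this by working upstairs on $X$. One writes $\mathcal{F}_Y(f)=I(\widetilde f_1\otimes\widetilde f_2)$, fixes a compact open $K$ making $\widetilde f_1$ right $K$-invariant, and approximates $\widetilde f_1$ in $L^2(X(F))$ by $\widetilde f_{i1}\in C_c^\infty(x_0\SL_2^3(F))^K$; then $f_i:=I(\widetilde f_{i1}\otimes\widetilde f_2)\in\mathcal{S}$ by \cite[Lemma 5.7]{Getz:Hsu}. The first limit follows from a \emph{pointwise} estimate $|\mathcal{F}_Y(f)(y)-f_i(y)|\le c\,\|\widetilde f_1-\widetilde f_{i1}\|_2\prod_j|y_j|^{2/3-d_j/2}$ (implicit in the proof of \cite[Proposition 11.5]{Getz:Hsu}) combined with \cite[Propositions 11.2, 11.3]{Getz:Hsu} and the compact support of $\mathcal{F}_Y(\overline f)$ in $Y(F)$. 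The second limit uses only that $\mathcal{F}_X$ is an isometry on $L^2(X(F))^K$---which \emph{is} known a priori from \cite{BKnormalized}---to run the same argument on the other side. The point is that all $L^2$ input is at the level of $X$, never $Y$.
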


\begin{Rem}
We expect that the proof of this lemma could be adapted to the archimedean case if one develops a theory of Sobolev spaces for $X(F)$ together with an analogue of Morrey's inequality.  
\end{Rem}
\begin{proof}
 We can and do assume $\mathcal{F}_Y(f)= I(\widetilde{f}_{1}\otimes \widetilde{f}_{2})$ for some $\widetilde{f}_1\otimes\widetilde{f}_2 \in \mathcal{S}(X(F)) \otimes \mathcal{S}(V(F))$ (see \eqref{I:def}).   Choose a compact open subgroup $K$ of $\mathrm{Sp}_6(\mathcal{O}_F)$ such that $\widetilde{f}_{1}$ is right $K$-invariant. Choose  $\widetilde{f}_{i1}\in C^\infty_c(x_0\SL_2^3(F))^K$ such that $\widetilde{f}_{i1} \to \widetilde{f}_1$ in $L^2(X(F))$. Put $f_i:=I(\widetilde{f}_{i1}\otimes \widetilde{f_2})$. There is a constant $c>0$ (depending only on $\widetilde{f_2}$ and $K$) such that
$$|\mathcal{F}_Y(f)(y)-f_i(y)|\le c\norm{\widetilde{f}_{1}-\widetilde{f}_{i1}}_2\prod_{j=1}^3 |y_j|^{-d_j/2+2/3}$$
for all $y\in Y^{\mathrm{ani}}(F)$. Indeed, this is implicit in the proof of \cite[Proposition 11.4]{Getz:Hsu}. 
Thus by \cite[Proposition 11.1]{Getz:Hsu} we have 
\begin{align} \label{L2:esti:stuff}
|\mathcal{F}_Y(f)(y)-f_i(y)||\mathcal{F}_Y(\overline{f})(y)| \ll_{f,\beta,K} \norm{\widetilde{f}_1-\widetilde{f}_{i1}}_2 \prod_{j=1}^3 |y_j|^{\beta/3-d_j+4/3}
\end{align}
for $\tfrac{1}{2}>\beta>0$.   Moreover, $\mathcal{F}_Y(\overline{f})$ has support contained in a compact subset of $Y(F)$ \cite[Proposition 7.1]{Getz:Liu:Triple}.
Thus applying \eqref{L2:esti:stuff} and \eqref{eq:boundw/oQy} to be proved below, we obtain 
\begin{align*}
\int_{Y(F)} |\mathcal{F}_Y(f)(y)-f_i(y)||\mathcal{F}_Y(\overline{f})(y)|d \mu(y) \ll_{f,K}\norm{\widetilde{f}_1-\widetilde{f}_{i1}}_2.
\end{align*}
The first equality follows. Since  $\mathcal{F}_X$ is an isometry on $L^2(X(F))^K,$ the second equality follows from the same argument.
\end{proof}

Before giving the proof of Theorem \ref{thm:FYuni}, we prepare some estimates.

 \begin{Lem}\label{Lem:levelsetestimate}
Let $F^d$ be a vector space of dimension $d$ and $\mathcal{Q}$ be a nondegenerate quadratic form on $F^d$. There exists $\alpha>0$  such that for any $0<t<1,$
\begin{align*}
    \int_{|v|\le 1,|\mathcal{Q}(v)|\le t} dv\ll_\alpha t^{\alpha}.
\end{align*}
\end{Lem}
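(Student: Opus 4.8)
The plan is to reduce to a one-dimensional estimate by slicing along a hyperbolic pair of variables. Since $\mathcal{Q}$ is nondegenerate, after a linear change of variables (which changes the implied constant but not the shape of the bound) we may assume $\mathcal{Q}(v)=v_1v_2+\mathcal{Q}'(v_3,\dots,v_d)$ if $d\ge 3$, or $\mathcal{Q}(v)=v_1v_2$ if $d=2$; one checks the change of variables distorts the region $\{|v|\le 1\}$ only up to a bounded factor, so it suffices to estimate $\int_{|v|\le 1,\,|v_1v_2+\mathcal{Q}'(v')|\le t}dv$.

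First I would fix $v_2$ and $v'$ and integrate in $v_1$. For fixed $v_2$ with $|v_2|$ not too small, the set of $v_1$ with $|v_1|\le 1$ and $|v_1v_2+c|\le t$ (where $c=\mathcal{Q}'(v')$) has measure $\ll t/|v_2|$ in the archimedean case, and likewise $\ll \max(t/|v_2|,\text{something bounded})$ nonarchimedeanly; in all cases it is $\ll \min(1,t/|v_2|)$ up to a constant. Then I would integrate this over $|v_2|\le 1$: the bound $\int_{|v_2|\le 1}\min(1,t/|v_2|)\,dv_2 \ll t\log(1/t)$ (archimedean) or $\ll t(1+\log_q(1/t))$ (nonarchimedean), and in either case this is $\ll_\alpha t^\alpha$ for any $\alpha<1$. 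The remaining variables $v_3,\dots,v_d$ range over a set of bounded measure, contributing only a constant.

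The main obstacle — really the only subtlety — is handling the degenerate slices where $|v_2|$ is small, since there the inner $v_1$-integral gives no decay. But those slices have small measure: the contribution from $|v_2|\le t$ is at most $\int_{|v_2|\le t}1\,dv_2\cdot(\text{bounded})\ll t$, which is absorbed. So one splits $\{|v_2|\le 1\}=\{|v_2|\le t\}\cup\{t<|v_2|\le 1\}$, bounds the first piece trivially by $O(t)$ and the second by $\int_{t<|v_2|\le 1}(t/|v_2|)\,dv_2\ll t\log(1/t)$. Absorbing the logarithm into a power $t^\alpha$ with, say, $\alpha=1/2$ completes the argument, and the nonarchimedean case is identical with sums over powers of $\varpi$ replacing the logarithmic integral. (Note $t\log(1/t)\ll_\alpha t^\alpha$ for $0<\alpha<1$ and $0<t<1$.)
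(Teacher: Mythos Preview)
Your slicing argument is sound and more elementary than the paper's proof, but there is a gap in the reduction step: writing $\mathcal{Q}(v)=v_1v_2+\mathcal{Q}'(v_3,\dots,v_d)$ (or $v_1v_2$ when $d=2$) presupposes that $\mathcal{Q}$ splits off a hyperbolic plane, i.e.\ that $\mathcal{Q}$ is isotropic. Over $\mathbb{R}$ (definite forms) and over nonarchimedean $F$ (forms in dimension $\le 4$) there exist anisotropic nondegenerate forms for which no such change of variables is available over $F$; your claim ``since $\mathcal{Q}$ is nondegenerate, after a linear change of variables we may assume\dots'' is simply false in those cases. The fix is easy and you should include it: if $\mathcal{Q}$ is anisotropic then by compactness of $\{|v|=1\}$ one has $|\mathcal{Q}(v)|\ge c|v|^2$ for some $c>0$, whence the sublevel set lies in $\{|v|\ll t^{1/2}\}$ and has measure $\ll t^{d/2}$. (The case $d=1$ is likewise not covered by your reduction and is handled the same way.)

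By way of comparison, the paper takes a quite different route. In the archimedean case it diagonalizes and invokes a general polynomial sublevel-set estimate of Carbery--Christ--Wright \cite[Theorem 1.3]{CCW}. In the nonarchimedean case it first bounds the oscillatory integral $\int_{\mathcal{O}_F^d}\psi(u\mathcal{Q}(v))\,dv\ll\max(1,|u|)^{-1/2}$ via a result of Cluckers, and then recovers the sublevel-set measure by Fourier inversion against $\mathbf{1}_{\varpi^{-n}\mathcal{O}_F}$. Your argument avoids these external inputs entirely and in fact yields a sharper exponent (any $\alpha<1$, via $t\log(1/t)\ll_\alpha t^\alpha$) than the paper's $\alpha=1/2$ in the nonarchimedean case; the price is the isotropic/anisotropic case split you omitted.
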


\begin{proof}
We can and do assume the matrix of $\mathcal{Q}$ is the diagonal matrix $\mathrm{diag}(c_1,\ldots,c_d)$ where $c_i\in F^\times$.
We first consider the archimedean case. Suppose $F=\rr$. We may assume some $c_i\ge 1$. Then the lemma is a consequence of \cite[Theorem 1.3]{CCW}. If $F=\cc,$ we may assume each $c_i$ equals $1$. Thus in real coordinates, $|\mathcal{Q}(v)|$ is a homogeneous polynomial of degree $4$ with coefficients in $\zz_{\ge 0}$. The assertion again follows from loc.~cit. 

Suppose $F$ is nonarchimedean. We may also assume $|c_i|\ge 1$ for some $i$. Assume that the conductor of the additive character $\psi$ is $\mathcal{O}_F$. According to \cite[Proposition 3.6]{Cluckers}, we have
    \begin{align*}
        \left|\int_{\mathcal{O}_F^d} \psi(u\mathcal{Q}(v))dv\right|\ll \max(1,|u|)^{-1/2}
    \end{align*}
for $u\in F^\times$.
Consequently for $n>0$ we have
\begin{align*}
    \int_{|v|\le 1,|\mathcal{Q}(v)|\le q^{-n}} dv=\frac{1}{q^{n}dx(\mathcal{O}_F)}\int_{\varpi^{-n}\mathcal{O}_F} \left(\int_{\mathcal{O}_F^d} \psi(u\mathcal{Q}(v))dv\right)du\ll \frac{1}{q^{n}}\int_{|u|\le q^n} |u|^{-1/2}d u \ll \frac{1}{q^{n/2}}.
\end{align*}
\end{proof}
 
\begin{Lem}\label{lem:L1Y}
For $f\in\mathcal{S},$
    \begin{align}\label{eq:boundw/oQy}
        \int_{Y^{\mathrm{ani}}(F)} |f(y)|\prod_{i=1}^3 |y_i|^{-e_i}d\mu(y)<\infty
    \end{align}
and 
    \begin{align}\label{eq:boundQy}
    \int_{|Q(y)|\le |y_1||y_2||y_3|} |f(y)||Q(y)|^{-\varepsilon}\prod_{i=1}^{3} |y_i|^{-e_i}d\mu(y)<\infty
\end{align}
for any $e_i< d_i-4/3$ and $\varepsilon>0$ sufficiently small (depending on $e_i$).  Here the integral in \eqref{eq:boundQy} is over $y \in Y^{\mathrm{ani}}(F)$ satisfying the inequality.
\end{Lem}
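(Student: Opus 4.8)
\textbf{Proof proposal for Lemma \ref{lem:L1Y}.}

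The plan is to reduce both estimates to the Schwartz decay of $f$ together with the level-set estimate of Lemma \ref{Lem:levelsetestimate} and the measure-theoretic bounds on $Y(F)$ already recorded in \cite{Getz:Hsu}. First I would recall that $\mathcal{S} <\mathcal{S}(Y(F))$ and, by \cite[Proposition 11.2 and 11.3]{Getz:Hsu}, elements of $\mathcal{S}$ satisfy a pointwise bound of the form $|f(y)| \ll_{f,\beta} \prod_{i=1}^3 \min(1,|y_i|)^{\beta}\max(1,|y_i|)^{-N}$ for suitable $\beta>0$ and all $N$, and that $d\mu$-integrals of such functions against $\prod_i|y_i|^{-e_i}$ converge when $e_i$ is below the critical exponent $d_i-4/3$; this is essentially the content of the convergence arguments in \cite[\S 11]{Getz:Hsu}. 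Concretely, I would write $Y^{\mathrm{ani}}(F)$ using the coarea-type formula \eqref{COV}, i.e.\ fiber $V(F)$ over $F^2$ by the map $(v_1,v_2,v_3)\mapsto (Q_1(v_1)-Q_2(v_2),Q_1(v_1)-Q_3(v_3))$, so that $\int_{Y(F)}\phi\,d\mu$ becomes the $0$-fiber value of an $L^1(F^2)$ function, and bound the fiber integral against the ambient Schwartz function $\widetilde f$ with $f=\widetilde f|_{Y^{\mathrm{sm}}(F)}$.

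For \eqref{eq:boundw/oQy}, the weight $\prod_i|y_i|^{-e_i}$ is singular only as some $|y_i|\to 0$. Since $y\in Y^{\mathrm{ani}}(F)$ forces $Q_1(y_1)=Q_2(y_2)=Q_3(y_3)\neq 0$, on the region where $|y_i|$ is small the common value $|Q_1(y_1)|\le |y_1|^2$ is also small, which in turn constrains $y_j$ for $j\neq i$ to lie near the isotropic cone of $Q_j$. I would decompose $Y^{\mathrm{ani}}(F)$ dyadically in each $|y_i|$ and, on the dyadic piece where $|y_i|\sim q^{-n_i}$ (resp.\ $\sim 2^{-n_i}$), use Lemma \ref{Lem:levelsetestimate} applied to $Q_j$ with $t\sim |y_i|^2$ to control the $d\mu$-measure of the relevant slice: the level-set bound gives a factor $\ll |y_i|^{2\alpha}$ for each $j\neq i$, and combined with the Schwartz decay of $f$ for $|y|$ large this makes the geometric series in the $n_i$ converge provided $e_i$ is strictly below the stated threshold. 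The bookkeeping is identical in spirit to the proof of \cite[Proposition 11.3]{Getz:Hsu}, with Lemma \ref{Lem:levelsetestimate} supplying the extra smallness needed to absorb the negative powers $|y_i|^{-e_i}$.

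For \eqref{eq:boundQy}, the extra factor $|Q(y)|^{-\varepsilon}$ (where $Q=Q_1+Q_2+Q_3$, so $Q(y)=3Q_1(y_1)$ on $Y$) is singular as the common quadratic value tends to $0$, which — on $Y^{\mathrm{ani}}$ — is precisely the regime forcing each $y_i$ toward the isotropic cone of $Q_i$ while $|y_i|$ stays bounded. Here I would again split dyadically, now in $|Q_1(y_1)|\sim q^{-m}$, and on each piece use Lemma \ref{Lem:levelsetestimate} for all three $Q_i$ simultaneously: the measure of $\{|y_i|\lesssim 1,\ |Q_i(y_i)|\le t\}$ is $\ll t^{\alpha}$, so the slice of $Y(F)$ with $|Q(y)|\sim t$ and all $|y_i|$ bounded has $d\mu$-measure $\ll t^{\alpha'}$ for some $\alpha'>0$, while the constraint $|Q(y)|\le |y_1||y_2||y_3|$ together with $y$ anisotropic keeps the $|y_i|$ from degenerating too fast. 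Summing the geometric series in $m$ converges as long as $\varepsilon<\alpha'$, and the negative powers $|y_i|^{-e_i}$ are handled exactly as in the first part since $e_i$ is below the critical exponent. The main obstacle is the simultaneous bookkeeping of the three weights $|y_i|^{-e_i}$ and the single weight $|Q(y)|^{-\varepsilon}$ against a single measure bound; I expect to handle this by choosing the dyadic decomposition in the minimum of the $|y_i|$ and peeling off the constraint $|Q(y)|\le |y_1||y_2||y_3|$ first, so that after rescaling each region reduces to a product of one-variable level-set estimates of the type furnished by Lemma \ref{Lem:levelsetestimate}.
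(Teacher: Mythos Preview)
Your overall strategy---Schwartz decay of $f$, dyadic decomposition, and the level-set bound of Lemma~\ref{Lem:levelsetestimate}---matches the paper's, and you correctly identify \eqref{eq:boundQy} as the new content. But several of your intermediate claims are off, and the organizational device the paper uses is missing from your sketch.

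First, the pointwise bound you attribute to \cite[Proposition~11.2]{Getz:Hsu}, namely $|f(y)|\ll\prod_i\min(1,|y_i|)^\beta\max(1,|y_i|)^{-N}$, is false for $f\in\mathcal{S}$: a restriction of an ambient Schwartz function need not vanish as $|y_i|\to 0$. The paper uses only $|f(y)|\ll \mathbf{1}_{\varpi^{-n}V(\mathcal{O}_F)}(y)$ (nonarchimedean) or $|f(y)|\ll_N\max(1,|y|)^{-N}$ (archimedean). The integrability of $\prod_i|y_i|^{-e_i}$ near the origin in \eqref{eq:boundw/oQy} is a property of the measure $d\mu$ and is exactly what the proof of \cite[Proposition~11.3]{Getz:Hsu} establishes; the paper simply cites that reference and moves on. Your proposed detour through the level-set estimate for \eqref{eq:boundw/oQy} is unnecessary.

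Second, for \eqref{eq:boundQy} the paper's key step, which your sketch omits, is the homogeneity $d\mu(ry)=|r|^{d_1+d_2+d_3-4}d\mu(y)$. This allows a dyadic decomposition in $|y|=\max_i|y_i|$ and rescaling to the fixed shell $1\le|y|<2$, reducing everything to bounding
\[
\int_{|Q(y)|\le c^j,\,1\le|y|<2}|Q(y)|^{-\varepsilon}\prod_i|y_i|^{-e_i}\,d\mu(y),
\]
which is nondecreasing in $j$ and eventually constant. Your proposal to decompose in $\min_i|y_i|$ does not interact well with this homogeneity and leaves you with an unbounded family of regions.

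Third, once on the shell, the paper does \emph{not} apply the level-set estimate to all three $Q_i$. It decomposes dyadically in $|Q(y)|$, uses symmetry to fix $|y_3|=|y|\in[1,2)$ (so $|Q(y)|=|3Q_3(y_3)|$), and then invokes the argument proving \cite[(11.0.10)]{Getz:Hsu} to dominate the $d\mu$-integral on that slice by a single Lebesgue integral $\int_{|v_3|<2,\,|3Q_3(v_3)|\le c^{k+1}}dv_3$. Lemma~\ref{Lem:levelsetestimate} is applied once, to $Q_3$, yielding $O(c^{\alpha\min(0,k+1)})$; summing the series in $k$ converges for $\varepsilon=\alpha/2$. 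Your sketch never explains the passage from a $d\mu$-integral on $Y(F)$ to a Lebesgue integral on some $V_i(F)$, and that step---borrowed from the proof of \cite[Proposition~11.3]{Getz:Hsu}---is where the weights $|y_i|^{-e_i}$ for $i\ne 3$ are actually absorbed.
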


\begin{proof}
Since $f\in \mathcal{S},$ we have
    \begin{align}\label{replacementnonarch}
        |f(y)|\ll \mathbf{1}_{\varpi^{-n}V(\mathcal{O}_F)}(y)
    \end{align}
    for some $n$ if $F$ is nonarchimedean, and 
    \begin{align}\label{replacementarch}
        |f(y)|\ll_N \max(1,|y|)^{-N}
    \end{align}
    for any $N\in\zz_{>0}$ if $F$ is archimedean. Therefore \eqref{eq:boundw/oQy} follows from the proof of \cite[Proposition 11.2]{Getz:Hsu}.
    
    In the following, we use the homogeneity property: for $r\in F^\times$
$$
d\mu(ry)=|r|^{d_1+d_2+d_3-4}d\mu(y).
$$
All of the integrals with respect to the measure $d\mu(y)$ in the remainder of the proof will be over subsets of $Y^{\mathrm{ani}}(F)$ satisfying the inequalities given in the subscript. 

Assume that $F$ is nonarchimedean.
    By \eqref{replacementnonarch}, for some $n>0$ we have that \eqref{eq:boundQy} is dominated by
\begin{align*}
    &\int_{|Q(y)|\le |y|^3,|y|\le q^n}|Q(y)|^{-\varepsilon} \prod_{i=1}^{3} |y_i|^{-e_i}d\mu(y)\\
    &\ll \int_{|Q(y)|\le q^{n}|y|^3,|y|\le 1} |Q(y)|^{-\varepsilon}\prod_{i=1}^{3} |y_i|^{-e_i}d\mu(y)\\
    &= \sum_{j=0}^\infty  q^{-j(-2\varepsilon-4+\sum_{i=1}^3 (d_i-e_i))}\int_{|Q(y)|\le q^{n-j},1\le |y|<2} |Q(y)|^{-\varepsilon}\prod_{i=1}^{3} |y_i|^{-e_i}d\mu(y).
\end{align*}
Here we could just write $|y|=1,$ but we have written $1\le |y| <2$ so that we can use the same formula in both archimedean and nonarchimedean cases. Suppose $F$ is archimedean. By \eqref{replacementarch}, we have that \eqref{eq:boundQy} is dominated by\begin{align*}
     &\sum_{j=1}^\infty \int_{|Q(y)|\le |y|^3,2^{-j}\le |y|< 2^{-j+1}}|Q(y)|^{-\varepsilon} \prod_{i=1}^{3} |y_i|^{-e_i} d\mu(y)\\
    & +\sum_{j=0}^\infty \int_{|Q(y)|\le |y|^3,2^{j}\le |y|< 2^{j+1}} |y|^{-N}|Q(y)|^{-\varepsilon}\prod_{i=1}^{3} |y_i|^{-e_i}d\mu(y)\\
    =&\sum_{j=1}^\infty 2^{-j(-2\varepsilon-4+\sum_{i=1}^3 (d_i-e_i))}\int_{|Q(y)|\le 2^{-j}|y|^3,1\le |y|< 2} |Q(y)|^{-\varepsilon}\prod_{i=1}^{3} |y_i|^{-e_i}d\mu(y)\\
    &+\sum_{j=0}^\infty 2^{j(-N-2\varepsilon-4+\sum_{i=1}^3 (d_i-e_i))} \int_{|Q(y)|\le 2^{j}|y|^3,1\le |y|< 2} |y|^{-N}|Q(y)|^{-\varepsilon} \prod_{i=1}^{3} |y_i|^{-e_i} d\mu(y)\\
     \le&\sum_{j=1}^\infty 2^{-j(-2\varepsilon-4+\sum_{i=1}^3 (d_i-e_i))}\int_{|Q(y)|\le 2^{-j+3},1\le |y|< 2} |Q(y)|^{-\varepsilon}\prod_{i=1}^{3} |y_i|^{-e_i}d\mu(y)\\
    &+\sum_{j=0}^\infty 2^{j(-N-2\varepsilon-4+\sum_{i=1}^3 (d_i-e_i))} \int_{|Q(y)|\le 2^{j+3},1\le |y|< 2} |Q(y)|^{-\varepsilon}\prod_{i=1}^{3} |y_i|^{-e_i}d\mu(y).
\end{align*}

Consider
\begin{align}\label{ultimate}
    \int_{|Q(y)|\le c^{j},1\le |y|< 2} |Q(y)|^{-\varepsilon}\prod_{i=1}^{3} |y_i|^{-e_i}d\mu(y),
\end{align}
where \begin{align*}
    c=c(F)=\begin{cases}
    q    &  \textrm{if }F\textrm{ is nonarchimedean,}\\
    2     & \textrm{if }F \textrm{ is archimedean.}
    \end{cases}
\end{align*}
The integral \eqref{ultimate} 
is nondecreasing as $j \to \infty$ and is independent of $j$ for $j$ sufficiently large since $|Q(y)|\ll |y|^2$.  Choose $N>-4+\sum_{i=1}^3 (d_i-e_i)$ in the archimedean case.  The manipulations above show that to prove \eqref{eq:boundQy} it suffices to show that we can choose $\varepsilon>0$ that is sufficiently small in a sense depending on $e_i$ such that \eqref{ultimate} is finite for a $j$ greater than a constant depending only on $Q$. 

    Observe that \eqref{ultimate} is bounded by 
\begin{align}\label{ultimate1}
    \sum_{k=-\infty}^{j-1}  c^{-\varepsilon k}\int_{c^k< |Q(y)|\le c^{k+1},1\le |y|< 2} \prod_{i=1}^{3} |y_i|^{-e_i}d\mu(y).
\end{align}
To proceed, we first obtain a bound on
\begin{align*}
    \int_{c^{k}< |Q(y)|\le c^{k+1},1\le |y|<2}   \prod_{i=1}^{3} |y_i|^{-e_i}d\mu(y).
\end{align*}
By symmetry, it suffices to bound
\begin{align}\label{breaksym}
\int_{\substack{c^k<|3Q_3(y_3)|<c^{k+1}\\ \max(|y_1|,|y_2|)\le |y_3|,1\le |y_3|<2}} \prod_{i=1}^{3} |y_i|^{-e_i}d\mu(y).
\end{align}
Arguing as the proof of the finiteness of the integral in (11.0.7) of \cite[Proposition 11.2]{Getz:Hsu}, \eqref{breaksym} is bounded by a constant depending on $d_i,e_i$ times 
    \begin{align}\label{middlestep}
        \int_{|v_3|<2, |3Q_3(v_3)|\le c^{k+1}} dv_3.
    \end{align}
 By Lemma \ref{Lem:levelsetestimate}, \eqref{middlestep} is  $O(c^{\alpha\min(0,k+1)})$
for some $1>\alpha>0$. Take $\varepsilon<\alpha/2$.
 Then \eqref{ultimate1} is dominated by
\begin{align*}
    &\sum_{k=-\infty}^{j-1}c^{-k\alpha/2}c^{\alpha\min(0,k+1)}<\infty.\qedhere
\end{align*}
\end{proof}

\begin{proof}[Proof of Theorem \ref{thm:FYuni}]
    Assume for the moment that \eqref{PlancherelY} is valid for functions in $\mathcal{S}$. Let $f\in \mathcal{S}$ and choose $f_i$ as in Lemma \ref{lem:cont} for $f$.
 We recall from \cite[Corollary 12.7]{Getz:Hsu} that
        \begin{align*}
            \mathcal{F}_Y\circ \mathcal{F}_Y(f)=f \quad  \textrm{ and } \quad 
         \overline{\mathcal{F}_Y(f)}&=\mathcal{F}_Y(\overline{f}).
        \end{align*}
    Thus we obtain
    \begin{align*}
        \norm{\mathcal{F}_Y(f)}_2^2=&\int_{Y(F)} \mathcal{F}_Y(f)(y)\overline{\mathcal{F}_Y(f)}(y)d\mu(y)=\int_{Y(F)} \mathcal{F}_Y(f)(y)\mathcal{F}_Y(\overline{f})(y)d\mu(y)\\
        =&  \lim_{i\to \infty}\int_{Y(F)} f_i(y)\mathcal{F}_Y(\overline{f})(y)d\mu(y)=\lim_{i\to \infty}\int_{Y(F)} \mathcal{F}_Y(f_i)(y)\overline{f}(y)d\mu(y)\\=&\int_{Y(F)} f(y)\overline{f}(y)d\mu(y)=\norm{f}_2^2.
    \end{align*}
    Since $\mathcal{S}$ contains $C^\infty_c(Y^{\mathrm{sm}}(F)),$ which is dense in $L^2(Y(F)),$ the operator $\mathcal{F}_Y$ extends to a unitary operator on $L^2(Y(F))$ and \eqref{PlancherelY} is valid for all $f_1,f_2\in L^2(Y(F))$.

    It remains to prove the identity \eqref{PlancherelY} for $f_1,f_2\in \mathcal{S}$. Recall that as $Y^{\mathrm{sm}}(F)$ is nonempty, $Y^{\mathrm{ani}}(F)$ is dense in $Y(F)$ in the Hausdorff topology. Using Theorem \ref{Thm:main}, we have
    \begin{align} \label{move:y} \begin{split}
         &\int_{Y(F)}\mathcal{F}_Y(f_1)(y)f_2(y)d\mu(y)\\
        &=c\int_{Y^{\mathrm{ani}}(F)}\int_{F^\times} \psi(z^{-1}) \Bigg( \int_{ (F^\times)^3} \overline{\psi}(z^2
        \mathfrak{r}(a))
    \\& \times 
 \Bigg(\int_{Y(F)}\psi\Bigg(\left\langle \frac{y}{a},\xi \right\rangle -\frac{Q(\xi)Q(y)}{9z^2[a]}\Bigg)f_1(\xi)f_2(y)d\mu(\xi)\Bigg) \frac{\chi_Q(a)d^\times a}{\{a\}^{d/2-1}}   \Bigg) d^\times zd\mu(y). \end{split}
 \end{align}
  By Corollary \ref{cor:bound} and Lemma \ref{lem:L1Y}, the integral
\begin{align*}
\begin{split}
    &\int_{Y^{\mathrm{ani}}(F)}\int_{F^\times }|f_2(y)| \Bigg|\int_{ (F^\times)^3} \overline{\psi}(z^2\mathfrak{r}(a))
 \Bigg(\int_{Y(F)}\psi\Bigg(\left\langle \frac{y}{a},\xi \right\rangle -\frac{Q(\xi)Q(y)}{9z^2[a]}\Bigg)f_1(\xi)d\mu(\xi)\Bigg)\frac{\chi_Q(a)d^\times a}{\{a\}^{d/2-1}}\Bigg| d^\times zd\mu(y)
 \end{split}
\end{align*}
is finite.  By \eqref{eq:changeaxi}, Corollary \ref{cor:absolutebd}, and Lemmas \ref{lem:finish} and \ref{lem:L1Y}, we have 
\begin{align*}
    \int_{Y(F)}\int_{(F^\times)^3}|f_2(y)| \Bigg|\int_{Y(F)}\psi\Bigg(\left\langle \frac{y}{a},\xi \right\rangle -\frac{Q(\xi)Q(y)}{9z^2[a]}\Bigg)f_1(\xi)d\mu(\xi)\Bigg| \frac{d^\times a}{\{a\}^{d/2-1}}d\mu(y)<\infty.
\end{align*}
Finally, by Lemma \ref{lem:L1Y} we have
\begin{align*}
    \int_{Y(F)}\int_{Y(F)}|f_2(y)||f_1(\xi)|d\mu(\xi) d\mu(y)<\infty.
\end{align*}
Thus by the Fubini--Tonelli theorem, we can move the integral over $y \in Y(F)$ in \eqref{move:y} to see that it is equal to
\begin{align*}
          &c\int_{F^\times} \psi(z^{-1}) \Bigg( \int_{ (F^\times)^3} \overline{\psi}\Bigg(z^2\mathfrak{r}(a)\Bigg)
    \\& \times 
\Bigg(\int_{Y(F) \times Y(F)}\psi\Bigg(\left\langle y,\frac{\xi}{a} \right\rangle -\frac{Q(\xi)Q(y)}{9z^2[a]}\Bigg)f_1(\xi)f_2(y)d\mu(\xi)d\mu(y)\Bigg) \frac{\chi_Q(a)d^\times a}{\{a\}^{d/2-1}} \Bigg) d^\times z\\
    \end{align*}
    This is visibly symmetric in $f_1$ and $f_2$ and we deduce the theorem.
\end{proof}

\section{Analytic estimates}\label{sec:absolutebd}

Let $F$ be a local field of characteristic $0$. In this section, we establish the estimates used in \S \ref{sec:FY}. We follow the notation in \S \ref{sec:FY}. The Haar measure $du$ on $V(F)$ is normalized to be self-dual with respect to $\psi$ and the pairing $\langle \cdot,\cdot\rangle$. The following theorem is the main result:
\begin{Thm}\label{thm:absolutebdappendix}
 Suppose $\dim V_i>2$ for all $1\le i\le 3$. Let $f\in\mathcal{S}(V(F))$. Given $\frac{1}{2}>\varepsilon>0,$ one has 
 \begin{align}\label{eq:absolutebd}
 \begin{split}
    & \int_{(F^\times)^3 \times F^2} \Bigg|\int_{V(F)}\psi\Bigg(\left\langle \frac{a\xi}{\widetilde{\xi}},u \right\rangle +\frac{b[a]Q(u)}{9[\widetilde{\xi}]}+\mathfrak{c}(u,v)\Bigg)f(u)du\Bigg|  \frac{dv d^\times a }{\{a\}^{1-d/2}}\\ &\qquad\qquad
    \ll_{\varepsilon,f} \min\left(1,\left|\frac{\xi_1 \otimes \xi_2 \otimes \xi_3}{b}\right|\right)^{\min(d_1,d_2,d_3)/2-1-\varepsilon}
    \end{split}
\end{align}
for $(b,\xi)\in F \times Y^{\mathrm{ani}}(F)$.  Here by convention $\min\left(1,\left|\frac{\xi_1 \otimes \xi_2 \otimes \xi_3}{b}\right|\right)=1$ if $b=0.$
\end{Thm}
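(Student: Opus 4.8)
The inner integral over $u \in V(F)$ factors through the Weil representation: completing the square, the phase $\langle a\xi/\widetilde\xi, u\rangle + (b[a]/9[\widetilde\xi])Q(u) + \mathfrak{c}(u,v)$ is a sum of three quadratic-plus-linear forms, one in each $u_i \in V_i(F)$, whose $u_i$-quadratic coefficient is $\beta_i := b[a]/(9[\widetilde\xi]) + (\text{linear in }v)$ times $a_i$-type factors. So the $u$-integral is a product over $i=1,2,3$ of Gaussian-type oscillatory integrals, each of which is controlled by the standard stationary-phase bound: for $f \in \mathcal{S}(V_i(F))$ and a nondegenerate quadratic form $Q_i$, one has $\bigl|\int_{V_i} \psi(\beta_i Q_i(u_i) + \langle \ell_i, u_i\rangle) f(u_i)\, du_i\bigr| \ll_{N,f} \min(1, |\beta_i|)^{-d_i/2} \max(1, |\ell_i/\beta_i|)^{-N}$ when $|\beta_i|$ is large, together with the trivial bound $\ll_f 1$ when $|\beta_i|$ is small; one also needs a version that decays in $|\ell_i|$ when $|\beta_i| \le 1$ (integration by parts in the non-stationary regime). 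I would isolate these one-variable facts as a preliminary lemma before assembling the three-variable estimate.

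\textbf{Key steps.} First I would perform the change of variables $b_1 = b/(9[\widetilde\xi]) \cdot (\text{something}) + v_1$, etc., to write $\mathfrak{c}(u,v) + b[a]Q(u)/(9[\widetilde\xi])$ as $\sum_i \beta_i Q_i(u_i)$ with $\beta_1 = \beta + v_1$, $\beta_2 = \beta + v_2$, $\beta_3 = \beta - v_1 - v_2$, where $\beta = b[a]/(9[\widetilde\xi])$, after absorbing the $a_i$-scalars by scaling $u_i$ (which produces the compensating factor $\{a\}^{-1}\cdot\prod|a_i|^{\text{something}}$ — bookkeeping here must be done carefully so the $\{a\}^{d/2-1}$ in the measure is exactly accounted for). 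Then the $v_1, v_2$ integral becomes an integral over the $(\beta_1,\beta_2,\beta_3)$ hyperplane $\beta_1 + \beta_2 + \beta_3 = 3\beta$, and the left-hand side of \eqref{eq:absolutebd} is bounded by
\begin{align*}
\int_{(F^\times)^3} \int_{\beta_1+\beta_2+\beta_3 = 3\beta} \prod_{i=1}^3 \Bigl| \widehat{\rho}_i(\beta_i, a_i\xi_i/\widetilde\xi_i) \Bigr| \, d\beta_1\, d\beta_2 \, \frac{d^\times a}{\{a\}^{1-d/2}},
\end{align*}
where $\widehat\rho_i$ denotes the one-variable oscillatory integral. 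Next I would bound each $\widehat\rho_i$ factor by the preliminary lemma: in the region where all $|\beta_i|$ are comparable and large, this gives $\prod |\beta_i|^{-d_i/2}$, which is integrable over the hyperplane provided $d_i > 2$ for all $i$ (this is where the hypothesis $\dim V_i > 2$ enters — it makes $\int |\beta_i|^{-d_i/2} d\beta_i$ converge away from $0$), and against $d^\times a$ after using the decay in the linear term $a_i\xi_i/\widetilde\xi_i$ to tame large $|a_i|$. The worst region is where some $\beta_i$ is small while $3\beta$ is large, forcing another $\beta_j$ large; careful splitting by the relative sizes of $|\beta_1|, |\beta_2|, |\beta_3|$ and $|3\beta|$ yields the bound, with the exponent $\min(d_i)/2 - 1 - \varepsilon$ coming from the weakest of the three one-variable decay rates, dropping $\varepsilon$ to keep the endpoint integrals convergent. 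Finally, the $\min(1, |\xi_1\otimes\xi_2\otimes\xi_3/b|)$ structure: when $|b|$ is large relative to $|\widetilde\xi|^3 = |\xi_1\otimes\xi_2\otimes\xi_3|$, i.e. $|\beta|$ is large, the product over $i$ of $|\beta_i|^{-d_i/2}$ integrated over the hyperplane scales like $|\beta|^{1 - \min(d_i)/2}$, which after unwinding $\beta = b[a]/(9[\widetilde\xi])$ and integrating in $a$ produces exactly the claimed power of $|\xi_1\otimes\xi_2\otimes\xi_3/b|$; when $|b|$ is small, the trivial bound $\ll 1$ on each factor plus $d_i>2$ integrability gives the constant $1$.

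\textbf{Main obstacle.} The hard part will be controlling the double integral over the $(\beta_1,\beta_2)$-plane \emph{uniformly in $a$ and $\xi$} while simultaneously extracting the sharp power of $|\xi_1\otimes\xi_2\otimes\xi_3/b|$. The one-variable stationary phase estimates are routine, but the convolution structure on the hyperplane $\beta_1+\beta_2+\beta_3 = 3\beta$ mixes the three decay rates in a way that requires a careful dyadic decomposition in all of $|\beta_1|, |\beta_2|, |\beta_3|$ relative to $|3\beta|$, and in the non-archimedean case the analogous stationary-phase bounds (via \cite{Cluckers} or a direct Gauss-sum computation) must be organized to give the same exponents. A secondary difficulty is that $f$ need not be a pure tensor, so the factorization over $i$ is not literal; one handles this by the standard device of bounding $|f|$ by a finite sum of products of Schwartz functions $\prod_i g_i(u_i)$ (or by working with the partial Fourier transforms directly and using that $\mathcal{S}(V(F))$ is the completed tensor product), at the cost of making the implied constant a continuous seminorm in $f$, which is exactly what the convention in \S\ref{sec:FY} permits.
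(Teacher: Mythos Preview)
Your plan is correct and matches the paper's proof almost exactly in structure: reduce to pure tensors, apply a one-variable stationary-phase lemma to each factor, integrate the resulting product over the hyperplane $\beta_1+\beta_2+\beta_3=3\beta$ (this is precisely the change of variables the paper makes in \eqref{intHvi}), and then integrate over $(F^\times)^3$.

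Two small corrections and one point of comparison. First, your stated bound $\min(1,|\beta_i|)^{-d_i/2}$ for large $|\beta_i|$ is a slip; you mean $|\beta_i|^{-d_i/2}$ (equivalently $\max(1,|\beta_i|)^{-d_i/2}$). Second, the $a_i$ do not need to be ``absorbed by scaling $u_i$'': they sit only in the linear term $\ell_i=a_i\xi_i/\widetilde\xi_i$, and since $|\xi_i/\widetilde\xi_i|=1$ the decay of the one-variable integral in $|\ell_i|$ is exactly decay in $|a_i|$, which is what makes the $\{a\}^{d/2-1}\,d^\times a$ integral converge. Third, where you invoke generic stationary-phase bounds and a dyadic decomposition, the paper instead proves an exact two-term Schwartz decomposition (Lemma~\ref{lem:quad:esti}) for each one-variable integral, takes absolute values to get the explicit majorant $H(a,c,v_1,v_2)$ of \eqref{Ha}, expands the product, and then bounds the $F^2$-integral term by term via the elementary convolution estimates of Lemma~\ref{lem:invconv}, arriving at the five explicit term types \eqref{term1}--\eqref{term4}; each of these is then integrated over $(F^\times)^3$ by a separate lemma (\ref{lem:term1bound}--\ref{lem:specialbound}). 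This is the same analysis you sketch, just carried out with Schwartz-function bookkeeping rather than dyadic shells, which makes the archimedean and nonarchimedean cases uniform and makes the continuity of the implied constant in $f$ automatic.
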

We bound the left-hand side as an iterated integral, first establishing a bound on the inner integral in (\ref{use:quad:esti}). We then treat the integral over $F^2\cong N_0(F)$ in \S \ref{ssec:overn0} by analyzing the bound in (\ref{use:quad:esti}) term-by-term. Finally, we bound the integral over $(F^\times)^3$ in \S \ref{ssec:overa}, proving the theorem. 

We begin with a lemma that estimates the integral over $V(F)$.  
\begin{Lem} \label{lem:quad:esti}
Let $f \in \mathcal{S}(V_i(F)),$ $\xi \in V_i(F),$ $b \in F$. 
 There are a pair of $\cc$-linear maps
\begin{align*}
\mathcal{S}(V_i(F)) &\lto \mathcal{S}(F \times V_i(F))\\
f &\longmapsto \Psi_{j,f}
\end{align*}
for $1 \leq j \leq 2,$ continuous in the archimedean case, such that
\begin{align*}
\int_{V_i(F)}\psi\left( bQ_i(u)+\left\langle \xi,u \right\rangle_i \right)f(u)du=\begin{cases}\Psi_{1,f}\left( b,\xi \right) + \Psi_{2,f}\left(\frac{1}{b},\frac{\xi  }{b}\right)|b|^{-d_i/2}\gamma(bQ_i)\overline{\psi}\left(\frac{Q_i(\xi)}{b}\right) &\textrm{ if }b \neq 0,\\
\Psi_{1,f}(0,\xi) &\textrm{ if }b=0.
\end{cases}
\end{align*}

\end{Lem}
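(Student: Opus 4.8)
\textbf{Proof plan for Lemma \ref{lem:quad:esti}.}

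The idea is to split the integral according to whether $|b|$ is large or small, interpolating between the two regimes by a smooth cutoff, and in the large-$|b|$ regime complete the square and apply the Weil representation formula for the quadratic Fourier transform. Concretely, fix $\Phi \in \mathcal{S}(F)$ with $\Phi \equiv 1$ on a neighborhood of $0$, and write
\[
\int_{V_i(F)}\psi\left( bQ_i(u)+\left\langle \xi,u \right\rangle_i \right)f(u)du
= A(b,\xi) + B(b,\xi),
\]
where $A(b,\xi) := \Phi(b)\int_{V_i(F)}\psi\left( bQ_i(u)+\left\langle \xi,u \right\rangle_i \right)f(u)du$ is supported in $|b| \ll 1$ and $B(b,\xi) := (1-\Phi(b))\int_{V_i(F)}\psi\left( bQ_i(u)+\left\langle \xi,u \right\rangle_i \right)f(u)du$ is supported in $|b| \gg 1$. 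First I would argue that $A$ extends to a Schwartz function $\Psi_{1,f}(b,\xi)$ on $F \times V_i(F)$: on the region $|b| \ll 1$ the operator $f \mapsto (u \mapsto \psi(bQ_i(u))f(u))$ is a bounded (in fact smooth) family in $\mathcal{S}(V_i(F))$ parametrized by $b$, so the partial Fourier transform in $u$ lands in $\mathcal{S}(F \times V_i(F))$ after multiplication by the compactly supported $\Phi(b)$; continuity of $f \mapsto \Psi_{1,f}$ in the archimedean case follows from continuity of these operations. This handles the first summand.

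For $B$, on the support $|b| \gg 1$ the form $bQ_i$ is nondegenerate, so I complete the square: writing $\langle \xi, u\rangle_i = b\langle \xi/b, u\rangle_i$ and using that $Q_i(u + \tfrac{\xi}{2b}) = Q_i(u) + \langle \tfrac{\xi}{b}, u\rangle_i + \tfrac{Q_i(\xi)}{4b^2}$ (adjusting constants according to the precise normalization relating $Q_i$ and $\langle\cdot,\cdot\rangle_i$ in the paper — I would double-check the factor of $2$ here against \eqref{chiQi} and the surrounding conventions, since $Q_i(v) = \tfrac12\langle v,v\rangle_i$), a change of variables $u \mapsto u - \tfrac{\xi}{2b}$ gives
\[
\int_{V_i(F)}\psi\left( bQ_i(u)+\left\langle \xi,u \right\rangle_i \right)f(u)du
= \overline{\psi}\!\left(\tfrac{Q_i(\xi)}{4b}\right)\int_{V_i(F)}\psi\left( bQ_i(u)\right)\,f\!\left(u - \tfrac{\xi}{2b}\right)du.
\]
Now the remaining integral is the Weil-representation action of $\left(\begin{smallmatrix} & -1 \\ 1 & \end{smallmatrix}\right)\left(\begin{smallmatrix} b & \\ & b^{-1}\end{smallmatrix}\right)$ (or its analogue) applied to the translate $f(\cdot - \tfrac{\xi}{2b})$, evaluated at $0$; by the standard formula for $\rho$ (cf.\ \eqref{Weil}) this equals $\gamma(bQ_i)|b|^{-d_i/2}$ times the Fourier transform $\widehat{f(\cdot - \xi/2b)}$ evaluated at an appropriate point, which is $\widehat{f}$ of that point twisted by a linear phase in $\xi/b$. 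Collecting terms, $B(b,\xi) = (1-\Phi(b))\,\gamma(bQ_i)|b|^{-d_i/2}\,\overline{\psi}(Q_i(\xi)/b)\cdot G(1/b, \xi/b)$ for a function $G$ that is (a linear phase times) $\widehat{f}$ evaluated at a point depending smoothly and boundedly on $(1/b, \xi/b)$; since $|b|\gg1$ means $1/b$ ranges in a bounded neighborhood of $0$, the function $(s,\eta)\mapsto (1-\Phi(1/s))G(s,\eta)$ extends to an element $\Psi_{2,f}$ of $\mathcal{S}(F \times V_i(F))$, depending continuously and linearly on $f$. Substituting $s = 1/b$, $\eta = \xi/b$ yields exactly the claimed second summand.

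The main obstacle is bookkeeping rather than conceptual: I need to verify that the various change-of-variables constants and Weil indices produced by completing the square match the $\gamma(bQ_i)$, the power $|b|^{-d_i/2}$, and the phase $\overline{\psi}(Q_i(\xi)/b)$ exactly as stated (in particular the precise constant inside $\overline\psi(Q_i(\xi)/b)$ versus $\overline\psi(Q_i(\xi)/4b)$ depends on whether the pairing $\langle\cdot,\cdot\rangle_i$ is the polarization of $Q_i$ or twice it, and on the normalization of the self-dual measure), and to confirm that the cutoff $\Phi$ can be absorbed cleanly so that both $\Psi_{1,f}$ and $\Psi_{2,f}$ are genuinely Schwartz (decay in the $\xi$ variable comes from $\widehat{f} \in \mathcal{S}$, decay in the first variable near $\infty$ from the cutoff, and smoothness at $b = 0$ resp.\ $s=0$ from the disjointness of the supports). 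Once those normalizations are pinned down, continuity of $f \mapsto \Psi_{j,f}$ in the archimedean case is automatic from continuity of Fourier transform, multiplication by fixed Schwartz functions, translation, and substitution on $\mathcal{S}$.
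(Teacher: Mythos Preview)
Your approach is correct and essentially the same as the paper's: both use a smooth cutoff to separate small and large $|b|$, handle the small-$|b|$ piece directly as a Schwartz function, and invoke Weil's formula for the Fourier transform of a Gaussian in the large-$|b|$ regime. The only difference is cosmetic: the paper applies Weil's Th\'eor\`eme 2 directly to write
\[
\int_{V_i(F)}\psi\bigl(bQ_i(u)+\langle \xi,u\rangle_i\bigr)f(u)\,du
=\frac{\gamma(bQ_i)}{|b|^{d_i/2}}\int_{V_i(F)}\overline{\psi}\!\left(\frac{Q_i(\xi-u)}{b}\right)\widehat{f}(u)\,du
\]
and then expands $Q_i(\xi-u)$, whereas you first complete the square by translating $u$ and then apply Weil's formula; after the dust settles both routes produce the same $\Psi_{2,f}(s,\eta)=p_2(s^{-1})\int\overline{\psi}\bigl(sQ_i(u)-\langle \eta,u\rangle_i\bigr)\widehat{f}(u)\,du$. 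The paper's order of operations sidesteps your factor-of-$2$ bookkeeping entirely (since $Q_i(\xi)/b$ falls out of the expansion of $Q_i(\xi-u)/b$ without any reference to the polarization), so you may find it slightly cleaner. One small imprecision in your write-up: after Weil's formula you still have an \emph{integral} against $\widehat{f}$, not a point evaluation of $\widehat{f}$; your function $G$ is this integral, and its Schwartz property in $(s,\eta)$ follows by the same argument you gave for $\Psi_{1,f}$ applied to $\widehat{f}$ in place of $f$.
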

\noindent Here $\gamma(bQ_i):=\gamma(\psi \circ bQ_i)$ is the Weil index \cite[Th\'eor\`eme 2]{Weil:certains}.  It satisfies $|\gamma(bQ_i)|=1$.  
\begin{proof}
Let $p_1,p_2 \in C^\infty(F)$ be a partition of unity such that 
\begin{align*}
    p_1(x)=1 \textrm{ for }|x|\leq 1 \quad \textrm{and}\quad 
    p_2(x)=1 \textrm{ for }|x| \geq 2.
\end{align*}
For $(x,y) \in F \times V_i(F),$ let
$$
\Psi_{1,f}(x,y):=p_1(x)\int_{V_i(F)} \psi(xQ_i(u)+\left \langle y,u\right \rangle_i)f(u)du.
$$
It is easy to check that $\Psi_{1,f} \in \mathcal{S}(F \times V_i(F))$. 

By Fourier inversion and \cite[Th\'eor\`eme 2]{Weil:certains}, for $(x,y) \in F^\times \times V_i(F)$ one has
\begin{align*}
&\int_{V_i(F)} \psi\left(xQ_i(u)+\left\langle y,u \right\rangle_i \right)f(u)du\\&=\frac{\gamma(xQ_i)}{|x|^{d_i/2}}
\int_{V_i(F)}\overline{\psi}\left(\frac{Q_i(y-u)}{x}\right)
 \widehat{f}(u)du\\
 &=\frac{\gamma(xQ_i) }{|x|^{d_i/2}}
\int_{V_i(F)}\overline{\psi}\left(\frac{ Q_i(u)-2\langle y,u \rangle_i+Q_i(y)}{x}\right)
 \widehat{f}(u)du.
\end{align*}
Here $\widehat{f}$ denotes the Fourier transform of $f$ with respect to the pairing $\langle\cdot,\cdot\rangle_i.$  Thus we can set 
\[
\Psi_{2,f}(x,y):=p_2(x^{-1})
\int_{V_i(F)}\overline{\psi}\left(-2\langle y,u \rangle_i+xQ_i(u)\right)
 \widehat{f}(u)du
\]
if $x \neq 0$ and $\Psi_{2,f}(0,y):=\int_{V_i(F)}\overline{\psi}\left(-2\langle y,u \rangle_i\right)
 \widehat{f}(u)du=f(-2y).$
\end{proof}

It suffices to prove Theorem \ref{thm:absolutebdappendix}  when $f=f_1 \otimes f_2 \otimes f_3$ is a pure tensor; we henceforth assume this. To simplify matters, we introduce the space of \textbf{rapidly decreasing} functions $\mathcal{A}(F^n)$ on $F^n$ as follows.  If $F$ is nonarchimedean, $\mathcal{A}(F^n):=\mathcal{S}(F^n)$.   If $F$ is archimedean,  let $C(F^n)$ be the space of continuous complex-valued functions on $F^n.$
 For a multi-index $\alpha=(\alpha_1,\ldots,\alpha_n)\in \zz_{\ge 0}^{n}$ and $x=(x_1,\ldots,x_n)\in F^n$, define $x^\alpha:=\prod_{i=1}^n x_i^{\alpha_i} \in F$. Put
\begin{align*}
    \mathcal{A}(F^n):=\left\{ f\in C(F^n): \norm{f}_\alpha:=\sup_{x\in F^{n}} |f(x)|(x^{\alpha}\overline{x}^\alpha)^{1/2} <\infty \textrm{ for all }\alpha\in \zz_{\ge 0}^{n}\right\}.
\end{align*}
Here the bar denotes complex conjugation, which is trivial if $F$ is real.
The seminorms $\norm{\cdot}_{\alpha}$ define a topology on $\mathcal{A}(F^n)$ under which the natural inclusions $\mathcal{S}(F^n)\hookrightarrow \mathcal{A}(F^n)\hookrightarrow L^p(F^n)$ for $0<p\le \infty$ and linear operators
\begin{align*}
    \mathcal{A}(F^n)&\lto \cc\\
        f&\mapsto   \int_{F^n} x^\alpha f(x)dx
\end{align*}
are continuous for any $\alpha\in \zz_{\ge 0}^{n}.$

Consider the map
\begin{align}\label{eqn: two variables}
\begin{split}
\mathcal{A}(V_i(F)) &\lto \mathcal{A}(F)\\
f_i &\longmapsto \left(a\mapsto\sup_{0\neq \xi_i\in V_i(F)} \left|f_i\left(\frac{a\xi_i}{\tilde{\xi}_i}\right)\right|\right).
\end{split}
\end{align}
It is continuous when $F$ is archimedean.
Thus by Lemma \ref{lem:quad:esti}  one has 
\begin{align} \label{use:quad:esti}\begin{split} 
    &\left|\int_{V(F)}\psi\left(\left\langle \frac{a \xi}{\widetilde{\xi}},u \right\rangle +\frac{b[a]Q(u)}{9[\widetilde{\xi}]}+\mathfrak{c}(u,v)\right)f(u)du\right| 
     \le H\left(a,\frac{b[a]}{3[\widetilde{\xi}]},v_1,v_2\right) \end{split}
    \end{align}
    where
    \begin{align} \label{Ha}
    \begin{split}
        H(a,c,v_1,v_2):&=
    \left(\Phi_1\Bigg (v_1+\tfrac{c}{3}
    ,a_1\Bigg )+\Phi'_1\Bigg(\frac{(1,a_1)}{v_1+\tfrac{c}{3}} \Bigg)\Bigg|v_1+\tfrac{c}{3}\Bigg|^{-d_1/2}\right)\\
    &\times\left(\Phi_2\Bigg (
    v_2+\tfrac{c}{3}
    ,a_2\Bigg )+\Phi'_2\Bigg(
    \frac{(1,a_2)}{v_2+\tfrac{c}{3}} \Bigg)\Bigg|v_2+\tfrac{c}{3}\Bigg|^{-d_2/2}\right)\\
    &\times\left(\Phi_3\Bigg (-v_1-v_2+\tfrac{c}{3},a_3\Bigg )+\Phi'_3\Bigg(
    \frac{(1,a_3)}{-v_1-v_2+\tfrac{c}{3}} \Bigg)\Bigg|-v_1-v_2+\tfrac{c}{3}\Bigg|^{-d_3/2}\right)\end{split}
\end{align}
for some $\Phi_i:=\Phi_{1,f_i},\Phi'_i:=\Phi_{2,f_i}\in\mathcal{A}(F^{2})$, which can be chosen continuously in $f_i \in \mathcal{S}(V_i(F))$ in the archimedean case.   

\subsection{Estimation of the integral over $N_0(F)$}\label{ssec:overn0}
We will expand the product in $H(a,c,v_1,v_2)$ and then integrate each of the corresponding terms in \eqref{use:quad:esti}. 

\begin{Lem} \label{lem:invconv}
Suppose $d,d_1,d_2\ge 2$. Let $\Phi,\Phi_1,\Phi_2\in \mathcal{A}(F)$. 
\begin{enumerate}[label=\upshape(\roman*),ref=\theLem (\roman*)]
    \item\label{lem:invconv1}  There exist $\Psi_1\in \mathcal{A}(F), \Psi_2\in \mathcal{A}(F^2)$ such that 
\begin{align*}
    \left|\int_{F} \Phi_1\left(\frac{x}{v}\right)|v|^{-d}\Phi_2(c-v)dv\right| \le \Psi_1\left(\frac{x}{c}\right)|c|^{-d}+\Psi_2(x,c)
\end{align*}
for $(x,c)\in F \times F^\times$ with $|x|\ge 1$.
    \item\label{lem:invconv2} There exists $\Psi\in\mathcal{A}(F^{2})$ such that 
\begin{align*}\begin{split}
    &\left|\int_{F} \Phi_1\left(\frac{x_1}{v}\right)|v|^{-d_1}\Phi_2\left(\frac{x_2}{c-v}\right)|c-v|^{-d_2}dv\right|\\
    &\ll \Psi\left(\frac{(x_1,x_2)}{c}\right)\Bigg(|x_1|^{1-d_1}|c|^{-d_2}+|c|^{-d_1}|x_2|^{1-d_2}\Bigg)+\max(|c|,|x_1|,|x_2|)^{1-d_1-d_2}
    \end{split}
\end{align*}
for $(x_1,x_2,c)\in (F^\times)^3 $.
\item\label{lem:maxconv} One has
\begin{align*}
 \left|\int_{F}\max(|v|,|x|)^{-d}\Phi(c-v)dv\right|\ll \max(|c|,|x|)^{-d}
 \end{align*}
 for $(x,c)\in F^2$ with $|x|\ge 1$.
 \item\label{lem:maxinvconv} There exists $\Psi\in\mathcal{A}(F)$ such that 
\begin{align*}
\begin{split}
&\left|\int_{F}\max(|v|,|x_1|)^{-d_1}\Phi\left(\frac{x_2}{c-v}\right)|c-v|^{-d_2}dv\right|\\
&\ll \Psi\left(\frac{x_2}{c}\right)\Bigg(\max(|x_1|,|c|)^{-d_1}|x_2|^{1-d_2}+|x_1|^{-d_1}|c|^{-d_2}\min(|x_1|,|c|)\Bigg)\\
&+\max(|c|,|x_2|)^{1-d_2}\max(|c|,|x_1|,|x_2|)^{-d_1}
\end{split}
\end{align*}
for $(x_1,x_2,c)\in (F^\times)^3$.
\end{enumerate}
Those implicit constants and rapidly decreasing functions $\Psi,\Psi_1,\Psi_2$ may be chosen continuously as a function of $\Phi,\Phi_1,\Phi_2\in \mathcal{A}(F)$ in the archimedean case.
\end{Lem}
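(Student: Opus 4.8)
The four estimates are all of the same flavor: a one-dimensional convolution against a Schwartz function, where one of the factors is a ``homogeneous-degree $-d$'' type function $|v|^{-d}$, $\max(|v|,|x|)^{-d}$, etc. In every case the plan is to split the domain of the convolution variable $v\in F$ according to which of $|v|$, $|c-v|$, (and where relevant $|x|$) is dominant, and to estimate each piece using only (a) $|\Phi|,|\Phi_i|\le \Psi$ for suitable Schwartz $\Psi$, and (b) the elementary one-variable bounds $\int_{|v|\le R}|v|^{-d}\,dv \ll R^{1-d}$ for $d>1$ (valid in all local fields) together with $\int_{|v|\ge R}|v|^{-d}\Psi(v)\,dv \ll \min(1,R^{1-d})\sup|\Psi|$-type tail bounds. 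Throughout one uses the ultrametric (or ordinary) triangle inequality in the form: on the region $|v|\le |c|/2$ one has $|c-v|\asymp|c|$, and similarly on $|c-v|\le|c|/2$ one has $|v|\asymp|c|$, so at least one of the two ``singular'' factors is essentially constant on each piece.

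For part (i): split into $|v|\le|c|/2$, where $|c-v|\asymp|c|$ so the integral is $\ll|c|^{-d}\int_{|v|\le|c|/2}|\Phi_1(x/v)|\,dv$, and after the substitution $v\mapsto xv$ (legitimate since $|x|\ge1$) this becomes $|c|^{-d}|x|\int|\Phi_1(1/v)|\,dv$ over $|v|\le |c|/(2|x|)$, which one repackages as $\Psi_1(x/c)|c|^{-d}$ using that $|\Phi_1(1/v)|$ times a power of $|v|$ is still rapidly decreasing near $v=0$ — this is where the hypothesis $d\ge 2$ (really $d>1$) enters. On the complementary region $|c-v|\le|c|/2$ one has $|v|\asymp|c|$, bound $|v|^{-d}\ll|c|^{-d}$ and use $|\Phi_1(x/v)|\ll \Psi'(x/c)$ for a Schwartz $\Psi'$, then integrate $\Phi_2$; the resulting bound $\Psi'(x/c)|c|^{-d}\cdot\|\Phi_2\|_1$ is absorbed into the first term. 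On the region $|v|\ge 2|c|$ (only relevant in the archimedean case, and trivial nonarchimedean) both $|v|$ and $|c-v|$ are $\asymp|v|\gg 1$, and the whole integrand is dominated by a Schwartz function of $(x,c)$ — this gives $\Psi_2(x,c)$. Parts (ii)–(iv) follow the identical three-region decomposition; in (ii) one gets a symmetric contribution from each of $|v|$-small and $|c-v|$-small, producing the two terms $|x_1|^{1-d_1}|c|^{-d_2}$ and $|c|^{-d_1}|x_2|^{1-d_2}$, with the ``bulk'' region $|v|,|c-v|\asymp\max$ giving $\max(|c|,|x_1|,|x_2|)^{1-d_1-d_2}$. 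Parts (iii) and (iv) are the same but with $\max(|v|,|x|)^{-d}$ in place of $|v|^{-d}$: here one further subdivides the $|v|$-small region into $|v|\le|x|$ (where $\max(|v|,|x|)=|x|$, giving the $|x|^{-d}$ factors) and $|v|\ge|x|$; the extra factor $\min(|x_1|,|c|)$ in (iv) records the length of the interval $|x_1|\le|v|\le|c|$.

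The routine but slightly delicate point, and the one I'd expect to be the main obstacle, is the bookkeeping needed to certify that each ``$\Psi$'' produced is genuinely a Schwartz function of the external parameters $(x,c)$ or $(x_1,x_2,c)$ — and that it depends continuously on the input data $(\Phi,\Phi_1,\Phi_2)$ in the archimedean case. This is handled by the standard device: after rescaling $v\mapsto xv$ (resp. $v\mapsto cv$) one is left with integrals of the form $\int g(v)\,dv$ or $\int g(v)h(\,\cdot/v\,)\,dv$ where $g$ is fixed Schwartz and the dependence on $(x,c)$ enters only through a Schwartz function evaluated at a rescaled argument, so rapid decay in $(x,c)$ is inherited from rapid decay of the $\Phi_i$, and the seminorm estimates are linear (hence continuous) in the seminorms of the $\Phi_i$; in the nonarchimedean case ``Schwartz'' just means locally constant compactly supported and every step preserves this. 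I would state once, as a preliminary remark, the elementary fact that for $d>1$ the function $x\mapsto \int_{|v|\le|x|}|\Phi(x/v)|\,|v|^{-d}\,|x|^{d}\,dv$ (and its analogues) is dominated by a Schwartz function and depends continuously on $\Phi$, and then invoke it repeatedly, so that the proof of the lemma itself reduces to the three-region case analysis sketched above.
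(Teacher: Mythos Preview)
Your three-region strategy is exactly the paper's approach, and your sketches for (ii)--(iv) line up with it. However, your treatment of the region $|v|\le|c|/2$ in part (i) contains a real error: you claim that $|c-v|\asymp|c|$ produces a factor $|c|^{-d}$, but the integrand is $\Phi_1(x/v)\,|v|^{-d}\,\Phi_2(c-v)$ --- the variable $c-v$ sits inside the Schwartz function $\Phi_2$, not in a power, so no $|c|^{-d}$ arises this way and your displayed bound ``$\ll|c|^{-d}\int|\Phi_1(x/v)|\,dv$'' is false. Bounding instead $|\Phi_2(c-v)|\le\sup_{|w|\ge|c|/2}|\Phi_2(w)|=:\Phi_2'(c)$ and substituting $u=x/v$, the contribution of this region is
\[
\ll\;\Phi_2'(c)\,|x|^{1-d}\int_{|u|\ge 2|x/c|}|\Phi_1(u)|\,|u|^{d-2}\,du,
\]
which the paper shows is rapidly decreasing in $(x,c)$ for $|x|\ge1$: for $|c|\ge1$ use the rapid decay of $\Phi_2'$, while for $|c|<1$ one has $|x/c|\ge|x|\ge1$ so the tail integral decays rapidly in $x/c$. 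Thus this piece feeds $\Psi_2(x,c)$, not $\Psi_1(x/c)|c|^{-d}$ as you claim; it is the region $|c-v|\le|c|/2$ (where $|v|\asymp|c|$ genuinely gives $|v|^{-d}\asymp|c|^{-d}$) that yields the $\Psi_1(x/c)|c|^{-d}$ term --- and that region you did handle correctly.

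One further point: your three regions $|v|\le|c|/2$, $|c-v|\le|c|/2$, $|v|\ge2|c|$ do not cover $F$ in the archimedean case (and ``$|v|\gg1$'' on the last region is unjustified when $|c|$ is small). The paper's third region is $\min(|v|,|c-v|)>|c|/2$; there one again splits according to whether $|c|\le1$ (the contribution is Schwartz in $x/c$ and, after the prefactor $|c|^{1-d}\le|c|^{-d}$, is absorbed into $\Psi_1(x/c)|c|^{-d}$) or $|c|>1$ (rapidly decreasing in $(x,c)$, absorbed into $\Psi_2$).
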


For the proof of the lemma, we consider the following extension of functions: Let $\Psi$ be a continuous function defined on $\{x\in F^n: |x|\ge 1\}$. In the nonarchimedean case, if $\Psi$ is smooth of compact support, let $\Psi_{\mathrm{ext}}\in \mathcal{A}(F^n)$ be the function that extends $\Psi$ by zero. In the archimedean case, for $0\le t\le 1$ and $x\in F^n$ of norm $1$, $\Psi_{\mathrm{ext}}(tx):= t\Psi(x)$. One has
\begin{align*}
    \norm{\Psi_{\mathrm{ext}}}_\alpha =\sup_{x\in F^n, |x|\ge 1} \left|x^\alpha \Psi(x)\right|. 
\end{align*}
If this is finite for all $\alpha \in \zz^n_{\geq 0}$ then we say $\Psi$ is rapidly decreasing.  In this case $\Psi_{\mathrm{ext}} \in \mathcal{A}(F^n).$  In either the nonarchimedean or archimedean case if $\Psi$ is nonnegative then so is $\Psi_{\mathrm{ext}}$. 

\begin{proof}
We can and do assume every function under consideration is nonnegative. 

For (i), take a change of variables  $v\mapsto  cv$. The integral becomes
\begin{align*}
    &|c|^{1-d}\int_{F} \Phi_1\left(\frac{x/c}{v}\right)|v|^{-d}\Phi_2(c-cv)dv.
\end{align*}
The contribution of $|v|\le 1/4$  is bounded by 
\begin{align}\label{eq:firsti}
   |c|^{1-d}\int_{|v|\le 1/4}\Phi_1\left(\frac{x/c}{v}\right)|v|^{-d}dv\sup_{3|c|\ge |w|\ge |c|/4} \Phi_2(w).
\end{align}
For $(x,c)\in F^2$, put
\begin{align*}
    \Psi'(c):= |c|\sup_{3|c|\ge |w|\ge |c|/4} \Phi_2(w), \quad \Psi_2'(x,c):= \sqrt{\sup_{|x|\le |r|}\Psi'(r)\Psi'(c)}.
\end{align*}
Note that if $|\frac{x}{c}|\le 1$ and $|x|\ge 1$, then taking a change of variables $v\mapsto vx/c$ in \eqref{eq:firsti}, we see that it is bounded by
\begin{align*}
    \Psi'(c)|x|^{-d}\int_{F} \Phi_1(v^{-1})|v|^{-d}dv\ll_{\Phi_1} \Psi'_2(x,c).
\end{align*}
For $y\in F$ and $|y|\ge 1$, put
\begin{align*}
    \Psi_1'(y):= \int_{|v|\le 1/4}\Phi_1\left(\frac{y}{v}\right)|v|^{-d}dv.
\end{align*}
Then $(\Psi_1')_{\mathrm{ext}}\in \mathcal{A}(F), \Psi_2'\in \mathcal{A}(F^2).$  Bounding the contribution of $\left|\frac{x}{c}\right| \geq 1$ in terms of $\Psi_1'$ and the contribution of $\left|\frac{x}{c} \right| \leq 1$ in terms of $\Psi_2',$ we see that \eqref{eq:firsti} is bounded by a constant $C_{\Phi_1,\Phi_2}$ times
\begin{align*}
    |c|^{-d}(\Psi_1')_{\mathrm{ext}}\left(\frac{x}{c}\right)+\Psi_2'(x,c).
\end{align*}
The contribution of  $|1-v|\le 1/4$ is bounded by $|c|^{-d}$ times
\begin{align*}
    \Psi''_1\left(\frac{x}{c}\right):=\sup_{3\ge |v|\ge 1/4}\Phi_1\left(\frac{x/c}{v}\right)|v|^{-d} \norm{\Phi_2}_1.
\end{align*}
Note that $\Psi''_1\in \mathcal{A}(F)$. Finally,
define 
\begin{align*}
    \Psi'''_1(y)&:= \sup_{\substack{y=x/c\\ |x|\ge 1, |c|\le 1}}\int_{\min(|v|,|1-v|)>\tfrac{1}{4}} \Phi_1\left(\frac{x/c}{v}\right)|v|^{-d}\Phi_2(c(1-v))dv\quad  \textrm{ for } y\in F, |y|\ge 1,\\
    \Psi''_2(x,c)&:= |c|^{1-d}\int_{\min(|v|,|1-v|)>\tfrac{1}{4}} \Phi_1\left(\frac{x/c}{v}\right)|v|^{-d}\Phi_2(c(1-v))dv\quad  \textrm{ for } (x,c)\in F^2, |c|\ge 1,
\end{align*}
Let $\Psi'''_2(x,c):=(\Psi''_2(x,\cdot))_{\mathrm{ext}}(c).$  Thus $\Psi_2''' \in  \mathcal{A}(F^2).$ Taking $\Psi_1:= (\Psi'_1)_{\mathrm{ext}}+\Psi''_1+(\Psi'''_1)_{\mathrm{ext}}$ and $\Psi_{2}:=\Psi'_2+\Psi'''_2$ proves (i). 

For (ii), the contribution of $|v|\le |c|/2$ is dominated by
\begin{align*}
    &\int_{|v|\le |c|/2}\Phi_1\left(\frac{x_1}{v}\right)|v|^{-d_1}dv\sup_{|w|\le |c|/2} \Phi_2\left(\frac{x_2}{c-w}\right) |c|^{-d_2}\\
    &\le \int_{|v|\ge 2|x_1/c|}\Phi_1(v)|v|^{d_1-2}dv\sup_{|w|\ge |x_2|/4|c|} \Phi_2\left(w\right) |x_1|^{1-d_1}|c|^{-d_2}.
\end{align*}
Here we have used the fact that if $|w| \leq |c|/2$ then $|c-w|\leq 4|c|$. Put
\begin{align*}
    \Psi'(y_1,y_2):= \int_{|v|\ge 2|y_1|}\Phi_1(v)|v|^{d_1-2}dv\sup_{|w|\ge |y_2|/4}\Phi_2\left(w\right).
\end{align*}
It is a function in $\mathcal{A}(F^2)$.
We obtain an analogous bound $\Psi''\left(\tfrac{(x_1,x_2)}{c}\right)|c|^{1-d_1}|x_2|^{1-d_2}$ in the
 $|c-v|\le |c|/2$ range by symmetry. Take $\Psi:=\Psi'+\Psi''$.  Now we bound the integral over the range $|v|>|c|/2,|c-v|>|c|/2,$ giving bounds in terms of $|x_1|,|x_2|,$ and $|c|$ so that we can take the minimum.  
 Clearly, the integral is dominated by  $\norm{\Phi_1}_\infty\norm{\Phi_2}_\infty|c|^{1-d_1-d_2}$.  It is also bounded by
\begin{align*}
    &\norm{\Phi_2}_\infty\int_{|v|>|c|/2,|c-v|>|c|/2} \Phi_1\left( \frac{x_1}{v}\right)|v|^{-d_1}|c-v|^{-d_2}dv.\\
    &= \norm{\Phi_2}_\infty|x_1|^{1-d_1-d_2}\int_{|v|>|y|/2,|y-v|>|y|/2} \Phi_1\left( \frac{1}{v}\right)|v|^{-d_1}|y-v|^{-d_2}dv,
\end{align*}
where $y=c/x_1$. Note that $|v|/|y-v| \ll 1$ in  the domain of integration. Therefore the integral is dominated by
\begin{align*}
    \int_{F} \Phi_1\left( \frac{1}{v}\right)|v|^{-d_1-d_2}dv=\int_{F} \Phi_1\left( v\right)|v|^{d_1+d_2-2}dv.
\end{align*}
Now (ii) follows by symmetry.

For (iii), if $|c|\le 2|x|,$ using $\max(|v|,|x|)\ge |x|,$ we have
\begin{align*}
    \int_{F}\max(|v|,|x|)^{-d}\Phi(c-v)dv\le |x|^{-d}\norm{\Phi}_{1}.
\end{align*}
If $|c|>2|x|,$ as $|x|\ge 1,$ we have
\begin{align*}
    &\int_{F}\max(|v|,|x|)^{-d}\Phi(c-v)dv\\
    &\le \int_{|v|\le |x|}|x|^{-d}\Phi(c-v)dv+\int_{|c-v|\le |c|/2}|v|^{-d}\Phi(c-v)dv+\int_{|v|\ge |x|,|c-v|> |c|/2}|v|^{-d}\Phi(c-v)dv\\
    & \ll |x|^{1-d}\sup_{|v|\le |x|} \Phi(c-v)+|c|^{-d}\norm{\Phi}_{1}+|x|^{1-d}\sup_{|v|\ge |c|/2} \Phi(v)\\
    &\ll_\Phi |c|^{-d}.
\end{align*}
Since $\max(|c|,|x|)\le \max(|c|,2|x|)\le 2\max(|c|,|x|),$ this proves (iii).

For (iv), consider the integral over $|v|\ge |x_1|$. By the proof of (ii), the contribution of $|v|\le |c|/2$ is bounded by 
$$\Psi'\left(\frac{x_2}{c}\right)|x_1|^{1-d_1}|c|^{-d_2},
$$ 
and
the contribution of $|c-v|\le |c|/2$ is bounded by $$\Psi'\left(\frac{x_2}{c}\right)|x_2|^{1-d_2}\max(|x_1|,|c|)^{-d_1}$$ for some $\Psi'\in\mathcal{A}(F)$ that can be chosen continuously in $\Phi$ in the archimedean case. The contribution of $|v|\geq |x_1|,|v|> |c|/2,|c-v|> |c|/2$ is dominated by
$$
\max(|c|,|x_1|,|x_2|)^{1-d_1-d_2}\le \max(|c|,|x_2|)^{1-d_2}\max(|c|,|x_1|,|x_2|)^{-d_1}.
$$
Indeed, this follows from the same argument as the last part of the proof of (ii).  

Thus we are left with bounding
\begin{align}\label{eq:maxinvconvfinal}
    |x_1|^{-d_1}\int_{|v|\le |x_1|}\Phi\left(\frac{x_2}{c-v}\right)|c-v|^{-d_2}dv.
\end{align}
If $2|x_2|\ge |c|\ge 2|x_1|,$ then \eqref{eq:maxinvconvfinal} is bounded by 
    \begin{align*}
        &|x_1|^{1-d_1}|x_2|^{-d_2}\sup_{|v|\le |x_1|}\Phi\left(\frac{x_2}{c-v}\right)\left|\frac{x_2}{c-v}\right|^{d_2}\le |x_1|^{1-d_1}|c|^{-d_2}\Psi''\left(\frac{x_2}{c}\right)
    \end{align*}
    where 
    \begin{align*}
        \Psi''(y):=\sup_{\substack{y=x_2/c\\ 2|x_2|\ge |c|\ge 2|x_1|}}\sup_{|v|\le |x_1|}\Phi\left(\frac{x_2}{c-v}\right)\left|\frac{x_2}{c-v}\right|^{d_2} \textrm{ for } y\in F, |y|\ge 1/2.
    \end{align*}
    We can extend this function to a function in $\mathcal{A}(F)$ by a minor variant of the construction explained before the proof.
     For $2|x_2|\ge 2|x_1|\ge  |c|,$ arguing as the case $2|x_2|\ge |c|\ge 2|x_1|,$ the expression \eqref{eq:maxinvconvfinal} is bounded by
\begin{align*}
    |x_1|^{-d_1}|x_2|^{1-d_2}\Psi'''\left(\frac{x_2}{x_1}\right)\le |x_2|^{1-d_2-d_1}\sup_{|v|\ge 1}\Psi'''(v)|v|^{d_1}
\end{align*}
for some $\Psi''' \in \mathcal{A}(F).$
If $|c|\ge 2\max(|x_2|,|x_1|),$ then \eqref{eq:maxinvconvfinal} is bounded by 
\begin{align*}
    &|x_1|^{1-d_1}\sup_{|v|\le |x_1|}\Phi\left(\frac{x_2}{c-v}\right)|c-v|^{-d_2}\\
    &=|x_1|^{1-d_1}|c|^{-d_2}\sup_{|v|\le |x_1/c|}\Phi\left(\frac{x_2/c}{1-v}\right)|1-v|^{-d_2}\\
    &\le |x_1|^{1-d_1}|c|^{-d_2}\sup_{|v|,|u|\le 1/2}\Phi\left(\frac{u}{1-v}\right)|1-v|^{-d_2}\ll_\Phi |x_1|^{1-d_1}|c|^{-d_2}.
\end{align*}

Suppose $2|x_1|>\max(|c|,2|x_2|)$. We write \eqref{eq:maxinvconvfinal} as
$|x_1|^{-d_1}|x_2|^{1-d_2}$ times
\begin{align*}
     &\int_{|v|\le |x_1/x_2|}\Phi\left(\frac{1}{c/x_2-v}\right)|c/x_2-v|^{-d_2}dv\le \int_{F }\Phi(v)|v|^{d_2-2}dv.
\end{align*}
Altogether, we have proven  (iv).
\end{proof}

Recall the definition of 
$H(a,c,v_1,v_2)$ from \eqref{Ha}.  It depends on functions $\Phi_i,\Phi_i' \in \mathcal{A}(F^2)$.  
In the rest of the section, all implicit constants and rapidly decreasing functions can be and are chosen continuously as a function of $\Phi_i,\Phi_i'$ when $F$ is archimedean.

Assume $c \neq 0$.  By taking a change of variables $v_2 \mapsto v_2-v_1$ and then $v_1\mapsto v_1-c/3$ and $v_2\mapsto v_2-2c/3,$ one has
\begin{align} \label{intHvi} \begin{split}
    \int_{F^2}H(a,c,v_1,v_2) dv
   =&\int_{F}\Bigg(\Phi_3\left(c-v_2,a_3\right)+\Phi_3'\left(\frac{(1,a_3)}{c-v_2}\right)|c-v_2|^{-d_3/2}\Bigg) \\
    &\times
    \int_{F} \Bigg(\Phi_1\left(v_1,a_1\right)+\Phi_1'\left(\frac{(1,a_1)}{v_1}\right)|v_1|^{-d_1/2}\Bigg)\\
    &\times\Bigg(\Phi_2\left(v_2-v_1,a_2\right)+\Phi_2'\left(\frac{(1,a_2)}{v_2-v_1}\right)|v_2-v_1|^{-d_2/2}\Bigg)dv. \end{split}
\end{align}
Here the $d_i$ are all even integers bigger than $2$.  This is necessary so that we can invoke Lemma \ref{lem:invconv} in the argument below.  

This integral corresponds to the integral over $N_0(F),$ hence the title of this subsection.
 By parts (i) and (ii) of Lemma \ref{lem:invconv} and the map \eqref{eqn: two variables},  the integral over $v_1$ in \eqref{intHvi} is dominated by
\begin{align*}
    & \Psi_0\left(v_2,a_1,a_2\right)+\Psi_1\left(\frac{(1,a_1)}{v_2},a_2\right)|v_2|^{-d_1/2}+\Psi_2\left(\frac{(1,a_2)}{v_2},a_1\right)|v_2|^{-d_2/2}\\
    &+\Psi_3\left(\frac{(1,a_1,a_2)}{v_2}\right)\Bigg(\max\left(1,|a_1|\right)^{1-d_1/2}|v_2|^{-d_2/2}+|v_2|^{-d_1/2}\max\left(1,|a_2|\right)^{1-d_2/2}\Bigg)\\
    &+\max\left(1,|v_2|,|a_1|,|a_2|\right)^{1-d_1/2-d_2/2}
\end{align*}
for some $\Psi_i \in \mathcal{A}(F^3)$. 
Note that $$\Psi_0\left(v_2,a_1,a_2\right)\ll \max\left(1,|v_2|,|a_1|,|a_2|\right)^{1-d_1/2-d_2/2}.$$
Therefore, by symmetry in $a_1$ and $a_2,$ to bound \eqref{intHvi} it suffices to study the integral
\begin{align*}
    &\int_{F} \Bigg(\Psi_1\left(\frac{(1,a_1)}{v_2},a_2\right)|v_2|^{-d_1/2}+\Psi_3\left(\frac{(1,a_1,a_2)}{v_2}\right)\max\left(1,|a_1|\right)^{1-d_1/2}|v_2|^{-d_2/2}\\
    &+\max\left(1,|v_2|,|a_1|,|a_2|\right)^{1-d_1/2-d_2/2}\Bigg)\Bigg(\Phi_3\left(c-v_2,a_3\right)+\Phi_3'\left(\frac{(1,a_3)}{c-v_2}\right)|c-v_2|^{-d_3/2}\Bigg)dv_2.
\end{align*}
In the following discussion, 
$$
M_1,M_2,M_1',M_2',M_4,M_5\in \mathcal{A}(F^4), \quad M_6\in\mathcal{A}(F^2),\quad  \textrm{ and }\quad M_3,M_4' \in \mathcal{A}(F)
$$
are suitable rapidly decreasing functions.
 By part (i) of Lemma \ref{lem:invconv}, we have
\begin{align} \label{il}
&\int_{F} \Psi_1\left(\frac{(1,a_1)}{v_2},a_2\right)|v_2|^{-d_1/2}\Phi_3\left(c-v_2,a_3\right)dv_2\le M_1\left(\frac{(1,a_1)}{c},a_2,a_3\right)|c|^{-d_1/2}+M_1'\left(c,a\right),
\end{align}
and 
\begin{align} \label{ee}\begin{split}
& \max\left(1,|a_1|\right)^{1-d_1/2}\int_{F}\Psi_3\left(\frac{(1,a_1,a_2)}{v_2}\right)|v_2|^{-d_2/2}\Phi_3\left(c-v_2,a_3\right)dv_2\\
&\le \max\left(1,|a_1|\right)^{1-d_1/2}M_2\left(\frac{(1,a_1,a_2)}{c},a_3\right)|c|^{-d_2/2}+\max\left(1,|a_1|\right)^{1-d_1/2}M_2'\left(c,a\right). \end{split}
\end{align}
By part (iii) of Lemma \ref{lem:invconv}, 
\begin{align} \label{sam} 
 \int_{F}&\max\left(1,|v_2|,|a_1|,|a_2|\right)^{1-d_1/2-d_2/2}\Phi_3\left(c-v_2,a_3\right)dv_2\le M_3\left(a_3\right)\max\left(1,|c|,|a_1|,|a_2|\right)^{1-d_1/2-d_2/2}.
\end{align}
By part (ii) of Lemma \ref{lem:invconv},
\begin{align} \label{sa} \begin{split}
&\int_{F} \Psi_1 \left(\frac{(1,a_1)}{v_2},a_2\right)|v_2|^{-d_1/2}\Phi_3'\left(\frac{(1,a_3)}{c-v_2}\right)|c-v_2|^{-d_3/2}dv_2,\\
\le& M_4\left(\frac{(1,a_1,a_3)}{c},a_2\right)\Bigg(\max\left(1,|a_1|\right)^{1-d_1/2}|c|^{-d_3/2}+|c|^{-d_1/2}\max\left(1,|a_3|\right)^{1-d_3/2}\Bigg)\\
&+M_4'\left(a_2\right)\max\left(1,|c|,|a_1|,|a_3|\right)^{1-d_1/2-d_3/2},\end{split}
\end{align}
and
\begin{align} \label{oh} \begin{split}
&\max\left(1,|a_1|\right)^{1-d_1/2} \int_{F}\Psi_3\left(\frac{(1,a_1,a_2)}{v_2}\right)|v_2|^{-d_2/2}\Phi_3'\left(\frac{(1,a_3)}{c-v_2}\right)|c-v_2|^{-d_3/2}dv_2\\
\ll& \max\left(1,|a_1|\right)^{1-d_1/2}M_5\left(\frac{(1,a)}{c}\right)\Bigg(\max\left(1,|a_1|,|a_2|\right)^{1-d_2/2}|c|^{-d_3/2}+|c|^{-d_2/2}\max\left(1,|a_3|\right)^{1-d_3/2}\Bigg)\\
&+\max\left(1,|a_1|\right)^{1-d_1/2}\max\left(1,|c|,|a|\right)^{1-d_2/2-d_3/2}.\end{split}
\end{align}
By part (iv) of Lemma \ref{lem:invconv}
\begin{align} \label{yok}\begin{split}
 &\int_{F}\max\left(1,|v_2|,|a_1|,|a_2|\right)^{1-d_1/2-d_2/2}\Phi_3'\left(\frac{(1,a_3)}{c-v_2}\right)|c-v_2|^{-d_3/2}dv_2\\
\ll& M_6\left(\frac{(1,a_3)}{c}\right)\Bigg(\max\left(1,|c|,|a_1|,|a_2| \right)^{1-d_1/2-d_2/2}\max\left(1,|a_3|\right)^{1-d_3/2}\\
&+\max\left(1,|a_1|,|a_2|\right)^{1-d_1/2-d_2/2}|c|^{-d_3/2}\min(\max\left(1,|a_1|,|a_2|\right),|c|)\Bigg)\\
&+\max\left(1,|c|,|a_3|\right)^{1-d_3/2}\max\left(1,|c|,|a|\right)^{1-d_1/2-d_2/2}. \end{split}
\end{align}
Note that both $M_1'\left(c,a\right)$ and $\max\left(1,|a_1|\right)^{1-d_1/2}M_2'\left(c,a\right)$ are dominated by $$\max\left(1,|a_1|\right)^{1-d_1/2}\max\left(1,|c|,|a|\right)^{1-d_2/2-d_3/2}.$$
Therefore, by symmetry there are $M\in\mathcal{A}(F^2),$ $M_{1\sigma}, M_{2\sigma},M_{3\sigma} \in \mathcal{A}(F^4)$  (indexed by $\sigma \in S_3,$ the symmetric group on three letters) such that \eqref{intHvi} is dominated by the sum of the following terms: 

    \begin{align}\label{term1}
        \sum_{\sigma\in S_2}M_{1\sigma}\left(\frac{(1,a_{\sigma(1)})}{c},a_{\sigma(2)},a_3\right)|c|^{-d_{\sigma(1)}/2},
    \end{align}
    
    \begin{align}\begin{split}\label{termspecial}
        &M\left(\frac{(1,a_3)}{c}\right)\Bigg(\max\left(1,|c|,|a_1|,|a_2|\right)^{1-d_1/2-d_2/2}\max\left(1,|a_3|\right)^{1-d_3/2}\\
&+\max\left(1,|a_1|,|a_2|\right)^{1-d_1/2-d_2/2}|c|^{-d_3/2}\min(\max\left(1,|a_1|,|a_2|\right),|c|)\Bigg),
    \end{split}
    \end{align}

    \begin{align}\label{term2}
        \sum_{\sigma\in S_3}M_{2\sigma}\left(\frac{(1,a_{\sigma(1)},a_{\sigma(2)})}{c},a_{\sigma(3)}\right)\max\left(1,|a_{\sigma(1)}|\right)^{1-d_{\sigma(1)}/2}|c|^{-d_{\sigma(2)}/2},
    \end{align}

    \begin{align}\label{term3}
        \sum_{\sigma\in C_3}M_{3\sigma}\left(\frac{(1,a)}{c}\right)\max\left(1,|a_{\sigma(1)}|\right)^{1-d_{\sigma(1)}/2}\max\left(1,|a_{\sigma(2)}|\right)^{1-d_{\sigma(2)}/2}|c|^{-d_{\sigma(3)}/2},
    \end{align}

    \begin{align}\label{term4}
    \sum_{\sigma\in C_3}\max\left(1,|a_{\sigma(1)}|\right)^{1-d_{\sigma(1)}/2}\max\left(1,|c|,|a|\right)^{1-d_{\sigma(2)}/2-d_{\sigma(3)}/2}.
    \end{align}

The following table explains how the terms \eqref{term1}, \eqref{termspecial}, \eqref{term2}, \eqref{term3}, \eqref{term4}  are used to bound the previous contributions:
\begin{center}
\begin{tabular}{c c}
    The contribution of  & is dominated by \\
    \eqref{il}  & \eqref{term1}, \eqref{term4}\\
    \eqref{ee} & \eqref{term2}, \eqref{term4}\\
    \eqref{sam} & \eqref{term4}\\ 
    \eqref{sa} & \eqref{term2}, \eqref{term4}\\
    \eqref{oh} & \eqref{term3}, \eqref{term4} \\
    \eqref{yok} & \eqref{termspecial}, \eqref{term4}.
\end{tabular}
\end{center}
The $M$ and $M_{i\sigma}$ can all be chosen continuously as a function of the $\Phi_i,\Phi_i'$ in the archimedean case.  

Before continuing the proof of Theorem \ref{thm:absolutebdappendix}, we
prove the  following strengthening of the theorem in the special case $b=0$:
\begin{Thm} \label{thm:b=0appendix} For $\xi \in \prod_{i=1}^3 (V_i(F)-\{0\}),$ if $d_i>2$ for all $i,$ we have 
\begin{align*}
\int_{F^2}\left|\int_{V(F)}\psi\left(\left\langle \xi,u \right\rangle +\mathfrak{c}(u,v)\right)f(u)du\right|
 dv
 &\ll_f
    \sum_{\sigma\in C_3}\max\left(1,|\xi_{\sigma(1)}|\right)^{1-d_{\sigma(1)}/2}\max\left(1,|\xi|\right)^{1-d_{\sigma(2)}/2-d_{\sigma(3)}/2}\\
    &\ll \prod_{i=1}^3 |\xi_i|^{2-d_i/2}
 \end{align*}
\end{Thm}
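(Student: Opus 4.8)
The plan is to derive Theorem~\ref{thm:b=0appendix} as a specialization of the machinery built in \S\ref{ssec:overn0}. Setting $b=0$ and $\widetilde{\xi}=1$ (which costs nothing since both sides are invariant under rescaling the measures, and we may normalize $\xi$ so that $|\xi/\widetilde{\xi}|=1$ coordinatewise if we prefer — but in fact it is cleaner to work directly with $\xi$ in place of $a\xi/\widetilde{\xi}$), the inequality \eqref{use:quad:esti} specializes so that the inner integral over $V(F)$ is bounded by $H(\xi,0,v_1,v_2)$, where $H$ is exactly the function defined in \eqref{Ha} with $c=0$ and with the role of $a$ played by $\xi$. Hence the left-hand side of the theorem is at most $\int_{F^2}H(\xi,0,v_1,v_2)\,dv$.

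Next I would invoke \eqref{intHvi}: taking $c\to 0$ there (or simply noting that the change of variables leading to \eqref{intHvi} is valid for $c=0$ as well, since the only place $c\neq 0$ was needed in that display is the translation, which is harmless when $c=0$), we obtain that $\int_{F^2}H(\xi,0,v_1,v_2)\,dv$ is dominated by the sum of the six terms \eqref{il}--\eqref{yok} evaluated at $c=0$, with $a$ replaced by $\xi$. But specializing the bounds \eqref{term1}--\eqref{term4} to $c=0$ is immediate: the terms \eqref{term1}, \eqref{termspecial}, \eqref{term2}, \eqref{term3} all carry a factor that is a Schwartz function of arguments of the form $(1,\ldots)/c$ together with a negative power $|c|^{-d_\bullet/2}$, and one checks that along $c\to 0$ each such product tends to $0$ (the Schwartz decay in $1/c$ beats the polynomial blow-up), so only \eqref{term4} survives. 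Evaluating \eqref{term4} at $c=0$ and $a=\xi$ gives precisely $\sum_{\sigma\in C_3}\max(1,|\xi_{\sigma(1)}|)^{1-d_{\sigma(1)}/2}\max(1,|\xi|)^{1-d_{\sigma(2)}/2-d_{\sigma(3)}/2}$, which is the claimed bound. Continuity of the implied constant in $f$ in the archimedean case follows since all the Schwartz functions and constants in \S\ref{ssec:overn0} were chosen continuously in the $\Phi_i,\Phi_i'$, which in turn depend continuously on $f$ through \eqref{eqn: two variables} and Lemma~\ref{lem:quad:esti}.

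I would present this cleanly by first stating that it suffices to bound $\int_{F^2}H(\xi,0,v_1,v_2)\,dv$, then noting that the argument of \S\ref{ssec:overn0} applies verbatim with $c=0$ and $a=\xi$ — the reductions through Lemma~\ref{lem:invconv} never used $c\neq 0$ in an essential way, only the translation-invariance of Lebesgue measure — and then observing that among \eqref{term1}--\eqref{term4}, all terms except \eqref{term4} vanish identically at $c=0$ because of the factor $|c|^{-d_\bullet/2}$ multiplied against a Schwartz function of $1/c$, which is bounded; more precisely at $c=0$ these products are literally $0\cdot(\text{finite})$ or, if one prefers to avoid evaluating a Schwartz function at $\infty$, one takes the limit $c\to 0$ and the bound $\int_{F^2}H(\xi,0,v_1,v_2)dv = \lim_{c\to 0}\int_{F^2}H(\xi,c,v_1,v_2)dv$ by dominated convergence (the integrand is dominated uniformly for small $|c|$ by an $L^1$ function, since $d_i>2$). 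The main obstacle — and it is a minor one — is making rigorous the passage $c\to 0$ in \eqref{intHvi}: the displays \eqref{il}--\eqref{yok} are stated for $c\neq 0$, and one must either redo that half-page of estimates directly at $c=0$ (straightforward, as the singular terms involving $|c-v_2|^{-d_3/2}$ just become $|v_2|^{-d_3/2}$ and remain integrable because $d_3>2$) or justify the limit. I would opt for the direct computation at $c=0$, remarking that it is strictly simpler than the $c\neq0$ case since no translation is needed and the three factors in $H(\xi,0,v_1,v_2)$ decouple more transparently, leaving only the convolution-type integrals over $v_1$ and $v_2$ handled by Lemma~\ref{lem:invconv}(i)--(iv) exactly as before.
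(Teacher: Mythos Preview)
Your argument is essentially the paper's own proof, which is just two sentences: bound the integral by the limit as $c\to 0$ of the sum \eqref{term1}--\eqref{term4} with $a_i=\widetilde{\xi}_i$, and observe that only \eqref{term4} survives. Your elaboration of why terms \eqref{term1}--\eqref{term3} and \eqref{termspecial} vanish (Schwartz decay in $1/c$ beating the negative power of $|c|$) and your discussion of how to justify the passage $c\to 0$ are correct and fill in details the paper leaves implicit.

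One small correction: you cannot ``set $\widetilde{\xi}=1$'' since $\widetilde{\xi}$ is determined by $\xi$. The right specialization of \eqref{use:quad:esti} is $b=0$ and $a=\widetilde{\xi}$, which makes $\tfrac{a\xi}{\widetilde{\xi}}=\xi$ and $c=\tfrac{b[a]}{3[\widetilde{\xi}]}=0$; then the bound is $H(\widetilde{\xi},0,v_1,v_2)$, and since the terms \eqref{term1}--\eqref{term4} depend on $a$ only through $|a_i|=|\widetilde{\xi}_i|=|\xi_i|$, you recover exactly the stated estimate. This is what the paper does, and your alternative phrasing (``work directly with $\xi$'') amounts to the same thing once one observes that the Schwartz bounds on $\Psi_{j,f_i}$ depend only on the norm of the $V_i(F)$-argument.
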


\begin{proof} Given Lemma \ref{lem:quad:esti} and our manipulations above, the integral to be bounded is dominated by the limit as $c \to 0$ of  the sum of \eqref{term1}, \eqref{termspecial}, \eqref{term2}, \eqref{term3}, \eqref{term4} in the case $a_i=\widetilde{\xi}_i$.  After taking the limit as $c \to 0$ the only term that is nonzero is \eqref{term4}.
\end{proof}

\subsection{Estimate of the integral over $(F^\times)^3$}\label{ssec:overa}

In this subsection, we bound  \begin{align}\label{H''}
   \int_{(F^\times)^3}\int_{F^2}H\left(a,\frac{b[a]}{3[\widetilde{\xi}]},v_1,v_2\right)dv\{a\}^{d/2-1}d^\times a
\end{align}
using the bounds on the inner integral obtained in the previous section.  Throughout this subsection we assume $d_i\ge 4$ for $1\le i\le 3$ and we fix  $\tfrac{1}{2}>\varepsilon>0$.  To avoid repetition, we point out once and for all that in the archimedean case all implicit constants and rapidly decreasing functions appearing in bounds in this section can be chosen continuously as a function of whatever rapidly decreasing functions appear in the hypotheses of the bounds.  

We start with a general bound that will be useful later.

\begin{Lem}\label{lem:twoterms} Suppose $e_1,e_2\in\rr_{>0}$.  
Let $\Phi\in\mathcal{A}(F)$.  As a function in $(r_1,r_2)\in (F^\times)^2,$ we have 
\begin{align*}
    &\int_{F^\times }\left|\Phi\left(\frac{r_1}{a}\right)\right|\max\left(1,\left|r_2a\right|\right)^{-e_1}
    \frac{d^\times a}{|a|^{e_2-e_1}} \ll_{\varepsilon,\Phi} \left\{\begin{array}{ll}
      |r_1|^{e_1-e_2}\max(1,|r_1r_2|)^{-e_1}   & \textrm{ if } e_2> e_1,  \\
      |r_2|^{e_2-e_1}\max(1,|r_1r_2|)^{-e_2}   & \textrm{ if } e_2< e_1,\\
      \max(1,|r_1r_2|)^{-e_2}\min(1,|r_1r_2|)^{-\varepsilon}    & \textrm{ if } e_2=e_1.
    \end{array} \right.
\end{align*}
\end{Lem}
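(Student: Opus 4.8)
The integral is a single Tate-type integral in the variable $a \in F^\times$, with a cutoff function $\Phi(r_1/a)$ in one direction and a truncated power $\max(1,|r_2 a|)^{-e_1}$ in the other, and a power weight $|a|^{e_1-e_2}$. The natural approach is to split the domain of integration into the regions where $|a| \geq 1$ and $|a| < 1$ (equivalently, where $|r_2 a| \geq 1$ or $< 1$ after scaling), and in each region bound $\max(1,|r_2a|)^{-e_1}$ by $1$ or by $|r_2a|^{-e_1}$ accordingly, then evaluate the resulting elementary integrals. I would first reduce to the case where $\Phi$ is the characteristic function of a large ball: since $\Phi \in \mathcal{S}(F)$, we have $|\Phi(x)| \ll_N \min(1,|x|)^{-N}$ for every $N$, so $|\Phi(r_1/a)| \ll_N \min(1,|a/r_1|)^{N} = \min(1,|a|^N|r_1|^{-N})$. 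In particular $\Phi(r_1/a)$ forces $|a| \lesssim |r_1|$ up to rapid decay, which is the key localization.

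\textbf{Key steps.} First I would dispose of the case $e_2 > e_1$: here the weight $|a|^{e_1 - e_2}$ is integrable near $a = 0$ and the factor $\Phi(r_1/a)$ cuts off $|a| \gtrsim |r_1|$, so one bounds $\int_{|a| \lesssim |r_1|} |a|^{e_1-e_2}\max(1,|r_2a|)^{-e_1}\,d^\times a$. Splitting at $|a| = |r_2|^{-1}$ and using $\max(1,|r_2 a|)^{-e_1} = 1$ for $|a| < |r_2|^{-1}$ and $= |r_2 a|^{-e_1}$ for $|a| \geq |r_2|^{-1}$, the integral over $|a| < \min(|r_1|, |r_2|^{-1})$ contributes $\min(|r_1|,|r_2|^{-1})^{e_1-e_2}$ and the integral over $|r_2|^{-1} \leq |a| \lesssim |r_1|$ (nonempty only if $|r_1 r_2| \geq 1$) contributes $|r_2|^{-e_1}|r_1|^{-e_2}$ (from integrating $|a|^{-e_2}$ near its left endpoint, using $e_2 > 0$). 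One then checks by a two-case analysis on whether $|r_1 r_2| \lessgtr 1$ that the sum is $\ll |r_1|^{e_1-e_2}\max(1,|r_1r_2|)^{-e_1}$, as claimed. Second, the case $e_2 < e_1$ is symmetric under the substitution $a \mapsto a^{-1}$ together with the roles of the two factors: after this substitution $\Phi(r_1/a) \rightsquigarrow \Phi(r_1 a)$ and $\max(1,|r_2 a|)^{-e_1} \rightsquigarrow \max(1,|r_2/a|)^{-e_1} = |a|^{e_1}\max(|a|,|r_2|)^{-e_1}$, and $d^\times a/|a|^{e_2-e_1}$ is invariant; regrouping exponents one lands in a situation dual to the first case with $(e_1,e_2)$ swapped, giving $|r_2|^{e_2-e_1}\max(1,|r_1r_2|)^{-e_2}$. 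Third, the borderline case $e_2 = e_1$: now the power weight $|a|^{e_1-e_2} = 1$ disappears and the integral is $\int_{F^\times} |\Phi(r_1/a)| \max(1,|r_2 a|)^{-e_1}\,d^\times a$. For $|r_1 r_2| \geq 1$ the overlap of the two cutoff regions $|a| \lesssim |r_1|$ and $|a| \gtrsim |r_2|^{-1}$ has $d^\times a$-measure $\sim \log|r_1 r_2|$, but using the rapid decay of $\Phi$ one gains $\min(1,|r_1 r_2|)^{-\varepsilon}$ type savings — more precisely, one bounds $|\Phi(r_1/a)| \ll |a|^{\varepsilon}|r_1|^{-\varepsilon}$ on $|a| \leq |r_1|$ (valid since $\Phi$ is bounded and decaying) to kill the logarithm and produce the stated $\max(1,|r_1r_2|)^{-e_1}\min(1,|r_1r_2|)^{-\varepsilon}$; when $|r_1 r_2| < 1$ the two regions are disjoint and one gets a clean $O(1)$, consistent with the bound since $\max(\cdots)^{-e_1} = 1$ and $\min(\cdots)^{-\varepsilon}$ absorbs any finite loss.

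\textbf{Main obstacle.} The routine computations in the $e_2 \ne e_1$ cases are straightforward geometric-series / elementary-integral estimates and the two-case analysis on $|r_1 r_2| \lessgtr 1$ is mechanical. The genuinely delicate point is the borderline case $e_1 = e_2$, where a naive bound produces a logarithm $\log|r_1 r_2|$ that must be converted into the power saving $\min(1,|r_1 r_2|)^{-\varepsilon}$; getting the bookkeeping of the $\varepsilon$ right (and making sure the implied constant depends only on $\varepsilon$ and $\Phi$, continuously on $\Phi$ in the archimedean case) is where care is needed. A secondary bookkeeping nuisance is ensuring uniformity of all constants in $(r_1,r_2)$ and handling the archimedean versus nonarchimedean measures uniformly; both are handled by the standard device of replacing $\Phi$ by a dominating function of the form $C\min(1,|x|)^{-N}$ and invoking continuity of $\Phi \mapsto \sup|x^j \Phi^{(k)}(x)|$.
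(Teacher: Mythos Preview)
Your overall strategy---localize using the Schwartz function, split at $|a|=|r_2|^{-1}$, and compute elementary integrals---is sound, and is close in spirit to the paper's argument. But there is a sign/direction error that breaks the computation as written.

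You claim that $|\Phi(x)|\ll_N \min(1,|x|)^{-N}$ and that $\Phi(r_1/a)$ ``forces $|a|\lesssim|r_1|$ up to rapid decay.'' This is backwards. A Schwartz function is bounded near the origin and decays at infinity, so the correct bound is $|\Phi(x)|\ll_N \max(1,|x|)^{-N}$, giving $|\Phi(r_1/a)|\ll_N \min(1,|a/r_1|)^N$. This is $\approx 1$ for $|a|\gtrsim|r_1|$ and small for $|a|\ll|r_1|$: the localization is to $|a|\gtrsim|r_1|$, not $|a|\lesssim|r_1|$. (In the nonarchimedean case this is transparent: $\Phi$ has compact support, so $\Phi(r_1/a)\neq 0$ forces $|r_1/a|$ bounded, i.e.\ $|a|\gtrsim|r_1|$.) The error propagates: in your $e_2>e_1$ case you bound $\int_{|a|<\min(|r_1|,|r_2|^{-1})}|a|^{e_1-e_2}\,d^\times a$, but with $e_1-e_2<0$ this integral diverges at $a=0$---it is precisely the rapid decay of $\Phi(r_1/a)$ near $a=0$ that makes the original integral finite, and you have thrown that factor away on exactly the region where it matters.

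Once you reverse the localization direction your scheme works, but at that point it is cleaner to do what the paper does: substitute $a\mapsto a^{-1}r_1$ at the outset, which turns $\Phi(r_1/a)$ into $\Phi(a)$ and collapses the dependence on $(r_1,r_2)$ to the single parameter $r=r_1r_2$. The integral becomes
\[
|r_1|^{e_1-e_2}\Bigl(|r|^{-e_1}\int_{|a|\le|r|}\Phi(a)|a|^{e_2}\,d^\times a+\int_{|a|\ge|r|}\Phi(a)|a|^{e_2-e_1}\,d^\times a\Bigr),
\]
from which the case $e_2\ge e_1$ is read off directly (the second term produces the $\min(1,|r|)^{-\varepsilon}$ when $e_1=e_2$, exactly as you anticipated); for $e_2<e_1$ one further substitutes $a\mapsto ar$ to pull out $|r_2|^{e_2-e_1}$. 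This is shorter than your proposed symmetry argument via $a\mapsto a^{-1}$, which as you wrote it does not quite land in a dual form (the measure $|a|^{e_1-e_2}d^\times a$ is not invariant under inversion when $e_1\neq e_2$).
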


\begin{proof}
We assume $\Phi$ is nonnegative.
 Taking a change of variables $a\mapsto a^{-1}r_1$ in the integral to be bounded, we obtain
\begin{align*}
    |r_1|^{e_1-e_2}\int_{F^\times }\Phi(a)\max\left(1,\left|\frac{r_1r_2}{a}\right|\right)^{-e_1}|a|^{e_2-e_1}d^\times a.
\end{align*}
Let $r=r_1r_2$.
 We write the integral above as
\begin{align*}
    &|r_1|^{e_1-e_2}\int_{|a|\le |r| }|r|^{-e_1}\Phi(a)|a|^{e_2}d^\times a+ |r_1|^{e_1-e_2}\int_{|a|\ge |r| }\Phi(a)|a|^{e_2-e_1}d^\times a.
\end{align*}
When $e_2 \geq e_1$ we observe that this is
 dominated by the bound claimed in the lemma.  
 Now suppose $e_2< e_1$. Then after a change of variables $a \mapsto ar$, the integral above is 
 \begin{align*}
  &|r_2|^{e_2-e_1}\int_{|a|\le 1 }\Phi(ar)|a|^{e_2}d^\times a+|r_2|^{e_2-e_1} \int_{|a|\ge 1 }\Phi(ar)|a|^{e_2-e_1}d^\times a
 \end{align*}
which is dominated by the bound asserted in the lemma.
\end{proof}

\begin{Lem} \label{lem:term1bound} For nonnegative $M_1 \in \mathcal{A}(F^4)$ and $c \in F^\times,$ one has
\begin{align*}\begin{split}
    &\int_{(F^\times)^3}M_{1}\left(\frac{(1,a_1)c}{[a]},a_2,a_3\right)\left|\frac{[a]}{c}\right|^{-d_{1}/2}\{a\}^{d/2-1}d^\times a \ll_{\varepsilon}  J\left(c\right)\left|c\right|^{\min(d_1,d_2,d_3)/2-1-\varepsilon}
    \end{split}
\end{align*}
for some $J \in \mathcal{A}(F)$.
\end{Lem}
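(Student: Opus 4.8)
\textbf{Proof strategy for Lemma \ref{lem:term1bound}.}

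The plan is to compute the integral over $(F^\times)^3$ as an iterated integral, peeling off the variables $a_3$, then $a_2$, then $a_1$, using Lemma \ref{lem:twoterms} at each stage. First I would make the change of variables to isolate the dependence on $[a]/c$. Writing $r := [a]/c$, the factor $\left|[a]/c\right|^{-d_1/2}\{a\}^{d/2-1} = |r|^{-d_1/2}|a_1|^{d_1/2-1}|a_2|^{d_2/2-1}|a_3|^{d_3/2-1}$, and the Schwartz function $M_1\left((1,a_1)c/[a],a_2,a_3\right)$ controls $|c/[a]| = |1/r|$ together with $|a_1/r|$, $|a_2|$, $|a_3|$. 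The key point is that the integrand is Schwartz in $c/[a]$, so it forces $|[a]| \gg |c|$ up to rapid decay; combined with the positive powers $|a_i|^{d_i/2-1}$ one gets convergence once the three integrations are arranged in the right order.

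The main computation: integrate first in $a_3$. Here $M_1$ provides Schwartz decay in $c/(a_1a_2a_3)$ and in $a_3$ separately. Apply Lemma \ref{lem:twoterms} with the roles $r_1 = c/(a_1a_2)$, $r_2 = a_3$ (or a variant), exponents chosen so that the $|a_3|^{d_3/2-1}$ weight and the power $|r|^{-d_1/2}$ are accounted for; this yields a bound of the shape $\left|c/(a_1a_2)\right|^{\alpha}\max(1,|c/(a_1a_2)| \cdot |a_3|)^{-\beta}$ which, after the $a_3$-integration, leaves a function that is Schwartz in $c/(a_1a_2)$ times a suitable power, with the exponent $d_3/2$ entering. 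Then integrate in $a_2$, again via Lemma \ref{lem:twoterms}, producing the exponent $d_2/2$; and finally in $a_1$, producing $d_1/2$. At each step the $\varepsilon$ appears only in the borderline case $e_1 = e_2$ of Lemma \ref{lem:twoterms}, which is why the final exponent is $\min(d_1,d_2,d_3)/2 - 1 - \varepsilon$ rather than $\min(d_1,d_2,d_3)/2 - 1$: the minimum is attained by whichever variable's integration is the ``tightest'', and in that case one loses $\varepsilon$. The resulting overall bound is a Schwartz function $J(c)$ of $c$ times $|c|^{\min(d_1,d_2,d_3)/2-1-\varepsilon}$; tracking the Schwartz decay in $c$ is automatic because $M_1$ is Schwartz in $c/[a]$ and the domain where $|[a]|$ is comparable to $|c|$ contributes, while large $|c|$ is suppressed.

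I would organize this cleanly by a symmetry reduction: since $M_1$ is an arbitrary Schwartz function, one may bound $|M_1|$ by a product $\Phi_0(c/[a])\Phi_1(a_1c/[a])\Phi_2(a_2)\Phi_3(a_3)$ of one-variable Schwartz functions (dominating a Schwartz function on $F^4$ by such a product, as is done repeatedly in \S \ref{ssec:overn0}), and then the iterated integral genuinely factors stage by stage. The hypothesis $d_i \ge 4$ guarantees $d_i/2 - 1 \ge 1 > 0$, so all the positive-power weights are integrable against Schwartz decay and Lemma \ref{lem:twoterms} applies with positive exponents throughout.

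\textbf{Expected main obstacle.} The bookkeeping of which exponent $e_1$ versus $e_2$ is larger at each of the three stages — i.e., determining in each regime which of $d_1/2, d_2/2, d_3/2$ governs the decay and hence why $\min(d_1,d_2,d_3)/2$ appears — is the delicate part; one must check that every case of Lemma \ref{lem:twoterms} composes correctly and that the worst case is exactly the minimum, with the $\varepsilon$-loss confined to the equal-exponent subcase. Ensuring the final bound is genuinely of the form ``Schwartz in $c$ times the stated power of $|c|$'' (uniformly, and with the Schwartz seminorms of $J$ controlled by those of $M_1$) requires care but is routine once the case analysis is set up.
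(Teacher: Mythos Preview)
Your strategy—peel off one variable at a time using Lemma \ref{lem:twoterms}-type bounds—is the right shape, but you have the variables in the wrong order, and this is where the proof either becomes easy or becomes a case analysis your sketch has not accounted for. Look at the four arguments of $M_1$: they are $c/[a]=c/(a_1a_2a_3)$, $a_1c/[a]=c/(a_2a_3)$, $a_2$, and $a_3$. The variable $a_1$ appears in \emph{only} the first slot. So the paper integrates $a_1$ first: the substitution $u=c/[a]$ turns the $a_1$-integral into
\[
|c|^{d_1/2-1}\,|a_2|^{(d_2-d_1)/2}\,|a_3|^{(d_3-d_1)/2}\int_{F^\times} M_1\Bigl(u,\tfrac{c}{a_2a_3},a_2,a_3\Bigr)\,|u|\,d^\times u,
\]
a plain integral of a Schwartz function against $du$, producing a new $J'\in\mathcal{S}(F^3)$ with no recourse to Lemma \ref{lem:twoterms} at all. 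The remaining $a_2$- and $a_3$-integrals are then exactly the exponent bookkeeping you anticipate (insert the $\varepsilon$ at the $a_2$ stage, split on $d_1\gtrless d_2$, then integrate $a_3$); the Schwartz function $J(c)$ emerges because $J'$ is Schwartz in $c/(a_2a_3)$ and in $a_2,a_3$ separately, hence after two more integrations one is left Schwartz in $c$.

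If instead you start with $a_3$ as you propose, the integrand depends on $a_3$ through \emph{three} of the four arguments ($c/[a]$, $c/(a_2a_3)$, and $a_3$ itself), and your product bound does not disentangle this: both $\Phi_0$ and $\Phi_1$ still involve $a_3$. Lemma \ref{lem:twoterms} treats one Schwartz factor against one $\max$-factor, so it does not apply as a single step here; discarding $\Phi_1$ by $\|\Phi_1\|_\infty$ throws away the only Schwartz control decoupled from $a_1$, and you are then left at the end with an $a_1$-integral carrying the positive weight $|a_1|^{d_1/2-1}$ and only polynomial-type control from the earlier $\max$-outputs. One can probably force this order through with further case splitting, but your sketch does not address the complication. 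Reorder to $a_1\to a_2\to a_3$ and the argument collapses to the short direct computation the paper gives.
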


\begin{proof}    A direct computation shows the integral over $a_1$ is dominated by
\begin{align}\label{term1original}
    |c|^{d_1/2-1}J'\left(\frac{c}{a_2a_3},a_2,a_3\right)|a_2|^{(d_2-d_1)/2}|a_3|^{(d_3-d_1)/2}
\end{align}
for some $J' \in \mathcal{A}(F^3)$.  
By symmetry, we may assume $d_2\le d_3$.  Note that  \eqref{term1original} is dominated by
\begin{align*}
    \left|c\right|^{d_1/2-1}J''\left(\frac{c}{a_2a_3},a_2,a_3\right)|a_2|^{(d_2-d_1)/2-\varepsilon}|a_3|^{(d_3-d_1)/2}
\end{align*}
for some $J'' \in \mathcal{A}(F^2).$
The integral of this function over $a_2$ is dominated by
\begin{align*}
   \left|c\right|^{d_1/2-1}J'''\left(\frac{c}{a_3},a_3\right)|a_3|^{(d_3-d_1)/2}\times \begin{cases}
    \left|\frac{c}{a_3}\right|^{(d_2-d_1)/2-\varepsilon}    & \textrm{ if } d_1\ge d_2, \\
     1    & \textrm{ if } d_1<d_2.
    \end{cases}
\end{align*}
for some $J''' \in \mathcal{A}(F^2).$ The integral over $a_3$ of this expression is dominated by the bound in the lemma.
\end{proof}

\begin{Lem} \label{lem:term2bound}  For nonnegative $M_2 \in \mathcal{A}(F^4)$ and $c \in F^\times,$ we have
\begin{align*}\begin{split}
    &\int_{(F^\times)^3}M_{2}\left(\frac{(1,a_1,a_2)c}{[a]},a_3\right)\max\left(1,|a_{1}|\right)^{1-d_{1}/2}\left|\frac{[a]}{c} \right|^{-d_{2}/2}\{a\}^{d/2-1}d^\times a\\
    &\ll_\varepsilon \min(1,|c|)^{\min(d_1,d_2,d_3)/2-1-\varepsilon}\max(1,|c|)^{-1}.
    \end{split}
\end{align*}
\end{Lem}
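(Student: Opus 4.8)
The strategy is to integrate iteratively in $a_1$, then $a_2$, then $a_3$, bounding the Schwartz function $M_2$ crudely (by a product of one-variable Schwartz functions majorizing it) and tracking the powers of $|a_i|$ and the argument $\frac{(1,a_1,a_2)c}{[a]}$. First I would replace $M_2\left(\frac{(1,a_1,a_2)c}{[a]},a_3\right)$ by a product $\Phi_0\!\left(\frac{c}{[a]}\right)\Phi_1\!\left(\frac{a_1c}{[a]}\right)\Phi_2\!\left(\frac{a_2c}{[a]}\right)\Phi_3(a_3)$ of nonnegative Schwartz functions (continuously in $M_2$ in the archimedean case), noting $\frac{c}{[a]}=\frac{c}{a_1a_2a_3}$, $\frac{a_1c}{[a]}=\frac{c}{a_2a_3}$, $\frac{a_2c}{[a]}=\frac{c}{a_1a_3}$. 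The key elementary input is Lemma \ref{lem:twoterms}: each one-dimensional integral $\int_{F^\times}|\Phi(r/a)|\max(1,|r'a|)^{-e_1}\frac{d^\times a}{|a|^{e_2-e_1}}$ is controlled by a Schwartz function of $r$ times an explicit power of $|r r'|$, with the three regimes $e_2\gtrless e_1$.

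\textbf{Step 1 (integrate in $a_1$).} Here the relevant factors are $\Phi_1\!\left(\frac{c}{a_2a_3}\right)$ (no $a_1$), $\max(1,|a_1|)^{1-d_1/2}$, $|a_1|^{-d_2/2}\cdot|a_1|^{d_1/2-1}=|a_1|^{(d_1-d_2)/2-1}$ from $|[a]/c|^{-d_2/2}\{a\}^{d/2-1}$, together with $\Phi_0\!\left(\frac{c}{a_1a_2a_3}\right)$ and $\Phi_2\!\left(\frac{a_2c}{[a]}\right)=\Phi_2\!\left(\frac{c}{a_1a_3}\right)$. I would absorb $\Phi_2$ into a bound of the form $|a_1|^{-\varepsilon}$ on the region where it matters (using $\Phi_2\!\left(\frac{c}{a_1a_3}\right)\ll\min(1,|c/(a_1a_3)|)^{\varepsilon}$), leaving an integral of the shape handled by Lemma \ref{lem:twoterms} with $r=\frac{c}{a_2a_3}$, $e_1=d_1/2-1$, $e_2=d_2/2$ (case $e_2>e_1$ since $d_2\geq 2$... — more precisely one must keep track of whether $d_1\lessgtr d_2$, which produces the two regimes of the lemma). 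The output is a Schwartz function of $\frac{c}{a_2a_3}$ times a power of $|c/(a_2a_3)|$ or of $|a_2a_3|$, times the surviving power $|a_2|^{(d_2-d_1)/2+d_2/2-1}|a_3|^{(d_3-d_1)/2+d_2/2-1}$ (up to the $\varepsilon$-shift), with the $\max(1,|c|)^{-1}$-type factor appearing when $|c|\geq 1$ since then $\Phi_0$ forces $|a_1a_2a_3|\gtrsim|c|$.

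\textbf{Step 2 (integrate in $a_2$, then $a_3$).} Each of these is again an instance of Lemma \ref{lem:twoterms} after the same majorization trick, now with $r$ proportional to $\frac{c}{a_3}$ (resp. just $c$), and exponents determined by the surviving powers. The bookkeeping naturally splits according to the ordering of $d_1,d_2,d_3$; by symmetry of the final bound in these indices (the right-hand side only sees $\min(d_1,d_2,d_3)$) it suffices to treat, say, $d_1\leq d_2\leq d_3$, which collapses several cases. After the $a_3$-integration one is left with a Schwartz function $\Phi(c)$ (assembled from the various $\Phi_i$ evaluated at quantities which, after all substitutions, became functions of $c$) times $|c|^{\text{exponent}}$, and one checks the exponent is $\geq \min(d_i)/2-1-\varepsilon$ for $|c|\leq 1$ and that for $|c|\geq 1$ the Schwartz decay of $\Phi$ plus the $\Phi_0$-constraint yields the factor $\max(1,|c|)^{-1}$ (indeed one can extract any fixed negative power of $|c|$ there). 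I expect the main obstacle to be the case analysis in Steps 1–2: one has to verify that in \emph{every} regime of $e_1\lessgtr e_2$ and of the ordering of the $d_i$, the accumulated exponent of $|c|$ in the $|c|\le 1$ range does not drop below $\min(d_1,d_2,d_3)/2-1-\varepsilon$, and that the "$\max(1,|c|)$ vs.\ Schwartz argument" is always the quantity $c/[a]$ (so that $\Phi_0$ supplies the decay). This is where one must be careful that the $\varepsilon$ is only spent once and that the three applications of Lemma \ref{lem:twoterms} in the borderline equal-exponent case do not compound into a power of a logarithm — here the $\min(1,\cdot)^{-\varepsilon}$ clause of Lemma \ref{lem:twoterms} is exactly strong enough to prevent this provided $\varepsilon$ is chosen small relative to the gaps, and we absorb the surplus into the stated $\varepsilon$.
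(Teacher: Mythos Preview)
Your overall strategy—majorize $M_2$ by a tensor product of Schwartz functions and apply Lemma~\ref{lem:twoterms} iteratively—is the paper's strategy too, but the order of integration matters and your choice to do $a_1$ first creates a real gap. The paper integrates $a_2$ first: the combined exponent of $|a_2|$ is exactly $-1$ (the $|a_2|^{-d_2/2}$ from $|[a]/c|^{-d_2/2}$ cancels against the $|a_2|^{d_2/2-1}$ from $\{a\}^{d/2-1}$), so the $a_2$-integral is just the substitution $v=c/(a_2a_3)$. Both $a_2$-dependent arguments of $M_2$ become $v/a_1$ and $v$, and the resulting integral $\int M_2(v/a_1,v,\tfrac{c}{a_1a_3},a_3)\,|v|\,d^\times v$ is bounded by $\min(1,|a_1|)$ times a Schwartz function $M_2'\bigl(\tfrac{c}{a_1a_3},a_3\bigr)$. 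This extra $\min(1,|a_1|)$ upgrades $\max(1,|a_1|)^{1-d_1/2}$ to $\max(1,|a_1|)^{-d_1/2}$, after which Lemma~\ref{lem:twoterms} applies to $a_1$ with $e_1=d_1/2$, $e_2=d_2/2$, and then once more to $a_3$ (after $a_3\mapsto a_3^{-1}$) to finish.

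If instead you integrate $a_1$ first and ``absorb $\Phi_2$'' as you propose—note your bound $\Phi_2(x)\ll\min(1,|x|)^\varepsilon$ is simply false, since Schwartz functions need not vanish at $0$—then Lemma~\ref{lem:twoterms} with $r_1=c/(a_2a_3)$, $e_1=d_1/2-1$, $e_2=d_2/2$ gives (in the regime $d_1\le d_2$) a factor $|c/(a_2a_3)|^{(d_1-d_2)/2-1}$; absorbing the $\max$-factor into $\Phi_1$ and substituting $w=c/(a_2a_3)$ in the $a_2$-integral leaves $\int\Phi_1'(w)\,|w|^{(d_1-d_2)/2}\,d^\times w$, which diverges at $w=0$ whenever $d_1\le d_2$. (Your stated surviving power $|a_2|^{(d_2-d_1)/2+d_2/2-1}$ is also miscomputed: the correct surviving power before the Lemma output is $|a_2|^{-1}$.) The missing $\min(1,|a_1|)$ is precisely what would rescue this step, but it cannot be obtained from a single Schwartz factor $\Phi_0(r_1/a_1)$ in Lemma~\ref{lem:twoterms}; it comes from using the pair $\Phi_0,\Phi_1$ simultaneously in the $a_2$-integration, which is why that integration must come first.
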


\begin{proof} Changing variables $a_2 \mapsto a_1^{-1}a_3^{-1}a_2$ and then computing directly, we see that the integral over $a_2$ is dominated by
\begin{align*}
  \left|c\right|^{d_2/2-1}|a_3|^{(d_3-d_2)/2}\max\left(1,|a_1|\right)^{-d_1/2} M'_2\left(\frac{c}{a_1a_3},a_3\right)|a_1|^{(d_1-d_2)/2}
\end{align*}
for some $M_2' \in \mathcal{A}(F^2)$.
 By Lemma \ref{lem:twoterms}, the integral of this function over $a_1$ is bounded by a constant depending on $\varepsilon$ times
\begin{align*}
 \left|c\right|^{d_2/2-1}|a_3|^{(d_3-d_2)/2}M_2''\left(a_3\right)\times \begin{cases}
    \left|\frac{c}{a_3}\right|^{(d_1-d_2)/2-\varepsilon}\max\left(1,\left|\frac{c}{a_3}\right|\right)^{-d_1/2+\varepsilon} & \textrm{ if } d_1\le d_2, \\  
      \max\left(1,\left|\frac{c}{a_3}\right|\right)^{-d_2/2} & \textrm{ if } d_1> d_2
\end{cases}
\end{align*}
for some $M''_2 \in \mathcal{A}(F)$.
Change variables $a_3\mapsto a_3^{-1}$ and apply Lemma  \ref{lem:twoterms} again to complete the proof.
\end{proof}

\begin{Lem}\label{lem:term3bound}
For nonnegative $M_3\in\mathcal{A}(F^4)$ and $c\in F^\times,$ one has
\begin{align*}
    \begin{split}
    &\int_{(F^\times)^3}M_{3}\left(\frac{(1,a)c}{[a]}\right)\max\left(1,|a_{1}|\right)^{1-d_{1}/2}\max\left(1,|a_{2}|\right)^{1-d_{2}/2}\left|\frac{[a]}{c}\right|^{-d_{3}/2}\{a\}^{d/2-1}d^\times a\\
    &\ll_\varepsilon \min\left(1,|c|\right)^{\min(d_1,d_2,d_3)/2-1-\varepsilon}.
    \end{split}
\end{align*}
\end{Lem}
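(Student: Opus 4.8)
The plan is to reduce the three-dimensional integral to an iterated integral and bound it one variable at a time, exactly in the spirit of the proofs of Lemmas \ref{lem:term1bound} and \ref{lem:term2bound}. Write $c$ in place of $\frac{b[a]}{3[\widetilde\xi]}$ as in \eqref{H''}; here $c \in F^\times$ is a parameter. First I would carry out the integral over $a_3$. Since $\left|\frac{[a]}{c}\right|^{-d_3/2} = |c|^{d_3/2}|a_1a_2|^{-d_3/2}|a_3|^{-d_3/2}$, the $a_3$-factor of the integrand, after pulling out the powers of $|a_1|,|a_2|,|c|$, is $M_3\!\left(\tfrac{c}{a_1a_2 a_3}, a_1,a_2,a_3\right)|a_3|^{d_3/2-1 - d_3/2} = M_3(\cdots)|a_3|^{-1}$ times $|a_3|^{d_3/2}$ coming from the missing weight; more precisely the $a_3$-integrand is $M_3\!\left(\tfrac{(1,a)c}{[a]}\right)|a_3|^{d_3/2-1}$, and integrating a Schwartz function of $\tfrac{c}{a_1a_2a_3}$ against $|a_3|^{d_3/2-1}d^\times a_3$ yields $|c|^{d_3/2-1}|a_1a_2|^{1-d_3/2}$ times a Schwartz function $M_3'\!\left(\tfrac{c}{a_1a_2}, a_1,a_2\right)$. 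This is the same elementary computation used at the start of the proof of Lemma \ref{lem:term2bound}.

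Next, I would integrate over $a_1$ and $a_2$ using Lemma \ref{lem:twoterms}. After the $a_3$-integration, up to the factor $|c|^{d_3/2-1}$ the remaining integrand has the shape
\[
M_3'\!\left(\frac{c}{a_1a_2},a_1,a_2\right)\max(1,|a_1|)^{1-d_1/2}\max(1,|a_2|)^{1-d_2/2}|a_1|^{(d_1-d_3)/2}|a_2|^{(d_2-d_3)/2}.
\]
By symmetry I may assume $d_1 \le d_2$ (or handle the two orderings in parallel). Freezing $a_2$, the $a_1$-integral is of the form treated by Lemma \ref{lem:twoterms} with $r_1 = c/a_2$, $r_2 = 1$, $e_1 = d_1/2 - 1$, $e_2 = d_3/2 - 1$ (inserting a harmless $\varepsilon$ into the exponent when $d_1 = d_3$ to stay in the strict regime); this produces a Schwartz function of $c/a_2$ times an explicit power of $|c/a_2|$ and a $\max(1,|c/a_2|)$-factor. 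Then the $a_2$-integral is again of the form covered by Lemma \ref{lem:twoterms} (after a change of variables $a_2 \mapsto a_2^{-1}$ if needed), and collecting the exponents gives the bound $\min(1,|c|)^{\min(d_1,d_2,d_3)/2 - 1 - \varepsilon}$, possibly up to enlarging $\varepsilon$. In the archimedean case all implied constants and Schwartz functions are chosen continuously in the data, as remarked at the start of \S\ref{ssec:overa}.

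The only genuine bookkeeping obstacle is arranging the exponent arithmetic so that the three applications of Lemma \ref{lem:twoterms} dovetail: one must check that, whichever of $d_1,d_2,d_3$ is smallest, the accumulated power of $|c|$ is exactly $\min(d_1,d_2,d_3)/2 - 1$ (before the $\varepsilon$-loss) for $|c|\le 1$, and that the large-$|c|$ decay is at least $|c|^{-1}$ (which is all that is needed here, since the statement only records the $\min(1,|c|)$-behavior). This is a finite case analysis over the six orderings of the $d_i$, reduced to three by the $C_3$- (indeed $S_3$-) symmetry of the hypotheses, and in each case one simply tracks which branch of Lemma \ref{lem:twoterms} ($e_2 > e_1$, $e_2 < e_1$, or $e_2 = e_1$) is invoked. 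I do not expect any new analytic input beyond Lemma \ref{lem:twoterms} and the elementary Schwartz-function integration used for the $a_3$-variable; the proof is structurally identical to that of Lemma \ref{lem:term2bound}, with one extra clean variable.
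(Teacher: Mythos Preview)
Your overall plan (integrate in $a_3$ first, then $a_1$, then $a_2$, feeding the middle step into Lemma~\ref{lem:twoterms}) matches the paper's, but there is a real gap in the $a_3$-step that propagates and makes the final bound fail for $|c|\ge 1$.

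The function $M_3\big(\tfrac{(1,a)c}{[a]}\big)$ depends on $a_3$ through \emph{three} of its four slots, namely $\tfrac{c}{a_1a_2a_3},\ \tfrac{c}{a_2a_3},\ \tfrac{c}{a_1a_3}$; only the fourth slot $\tfrac{c}{a_1a_2}$ is $a_3$-free. After the change of variables $u=\tfrac{c}{a_1a_2a_3}$, the $a_3$-integral becomes $\int M_3(u,a_1u,a_2u,\tfrac{c}{a_1a_2})\,|u|\,d^\times u$, and the Schwartz decay in the second and third slots only buys a single power $\max(1,|a_1|,|a_2|)^{-1}$, not Schwartz decay in $a_1,a_2$. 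So the correct output (this is \eqref{direct} in the paper) is
\[
|c|^{d_3/2-1}\,M_3'\!\Big(\tfrac{c}{a_1a_2}\Big)\,|a_1|^{(d_1-d_3)/2}|a_2|^{(d_2-d_3)/2}\,\max(1,|a_1|,|a_2|)^{-1}\,\max(1,|a_1|)^{1-d_1/2}\max(1,|a_2|)^{1-d_2/2},
\]
with $M_3'\in\mathcal S(F)$ in one variable only. Your claimed $M_3'(\tfrac{c}{a_1a_2},a_1,a_2)\in\mathcal S(F^3)$ is not available. This extra $\max(1,|a_1|,|a_2|)^{-1}$ is not cosmetic: if you drop it and run your remaining steps, then for $|c|\ge 1$ the integrand in the $a_2$-variable is \emph{constant} $=|c|^{1-d_3/2}$ on the range $1\le|a_2|\le|c|$, giving a contribution $\asymp|c|^{1-d_3/2}\log|c|$; multiplied by the prefactor $|c|^{d_3/2-1}$ the total is $\asymp\log|c|$, which is not $O_\varepsilon(1)$. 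In the paper's argument, that extra power is absorbed via $\max(1,|a_1|,|a_2|)^{-1}\le\max(1,|a_1|)^{-1}$ before applying Lemma~\ref{lem:twoterms} to $a_1$ (so the effective $e_1$ is $d_1/2$ rather than $d_1/2-1$), and this is exactly what kills the logarithm.

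A second, smaller issue: after the $a_1$-integration the Schwartz factor $M_3'$ has been consumed, so the $a_2$-integral is \emph{not} of the form in Lemma~\ref{lem:twoterms} (there is no $\Phi(r_1/a_2)$ left). The paper handles the final $a_2$-integral by an explicit elementary split into $|a_2|\le 1$, $1<|a_2|\le|c|$, $|a_2|>|c|$ (and separately $|c|<1$), not by another call to Lemma~\ref{lem:twoterms}. Once you record the $\max(1,|a_1|,|a_2|)^{-1}$ correctly, this last step is routine.
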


\begin{proof}
By symmetry, we may assume $d_1\le d_2$. A direct computation shows the integral over $a_3$ is dominated by
\begin{align}\label{direct}
\begin{split}
    &|c|^{d_3/2-1}M_3'\left(\frac{c}{a_1a_2}\right)|a_1|^{(d_1-d_3)/2}|a_2|^{(d_2-d_3)/2}\max\left(1,|a_1|,|a_2|\right)^{-1}\\
    &\times\max\left(1,|a_1|\right)^{1-d_1/2}\max\left(1,|a_2|\right)^{1-d_2/2}
\end{split}
\end{align}
for some $M_3' \in \mathcal{A}(F)$.  Since  $\max\left(1,|a_1|,\left|a_2\right|\right)^{-1}\le \max\left(1,|a_1|\right)^{-1},$ by Lemma \ref{lem:twoterms}, the integral over $a_1$ is dominated by
\begin{align} \label{for:a20}
 &|c|^{d_3/2-1}|a_2|^{d_2/2-d_3/2}\max\left(1,|a_2|\right)^{1-d_2/2}\times \begin{cases}
    \left|\frac{c}{a_2}\right|^{d_1/2-d_3/2-\varepsilon}\max\left(1,\left|\frac{c}{a_2}\right|\right)^{\varepsilon-d_1/2} & \textrm{ if } d_1\le d_3, \\  
     \left|\frac{c}{a_2}\right|^{-\varepsilon}\max\left(1,\left|\frac{c}{a_2}\right|\right)^{\varepsilon-d_3/2} & \textrm{ if } d_1\ge d_3.
\end{cases}
\end{align}
Choose $\sigma\in \{\textrm{Id},(13)\}$ such that $d_{\sigma(1)}\le d_{\sigma(3)}$. Then by \eqref{for:a20} the integral in the lemma is bounded by a constant depending on $\varepsilon$ times 
\begin{align*}
\int_{F^\times}
    &|c|^{d_{\sigma(1)}/2-1-\varepsilon}|a_2|^{d_2/2-d_{\sigma(1)}/2+\varepsilon}\max\left(1,|a_2|\right)^{1-d_2/2}\max\left(1,\left|\frac{c}{a_2} \right| \right)^{\varepsilon-d_{\sigma(1)}/2}d^\times a_2.
\end{align*}
Writing $a$ for $a_2$, for $|c|\geq 1$  this is 
\begin{align*}\begin{split}
&|c|^{-1}    \int_{|a|\le 1} |a|^{d_2/2}d^\times a+|c|^{-1}\int_{1<|a|\leq |c|}  |a|d^\times a+|c|^{d_{\sigma(1)}/2-1-\varepsilon}\int_{|c|<|a|}|a|^{1-d_{\sigma(1)}/2+\varepsilon}d^\times a\ll 1.
\end{split}
\end{align*}
For $|c| <1$, this is 
\begin{align*}
&|c|^{-1}    \int_{|a|\le |c|} |a|^{d_2/2}d^\times a+|c|^{d_{\sigma(1)}/2-1-\varepsilon}\int_{|c|<|a|\leq 1}  |a|^{d_2/2-d_{\sigma(1)}/2+\varepsilon}d^\times a+|c|^{d_{\sigma(1)}/2-1-\varepsilon}\int_{1<|a|}|a|^{1-d_{\sigma(1)}/2+\varepsilon}d^\times a\\
&\ll_\varepsilon |c|^{\min(d_1,d_2,d_3)/2-1-\varepsilon}.
\end{align*}
\end{proof}

As a corollary of the proof we obtain:

\begin{Cor}\label{cor:term3bound}
Suppose $|c|>1$. For nonnegative $M_3\in\mathcal{A}(F^4),$ given $\alpha>0,$ there exists $\beta>0$ such that
\begin{align}\label{goodpart}
    \begin{split}
    &\int_{|a|\ge |c|^{\alpha}} M_{3}\left(\frac{(1,a)c}{[a]}\right)\max\left(1,|a_{1}|\right)^{1-d_{1}/2}\max\left(1,|a_{2}|\right)^{1-d_{2}/2}\left|\frac{[a]}{c}\right|^{-d_{3}/2}\{a\}^{d/2-1}d^\times a\ll_{\alpha,\beta}  |c|^{-\beta},
    \end{split}
\end{align}
\end{Cor}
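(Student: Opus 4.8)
The plan is to deduce Corollary \ref{cor:term3bound} from the proof of Lemma \ref{lem:term3bound} by inserting into the integrand an extra power of the largest coordinate. Fix a small $\delta>0$, to be constrained below in terms of $\min_i d_i$. On the domain $\{|a|\ge |c|^{\alpha}\}$ we have $|c|^{\alpha\delta}\le |a|^{\delta}$, so the integrand is bounded by $|c|^{-\alpha\delta}|a|^{\delta}$ times the original integrand; since $|a|=\max(|a_1|,|a_2|,|a_3|)\le\prod_{i=1}^3\max(1,|a_i|)$ and the integrand is nonnegative (after replacing $M_3$ by $|M_3|$), enlarging the domain back to all of $(F^\times)^3$ bounds the left-hand side of \eqref{goodpart} by
\begin{align*}
|c|^{-\alpha\delta}\int_{(F^\times)^3}|M_{3}|\Big(\tfrac{(1,a)c}{[a]}\Big)\max(1,|a_1|)^{1-d_1/2+\delta}\max(1,|a_2|)^{1-d_2/2+\delta}\max(1,|a_3|)^{\delta}\Big|\tfrac{[a]}{c}\Big|^{-d_3/2}\{a\}^{d/2-1}\,d^\times a.
\end{align*}

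It then suffices to show that the remaining integral is $O_{\delta,\varepsilon}(1)$ for $|c|>1$, and I would do this by repeating the proof of Lemma \ref{lem:term3bound} step by step: integrate out $a_3$ to reach the analogue of \eqref{direct}, then integrate out $a_1$ by Lemma \ref{lem:twoterms} to reach the analogue of \eqref{for:a20}, and finally integrate out $a_2$. The two modified factors $\max(1,|a_i|)^{1-d_i/2+\delta}$ for $i=1,2$ amount to running that argument with $d_1,d_2$ replaced by $d_1-2\delta,d_2-2\delta$; since $d_i\ge 4$ gives $(d_i-2\delta)/2-1\ge 1-\delta>0$, all the convergence conditions and the exponent bookkeeping survive, at the cost of slightly enlarging the auxiliary parameter $\varepsilon$ appearing in Lemmas \ref{lem:twoterms} and \ref{lem:invconv}. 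The new factor $\max(1,|a_3|)^{\delta}$ is absorbed in the first step: the power of $|a_3|$ produced by $\{a\}^{d/2-1}|[a]/c|^{-d_3/2}$ changes from $|a_3|^{-1}$ to $|a_3|^{-1}\max(1,|a_3|)^{\delta}$, which is still integrable against $d^\times a_3$ on the support of $M_3$ once $\delta<1$. The net bound is of the form $\min(1,|c|)^{\min(d_1,d_2,d_3)/2-1-\varepsilon-\delta}$, which equals $1$ for $|c|>1$; combining with the prefactor gives \eqref{goodpart} with $\beta:=\alpha\delta>0$.

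The step I expect to require the most care is this first $a_3$-integration, precisely because $a_3$ enters the integrand asymmetrically (it carries the weight $|a_3|^{-1}$ and, unlike $a_1,a_2$, no $\max(1,|a_3|)$-factor), so the bound \eqref{direct} must be reestablished in its presence. Concretely one must check that
\[
\int_{F^\times}|a_3|^{-1}\max(1,|a_3|)^{\delta}\,|M_3|\big(\tfrac{(1,a)c}{[a]}\big)\,d^\times a_3
\]
is still dominated, uniformly in $(a_1,a_2)$, by $|c|^{d_3/2-1}$ times a Schwartz function of $c/(a_1a_2)$ times the polynomial factors in $a_1,a_2$ appearing in \eqref{direct}. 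This holds because, as a function of $a_3$ with $(a_1,a_2)$ fixed, $M_3\big((1,a)c/[a]\big)$ is — up to the rapid decay forced by its remaining, $a_3$-independent argument $c/(a_1a_2)$ — a Schwartz function of the single variable $c/(a_1a_2a_3)$, hence decays rapidly both as $|a_3|\to 0$ and as $|a_3|\to\infty$; against this decay the factor $\max(1,|a_3|)^{\delta}$ with $\delta<1$ is harmless, exactly as in the derivation of \eqref{direct} itself. Once $\delta$ is fixed small enough the remainder is a routine repetition of the estimates of \S\ref{ssec:overa}.
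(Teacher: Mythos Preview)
Your approach is correct and genuinely different from the paper's. The paper argues by decomposing $\{|a|\ge |c|^\alpha\}$ into the three regions $\{|a_i|\ge |c|^\alpha\}$ and treating each by hand: for $i=1,2$ it first integrates out $a_3$ via \eqref{direct} and then splits further on the size of the remaining variable, while for $i=3$ it simply pulls out $|a_3|^{-1}\le |c|^{-\alpha}$ and bounds the leftover two-variable integral by $O(|c|^{\varepsilon'})$. Your weight-insertion trick $|c|^{\alpha\delta}\le |a|^\delta\le\prod_i\max(1,|a_i|)^\delta$ bypasses this case split and reduces everything to a single run of the Lemma~\ref{lem:term3bound} argument with perturbed exponents; it is cleaner, and the same device would prove the companion Corollary~\ref{cor:term4bound} as well.

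One correction and one point to make explicit. First, your claim that $M_3\big((1,a)c/[a]\big)$ decays rapidly as $|a_3|\to\infty$ is false: the three $a_3$-dependent arguments $c/[a],\,c/(a_2a_3),\,c/(a_1a_3)$ all tend to $0$, so $M_3$ merely stays bounded. This does not matter, since $|a_3|^{-1}\max(1,|a_3|)^\delta\,d^\times a_3$ is already integrable at infinity for $\delta<1$; but your stated reason is wrong. Second, after the $a_3$-integration the resulting bound is not literally \eqref{direct} but \eqref{direct} times an extra $\max(1,|a_1|,|a_2|)^\delta$: substituting $u=c/[a]$ one has $\max(1,|a_3|)^\delta\le \max(1,|c/(a_1a_2)|)^\delta\max(1,|u|^{-1})^\delta$, and the first factor is absorbed into the Schwartz function of $c/(a_1a_2)$ while the second contributes $\max(1,|a_1|,|a_2|)^\delta$ after the change of variables $w=u\max(1,|a_1|,|a_2|)$. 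This extra factor is harmless in your scheme (fold it into the shifted exponents on $\max(1,|a_1|)$ and $\max(1,|a_2|)$), but you should say so rather than assert that \eqref{direct} is reproduced verbatim.
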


\begin{proof}We can and do assume $\tfrac{1}{2}>\alpha$.  Consider the contribution of $|a_1| \geq |c|^\alpha$.
In view of \eqref{direct}, the integral over $a_3$ is dominated by
\begin{align*}
    &|c|^{d_3/2-1}M_3'\left(\frac{c}{a_1a_2}\right)|a_1|^{1-d_3/2}|a_2|^{(d_2-d_3)/2}\max\left(|a_1|,|a_2|\right)^{-1}\max\left(1,|a_2|\right)^{1-d_2/2}.
\end{align*}
When  $|a_2|\ge |c|^{1-\alpha},$ this expression is dominated by
\begin{align*}
    \norm{M_3'}_\infty|c|^{d_3/2-1} |a_1|^{1-d_3/2}|a_2|^{-d_3/2}.
\end{align*}
Thus, the contribution of the range $|a_1| \geq |c|^{\alpha},$ $|a_2| \ge |c|^{1-\alpha}$ is dominated by $|c|^{\alpha-1}$. 
To bound the contribution of the range $|a_1| \ge |c|^{\alpha},$ $|a_2| <|c|^{1-\alpha}$ to \eqref{goodpart}, we argue as above and then take a change of variables 
 $a_1\mapsto a_1\frac{c}{a_2}$.  This yields a bound of
\begin{align*}
    |c|^{-1}&\int_{|c|^{1-\alpha}\ge |a_2|, |a_1|\ge |a_2||c|^{\alpha -1}} M_3'(a_1^{-1})|a_1|^{-d_3/2}|a_2|^{d_2/2}\max(1,|a_2|)^{1-d_2/2}d^\times a_1d^\times a_2\\
    &\ll |c|^{-\alpha}\int_{F} M'_3(a_1)|a_1|^{d_3/2-1}da_1.
\end{align*}
The contribution of $|a_2| \ge |c|^{\alpha}$ admits the same bound by symmetry.  

Now consider the contribution of $|a_3|\ge |c|^{\alpha}$. It is dominated by $|c|^{-\alpha}$ times
\begin{align*}
    &|c|^{d_3/2}\int_{(F^\times)^2} M_3'\left(\frac{c}{a_1a_2}\right)\\& \times \max\left(1,|a_{1}|\right)^{1-d_{1}/2}\max\left(1,|a_{2}|\right)^{1-d_{2}/2}|a_1|^{d_1/2-d_3/2-1}|a_2|^{d_2/2-d_3/2-1} d^\times a_1 d^\times a_2\\
    &\ll_{M_3',\varepsilon'} |c|^{\varepsilon'}.
\end{align*}
for some $M_3' \in \mathcal{A}(F)$ and $0<\varepsilon'<\alpha$.  
 Here the last inequality follows from an argument similar to that in the proof of Lemma \ref{lem:term3bound}. 
\end{proof}

\begin{Lem}\label{lem:term4bound}
For $c\in F^\times,$ we have
\begin{align}\label{term4bound}
    \int_{(F^\times)^3} \max(1,|a_1|)^{1-d_1/2}\max\left(1,\left|\frac{[a]}{c}\right|,|a|\right)^{1-d_2/2-d_3/2}\{a\}^{d/2-1}d^\times a\ll_\varepsilon \min(1,|c|)^{\min(d_1,d_2,d_3)/2-1-\varepsilon}.
\end{align}
\end{Lem}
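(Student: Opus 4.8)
\textbf{Proof plan for Lemma \ref{lem:term4bound}.}

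The plan is to integrate the three variables in a convenient order, splitting into cases according to whether $|a_i|$ is large or small. First I would observe that by symmetry in $a_2,a_3$ we may restrict the analysis, and the key structural feature is that the exponent $d/2-1$ in $\{a\}^{d/2-1}$ is positive (so the factors $|a_i|^{d_i/2-1}$ are integrable near $0$), while the competing $\max(1,|[a]/c|,|a|)^{1-d_2/2-d_3/2}$ decays at infinity since $d_2/2+d_3/2-1>0$. I would begin by integrating over $a_1$. On the region $|a_1|\le 1$ the factor $\max(1,|a_1|)^{1-d_1/2}=1$ and the integral $\int_{|a_1|\le 1}|a_1|^{d_1/2-1}\max(1,|a_2a_3a_1/c|,|a_1|,|a_2|,|a_3|)^{1-d_2/2-d_3/2}\,d^\times a_1$ is controlled by $\max(1,|a_2|,|a_3|)^{1-d_2/2-d_3/2}$ times a constant. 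On the region $|a_1|>1$ the integrand in $a_1$ is $|a_1|^{d_1/2-1}\cdot|a_1|^{1-d_1/2}\max(1,|a_2a_3a_1/c|,|a_1|,|a_2|,|a_3|)^{1-d_2/2-d_3/2}=\max(1,|a_2a_3a_1/c|,|a_1|,|a_2|,|a_3|)^{1-d_2/2-d_3/2}$, and since $d_2/2+d_3/2-1\ge 1>0$ this is integrable in $a_1$ over $|a_1|>1$ with the integral bounded (up to an $\varepsilon$-loss at the threshold, absorbed by replacing the exponent by $1-d_2/2-d_3/2+\varepsilon$) by $\max(1,|a_2|,|a_3|,|a_2a_3/c|)^{1-d_2/2-d_3/2}\cdot\min(1,\text{stuff})$; more simply, one gets a bound of the form $|c|^{\delta}\max(1,|a_2|,|a_3|)^{2-d_2/2-d_3/2+\varepsilon}$ for small $\delta$, or one keeps the $|a_2a_3/c|$ term. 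I would choose whichever of these bookkeeping options makes the remaining two integrals cleanest, most likely keeping the $\max(1,|a_2|,|a_3|)^{1-d_2/2-d_3/2}$ shape and noting that the $a_1$-integral contributes a bounded factor depending on $|c|$ only through $\min(1,|c|)^{-\varepsilon}$ type corrections.

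Next I would integrate over $a_2$ and then $a_3$. After the $a_1$-integration one is left with an expression of the form (constant)$\cdot\int_{(F^\times)^2}\max(1,|a_2|,|a_3|)^{1-d_2/2-d_3/2}\{(a_2,a_3)\}^{d/2-1}\,d^\times a_2\,d^\times a_3$ possibly with an extra $\max(1,|a_2a_3/c|)$-type factor and a $|c|$ power. The crucial point is that $\int_{F^\times}|a_2|^{d_2/2-1}\max(1,|a_2|,|a_3|)^{1-d_2/2-d_3/2}\,d^\times a_2$ converges: near $a_2=0$ the factor $|a_2|^{d_2/2-1}$ (with $d_2/2-1\ge 1>0$) gives integrability, and for $|a_2|$ large the integrand behaves like $|a_2|^{d_2/2-1}\cdot|a_2|^{1-d_2/2-d_3/2}=|a_2|^{-d_3/2}$ which is integrable since $d_3/2\ge 2>0$; the result is $O(\max(1,|a_3|)^{1-d_3/2})$ (again up to $\varepsilon$ at the boundary exponent). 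The final integral $\int_{F^\times}|a_3|^{d_3/2-1}\max(1,|a_3|)^{1-d_3/2}\,d^\times a_3$ converges by the same reasoning ($|a_3|^{d_3/2-1}$ near $0$, $|a_3|^{-\text{something positive}}$ near $\infty$ — here one must be slightly careful since the tail is like $|a_3|^{d_3/2-1}\cdot|a_3|^{1-d_3/2}=1$, so one actually needs to track the $\max(1,|a_2a_3/c|)$ factor or an extra decay saved from the earlier steps to close it). Tracking the $|[a]/c|$ contribution through all three integrations is what produces the claimed $\min(1,|c|)^{\min(d_1,d_2,d_3)-1-\varepsilon}$ factor: when $|c|$ is small, the $\max$ is dominated by $|[a]/c|$ over a large region, forcing $|a|$ to be correspondingly small and gaining powers of $|c|$; when $|c|\ge 1$ the $\max(1,\cdot)$ truncation makes the $c$-dependence harmless, giving a bounded contribution consistent with $\min(1,|c|)^{\cdots}=1$.

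The main obstacle will be the bookkeeping of the threshold (logarithmically divergent) cases, where an exponent equals exactly $0$ or $1$ — these are precisely the places where the $\varepsilon$ is spent. Concretely, the tail of the $a_3$-integral after the first two integrations is borderline, so one must either extract a genuine power of decay from the $\max(1,|[a]/c|,|a|)$ factor (replacing $|a|$ by $|a_3|$ alone loses this) or replace the exponent $1-d_2/2-d_3/2$ by $1-d_2/2-d_3/2+\varepsilon'$ at an earlier stage, pushing the loss into the constant. I would handle this by introducing a small $\varepsilon'<\varepsilon$, bounding $\max(1,|a_1|)^{1-d_1/2}\le \max(1,|a_1|)^{1-d_1/2}$ and each $\max(1,|a_i|)^{1-d_i/2}\le\max(1,|a_i|)^{1-d_i/2+\varepsilon'/3}$ only where needed, and carefully verifying in each of the (finitely many) sign patterns of $\log|a_i|$ and $\log|c|$ that the resulting geometric-series sums converge, which they do because every critical exponent is positive after the $\varepsilon'$-perturbation. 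The homogeneity structure (the three integrations are nearly symmetric and each "uses up" one factor of the decay $\max^{1-d_2/2-d_3/2}$) makes the estimate morally immediate; the work is purely in confirming the elementary one-variable estimates $\int_{F^\times}|a|^{s}\max(1,|a|,|r|)^{-t}\,d^\times a$-type bounds, for which Lemma \ref{lem:twoterms} (or direct computation) suffices.
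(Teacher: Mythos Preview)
Your approach is the same in spirit as the paper's --- integrate the $a_i$ one at a time using elementary one-variable estimates --- but there is a genuine gap in the plan that causes it to fail as written.

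The crude bound you propose after the $a_1$-integration, namely replacing the result by (constant)$\cdot\max(1,|a_2|,|a_3|)^{1-d_2/2-d_3/2}$ and deferring the $|[a]/c|$-dependence, does not merely lose the $c$-power: it leads to a divergent integral over $(a_2,a_3)$. Concretely, on the region $|a_2|\ge|a_3|>1$ the remaining integrand is $|a_2|^{d_2/2-1}|a_3|^{d_3/2-1}|a_2|^{1-d_2/2-d_3/2}=|a_2|^{-d_3/2}|a_3|^{d_3/2-1}$; integrating $a_3$ over $1<|a_3|<|a_2|$ gives $\asymp|a_2|^{-1}$, and $\int_{|a_2|>1}|a_2|^{-1}\,d^\times a_2$ diverges. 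So the $|[a]/c|$ term inside the $\max$ is not optional bookkeeping for the $|c|$-power --- it is what makes the integral converge at all, even when $|c|\ge1$. You correctly flag the borderline $a_3$-tail later, but the proposed fix (spend an $\varepsilon$) cannot close a genuinely divergent integral; you must keep the $|a_2a_3/c|$ contribution through the $a_1$-step.

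The paper addresses this by first proving a clean sublemma that evaluates $\int_{F^\times}\max(1,|a|)^{-e_1}\max(|a|,|r|)^{-e_2}|a|^{e_1}\,d^\times a$ with explicit case-dependent answers depending on $|r|\gtrless1$ and $e_1\gtrless e_2$, applied to the $a_1$-integral with $r=\max(1,|a_2|,|a_3|)/\max(1,|a_2a_3/c|)$. This retains the full $(a_2,a_3,c)$-dependence in one shot, and the remainder of the proof is a case analysis on the domains $|a_2a_3|\lessgtr|c|$, $|a_3|\gtrless\max(1,|a_2|)$, etc. Your instinct to invoke Lemma~\ref{lem:twoterms} is right, but you need an analogue tailored to the shape $\max(1,|a_1|)^{-e_1}\max(|a_1|\cdot A,\,B)^{-e_2}|a_1|^{e_1}$ rather than discarding the $A$-factor.
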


\begin{proof} We start with an easy estimate:
\begin{Sublem}\label{lem:onlymax}
Suppose $e_1,e_2\in\rr_{>0}$. As a function of $r\in F^\times,$
\begin{align*} 
    \int_{F^\times} \max\left(1,|a|\right)^{-e_1}\max\left(|a|,|r|\right)^{-e_2}|a|^{e_1}d^\times a
    &\ll_{\varepsilon}\begin{cases}
   |r|^{\varepsilon-e_2}     &  \textrm{ if } |r|\ge 1,\\
    |r|^{-\varepsilon}   &  \textrm{ if } e_1\ge e_2 \textrm{ and } |r|\le 1,\\
   |r|^{e_1-e_2}&  \textrm{ if } e_1< e_2 \textrm{ and } |r|\le 1.
    \end{cases}
\end{align*}
\end{Sublem}

\begin{proof}
    If $|r|\ge 1,$ then the integral is
    \begin{align*}
        &|r|^{-e_2}\int_{|a|\le 1} |a|^{e_1}d^\times a+|r|^{-e_2}\int_{1\le |a|\le |r|} d^\times a+\int_{|r|\le |a|}|a|^{-e_2}d^\times a\ll_\varepsilon |r|^{\varepsilon-e_2}.
    \end{align*}
    If $|r|\le 1,$ the integral is
    \begin{align*}
        &|r|^{-e_2}\int_{|a|\le |r|} |a|^{e_1}d^\times a+\int_{|r|\le |a|\le 1} |a|^{e_1-e_2}d^\times a+\int_{1\le |a|}|a|^{-e_2}d^\times a.
    \end{align*}
\end{proof}

 Applying Sublemma \ref{lem:onlymax} to the integral over $a_1$ with $|r|=\frac{\max(1,|a_2|,|a_3|)}{ \max\left(1,\frac{|a_2a_3|}{|c|}\right)},$  we see the integral over $a_1$ in \eqref{term4bound} is dominated by $|a_2|^{d_2/2-1}|a_3|^{d_3/2-1}$ times
\begin{align*}
   \begin{cases}
    \max\left(1,\frac{|a_2a_3|}{|c|}\right)^{-\varepsilon}\max(1,|a_2|,|a_3|)^{1+\varepsilon-d_2/2-d_3/2}&  \textrm{ if } \frac{\max(1,|a_2|,|a_3|)}{ \max\left(1,\frac{|a_2a_3|}{|c|}\right)} \geq 1,\\
     \max\left(1,\frac{|a_2a_3|}{|c|}\right)^{1+\varepsilon-d_2/2-d_3/2}\max(1,|a_2|,|a_3|)^{-\varepsilon}     &  \textrm{ if } d_1\ge d_2+d_3,\frac{\max(1,|a_2|,|a_3|)}{ \max\left(1,\frac{|a_2a_3|}{|c|}\right)} \leq 1,\\
     \max\left(1,\frac{|a_2a_3|}{|c|}\right)^{1-d_1/2}\max(1,|a_2|,|a_3|)^{d_1/2-d_2/2-d_3/2}&  \textrm{ if } d_1< d_2+d_3, \frac{\max(1,|a_2|,|a_3|)}{ \max\left(1,\frac{|a_2a_3|}{|c|}\right)} \leq 1.
    \end{cases}
\end{align*}
We now bound the integral of this expression over $a_2,a_3$.

Consider the contribution of the domain $|a_2a_3|\ge |c|,$ $|a_3|\ge \max(1,|a_2|)$.  The contribution of $|c|\ge |a_2|$  is dominated by
\begin{align*}
    &|c|^{\varepsilon}\int_{|c|\ge |a_2|}|a_2|^{d_2/2-1-\varepsilon}\int_{|a_3|\ge \max(1,|a_2|,|a_2|^{-1}|c|)}|a_3|^{-d_2/2}d^\times a_3d^\times a_2\\
    &\ll |c|^{\varepsilon} \int_{|c|\ge |a_2|}|a_2|^{d_2/2-1-\varepsilon}\max(1,|a_2|,|a_2|^{-1}|c|)^{-d_2/2}d^\times a_2\\
    &=|c|^{d_2/2-1} \int_{1\ge |a_2|}|a_2|^{d_2/2-1-\varepsilon}\max(|a_2||c|,|a_2|^{-1})^{-d_2/2}d^\times a_2.
\end{align*}
Here we have taken a change of variables $a_2\mapsto a_2c$. This term is 
\begin{align*}
   \ll_\varepsilon \left\{\begin{array}{ll}
        |c|^{d_2/2-1}& \textrm{ if } |c|\le 1, \\
        |c|^{-1/2+\varepsilon/2}& \textrm{ if } |c|\ge 1.
    \end{array}\right.
\end{align*}

Consider the contribution of $|c|\le |a_2|$.  If $d_1\ge d_2+d_3,$ it is
\begin{align*}
    &|c|^{d_2/2+d_3/2-1-\varepsilon}\int_{|c|\le |a_2|}|a_2|^{\varepsilon-d_3/2}\int_{|a_3|\ge \max(1,|a_2|,|a_2|^{-1}|c|)}|a_3|^{-d_2/2}d^\times a_3d^\times a_2\\
    &\ll |c|^{d_2/2-1}\int_{1\le |a_2|}|a_2|^{\varepsilon-d_3/2}\max(1,|a_2||c|)^{-d_2/2}d^\times a_2\\
    &\ll_\varepsilon \left\{\begin{array}{ll}
        |c|^{d_2/2-1} & \textrm{ if } |c|\le 1,  \\
        |c|^{-1}& \textrm{ if } |c|\ge 1.
    \end{array}\right.
\end{align*}
If $d_1< d_2+d_3,$ a similar argument yields a bound of $|c|^{\min(d_1,d_2)/2-1-\varepsilon}$ if $|c| \leq 1$ and $|c|^{-1}$ if $|c| \geq 1$.

Over the domain $|a_2a_3|\le |c|,$ $|a_3|\ge \max(1,|a_2|),$ the integral is
\begin{align}\label{morecare}
\begin{split}
    &\int_{|a_2|^{-1}|c|\ge |a_3|\ge \max(1,|a_2|) } |a_2|^{d_2/2-1}|a_3|^{\varepsilon-d_2/2} d^\times a_3d^\times a_2\\
    &\ll_\varepsilon \int_{|a_2|\ge 1,|c|\ge |a_2|^{2}} |a_2|^{\varepsilon-1}d^\times a_2 +  \int_{\min(1,|c|)\ge |a_2|} |a_2|^{d_2/2-1} d^\times a_2\\
    &\ll \min(1,|c|)^{d_2/2-1}.
\end{split}
\end{align}

By symmetry in $a_2, a_3,$ we are left with bounding the integral over $|a_2|,|a_3|\le 1$. If $d_1\ge d_2+d_3$ then over this domain, the integral is
\begin{align*}
    \int_{|a_2|\le 1,|a_3|\le 1}& \max\left(1,\frac{|a_2a_3|}{|c|}\right)^{1+\varepsilon-d_2/2-d_3/2}|a_2|^{d_2/2-1}|a_3|^{d_3/2-1}d^\times a_2d^\times a_3\\
    =&|c|^{d_2/2+d_3/2-1-\varepsilon}\int_{|a_2|\le 1,|a_2|^{-1}|c|\le |a_3|\le 1} |a_2|^{\varepsilon-d_3/2}|a_3|^{\varepsilon-d_2/2}d^\times a_2d^\times a_3\\
    &+\int_{|a_2|\le 1,|a_3|\le \min(|a_2|^{-1}|c|, 1)} |a_2|^{d_2/2-1}|a_3|^{d_3/2-1}d^\times a_2d^\times a_3\\
    \ll& |c|^{d_3/2-1}\int_{|c|\le |a_2|\le 1} |a_2|^{d_2/2-d_3/2}d^\times a_2+\int_{|a_2|\le 1} |a_2|^{d_2/2-d_3/2}\min(|a_2|,|c|)^{d_3/2-1}d^\times a_2\\
    \ll_\varepsilon&   \begin{cases}
        |c|^{\min(d_2,d_3)/2-1-\varepsilon} & \textrm{ if } |c|\le 1,  \\
        1 & \textrm{ if } |c|\ge 1.
    \end{cases}
\end{align*}
    The case $d_1<d_2+d_3$ follows from a similar computation.  
\end{proof}

\begin{Cor}\label{cor:term4bound}
Suppose $|c|>1$. Given $\alpha>0,$ there exists $\beta>0$ such that
\begin{align}
    \int_{|a|\ge |c|^{\alpha}} \max(1,|a_1|)^{1-d_1/2}\max\left(1,\left|\frac{[a]}{c}\right|,|a|\right)^{1-d_2/2-d_3/2}\{a\}^{d/2-1}d^\times a\ll_{\alpha,\beta} |c|^{-\beta}.
\end{align}
\end{Cor}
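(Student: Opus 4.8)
The plan is to deduce the corollary from Lemma~\ref{lem:term4bound} in the same way Corollary~\ref{cor:term3bound} is deduced from Lemma~\ref{lem:term3bound}: restricting the domain to $\{|a|\ge |c|^{\alpha}\}$ ought to produce an extra negative power of $|c|$. The key point is that on this domain $\max\left(1,\left|\tfrac{[a]}{c}\right|,|a|\right)\ge |a|\ge |c|^{\alpha}>1$, so for every $\eta>0$,
\begin{align*}
\max\left(1,\left|\tfrac{[a]}{c}\right|,|a|\right)^{1-d_2/2-d_3/2}\le |c|^{-\alpha\eta}\,\max\left(1,\left|\tfrac{[a]}{c}\right|,|a|\right)^{1-d_2/2-d_3/2+\eta}
\end{align*}
(here $1-d_2/2-d_3/2<0$ since $d_2,d_3\ge 4$). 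Consequently the integral to be bounded is at most $|c|^{-\alpha\eta}$ times
\begin{align*}
\int_{(F^\times)^3}\max(1,|a_1|)^{1-d_1/2}\,\max\left(1,\left|\tfrac{[a]}{c}\right|,|a|\right)^{1-d_2/2-d_3/2+\eta}\,\{a\}^{d/2-1}\,d^\times a,
\end{align*}
which has the same integrand as in Lemma~\ref{lem:term4bound} except that the exponent on the factor $\max\left(1,\left|\tfrac{[a]}{c}\right|,|a|\right)$ has been raised by $\eta$.

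I would then re-run the proof of Lemma~\ref{lem:term4bound} verbatim with this perturbed exponent. Because $d_1,d_2,d_3\ge 4$ there is slack in every inequality used there, so for $\eta$ small (depending only on $d_1,d_2,d_3$) nothing breaks: Sublemma~\ref{lem:onlymax} still applies since its parameter $e_2=d_2/2+d_3/2-1-\eta$ stays positive, and the subsequent integrations over $a_2$ and $a_3$ still converge because each of the exponents appearing there is shifted only by $O(\eta)$ and hence keeps its sign (for instance $\varepsilon-d_2/2$ becomes $\varepsilon-d_2/2+\eta$, still negative). The perturbed argument therefore still produces a bound by a positive power of $\min(1,|c|)$, which for $|c|>1$ is simply $\ll_{\varepsilon}1$. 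Fixing $\varepsilon=\tfrac{1}{4}$ and setting $\beta:=\alpha\eta$ then gives the claimed bound $\ll_{\alpha,\beta}|c|^{-\beta}$ for $|c|>1$.

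The only real work is bookkeeping, and it is of one specific kind: one must make sure the power of $|c|$ harvested from the domain restriction is not cancelled by $|c|$-dependence already present inside $\max\left(1,\left|\tfrac{[a]}{c}\right|,|a|\right)$. Packaging that dependence through the perturbed exponent, as above, is precisely what makes this transparent, which is why I prefer it to the more literal alternative --- mimicking the proof of Corollary~\ref{cor:term3bound} by splitting $\{|a|\ge |c|^{\alpha}\}$ into the three overlapping pieces $\{|a_i|\ge |c|^{\alpha}\}$ and chasing the gain through each of the sub-domains of the proof of Lemma~\ref{lem:term4bound} separately. Should the perturbation of Lemma~\ref{lem:term4bound} turn out to need more care than anticipated, that second route is available as a fallback.
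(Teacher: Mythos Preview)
Your proposal is correct and takes a genuinely different route from the paper.

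The paper does exactly what you describe as your fallback: it revisits the case analysis of Lemma~\ref{lem:term4bound} and isolates the sub-domains whose contribution, for $|c|>1$, was only bounded by $O(1)$ rather than a negative power of $|c|$. These turn out to be (up to the symmetry in $a_2,a_3$) the region $|a_2|^{-1}|c|\ge |a_3|\ge |c|^{\alpha},\ |a_2|\le |c|^{\alpha}$ inside \eqref{morecare}, and the region $|a_1|\ge |c|^{\alpha}\ge \max(|a_2|,|a_3|)$; on each the paper re-computes and exhibits an explicit power saving.

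Your argument is cleaner: the observation $\max(1,|[a]/c|,|a|)\ge |a|\ge |c|^{\alpha}$ lets you strip off $|c|^{-\alpha\eta}$ uniformly before any case analysis, and then a single appeal to the $\eta$-perturbed Lemma~\ref{lem:term4bound} handles everything at once. The only cost is the need to verify that the proof of Lemma~\ref{lem:term4bound} is stable under replacing $e_2=d_2/2+d_3/2-1$ by $e_2-\eta$; it is, since every exponent inequality used there (positivity of $e_2$, negativity of $\varepsilon-d_2/2$, negativity of $\varepsilon-1$, etc.) is strict with room at least $1$ when $d_i\ge 4$, so any $0<\eta<1-\varepsilon$ works. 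The paper's approach avoids this verification at the price of repeating case-by-case computations; your approach trades explicit constants for a more structural argument.
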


\begin{proof}
We may assume $\alpha<1/2$. In view of the proof of Lemma \ref{lem:term4bound}, it suffices to bound the contributions of the domain $|a_2|^{-1}|c|\ge |a_3|\ge |c|^{\alpha}, |a_2|\le |c|^{\alpha},$ the domain $|a_2|^{-1}|c|\ge |a_3|\ge |a_2|\ge |c|^{\alpha},$ and the domain $|a_1|\ge |c|^{\alpha}\ge \max(|a_2|,|a_3|)$. 
    
    Over the first domain, by \eqref{morecare}, the integral to be bounded is dominated by
    \begin{align*}
        &\int_{|a_2|^{-1}|c|\ge |a_3|\ge |c|^{\alpha}, |a_2|\le |c|^{\alpha} } |a_2|^{d_2/2-1}|a_3|^{\varepsilon-d_2/2} d^\times a_3d^\times a_2\\
        &\ll |c|^{\alpha(\varepsilon-d_2/2)}\int_{|c|^{\alpha}\ge |a_2|} |a_2|^{d_2/2-1}d^\times a_2\\
        &\ll |c|^{\alpha(\varepsilon-1)}.
    \end{align*}
Over the second domain, the integral is dominated by
\begin{align*}
    \int_{|a_3|\ge |c|^\alpha} |a_3|^{\varepsilon-d_2/2}\min(|a_3|,|c|^{1/2})^{d_2/2-1} d^\times a_3\ll |c|^{\alpha(\varepsilon-1)}.
\end{align*}
Over the third domain, the integral is dominated by 
    \begin{align*}
    &\int_{|a_1|\ge |c|^{\alpha}\ge \max(|a_2|,|a_3|)} |a_1|^{1-d_2/2-d_3/2}|a_2|^{d_2/2-1}|a_3|^{d_3/2-1}d^\times a \ll |c|^{-\alpha}.
\end{align*}
\end{proof}

\begin{Lem} \label{lem:specialbound}
Assume $d_3\le d_1,d_2$. For $c\in F^\times$ and nonnegative $M\in\mathcal{A}(F^2),$ we have
\begin{align*}
\begin{split}
    &\int_{(F^\times)^3}M\left(\frac{(1,a_3)c}{[a]}\right)\Bigg(\max\left(1,\left|\frac{[a]}{c}\right|,|a_1|,|a_2|\right)^{1-d_1/2-d_2/2}\max\left(1,|a_3|\right)^{1-d_3/2}\\
&+\max\left(1,|a_1|,|a_2|\right)^{1-d_1/2-d_2/2}\left|\frac{[a]}{c}\right|^{-d_3/2}\min\left(\max\left(1,|a_1|,|a_2|\right),\left|\frac{[a]}{c}\right|\right)\Bigg)\{a\}^{d/2-1}d^\times a\\
&\ll_{\varepsilon,M} \min(1,|c|)^{d_3/2-1-\varepsilon}\max(1,|c|)^{-1/2+\varepsilon/2}.
\end{split}
\end{align*}
\end{Lem}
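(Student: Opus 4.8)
The strategy is to split the integral into the two summands appearing in the integrand and treat each by integrating iteratively, exactly as in the proofs of Lemmas \ref{lem:term3bound} and \ref{lem:term4bound} (indeed the two terms here are mild modifications of the terms \eqref{term4} and \eqref{term3} already handled). Since we have assumed $d_3 \le d_1, d_2$, the variable $a_3$ plays a distinguished role, and I will carry out the $a_3$-integral last in the first summand and first in the second summand, so as to exploit the decay $\max(1,|a_3|)^{1-d_3/2}$ and the factor $|[a]/c|^{-d_3/2}$ respectively.

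\textbf{First summand.} After taking a change of variables $a_3 \mapsto a_3 \cdot c/(a_1a_2)$ (so that the Schwartz factor $M$ becomes a function of $a_3$ alone, up to its first slot which becomes $c/(a_1a_2)$), the $a_3$-integral contributes (via Sublemma \ref{lem:onlymax} applied with the max over $|a_3|$, $|[a]/c|$, and $|a_1|,|a_2|$) a bound of the form $|c|^{\text{something}}$ times an expression in $|a_1|, |a_2|$ of the same shape as in the proof of Lemma \ref{lem:term4bound}, but with the extra gain from $\max(1,|a_3|)^{1-d_3/2}$. Concretely I expect the $a_3$-integral to be dominated by $|a_1a_2|^{d_3/2 - 1}$ times a Schwartz function of $c/(a_1a_2)$ times $\max(1,|a_1|,|a_2|)^{1-d_1/2-d_2/2}$ (up to $\varepsilon$-losses), at which point the remaining double integral over $a_1,a_2$ is handled by Lemma \ref{lem:twoterms} (or directly, as in the last paragraph of the proof of Lemma \ref{lem:term4bound}), yielding $\min(1,|c|)^{d_3/2-1-\varepsilon}\max(1,|c|)^{-1/2+\varepsilon/2}$. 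The exponent $-1/2+\varepsilon/2$ for $|c|\ge 1$ will come, as in Lemma \ref{lem:term4bound}, from the region $|a_2a_3| \ge |c|$ where the decay is only barely enough to converge.

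\textbf{Second summand.} Here I first integrate over $a_3$. Writing $t := |[a]/c| = |a_1a_2a_3|/|c|$, the factor $\min(\max(1,|a_1|,|a_2|), t) \cdot t^{-d_3/2}$ is, up to constants, $\le \max(1,|a_1|,|a_2|)^{1-d_3/2}$ when $t \ge \max(1,|a_1|,|a_2|)$ and is $\le \max(1,|a_1|,|a_2|) \cdot t^{-d_3/2}$ otherwise; in either case one integrates $d^\times a_3$ against the Schwartz function $M(\cdot, a_3 c/[a])$ in the distinguished slot, producing $|c/(a_1a_2)| \cdot (\text{Schwartz in } c/(a_1a_2))$ times an $a_3$-free factor. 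The resulting double integral over $a_1, a_2$ then has integrand $\max(1,|a_1|,|a_2|)^{1-d_1/2-d_2/2}$ times a manageable power of $\max(1,|a_1|,|a_2|)$ and a Schwartz function of $c/(a_1a_2)$; applying Lemma \ref{lem:twoterms} twice (first in $a_1$, then $a_2$, after the change of variables $a_1 \mapsto a_1 c/a_2$) gives the bound $\min(1,|c|)^{d_3/2-1-\varepsilon}$, which is dominated by the claimed bound. Combining the two summands and absorbing constants, the lemma follows.

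\textbf{Main obstacle.} The routine bookkeeping is lengthy but the genuine difficulty is the same one encountered in Lemma \ref{lem:term4bound}: ensuring the $|c| \ge 1$ behavior is exactly $|c|^{-1/2+\varepsilon/2}$ and not worse. This requires carefully tracking, in the first summand, the contribution of the range where $|a_1 a_2 a_3| \gtrsim |c|$ and the maxima are achieved by the ``$|[a]/c|$'' term — there the total decay in $a_1,a_2,a_3$ is only $1 - d_1/2 - d_2/2$ spread over three variables together with the $a_3$-Schwartz decay, and one must change variables and split domains (as in the $|a_2a_3|\ge|c|$ analysis of Lemma \ref{lem:term4bound}) to see that the $c$-exponent does not degrade below $-1/2+\varepsilon/2$. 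Everything else reduces to Lemma \ref{lem:twoterms}, Sublemma \ref{lem:onlymax}, and elementary one-variable integrals of the form $\int |a|^s \max(1,|a|,|r|)^{-e}\,d^\times a$.
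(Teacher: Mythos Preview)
Your overall strategy matches the paper's: the same change of variables $a_3\mapsto a_3\,c/(a_1a_2)$ for the first summand (yielding the paper's display \eqref{termspecial1}), then iterated one-variable integrals with domain splitting; and for the second summand the same reduction via a crude bound on the $\min$ factor to shapes already handled (the paper bounds $\min(\cdot,|[a]/c|)\le |[a]/c|^{1-\varepsilon}\max(\cdot)^{\varepsilon}$ and lands on exactly the expressions \eqref{firstpart} and \eqref{specialform} treated in the first summand).

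There is, however, a concrete gap in your first-summand plan. After the change of variables the $a_3$-integrand is
\[
M\!\left(a_3^{-1},\tfrac{c}{a_1a_2}\right)\max\!\big(1,|a_1|,|a_2|,|a_3|\big)^{1-d_1/2-d_2/2}\max\!\big(|a_3|^{-1},\big|\tfrac{c}{a_1a_2}\big|\big)^{1-d_3/2},
\]
which involves a Schwartz factor and \emph{two} max-type factors in $a_3$; Sublemma~\ref{lem:onlymax} does not apply directly, and your ``expected'' bound $|a_1a_2|^{d_3/2-1}\,\Phi(c/(a_1a_2))\,\max(1,|a_1|,|a_2|)^{1-d_1/2-d_2/2}$ is false in general (e.g.\ for $|a_1|,|a_2|\le 1$ and $|c/(a_1a_2)|$ small the actual $a_3$-integral is of order $\max(1,|a_1|,|a_2|)^{d_3/2-d_1/2-d_2/2}$, not $\max(1,|a_1|,|a_2|)^{1-d_1/2-d_2/2}$). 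There is no single clean $a_3$-bound that feeds into Lemma~\ref{lem:twoterms}; the paper instead splits the $(a_1,a_2,a_3)$-domain into four regions according to the signs of $|a_1a_2|-|c|$, $|a_3|-1$, and $|a_1a_2|-|a_3||c|$, and treats each with a separate elementary sublemma. The exponent $-\tfrac12+\tfrac{\varepsilon}{2}$ for $|c|\ge 1$ arises specifically from the regions $|c|\le |a_1a_2|$, $|a_3|\ge1$ (two sub-cases) and from the tail analysis in \eqref{specialform}; you correctly flag this as the main obstacle, but your outline does not resolve it, and the shortcut you propose through a single $a_3$-bound plus Lemma~\ref{lem:twoterms} does not produce it.
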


\begin{proof}
 We first compute
    \begin{align}\label{termspecial1}
    \begin{split}
     &\int_{(F^\times)^3}M\left(\frac{c}{a_1a_2a_3},\frac{c}{a_1a_2}\right)\max\left(1,\frac{|a_1a_2a_3|}{|c|},|a_1|,|a_2|\right)^{1-d_1/2-d_2/2}\max\left(1,|a_3|\right)^{1-d_3/2}\\
     &\times|a_1|^{d_1/2-1}|a_2|^{d_2/2-1}|a_3|^{d_3/2-1}d^\times a_3 d^\times a_2 d^\times a_1\\
     &=|c|^{d_3/2-1}\int_{(F^\times)^3}M\left(a_3^{-1},\frac{c}{a_1a_2}\right)\max\left(1,|a|\right)^{1-d_1/2-d_2/2}\max\left(|a_3|^{-1},\left|\frac{c}{a_1a_2}\right|\right)^{1-d_3/2}\\
     &\times|a_1|^{d_1/2-d_3/2}|a_2|^{d_2/2-d_3/2}d^\times a_3 d^\times a_2 d^\times a_1.
     \end{split}
 \end{align}
 We break down the integral into several domains. 
 \begin{Sublem}
 Over the domain $|c|\le |a_1a_2|, |a_3|\le 1,$ \eqref{termspecial1} is dominated by
 $$\min(1,|c|)^{d_3/2-1-\varepsilon}\max(1,|c|)^{-1/2}.$$
 \end{Sublem}

\begin{proof}
Over this domain, \eqref{termspecial1} is dominated by  \begin{align}\label{firstpart}
     &|c|^{d_3/2-1}\int_{ |c|\le |a_1a_2|} \max(1,|a_1|,|a_2|)^{1-d_1/2-d_2/2}|a_1|^{d_1/2-d_3/2}|a_2|^{d_2/2-d_3/2}d^\times a_2 d^\times a_1.
\end{align}
The contribution of $|a_1|,|a_2|\le 1$ (which is zero unless $|c|  \leq 1$)  is  $O_{\varepsilon}(|c|^{d_3/2-1-\varepsilon})$.  For the rest, by symmetry it suffices to bound the integral over the domain $|a_1|\ge \max(1,|a_2|)$. This contribution is bounded by
\begin{align*}
\begin{split}
     &|c|^{d_3/2-1}\int_{ \max(1,|a_2|,|c/a_2|)\le |a_1|} |a_1|^{1-d_2/2-d_3/2}|a_2|^{d_2/2-d_3/2}d^\times a_2 d^\times a_1\\
     &\ll|c|^{d_3/2-1}\int_{ F^\times} \max(1,|a_2|,|c/a_2|)^{1-d_2/2-d_3/2}|a_2|^{d_2/2-d_3/2}d^\times a_2 \\
     &=|c|^{d_3/2-1}\int_{ F^\times} \max(|a_2|,|a_2|^2,|c|)^{1-d_2/2-d_3/2}|a_2|^{d_2-1}d^\times a_2.
\end{split}
\end{align*}
 The contribution of the domain $|a_2|\le 1$ to the integral is bounded by
 \begin{align*}
     &|c|^{-d_2/2}\int_{|a_2|\le \min(1,|c|)}|a_2|^{d_2-1}d^\times a_2+ |c|^{d_3/2-1}\int_{\min(1,|c|)< |a_2|\le 1}|a_2|^{d_2/2-d_3/2}d^\times a_2\\
     &\ll_{\varepsilon} \max(1,|c|)^{-d_2/2}\min(1,|c|)^{d_3/2-1-\varepsilon}.
 \end{align*}
 Over the domain $|a_2|\ge 1,$ the integral becomes 
 \begin{align*}
     &|c|^{-d_2/2}\int_{1\le |a_2|\le |c|^{1/2}}|a_2|^{d_2-1}d^\times a_2+ |c|^{d_3/2-1}\int_{\max(|c|^{1/2},1)\le |a_2|}|a_2|^{1-d_3}d^\times a_2\\
     &\ll \max(1,|c|)^{-1/2}\min(1,|c|)^{d_3/2-1}.
 \end{align*}
 \end{proof}

 \begin{Sublem}
  Over the domain $|c|\le |a_1a_2|, |a_3|\ge 1,$ $|a_1a_2|\le |a_3||c|,$ \eqref{termspecial1} is dominated by
 $$\min(1,|c|)^{\min(d_1,d_2)/2-1-\varepsilon}\max(1,|c|)^{-1/2+\varepsilon/2}.$$
 \end{Sublem}
 \begin{proof}
 Over this domain, \eqref{termspecial1} is dominated by
\begin{align*}
    &\int_{|c|\le |a_1a_2|\le |a_3||c|}|a|^{1-d_1/2-d_2/2}|a_1|^{d_1/2-1}|a_2|^{d_2/2-1}d^\times a_3 d^\times a_2 d^\times a_1\\
    =&\int_{|c|\le |a_1a_2|\le |a_3||c|,|a_3|=|a|}|a|^{1-d_1/2-d_2/2}|a_1|^{d_1/2-1}|a_2|^{d_2/2-1}d^\times a_3 d^\times a_2 d^\times a_1\\
    &+ \sum_{i=1}^2\int_{|c|\le |a_1a_2|\le |a_3||c|,|a_i|=|a|}|a|^{1-d_1/2-d_2/2}|a_1|^{d_1/2-1}|a_2|^{d_2/2-1}d^\times a_3 d^\times a_2 d^\times a_1.
 \end{align*}
The first term is dominated by
\begin{align*}
    &\int_{|c|\le |a_1a_2|} \max\left(|a_1|,|a_2|,\frac{|a_1a_2|}{|c|}\right)^{1-d_1/2-d_2/2}|a_1|^{d_1/2-1}|a_2|^{d_2/2-1}d^\times a_2 d^\times a_1\\
    =&\sum_{\sigma\in S_2}\int_{|a_{\sigma(1)}|\ge \max(|a_{\sigma(2)}|,|c|/|a_{\sigma(2)}|),|a_{\sigma(2)}|\le |c|}|a_{\sigma(1)}|^{-d_{\sigma(2)}/2}|a_{\sigma(2)}|^{d_{\sigma(2)}/2-1}d^\times a_2 d^\times a_1\\
    &+|c|^{d_1/2+d_2/2-1}\int_{|c|\le |a_1|,|c|\le |a_2|,|c|\le |a_1||a_2|} |a_1|^{-d_2/2}|a_2|^{-d_1/2}d^\times a_2 d^\times a_1\\
    \ll &\sum_{\sigma\in S_2}\int_{|a_{\sigma(2)}|\le |c|} \max(|a_{\sigma(2)}|^2,|c|)^{-d_{\sigma(2)}/2}|a_{\sigma(2)}|^{d_{\sigma(2)}-1} d^\times a_{\sigma(2)}\\
    &+|c|^{d_1/2-1}\int_{|c|\le |a_2|} \max(1,|a_2|)^{-d_2/2}|a_2|^{d_2/2-d_1/2}d^\times a_2 \\
    \ll_\varepsilon& \min(1,|c|)^{\min(d_1,d_2)/2-1-\varepsilon}\max(1,|c|)^{-1/2}.
\end{align*}
For the second term, by symmetry, we may assume $i=1$. Then the integral is 
\begin{align*}
    &\int_{|c|\le |a_1a_2|\le |a_3||c|,|a_2|\le |a_1|,|a_3|\le |a_1|}|a_1|^{-d_2/2}|a_2|^{d_2/2-1}d^\times a_3 d^\times a_2 d^\times a_1\\
    &\leq\int_{|c|\le |a_1a_2|\le |a_3||c|,|a_2|\le |a_1|,|a_3|\le |a_1|}|a_1|^{-d_2/2}\left(\frac{|a_1|}{|a_3|} \right)^\varepsilon|a_2|^{d_2/2-1}d^\times a_3 d^\times a_2 d^\times a_1\\
    &\ll_{\varepsilon}  |c|^{\varepsilon}\int_{|c|\le |a_1a_2|,|a_2|\le |a_1|,|a_2|\le |c|}|a_1|^{-d_2/2}|a_2|^{d_2/2-1-\varepsilon} d^\times a_2 d^\times a_1\\
    &\ll |c|^{\varepsilon} \int_{|a_2|\le |c|} \max(|c|,|a_2|^2)^{-d_2/2}|a_2|^{d_2-1-\varepsilon}d^\times a_2\\
    &\ll \min(1,|c|)^{d_2/2-1}\max(1,|c|)^{-1/2+\varepsilon/2}.
\end{align*} 
\end{proof}

\begin{Sublem}
  Over the domain $|c|\le |a_1a_2|, |a_3|\ge 1,$ $|a_1a_2|\ge |a_3||c|,$ \eqref{termspecial1} is dominated by 
 $$\min(1,|c|)^{d_3/2-1-\varepsilon}\max(1,|c|)^{-1/2}.$$
\end{Sublem}
\begin{proof}
Over this domain, \eqref{termspecial1} is dominated by
\begin{align}\label{ugly1}
    &|c|^{d_3/2-1}\int_{|a_3||c|\le |a_1a_2|,|a_3|\ge 1}|a|^{1-d_1/2-d_2/2}|a_3|^{d_3/2-1}|a_1|^{d_1/2-d_3/2}|a_2|^{d_2/2-d_3/2}d^\times a_3 d^\times a_2 d^\times a_1.
 \end{align}
 We argue as above. Over $|a_3|=|a|,$ \eqref{ugly1} is
 \begin{align*}
     &|c|^{d_3/2-1}\int_{|a_3||c|\le |a_1a_2|,|a_3|\ge 1,|a_3|=|a|}|a_3|^{d_3/2-d_1/2-d_2/2}|a_1|^{d_1/2-d_3/2}|a_2|^{d_2/2-d_3/2}d^\times a_3 d^\times a_2 d^\times a_1\\
     &\ll |c|^{d_3/2-1}\int_{|c|\le \min(|a_1|,|a_2|,|a_1a_2|)} \max(1,|a_1|,|a_2|)^{d_3/2-d_1/2-d_2/2}|a_1|^{d_1/2-d_3/2}|a_2|^{d_2/2-d_3/2} d^\times a_2 d^\times a_1.
 \end{align*}
 If $|a_1|,|a_2|\le 1,$ this is
 \begin{align*}
 |c|^{d_3/2-1}\int_{|c|\le |a_1a_2|,|a_1|\le 1,|a_2|\le 1} |a_1|^{d_1/2-d_3/2}|a_2|^{d_2/2-d_3/2} d^\times a_2 d^\times a_1\ll_\varepsilon |c|^{d_3/2-1-\varepsilon}.
 \end{align*}
 If $|a_1|\ge \max(1,|a_2|),$ this is dominated by
 \begin{align*}
     |c|^{d_3/2-1}\int_{|c|\le |a_2|} \max(1,|a_2|)^{-d_2/2}|a_2|^{d_2/2-d_3/2} d^\times a_2\ll_\varepsilon \min(1,|c|)^{d_3/2-1-\varepsilon}\max(1,|c|)^{-1}.
 \end{align*}
Over $|a_1|=|a|,$ \eqref{ugly1} is
\begin{align*}
    &|c|^{d_3/2-1}\int_{|a_3||c|\le |a_1a_2|,|a_1|\ge |a_3|\ge 1, |a_1|\ge |a_2|}|a_3|^{d_3/2-1}|a_1|^{1-d_2/2-d_3/2}|a_2|^{d_2/2-d_3/2}d^\times a_3 d^\times a_2 d^\times a_1\\
    &\ll |c|^{d_3/2-1}\int_{|c|\le |a_1a_2|, \max(1,|a_2|)\le |a_1|} \min(|a_2|/|c|,1)^{d_3/2-1}|a_1|^{-d_2/2}|a_2|^{d_2/2-d_3/2}d^\times a_2 d^\times a_1\\
    &\ll \int_{F^\times} \min(|a_2|,|c|)^{d_3/2-1}\max(|a_2|,|a_2|^2,|c|)^{-d_2/2}|a_2|^{d_2-d_3/2}d^\times a_2 \\
    &\ll_\varepsilon \min(1,|c|)^{d_3/2-1-\varepsilon}\max(1,|c|)^{-1/2}.
\end{align*}
The rest follows by symmetry.
\end{proof}
 
Now suppose $|a_1a_2|\le |c|$. Then the integral \eqref{termspecial1} over $a_3$ is dominated by a constant depending on $\varepsilon$ times
 \begin{align}\label{specialform}
     |c|^{d_3/2-1}\int_{|a_1a_2|\le |c|}J\left(\frac{c}{a_1a_2}\right)\max\left(1,|a_1|,|a_2|\right)^{\varepsilon+1-d_1/2-d_2/2}|a_1|^{d_1/2-d_3/2}|a_2|^{d_2/2-d_3/2}d^\times a_2 d^\times a_1
 \end{align}
 for some $J \in \mathcal{A}(F)$.   Over the domain $|a_1|,|a_2|\le 1,$ this integral is rapidly decreasing as a function of $c$, and dominated by $|c|^{d_3/2-1-\epsilon}$ for $|c|\le 1$. For the rest, by symmetry it suffices to bound the integral over the domain $|a_1|\ge \max(1,|a_2|),$ which is 
 \begin{align*}
     &|c|^{d_3/2-1}\int_{\max(1,|a_2|)\le |a_1|\le |c/a_2|}J\left(\frac{c}{a_1a_2}\right)|a_1|^{1+\varepsilon-d_2/2-d_3/2}|a_2|^{d_2/2-d_3/2}d^\times a_1 d^\times a_2 \\
     &=|c|^{\varepsilon-d_2/2}\int_{\max(|a_2|,|a_2|^2)|c|^{-1}\le |a_1|\le 1 } J(a_1^{-1})|a_1|^{1+\varepsilon-d_2/2-d_3/2} |a_2|^{d_2-1-\varepsilon} d^\times a_1d^\times a_2\\
     &\ll_J |c|^{\varepsilon-d_2/2} \int_{\max(|a_2|,|a_2|^2)\le |c|} |a_2|^{d_2-1-\varepsilon} d^\times a_2\\
     &\ll \max(1,|c|)^{-1/2+\varepsilon/2}\min(1,|c|)^{d_2/2-1}.
\end{align*}
 This completes the proof of the bound for the first summand in the statement of Lemma \ref{lem:specialbound}.  
 
The second summand in the statement of Lemma \ref{lem:specialbound} is bounded by 
 \begin{align*}
    &   \int_{(F^\times)^3}M\left(\frac{c}{a_1a_2a_3},\frac{c}{a_1a_2}\right)\max\left(1,|a_1|,|a_2|\right)^{\varepsilon+1-d_1/2-d_2/2}\left|\frac{[a]}{c}\right|^{-d_3/2+1-\varepsilon} \{a\}^{d/2-1} d^\times a\\
     &=|c|^{d_3/2-1}\int_{(F^\times)^3}M\left(a_3^{-1},\frac{c}{a_1a_2}\right)\max\left(1,|a_1|,|a_2|\right)^{\varepsilon+1-d_1/2-d_2/2}|a_1|^{d_1/2-d_3/2}|a_2|^{d_2/2-d_3/2}|a_3|^{-\varepsilon}d^\times a\\
     &\ll |c|^{d_3/2-1}\int_{(F^\times)^2}J'\left(\frac{c}{a_1a_2}\right)\max\left(1,|a_1|,|a_2|\right)^{\varepsilon+1-d_1/2-d_2/2}|a_1|^{d_1/2-d_3/2}|a_2|^{d_2/2-d_3/2}d^\times a_2d^\times a_1
 \end{align*}
 for some $J'\in \mathcal{A}(F)$. Break down the integral into $|a_1a_2|\ge |c|$ and $|a_1a_2|\le |c|$, which are \eqref{firstpart} (up to $\varepsilon$) and \eqref{specialform} respectively. In both cases, the integral is dominated by $\max(1,|c|)^{-1/2+\varepsilon/2}\min(1,|c|)^{d_3/2-1-\varepsilon}$.
 \end{proof}
 
 \begin{proof}[Proof of Theorem \ref{thm:absolutebdappendix}]  By symmetry, we may assume $d_1,d_2\ge d_3$; this assumption is used to apply Lemma \ref{lem:specialbound}.
 We showed \eqref{intHvi} was dominated by  \eqref{term1}, \eqref{termspecial}, \eqref{term2}, \eqref{term3}, \eqref{term4} with $c=\frac{b[a]}{3[\widetilde{\xi}]}$ in \S \ref{ssec:overn0}.  To estimate the $(F^\times)^3$ integrals over the terms \eqref{term1}, \eqref{termspecial}, \eqref{term2}, \eqref{term3}, \eqref{term4}, we apply Lemmas \ref{lem:term1bound}, \ref{lem:specialbound}, \ref{lem:term2bound},  \ref{lem:term3bound}, \ref{lem:term4bound} (respectively)  with $c=\frac{3[\widetilde{\xi}]}{b}$. Given our comments on the continuity of our bounds as a function of $f$ at the beginning of \S \ref{ssec:overa}, the theorem follows.
 \end{proof}

 Moreover, we have the following bound.
 
 \begin{Thm}\label{thm:byproductappendix}
 Suppose $|\xi_1\otimes \xi_2\otimes \xi_3|>|b|$. If $d_i>2$ for all $i,$ given $\alpha>0,$ there exists  $\beta>0$ such that 
 \begin{align*}
 \begin{split}
    & \int_{ }\int_{F^2} \Bigg|\int_{V(F)}\psi\Bigg(\left\langle \frac{a\xi}{\widetilde{\xi}},u \right\rangle +\frac{[a]bQ(u)}{9[\widetilde{\xi}]}+\mathfrak{c}(u,v))\Bigg)f(u)du\Bigg|   \frac{dvd^\times a}{\{a\}^{1-d/2}} \ll_{\alpha,\beta,f}  \left|\frac{\xi_1\otimes\xi_2\otimes \xi_3}{b}\right|^{-\beta}
    \end{split}
\end{align*}
where the outer integral is over $|a|\ge \left|\frac{\xi_1\otimes\xi_2\otimes \xi_3}{b}\right|^\alpha$.
\end{Thm}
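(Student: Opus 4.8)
\textbf{Proof proposal for Theorem \ref{thm:byproductappendix}.}
The plan is to run the same term-by-term analysis that established Theorem \ref{thm:absolutebdappendix}, but now tracking the behaviour of the $(F^\times)^3$-integral when we cut off at $|a| \geq |\xi_1\otimes\xi_2\otimes\xi_3/b|^\alpha$ instead of integrating over all of $(F^\times)^3$. The starting point is identical: using Lemma \ref{lem:quad:esti} and the bound \eqref{eqn: two variables}, the inner integral over $V(F)$ is dominated by $H(a, \tfrac{b[a]}{3[\widetilde\xi]}, v_1, v_2)$ as in \eqref{use:quad:esti}, and after the reductions of \S \ref{ssec:overn0} the integral over $v \in F^2$ of this quantity is dominated by the sum of \eqref{term1}, \eqref{termspecial}, \eqref{term2}, \eqref{term3}, \eqref{term4} with $c = \tfrac{b[a]}{3[\widetilde\xi]}$. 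Equivalently, substituting $a \mapsto a^{-1}\widetilde\xi$ as in \S \ref{ssec:overa}, we must bound the $(F^\times)^3$-integral of these five terms with $c = \tfrac{3[\widetilde\xi]}{b}$, restricted to the region $|a| \geq |c|^\alpha$ (note $|c|$ and $|\xi_1\otimes\xi_2\otimes\xi_3/b|$ differ by a bounded factor since $|\xi_i/\widetilde\xi_i|=1$, and the hypothesis $|\xi_1\otimes\xi_2\otimes\xi_3| > |b|$ gives $|c| > 1$).

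The key point is that Corollaries \ref{cor:term3bound} and \ref{cor:term4bound} already supply the required power savings for the integrals over \eqref{term3} and \eqref{term4} in exactly the region $|a| \geq |c|^\alpha$. So the remaining work is to prove analogous statements for \eqref{term1}, \eqref{termspecial}, and \eqref{term2}. For \eqref{term1}: in the proof of Lemma \ref{lem:term1bound} the integrand, after integrating out $a_1$, is a Schwartz function in $c/(a_2a_3)$ times power weights; imposing $|a| \geq |c|^\alpha$ forces either $|a_i| \geq |c|^\alpha$ for some $i \in \{2,3\}$ (in which case the Schwartz decay in the first slot of $J'$ together with the extra weight of $|a_i|$ being large gives a gain of a small positive power of $|c|$), or $|a_1| \geq |c|^\alpha$, where the exponential weight $|[a]/c|^{-d_1/2}$ combined with $|a_1|$ large produces the saving directly. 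The same mechanism applies to \eqref{term2}, working from the proof of Lemma \ref{lem:term2bound}, and to \eqref{termspecial}, working from the proof of Lemma \ref{lem:specialbound}: in each case one re-examines the subdomains appearing in those proofs and observes that on each subdomain, the constraint that some $|a_i|$ is at least $|c|^\alpha$ either makes the relevant Schwartz factor small or makes a power weight contribute a factor $|c|^{-c'\alpha}$ for some $c' > 0$. Collecting the exponents and taking $\beta$ to be a small enough positive number (depending on $\alpha$, the $d_i$, and $\varepsilon$) gives the claim; as throughout \S \ref{ssec:overa}, when $F$ is archimedean the implied constants may be taken continuous in $f$ because they are continuous in the underlying Schwartz data.

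The main obstacle I anticipate is purely bookkeeping rather than conceptual: each of Lemmas \ref{lem:term1bound}, \ref{lem:term2bound}, \ref{lem:specialbound} is proved by breaking $(F^\times)^3$ into several overlapping regions according to the relative sizes of $|a_1|, |a_2|, |a_3|, |c|$, and for the present theorem one must verify on \emph{each} such region that restricting to $|a| \geq |c|^\alpha$ yields a genuine negative power of $|c|$ (and in particular that no region is "already saturated" at $|c|^0$ even after the cutoff). The region near $|a_1| \approx |a_2| \approx |a_3| \approx |c|^\alpha$ is the tightest, and there one relies on the Schwartz decay of the cutoff functions $M, M_{i\sigma}$ evaluated at $(1,a)c/[a]$ — when all $|a_i| \approx |c|^\alpha$ with $\alpha < 1/3$, the argument $c/[a] \approx |c|^{1-3\alpha}$ is large, so Schwartz decay saves an arbitrarily large power. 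One then simply takes $\alpha < 1/3$ (which is harmless, since the statement for small $\alpha$ implies it for larger $\alpha$ by monotonicity of the integration region) and $\beta$ the minimum of the finitely many exponents produced. I would present this by stating the three auxiliary bounds (the analogues of Corollaries \ref{cor:term3bound}, \ref{cor:term4bound} for terms \eqref{term1}, \eqref{term2}, \eqref{termspecial}) and then deducing the theorem exactly as Theorem \ref{thm:absolutebdappendix} was deduced, citing those bounds and Corollaries \ref{cor:term3bound}, \ref{cor:term4bound}.
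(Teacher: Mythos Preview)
Your reduction to the five terms \eqref{term1}--\eqref{term4} and the invocation of Corollaries \ref{cor:term3bound} and \ref{cor:term4bound} for \eqref{term3} and \eqref{term4} is exactly right, and matches the paper. But you are planning unnecessary extra work for \eqref{term1}, \eqref{term2}, and \eqref{termspecial}. Look again at the conclusions of Lemmas \ref{lem:term1bound}, \ref{lem:term2bound}, and \ref{lem:specialbound}: for $|c|>1$ they give, respectively, $J(c)|c|^{\min(d_i)/2-1-\varepsilon}$ with $J$ Schwartz (hence $O(|c|^{-N})$ for any $N$), $\max(1,|c|)^{-1}$, and $\max(1,|c|)^{-1/2+\varepsilon/2}$. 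These bounds are for the integral over \emph{all} of $(F^\times)^3$, so they a fortiori bound the integral over the subregion $|a|\ge|c|^\alpha$, and they already carry a negative power of $|c|$. Thus no analogue of the corollaries is needed for these three terms; the paper's proof is literally one sentence, replacing only Lemmas \ref{lem:term3bound} and \ref{lem:term4bound} by their corollaries and keeping the other three lemmas unchanged. Your proposed case analysis for \eqref{term1}, \eqref{term2}, \eqref{termspecial} would succeed, but it reproves something already contained in the existing bounds.
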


\begin{proof}
Replacing Lemmas \ref{lem:term3bound} and \ref{lem:term4bound} with Corollaries \ref{cor:term3bound} and \ref{cor:term4bound} in the proof of Theorem \ref{thm:absolutebdappendix} yields the bound.
\end{proof}

\appendix

\section{Computation of normalizing factors} \label{App:normalizing}

In this appendix we compute the normalizing factors  $\{(s_i,\lambda_i)\}$.
The parameters depend only on the action of the dual group $\widehat{M}$ acting on the dual Lie algebra $\widehat{\mathfrak{n}}_P$ 
and our fixed isomorphism
\begin{align} \label{Mab:isom}
\omega_P:M^{\mathrm{ab}} \tilde{\lto}\; \GG_m
\end{align}
 where $\omega_P$ is defined as in \eqref{ssec:Pl}.  
 
 To avoid proliferation of duals, we work directly in the dual picture in this section.  Thus now $G$ denotes an adjoint simple group over $\cc$ with maximal parabolic subgroup $P,$ Levi subgroup $M,$ and we are studying the action of $M$ on $\mathfrak{n}_P,$ the complex Lie algebra of the unipotent radical $N_P$ of $P.$   We define the parameters $(s_i,\lambda_i)$ as in \S \ref{ssec:BKL} but with $\widehat{M},$ $\widehat{\mathfrak{n}}_P$ in that section  replaced by $M$ and $\mathfrak{n}_P,$ respectively.  We let $T$ be a maximal torus in $M,$ $T \leq B \leq P$ a Borel subgroup, and $\Delta$ the corresponding set of simple roots.  We let $\beta$ be the simple root such that $\Delta-\{\beta\}$ is the set of simple roots of $T \cap M$ in $M$ with respect to $B \cap M$.  The dual of \eqref{Mab:isom} is an isomorphism 
\begin{align} \label{betaZM}
\varphi:\GG_m \tilde{\lto} Z(M).
\end{align}
 For any representation $W$ of $M$ and any integer $\lambda,$ we write $W(\lambda)$ for the subspace on which $Z(M)=\GG_m$ acts via $x \mapsto x^\lambda$.
 
 \begin{Lem} \label{lem:posi} If $\lambda \leq 0,$ then $\mathfrak{n}_P(\lambda)=0$.
 \end{Lem}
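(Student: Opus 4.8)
The plan is to unwind the definitions and reduce everything to the weight-theoretic statement that all weights of $T$ occurring in $\mathfrak{n}_P$ pair strictly positively with $\omega_\beta$ (equivalently, pair nonnegatively with $\beta^\vee$ once we account for the $\GG_m$-grading coming from $Z(M)$). First I would recall that $\mathfrak{n}_P=\bigoplus_{\gamma}\mathfrak{g}_\gamma$, where the sum is over roots $\gamma\in\Phi^+-\Phi^+_M$, i.e. positive roots whose expansion in simple roots has a strictly positive coefficient $c_\beta(\gamma)$ on $\beta$. The $\GG_m$-action via $\varphi:\GG_m\iso Z(M)$ in \eqref{betaZM} is dual to $\omega_P=\omega_\beta$, so on the root space $\mathfrak{g}_\gamma$ the torus $Z(M)$ acts through the character $\langle\gamma,\varpi_\beta^\vee\rangle$ for the appropriate cocharacter; concretely, the grading index $\lambda$ attached to $\mathfrak{g}_\gamma$ is the coefficient $c_\beta(\gamma)$ (this is the standard fact that $Z(M)$ acts on the $k$-th graded piece $\mathfrak{n}_P(k)$, where $\mathfrak{n}_P(k)$ collects roots with $c_\beta(\gamma)=k$; cf. the discussion around \eqref{L:def} and \cite[\S5.2]{Manivel}). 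Since every $\gamma\in\Phi^+-\Phi^+_M$ has $c_\beta(\gamma)\ge 1$, we immediately get $\mathfrak{n}_P(\lambda)=0$ for $\lambda\le 0$.

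The one point that needs care is the normalization of the isomorphism \eqref{betaZM}: a priori $Z(M)$ could act on $\mathfrak{n}_P(1)$ via $x\mapsto x^{\pm 1}$, and I must check the sign is $+1$ so that the positivity of $c_\beta(\gamma)$ translates into positivity of $\lambda$. This is pinned down by Lemma \ref{Lem: general Plucker}: there $P$ acts on the highest weight line $\cc v_P$ via the character $-\omega_P$ of $M$, equivalently $P^{\mathrm{op}}$ — no — equivalently, the induced map $\omega_P:M^{\mathrm{ab}}\to\GG_m$ is an isomorphism and, on the dual side, $\varphi$ is its dual. Then $Z(M)$ acts on $\mathfrak{n}_P$ with positive grading because $\mathfrak{n}_P$ consists of root vectors with positive $\beta$-height, and the $\GG_m$ identification is set up (consistently with \S\ref{ssec:BKL}) so that height $k$ corresponds to eigencharacter $x\mapsto x^k$ with $k>0$. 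I would spell this out by pairing: for $\gamma\in\Phi^+-\Phi^+_M$ one has $\langle\omega_\beta,\gamma^\vee\rangle>0$ is not quite what is needed; rather one uses that $\gamma=\sum_{\alpha}c_\alpha(\gamma)\alpha$ with $c_\beta(\gamma)\ge1$, and $Z(M)=\varphi(\GG_m)$ is the connected component of $\bigcap_{\alpha\ne\beta}\ker\alpha$, so $\alpha|_{Z(M)}$ is trivial for $\alpha\ne\beta$ and $\beta|_{Z(M)}$ generates $X^*(Z(M))\cong\zz$; hence $\gamma|_{Z(M)}=c_\beta(\gamma)\,\beta|_{Z(M)}$, and after identifying $X^*(Z(M))\cong\zz$ via $\varphi^*$ this is exactly the positive integer $c_\beta(\gamma)$.

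Concretely, the proof I would write is: let $\mathfrak{g}_\gamma\subset\mathfrak{n}_P$ be a root space, $\gamma\in\Phi^+-\Phi^+_M$. By definition of $\Delta_M=\Delta-\{\beta\}$, the simple-root expansion $\gamma=\sum_{\alpha\in\Delta}c_\alpha(\gamma)\alpha$ has $c_\beta(\gamma)\ge1$. Because $Z(M)^\circ$ is the identity component of $\bigcap_{\alpha\in\Delta_M}\ker\alpha$ and $\beta$ restricts to a generator of $X^*(Z(M))$, and because the isomorphism $\varphi:\GG_m\iso Z(M)$ is chosen (via $\omega_P=\omega_\beta$ as in Lemma \ref{Lem: general Plucker}) so that $\varphi^*(\beta)$ is the identity character of $\GG_m$, the eigencharacter of $Z(M)=\varphi(\GG_m)$ on $\mathfrak{g}_\gamma$ is $x\mapsto x^{c_\beta(\gamma)}$, i.e. $\mathfrak{g}_\gamma\subset\mathfrak{n}_P(c_\beta(\gamma))$ with $c_\beta(\gamma)\ge1>0$. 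Summing over all such $\gamma$ gives $\mathfrak{n}_P=\bigoplus_{k\ge1}\mathfrak{n}_P(k)$, so $\mathfrak{n}_P(\lambda)=0$ whenever $\lambda\le0$. The main (and only) obstacle is bookkeeping the sign convention in \eqref{betaZM}; once that is fixed consistently with Lemma \ref{Lem: general Plucker} and \S\ref{ssec:BKL}, the statement is immediate from the root-space decomposition of $\mathfrak{n}_P$.
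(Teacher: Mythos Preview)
Your proposal is correct and follows essentially the same approach as the paper: both decompose $\mathfrak{n}_P$ into root spaces $\mathfrak{g}_\gamma$ with $\gamma\in\Phi^+-\Phi^+_M$, observe that such $\gamma$ have $c_\beta(\gamma)\ge 1$, and conclude that the $Z(M)$-weight $\langle\gamma,\varphi\rangle$ is positive. The paper records the weight as $c_\beta m_\beta$ (using $\langle\beta,\varphi\rangle=m_\beta$ from \S\ref{ssec:Pl}) rather than your $c_\beta$; your normalization $\varphi^*(\beta)=\mathrm{id}$ amounts to $m_\beta=1$, which does hold here since the original $G$ is simply connected, but the paper's formulation is slightly more robust. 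Either way the positivity, and hence the lemma, follows.
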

 
 \begin{proof}
Let $\gamma$ be a positive root of $(G,B,T)$. Note that the root space $(\mathfrak{n}_P)_{\ga}$ is non-zero if and only if writing
$\ga=\sum_{\al\in \De}c_{\al}\al$ we have $c_\beta>0$. It follows from \eqref{ssec:Pl} that 
\begin{align*}
    \langle \ga, \varphi\rangle=c_\be\la\be,\varphi\ra=c_\be m_{\be^\lor}>0.
\end{align*} 
We deduce the lemma.
 \end{proof} 

In each of the cases given below, the isomorphism $\varphi:\GG_m \to Z(M)$  will be the ``obvious one'', so we will not record it.  In fact, there are only two choices of isomorphism $\GG_m \tilde{\lto} Z(M),$ and there is only one of them so that Lemma \ref{lem:posi} is true, so the reader can easily check which isomorphism is $\varphi$.  

In the following computations, we interpret $\mathrm{Sym}^0(\cc^2)$ as the trivial $1$-dimensional representation of $\mathfrak{sl}_2$.

\subsection{Projective general linear groups}
The following is the classical Clebsch--Gordan rule  \cite[Exercise 11.11]{Fulton:Harris}:
\begin{Lem} \label{lem:tensor}
We have an isomorphism of $\mathfrak{sl}_2$-representations
\begin{align*}
\Sym^{n}(\cc^2)\otimes \Sym^{m}(\cc^2)\cong \Sym^{n+m}(\cc^2)\oplus \Sym^{n+m-2}(\cc^2)\oplus\cdots \oplus \Sym^{|n-m|}(\cc^2).
\end{align*}
\qed
\end{Lem}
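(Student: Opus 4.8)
The final statement to prove is the Clebsch--Gordan rule (Lemma \ref{lem:tensor}), which decomposes the tensor product $\Sym^n(\cc^2) \otimes \Sym^m(\cc^2)$ of irreducible $\mathfrak{sl}_2$-representations into a direct sum of irreducibles. Since this is quoted as ``classical'' with a reference to Fulton--Harris, the expectation is a short, self-contained argument rather than an elaborate development; still, let me sketch how I would prove it cleanly.

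\textbf{Plan.} Without loss of generality assume $n \geq m$. The plan is to use the theory of weights (eigenvalues of the standard Cartan element $h$) together with the classification of finite-dimensional $\mathfrak{sl}_2$-representations, which says that a finite-dimensional representation is semisimple and $\Sym^k(\cc^2)$ is, up to isomorphism, the unique irreducible with highest weight $k$ (weights $k, k-2, \dots, -k$, each with multiplicity one). First I would compute the character (formal sum of weights) of the tensor product: $\Sym^n(\cc^2)$ has weights $n, n-2, \dots, -n$ and $\Sym^m(\cc^2)$ has weights $m, m-2, \dots, -m$, so the multiset of weights of the tensor product is $\{\, i + j : i \in \{n, n-2, \dots, -n\},\ j \in \{m, m-2, \dots, -m\}\,\}$. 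Then I would show, by a direct counting argument, that the multiplicity of the weight $w$ in this multiset equals $\sum_{\ell} \mathbf{1}[\,|w| \leq \ell,\ \ell \equiv n+m \pmod 2,\ |n-m| \leq \ell \leq n+m\,]$; equivalently, that the generating function identity
\begin{align*}
\left(\sum_{i=0}^{n} q^{n-2i}\right)\left(\sum_{j=0}^{m} q^{m-2j}\right) = \sum_{\ell = |n-m|,\ \ell \equiv n+m (2)}^{n+m} \left(\sum_{i=0}^{\ell} q^{\ell - 2i}\right)
\end{align*}
holds in $\zz[q, q^{-1}]$. This is an elementary manipulation (multiply out, or use $\sum_{i=0}^k q^{k-2i} = (q^{k+1} - q^{-k-1})/(q - q^{-1})$ and telescope).

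\textbf{Key steps in order.} (1) Reduce to $n \geq m$ by symmetry of the tensor product. (2) Invoke semisimplicity of finite-dimensional $\mathfrak{sl}_2$-modules and the fact that the isomorphism class of such a module is determined by its character, so it suffices to match characters. (3) Establish the character identity above by the telescoping computation; the finite geometric-series form $(q^{\ell+1} - q^{-\ell-1})/(q - q^{-1})$ for the character of $\Sym^\ell(\cc^2)$ makes the right-hand side collapse to a product of two such, matching the left. (4) Conclude that $\Sym^n(\cc^2) \otimes \Sym^m(\cc^2) \cong \bigoplus_{\ell} \Sym^\ell(\cc^2)$ with $\ell$ running over $|n-m|, |n-m|+2, \dots, n+m$, which is the claimed decomposition. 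Alternatively, at step (3)--(4) one can instead argue representation-theoretically: the highest weight $n+m$ appears once, generating a copy of $\Sym^{n+m}$; quotient it out and induct on $n+m$, checking the next highest weight of the complement is $n+m-2$ when $m \geq 1$ and that the process terminates at $\Sym^{|n-m|}$. I would likely present the character computation since it is the most transparent.

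\textbf{Main obstacle.} There is no serious obstacle here — this is a standard lemma. The only mild bookkeeping point is getting the endpoints of the sum right (that the bottom term is $\Sym^{|n-m|}$ and not, say, $\Sym^{0}$ or $\Sym^{n-m+2}$), and making sure multiplicities are exactly one across the board; the telescoping identity handles both automatically once set up correctly. If brevity is paramount, I would simply cite \cite[Exercise 11.11]{Fulton:Harris} as the authors do and omit the proof, but the character argument above is short enough to include in full.
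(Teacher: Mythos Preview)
Your proposal is correct; the paper itself does not prove this lemma at all but simply states it with a citation to \cite[Exercise 11.11]{Fulton:Harris}. Your character/weight argument is the standard proof and is exactly what the cited exercise asks the reader to carry out.
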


\begin{Lem} \label{lem:PGL:case}
Let $P \leq \mathrm{PGL}_n$ be the parabolic stabilizing an $\ell$-plane.  Then
 $$
\left\{(\tfrac{|n-2\ell|}{2},1),(\tfrac{|n-2\ell|+2}{2},1),\dots,(\tfrac{n-2}{2},1)\right\}
 $$ is a good ordering for $\mathfrak{n}_P$.
\end{Lem}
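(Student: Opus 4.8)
\textbf{Proof plan for Lemma \ref{lem:PGL:case}.}

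The plan is to compute the $M$-representation $\mathfrak{n}_P$ explicitly, restrict it along the principal $\mathfrak{sl}_2$, and read off the highest weights. First I would set up the groups concretely: $G = \mathrm{PGL}_n$ with $P$ the stabilizer of the standard $\ell$-plane, so $M$ is the image of $\GL_\ell \times \GL_{n-\ell}$ in $\mathrm{PGL}_n$, and $\mathfrak{n}_P \cong \Hom(\cc^{\ell}, \cc^{n-\ell}) \cong (\cc^{\ell})^\vee \otimes \cc^{n-\ell}$ as a representation of $M$ (or $\GL_\ell \times \GL_{n-\ell}$). This is irreducible as an $M$-module, and the central torus $Z(M) \cong \GG_m$ acts on it through a single character, which by Lemma \ref{lem:posi} is positive; upon normalizing via $\varphi$ this character is $1$ (one verifies $\langle\ga,\varphi\rangle = m_\be = 1$ for the relevant root, since in type $A$ all $m_\be = 1$). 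Hence \emph{all} $\lambda_i = 1$, and it remains only to find the $s_i$, i.e.\ half the $h$-eigenvalues on $\mathfrak{n}_P^e$.

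Next I would restrict $(\cc^{\ell})^\vee \otimes \cc^{n-\ell}$ to the principal $\mathfrak{sl}_2 \subset \mathfrak{gl}_\ell \times \mathfrak{gl}_{n-\ell} \to \widehat{\mathfrak{m}}$. The principal $\mathfrak{sl}_2$ in $\mathfrak{gl}_\ell$ acts on $\cc^\ell$ as $\Sym^{\ell-1}(\cc^2)$, and on $(\cc^\ell)^\vee$ also as $\Sym^{\ell-1}(\cc^2)$ (self-dual); likewise $\cc^{n-\ell} \cong \Sym^{n-\ell-1}(\cc^2)$. Note the principal $\mathfrak{sl}_2$ of $\mathfrak{gl}_\ell \times \mathfrak{gl}_{n-\ell}$ is the diagonal one. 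So as an $\mathfrak{sl}_2$-module,
\[
\mathfrak{n}_P \cong \Sym^{\ell-1}(\cc^2) \otimes \Sym^{n-\ell-1}(\cc^2).
\]
By the Clebsch--Gordan rule (Lemma \ref{lem:tensor}) this decomposes as $\bigoplus_{j} \Sym^{j}(\cc^2)$ where $j$ runs over $|(\ell-1)-(n-\ell-1)| = |2\ell - n|$, $|2\ell-n|+2, \dots, (\ell-1)+(n-\ell-1) = n-2$ in steps of $2$. The highest weight vector of $\Sym^j(\cc^2)$ has $h$-eigenvalue $j$, so $s_i = j/2$ ranges over $\tfrac{|n-2\ell|}{2}, \tfrac{|n-2\ell|+2}{2}, \dots, \tfrac{n-2}{2}$, each appearing once, exactly as claimed.

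Finally I would observe that since every $\lambda_i = 1$, condition \eqref{increasing} for a good ordering just says $s_{i+1} \ge s_i$, and we have listed the $s_i$ in (strictly) increasing order, so this is indeed a good ordering; existence is trivial here since all $\lambda_i$ are equal and positive. I do not anticipate a serious obstacle: the only points requiring a little care are (i) checking that the diagonal principal $\mathfrak{sl}_2$ in $\mathfrak{gl}_\ell \times \mathfrak{gl}_{n-\ell}$ is the one induced from the principal $\mathfrak{sl}_2$ in $\widehat{\mathfrak{m}}$ — which follows because a principal $\mathfrak{sl}_2$ in a product is the product of principal $\mathfrak{sl}_2$'s, embedded diagonally in the relevant sense — and (ii) confirming the normalization $\lambda_i = 1$ via $m_\be = 1$ in type $A$, which is immediate from the definition of $\omega_P = \omega_\beta$ and the fact that $\omega_\beta \in X^*(T)$ for $\mathrm{SL}_n$ (hence on the dual side $Z(M)$ is generated by a single cocharacter pairing to $1$). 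If one prefers to be slightly cleaner about duals, one can instead just compute directly with $\mathrm{PGL}_n$ as stated in the appendix's conventions; the representation-theoretic content is identical.
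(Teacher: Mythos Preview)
Your proposal is correct and follows essentially the same approach as the paper: identify $\mathfrak{n}_P$ as $\mathrm{Hom}(\cc^{n-\ell},\cc^\ell)$ (or its transpose, which is immaterial), restrict to the principal $\mathfrak{sl}_2$ to get $\Sym^{\ell-1}(\cc^2)\otimes\Sym^{n-\ell-1}(\cc^2)$, and apply Clebsch--Gordan. You are slightly more explicit than the paper in justifying $\lambda_i=1$ and verifying the good-ordering condition, but the argument is the same.
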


\begin{proof}
It is not hard to see  $\mathfrak{m}^{\mathrm{der}} \cong \mathfrak{sl}_{n-\ell} \times \mathfrak{sl}_{\ell}$ and $\fn _P$ is isomorphic as a representation of $\mathfrak{m}$ to  
$$
\mathrm{Hom}(\cc^{n-\ell},\cc^{\ell})
$$
with the natural action.  The induced representation  of a  principal $\mathfrak{sl}_2$-triple is 
\begin{align*}
\Sym^{n-\ell-1}(\cc^2)^\vee \otimes\Sym^{\ell-1}(\cc^2)  &\cong \Sym^{n-\ell-1}(\cc^2) \otimes \Sym^{\ell-1}(\cc^2) \\
& \cong \Sym^{n-2}(\cc^2)\oplus \Sym^{n-4}(\cc^2)\oplus\cdots \oplus \Sym^{|n-2\ell|}(\cc^2)
\end{align*}
by Lemma \ref{lem:tensor}.  The lemma follows.  
\end{proof}

\subsection{The classical groups}
\label{ssec:classical:groups}
Let $V$ be a complex vector space equipped with a nondegenerate $\epsilon$-symmetric form $\langle\cdot,\cdot\rangle,$ that is, 
\[
\langle v,w\rangle= \ep\langle  w,v\rangle 
\] 
for $v,w\in V$.  We assume $\epsilon \in \{1,-1\}$.  
For $\cc$-algebras $R,$ let
$$
G_V(R):=\{g \in \mathrm{SL}_V(R):\langle gv,gw\rangle=\langle v,w\rangle
\}.
$$
We refer to $G_V$ as a classical group. The corresponding Lie algebra is
\[
\fg_V= \{X\in \mathfrak{sl}(V): \la X v,w\ra+\la v,X w\ra = 0\:\text{ for } v,w\in V\}.
\]
Let $PG_V$ be the associated projective group.  Concretely,
$$
PG_{V }\cong \begin{cases} 
\mathrm{PSO}_{\dim V} &\textrm{ if }\epsilon=1,\\
\mathrm{PSp}_{\dim V} &\textrm{ if }\epsilon=-1.\end{cases}
$$
We assume that $PG_V$ is simple and not isomorphic to a projective general linear group.  Thus $\dim V \not \in \{2,4\}$ if $\epsilon=1$ and $\dim V \neq 2$ if $\epsilon=-1$.  We also observe that $\mathrm{PSO}_{2r+1}=\mathrm{SO}_{2r+1}$.

The maximal parabolic subgroups of $PG_V$ are precisely the stabilizers of isotropic subspaces.  For a parabolic $P=MN_P,$ we denote by $W_P$ the corresponding isotropic subspace. 
We let $W_P^\vee$ be the linear dual of $W_P$ with respect to $\langle \cdot, \cdot\rangle$. Then there exists a subspace $V_P< V$ such that the pair $(V_P,\langle\cdot,\cdot\rangle|_{V_P})$ is of the same symmetric type as $V$, and such that there is a direct sum decomposition $V=W_P\oplus V_P\oplus W_P^\vee.$ Note that the pair $(W_P\oplus W_P^\vee,\la\cdot,\cdot\ra|_{W_P\oplus W_P^\vee})$ is also non-degenerate and of the same symmetric type as $V$.
We have
\[
\fm\cong \mathfrak{gl}_{W_P}\oplus \fg_{V_P}.
\]
We refer to $\ell:=\dim W_P$ as the \textbf{linear rank} of $M$ or $\mathfrak{m}$.

The following lemma is well known.  See for instance \cite[Theorems 8.6 and 12.6]{Wolf}.

\begin{Lem}\label{Lem: nil rad rep}
As a representation of $\mathfrak{m},$
\[
\mathfrak{n}_P\cong
\Hom_\cc(V_P,W_P) \oplus\Sym_V(W_P)
\]
where 
\begin{align*}
\Sym_V(W_P)&:=
 \begin{cases}\Sym^2(W_P)& \textrm{ if }\ep=-1,\\\;\mathrm{Alt}^2(W_P)& \textrm{ if } \ep=1.\end{cases}
\end{align*}
\end{Lem}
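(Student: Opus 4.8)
\textbf{Proof plan for Lemma \ref{Lem: nil rad rep}.}

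The plan is to describe $\mathfrak{n}_P$ explicitly inside $\fg_V$ using the decomposition $V\cong W_P\oplus V_P\oplus W_P^\vee$ and then identify each graded piece as an $\fm$-representation. First I would fix the grading: the choice of isotropic $W_P$ together with the dual $W_P^\vee$ determines a cocharacter of $Z(M)$, and with respect to this cocharacter $\fg_V$ decomposes as $\fg_V\cong \fg_{V,-2}\oplus\fg_{V,-1}\oplus\fg_{V,0}\oplus\fg_{V,1}\oplus\fg_{V,2}$, where $\fg_{V,0}=\fm$ and $\mathfrak{n}_P=\fg_{V,1}\oplus\fg_{V,2}$. I would realize $\fg_V$ concretely as the endomorphisms $X$ of $V$ with $\langle Xv,w\rangle+\langle v,Xw\rangle=0$ (the $\epsilon$-Hermitian condition), write such $X$ in block form with respect to the three summands $W_P$, $V_P$, $W_P^\vee$, and read off which blocks are free and which are constrained by the skew-symmetry condition.

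Next I would carry out the identification of the two summands. The degree-one part $\fg_{V,1}$ consists of block matrices whose only nonzero entries are a map $V_P\to W_P$ (and its determined partner $W_P^\vee\to V_P$), so as an $\fm=\mathfrak{gl}_{W_P}\oplus\fg_{V_P}$-module it is $\Hom_\cc(V_P,W_P)$, with $\mathfrak{gl}_{W_P}$ acting on the target and $\fg_{V_P}$ acting on the source via the identification $V_P\cong V_P^\vee$ furnished by the nondegenerate form on $V_P$. The degree-two part $\fg_{V,2}$ consists of block matrices supported on a single block $W_P^\vee\to W_P$; the $\epsilon$-Hermitian constraint forces this block, viewed as a bilinear form on $W_P^\vee$ hence as an element of $W_P\otimes W_P$, to be symmetric when $\epsilon=-1$ and alternating when $\epsilon=1$. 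This gives exactly $\Sym_V(W_P)$ as defined in the statement, with $\mathfrak{gl}_{W_P}$ acting in the natural way and $\fg_{V_P}$ acting trivially. Combining, $\mathfrak{n}_P\cong\Hom_\cc(V_P,W_P)\oplus\Sym_V(W_P)$ as $\fm$-modules.

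I expect the main (minor) obstacle to be bookkeeping with signs: correctly matching the skew-symmetry condition defining $\fg_V$ against the symmetry type of the form on $W_P\otimes W_P$, i.e. verifying that $\epsilon=-1$ yields $\Sym^2(W_P)$ and $\epsilon=1$ yields $\bigwedge^2(W_P)$ rather than the reverse, and keeping the identifications $V_P\cong V_P^\vee$, $W_P^\vee{}^\vee\cong W_P$ consistent throughout. Since this is a standard structural fact about parabolics in classical groups, I would present it briefly, citing a reference such as \cite{Manivel} for the decomposition of $\fn_P$ under the Levi, and relegate the sign verification to a short direct check in the matrix model. The orthogonal cases $\mathrm{SO}_{2r}$ versus $\mathrm{SO}_{2r+1}$ are handled uniformly by the block decomposition above, using that $V_P$ inherits a nondegenerate form of the same type; the excluded small cases ($\dim V\in\{2,4\}$ for $\epsilon=1$, $\dim V=2$ for $\epsilon=-1$) are exactly those where $PG_V$ fails to be simple or is a projective general linear group, and so do not arise.
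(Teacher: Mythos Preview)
Your proposal is correct and follows the standard approach to this well-known structural fact. Note, however, that the paper does not actually give a proof: the lemma is stated with a \qed\ immediately after the statement, and the preceding sentence reads ``The following is well-known.'' So there is no proof in the paper to compare against; your block-matrix argument via the $Z(M)$-grading is exactly the kind of verification one would supply if a proof were required.
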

We have
\begin{align} \label{usu:case}
    \mathfrak{n}_P(1)\cong\Hom_\cc(V_P,W_P) \quad \textrm{ and } \quad \mathfrak{n}_P(2)\cong\Sym_V(W_P)
\end{align}
unless $PG_V$ is $\mathrm{PSO}_{2r}$ or $\mathrm{PSp}_{2r}$ and
$\mathfrak{m} \cong \mathfrak{gl}_r,$ in which case 
\begin{align} \label{spec:case}
\mathfrak{n}_P=\mathfrak{n}_P(1) \cong \mathrm{Sym}_V(W_P).
\end{align}
The following lemma explicates the principal $\fsl_2$-subalgebra of $\fg_{V_P}$.
\begin{Lem} \label{lem:prin:sl2}
As a representation of a principal $\fsl_2$-subalgebra of $\fg_V,$ the standard representation $V$ of $\fg_V$ is isomorphic to $\mathrm{Sym}^{\dim V-1}(\cc^2)$ unless $G\cong \mathrm{PSO}_{2r},$ in which case it is $\mathrm{Sym}^{\dim V-2}(\cc^2) \oplus \cc$.
\end{Lem}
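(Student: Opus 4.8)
The plan is to prove Lemma \ref{lem:prin:sl2} by a direct, type-by-type verification, using the fact that for a principal $\mathfrak{sl}_2$ the decomposition of any representation is governed entirely by the exponents of the corresponding Lie algebra (equivalently, by the action of a principal nilpotent on a fixed irreducible constituent). The cleanest route is via the principal nilpotent element $e$: if $V$ is an irreducible representation of $\fg_V$ and we restrict to a principal $\mathfrak{sl}_2 = \langle e,h,f\rangle$, then the $\mathfrak{sl}_2$-constituents of $V$ are indexed by the exponents of $V$ as an $\fg_V$-module, and in particular $V$ is $\mathfrak{sl}_2$-irreducible (hence a single $\mathrm{Sym}^{k}(\cc^2)$) if and only if $\dim V^e = 1$, where $V^e$ is the space of $e$-invariants. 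So the first step I would carry out is to recall (or cite Kostant's theorem on principal $\mathfrak{sl}_2$-triples) that $\dim V^e$ equals the number of $\mathfrak{sl}_2$-irreducible summands of $V$, and that the highest weight of each summand is one less than the dimension of that summand.

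Next I would split into the three cases $\epsilon = -1$ (so $\fg_V = \fsp_{2r}$, $\dim V = 2r$), $\epsilon = +1$ with $\dim V$ odd (so $\fg_V = \fso_{2r+1}$, $\dim V = 2r+1$), and $\epsilon = +1$ with $\dim V$ even (so $\fg_V = \fso_{2r}$, $\dim V = 2r$, with $r \geq 3$ since we excluded $\mathrm{PSO}_4$). For the first two, the standard representation is well known to be $\mathfrak{sl}_2$-irreducible under the principal embedding: a quick way to see this is that $\fsp_{2r}$ and $\fso_{2r+1}$ each contain a principal $\fsl_2$ for which the standard module is the unique irreducible of dimension $2r$ resp.\ $2r+1$ — concretely $\fsl_2 \hookrightarrow \fsl_{2r} \hookrightarrow \fsp_{2r}$ via $\mathrm{Sym}^{2r-1}$ lands inside the symplectic group because $\mathrm{Sym}^{2r-1}(\cc^2)$ carries an $\fsl_2$-invariant symplectic form (odd symmetric power), and similarly $\mathrm{Sym}^{2r}(\cc^2)$ carries an invariant symmetric form, so it sits inside $\fso_{2r+1}$, and in both cases a dimension count shows this $\fsl_2$ is principal. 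For the third case, $\fso_{2r}$ with $r\geq 3$, the principal $\fsl_2$ factors through $\fso_{2r-1} \hookrightarrow \fso_{2r}$ — a principal $\fsl_2$ of $\fso_{2r}$ is \emph{not} principal in any $\fsl_N$ containing it, because the exponents of $D_r$ are $1,3,\dots,2r-3$ together with $r-1$, which forces the standard module to break as $\mathrm{Sym}^{2r-2}(\cc^2) \oplus \mathrm{Sym}^{0}(\cc^2)$; this is exactly the assertion, since $2r-2 = \dim V - 2$ and the trivial summand is $\cc$.

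To nail down the $D_r$ case rigorously rather than by invoking exponents, I would argue directly: a principal $\fsl_2$ in $\fso_{2r}$ is obtained by taking a principal $\fsl_2$ in $\fso_{2r-1}$ (which is principal there because $B_{r-1}$ and $D_r$ have the same Coxeter number $2r-2$) and composing with the embedding $\fso_{2r-1}\hookrightarrow \fso_{2r}$ fixing a nonisotropic vector; under this the standard $2r$-dimensional module restricts to (standard $\fso_{2r-1}$-module) $\oplus$ (trivial) $= \mathrm{Sym}^{2r-2}(\cc^2)\oplus\cc$, using the odd-dimensional case already handled. One checks this $\fsl_2$ is principal in $\fso_{2r}$ by verifying its neutral element $h$ is a regular semisimple element whose eigenvalues on the standard module are $\{2r-2, 2r-4, \dots, -(2r-2)\}\cup\{0\}$, which are precisely the weights forced on a principal $h$ by the $D_r$ root data. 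The main obstacle is ensuring the $D_r$ computation is airtight — specifically that the principal $\fsl_2$ of $D_r$ genuinely has the branching with a single extra trivial summand and not, say, two smaller pieces; this is where the distinctive exponent $r-1$ of $D_r$ (as opposed to $B_r$) enters, and I would make sure to either cite the standard table of exponents or give the explicit weight computation above so the reader can check the $0$-weight multiplicity is exactly $1$ higher than it would be in type $B$.
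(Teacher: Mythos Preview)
Your argument is correct and, for the symplectic and odd orthogonal cases, is essentially the same as the paper's: both use that $\mathrm{Sym}^{n}(\cc^2)$ carries an $\fsl_2$-invariant $(-1)^n$-Hermitian form, so the principal embedding $\fsl_2 \to \fsl_V$ via $\mathrm{Sym}^{\dim V-1}$ factors through $\fg_V$. For type $D_r$ the paper simply refers to \cite[\S 7]{GrossPrincipal}, whereas you supply a self-contained proof via the inclusion $\fso_{2r-1}\hookrightarrow \fso_{2r}$. Your argument here is fine; the cleanest justification that the resulting $\fsl_2$ is principal in $\fso_{2r}$ is not the Coxeter-number coincidence (which is suggestive but not a proof) but the observation you essentially make afterwards: the regular nilpotent in $\fso_{2r-1}$ has Jordan type $(2r-1)$ on its standard module, so its image in $\fso_{2r}$ has Jordan type $(2r-1,1)$, which is exactly the partition of the regular nilpotent orbit in $D_r$. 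This single sentence already forces the decomposition $\mathrm{Sym}^{2r-2}(\cc^2)\oplus \cc$ and certifies principality, so you could drop the exponent discussion and the weight-multiplicity check without loss.
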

\begin{proof}The $n$-th tensor power of the standard symplectic form on $\cc^2$ is $(-1)^{n}$-symmetric, and the $n$-th symmetric power of the standard representation $\cc^2$ of $\mathfrak{sl}_2$ is a subrepresentation of the $n$th tensor power.  Thus the principal $\fsl_2\to \mathfrak{sl}_V$ may be chosen to factor through the standard representation $\fg_V \to \fsl_V$.  This implies the lemma unless $G\cong \mathrm{PSO}_{2r}$. For this last case see \cite[Section 7]{GrossPrincipal}.
\end{proof}

For the following lemma, see 
\cite[Exercise 11.31 and 11.35]{Fulton:Harris}:

\begin{Lem} \label{lem:plethysm}
For any $n\geq1,$ we have the following equivalences of $\fsl_2$-representations:
\begin{align*}
 \wedge^{2}( \mathrm{Sym}^{n}(\cc^2))\cong \mathrm{Sym}^{2}( \mathrm{Sym}^{n-1}(\cc^2))=\bigoplus_{j=0}^{\lfloor (n-1)/2 \rfloor}\mathrm{Sym}^{2(n-1)-4j}(\cc^2).
\end{align*}
\qed
\end{Lem}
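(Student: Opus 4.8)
The plan is to reduce the whole statement to an identity of formal $\mathfrak{sl}_2(\cc)$-characters. Recall that a finite-dimensional representation of $\mathfrak{sl}_2(\cc)$ is determined up to isomorphism by its character (by complete reducibility together with the fact that the irreducibles $\Sym^k(\cc^2)$ have pairwise distinct, hence linearly independent, characters)
\[
a_k(x):=x^k+x^{k-2}+\dots+x^{-k}=\frac{x^{k+1}-x^{-k-1}}{x-x^{-1}},\qquad k\ge 0,
\]
where $x$ tracks the eigenvalue of a Cartan element $h$. So it suffices to match characters on the two sides. For any finite-dimensional graded vector space $V$ with character $\chi(x)$ one has the standard identities
\[
\mathrm{ch}\big(\Sym^2 V\big)(x)=\tfrac12\big(\chi(x)^2+\chi(x^2)\big),\qquad
\mathrm{ch}\big(\wedge^2 V\big)(x)=\tfrac12\big(\chi(x)^2-\chi(x^2)\big).
\]
Applying these with $\chi=a_n$ and $\chi=a_{n-1}$, I would reduce the first claimed isomorphism $\wedge^2(\Sym^n(\cc^2))\cong\Sym^2(\Sym^{n-1}(\cc^2))$ to the scalar identity $a_n(x)^2-a_{n-1}(x)^2=a_n(x^2)+a_{n-1}(x^2)$.

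First I would treat the left-hand side using the Clebsch--Gordan identity, which is already available as Lemma~\ref{lem:tensor}: it says precisely $a_p(x)a_q(x)=\sum_{k=0}^{\min(p,q)}a_{p+q-2k}(x)$, so $a_n(x)^2=\sum_{i=0}^{n}a_{2i}(x)$ and $a_{n-1}(x)^2=\sum_{i=0}^{n-1}a_{2i}(x)$, whence $a_n(x)^2-a_{n-1}(x)^2=a_{2n}(x)$. For the right-hand side I would observe that the exponents occurring in $a_n(x^2)$ and in $a_{n-1}(x^2)$ together run over every even integer in $[-2n,2n]$ exactly once, so $a_n(x^2)+a_{n-1}(x^2)=a_{2n}(x)$ as well; this establishes the first isomorphism.

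For the explicit decomposition I would set $m=n-1$ and establish the auxiliary identity $a_m(x^2)=\sum_{i=0}^{m}(-1)^{m-i}a_{2i}(x)$, valid for all $m\ge 0$. This is elementary: multiply through by $x^2-x^{-2}=(x-x^{-1})(x+x^{-1})$, use $a_m(x^2)=\tfrac{x^{2m+2}-x^{-2m-2}}{x^2-x^{-2}}$, and telescope (or argue by induction on $m$, using $a_m(x^2)=a_{m-2}(x^2)+x^{2m}+x^{-2m}$). Combining it with $a_{m}(x)^2=\sum_{i=0}^{m}a_{2i}(x)$ gives
\[
\mathrm{ch}\big(\Sym^2(\Sym^{m}(\cc^2))\big)=\tfrac12\big(a_{m}(x)^2+a_{m}(x^2)\big)=\!\!\sum_{\substack{0\le i\le m\\ i\equiv m\,(2)}}\!\!a_{2i}(x)=\sum_{j=0}^{\lfloor m/2\rfloor}a_{2m-4j}(x),
\]
which, with $m=n-1$, is exactly the character of $\bigoplus_{j=0}^{\lfloor (n-1)/2\rfloor}\Sym^{2(n-1)-4j}(\cc^2)$. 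By the character criterion this finishes the proof.

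I expect the only step needing genuine care — and thus the main technical obstacle, modest as it is — to be the parity bookkeeping: proving the auxiliary identity $a_m(x^2)=\sum_{i=0}^m(-1)^{m-i}a_{2i}(x)$ cleanly and checking that the surviving terms ($i\equiv m\bmod 2$) are reindexed correctly into the range $j=0,\dots,\lfloor (n-1)/2\rfloor$, including the parity/floor interplay when $n$ is odd versus even. Everything else is formal manipulation of known $\mathfrak{sl}_2$-plethysm, with Clebsch--Gordan (Lemma~\ref{lem:tensor}) supplying the substantive input.
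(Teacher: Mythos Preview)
Your proof is correct. The paper does not actually prove this lemma; it merely cites \cite[Exercises 11.31 and 11.35]{Fulton:Harris} and moves on, so there is no in-paper argument to compare against. Your character computation is the standard route to those exercises, and your use of Lemma~\ref{lem:tensor} (Clebsch--Gordan) together with the auxiliary identity $a_m(x^2)=\sum_{i=0}^m(-1)^{m-i}a_{2i}(x)$ is clean and complete.
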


Let
$$
p(V):=\dim V \pmod{2}
$$
be the parity of $\dim V,$ viewed as an element of the set $\{0,1\}$. Note that if $G\cong \mathrm{PSO}_{2r}$, linear ranks are either $r$ or $\le r-2$.
\begin{Lem} \label{lem:classic:case}
Assume $r>1$.  
Assume that either $G$ is  $ \mathrm{PSp}_{2r}$ and that the linear rank $\ell$ of $M$ is not $r$ or $G\cong \mathrm{SO}_{2r+1}$.   For $\ell>1,$ the parameters $\{(s_i,\lambda_i)\}$ are 
\begin{align*}
    &\left\{(\tfrac{|2r+p(V)-3\ell|}{2},1),(\tfrac{|2r+p(V)-3\ell|+2}{2},1),\dots,(\tfrac{2r+p(V)-\ell-2}{2},1)\right\}\\& \bigsqcup \left\{(\ell-1-p(V)-2j,2):0 \leq j \leq \lfloor (\ell-1-p(V))/2 \rfloor \right\}.
\end{align*}
If $\ell=1,$ the parameters are
\begin{align*}
   \left\{(\tfrac{2r-2}{2},1)\right\} \textrm{ if } G \cong \mathrm{SO}_{2r+1}, \quad \textrm{ and } \quad 
   \left\{(\tfrac{2r-3}{2},1),(0,2)\right\} \textrm{ if } G \cong \mathrm{PSp}_{2r}.
\end{align*}
Suppose $G \cong \mathrm{PSO}_{2r}$ with $r\ge 3$ and that $\ell\le r-2$. If $\ell>1,$ then the parameters are 
\begin{align*}
&\left\{(\tfrac{|2r-1-3\ell|}{2},1),(\tfrac{|2r-1-3\ell|+2}{2},1),\dots,(\tfrac{2r-\ell-3}{2},1)\right\}\bigsqcup \left\{(\tfrac{\ell-1}{2},1)\right\}\\ & \bigsqcup \left\{(\ell-2-2j,2):0 \leq j \leq \lfloor (\ell-2)/2 \rfloor \right\}.
\end{align*}
If $\ell=1,$ the parameters are $\{(0,1),(r-2,1)\}$.
If $\ell=r$ and $G$ is isomorphic to  $\mathrm{PSp}_{2r}$ or $\mathrm{PSO}_{2r}$ then the parameters $\{(s_i,\lambda_i)\}$ are 
\begin{align*}
    \left\{(r-1-2j,1):0 \leq j \leq \lfloor (r-1)/2 \rfloor \right\} &\textrm{ if } G \cong \mathrm{PSp}_{2r},\\
     \left\{(r-2-2j,1):0 \leq j \leq \lfloor (r-2)/2 \rfloor \right\} &\textrm{ if }G \cong \mathrm{PSO}_{2r} \textrm{ and }  r\ge 3.
\end{align*}
In all cases, every good ordering has the largest parameter $(s_k,\lambda_k)$ with $\lambda_k=1$.
\end{Lem}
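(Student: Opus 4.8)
The plan is to prove Lemma~\ref{lem:classic:case} by computing the $\fsl_2\times\GG_m$-isotypic decomposition of $\fn_P$ explicitly in each case, reading off the multiset $\{(s_i,\lambda_i)\}$, and then deducing the final assertion by comparing the largest values of $s_i/\lambda_i$ contributed by $\fn_P(1)$ and by $\fn_P(2)$. I would first set up notation uniformly: write $\dim V=2r+p(V)$, let $\ell=\dim W_P$ be the linear rank of $M$, and recall $\fm\cong\fgl_\ell\oplus\fg_{V_P}$ with $\dim V_P=\dim V-2\ell$. A principal $\fsl_2$-triple $\{e,h,f\}$ of $\fm$ is a sum of principal triples in the two factors, so with respect to it the standard representation $W_P$ of $\fgl_\ell$ is $\Sym^{\ell-1}(\cc^2)$, while by Lemma~\ref{lem:prin:sl2} the standard representation $V_P$ of $\fg_{V_P}$ is $\Sym^{\dim V_P-1}(\cc^2)$ except when $\fg_{V_P}$ is an even special orthogonal Lie algebra, in which case it is $\Sym^{\dim V_P-2}(\cc^2)\oplus\cc$; since finite-dimensional $\fsl_2$-representations are self-dual, $V_P^\vee$ has the same decomposition. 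For each irreducible $\fsl_2$-summand $\Sym^c(\cc^2)$ of $\fn_P$ lying in $\fn_P(\lambda)$ one records the parameter $(c/2,\lambda)$, which is precisely the datum attached to the corresponding highest-weight line of $\widehat{\fn}_P^e$.

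Next I would carry out the decomposition of the two graded pieces. By \eqref{usu:case}, when $\ell$ is not the degenerate value we have $\fn_P(1)\cong\Hom_\cc(V_P,W_P)\cong V_P^\vee\otimes W_P$; substituting the $\fsl_2$-decompositions above and applying the Clebsch--Gordan rule (Lemma~\ref{lem:tensor}) to each product $\Sym^a(\cc^2)\otimes\Sym^b(\cc^2)$ yields summands $\Sym^c(\cc^2)$ with $c$ running from $|a-b|$ to $a+b$ in steps of $2$, hence parameters $(c/2,1)$; in particular the largest of these has $c=(\dim V_P-1)+(\ell-1)$ in the odd-orthogonal and symplectic cases for $\fg_{V_P}$, and $c=(\dim V_P-2)+(\ell-1)$ in the even-orthogonal case, plus the extra chain coming from the trivial summand of $V_P$, whose top is $c=\ell-1$. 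Similarly \eqref{usu:case} gives $\fn_P(2)\cong\Sym_V(W_P)$, which is $\Sym^2(\Sym^{\ell-1}(\cc^2))$ if $\ep=-1$ and, by Lemma~\ref{lem:plethysm}, $\wedge^2(\Sym^{\ell-1}(\cc^2))\cong\Sym^2(\Sym^{\ell-2}(\cc^2))$ if $\ep=1$; the plethysm formula then produces summands $\Sym^{2(\ell-1)-4j}(\cc^2)$ resp.\ $\Sym^{2(\ell-2)-4j}(\cc^2)$, hence parameters $(\ell-1-2j,2)$ resp.\ $(\ell-2-2j,2)$. The degenerate parabolic $\ell=r$ of $\mathrm{PSp}_{2r}$ or $\mathrm{PSO}_{2r}$ is handled separately via \eqref{spec:case}: there $\fn_P=\fn_P(1)=\Sym_V(W_P)$, so the parameters are the plethysm summands with $\lambda$ set to $1$. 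Inserting the values of $\dim V_P$ and $p(V)$ appropriate to each of the listed families, together with the small cases $\ell=1$ (where either $\fn_P(2)=0$ or $\fn_P(2)$ is one-dimensional), reproduces exactly the multisets in the statement; the only genuine care needed is matching $\ep=1$ with $\wedge^2$ and $\ep=-1$ with $\Sym^2$, and tracking the isolated parameter $(\tfrac{\ell-1}{2},1)$ that the trivial summand contributes in the $\mathrm{PSO}_{2r}$ tables.

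Finally, for the last assertion I would observe that a good ordering is one with $s_{i+1}/\lambda_{i+1}\geq s_i/\lambda_i$, so its last entry realizes $\max_i s_i/\lambda_i$; it therefore suffices to show that this maximum is attained only by parameters with $\lambda=1$. Every $\lambda=2$ parameter above has ratio $\leq\tfrac{\ell-1}{2}$ when $\ep=-1$ and $\leq\tfrac{\ell-2}{2}$ when $\ep=1$, whereas the largest $\lambda=1$ ratio is $\tfrac{(\dim V_P-1)+(\ell-1)}{2}$ (resp.\ $\tfrac{(\dim V_P-2)+(\ell-1)}{2}$ for even orthogonal $\fg_{V_P}$). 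Comparing these, the desired strict inequality reduces to $\dim V_P>0$ for $\mathrm{SO}_{2r+1}$, to $\dim V_P\geq 2$ for $\mathrm{PSp}_{2r}$ with $\ell<r$, and to $\dim V_P\geq 2$ for $\mathrm{PSO}_{2r}$ with $\ell\leq r-2$; each holds under the constraints on $\ell$ recorded in the statement (and the isolated $(\tfrac{\ell-1}{2},1)$, having $\lambda=1$, cannot violate the claim even when present). In the degenerate cases $\ell=r$ no $\lambda=2$ parameter occurs, and in the cases $\ell=1$ one checks the single inequality $\tfrac{2r-2}{2}>0$, $\tfrac{2r-3}{2}>0$, or $r-2\geq 0$ directly. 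This yields the lemma, providing the classical-group input needed for Proposition~\ref{Prop: first term}. The main obstacle is purely organizational: keeping straight which hermitian type and parity $\fg_{V_P}$ inherits as $\ell$ varies, the attendant $\Sym^{\dim V_P-2}(\cc^2)\oplus\cc$ anomaly in the even orthogonal case, and the separate degenerate parabolic. No individual step is conceptually difficult, which is precisely why the computation has been deferred to this appendix.
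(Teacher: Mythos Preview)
Your proposal is correct and follows essentially the same route as the paper: both decompose $\fn_P(1)\cong\Hom_\cc(V_P,W_P)$ via Clebsch--Gordan (Lemma~\ref{lem:tensor}) together with Lemma~\ref{lem:prin:sl2}, decompose $\fn_P(2)\cong\Sym_V(W_P)$ via the plethysm in Lemma~\ref{lem:plethysm}, handle the degenerate parabolic through \eqref{spec:case}, and read off the parameters from Lemma~\ref{Lem: nil rad rep}. The one place you go beyond the paper is in explicitly verifying the final assertion by comparing the maximal ratios $s_i/\lambda_i$ from $\fn_P(1)$ and $\fn_P(2)$; the paper simply says the lemma follows, leaving this check to the reader.
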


\begin{proof}
We use Lemmas \ref{lem:tensor},  \ref{lem:prin:sl2}, and \ref{lem:plethysm} freely in the following.  
If $G \cong \mathrm{PSp}_{2r}$ or $G \cong \mathrm{SO}_{2r+1},$ then as a representation under a principal $\mathfrak{sl}_2$-triple,
\begin{align*}
\mathrm{Hom}_{\cc}(V_P,W_P)&\cong \Sym^{\ell-1}(\cc^2)\otimes \Sym^{2r+p(V)-2\ell-1}(\cc^2)\\
& \cong 
    \Sym^{2r+p(V) -\ell-2}(\cc^2)\oplus \Sym^{2r+p(V)-\ell-4}(\cc^2)\oplus\cdots \oplus \Sym^{|2r+p(V)-3\ell|}(\cc^2).
\end{align*} 
This space is understood to be zero if $r=\ell$ and $G\cong \mathrm{PSp}_{2r}$. 
If $G\cong \mathrm{PSO}_{2r},$
\begin{align*}
 \mathrm{Hom}_{\cc}(V_P,W_P) &\cong \Sym^{\ell-1}(\cc^2)\otimes \left(\cc \oplus \Sym^{2r-2\ell-2}(\cc^2)\right)\\
&\cong 
\Sym^{\ell-1}(\cc^2)\oplus \mathrm{Sym}^{2r-\ell-3}(\cc^2) \oplus \cdots \oplus \mathrm{Sym}^{|2r-3\ell-1|}(\cc^2).\end{align*}
If $G \cong \mathrm{PSp}_{2r}$, we have
\begin{align*}
  \Sym_V(W_P) \cong \Sym^2(\mathrm{Sym}^{\ell-1}(\cc^2)) \cong \bigoplus_{j=0}^{\lfloor (\ell-1)/2 \rfloor}\mathrm{Sym}^{2(\ell-1)-4j}(\cc^2).
\end{align*}
If $G$ is $\mathrm{PSO}_{2r}$ or $\mathrm{SO}_{2r+1},$  we have 
\begin{align*}
    \Sym_V(W_P)
    \cong \wedge^2(\mathrm{Sym}^{\ell-1}(\cc^2)) \cong \bigoplus_{j=0}^{\lfloor (\ell-2)/2 \rfloor}\mathrm{Sym}^{2(\ell-2)-4j}(\cc^2).
\end{align*}
Here by convention this space is zero if $\ell=1$.  
The lemma now follows from Lemma \ref{Lem: nil rad rep}, \eqref{usu:case} and \eqref{spec:case}.
\end{proof}

\subsection{Exceptional cases} 
For exceptional types, we compute the decomposition of $\mathfrak{n}_P$ using LieART 2.0.2 (a Mathematica application) based on the tables of \cite{MillerSahi}.

 Assume $G$ is adjoint of type $E,$ $F,$ or $G$. Let $P_k=M_kN_k\leq G$ denote the maximal parabolic associated to the $k$-th node of the Dynkin diagram of $G,$ using the Bourbaki numbering. For a given parabolic subgroup, consider the grading 
    \[
    \fn_{P_k} = \bigoplus_{i\geq1}\fn_{P_k}(i),
    \] associated to the action of $Z(M)$.  The columns of the tables correspond to the graded piece we consider.
    
   We list the resulting $\fsl_2$-representations by the highest weight. For example, the representation $\Sym^{n}(\cc^2)$ will be denoted $\mathbf{n}.$ In particular, under the assumption $G$ is adjoint, the data $(s,\lam)$ associated to the representation $\mathbf{n}$ appearing in $\fn_{P_k}(i)$ is  $(\frac{\mathbf{n}}{2},i)$.

\smallskip

\noindent    
\underline{$E_6$:}

\begin{center}
\begin{tabular}{ccccc}
\hline
Node & $i=1$       & $i=2$     & $i=3$ \\ \hline
1    & $\mathbf{10},\mathbf{4}$      &       &   \\
2    & $\mathbf{9},\mathbf{5},\mathbf{3}$   & $\mathbf{0}$     &   \\
3    & $\mathbf{7},\mathbf{5},\mathbf{3},\mathbf{1}$   & $\mathbf{4}$     &   \\
4    & $\mathbf{5},\mathbf{3},\mathbf{3},\mathbf{1},\mathbf{1}$ & $\mathbf{4},\mathbf{2},\mathbf{0}$ & $\mathbf{1}$ \\
5    & $\mathbf{7},\mathbf{5},\mathbf{3},\mathbf{1}$   & $\mathbf{4} $    &   \\
6    & $\mathbf{10},\mathbf{4}   $   &       &   \\ \hline
\end{tabular}
\end{center}

\noindent    
\underline{$E_7$:}

\begin{center}
\begin{tabular}{ccccc}
\hline
Node & $i=1$         & $i=2$       & $i=3$   & $i=4$ \\ \hline
1    & $\mathbf{15},\mathbf{9},\mathbf{5}$      & $\mathbf{0}$       &     &   \\
2    &$ \mathbf{12},\mathbf{8},\mathbf{6},\mathbf{4},\mathbf{0}$  & $\mathbf{6}$       &     &   \\
3    & $\mathbf{9},\mathbf{7},\mathbf{5},\mathbf{3},\mathbf{1} $  &$ \mathbf{8},\mathbf{4},\mathbf{0} $  & $\mathbf{1} $ &   \\
4    & $\mathbf{6},\mathbf{4},\mathbf{4},\mathbf{2},\mathbf{2},\mathbf{0}$ & $\mathbf{6},\mathbf{4},\mathbf{2},\mathbf{2} $&$ \mathbf{4},\mathbf{2}$ &$ \mathbf{2}$ \\
5    &$ \mathbf{8},\mathbf{6},\mathbf{4},\mathbf{4},\mathbf{2},\mathbf{0}$ & $\mathbf{6},\mathbf{4},\mathbf{2}$   & $\mathbf{4}$   &   \\
6    & $\mathbf{11},\mathbf{9},\mathbf{5},\mathbf{3}$    & $\mathbf{8},\mathbf{0}$     &     &   \\
7    & $\mathbf{16},\mathbf{8},\mathbf{0}$   &         &     &   \\ \hline
\end{tabular}
\end{center}

\noindent    
\underline{$E_8$:}

\begin{center}
    \begin{tabular}{ccccccc}
\hline
Node & $i=1$               & $i=2$           & $i=3$       & $i=4$     & $i=5$   & $i=6$ \\ \hline
1    & $\mathbf{21},\mathbf{15},\mathbf{11},\mathbf{9},\mathbf{3}$      & $\mathbf{12},\mathbf{0}$        &         &       &     &   \\
2    & $\mathbf{15},\mathbf{11},\mathbf{9},\mathbf{7},\mathbf{5},\mathbf{3}$     & $\mathbf{12},\mathbf{8},\mathbf{4},\mathbf{0}$    & $\mathbf{7}$       &       &     &   \\
3    & $\mathbf{11},\mathbf{9},\mathbf{7},\mathbf{5},\mathbf{3},\mathbf{1}$      & $\mathbf{12},\mathbf{8},\mathbf{6},\mathbf{4},\mathbf{0}$  & $\mathbf{7},\mathbf{5}$     & $\mathbf{6}$     &     &   \\
4    & $\mathbf{7},\mathbf{5},\mathbf{5},\mathbf{3},\mathbf{3},\mathbf{1}$       & $\mathbf{8},\mathbf{6},\mathbf{4},\mathbf{4},\mathbf{2},\mathbf{0}$ & $\mathbf{7},\mathbf{5},\mathbf{3},\mathbf{1}$ & $\mathbf{6},\mathbf{4},\mathbf{2}$ & $\mathbf{3},\mathbf{1}$ & $\mathbf{4}$ \\
5    & $\mathbf{9},\mathbf{7},\mathbf{5},\mathbf{5},\mathbf{3},\mathbf{3},\mathbf{1}$     & $\mathbf{8},\mathbf{6},\mathbf{4},\mathbf{4},\mathbf{2},\mathbf{0}$ & $\mathbf{7},\mathbf{5},\mathbf{3},\mathbf{1}$ & $\mathbf{6},\mathbf{2}$   & $\mathbf{3}$   &   \\
6    & $\mathbf{12},\mathbf{10},\mathbf{8},\mathbf{6},\mathbf{4},\mathbf{2}$     & $\mathbf{10},\mathbf{8},\mathbf{6},\mathbf{2}$    & $\mathbf{10},\mathbf{4}$    & $\mathbf{2}$     &     &   \\
7    & $\mathbf{17},\mathbf{15},\mathbf{9},\mathbf{7}, \mathbf{1}$ & $\mathbf{16},\mathbf{8}, \mathbf{0}$   & $\mathbf{1}$       &       &     &   \\
8    & $\mathbf{27},\mathbf{17},\mathbf{9}$           &   $\mathbf{0}$           &         &       &     &   \\ \hline
\end{tabular}
\end{center}

\noindent
$\underline{F_4:}$

\begin{center}
    \begin{tabular}{ccccc}
\hline
Node & $i=1$   & $i=2$     & $i=3$ & $i=4$ \\ \hline
1    & $\mathbf{9},\mathbf{3}$ & $\mathbf{0}$     &   &   \\
2    & $\mathbf{5},\mathbf{3},\mathbf{1}$ & $\mathbf{4},\mathbf{0}$   & $\mathbf{1}$ &   \\
3    & $\mathbf{3},\mathbf{1}$   & $\mathbf{4},\mathbf{2},\mathbf{0}$ & $\mathbf{1}$ & $\mathbf{2}$ \\
4    & $\mathbf{6},\mathbf{0}$   & $\mathbf{6}$     &   &  \\\hline
\end{tabular}
\end{center}

\noindent $\underline{G_2:}$
\begin{center}
    \begin{tabular}{cccc}
\hline
Node & $i=1$   & $i=2$     & $i=3$ \\ \hline
1    & $\mathbf{1}$ & $\mathbf{0}$     &   $\mathbf{1}$   \\
2    & $\mathbf{3}$ & $\mathbf{0}$   &  \\\hline
\end{tabular}
\end{center}

\section*{List of symbols}

\begin{center}
\begin{longtable}{l c r}
$X_P^{\circ}$ & $P^{\mathrm{der}}\backslash G$& \S \ref{ssec:BKspace}\\
$\beta$ & simple root in $\Delta$ associated to $P$ & \S \ref{ssec:Pl}\\ 
$\omega_\beta$ & fundamental weight associated to $\beta$ & \S \ref{ssec:Pl}\\
$\omega_P$ & weight in $X^\ast(T)$ attached to $P$ & \eqref{omegaP}\\
$X_P$& affine completion of $X_P^{\circ}$ & \S \ref{ssec:Pl}\\
$v_{P}$ & highest weight vector in $V_P(F)$ & \S \ref{ssec:Pl}\\
$v_{P^{\mathrm{op}}}^\ast$  & lowest weight vector dual to $v_p$ in $V_P^{\vee}(F)$ & \S \ref{ssec:Pl}\\
$\mathrm{Pl}=\mathrm{Pl}_{v_P}$ & Pl\"ucker embedding $\mathrm{Pl}:X_P \lto Gv_P$ & \S \ref{ssec:Pl}\\
$V_P$ & right representation of $G$ of highest weight $-\omega_P$ & \S \ref{ssec:Pl}\\
$\langle \cdot, \cdot\rangle_{P|P^{\mathrm{op}}}$ & pairing on $X^\circ_P\times X^\circ_{P^{\mathrm{op}}}$ & \eqref{PPop:pairing}\\
$(\cdot)_{\chi,P},$ $(\cdot)_{\chi,P^{\mathrm{op}}}^{\mathrm{op}}$ & Mellin transform along $\chi$ & \eqref{Mellin} \\ 
$I_P(\chi),\overline{I}_{P^{\mathrm{op}}}(\chi)$ &  normalized induction & \eqref{normalizedinduct}\\
$V_{A,B}$ & $\{s \in \cc:A<\mathrm{Re}(s)<B\}$ & \eqref{VAB}\\
$|\cdot|_{A,B,p}$ & $\sup_{s \in V_{A,B}}|p(s)\phi(s)|$ & \eqref{semi:normf}\\
$ \widehat{K}_{\mathbb{G}_m}$ & set of (unitary) characters & \eqref{KGM}\\
$ |\cdot|_{A,B,p_{P|Q},\Omega,D}$ & seminorm & \eqref{seminorm}\\
$U(\fm^{\mathrm{ab}}\oplus \fg)$ & universal enveloping algebra of $(\fm^{\mathrm{ab}}\oplus \fg)_\cc$ & \S \ref{Section: twisting}\\
$\lambda_!(\mu_s)$ & normalized operator attached to $(s,\lambda)$ & \eqref{eqn: eta def} \\
$L$ & graded representation $L$ of $\mathbb{G}_m$ with attached data $\{(s_i,\lambda_i)\}$ & \S \ref{Section: twisting}\\
$A(L),B(L)$ & extended real numbers attached to $L$ & \eqref{AB}\\
$a_L(\chi)$ & $\prod_{i\in I}L(-s_i,\chi^{\lambda_i})$ & \eqref{al}\\
$a_{P|P}(\chi)$& $a_{\widetilde{L}}(\chi^{-1})$ with $L=\widehat{\fn}_P^e$ &  \eqref{ap}\\
$a_{P|P^{\mathrm{op}}}(\chi)$ & $a_L(\chi)$ with $L=\widehat{\fn}_P^e$ & \eqref{ap}\\
$\mathcal{S}_L$ &  Fr\'echet space attached to $L$ &  \S \ref{Section: twisting} \\
$\mathcal{S}(X_P(F))$ & Schwartz space on $X_P(F)$ & Def. \ref{defn:arch}\\
$\{e,h,f\}$ & principal $\fsl_2$ triple in $\widehat{\fm}$& \S \ref{ssec:BKL} \\
$\widehat{\fn}_P^e$ & space of highest weight vectors in $\widehat{\fn}_P$ for a principal $\fsl_2$-triple& \eqref{L:def}\\
$\mu_L$ & normalized operator attached to $L$ & \eqref{mul}\\
$\mu_L(\chi)$ & $\prod_{i=1}^k\gamma(-s_i,\chi^{\lambda_i},\psi)$& \eqref{mulchi} \\
$\mu_P$ & $\mu_L$ with $L=\widehat{\fn}_P^e$ & \eqref{etaP}\\
$\mathcal{R}_{P|P}, \mathcal{R}_{P|P^{\mathrm{op}}}$ & Radon transform & \eqref{R}\\ 
$\iota_{w_0} $ & isomorphism $\iota_{w_0}:\mathcal{S}(X_P(F)) \tilde{\lto} \mathcal{S}(X_{P^{\mathrm{op}}}(F))$& \S \ref{ssec:FT}\\
$\mathcal{F}_{P|P^{\mathrm{op}}}$ & $\mu_P\circ \mathcal{R}_{P|P^{\mathrm{op}}}$ & \eqref{FPPOP} \\

$w_0$ & $w_0Pw_0^{-1}=P^{\mathrm{op}}$& \eqref{w00}\\
$\mathcal{F}_{P|P^{\mathrm{op}}}^{\mathrm{geo}}$ & $\mu_{P}^{\mathrm{geo}}\circ\calr_{P|P^{\mathrm{op}}}$ & \eqref{geopart}\\
$\mathcal{F}_{X_P}$ & $\iota_{w_0} \circ \mathcal{F}_{P|P^{\mathrm{op}}}$ &  \eqref{FXP2}\\

$\mu_P^{\mathrm{aug}}$ & $\lambda_{1!}(\mu_{s_1}) \circ \dots \circ \lambda_{(k-1)!}(\mu_{s_{k-1}})$ & \eqref{eqn: eta aug}\\
$\mu_P^{\mathrm{geo}}$ & $[1]_!(\mu_{s_k})$& \eqref{eqn: eta aug} \\

$(V_i,Q_i)$ & quadratic space of even dimension & \S \ref{sec:Y}\\
$(V,Q)$ & $(\prod_{i=1}^3 V_i,\sum_{i=1}^3Q_i)$ & \S \ref{sec:Y}\\

$\chi_Q$ & quadratic character attached to $Q$& \eqref{chiQi}\\
$\gamma(Q)$ & Weil index of $Q$& \S \ref{sec:FY}\\

$X$ & $X_P$ where $G=\mathrm{Sp}_6$ and $P$ is the Siegel parabolic & \S \ref{sec:Y}\\
$x_0$ & representative of the open $\mathrm{SL}_2^3$-orbit in $X$& \eqref{x0}\\

$N_0$ & stabilizer of $x_0$ in $\SL_2^3$ & \eqref{N0}\\

$Y$ & $\{ (y_1,y_2,y_3) \in V:Q_1(y_1)=Q_2(y_2)=Q_3(y_3)\}.$ & \eqref{Ydef} \\
$Y^{\mathrm{ani}}$ & anisotropic vectors in $Y$ & \eqref{Yani}\\
$d\mu(y)$ & measure on $Y$ & \S \ref{sec:Y}\\
$\mathcal{S}(Y(F))$ & Schwartz space of $Y(F)$ &  \S \ref{sec:Y}\\ 
$\mathcal{F}_Y$ & Fourier transform on $\mathcal{S}(Y(F))$ & \S \ref{sec:Y}\\
$\mathcal{S}$ & $\mathrm{Im}(\mathcal{S}(V(F)) \to C^\infty(Y^{\mathrm{sm}}(F)))$& \eqref{calS}\\
$\mathfrak{r}$ & rational function on $(F^\times)^3$& \eqref{q2}\\

$\mathfrak{c}(u,v)$ & $v_1Q_1(u_1)+v_2Q_2(u_2)-(v_1+v_2)Q_3(u_3)$ & \eqref{cuv} \\
$v'$ & $(v_1,v_2,-v_1-v_2)$ & \eqref{v'} 
\end{longtable}
\end{center}


\bibliographystyle{alpha}

\bibliography{bibs_FourierTransform}
\end{document}